\definecolor{mygray}{gray}{0.85}
\newtheorem{theorem}{Theorem}
\newtheorem{proposition}{Proposition} 
\newtheorem{corollary}{Corollary} 
\newtheorem{remark}{Remark} 
\newtheorem{definition}{Definition}
\begin{document}
	
\title[Evolutionary Stable Strategies and Cubic Vector Fields]{Evolutionary Stable Strategies and Cubic Vector Fields}

\author[Jefferson Bastos, Claudio Buzzi and Paulo Santana]
{Jefferson Bastos$^1$, Claudio Buzzi$^1$ and Paulo Santana$^1$}

\address{$^1$ IBILCE--UNESP, CEP 15054--000, S. J. Rio Preto, S\~ao Paulo, Brazil}
\email{jefferson.bastos@unesp.br; claudio.buzzi@unesp.br; paulo.santana@unesp.br}

\subjclass[2020]{34C05, 34C23, 34C60}

\keywords{Phase portraits, evolutionary stable strategies, replicator equation, invariant lines, invariant octothorpe.}

\begin{abstract}
	The introduction of concepts of Game Theory and Ordinary Differential Equations into Biology gave birth to the field of Evolutionary Stable Strategies, with applications in Biology, Genetics, Politics, Economics and others. In special, the model composed by two players having two pure strategies each results in a planar cubic vector field with an invariant octothorpe. Therefore, in this paper we study such class of vector fields, suggesting the notion of genericity and providing the global phase portraits of the generic systems with a singularity at the central region of the octothorpe.
\end{abstract}

\maketitle

\section{Introduction}\label{Sec1}

The introduction of concepts of Game Theory into Biology, by Maynard Smith and Price \cite{SmiPri}, gave birth to the field of Evolutionary Stable Strategies. The introduction of Ordinary Differential Equations in such concepts, by Taylor and Jonker \cite{TayJon}, proved to be a very fruitful tool with applications in a myriad of research areas, as Parental Investing, Biology, Genetics, Evolution, Ecology, Politics and Economics. See \cites{Hines,SchSig,Bomze,AccMarOvi,CanBer,XiaoYu} and the references therein. In this paper we present a brief explanation of the model, followed by an example, and then we study the case where the model is composed by two players having two pure strategies each, which is given by a planar cubic vector field with an invariant octhotorpe.

The model work as follows. Suppose that we have $n$ players, denoted by $\Gamma_1,\dots \Gamma_n$, and that each player has an unique strategy $X_i$. Let $a_{ij}\in\mathbb{R}$ denote the payoff of the pure strategy $X_i$ against the pure strategy $X_j$. Also, let $A=(a_{ij})$ be the payoff matrix. In what follows, $\left<\cdot,\cdot\right>$ denotes the standard inner product on $\mathbb{R}^n$. Given a probabilistic vector
	\[x\in S^n:=\{(x_1,\dots,x_n)\in\mathbb{R}^n\colon x_i\geqslant0,\; x_1+\dots+x_n=1\},\]
we define
\begin{equation}\label{16}
	\left<e_i,Ax\right>=\sum_{j=1}^{n}a_{ij}x_j,
\end{equation}
as the payoff of the pure strategy $X_i$ against the mix strategy $x_1X_1+\dots+ x_nX_n$. Furthermore, the average payoff of such mix strategy is defined by,
\begin{equation}\label{17}
	\left<x,Ax\right>=\sum_{i,j=1}^{n}a_{ij}x_ix_j.
\end{equation}
Therefore, the dynamics between the strategies is defined by the \emph{replicator equation},
\begin{equation}\label{39}
	\dot x_i=x_i\left(\sum_{j=1}^{n}a_{ij}x_j-\sum_{j=1}^{n}a_{ij}x_ix_j\right), \quad i\in\{1,\dots,n\}.
\end{equation}
In simple words, the rate of change of the population $x_i$ depends on the difference between the payoffs \eqref{16} and \eqref{17} (the bigger the difference, the bigger the superiority of strategy $X_i$), and on the size of the population itself, as a percentage of the total population $x_1+\dots+x_n$.

An important fact about the replicator equation \eqref{39}, is its equivalence with the compactification of the Volterra-Lotka equation
	\[\dot y_i=y_i\left(b_{i0}+\sum_{j=1}^{n}b_{ij}y_j\right), \quad i\in\{1,\dots,n-1\},\]
defined on $\mathbb{R}_+^{n-1}=\{(y_1,\dots,y_{n-1})\in\mathbb{R}^{n-1}\colon y_i\geqslant0\}$, for some $b_{ij}\in\mathbb{R}$. In special, system \eqref{39} has limit cycle (i.e. an isolated periodic orbit) if, and only if, $n\geqslant4$. For more details, see the work of Hofbauer \cite{Hof}. For a classification of all the possible phase portraits of system \eqref{39} with $n=3$, see the works of Zeeman \cite{Zeeman1980} and Bomze \cite{Bomze}. 

We now look at the case where each player $\Gamma_i$ have two strategies. To simplify the approach, we consider two players $\Gamma_1$ and $\Gamma_2$ with two strategies each, denoted by  $\{X_1,X_2\}$ and $\{Y_1,Y_2\}$. In this case, let $a_{ij}^*\in\mathbb{R}$ be the payoff of the pure strategy $X_i$ against the pure strategy $Y_j$ and $b_{ij}^*\in\mathbb{R}$ be the payoff of the pure strategy $Y_i$ against the pure strategy $X_j$. Let also
	\[A^*=\left(\begin{array}{cc} a_{11}^* & a_{12}^* \\ a_{21}^* & a_{22}^* \end{array}\right), \quad B^*=\left(\begin{array}{cc} b_{11}^* & b_{12}^* \\ b_{21}^* & b_{22}^* \end{array}\right),\]
be the payoff matrices. Similarly to the previous case, given a planar probabilistic vector
	\[y\in S^2=\{(y_1,y_2)\in\mathbb{R}^2\colon y_1+y_2=1\},\]
we define,
	\[\left<e_i,A^*y\right>=a_{i1}^*y_1+a_{i2}^*y_2,\]
as the payoff of the pure strategy $X_i$ of player $\Gamma_1$, against the mix strategy $y_1Y_1+ y_2Y_2$ of player $\Gamma_2$. Similarly, given a probabilistic vector $x\in S^2$, the payoff of the pure strategy $Y_i$ against the mix strategy $x_1X_1+x_2X_2$ is given by,
	\[\left<e_i,B^*x\right>=b_{i1}^*x_1+b_{i2}^*x_2.\]
Furthermore, the average payoff of $x_1X_1+x_2X_2$ against $y_1Y_1+y_2Y_2$ is given by,
	\[\left<x,A^*y\right>=a_{11}^*x_1y_1+a_{12}^*x_1y_2+a_{21}^*x_2y_1+a_{22}^*x_2y_2,\]
while the average payoff of $y_1Y_1+y_2Y_2$ against $x_1X_1+x_2X_2$ is given by,
	\[\left<y,B^*x\right>=b_{11}^*x_1y_1+b_{12}^*x_1y_2+b_{21}^*x_2y_1+b_{22}^*x_2y_2.\]
Therefore, similarly to the previous case, the dynamics between players $\Gamma_1$ and $\Gamma_2$ is given by,
\begin{equation}\label{2}
	\begin{array}{l}
		\dot x_1=x_1(a_{11}^*y_1+a_{12}^*y_2-(a_{11}^*x_1y_1+a_{12}^*x_1y_2+a_{21}^*x_2y_1+a_{22}^*x_2y_2)), \vspace{0.2cm} \\
		\dot x_2=x_2(a_{21}^*y_1+a_{22}^*y_2-(a_{11}^*x_1y_1+a_{12}^*x_1y_2+a_{21}^*x_2y_1+a_{22}^*x_2y_2)), \vspace{0.2cm} \\
		\dot y_1=y_1(b_{11}^*x_1+b_{12}^*x_2-(b_{11}^*y_1x_1+b_{12}^*y_1x_2+b_{21}^*y_2x_1+b_{22}^*y_2x_2)), \vspace{0.2cm} \\
		\dot y_2=y_2(b_{21}^*x_1+b_{22}^*x_2-(b_{11}^*y_1x_1+b_{12}^*y_1x_2+b_{21}^*y_2x_1+b_{22}^*y_2x_2)). 
	\end{array}
\end{equation} 
However, since $x_1+x_2=y_1+y_2=1$, it follows that to describe the dynamic between players $\Gamma_1$ and $\Gamma_2$, it is necessary only two variables, namely $x=x_1$ and $y=y_1$. Therefore, if we define $x_2=1-x$ and $y_2=1-y$, then we can simplify system \eqref{2} and obtain the planar polynomial system,
\begin{equation}\label{3}
	\begin{array}{l}
		\dot x=x(x-1)(a_{22}^*-a_{12}^*+(a_{12}^*+a_{21}^*-a_{11}^*-a_{22}^*)y), \vspace{0.2cm} \\
		\dot y=y(y-1)(b_{22}^*-b_{12}^*+(b_{12}^*+b_{21}^*-b_{11}^*-b_{22}^*)x). 
	\end{array}
\end{equation}
Therefore, to understand the dynamics between players $\Gamma_1$ and $\Gamma_2$, it is enough to understand the phase portrait of system \eqref{3} within the square given by,
	\[\left\{(x,y)\in\mathbb{R}^2\colon 0\leqslant x\leqslant 1,\; 0\leqslant y\leqslant 1\right\}.\]
In this paper we consider \eqref{3} as a system defined on the plane $\mathbb{R}^2$ and we study its compactification on the Poincar\'e Sphere. For a study focused on the above square, see Schuster et al \cite{SchSigHof}.
	
\section{Example}\label{Sec2}	
	
As an example, we present a brief explanation of a model due to Accinelli et al \cite{AccMarOvi}. Consider a democratic society and let $G$ denote its elected government. The government can be re-elected or not after an electoral period. When elected, such government appoint public officials, denoted by $O$, who may or may not be renewed by a new government. Such officials are in charge of carrying out the legal and administrative management of the government and serve directly to the citizens, when they require to carry out this type of formalities. When required by a citizen, an official may by honest and just fulfill his/her duty, or be corrupt and fulfills his/her duty as long the citizen pays a bribe. Similarly, a member of the government $G$ may also be honest or corrupt. We say that a government's member is corrupt when he/she collude with a corrupt official, sharing the bribe. In return, the member of the government does not punish the official. On the other hand, a government's members is honest when, after an offer of collude, he/she punishes the corrupt official. Therefore, following Accinelli et al \cite{AccMarOvi}, we let $\Gamma_1=O$ and $\Gamma_2=G$, i.e. the officials are the first player and the government the second. Moreover, we denote by $n_c$ and $n_{nc}$ the percentages of corrupt and non-corrupt officials, respectively (i.e. $x_1=n_c$ and $x_2=n_{nc})$. Similarly, we let $(g_c,g_{nc})$ denote the corrupt and non-corrupt government's members. Moreover, the payoffs coefficients are given by the following table.
\begin{table}[h]
	\caption{The payoffs table.}
	\begin{tabular}{c cc c} 
		\hline
		$O|G$ & $G_{c}$ && $G_{nc}$ \\
		\hline
		$O_{c}$ & $W+M_c-M_g$, $M_g-W+V_{G_c}-KP$ & $\quad$ & $W+M_c-M$, $M-W-e+V_{G_{nc}}$ \\
		$O_{nc}$ & $W-M_g'$, $M_g'-W+V_{G_c}-KP$ & $\quad$ &$W$, $-W+V_{G_{nc}}$ \\
		\hline
	\end{tabular}
\end{table}

Where:
\begin{itemize}
	\item $W$ is the wage of the officials, paid by the government;
	\item $M$ is the fine imposed by an honest government to a corrupt official;
	\item $M_c$ is the bribe that a corrupt official takes from a citizen;
	\item $M_g$ is the fraction of the bribe that a corrupt officials shares with a corrupt government's member;
	\item $M_g'$ is the amount that an honest official pays to a corrupt government's members to keep his position;
	\item $e$ is the cost associated with the capture of a corrupt official;
	\item $V_{G_c}$ and $V_{G_{nc}}$ is the value that a corrupt and non-corrupt government assign to be re-elected for the next period. It may also depends on the level of corrupt-intolerance of the citizens.
	\item $KP$ is the total amount of money that the government offers to the citizens to buy their votes, where $P$ is the unitary valued payed to each person and $K$ is the number of citizens to which the government pays.
\end{itemize}
Therefore, we have the following values for $a_{ij}^*$ and $b_{ij}^*$:
\[\begin{array}{ll}
		a_{11}^*=W+M_c-M_g, & \quad b_{11}^*=M_g-W+V_{G_c}, \vspace{0.2cm} \\
		a_{12}^*=W+M_c-M, & \quad b_{12}^*=M_g'-W+V_{G_c}-KP, \vspace{0.2cm} \\
		a_{21}^*=W-M_g', & \quad b_{21}^*=M-W-e+V-{G_{nc}}, \vspace{0.2cm} \\
		a_{22}^*=W, & \quad b_{22}^*=-W+V_{G_{nc}}.
\end{array}\]
Hence, the ordinary differential system is given by:
\begin{equation}\label{18}
	\dot {n_c}=n_c(n_c-1)(a_{00}+a_{01}g_c), \quad \dot {g_c}=g_c(g_c-1)(b_{00}+b_{10}n_c),
\end{equation}
where the coefficients $a_{ij}$ and $b_{ij}$ are given by,
	\[a_{00}= M-M_c, \quad a_{01}=M_g-M_g'-M, \quad b_{00}=KP+V_{G_{nc}}-V_{G_c}-M_g', \quad b_{10}=M+M_g'-M_g-e.\]
Let $(\overline{n_c},\overline{g_c})=(-\frac{b_{00}}{b_{10}},-\frac{a_{00}}{a_{01}})$ be the unique singularity of \eqref{18} that may not lay in any of the four invariant straight lines. Since $n_c$ and $g_c$ denotes the proportions of the corrupted officials and government's members, then such singularity is well defined in the model if, and only if,
	\[(\overline{n_c},\overline{g_c})\in\left\{(n_c,g_c)\in\mathbb{R}^2\colon 0\leqslant n_c\leqslant 1,\; 0\leqslant g_c\leqslant 1\right\}.\]
Therefore, except for degenerate cases, we can assume $0<\overline{n_c}<1$ and $0<\overline{g_c}<1$. Under the principle that \emph{corruption corrupts} (see Proposition~$1$ of Accinelli et al \cite{AccMarOvi}), we can assume that a corrupt (non-corrupt) official encourages a corrupt (non-corrupt) government and vice-versa. Therefore, we can assume,
	\[b_{11}^*>b_{21}^*, \quad b_{22}^*>b_{12}^*, \quad a_{11}^*>a_{21}^*, \quad a_{22}^*>a_{12}^*.\]
Hence, it follows from \eqref{3} that,
	\[b_{00}>0, \quad b_{00}+b_{10}<0, \quad a_{00}>0, \quad a_{00}+a_{01}<0.\]
Thus, it can be seen that the phase portrait of system \eqref{18} is given by Figure~\ref{Fig8}(a). Observe that the points $(0,0)$ (zero corruption) and $(1,1)$ (fully corrupted) are the only stable singularities and that almost all initial conditions evolve to one of such singularities, i.e. for almost all initial conditions, the democratic society either annihilate the corruption or corrupts itself completely.
\begin{figure}[h]
	\begin{center}
		\begin{minipage}{5cm}
			\begin{center}
				\begin{overpic}[height=3cm]{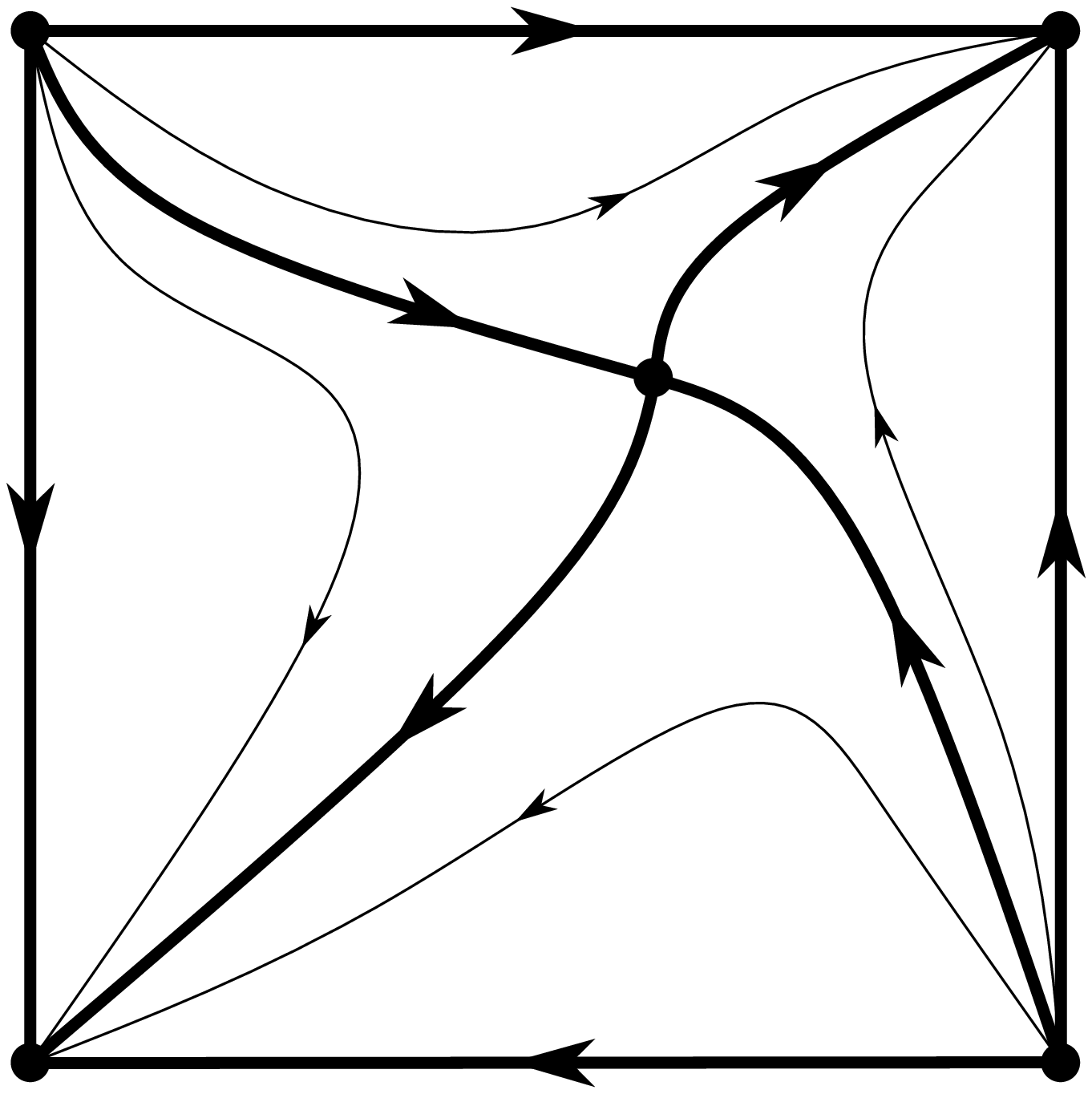} 
				\end{overpic}
				
				$(a)$
			\end{center}
		\end{minipage}
		\begin{minipage}{5cm}
			\begin{center}
				\begin{overpic}[height=3cm]{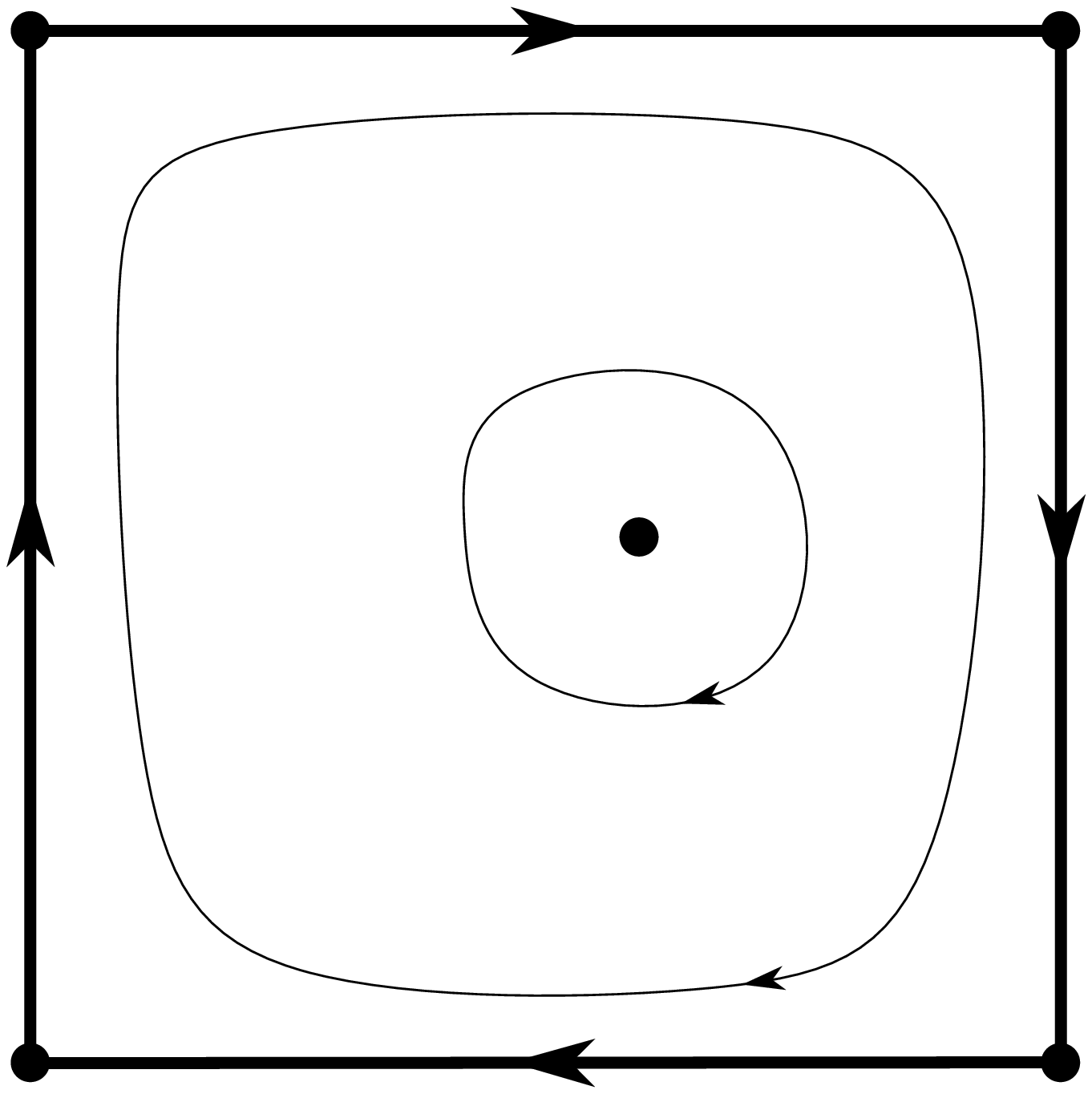} 
				\end{overpic}
				
				$(b)$
			\end{center}
		\end{minipage}
	\end{center}
	\caption{Phase portraits of the model due to Accinelli et al \cite{AccMarOvi}.}\label{Fig8}
\end{figure}

Another principle explored by the authors of the model is the one in which the intolerance of the citizens depends on the level of corruption of the system. In simple words, the bigger the corruption, the bigger the intolerance to corruption. More precisely, if corruption increases, then $V_{G_{nc}}$ increases and $V_{G_c}$ decreases. On the other hand, if corruption decreases, then $V_{G_{nc}}$ decreases and $V_{G_c}$ increases. Under this principle, we have (following Accinelli et al \cite{AccMarOvi}) that \emph{a low index of intolerance causes an increase in government corruption. Such increase causes more officials to prefer to be corrupt, which in turn increases the corruption of the system. Facing such increase in corruption, the intolerance rises, forcing the level of corruption of the government to decrease, which causes the officials to prefer to be non-corrupt, which in turn decreases the corruption of the system. Thus, decreasing the index of intolerance.} See Figure~\ref{Fig8}(b).

\section{Statement of the Main Results}\label{Sec3}

The chasing for more realistic models demand the payoff coefficients to depend on the weight given to strategies $X_i$ and $Y_j$, rather than being constant, i.e. $a_{ij}^*=a_{ij}^*(x,y)$ and $b_{ij}^*=b_{ij}^*(x,y)$. Hence, if we assume that each payoff $a_{ij}^*$, $b_{ij}^*$ is a polynomial of degree at most $n$, then system \eqref{3} becomes
\begin{equation}\label{4}
	\dot x=x(x-1)\mathcal{P}(x,y), \quad \dot y=y(y-1)\mathcal{Q}(x,y), 
\end{equation}
with $\mathcal{P}$ and $\mathcal{Q}$ polynomials of degree at most $n+1$. In this paper we assume that $\mathcal{P}$ and $\mathcal{Q}$ are polynomials of degree one. Thus, system \eqref{4} becomes,
\begin{equation}\label{1}
	\dot x=x(x-1)(a_{00}+a_{10}x+a_{01}y), \quad \dot y=y(y-1)(b_{00}+b_{10}x+b_{01}y). 
\end{equation}
Observe that the hypothesis of every payoff $a_{ij}^*$ and $b_{ij}^*$ being constant is equivalent to $a_{10}=b_{01}=0$. In this paper we suggest the notion of \emph{genericity}, prove that the phase portrait of system \eqref{1} depends mainly on the relative position between its singularities and provide the generic phase portraits with a singularity inside the square,
	\[V=\left\{(x,y)\in\mathbb{R}^2\colon 0\leqslant x\leqslant1,\; 0\leqslant y\leqslant1\right\},\]
which is the region of the plane in which the model is defined.	Following Sotomayor \cite{Soto1974}, given a family $\mathfrak{X}$ of smooth vector fields, we say that $X\in\mathfrak{X}$ is \emph{generic} if $X$ is an element of a collection $\Sigma_0\subset\mathfrak{X}$ such that:
\begin{enumerate}[label=(\alph*)]
	\item $\Sigma_0$ is large with respect to $\mathfrak{X}$;
	\item its elements are amenable to simple description.
\end{enumerate}
More precisely, if $\mathfrak{X}$ is endowed with some interesting topology, then condition $(a)$ can be replaced by:
\begin{enumerate}
	\item[$(a_1)$] $\Sigma_0$ is open and dense in $\mathfrak{X}$.
\end{enumerate}
Usually, $\mathfrak{X}$ is endowed with the $C^r$ or $C^\infty$-topology (or the coefficients topology, if $\mathfrak{X}$ is a class of polynomial vector fields). Although it precedes the work of Sotomayor, one of the biggest example of generic family is due to Peixoto \cite{Pei1959,Pei1961}. Following his work \cite{Pei1959}, we say that two vector fields $X$, $Y\in\mathfrak{X}$ are \emph{topologically equivalent} if there exists an homeomorphism $h$ which sends orbits of $X$ to orbits of $Y$, preserving or reversing the orientation of all orbits. Furthermore, if there exists a neighborhood $N\subset\mathfrak{X}$ of $X$ such that $X$ is topologically equivalent to every $Y\in N$, then we say that $X$ is \emph{structurally stable}.

\begin{theorem}[Peixoto \cite{Pei1961}]
	Let $M$ be a compact differentiable manifold of dimension two, endowed with a metric, and $\mathfrak{X}$ be the class of $C^1$-vector fields over $M$, endowed with the $C^1$-topology. Let also $\Sigma_0\subset\mathfrak{X}$ be the class of the structurally stable vector fields. Given $X\in\mathfrak{X}$, then $X\in\Sigma_0$ if, and only if, the following conditions are satisfied.
	\begin{enumerate}[label=(\alph*)]
		\item There is only a finite number of singularities, all hyperbolic;
		\item There is only a finite number of closed orbits, all hyperbolic;
		\item There is no connections between saddles;
		\item The $\alpha$ and $\omega$-limits of every orbit is either a singularity or a closed orbit.
	\end{enumerate}
	Moreover, $\Sigma_0$ is open and dense in $\mathfrak{X}$.
\end{theorem}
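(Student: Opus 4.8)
\section*{Proof proposal}

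The plan is to separate the statement into two logically distinct halves: the characterization of $\Sigma_0$ by conditions (a)--(d), which is itself an equivalence, and the topological assertion that $\Sigma_0$ is open and dense. For the characterization I would prove necessity by the contrapositive and sufficiency by explicitly constructing topological equivalences between $X$ and each $C^1$-nearby field. The indispensable two-dimensional input throughout is the Poincar\'e--Bendixson theorem, which forces every $\alpha$- and $\omega$-limit set containing no singularity to be a closed orbit; this is what makes the global recurrence on a surface tractable and is the reason the theorem is specific to dimension two.

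For necessity I would assume that one of (a)--(d) fails and exhibit an arbitrarily $C^1$-close field that is not topologically equivalent to $X$. A non-hyperbolic singularity can be split or annihilated by adding a small linear or constant term, changing the number or indices of the singularities; a non-hyperbolic closed orbit (one whose Poincar\'e return map has derivative $1$) can be perturbed so that the orbit either disappears or bifurcates into orbits of different stability, which is detectable topologically; a saddle connection can be broken by a localized perturbation transverse to the connecting separatrix, altering the configuration of stable and unstable manifolds; and an $\alpha$- or $\omega$-limit set that is neither a point nor a cycle carries nontrivial recurrence that can be perturbed into a closed orbit, again changing the topological type. Each construction certifies $X\notin\Sigma_0$.

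For sufficiency I would assume (a)--(d) and produce, for every $Y$ in a suitable $C^1$-neighborhood $N$ of $X$, an equivalence $h$. Hyperbolicity, via the stable manifold theorem and the implicit function theorem, guarantees that each singularity and each closed orbit of $X$ persists and varies continuously over $N$ with the same local phase portrait (Hartman--Grobman for the singularities; persistence of hyperbolic periodic orbits through their Poincar\'e maps for the cycles). Conditions (c) and (d) then ensure that the saddle separatrices, together with the sinks, sources and closed orbits, cut $M$ into finitely many \emph{canonical regions} on each of which the flow is parallel. I would build $h$ in three stages: first match the finitely many singularities and closed orbits of $X$ with those of $Y$; then extend $h$ along the separatrix skeleton, using the absence of saddle connections to keep it a disjoint, well-ordered family of arcs; and finally fill in each canonical region by a fibre-preserving homeomorphism. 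Compactness of $M$ and finiteness of the decomposition make the gluing consistent.

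For the open--dense claim, openness is the easier half: hyperbolicity of finitely many singularities and closed orbits is a $C^1$-open condition, and absence of saddle connections is open because a connection is a codimension-one phenomenon that small perturbations destroy rather than create. Density is the crux and the step I expect to be the genuine obstacle. Given an arbitrary $X$ I would successively perturb to make all singularities hyperbolic (a transversality and Sard argument on the zero set of the field), then all closed orbits hyperbolic (perturbing the return maps), then remove saddle connections by transverse pushes, and---hardest of all---eliminate every nontrivial minimal set so that (d) holds. This last step is where the fine structure of recurrence on surfaces is essential: one must either invoke the $C^1$ Closing Lemma to turn nontrivial recurrence into a genuine closed orbit and then hyperbolize it, or argue directly that nontrivial minimal sets on a surface can be perturbed away, as in Peixoto's original analysis. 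Controlling recurrence in this way---and coping with the extra difficulties of the non-orientable case---is precisely what makes the density half deep rather than routine.
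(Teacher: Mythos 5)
The paper does not prove this statement: it is quoted verbatim as a classical background result and attributed to Peixoto \cite{Pei1961}, so there is no internal proof to measure your proposal against. Judged on its own terms, your outline reproduces the standard architecture of the known proof --- necessity by explicit destabilizing perturbations, sufficiency by persistence of hyperbolic pieces plus a conjugacy built region-by-region over the canonical decomposition, openness from persistence, and density as the hard step --- and you correctly isolate where the real difficulty lives, namely the elimination of nontrivial recurrence and the non-orientable case.

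That said, as a proof it has genuine gaps rather than merely compressed steps. First, the sufficiency argument hides the hardest construction: matching the separatrix skeletons of $X$ and a nearby $Y$ and then filling in canonical regions requires showing that the \emph{combinatorial} separatrix configuration is locally constant on the set defined by (a)--(d), and the ``fibre-preserving homeomorphism on each parallel region'' must be glued continuously along separatrices that are only $C^0$-close; this is where Peixoto (and later the Markus--Neumann--Peixoto machinery, including the correction in \cite{CorrectionMNP}) spends most of the effort. Second, your density argument leans on the $C^1$ Closing Lemma, which postdates Peixoto's theorem and is itself a deep result; invoking it is legitimate for the $C^1$ statement as quoted here, but ``argue directly that nontrivial minimal sets on a surface can be perturbed away'' is not a step one can wave at --- Peixoto's original treatment of this point for non-orientable surfaces was incomplete, and the analogous $C^r$, $r\geqslant 2$, question remains open for non-orientable surfaces of high genus. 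Third, your openness claim that saddle connections are ``destroyed rather than created'' by small perturbations is not the right justification: the correct argument is that each separatrix of a field satisfying (a)--(d) limits on a hyperbolic sink, source or cycle, and this asymptotic behavior persists by continuity of invariant manifolds, which is what prevents a nearby field from acquiring a new connection. None of this makes your roadmap wrong, but it is a roadmap, not a proof, and the places where it says ``one must either invoke\ldots or argue directly'' are exactly the places where the theorem is earned.
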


Given $X\in\Sigma_0$, we say that $X$ is of \emph{codimension zero}. Let $\mathfrak{X}_1=\mathfrak{X}\backslash\Sigma_0$. Following Sotomayor \cite{Soto1974}, we say that $X\in\mathfrak{X}_1$ is of \emph{codimension one} if $X$ is an element of a collection $\Sigma_1\subset\mathfrak{X}_1$ such that:
\begin{enumerate}[label=(\alph*)]
	\item $\Sigma_1$ is a submanifold of codimension one of $\mathfrak{X}$;
	\item $\Sigma_1$ is open and dense in $\mathfrak{X}_1$;
	\item If $Y\in\Sigma_1$, then $Y$ is structurally stable with respect to $\mathfrak{X}_1$.
\end{enumerate}
In simple words, $X$ is of codimension one if it is generic with respect to $\mathfrak{X}_1$. Similarly, one may define vector fields of codimension two, three, etc. For the characterization of the codimension one vector fields over a compact differentiable manifold $M$ of dimension two, see Sotomayor \cite{Soto1974}. Given a family of vector fields $\mathfrak{X}$, there are multiple ways to systematically study its phase portraits. A common one is to study $\mathfrak{X}$ under some extra hypothesis, i.e. assuming upfront some degree of degeneracy. For example, if $\mathfrak{X}$ is the class of the quadratic vector fields, then one may study $\mathfrak{X}$ by means of the number of its finite singularities \cites{Rey0,Rey1,ReyBook}; by assuming some suitable algebraic hypothesis \cites{ArtLli2,LliVal,SchVul}, by intersecting it with some renowned other class of vector fields \cites{ArtLli1,CaoJia,GouLliRob,LliPerPes,LiaZha}, or by studying its phase portraits \cite{PerPes2010,PerPes2012,LiLli,LliSil,CaiLli,LliOli}. But, with the notion of genericity one may also work on $\mathfrak{X}$ from the lower degree of degeneracy to the higher ones. See for example the classification of phase portraits of the quadratic planar vector fields of codimension zero \cite{ArtKooLli}, codimension one \cite{ArtRezLli} and the ongoing classification of those of codimension two \cites{ArtOliRez,ArtMotRez}. In this paper, we focus on the first approach, giving a systematically study of the phase portraits of the system of differential equations given by
	\[\dot x=x(x-1)(a_{00}+a_{10}x+a_{01}y), \quad \dot y=y(y-1)(b_{00}+b_{10}x+b_{01}y),\]
with some generic restrictions following the foundations given by Peixoto's theorem. Before we state our main results, we need first to define a \emph{generic polycycle}. Let $\Gamma\subset\mathbb{R}^2$ be a polycycle  (also known as a graph) composed by $n$ hyperbolic saddles $p_1,\dots p_n$, whose eigenvalues are given by $\mu_i<0<\nu_i$, $i\in\{1,\dots,n\}$. The \emph{hyperbolicity ratio} of $p_i$ is given by $r_i=\frac{|\mu_i|}{\nu_i}$. Cherkas \cite{Cherkas} proved that if $r(\Gamma):=r_1\dots r_n>1$, (resp. $r(\Gamma)<1$), then polycycle $\Gamma$ is stable (resp. unstable). Therefore, we say that $\Gamma$ is generic if $r(\Gamma)\neq1$. From now on, let $\mathfrak{X}$ denote the family of vector fields $X=(P,Q)$ given by
	\[P(x,y)=x(x-1)(a_{00}+a_{10}x+a_{01}y), \quad Q(x,y)=y(y-1)(b_{00}+b_{10}x+b_{01}y),\]
with $a_{ij}$, $b_{ij}\in\mathbb{R}$. Let $\varphi\colon\mathfrak{X}\to\mathbb{R}^6$ be the linear isomorphism given by,
	\[\varphi(X)=(a_{00},a_{10},a_{01},b_{00},b_{10},b_{01}).\]
The \emph{coefficients topology} of $\mathfrak{X}$ is the topology given by the metric,
	\[\rho(X,Y)=||\varphi(X)-\varphi(Y)||,\]
where $||\cdot||$ denotes the standard norm of $\mathbb{R}^6$. Given $X\in\mathfrak{X}$, let $p(X)$ denote its compactification in the Poincar\'e sphere (see Subsection~\ref{SubSec4.1} for more details about the Poincar\'e compactification). Let $\gamma_1,\gamma_2,\gamma_3,\gamma_4\subset\mathbb{S}^2$ be the respective invariant curves of $p(X)$ given by the compactification of the four invariant straight lines,
	\[x=0, \quad x=1, \quad y=0, \quad y=1.\]
Let also $\Lambda=\gamma_1\cup\gamma_2\cup\gamma_3\cup\gamma_4\cup\mathbb{S}^1$, where $\mathbb{S}^1$ denotes the infinity of the Poincar\'e sphere. Following the foundations given by Peixoto's Theorem, we suggest the following definition.

\begin{definition}\label{Def1} 
	A vector field $X\in\mathfrak{X}$ is generic if its compactification $p(X)$ satisfies the following conditions.
\begin{enumerate}[label=(\alph*)]
	\item There is only a finite number of singularities, all hyperbolic;
	\item There is only a finite number of closed orbits, all hyperbolic;
	\item There is no connections between saddles, except if such connection is contained in $\Lambda$;
	\item The $\alpha$ and $\omega$-limits of every orbit is either a singularity, a closed orbit or a generic polycycle contained in $\Lambda$.
\end{enumerate}
\end{definition}
Let $\Sigma_0$ denote the family of the generic vector fields in $\mathfrak{X}$. In our first main result we give a necessary condition for a vector field $X\in\mathfrak{X}$ to be generic. Given $X\in\Sigma_0$, such result will enable us to work with an equivalent version of $X$.

\begin{theorem}\label{Main1}
	Let $X\in\mathfrak{X}$ be given by,
		\[P(x,y)=x(x-1)(a_{00}+a_{10}x+a_{01}y), \quad Q(x,y)=y(y-1)(b_{00}+b_{10}x+b_{01}y),\]
	with $a_{ij}$, $b_{ij}\in\mathbb{R}$. Let also,
		\[A=\left(\begin{array}{cc} a_{10} & a_{01} \\ b_{10} & b_{01} \end{array}\right).\]
	If $X\in\Sigma_0$, then $a_{10}b_{01}\det A\neq 0$.
\end{theorem}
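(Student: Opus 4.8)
The plan is to prove the contrapositive: assuming $a_{10}b_{01}\det A = 0$, I will exhibit a non-hyperbolic singularity on the equator $\mathbb{S}^1$, which violates condition $(a)$ of Definition~\ref{Def1} and hence shows $X\notin\Sigma_0$. The natural tool is the Poincar\'e compactification, because the three factors $a_{10}$, $b_{01}$ and $\det A$ are all assembled from the degree-three homogeneous parts
\[P_3(x,y)=x^2(a_{10}x+a_{01}y),\qquad Q_3(x,y)=y^2(b_{10}x+b_{01}y),\]
which are precisely the data governing the dynamics at infinity.

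First I would record the infinite singularities in the chart $U_1$: they are the zeros $(u_0,0)$ of $F(u):=Q_3(1,u)-uP_3(1,u)$, and at such a point the Jacobian of the (rescaled) compactified field is upper triangular with eigenvalues $F'(u_0)$ and $-P_3(1,u_0)$. A direct computation gives the convenient factorization $F(u)=u\,q(u)$ with $q(u)=b_{01}u^2+(b_{10}-a_{01})u-a_{10}$, while $P_3(1,u)=a_{10}+a_{01}u$.

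The three vanishing conditions then match three explicit degeneracies. At the origin $u_0=0$ of $U_1$ — the point where the horizontal invariant lines $y=0,\,y=1$ meet infinity — one finds $F'(0)=-a_{10}$ and $-P_3(1,0)=-a_{10}$, so both eigenvalues equal $-a_{10}$; thus $a_{10}=0$ forces non-hyperbolicity. By the symmetric computation in $U_2$, where $\widetilde F(u)=P_3(u,1)-uQ_3(u,1)=u\bigl(a_{10}u^2+(a_{01}-b_{10})u-b_{01}\bigr)$, the origin of $U_2$ (where $x=0,\,x=1$ meet infinity) has both eigenvalues equal to $-b_{01}$, handling $b_{01}=0$. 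For $\det A=0$ I may assume $a_{10}b_{01}\neq0$ (otherwise a previous case applies), whence $a_{01}b_{10}\neq0$ and $u_0=-a_{10}/a_{01}$ is a well-defined nonzero direction. The identity $q\!\left(-a_{10}/a_{01}\right)=\frac{a_{10}}{a_{01}^{2}}\det A$ shows $u_0$ is a root of $q$, hence an infinite singularity, and there the eigenvalue $-P_3(1,u_0)=-(a_{10}+a_{01}u_0)=0$ vanishes. Conceptually, $\det A=0$ means the linear factors $a_{10}x+a_{01}y$ and $b_{10}x+b_{01}y$ are proportional, so $P_3$ and $Q_3$ acquire a common factor and both vanish along the direction $u_0$, which is exactly what produces the degenerate point. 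In each of the three cases condition $(a)$ fails, completing the contrapositive.

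The main obstacle I anticipate is bookkeeping rather than ideas: fixing the conventions for the compactification and the eigenvalue formula, verifying that the points produced are genuine and pairwise distinct singularities on $\mathbb{S}^1$ (in particular that $u_0\neq0$ when $a_{10}\neq0$, so the $\det A$ case is not absorbed into the origin), and organizing the case analysis so that each of the three vanishing factors is cleanly attached to its own explicit non-hyperbolic singularity.
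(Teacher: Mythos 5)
Your proposal is correct and follows essentially the same route as the paper: prove the contrapositive by exhibiting non-hyperbolic singularities of $p(X)$ at infinity, using the origins of the charts $U_1$ and $U_2$ (both eigenvalues $-a_{10}$, resp. $-b_{01}$) for the cases $a_{10}=0$ and $b_{01}=0$. Your treatment of $\det A=0$ is a mild streamlining of the paper's: instead of splitting into coincident versus parallel nullcline lines and computing $\det DX_1(u_0^\pm,0)$ at both roots, you pinpoint the common zero direction $u_0=-a_{10}/a_{01}$ of $q$ via the identity $q(-a_{10}/a_{01})=\tfrac{a_{10}}{a_{01}^2}\det A$ (the same identity the paper uses elsewhere, in Proposition~\ref{Theo15}) and observe that the eigenvalue $-(a_{10}+a_{01}u_0)$ vanishes there; this handles both subcases uniformly and reaches the same conclusion.
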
 

Given $X\in\Sigma_0$, we can associate the system of differential equations,
\begin{equation}\label{42}
	\dot x=x(x-1)(a_{00}+a_{10}x+a_{01}y), \quad \dot y=y(y-1)(b_{00}+b_{10}x+b_{01}y).
\end{equation}
It follows from Theorem~\ref{Main1} that we can assume $\det A\neq0$. Therefore, let,
	\[A_1=\left(\begin{array}{cc} -a_{00} & a_{01} \\ -b_{00} & b_{01} \end{array}\right), \quad A_2=\left(\begin{array}{cc} a_{10} & -a_{00} \\ b_{10} & -b_{00} \end{array}\right).\]
It follows from Cramer's rule that the unique solution of the linear system,
	\[\left(\begin{array}{cc} a_{10} & a_{01} \\ b_{10} & b_{01} \end{array}\right)\left(\begin{array}{c} x \\ y \end{array}\right) = \left(\begin{array}{c} -a_{00} \\ -b_{00} \end{array}\right),\]
is given by,
\begin{equation}\label{5}
	p=\left(\frac{\det A_1}{\det A},\frac{\det A_2}{\det A}\right).
\end{equation}
Writing $p=(p_1,p_2)$ and applying the change of coordinates $u=x-p_1$, $v=y-p_2$, it follows that system \eqref{42} is equivalent to
	\[\dot u=(u+p_1)(u+p_1-1)(a_{10}u+a_{01}v), \quad \dot v=(v+p_2)(v+p_2-1)(b_{10}u+b_{01}v).\]
Therefore, if $X\in\Sigma_0$, then we can study its equivalent system of differential equations,
\begin{equation}\label{6}
	\dot x=(x+\alpha)(x+\alpha-1)(a_{10}x+a_{01}y), \quad \dot y=(y+\beta)(y+\beta-1)(b_{10}x+b_{01}y),
\end{equation}
with $\alpha$, $\beta$, $a_{ij}$, $b_{ij}\in\mathbb{R}$. Observe that system \eqref{6} can be divided in nine cases, given by the position of the origin in relation to the octothorpe defined by the four invariant lines, 
\begin{equation}\label{7}
	x=-\alpha, \quad x=1-\alpha, \quad y=-\beta, \quad y=1-\beta.
\end{equation}
More precisely, those nine cases are given by,
\begin{enumerate}[label=\arabic*)]\label{GenericCases}
	\item $0\leqslant\alpha\leqslant1$ and $0\leqslant\beta\leqslant1$, i.e. the origin lies in the \textbf{center} of the octothorpe; 
	\item $1\leqslant\alpha$ and $0\leqslant\beta\leqslant1$, i.e. the origin lies in the \textbf{middle-right} side of the octothorpe; 
	\item $1\leqslant\alpha$ and $1\leqslant\beta$, i.e. the origin lies in the \textbf{top-right} corner of the octothorpe; 	
	\item $0\leqslant\alpha\leqslant1$ and $1\leqslant\beta$, i.e. the origin lies in the \textbf{top-middle} side of the octothorpe;	
	\item $\alpha\leqslant0$ and $\beta\geqslant1$, i.e. the origin lies in the \textbf{top-left} corner of the octothorpe;		
	\item $\alpha\leqslant0$ and $0\leqslant\beta\leqslant1$, i.e. the origin lies in the \textbf{middle-left} side of the octothorpe; 	
	\item $\alpha\leqslant0$ and $\beta\leqslant0$, i.e. the origin lies in the \textbf{bottom-left} corner of the octothorpe; 		
	\item $0\leqslant\alpha\leqslant1$ and $\beta\leqslant0$, i.e. the origin lies in the \textbf{bottom-middle} side of the octothorpe;		
	\item $1\leqslant\alpha$ and $\beta\leqslant0$, i.e. the origin lies in the \textbf{bottom-right} corner of the octothorpe.
\end{enumerate}

Hence, our second main result states that it is enough to study only three of those nine positions, given by whether the origin is at the center, the side or the corner of the octothorpe. It also states a set of sufficient conditions for each of these three positions. Given two sets $M$, $N\subset\mathfrak{X}$, we say that $M$ and $N$ are \emph{topologically equivalent} if for every $X\in M$ there exists $Y\in N$ such that $X$ and $Y$ are topologically equivalent.

\begin{theorem}\label{Main2}
	Regarding the positions of the origin in relation to the octothorpe given by the four invariant straight lines, the following statements holds.
		\begin{enumerate}[label=(\alph*)]
			\item Positions $3$, $5$, $7$ and $9$ are topologically equivalents;
			\item Positions $2$, $4$, $6$ and $8$ are topologically equivalents.
		\end{enumerate}
		Hence, to classify all vector fields in $\Sigma_0$, it is enough to study positions $1$, $2$ and $3$. Furthermore, within positions $1$, $2$ and $3$, it is enough to study the families given by Tables~\ref{Table8}, \ref{Table9} and \ref{Table10}, respectively.
	\begin{table}[h]
		\caption{Position $1$.}\label{Table8}
		\begin{tabular}{c c c c c c c}
			\hline
			\rowcolor{mygray}
			Family $1$ & $0<\alpha<1$ & $\frac{1}{2}\leqslant\beta<1$ & $a_{10}>0$ & $a_{01}\geqslant0$ & $b_{10}\geqslant0$ & $b_{01}>0$ \vspace{0.1cm} \\
			Family $2$ & $0<\alpha<1$ & $\frac{1}{2}\leqslant\beta<1$ & $a_{10}>0$ & $a_{01}\geqslant0$ & $b_{10}\geqslant0$ & $b_{01}<0$ \vspace{0.1cm} \\
			\rowcolor{mygray}
			Family $3$ & $0<\alpha<1$ & $\frac{1}{2}\leqslant\beta<1$ & $a_{10}>0$ & $a_{01}\geqslant0$ & $b_{10}<0$ & $b_{01}>0$ \vspace{0.1cm} \\
			Family $4$ & $0<\alpha<1$ & $\frac{1}{2}\leqslant\beta<1$ & $a_{10}>0$ & $a_{01}\geqslant0$ & $b_{10}<0$ & $b_{01}<0$ \\
			\hline
		\end{tabular}
	\end{table}
	\begin{table}[h]
		\caption{Position $2$.}\label{Table9}
		\begin{tabular}{c c c c c c c}
			\hline
			\rowcolor{mygray}
			Family $1$ & $\alpha>1$ & $0<\beta<1$ & $a_{10}>0$ & $a_{01}\geqslant0$ & $b_{10}\geqslant0$ & $b_{01}>0$ \\
			Family $2$ & $\alpha>1$ & $0<\beta<1$ & $a_{10}>0$ & $a_{01}\geqslant0$ & $b_{10}\geqslant0$ & $b_{01}<0$ \\
			\rowcolor{mygray}
			Family $3$ & $\alpha>1$ & $0<\beta<1$ & $a_{10}>0$ & $a_{01}\geqslant0$ & $b_{10}<0$ & $b_{01}>0$ \\
			Family $4$ & $\alpha>1$ & $0<\beta<1$ & $a_{10}>0$ & $a_{01}\geqslant0$ & $b_{10}<0$ & $b_{01}<0$ \\
			\hline
		\end{tabular}
	\end{table}
	\begin{table}[h]
		\caption{Position $3$.}\label{Table10}
		\begin{tabular}{c c c c c c c}
			\hline
			\rowcolor{mygray}
			Family $1$ & $\alpha>1$ & $\beta>1$ & $a_{10}>0$ & $a_{01}\geqslant0$ & $b_{10}\geqslant0$ & $b_{01}>0$ \\
			Family $2$ & $\alpha>1$ & $\beta>1$ & $a_{10}>0$ & $a_{01}\geqslant0$ & $b_{10}\geqslant0$ & $b_{01}<0$ \\
			\rowcolor{mygray}
			Family $3$ & $\alpha>1$ & $\beta>1$ & $a_{10}>0$ & $a_{01}\geqslant0$ & $b_{10}<0$ & $b_{01}>0$ \\
			Family $4$ & $\alpha>1$ & $\beta>1$ & $a_{10}>0$ & $a_{01}\geqslant0$ & $b_{10}<0$ & $b_{01}<0$ \\
			\rowcolor{mygray}
			Family $5$ & $\alpha>1$ & $\beta>1$ & $a_{10}<0$ & $a_{01}\geqslant0$ & $b_{10}\geqslant0$ & $b_{01}<0$ \\
			Family $6$ & $\alpha>1$ & $\beta>1$ & $a_{10}<0$ & $a_{01}\geqslant0$ & $b_{10}\leqslant0$ & $b_{01}>0$ \\
			\hline
		\end{tabular}
	\end{table}
\end{theorem}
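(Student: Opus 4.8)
The plan is to realise every equivalence in the statement through explicit linear symmetries of $\mathfrak{X}$, together with time reversal. First I would introduce the three linear involutions of the plane
\[\sigma_x\colon(x,y)\mapsto(-x,y),\qquad \sigma_y\colon(x,y)\mapsto(x,-y),\qquad \tau\colon(x,y)\mapsto(y,x),\]
and the time reversal $\iota\colon X\mapsto -X$, and compute their action on the parameter vector $(\alpha,\beta,a_{10},a_{01},b_{10},b_{01})$ of system~\eqref{6}. A direct substitution shows that $\sigma_x$ sends this vector to $(1-\alpha,\beta,a_{10},-a_{01},-b_{10},b_{01})$, that $\sigma_y$ sends it to $(\alpha,1-\beta,a_{10},-a_{01},-b_{10},b_{01})$, that $\tau$ sends it to $(\beta,\alpha,b_{01},b_{10},a_{01},a_{10})$, and that $\iota$ negates the four coefficients $a_{10},a_{01},b_{10},b_{01}$ while fixing $\alpha,\beta$. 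Each of $\sigma_x,\sigma_y,\tau$ is a linear homeomorphism of $\mathbb{R}^2$ conjugating $X$ to the member of $\mathfrak{X}$ with the transformed parameters, and extends to a homeomorphism of the Poincar\'e sphere carrying orbits of $p(X)$ onto orbits of the image; $\iota$ does the same while reversing the orientation of every orbit. Since all four maps carry the invariant lines~\eqref{7} of $X$ homeomorphically onto those of its image and fix the line at infinity, they map the set $\Lambda$ of $X$ onto the one of its image, preserve conditions (a)--(d) of Definition~\ref{Def1}, and hence carry $\Sigma_0$ to $\Sigma_0$. Thus each one is a topological equivalence of $\mathfrak{X}$.

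For parts (a) and (b) I would then track how $\sigma_x,\sigma_y,\tau$ act on the nine positions through their action on $(\alpha,\beta)$ alone: $\sigma_x$ interchanges the columns $\alpha\le0$ and $\alpha\ge1$ of the $3\times 3$ grid (fixing the central column), $\sigma_y$ interchanges the rows $\beta\le0$ and $\beta\ge1$, and $\tau$ reflects the grid across its main diagonal. Reading off the orbits, the four corners $\{3,5,7,9\}$ form a single orbit (for instance $3\xrightarrow{\sigma_x}5$, $3\xrightarrow{\sigma_y}9$, $7=\sigma_x(9)$) and the four edge-midpoints $\{2,4,6,8\}$ form a single orbit ($2\xrightarrow{\sigma_x}6$, $4\xrightarrow{\sigma_y}8$, $2\xrightarrow{\tau}4$), while position~$1$ is fixed. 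This proves (a) and (b) and reduces the classification to positions $1$, $2$ and $3$.

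The core of the argument is the reduction of each surviving position to its table, for which I would use only the symmetries that preserve that position. A preliminary computation of the Jacobian at the origin, which equals $\mathrm{diag}\bigl(\alpha(\alpha-1),\beta(\beta-1)\bigr)A$, shows that the boundary values $\alpha\in\{0,1\}$ and $\beta\in\{0,1\}$ force a non-hyperbolic singularity and are therefore excluded from $\Sigma_0$, which justifies the strict ranges in the tables. For position~$1$ all of $\sigma_x,\sigma_y,\tau,\iota$ fix the centre, and I would apply $\sigma_y$ to fold $\beta$ into $[\tfrac12,1)$, then $\iota$ to force $a_{10}>0$, then $\sigma_x$ to force $a_{01}\ge0$ (each later step preserving the constraints already achieved, since $\iota$ fixes $\beta$ and $\sigma_x$ fixes both $\beta$ and $a_{10}$), leaving the signs of $(b_{10},b_{01})$ free and producing the four families of Table~\ref{Table8}. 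For position~$2$ only $\sigma_y$ and $\iota$ fix the middle-right position, so $\iota$ gives $a_{10}>0$ and $\sigma_y$ gives $a_{01}\ge0$; because $\sigma_y$ is now spent on the sign of $a_{01}$ the variable $\beta$ cannot be folded, which is exactly why Table~\ref{Table9} keeps the full range $0<\beta<1$, again with four families.

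The delicate case, and the step I expect to be the main obstacle, is position~$3$, where only $\iota$ and $\tau$ fix the top-right corner; these generate a group isomorphic to $\mathbb{Z}_2\times\mathbb{Z}_2$, and the reduction becomes a careful orbit analysis of the sign pattern of $(a_{10},a_{01},b_{10},b_{01})$ under this restricted group, always normalising $a_{01}\ge0$. The point is that, unlike positions $1$ and $2$, this smaller group cannot in general force $a_{10}>0$ simultaneously with $a_{01}\ge0$: for certain patterns the only representative with $a_{01}\ge0$ has $a_{10}<0$, which is precisely why Table~\ref{Table10} needs the two extra families with $a_{10}<0$. I would verify by enumeration that every admissible sign pattern---including the boundary configurations $a_{01}=0$ and $b_{10}=0$, which are allowed because Theorem~\ref{Main1} only forbids $a_{10}b_{01}\det A=0$---falls into the orbit of one of the six tabulated families, and check that no symmetry outside the position-preserving subgroup is tacitly required, so that the stated ranges of $\alpha$ and $\beta$ are exactly those the available symmetries produce.
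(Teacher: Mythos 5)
Your proposal is correct and follows essentially the same route as the paper: your $\sigma_x$, $\sigma_y$, $\tau$ and the time reversal $\iota$ are exactly the paper's $\varphi_3$, $\varphi_4$, $\varphi_1$ and the change $t\mapsto -t$ (with $\varphi_2=\tau\circ\sigma_x\circ\sigma_y$), and the orbit analysis on the $3\times3$ grid and on the sign patterns of $(a_{10},a_{01},b_{10},b_{01})$ matches the paper's reduction to Tables~\ref{Table8}--\ref{Table10}. Your explicit enumeration of the six sign-pattern orbits under the stabilizer $\langle\tau,\iota\rangle$ of position $3$ correctly accounts for the two extra families with $a_{10}<0$, a point the paper only asserts ``follows similarly.''
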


From the three sufficient positions given by Theorem~\ref{Main2}, it is clear that position $1$ is the richest one for the model of Evolutionary Stable Strategies, since the origin stays in the region of the plane in which the model is defined. Therefore, in our third main result, we classify all the possible phase portraits of $X\in\Sigma_0$ under position $1$.

\begin{theorem}\label{Main3}
	Let $X\in\Sigma_0$ be given by
		\[P(x,y)=x(x-1)(a_{00}+a_{10}x+a_{01}y), \quad Q(x,y)=y(y-1)(b_{00}+b_{10}x+b_{01}y),\]
	with $a_{ij}$, $b_{ij}\in\mathbb{R}$. Let also $(p_1,p_2)\in\mathbb{R}^2$ be the unique solution of the linear system,
		\[\left(\begin{array}{cc} a_{10} & a_{01} \\ b_{10} & b_{01} \end{array}\right)\left(\begin{array}{c} x \\ y \end{array}\right)=\left(\begin{array}{c} -a_{00} \\ -b_{00} \end{array}\right).\]
	If $(p_1,p_2)\in[0,1]\times[0,1]$, then the phase portrait of $X$ in the Poincar\'e disk is topologically equivalent to one of the twenty five phase portraits given by Figure~\ref{GenericFinal}. Moreover, all phase portraits are realizable.
\begin{figure}[h]
	\begin{center}
		\begin{minipage}{3.1cm}
			\begin{center}
				\begin{overpic}[height=3cm]{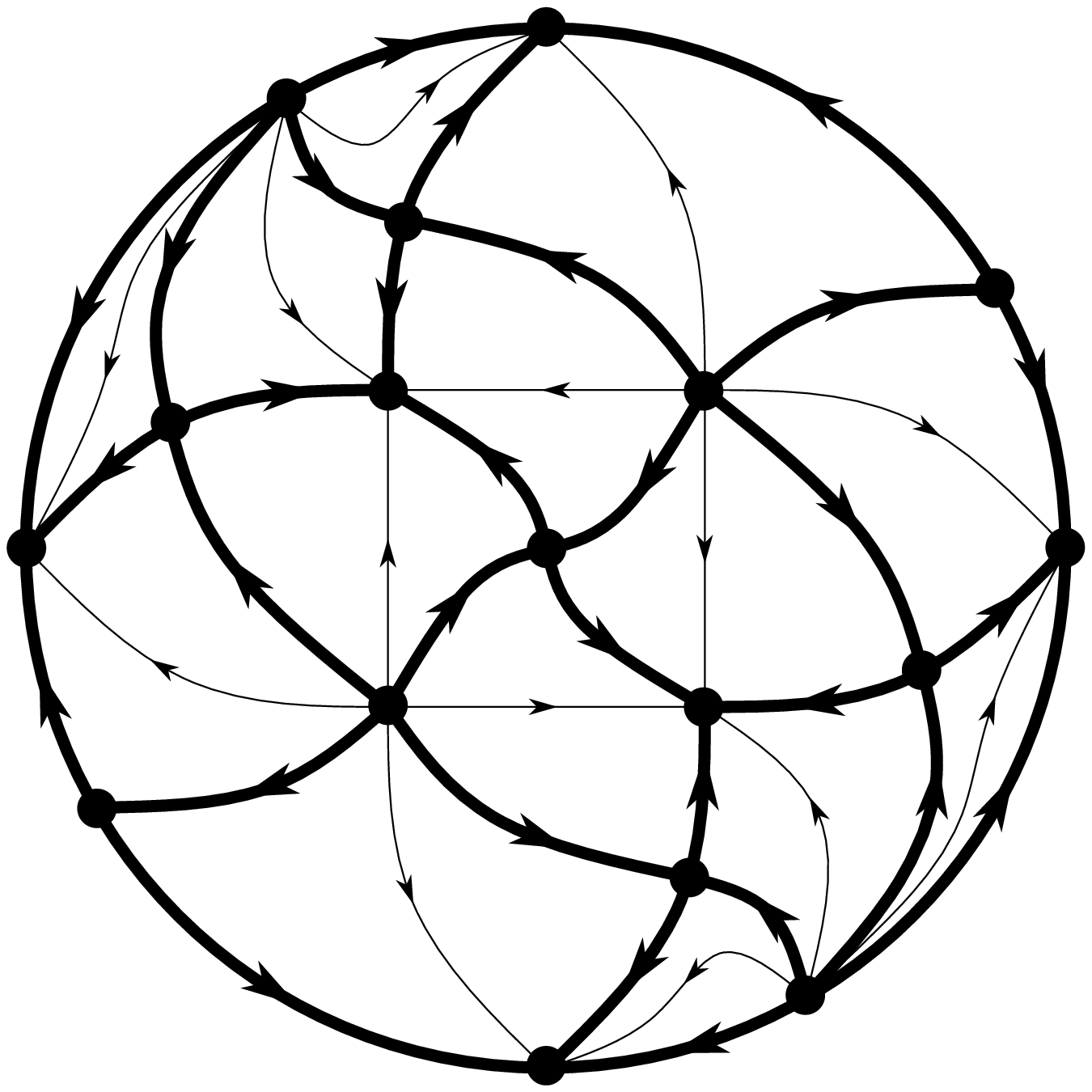} 
				\end{overpic}
				
				Case~$1.1$.
			\end{center}
		\end{minipage}
		\begin{minipage}{3.1cm}
			\begin{center}
				\begin{overpic}[height=3cm]{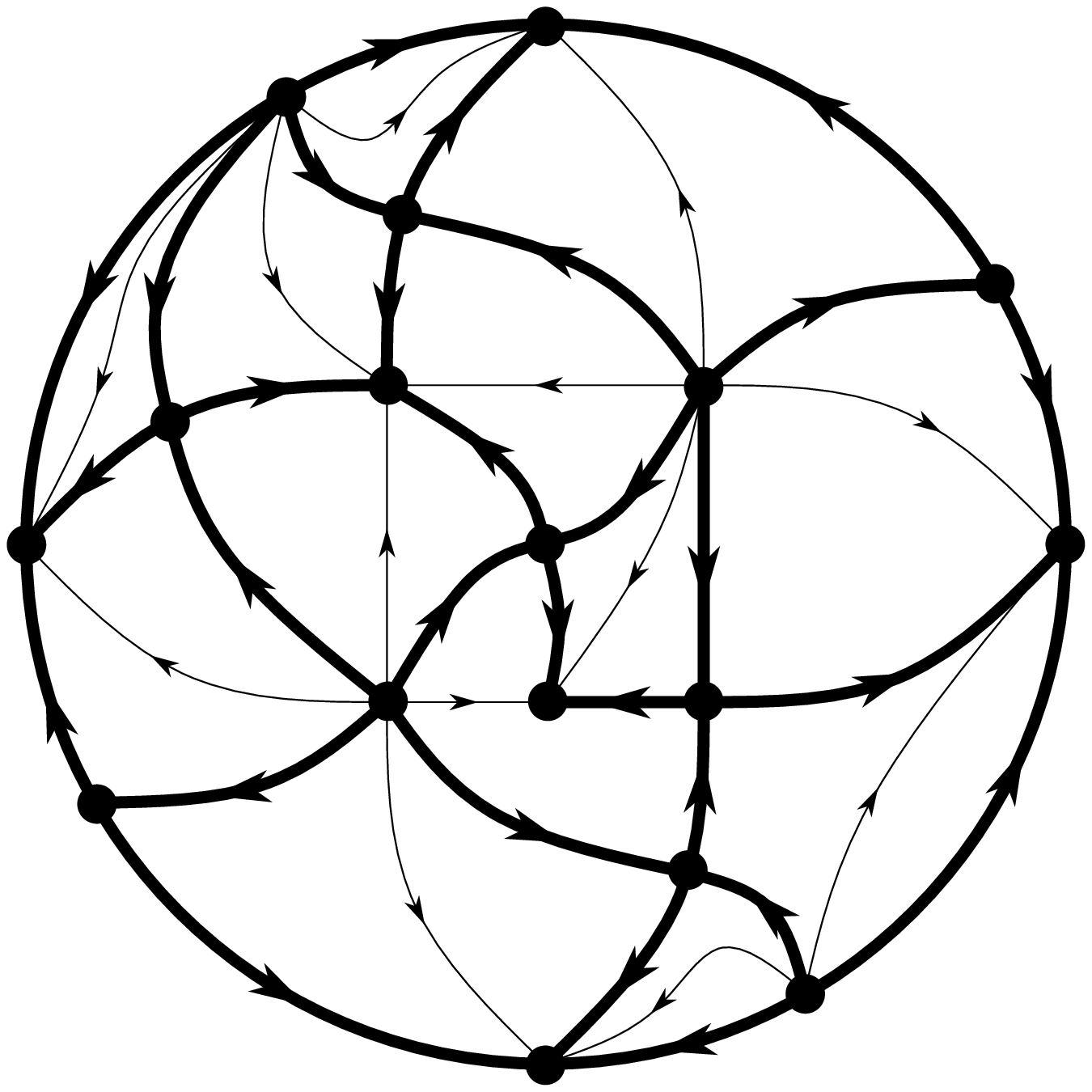} 
				\end{overpic}
				
				Case~$1.2$.
			\end{center}
		\end{minipage}
		\begin{minipage}{3.1cm}
			\begin{center}
				\begin{overpic}[height=3cm]{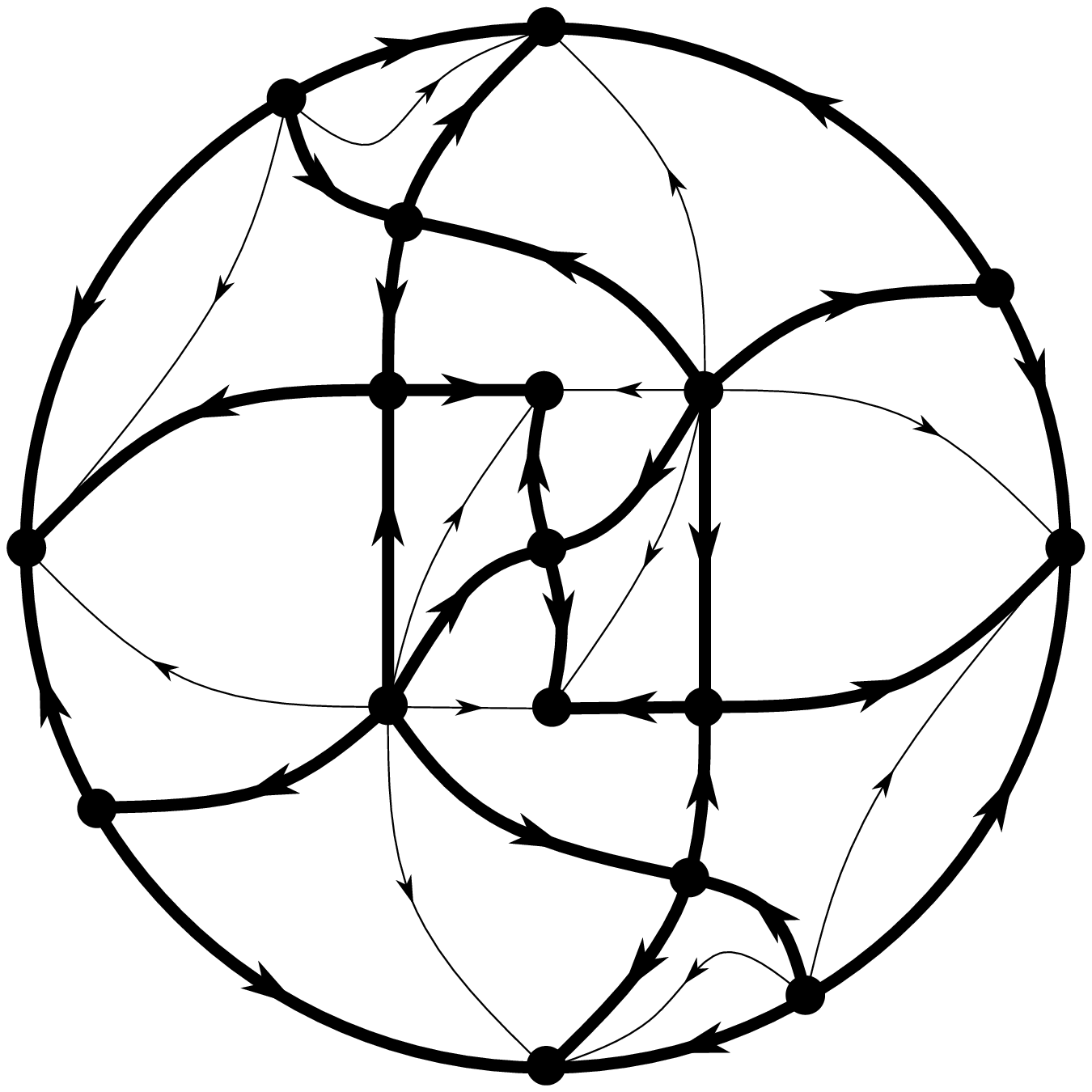} 
				\end{overpic}
				
				Case~$1.4a$.
			\end{center}
		\end{minipage}
		\begin{minipage}{3.1cm}
			\begin{center}
				\begin{overpic}[height=3cm]{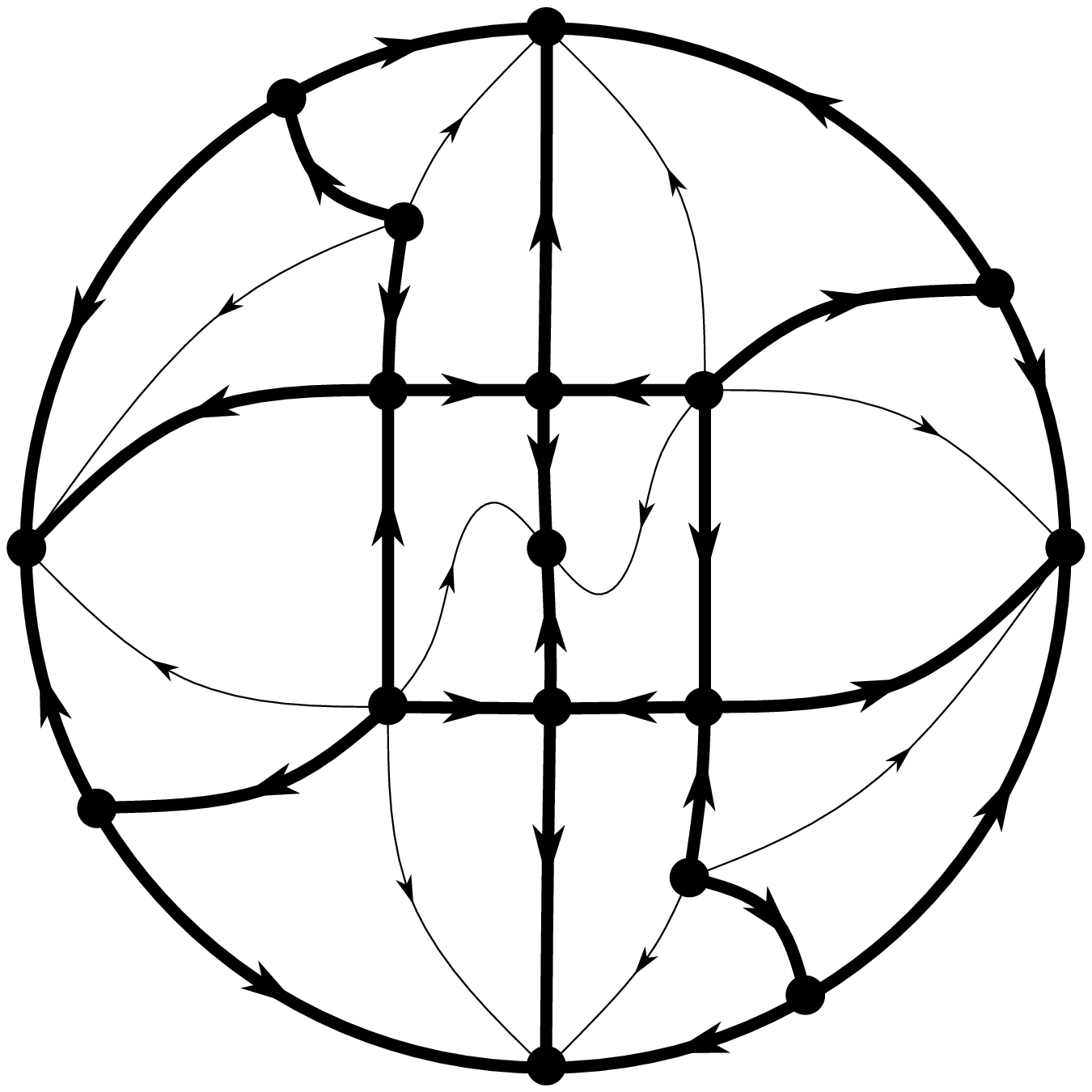} 
				\end{overpic}
				
				Case~$1.4b$.
			\end{center}
		\end{minipage}
		\begin{minipage}{3.1cm}
			\begin{center}
				\begin{overpic}[height=3cm]{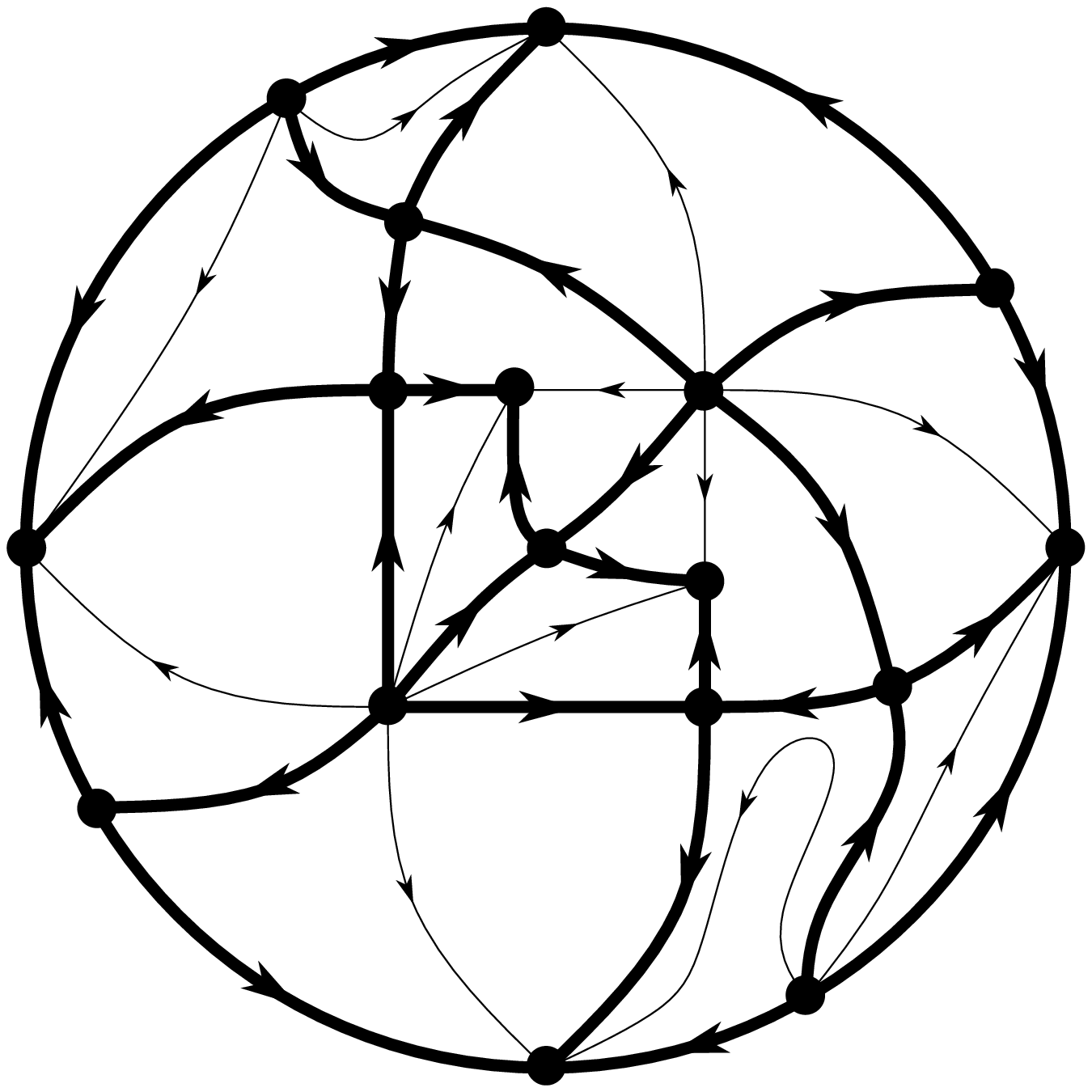} 
				\end{overpic}
				
				Case~$1.6a$.
			\end{center}
		\end{minipage}
	\end{center}
$\;$
	\begin{center}
		\begin{minipage}{3.1cm}
			\begin{center}
				\begin{overpic}[height=3cm]{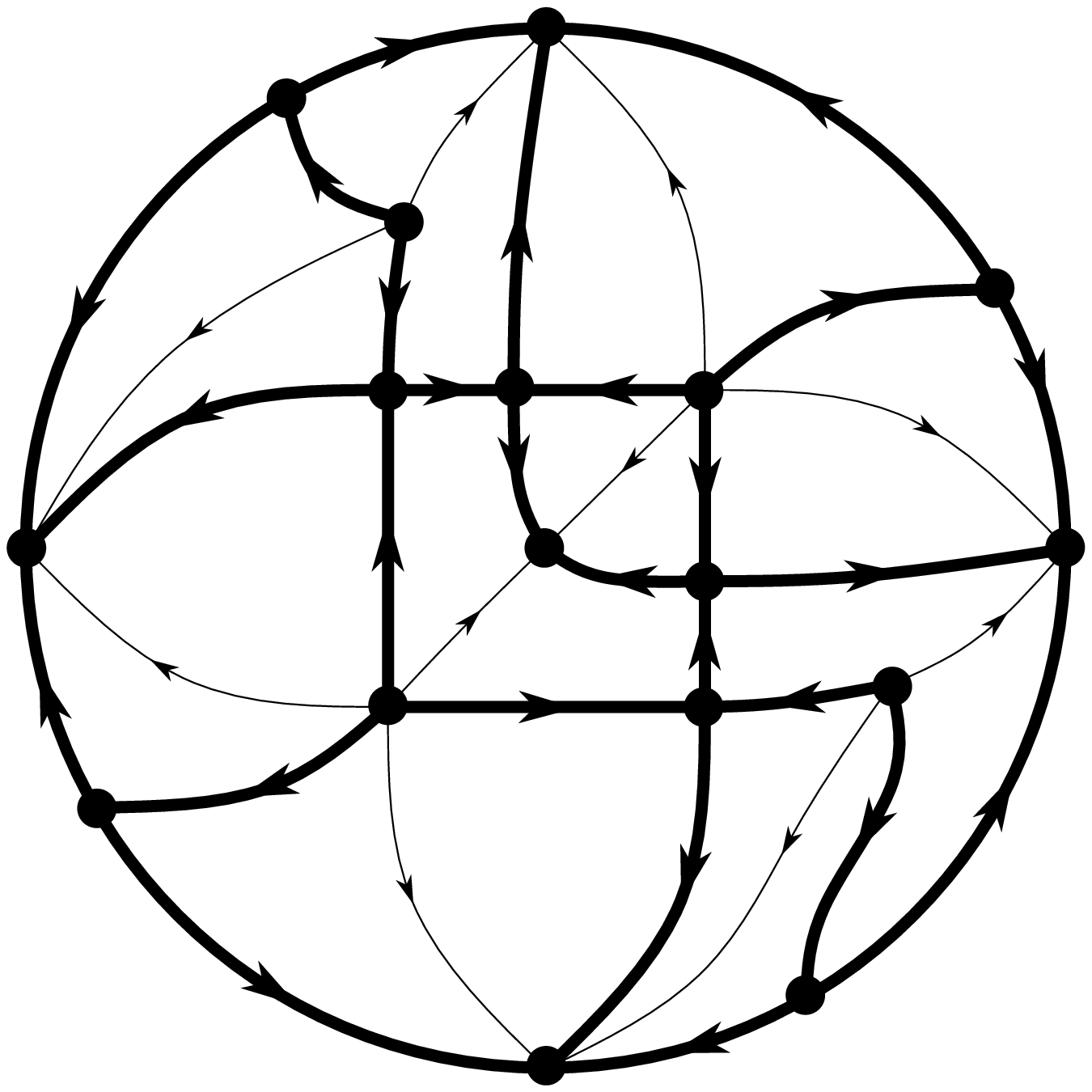} 
				\end{overpic}
				
				Case~$1.6b$.
			\end{center}
		\end{minipage}
		\begin{minipage}{3.1cm}
			\begin{center}
				\begin{overpic}[height=3cm]{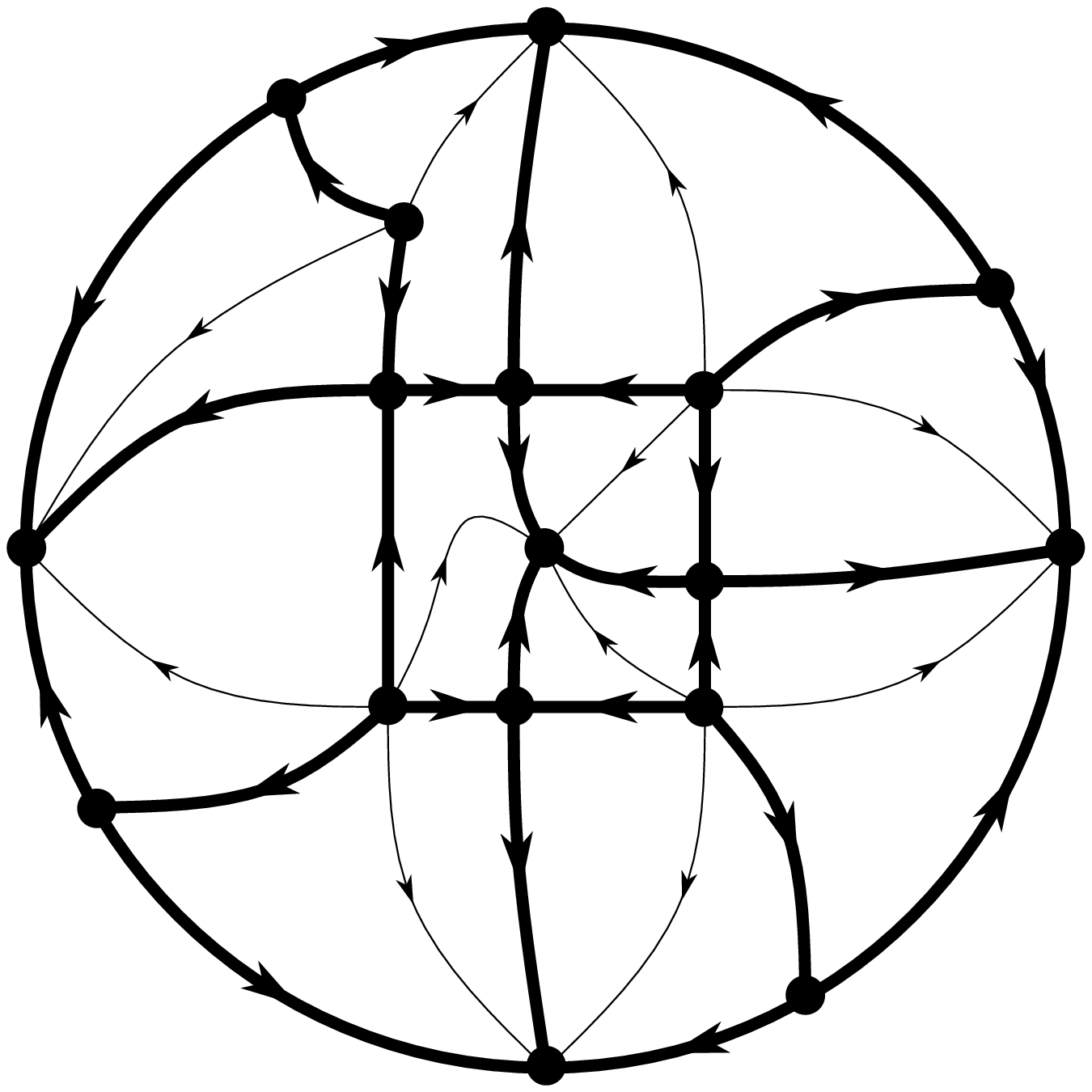} 
				\end{overpic}
				
				Case~$1.7$.
			\end{center}
		\end{minipage}
		\begin{minipage}{3.1cm}
			\begin{center}
				\begin{overpic}[height=3cm]{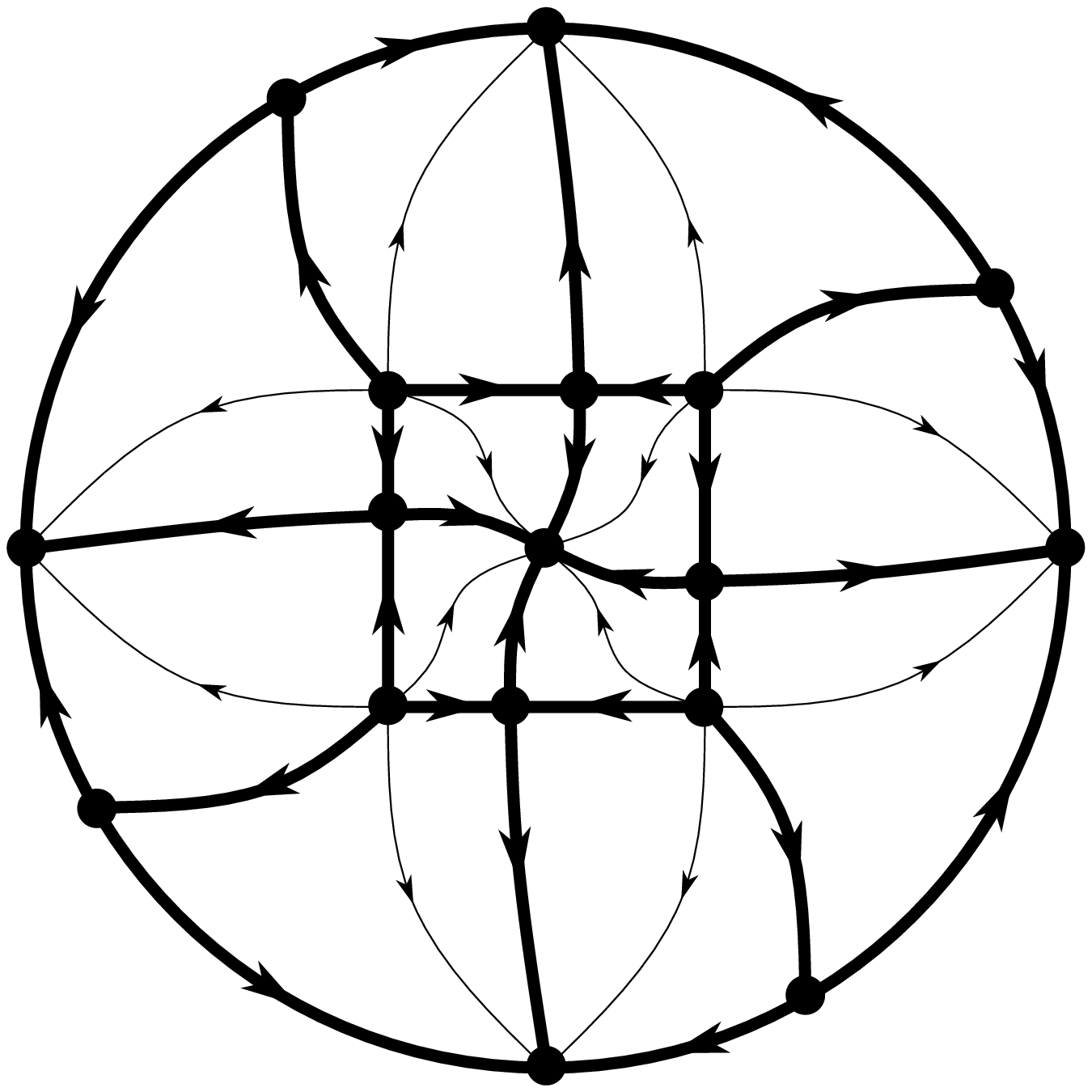} 
				\end{overpic}
				
				Case~$1.14$.
			\end{center}
		\end{minipage}
		\begin{minipage}{3.1cm}
			\begin{center}
				\begin{overpic}[height=3cm]{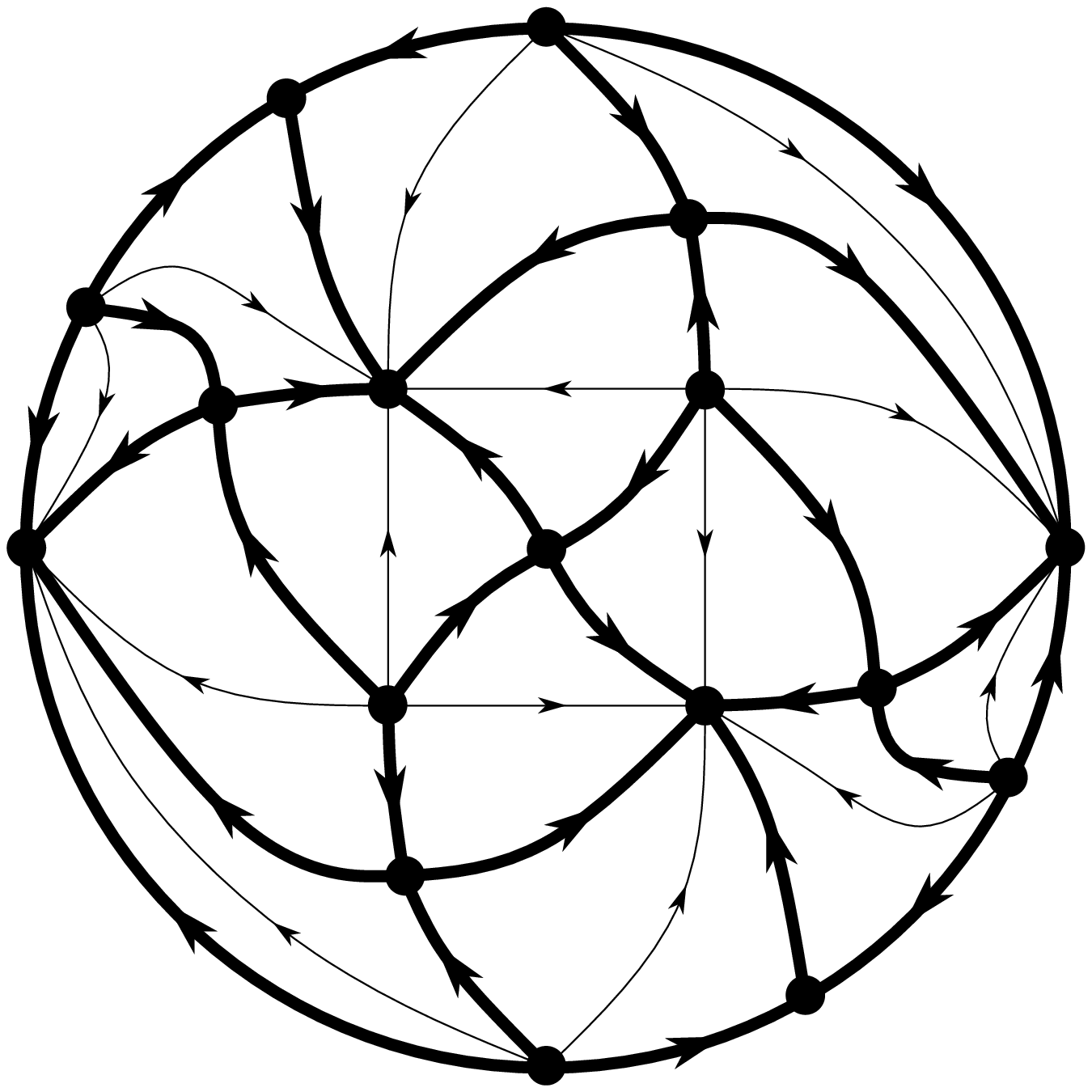} 
				\end{overpic}
				
				Case~$2.1a1$.
			\end{center}
		\end{minipage}
		\begin{minipage}{3.1cm}
			\begin{center}
				\begin{overpic}[height=3cm]{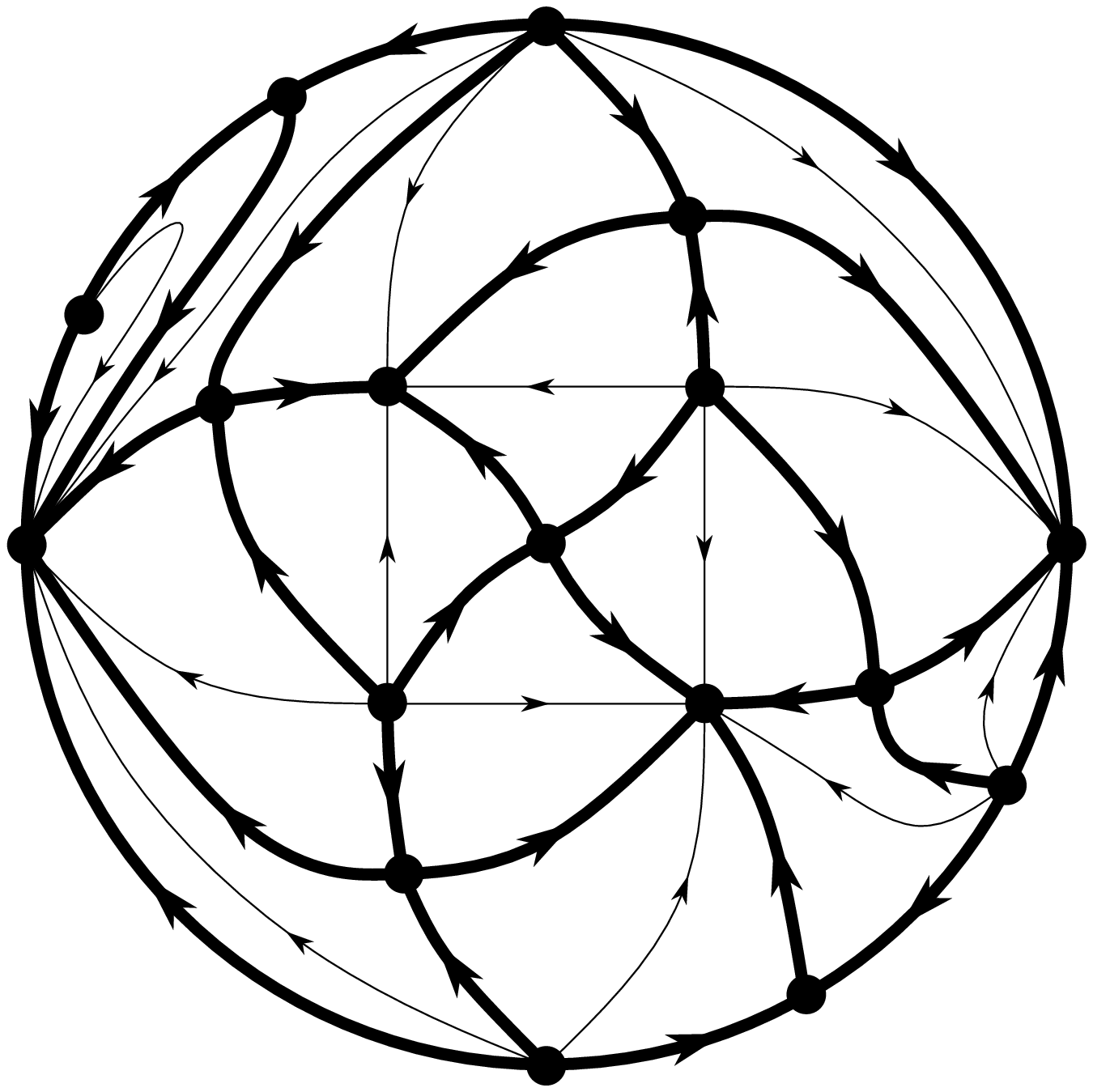} 
				\end{overpic}
				
				Case~$2.1a3$.
			\end{center}
		\end{minipage}
	\end{center}
$\;$
	\begin{center}
		\begin{minipage}{3.1cm}
			\begin{center}
				\begin{overpic}[height=3cm]{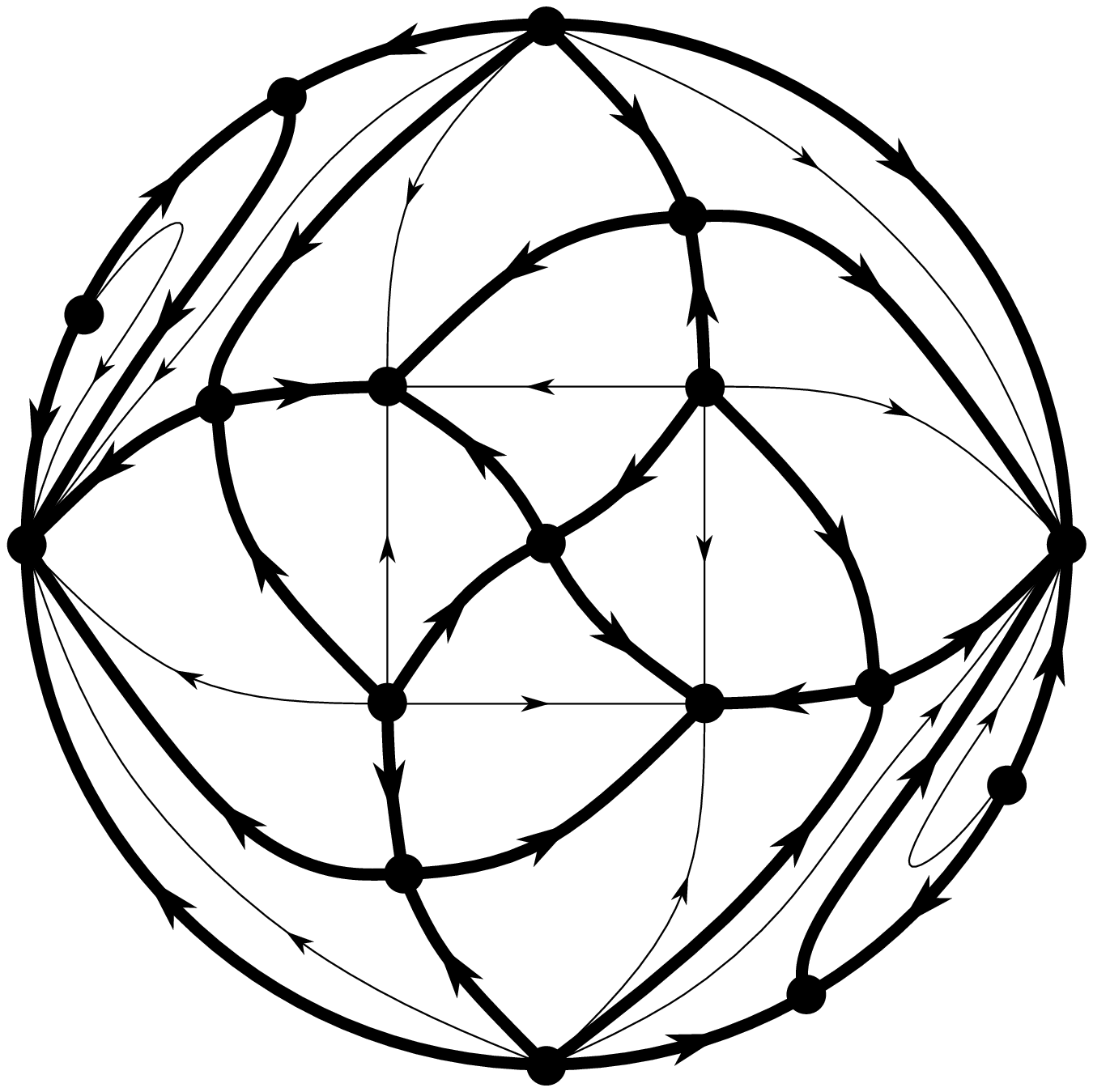} 
				\end{overpic}
				
				Case~$2.1a4$.
			\end{center}
		\end{minipage}	
		\begin{minipage}{3.1cm}
			\begin{center}
				\begin{overpic}[height=3cm]{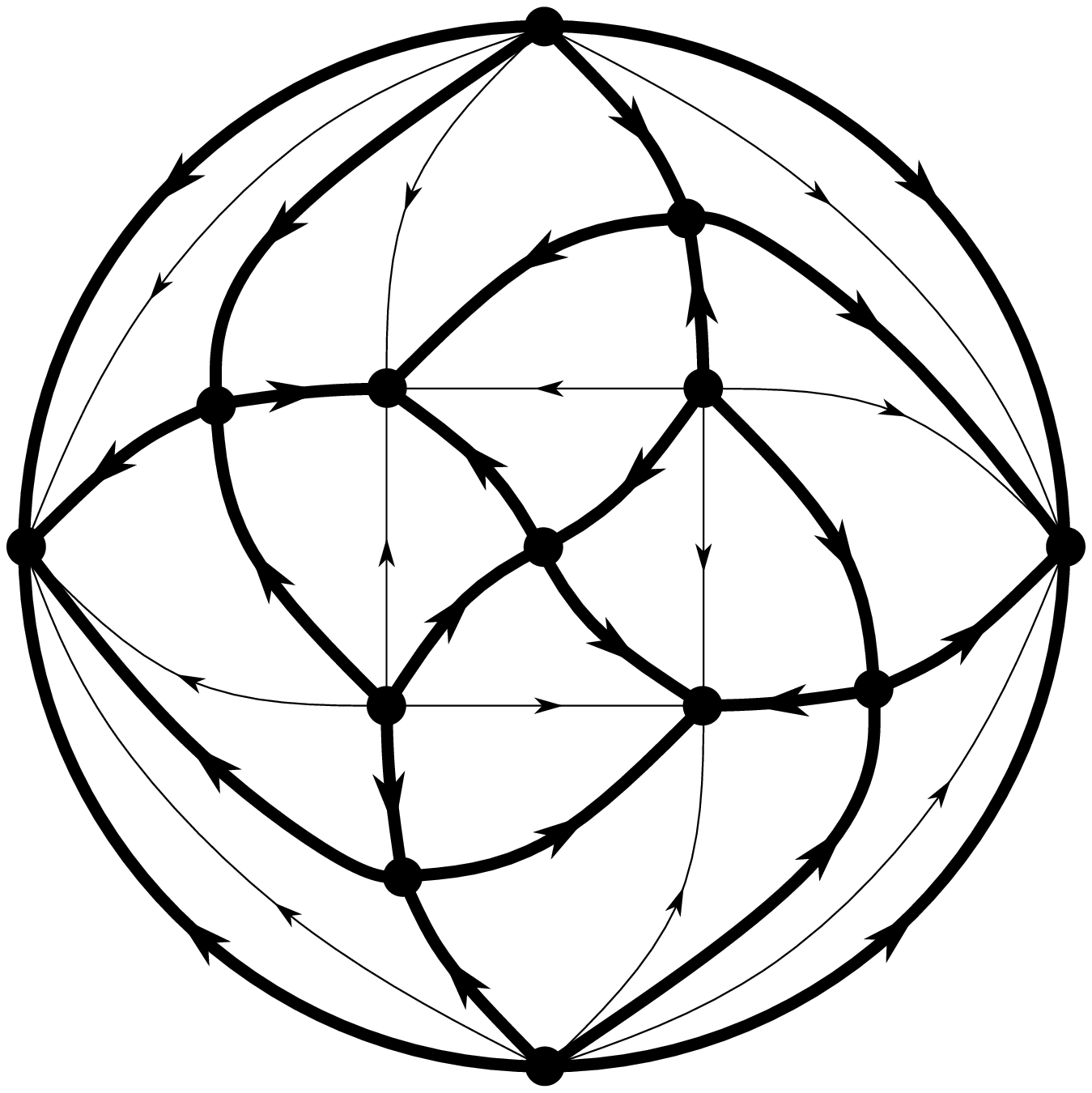} 
				\end{overpic}
				
				Case~$2.1c$.
			\end{center}
		\end{minipage}	
		\begin{minipage}{3.1cm}
			\begin{center}
				\begin{overpic}[height=3cm]{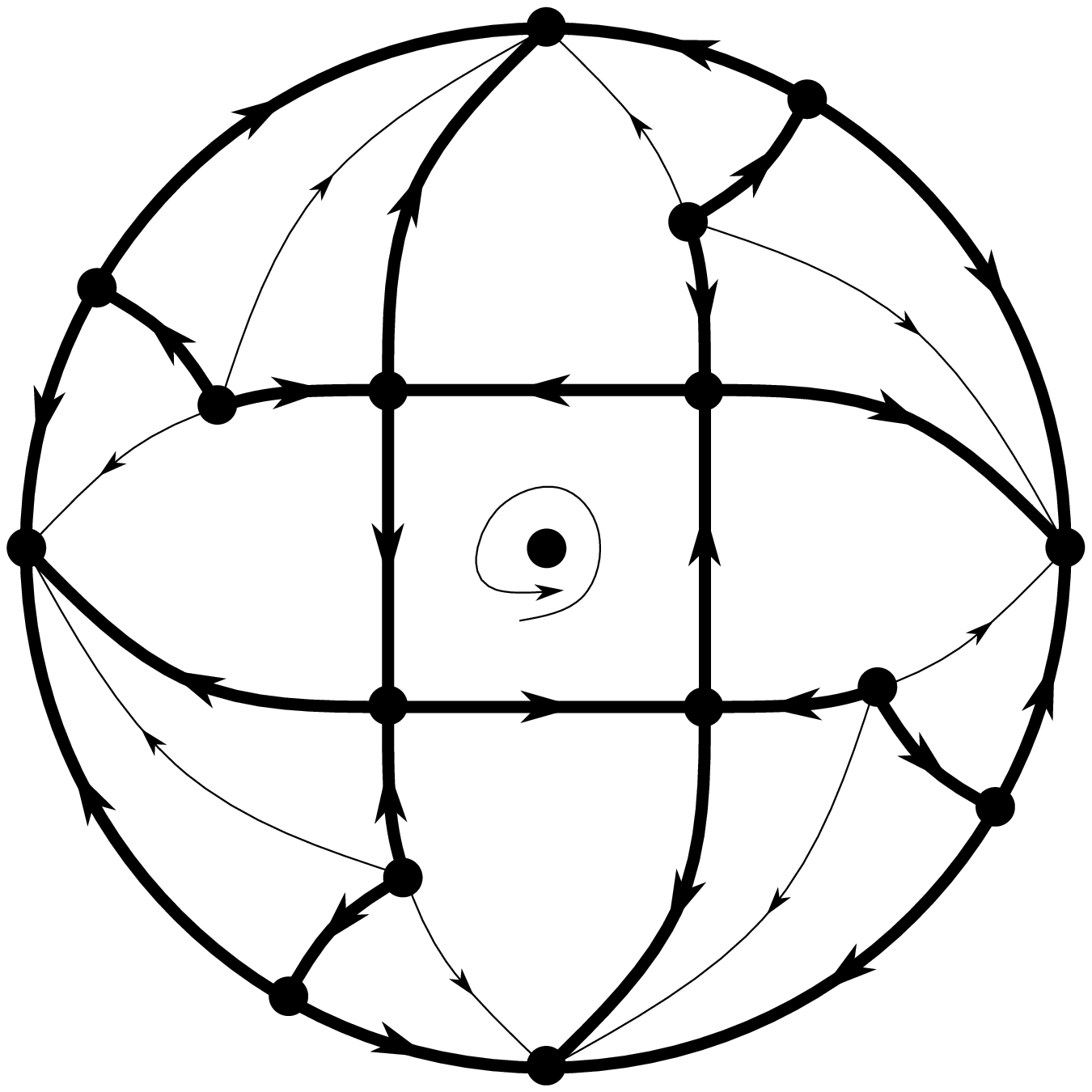} 
				\end{overpic}
				
				Case~$3.1$.
			\end{center}
		\end{minipage}
		\begin{minipage}{3.1cm}
			\begin{center}
				\begin{overpic}[height=3cm]{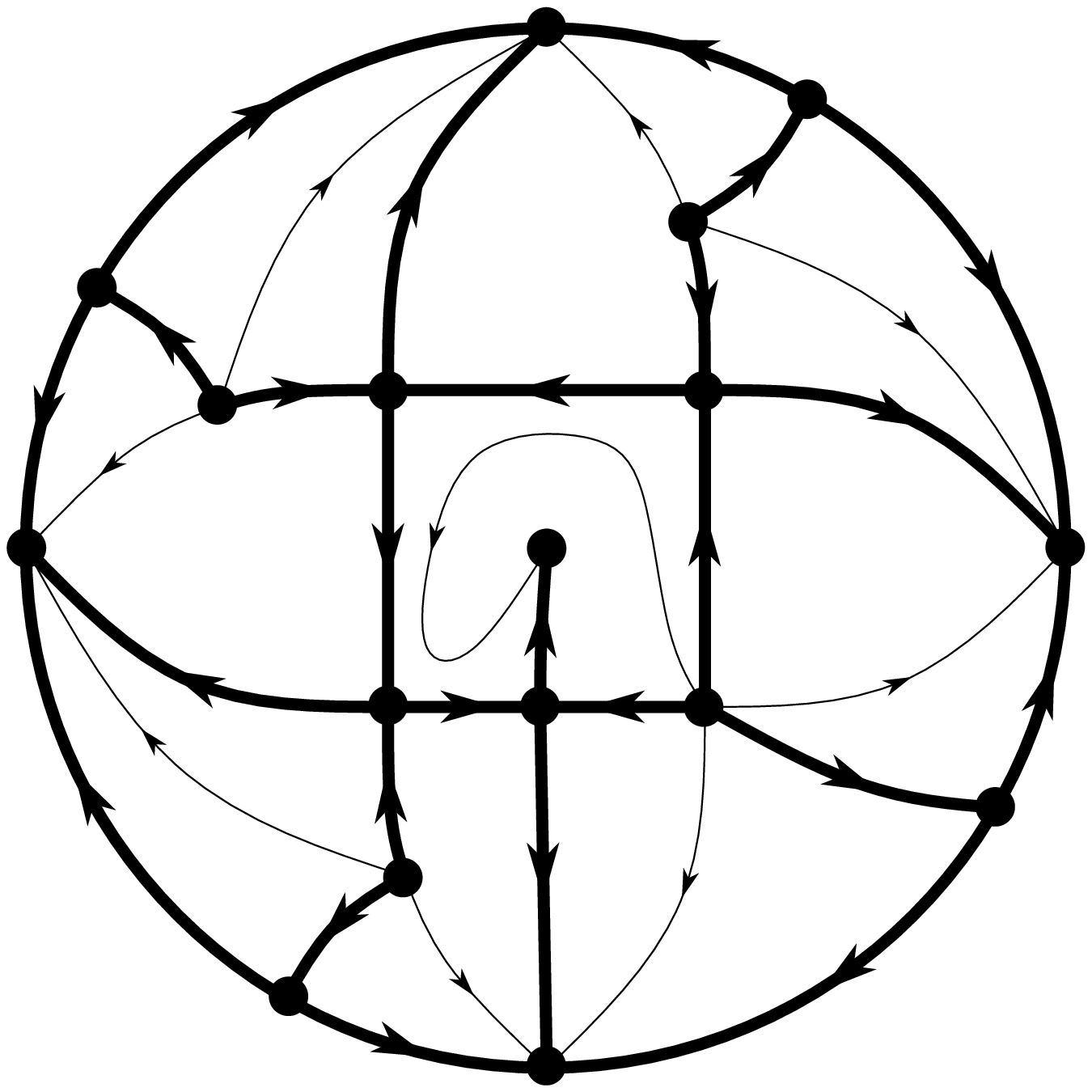} 
				\end{overpic}
				
				Case~$3.2$.
			\end{center}
		\end{minipage}
		\begin{minipage}{3.1cm}
			\begin{center}
				\begin{overpic}[height=3cm]{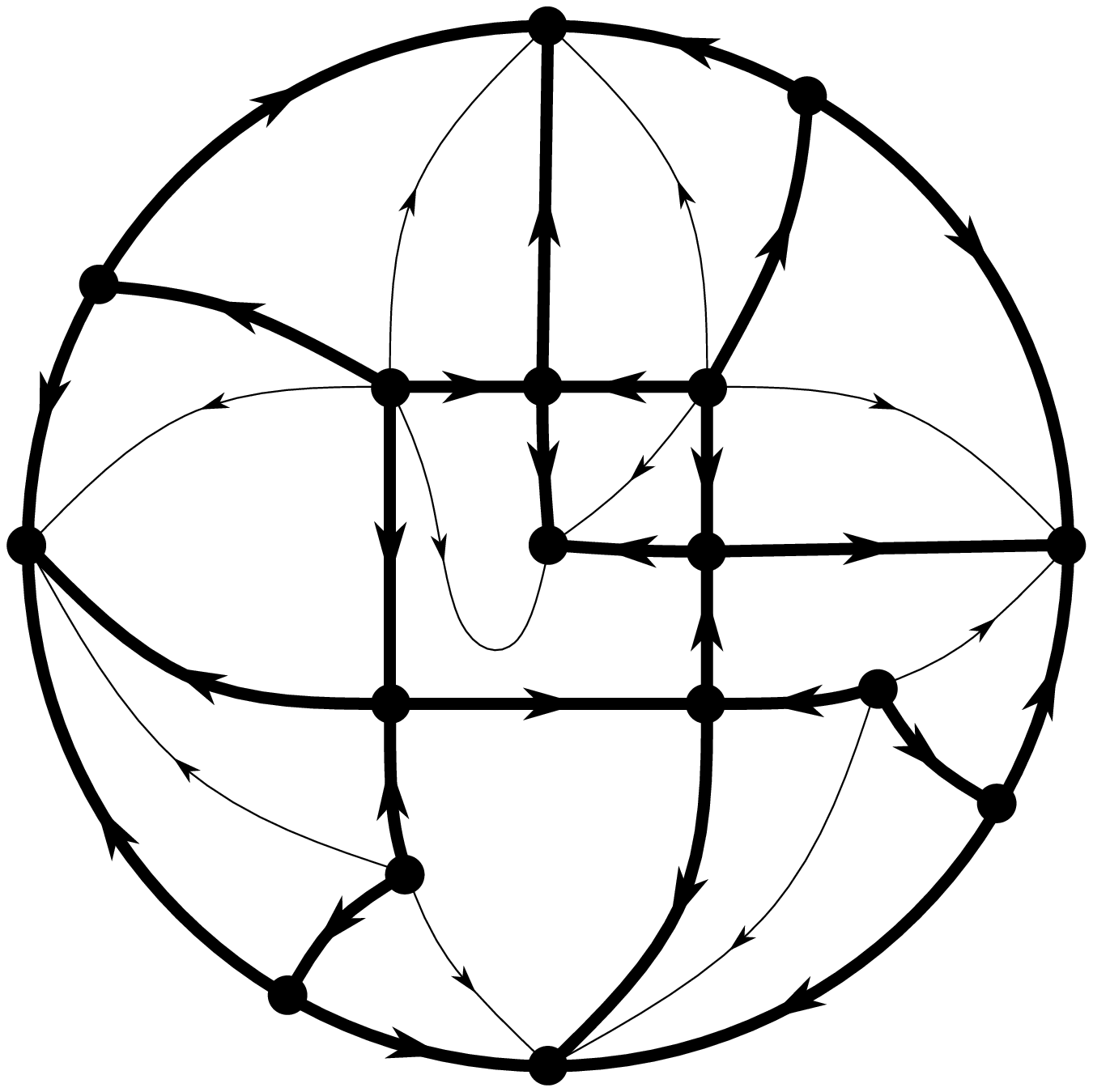} 
				\end{overpic}
				
				Case~$3.6$.
			\end{center}
		\end{minipage}
	\end{center}
$\;$
	\begin{center}
		\begin{minipage}{3.1cm}
			\begin{center}
				\begin{overpic}[height=3cm]{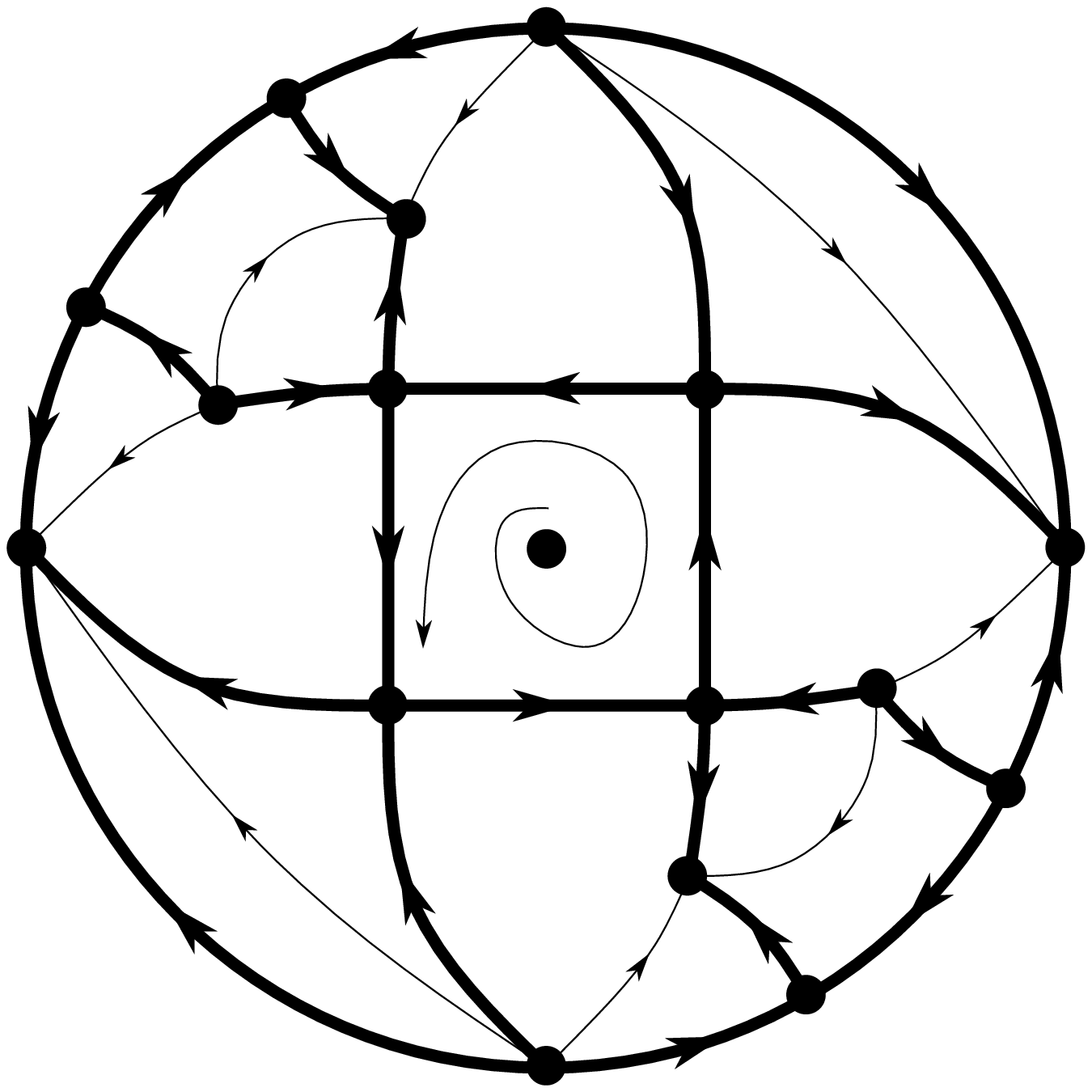} 
				\end{overpic}
				
				Case~$4.1a.i$.
			\end{center}
		\end{minipage}
		\begin{minipage}{3.1cm}
			\begin{center}
				\begin{overpic}[height=3cm]{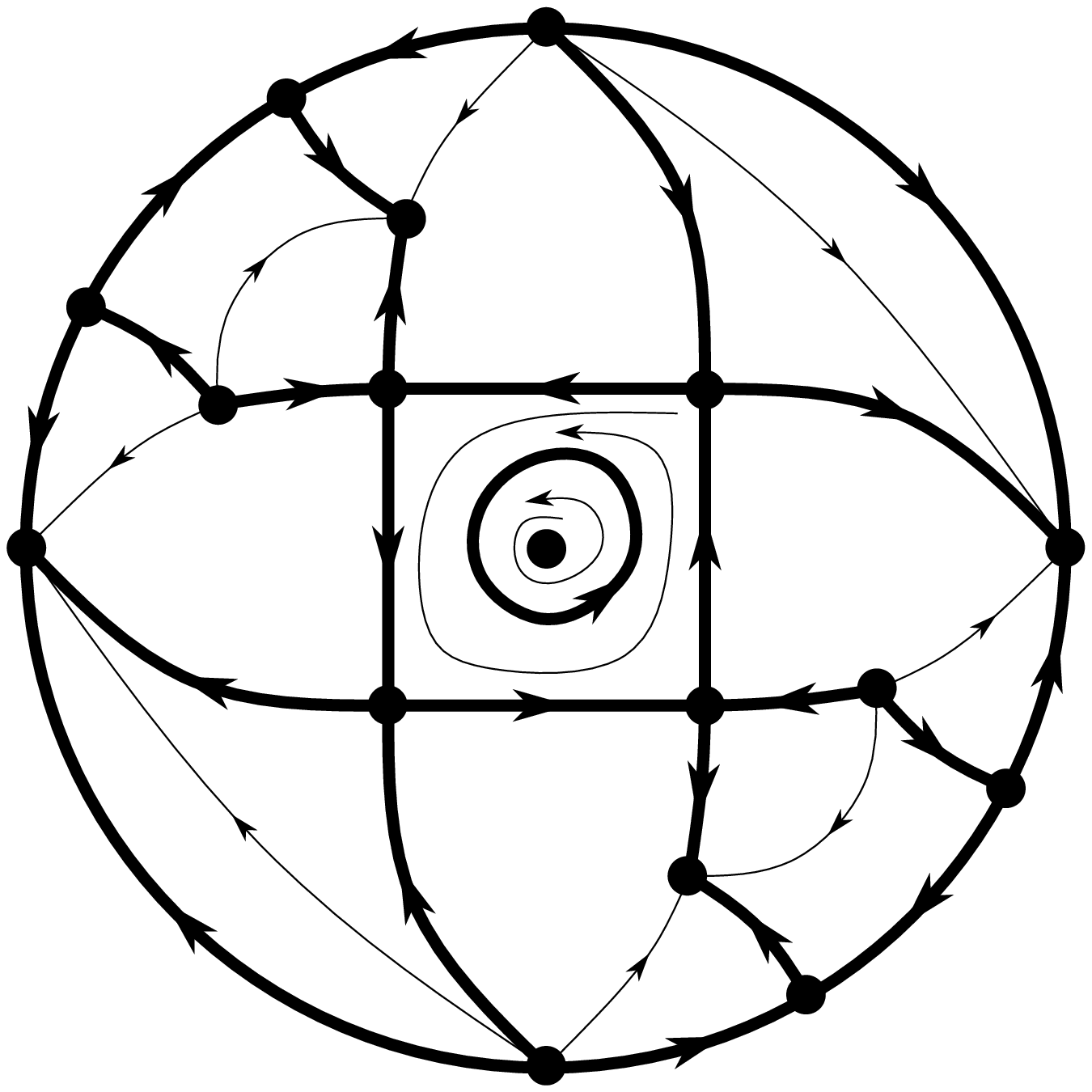} 
				\end{overpic}
				
				Case~$4.1a.ii$.
			\end{center}
		\end{minipage}
		\begin{minipage}{3.1cm}
			\begin{center}
				\begin{overpic}[height=3cm]{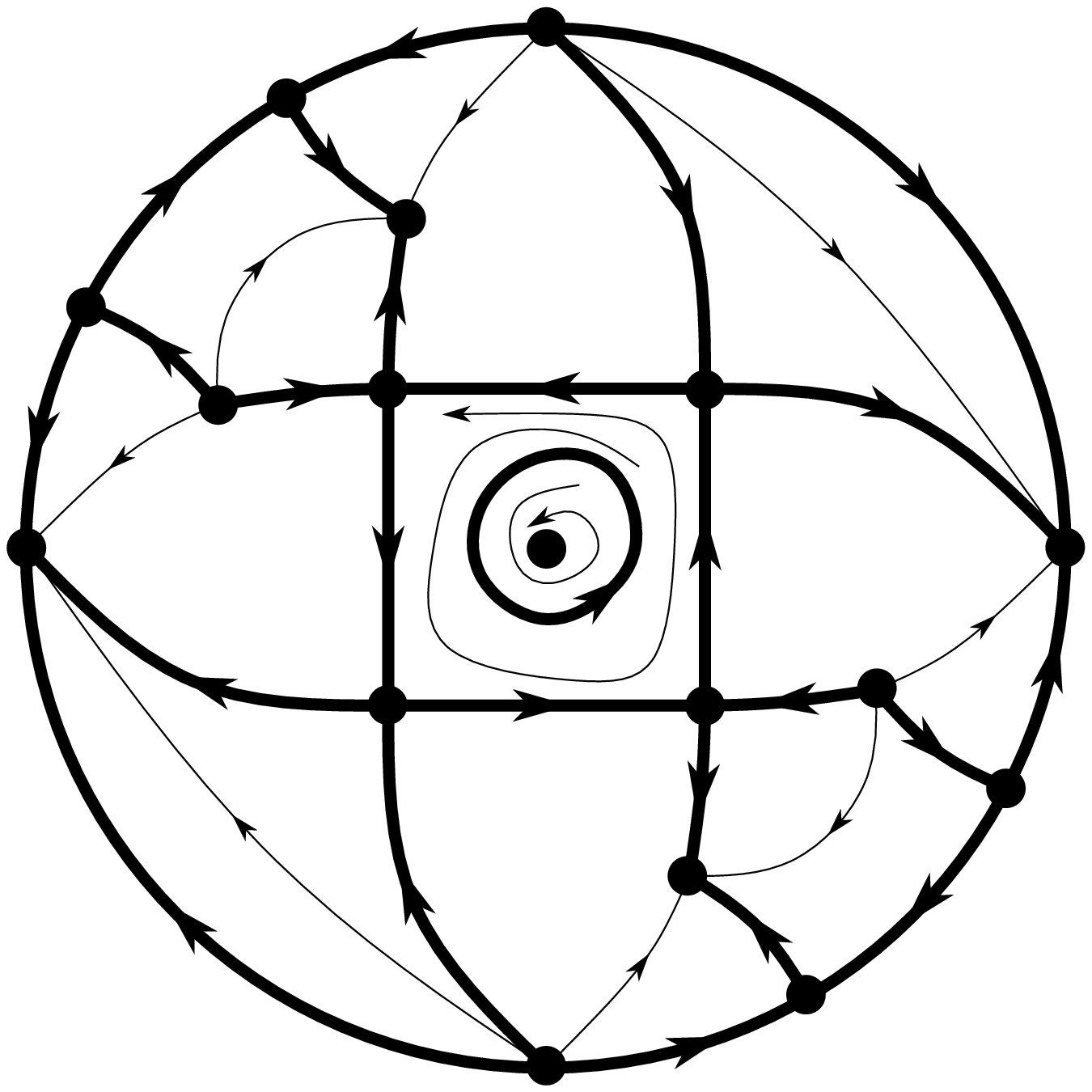} 
				\end{overpic}
				
				Case~$4.1b.i$.
			\end{center}
		\end{minipage}	
		\begin{minipage}{3.1cm}
			\begin{center}
				\begin{overpic}[height=3cm]{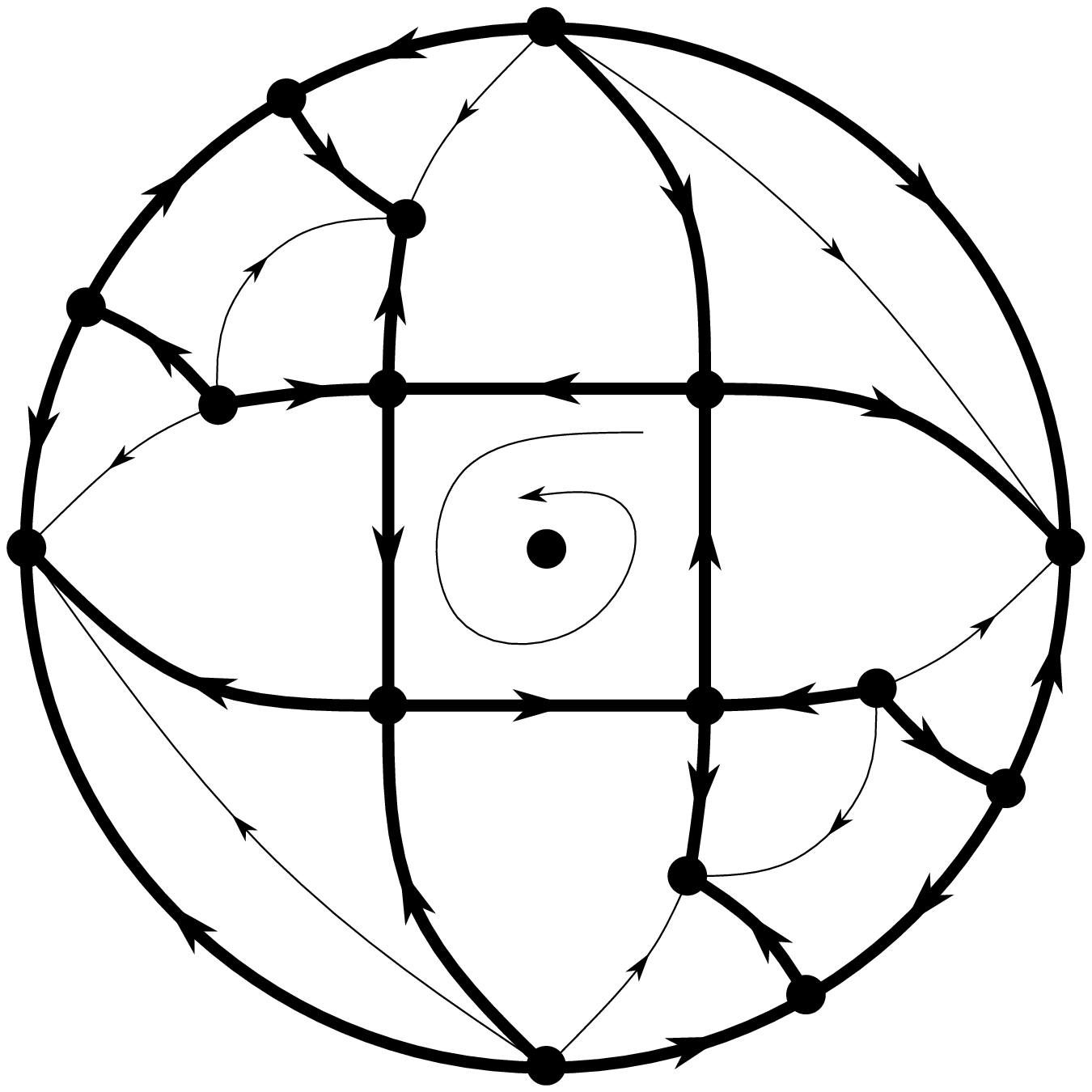} 
				\end{overpic}
				
				Case~$4.1b.ii$.
			\end{center}
		\end{minipage}
		\begin{minipage}{3.1cm}
			\begin{center}
				\begin{overpic}[height=3cm]{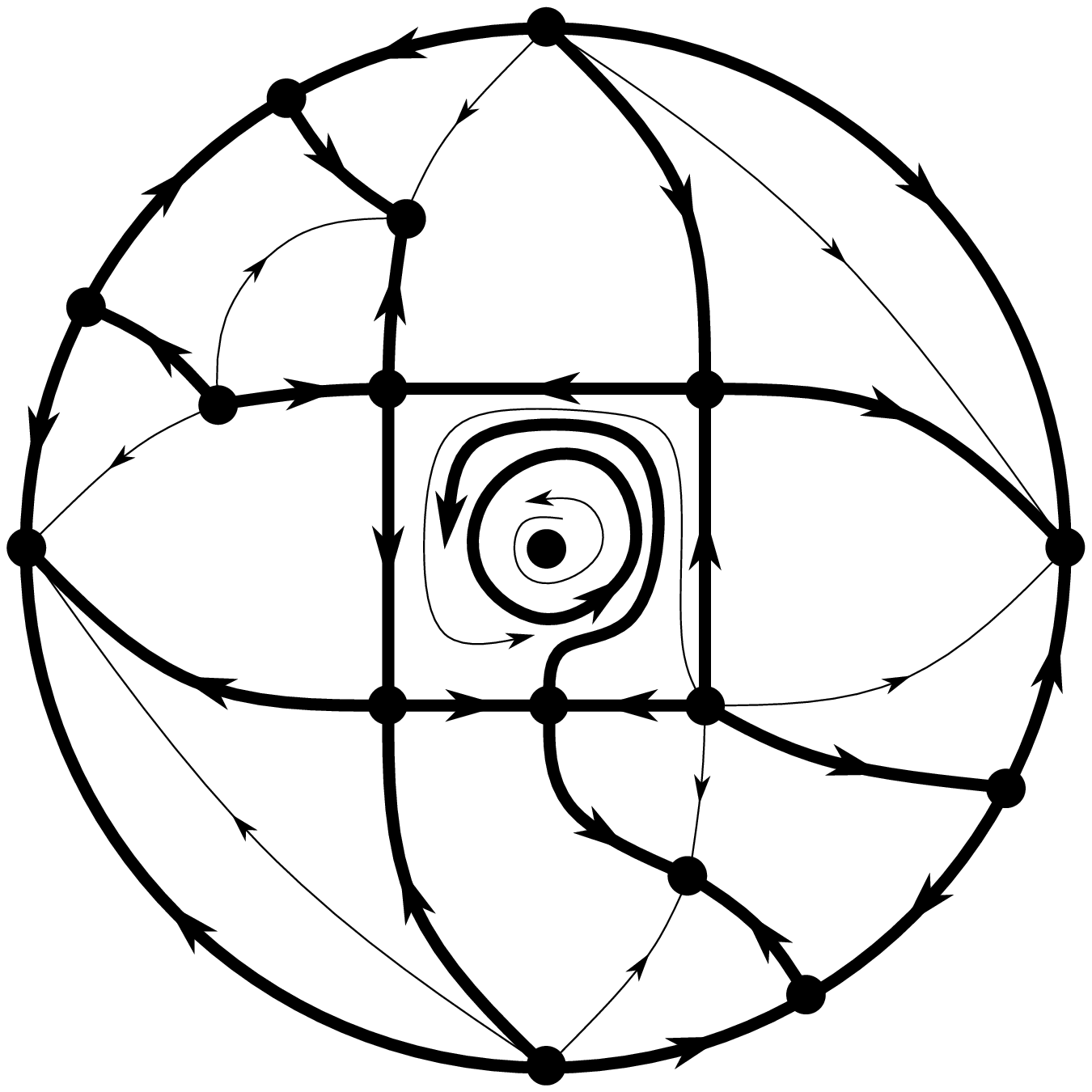} 
				\end{overpic}
				
				Case~$4.2a$.
			\end{center}
		\end{minipage}
	\end{center}
$\;$
	\begin{center}
		\begin{minipage}{3.1cm}
			\begin{center}
				\begin{overpic}[height=3cm]{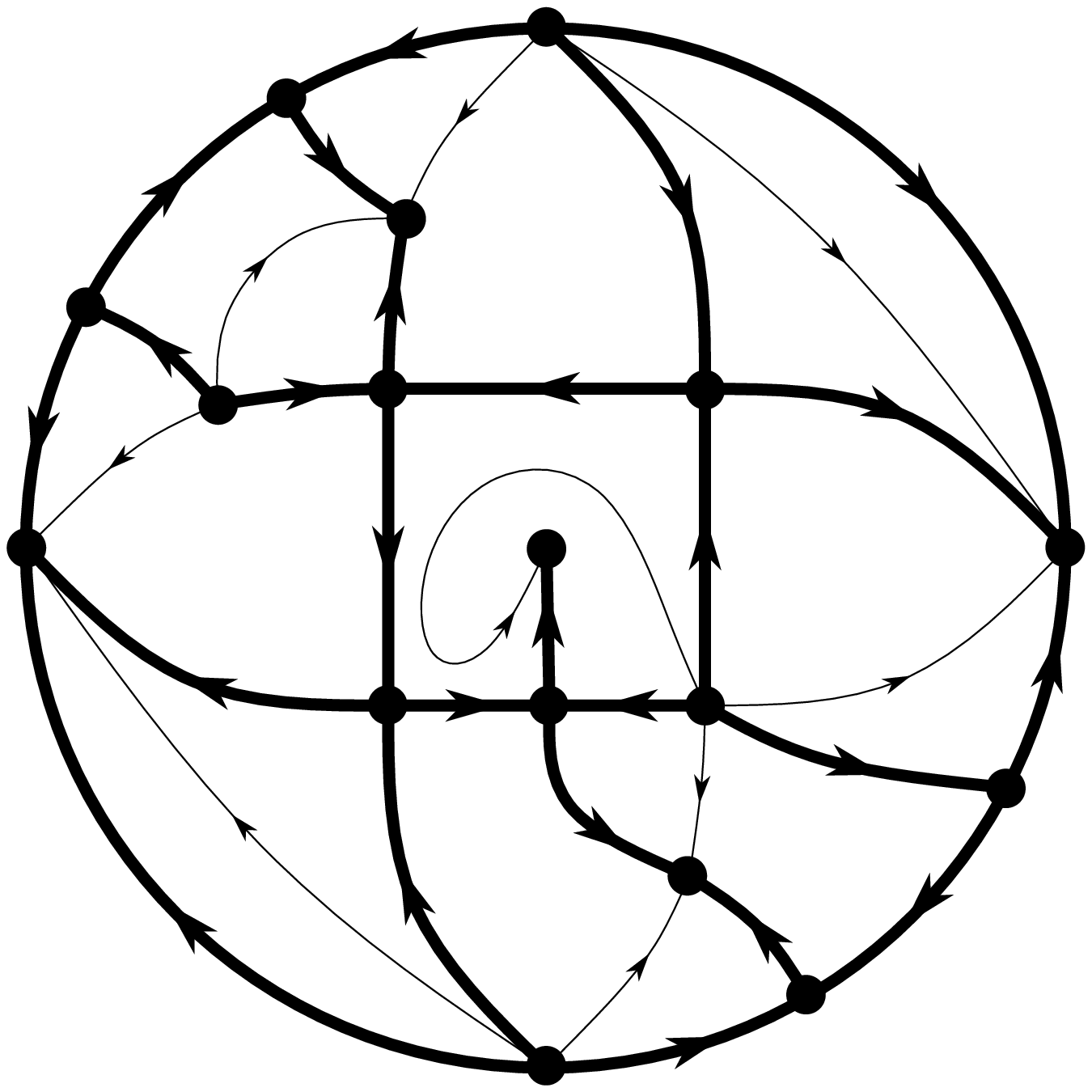} 
				\end{overpic}
				
				Case~$4.2b$.
			\end{center}
		\end{minipage}
		\begin{minipage}{3.1cm}
			\begin{center}
				\begin{overpic}[height=3cm]{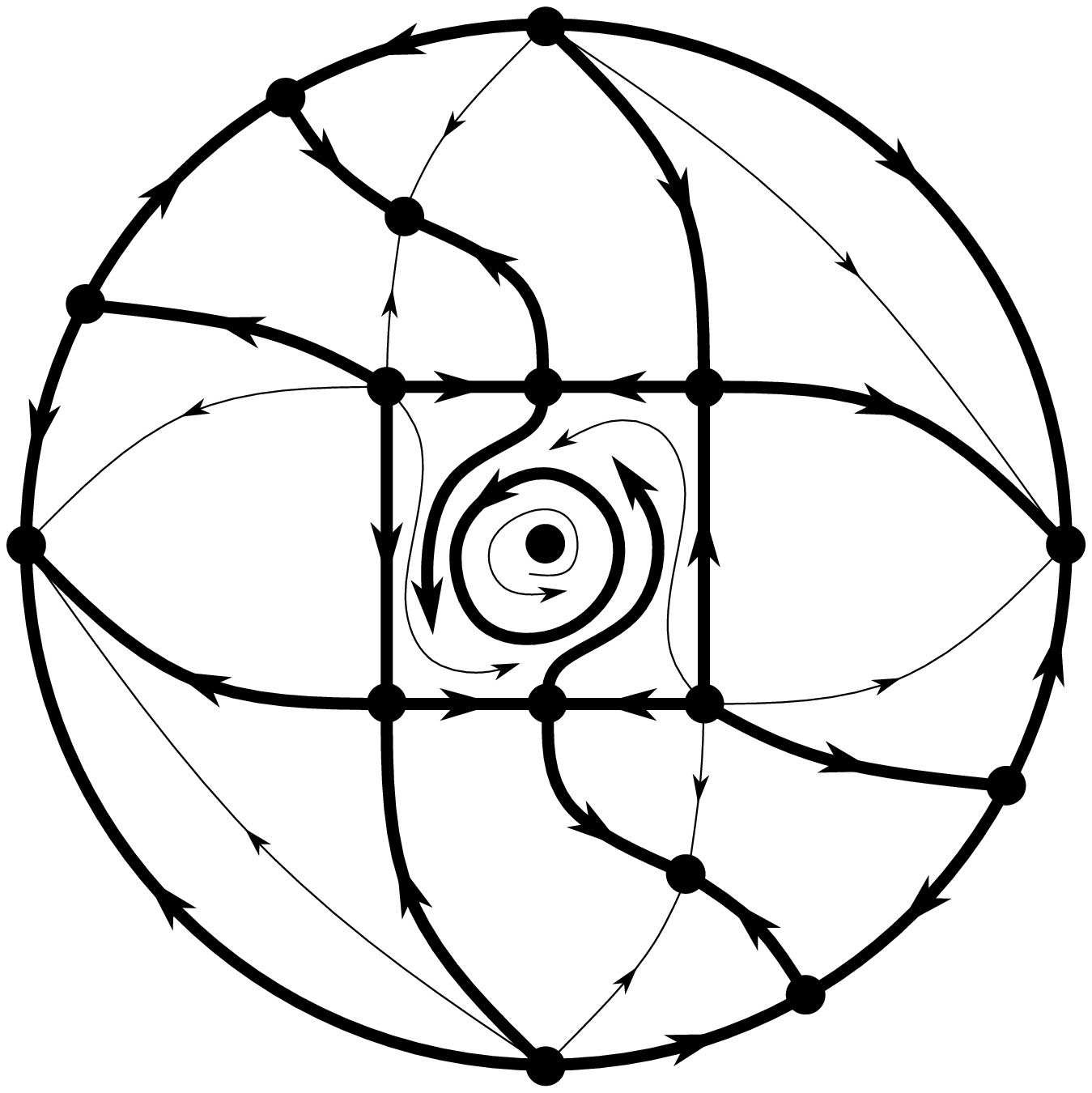} 
				\end{overpic}
				
				Case~$4.4a.i$.
			\end{center}
		\end{minipage}
		\begin{minipage}{3.1cm}
			\begin{center}
				\begin{overpic}[height=3cm]{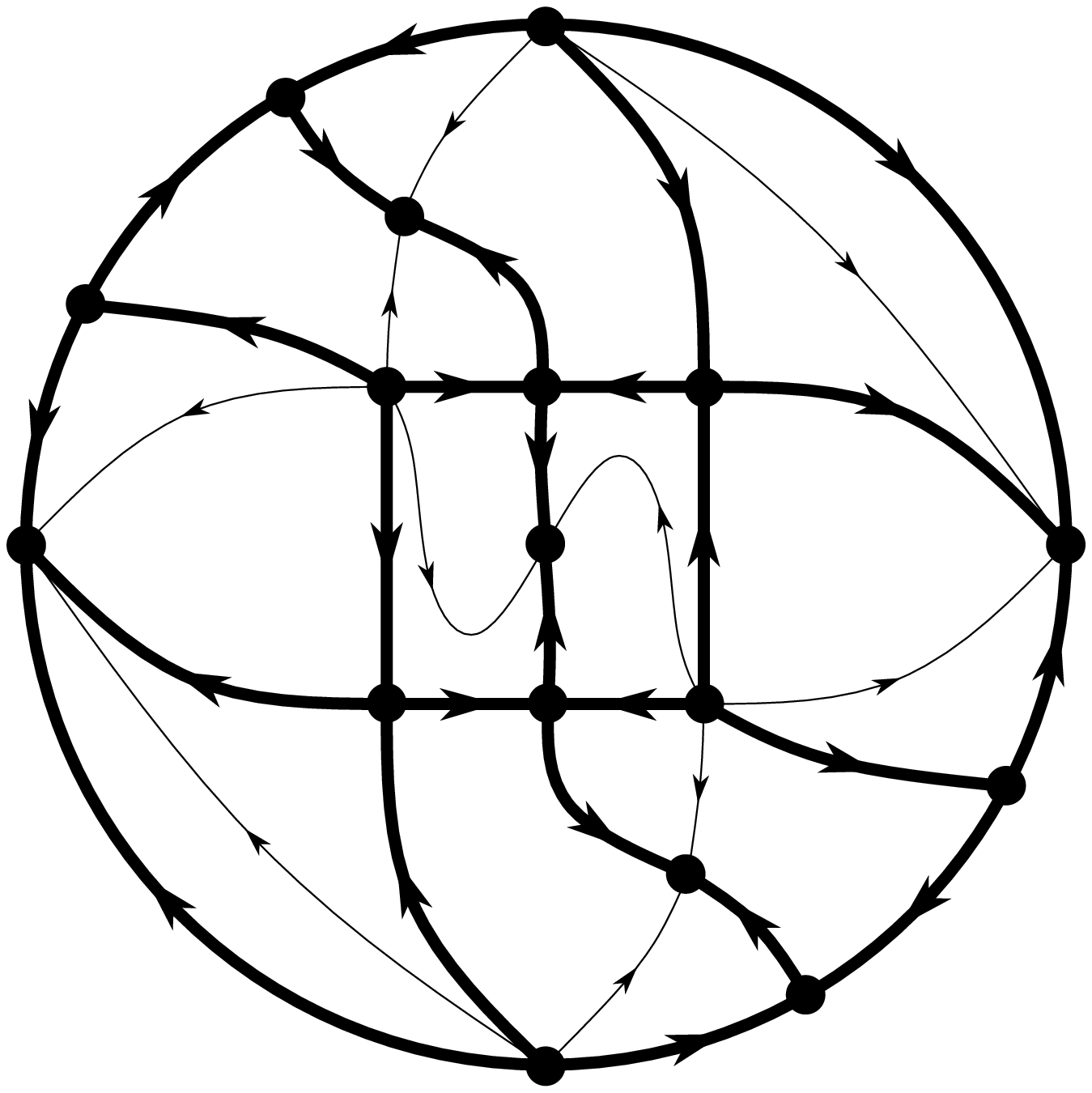} 
				\end{overpic}
				
				Case~$4.4a.ii$.
			\end{center}
		\end{minipage}
		\begin{minipage}{3.1cm}
			\begin{center}
				\begin{overpic}[height=3cm]{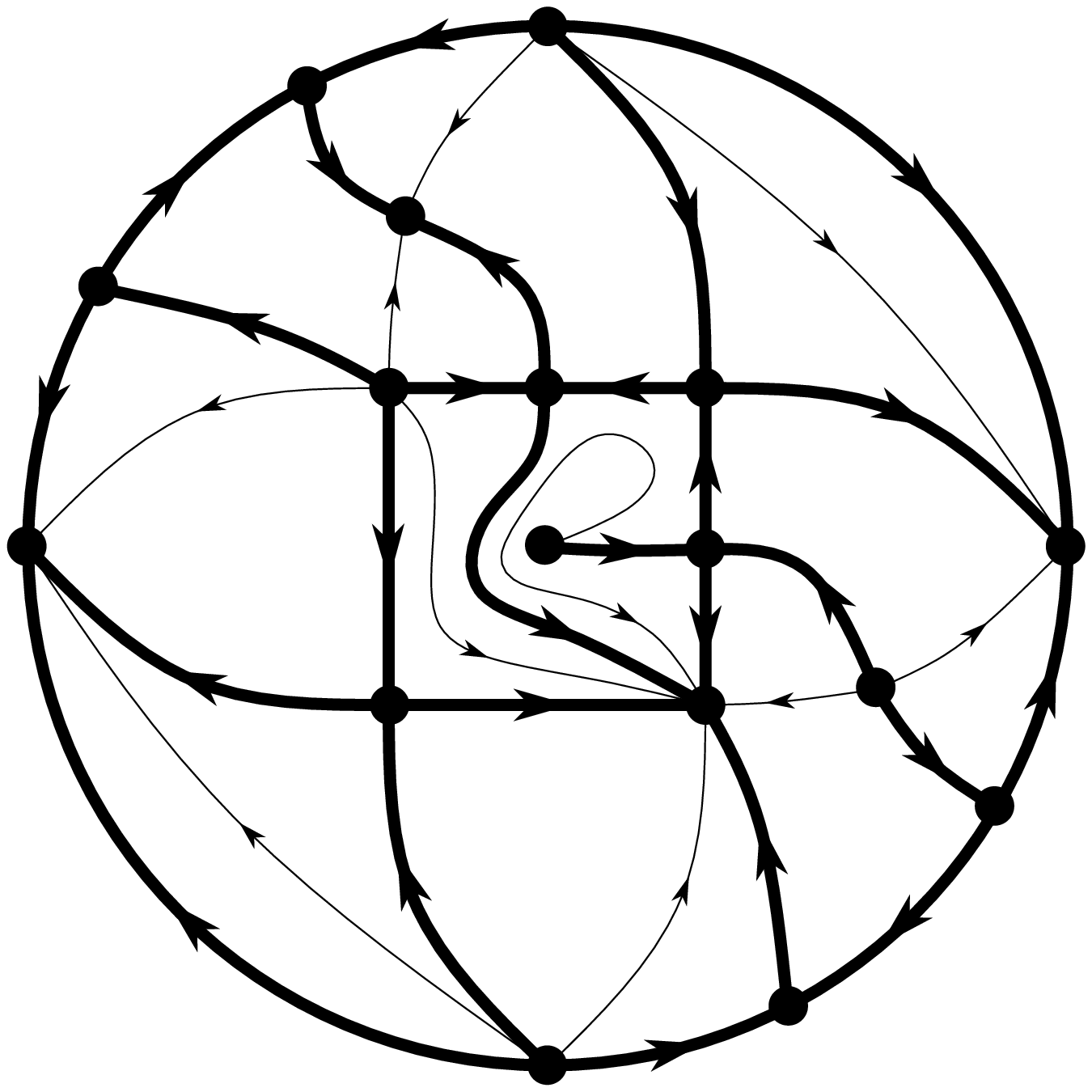} 
				\end{overpic}
				
				Case~$4.6a.i.1$.
			\end{center}
		\end{minipage}
		\begin{minipage}{3.1cm}
			\begin{center}
				\begin{overpic}[height=3cm]{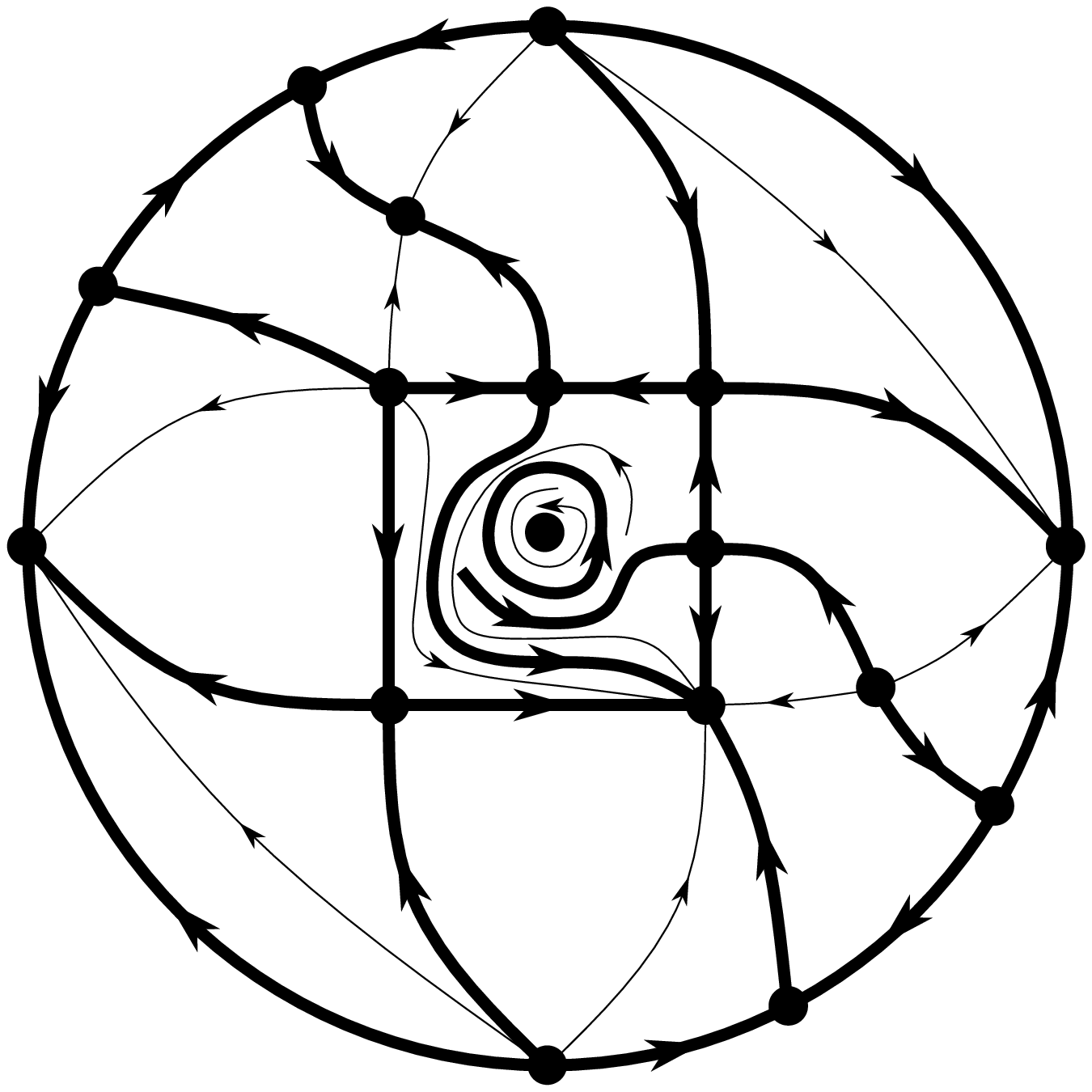} 
				\end{overpic}
				
				Case~$4.6a.ii.1$.
			\end{center}
		\end{minipage}
	\end{center}
	\caption{Topologically distinct phase portraits under the hypothesis of Theorem~\ref{Main3}.}\label{GenericFinal}
\end{figure}
\end{theorem}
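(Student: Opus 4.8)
The plan is to use Theorems~\ref{Main1} and \ref{Main2} to collapse the statement into a finite, explicitly parametrized case analysis, and then to treat that analysis with the standard tools of the qualitative theory of planar fields on the Poincar\'e disk. First I would invoke Theorem~\ref{Main1} to guarantee $\det A\neq0$, so that the linear system has a unique solution $p=(p_1,p_2)$; the translation $u=x-p_1$, $v=y-p_2$ then puts $X$ into the normal form \eqref{6}. The hypothesis $(p_1,p_2)\in[0,1]\times[0,1]$ is precisely position~$1$, and Theorem~\ref{Main2}, together with the reflections $x\mapsto 1-x$, $y\mapsto 1-y$ and the swap $(x,y)\mapsto(y,x)$ used to prove it, lets me normalize the signs and restrict to the four families of Table~\ref{Table8}. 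Thus the theorem reduces to classifying the phase portraits of \eqref{6} as $(\alpha,\beta,a_{ij},b_{ij})$ ranges over Families~$1$--$4$.

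Next I would locate and classify every singularity. Each nullcline consists of three lines --- the two invariant lines of the octothorpe in the relevant variable together with the line $a_{10}x+a_{01}y=0$ (resp. $b_{10}x+b_{01}y=0$) through the origin --- so there are at most nine finite singularities: the four vertices of the octothorpe, four ``edge'' points where an invariant line meets a diagonal nullcline, and the origin. The Jacobian at the origin factors as $\mathrm{diag}(\alpha(\alpha-1),\beta(\beta-1))\cdot A$, so that $\det J_0=\alpha(\alpha-1)\beta(\beta-1)\det A$ carries the sign of $\det A$; this immediately separates the case where the origin is a saddle from the case where it is a node or focus, and it is the origin that produces most of the qualitative variety. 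Genericity (Definition~\ref{Def1}) forces all singularities to be hyperbolic and distinct, while the vertices and edge points are classified directly from the signs of the linear factors evaluated there. I would then Poincar\'e-compactify and compute the infinite singularities: the directions $x=0$ and $y=0$ are always singular, since the octothorpe lines escape to infinity there, and the remaining ones are the real roots of $b_{01}m^2+(b_{10}-a_{01})m-a_{10}=0$ in the slope $m=\tan\theta$, each classified in the usual charts $U_1,U_2$.

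With all local data in hand I would assemble the global portraits. The four invariant lines cut the Poincar\'e disk into nine regions, and since no orbit crosses an invariant line, each region may be analyzed separately and then matched along the shared separatrices. A periodic orbit would have to be confined to one region and enclose a singularity of index $+1$; the only candidate is the origin inside the central rectangle, and I would exclude limit cycles there by exhibiting a Dulac function adapted to the invariant lines (or, failing a closed form, by an index and divergence argument in the central region, in the spirit of the Volterra--Lotka reduction recalled in the introduction). The $\alpha$- and $\omega$-limits are then singularities or, along $\Lambda$, generic polycycles whose stability is decided by Cherkas' criterion through the product $r(\Gamma)$ of hyperbolicity ratios; this is exactly what distinguishes the sub-cases labelled $a$/$b$ and $i$/$ii$ in Figure~\ref{GenericFinal}. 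Finally I would enumerate the sub-cases within each family according to the type and stability of the origin, the positions of the edge singularities relative to the square, and the number of infinite singularities, collapse the topologically equivalent configurations, and verify that exactly twenty-five classes survive; realizability follows by exhibiting explicit parameter values for each.

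I expect the principal difficulty to be the global separatrix matching together with the completeness and irredundancy of the count. Ruling out limit cycles rigorously and, above all, deciding which saddle connections and polycycles actually occur --- and proving that the only admissible saddle connections are those forced to lie in $\Lambda$ --- is delicate, since it demands tracking separatrices across several regions and applying the hyperbolicity-ratio criterion to each candidate polycycle. Checking that no case has been omitted and that the twenty-five portraits are pairwise topologically inequivalent is the bookkeeping counterpart of the same obstacle.
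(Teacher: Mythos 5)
Your overall architecture coincides with the paper's: reduce via Theorems~\ref{Main1} and \ref{Main2} to position~$1$ and the four families of Table~\ref{Table8}, classify the nine finite singularities and the infinite ones given by the roots of $b_{01}u^2+(b_{10}-a_{01})u-a_{10}$, assemble the portraits region by region, and finish with an equivalence count and realizability check. However, there is a genuine error in your treatment of periodic orbits. You propose to \emph{exclude} limit cycles in the central rectangle by a Dulac function (or an index/divergence argument). No such argument can succeed, because several of the twenty-five portraits in Figure~\ref{GenericFinal} actually contain a limit cycle: in Family~$4$ the pairs $4.1a.i$/$4.1a.ii$ and $4.1b.i$/$4.1b.ii$ are distinguished precisely by the presence or absence of a limit cycle surrounding the origin, born between the focus/node at the origin and the polycycle formed by the four vertex saddles $p_1,\dots,p_4$. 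The paper's route is different and essential: Kooij's Theorem~\ref{Theo6} guarantees that a cubic system with two pairs of parallel invariant lines has \emph{at most one} limit cycle and that it is hyperbolic; Cherkas' criterion gives the stability of the polycycle through $r(\Gamma)$; and Proposition~\ref{Theo12} combines these with the stability of the origin to show that the limit cycle exists if and only if the algebraic condition \eqref{33} holds, i.e.\ exactly when the trace $T$ at the origin and the quantity $K$ controlling $r(\Gamma)$ have opposite signs. You do invoke Cherkas for polycycles, but only to describe $\omega$-limits along $\Lambda$, not to decide limit-cycle existence, and this is incompatible with your simultaneous claim that all limit cycles can be ruled out. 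Without Kooij's uniqueness theorem you also lose the perturbation argument needed for the boundary case $K=0$ and the Poincar\'e--Bendixson exclusion used, e.g., in Case~$1.4b$.

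A secondary, smaller gap concerns realizability. For most cases exhibiting parameter values suffices, as you say, but Cases~$2.1a1$, $2.1a3$ and $2.1a4$ share the same disposition of the $p$- and $q$-singularities and the same local data, differing only in which separatrix connections occur; the paper realizes all three by deforming one- and two-parameter families from degenerate endpoints ($\lambda=0$ and $\lambda=5$) and invoking the structural stability of a saddle-to-node connection. Your plan does not address how to certify that all three separatrix configurations, rather than just one, are attained.
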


\begin{corollary}\label{Main4}
	Under the hypothesis of Theorem~\ref{Main3}, the phase portrait in the square
		\[\{(x,y)\in\mathbb{R}^2\colon 0\leqslant x\leqslant1, \; 0\leqslant y\leqslant 1\},\]
	is topologically equivalent to one of the twenty phase portraits given by Figure~\ref{FinalSquares}. Moreover, all phase portraits are realizable.
\begin{figure}[h]
	\begin{center}
		\begin{minipage}{3.1cm}
			\begin{center}
				\begin{overpic}[height=3cm]{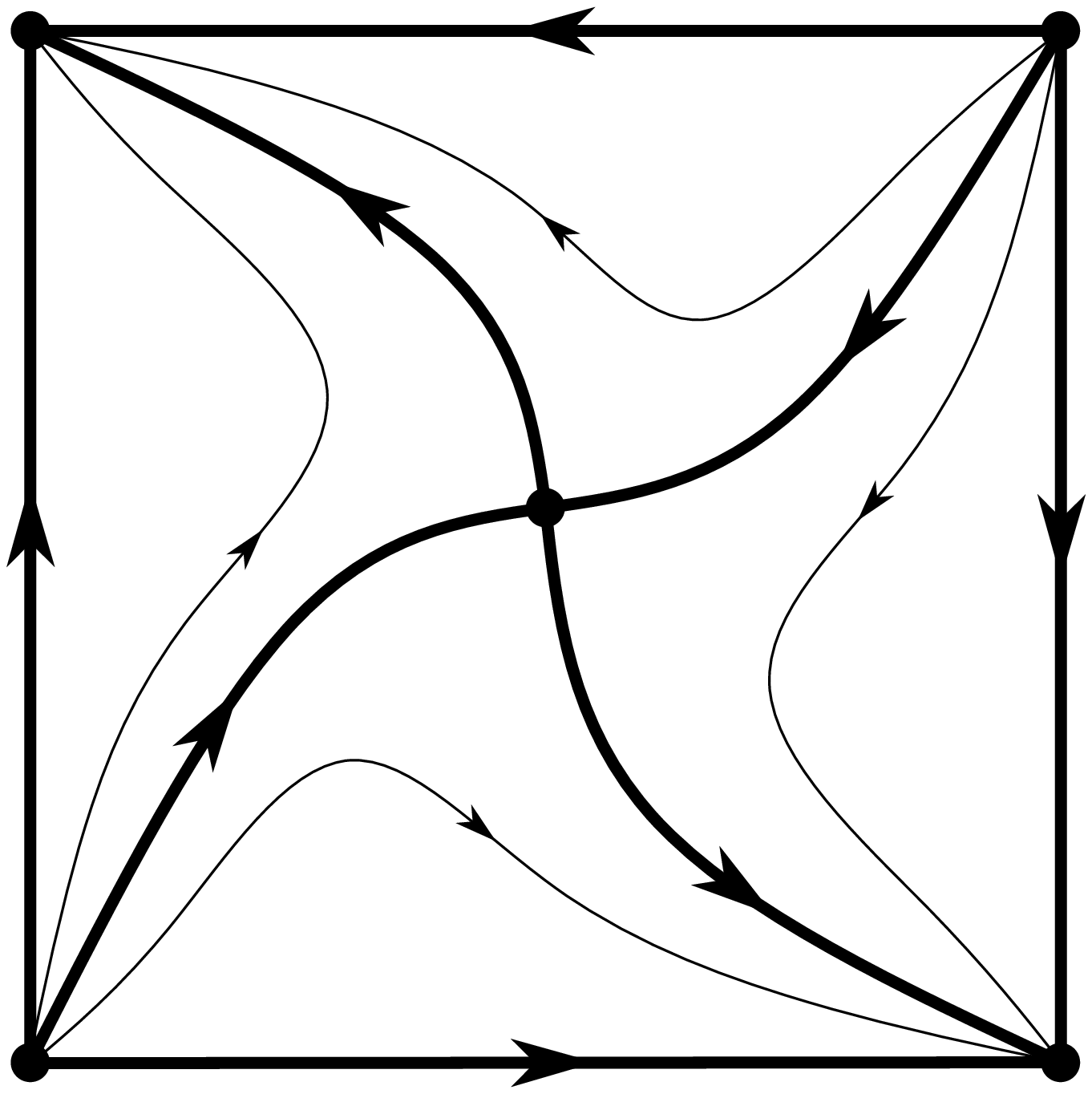} 
				\end{overpic}
				
				Case~$1.1$.
			\end{center}
		\end{minipage}
		\begin{minipage}{3.1cm}
			\begin{center}
				\begin{overpic}[height=3cm]{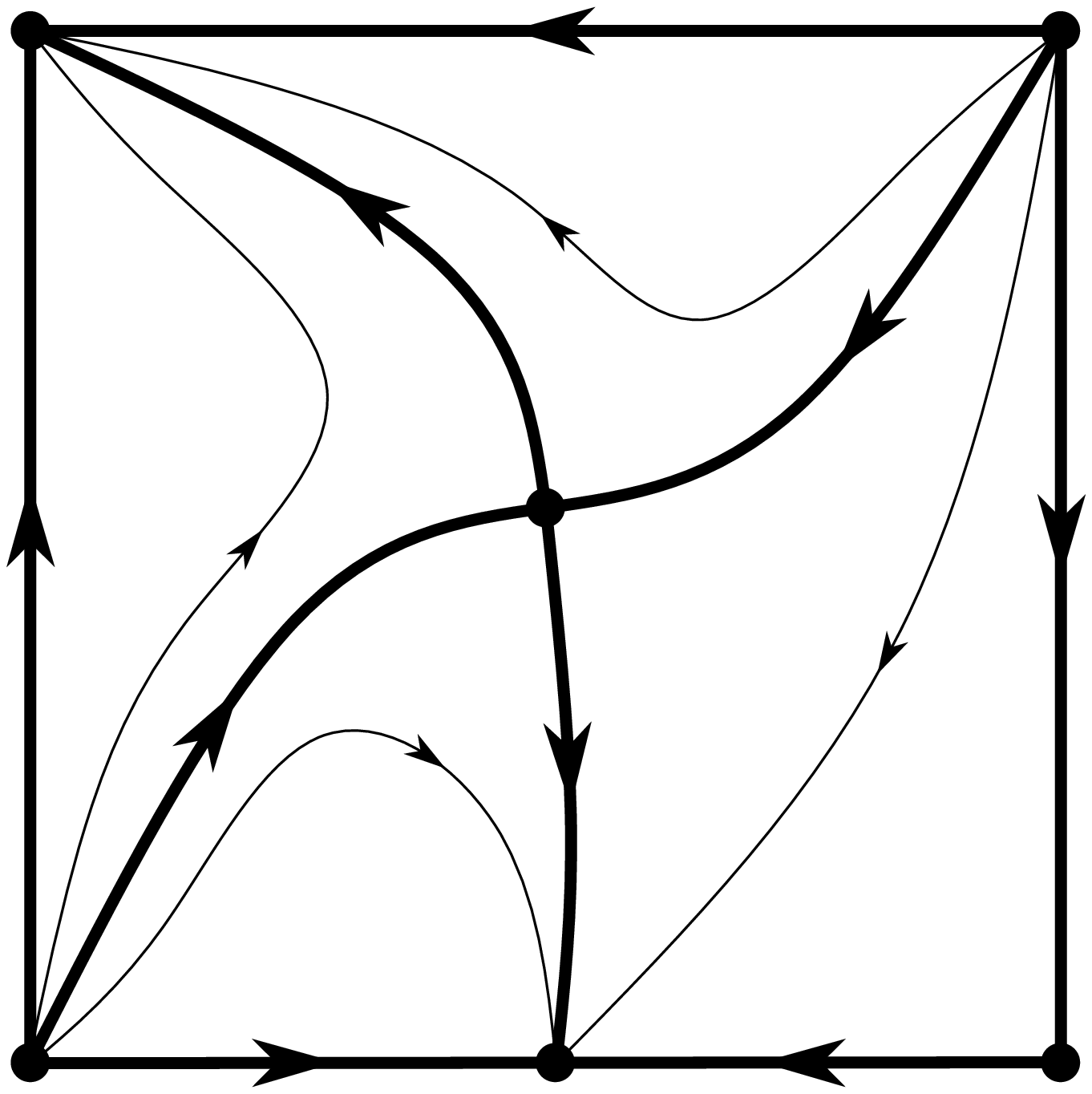} 
				\end{overpic}
				
				Case~$1.2$.
			\end{center}
		\end{minipage}
		\begin{minipage}{3.1cm}
			\begin{center}
				\begin{overpic}[height=3cm]{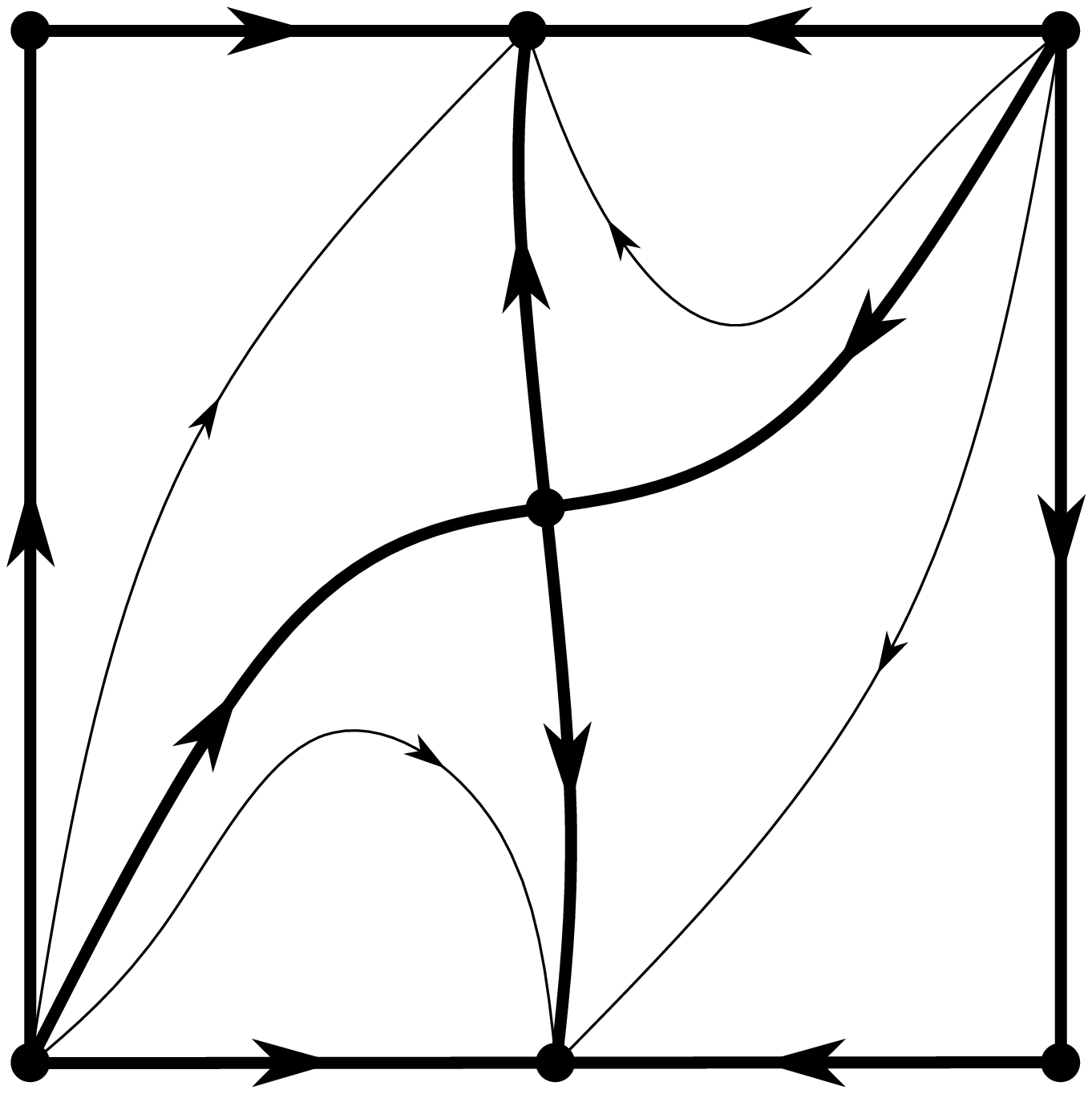} 
				\end{overpic}
				
				Case~$1.4a$.
			\end{center}
		\end{minipage}
		\begin{minipage}{3.1cm}
			\begin{center}
				\begin{overpic}[height=3cm]{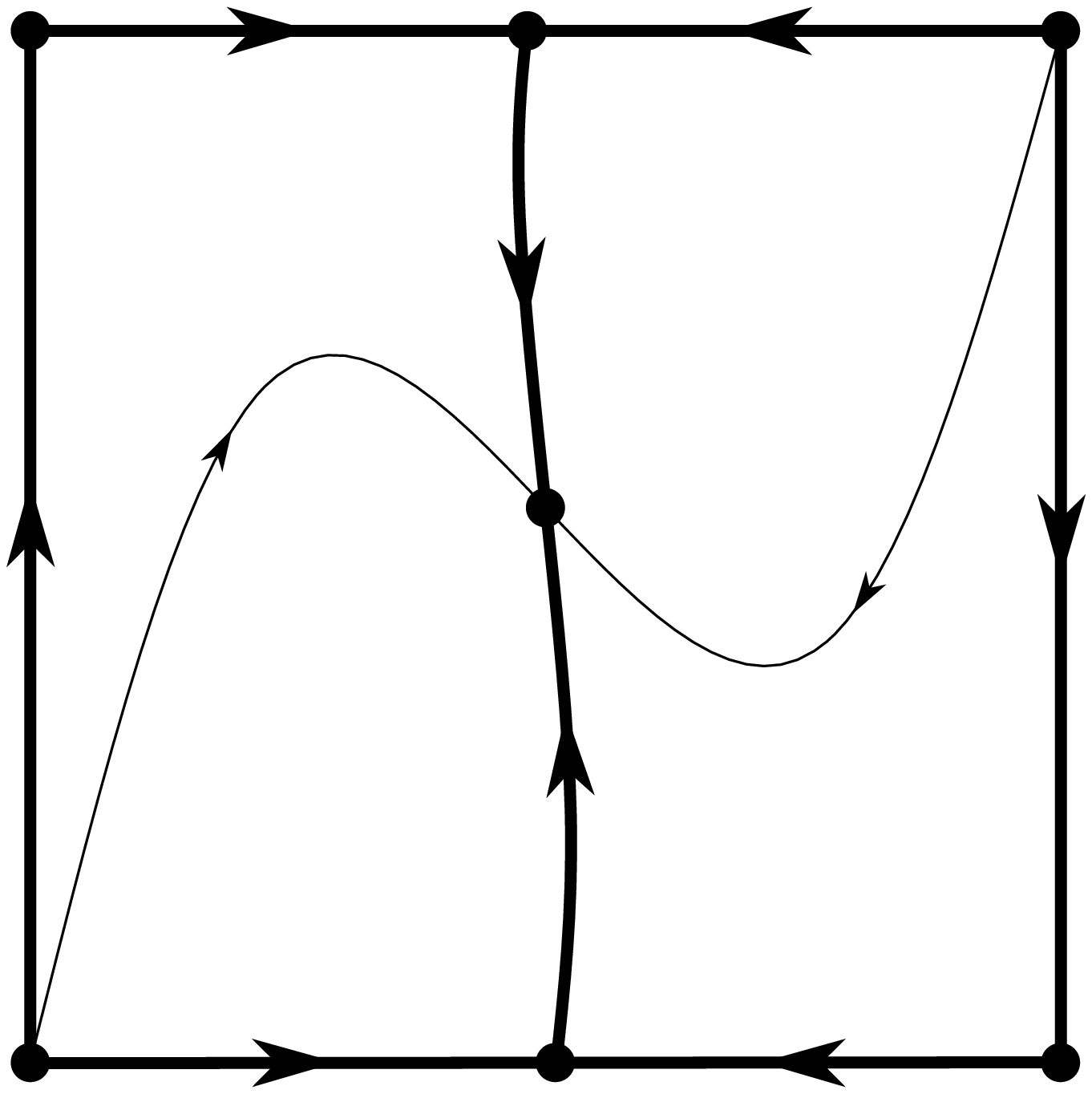} 
				\end{overpic}
				
				Case~$1.4b$.
			\end{center}
		\end{minipage}
		\begin{minipage}{3.1cm}
			\begin{center}
				\begin{overpic}[height=3cm]{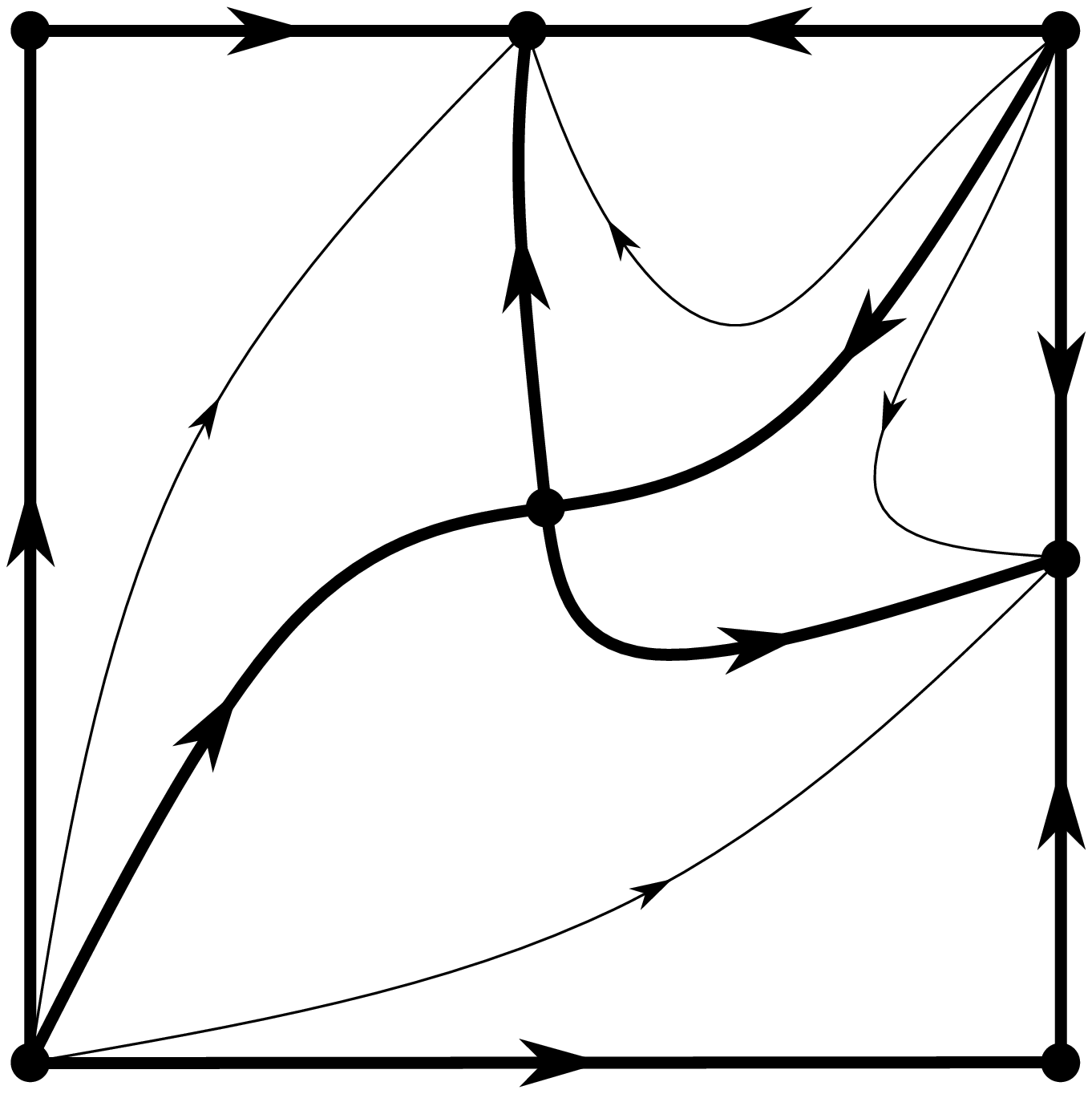} 
				\end{overpic}
				
				Case~$1.6a$.
			\end{center}
		\end{minipage}
	\end{center}
$\;$
	\begin{center}
		\begin{minipage}{3.1cm}
			\begin{center}
				\begin{overpic}[height=3cm]{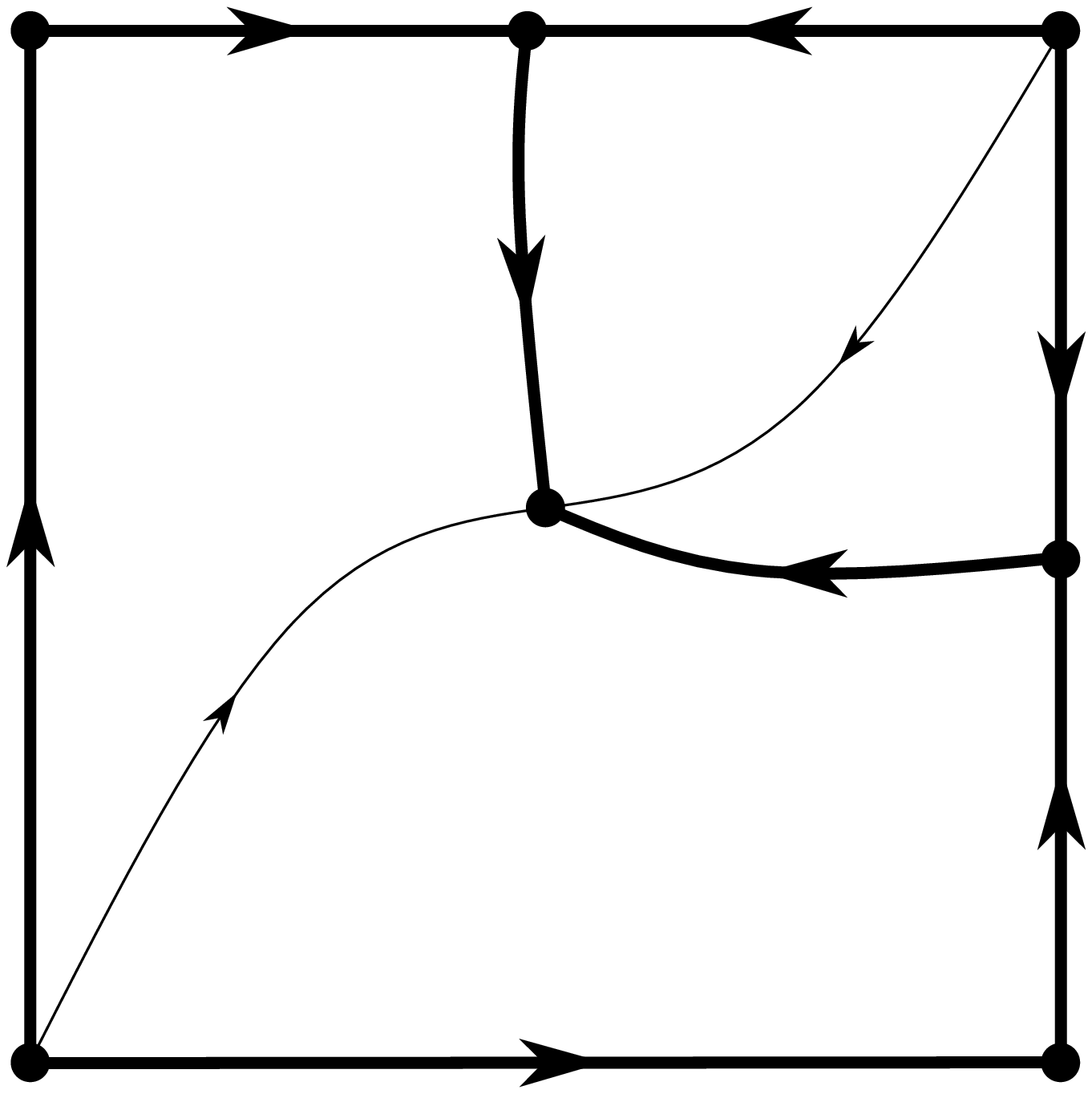} 
				\end{overpic}
				
				Case~$1.6b$.
			\end{center}
		\end{minipage}
		\begin{minipage}{3.1cm}
			\begin{center}
				\begin{overpic}[height=3cm]{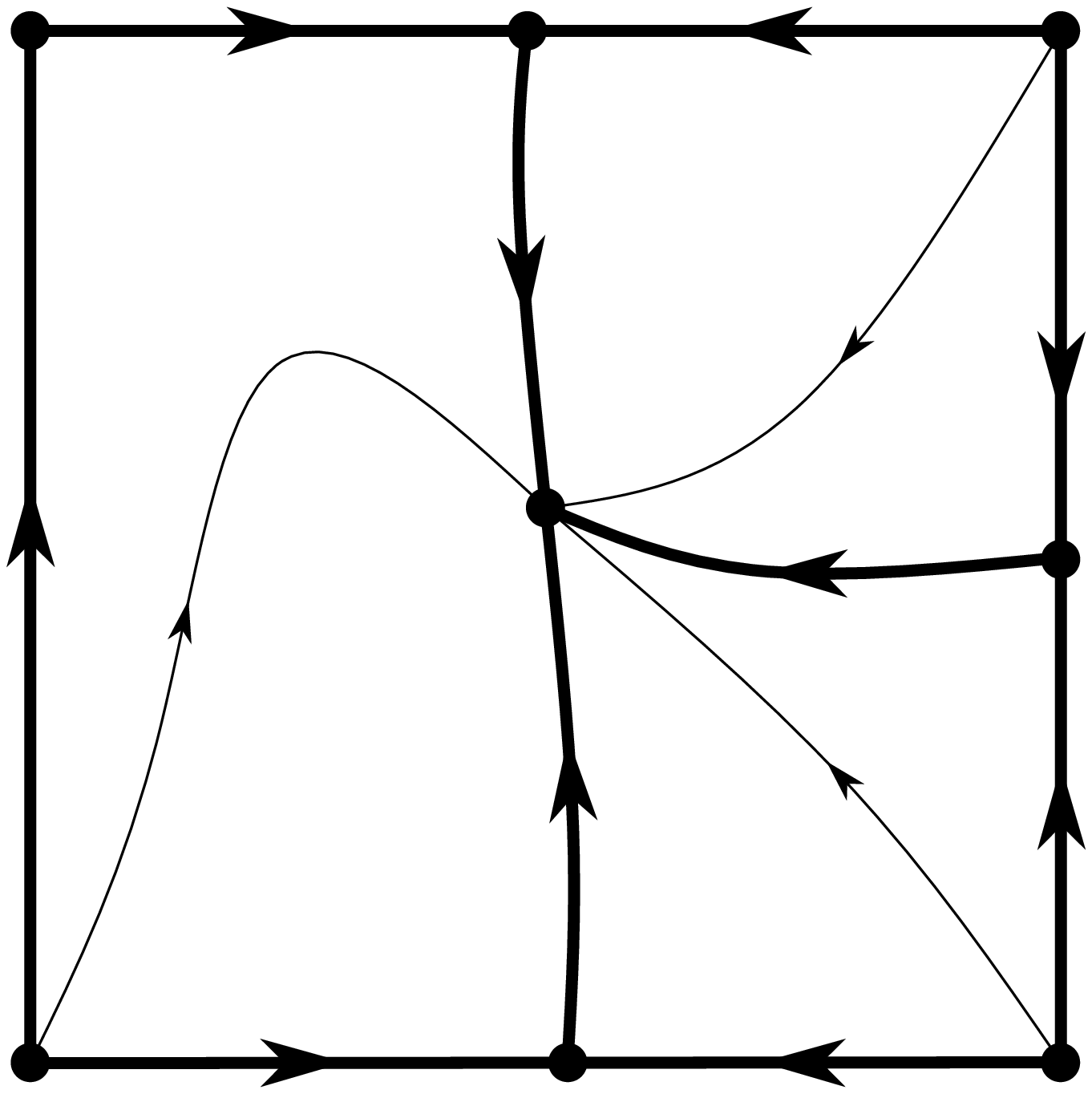} 
				\end{overpic}
				
				Case~$1.7$.
			\end{center}
		\end{minipage}
		\begin{minipage}{3.1cm}
			\begin{center}
				\begin{overpic}[height=3cm]{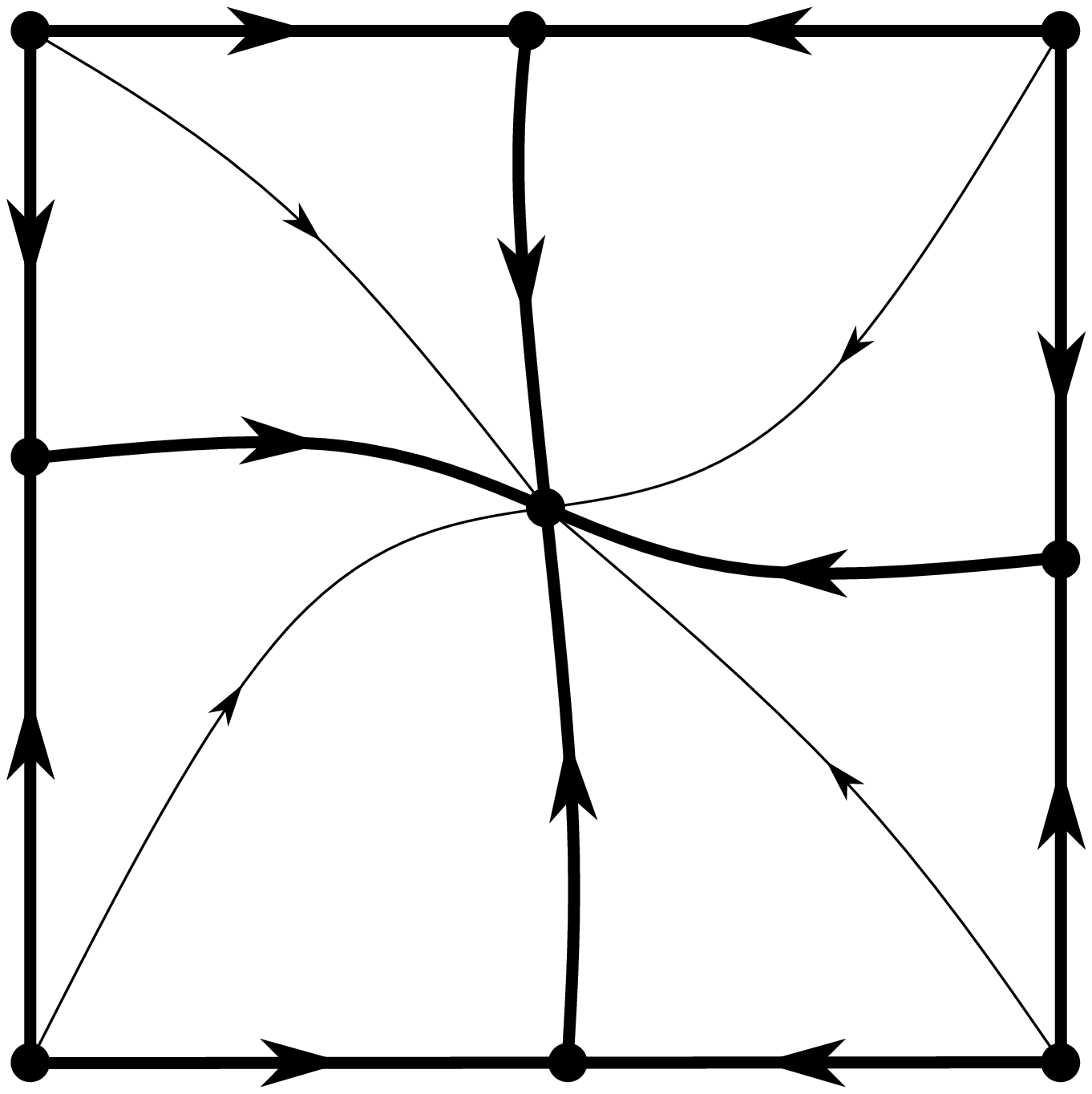} 
				\end{overpic}
				
				Case~$1.14$.
			\end{center}
		\end{minipage}
		\begin{minipage}{3.1cm}
			\begin{center}
				\begin{overpic}[height=3cm]{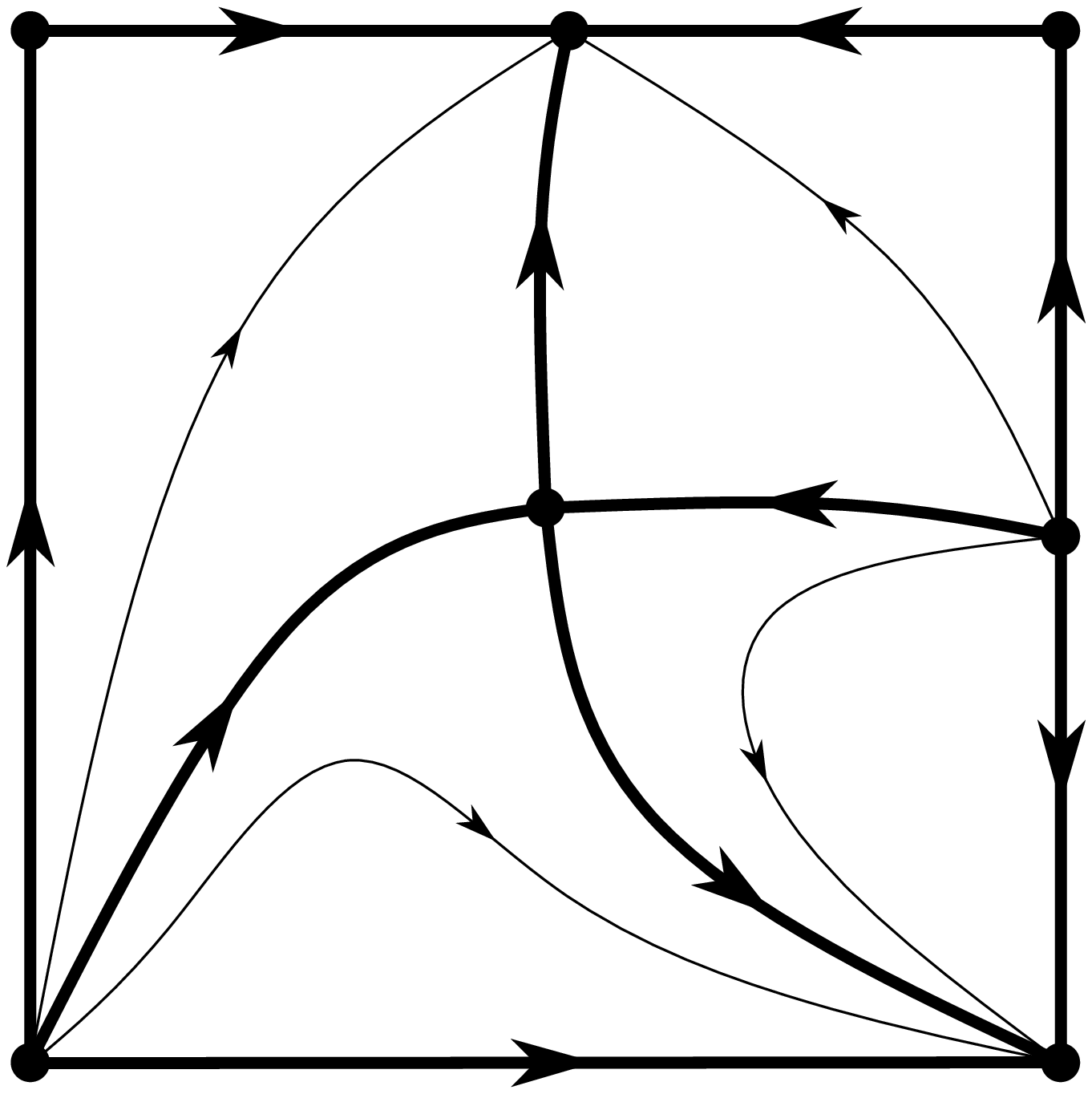} 
				\end{overpic}
				
				Case~$2.6a1$.
			\end{center}
		\end{minipage}
		\begin{minipage}{3.1cm}
			\begin{center}
				\begin{overpic}[height=3cm]{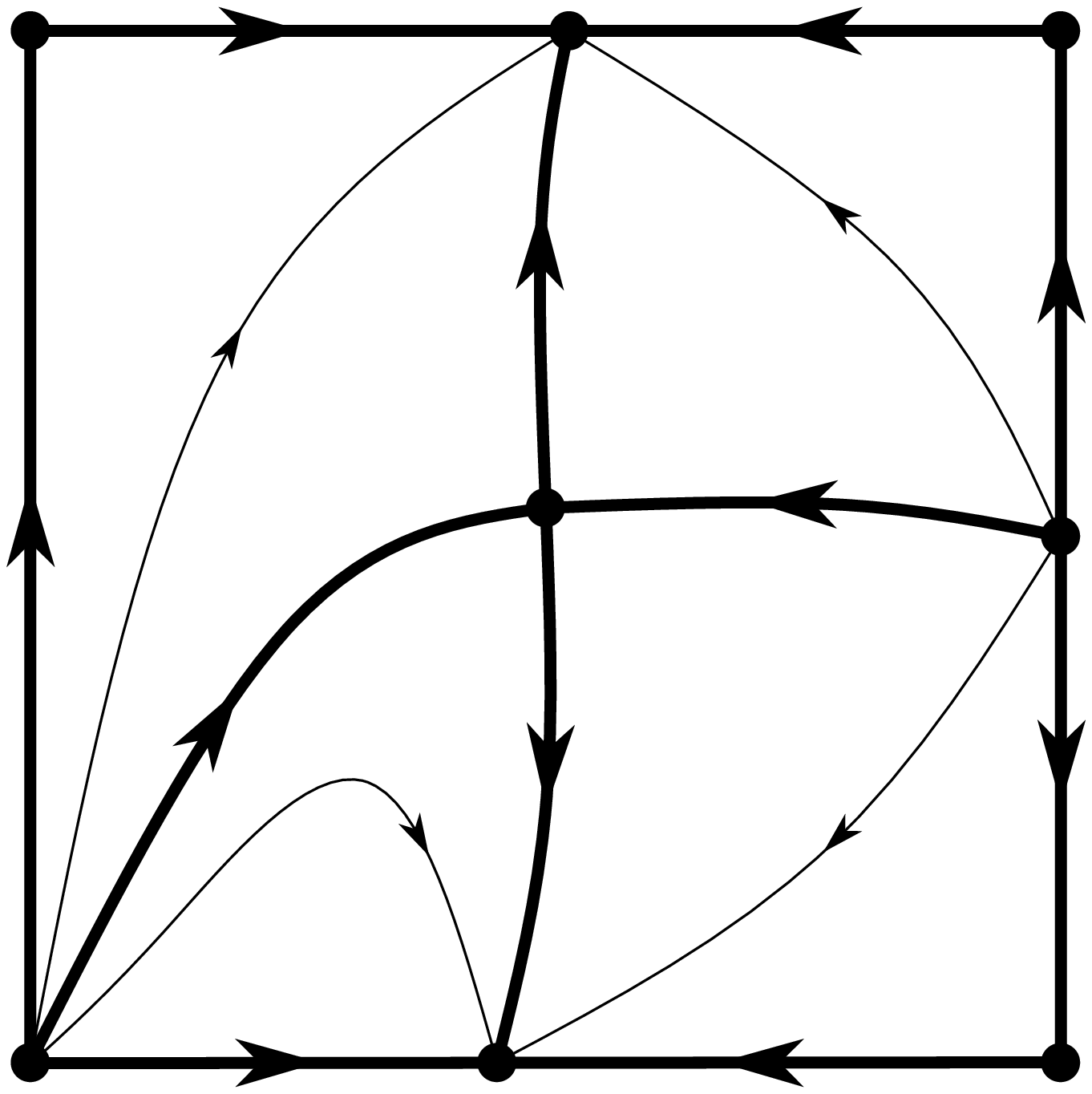} 
				\end{overpic}
				
				Case~$2.7a$.
			\end{center}
		\end{minipage}
	\end{center}
$\;$
	\begin{center}	
		\begin{minipage}{3.1cm}
			\begin{center}
				\begin{overpic}[height=3cm]{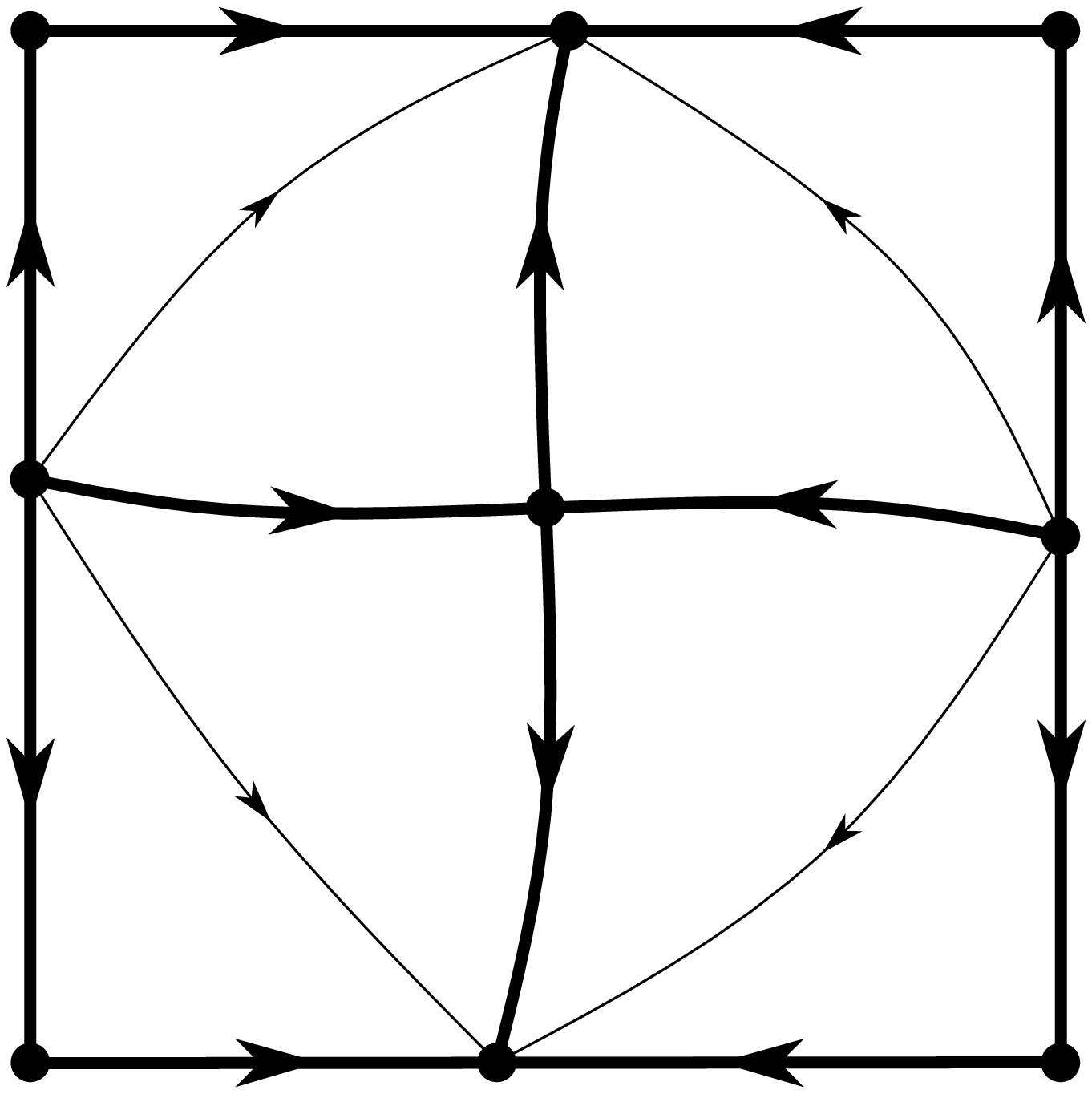} 
				\end{overpic}
				
				Case~$2.15a$.
			\end{center}
		\end{minipage}
			\begin{minipage}{3.1cm}
			\begin{center}
				\begin{overpic}[height=3cm]{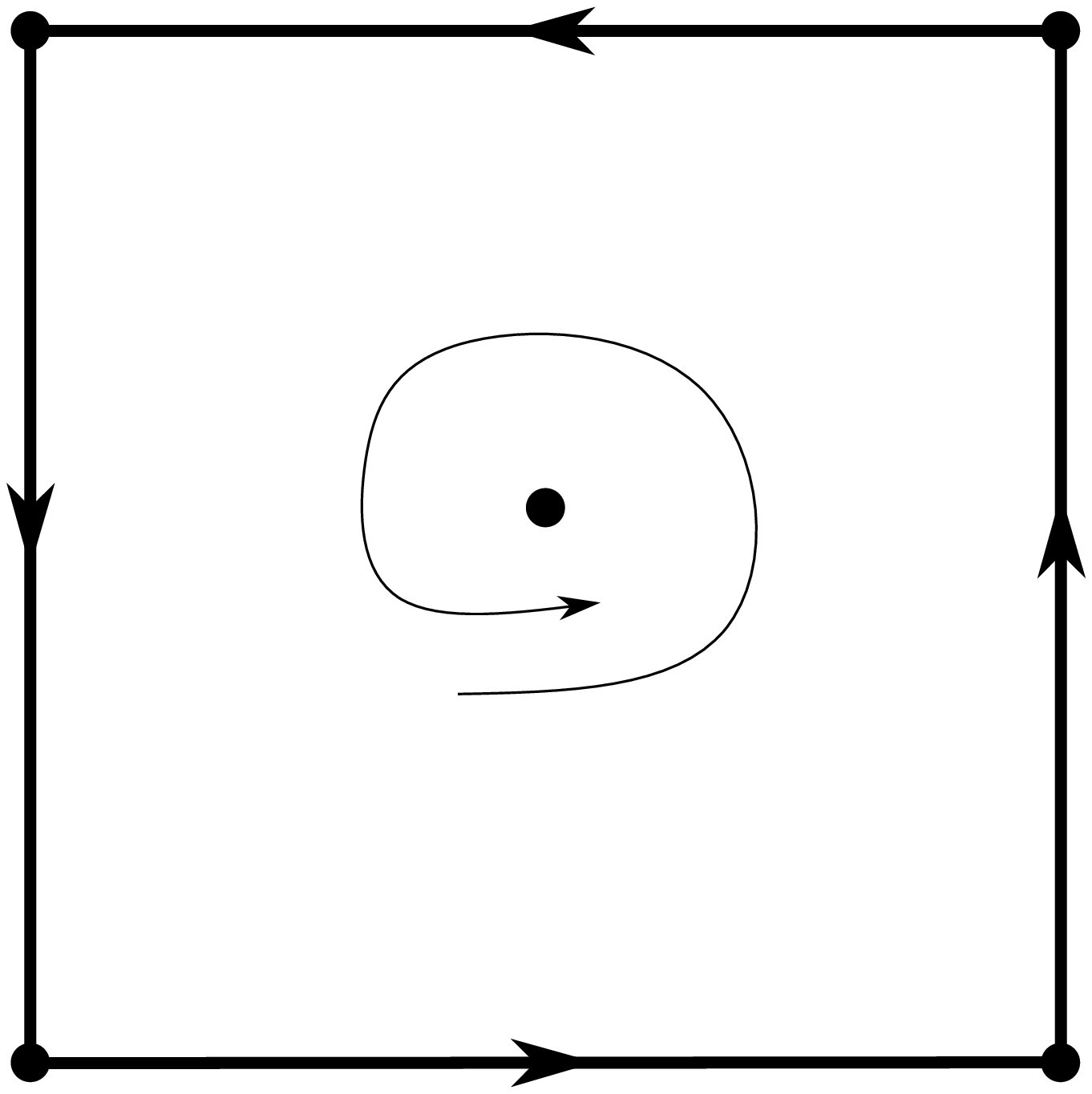} 
				\end{overpic}
				
				Case~$3.1$.
			\end{center}
		\end{minipage}
		\begin{minipage}{3.1cm}
			\begin{center}
				\begin{overpic}[height=3cm]{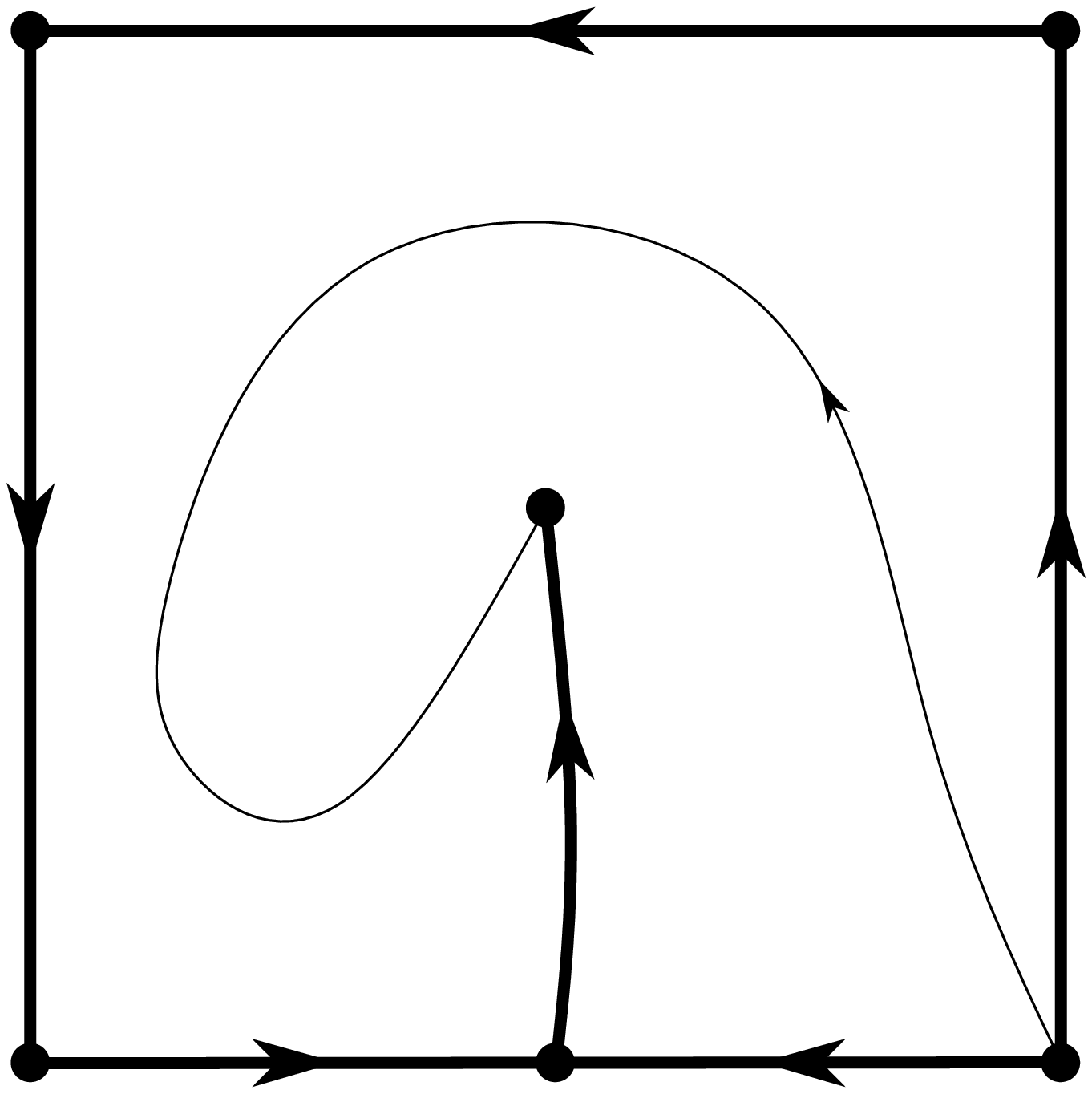} 
				\end{overpic}
				
				Case~$3.2$.
			\end{center}
		\end{minipage}
		\begin{minipage}{3.1cm}
			\begin{center}
				\begin{overpic}[height=3cm]{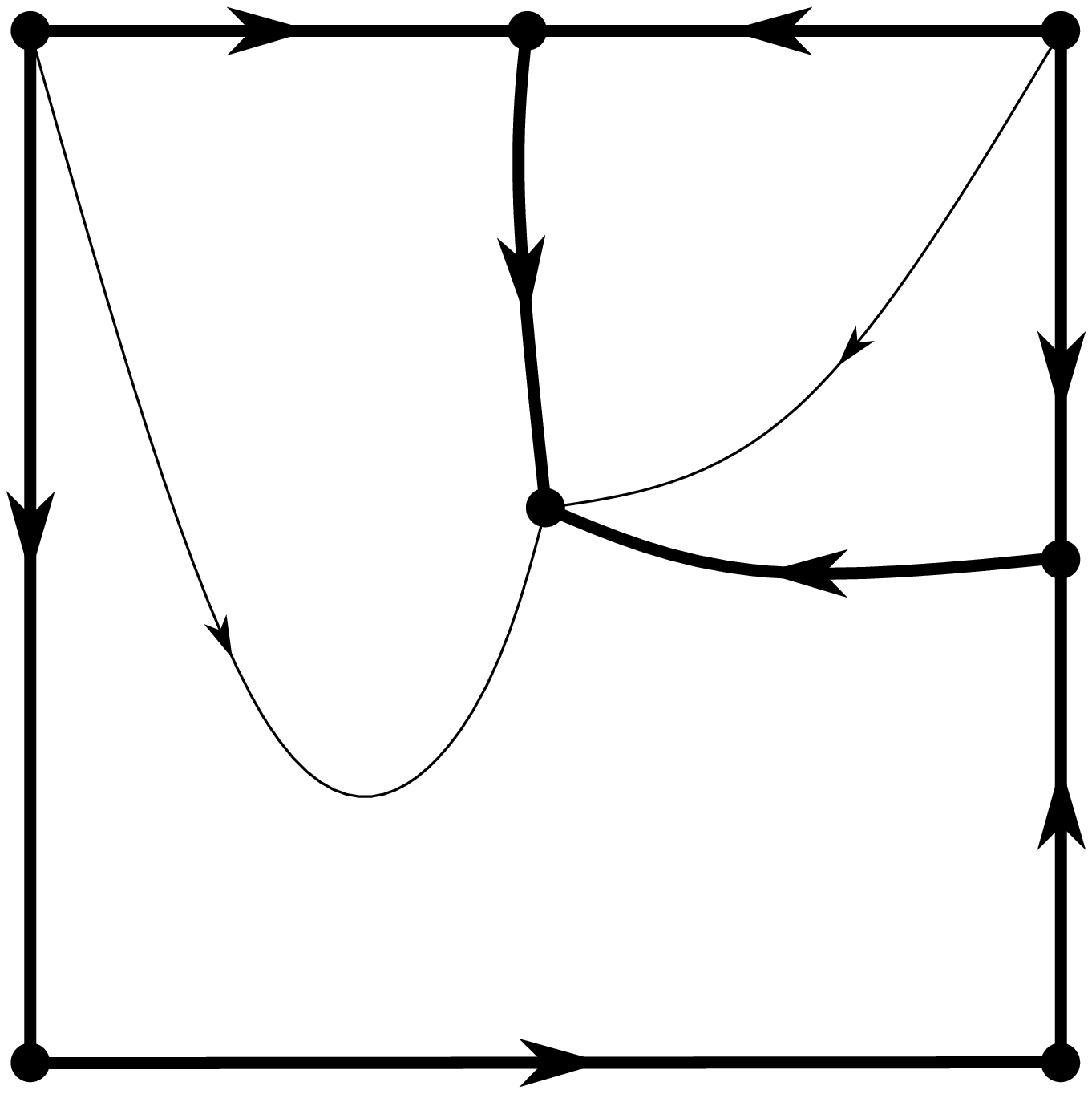} 
				\end{overpic}
				
				Case~$3.6$.
			\end{center}
		\end{minipage}
		\begin{minipage}{3.1cm}
			\begin{center}
				\begin{overpic}[height=3cm]{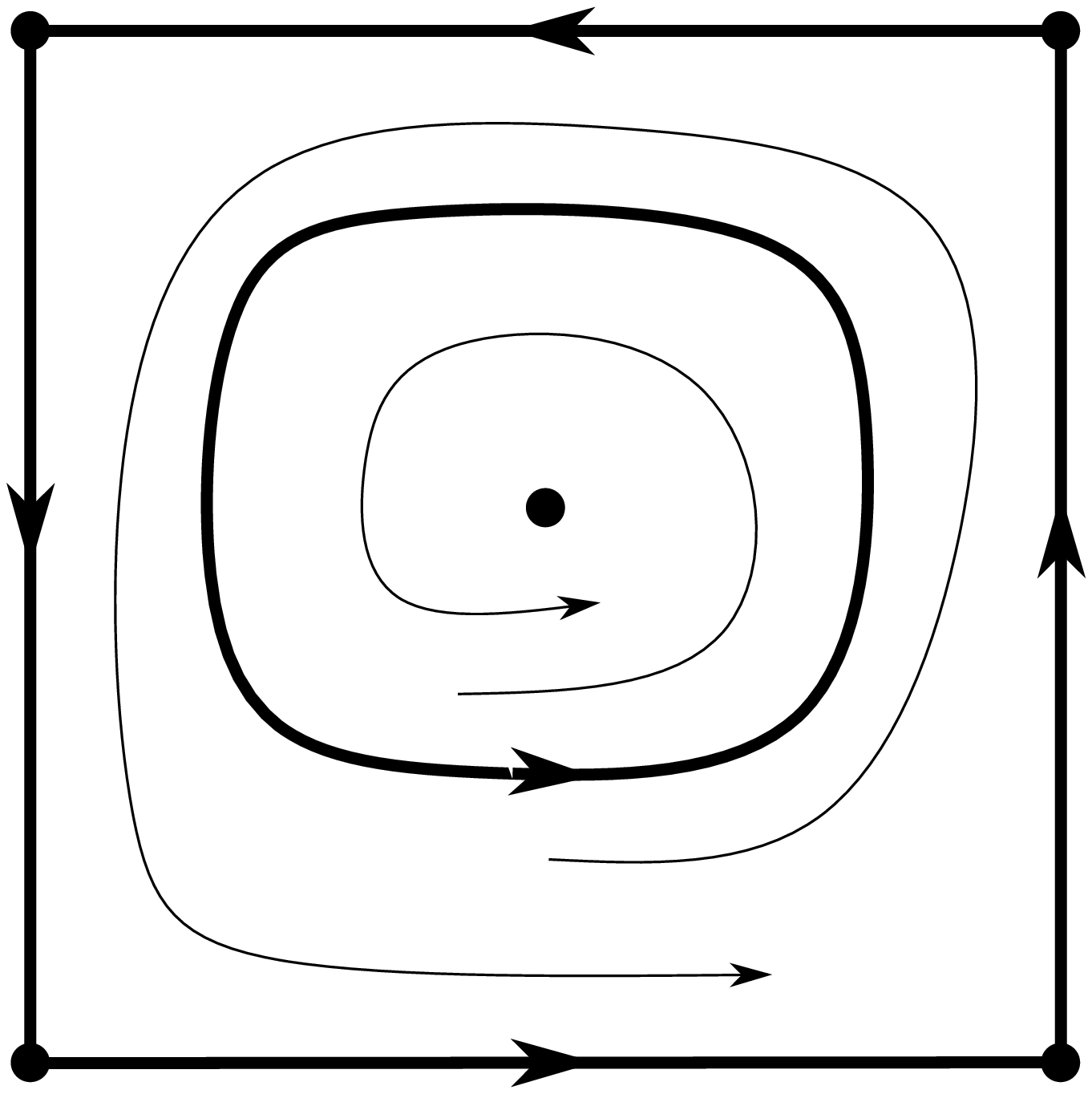} 
				\end{overpic}
				
				Case~$4.1b.i$.
			\end{center}
		\end{minipage}
	\end{center}
$\;$
	\begin{center}
		\begin{minipage}{3.1cm}
			\begin{center}
				\begin{overpic}[height=3cm]{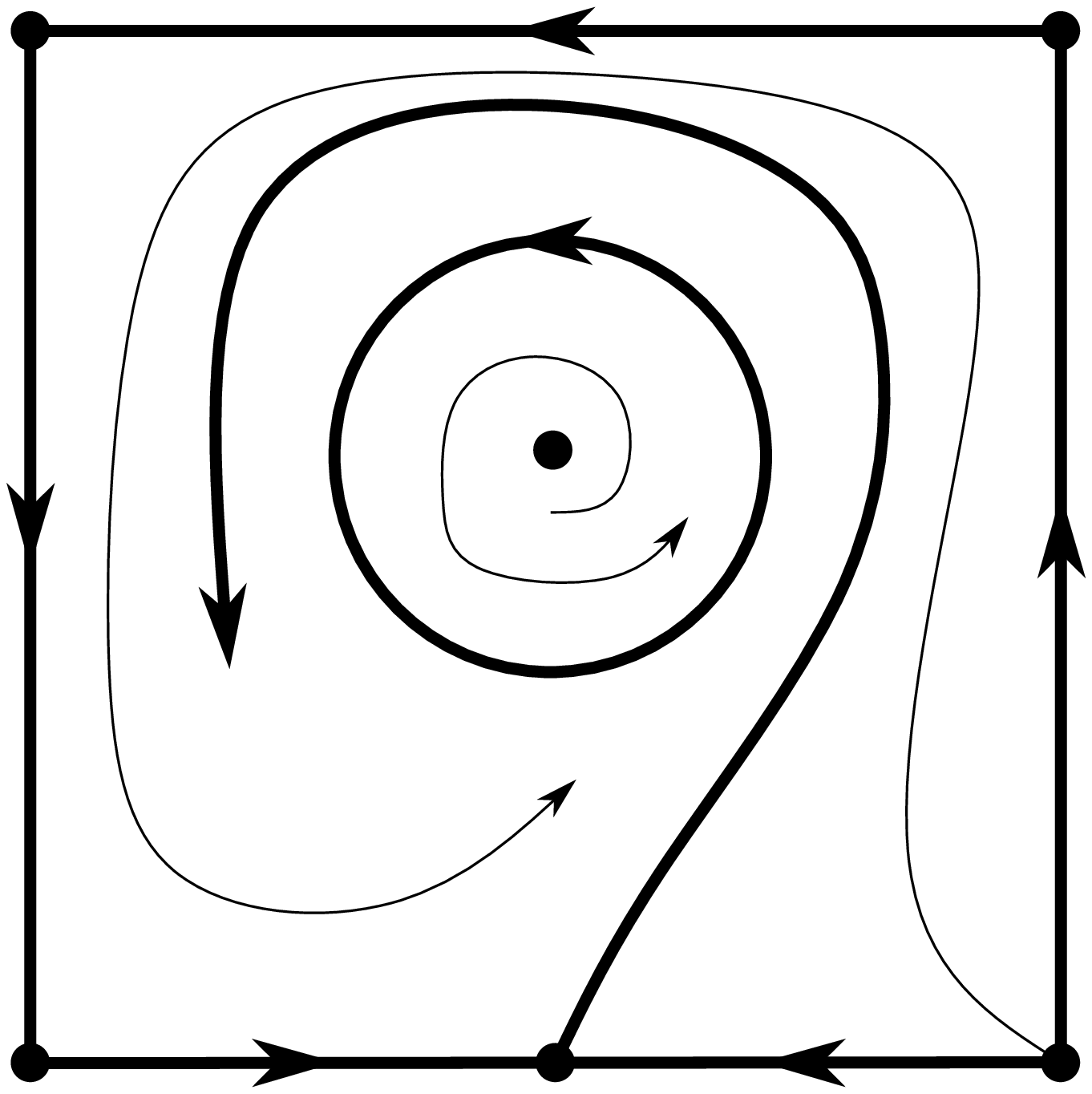} 
				\end{overpic}
				
				Case~$4.2a$.
			\end{center}
		\end{minipage}
		\begin{minipage}{3.1cm}
			\begin{center}
				\begin{overpic}[height=3cm]{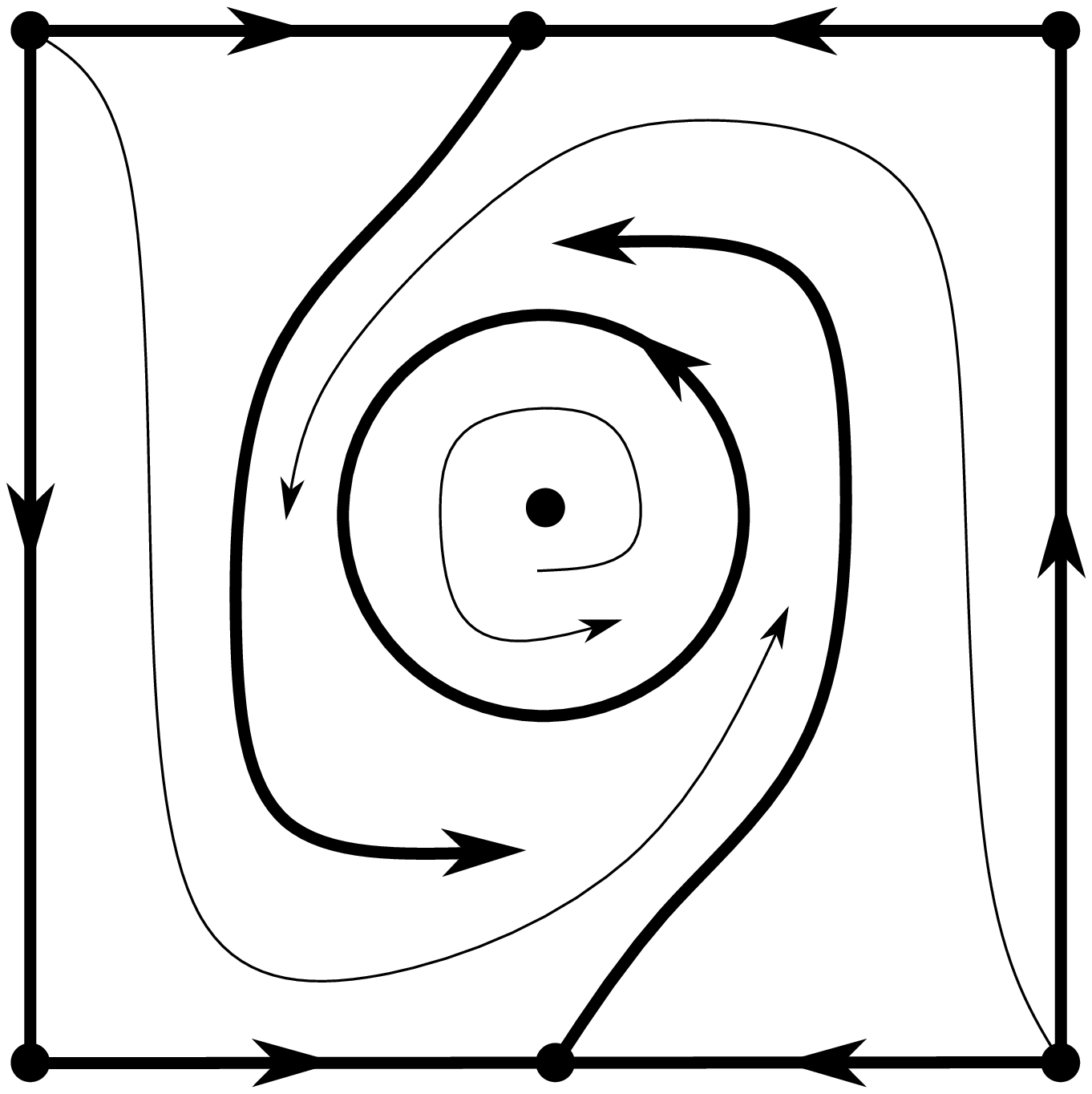} 
				\end{overpic}
				
				Case~$4.4a.i$.
			\end{center}
		\end{minipage}	
		\begin{minipage}{3.1cm}
			\begin{center}
				\begin{overpic}[height=3cm]{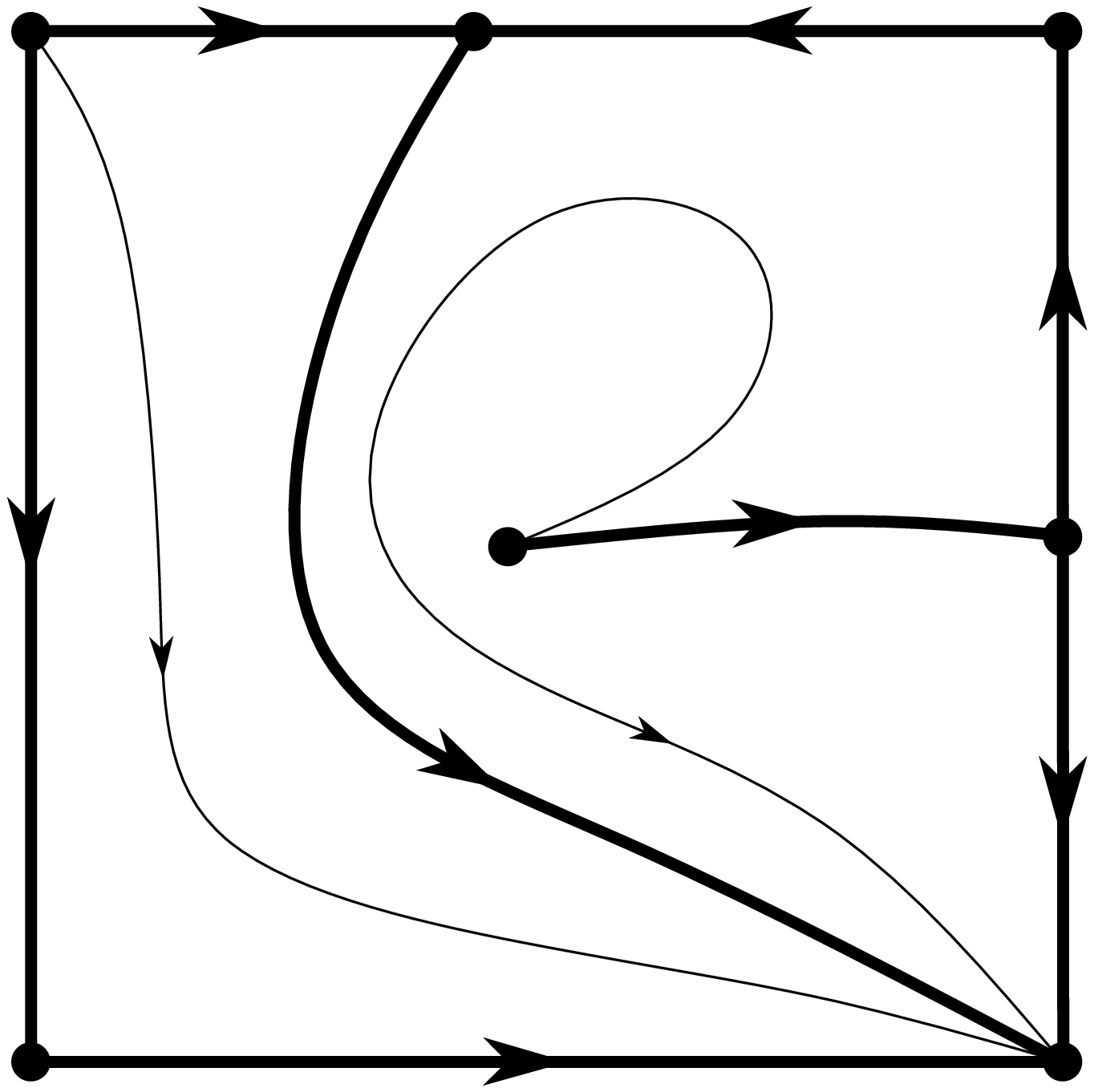} 
				\end{overpic}
				
				Case~$4.6a.i.1$.
			\end{center}
		\end{minipage}
		\begin{minipage}{3.1cm}
			\begin{center}
				\begin{overpic}[height=3cm]{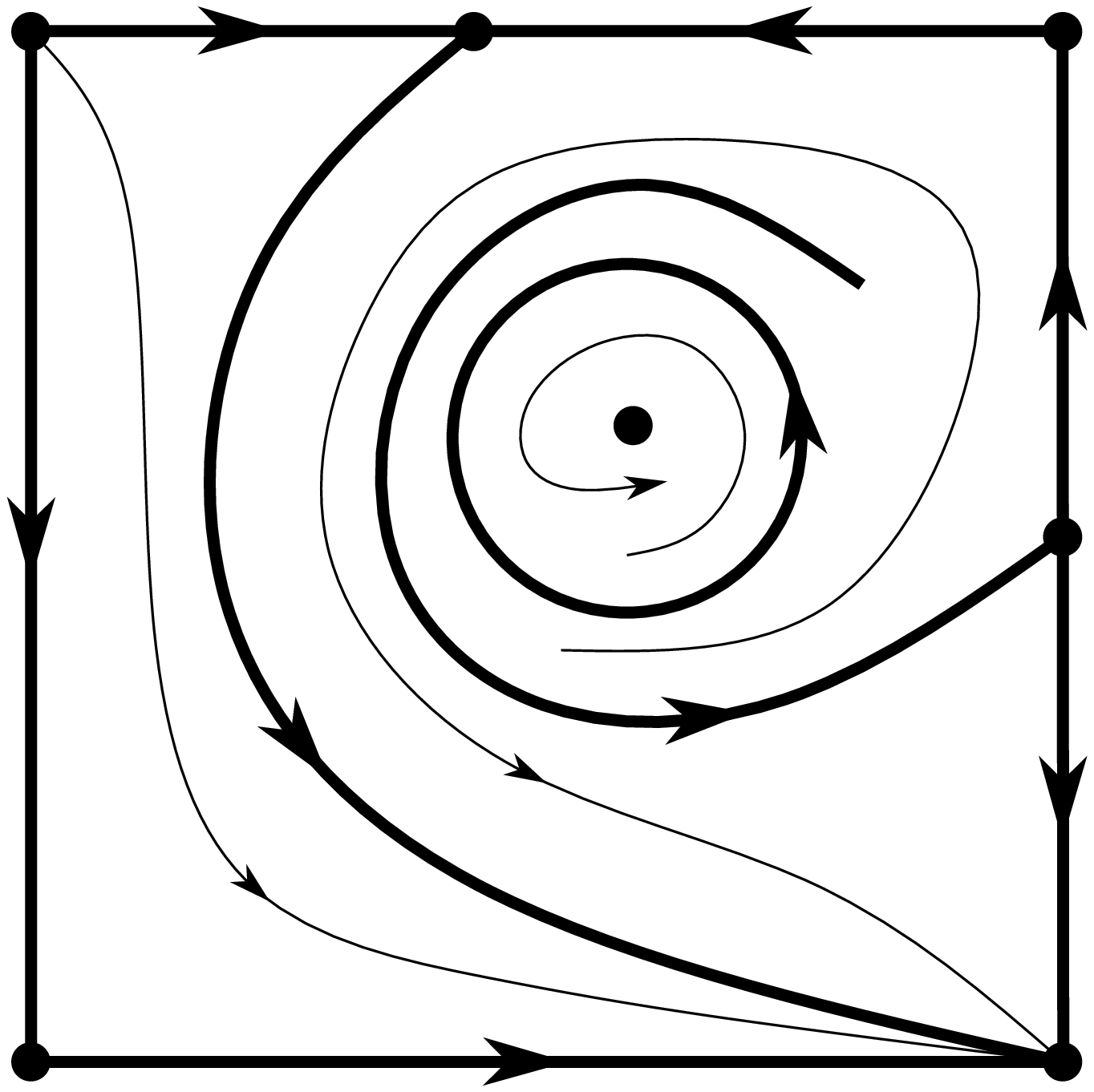} 
				\end{overpic}
				
				Case~$4.6a.ii.1$.
			\end{center}
		\end{minipage}
		\begin{minipage}{3.1cm}
			\begin{center}
				\begin{overpic}[height=3cm]{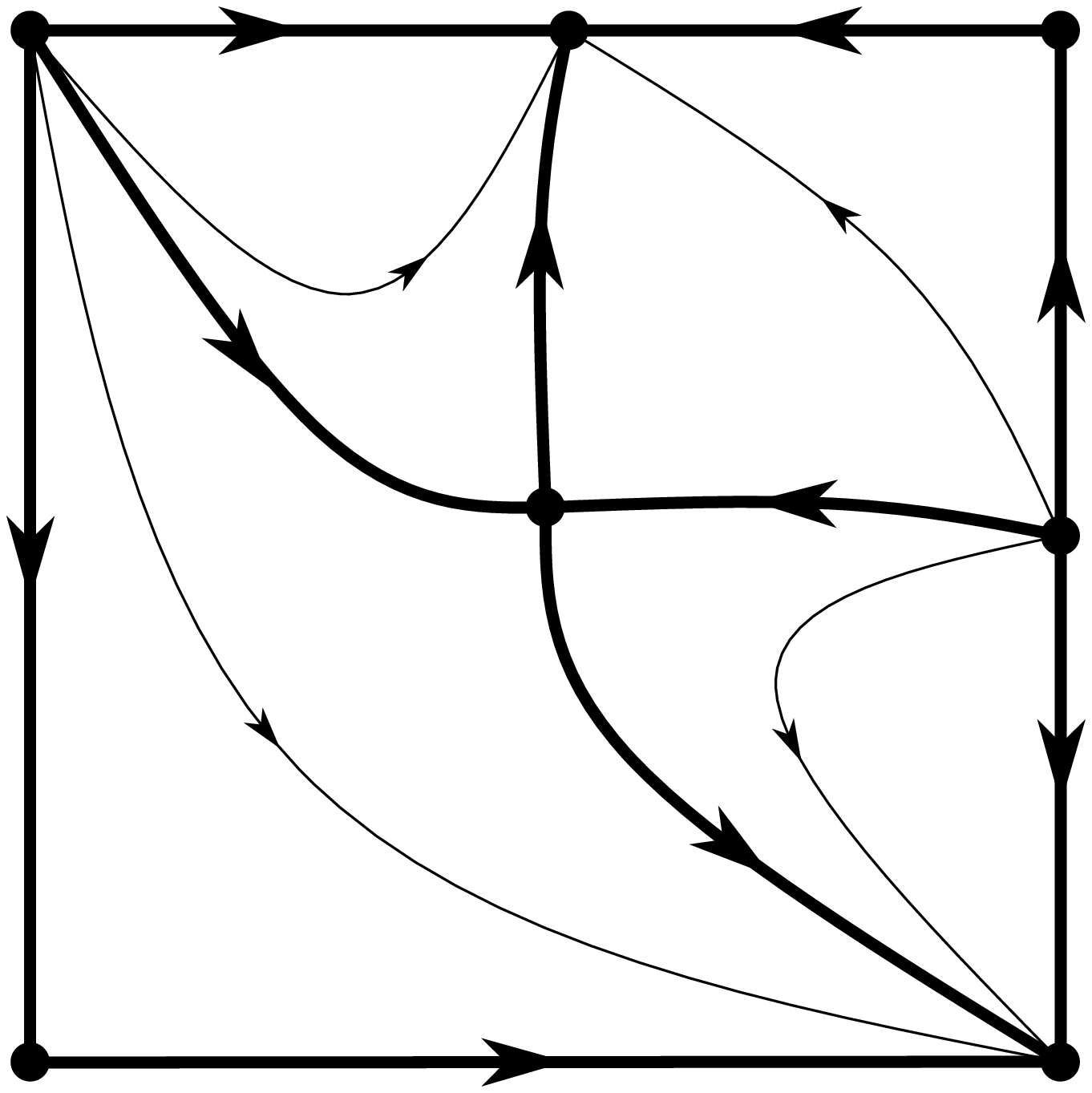} 
				\end{overpic}
				
				Case~$4.6b.i.1$.
			\end{center}
		\end{minipage}
	\end{center}
	\caption{Topologically distinct phase portraits under the hypothesis of Corollary~\ref{Main4}.}\label{FinalSquares}
\end{figure}
\end{corollary}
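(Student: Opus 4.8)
The plan is to deduce Corollary~\ref{Main4} directly from the classification in Theorem~\ref{Main3}, by restricting each global phase portrait to the square $V$. The key preliminary observation is that $V$ is an invariant region of the flow. Indeed, the four sides of $\partial V$ lie on the invariant straight lines $x=0$, $x=1$, $y=0$, $y=1$, and the four vertices $(0,0)$, $(1,0)$, $(0,1)$, $(1,1)$ are singularities of $X$, being common zeros of $P$ and $Q$. Hence each side is itself a union of orbits with singular endpoints, no orbit crosses $\partial V$, and the flow of $X$ restricts to a well-defined flow on the closed topological disk $V$. Consequently, for any $X\in\Sigma_0$ satisfying the hypothesis of Theorem~\ref{Main3}, its phase portrait inside $V$ is obtained from the global portrait in the Poincar\'e disk simply by discarding everything exterior to $V$ — the line at infinity, the exterior singularities, and the exterior separatrices.

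The second step is to run through the case analysis underlying Theorem~\ref{Main3}. For each sub-case produced there, the global portrait is already known, so restricting it to $V$ as above yields a concrete candidate portrait in the square, which I would then sort into topological-equivalence classes. Since by hypothesis $(p_1,p_2)$ is the unique singularity of $X$ in the interior of $V$, each square portrait is organized around this interior singularity together with the configuration along the four invariant boundary segments; the local type and stability of $(p_1,p_2)$, together with the direction of the flow along the four sides, furnish a complete set of invariants for separating classes. Two square portraits sharing all these data are matched by an explicit orbit-preserving homeomorphism of $V$, while two differing in any of them are distinguished by that invariant; carrying this out yields exactly the twenty portraits of Figure~\ref{FinalSquares}.

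The third step accounts for the drop from twenty five to twenty: topological equivalence on $V$ is coarser than topological equivalence on the whole Poincar\'e disk, so several global portraits distinguished only by their behaviour at infinity or in the exterior regions of the octothorpe restrict to one and the same portrait on $V$. I would make this merging explicit case by case, which also explains why the representative labels in Figure~\ref{FinalSquares} need not coincide with those in Figure~\ref{GenericFinal}. Realizability is then inherited for free: since Theorem~\ref{Main3} asserts that each global portrait is realized by some $X\in\Sigma_0$, the same $X$ realizes the corresponding square portrait, so all twenty are realizable.

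The main obstacle I expect lies in the bookkeeping of the second and third steps rather than in the invariance of $V$, which is immediate. One must determine, for every sub-case of the proof of Theorem~\ref{Main3}, precisely which restriction to $V$ results, and then show the resulting list has no redundancies (by exhibiting distinguishing invariants) and no omissions (by constructing the identifying homeomorphisms). Pinning down the exact merging pattern that collapses the twenty five disk classes onto twenty square classes is where the real care is required.
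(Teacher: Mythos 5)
Your first step (invariance of the square $V$, so that the flow restricts to a well-defined flow on a closed disk) and your general plan of restricting and re-sorting are consistent with what the paper does. The genuine gap is in your third step, and it is not mere bookkeeping: the claim that topological equivalence on $V$ is \emph{coarser} than topological equivalence on the Poincar\'e disk is false for this family, and the two relations are in fact incomparable. The identifications that reduced the many sub-cases to the twenty five disk portraits of Figure~\ref{GenericFinal} (the ``sliding of the $q$-singularities'' of Remarks~\ref{Remark5} and \ref{Remark6}) are realized by homeomorphisms of the disk that do \emph{not} preserve the central square of the octothorpe: they move $q$-singularities from one region of the octothorpe to another. Consequently two vector fields can be equivalent on the Poincar\'e disk while having inequivalent restrictions to $V$. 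This is exactly why the paper's proof states that only sixteen of the twenty square portraits are obtained by restricting Figure~\ref{GenericFinal}, while the remaining four (Cases $2.6a1$, $2.7a$, $2.15a$ and $4.6b.i.1$) must be recovered by ``engineering back'' those identifications, i.e.\ by returning to representatives that were merged away at the disk level.

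Your procedure, as described, starts from the twenty five disk representatives and only merges; it can therefore produce at most the sixteen square portraits visible in Figure~\ref{GenericFinal} and would omit the four listed above (note, for instance, that $2.6a1$, $2.7a$ and $2.15a$ do not even appear among the Family~$2$ representatives $2.1a1$, $2.1a3$, $2.1a4$, $2.1c$ of Figure~\ref{GenericFinal}). To repair the argument you must run the restriction-and-sorting step over the full case lists of the appendix tables (i.e.\ over every realizable disposition of the $p$- and $q$-singularities, before any disk-level identification), and only then quotient by square-preserving equivalences; the count of twenty arises from the combination of a genuine coarsening ($25\to 16$) together with a splitting that re-separates four cases that the disk classification had identified. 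Your realizability argument is fine once the correct representatives are in hand, since each square portrait is realized by the same $X\in\Sigma_0$ that realizes the corresponding (finer) case.
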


The paper is organized as follows. At Section~\ref{Sec4} we present some preliminaries about the Poincar\'e compactification, the notion of \emph{separatrix} and some known results about cubic systems with invariant straight lines. Theorem~\ref{Main1} is proved in Section~\ref{Sec5}. Theorem~\ref{Main2} is proved in Section~\ref{Sec6}. At Sections~\ref{Sec7} and \ref{Sec8} we study the singularities of system \eqref{6}. The phase portraits are studied in Section~\ref{Sec9}. Theorem~\ref{Main3} and Corollary~\ref{Main4} are proved in Section~\ref{Sec10}. Most of the phase portraits, and its characterizations, lies in the Appendix. 

\section{Preliminaries}\label{Sec4}

\subsection{The Poincar\'e Compactification}\label{SubSec4.1}

Let $X=(P,Q)$ be a planar \emph{polynomial} vector field of degree $n\in\mathbb{N}$ (observe that $n=3$ in our case). The \emph{Poincar\'e compactified vector field} $p(X)$ is an analytic vector field on $\mathbb{S}^2$ constructed as follows (for more details see \cite{Vel} or Chapter $5$ of \cite{DumLliArt2006}). First we identify $\mathbb{R}^2$ with the plane $(x_1,x_2,1)$ in $\mathbb{R}^3$ and define the \emph{Poincar\'e sphere} as $\mathbb{S}^2=\{y=(y_1,y_2,y_3)\in\mathbb{R}^3:y_1^2+y_2^2+y_3^2=1\}$. We define the \emph{northern hemisphere}, the \emph{southern hemisphere} and the \emph{equator} respectively by $H_+=\{y\in\mathbb{S}^2:y_3>0\}$, $H_-=\{y\in\mathbb{S}^2:y_3<0\}$ and $\mathbb{S}^1=\{y\in\mathbb{S}^2:y_3=0\}$. Consider now the projections $f_\pm:\mathbb{R}^2\rightarrow H_\pm$, given by $f_\pm(x_1,x_2)=\pm \Delta(x_1,x_2)(x_1,x_2,1)$, where $\Delta(x_1,x_2)=(x_1^2+x_2^2+1)^{-\frac{1}{2}}$. These two maps define two copies of $X$, one copy $X^+$ in $H_+$ and one copy $X^-$ in $H_-$. Consider the vector field $X'=X^+\cup X^-$ defined in $\mathbb{S}^2\backslash\mathbb{S}^1$. Note that the \emph{infinity} of $\mathbb{R}^2$ is identified with the equator $\mathbb{S}^1$. The Poincar\'e compactified vector field $p(X)$ is the analytic extension of $X'$ from $\mathbb{S}^2\backslash\mathbb{S}^1$ to $\mathbb{S}^2$ given by $y_3^{n-1}X'$. The \emph{Poincar\'e disk} $\mathbb{D}$ is the projection of the closed northern hemisphere to $y_3=0$ under $(y_1,y_2,y_3)\mapsto(y_1,y_2)$ (the vector field given by this projection will also be denoted by $p(X)$). Note that to know the behavior $p(X)$ near $\mathbb{S}^1$ is the same than to know the behavior of $X$ near the infinity. We define the local charts of $\mathbb{S}^2$ by $U_i=\{y\in\mathbb{S}^2:y_i>0\}$ and $V_i=\{y\in\mathbb{S}^2:y_i<0\}$ for $i\in\{1,2,3\}$. In these charts we define $\phi_i:U_i\rightarrow\mathbb{R}^2$ and $\psi_i:V_i\rightarrow\mathbb{R}^2$ by $\phi_i(y_1,y_2,y_3)=-\psi_i(y_1,y_2,y_3) = \left(\frac{y_m}{y_i},\frac{y_n}{y_i}\right)$, where $m\neq i$, $n\neq i$ and $m<n$. Denoting by $(u,v)$ the image of $\phi_i$ and $\psi_i$ in every chart (therefore $(u,v)$ will play different roles in each chart), one can see the following expression for $p(X)$ at chart $U_1$ is given by,
	\[\dot u = v^n \left[Q\left(\frac{1}{v},\frac{u}{v}\right)-uP\left(\frac{1}{v},\frac{u}{v}\right)\right], \quad \dot v = -v^{n+1}P\left(\frac{1}{v},\frac{u}{v}\right),\]
and at chart $U_2$ it is given by,
	\[\dot u = v^n \left[P\left(\frac{u}{v},\frac{1}{v}\right)-uQ\left(\frac{u}{v},\frac{1}{v}\right)\right], \quad \dot v = -v^{n+1}Q\left(\frac{u}{v},\frac{1}{v}\right).\]
The expressions of $p(X)$ in $V_1$ and $V_2$ is the same as that for $U_1$ and $U_2$, except by a multiplicative factor of $(-1)^{n-1}$. In these coordinates for $i\in\{1,2\}$, $v=0$ represents the points of $\mathbb{S}^1$. Thus, the infinity of $\mathbb{R}^2$. Note that $\mathbb{S}^1$ is invariant under the flow of $p(X)$. 

\subsection{The Markus-Neumann-Peixoto Theorem}\label{SubSec4.2}

Let $X$ be a \emph{polynomial} vector field, $p(X)$ its compacti\-fication defined on $\mathbb{D}$ and $\phi$ the flow defined by $p(X)$. The separatrices of $p(X)$ are:
\begin{enumerate}[label=\arabic*.]
	\item All the orbits contained in $\mathbb{S}^1$, i.e. at infinity;
	\item All the singular points;
	\item All the separatrices of the hyperbolic sectors of the finite and infinite singular points;
	\item All the limit cycles of $X$.
\end{enumerate}
Denote by $\mathcal{S}$ the set of all separatrices. Each connected component of $\mathbb{D}\backslash\mathcal{S}$ is called a \emph{canonical region} of the flow $(\mathbb{D},\phi)$. The \emph{separatrix configuration} $\mathcal{S}_c$ of a flow $(\mathbb{D},\phi)$, is the union of all the separatrices $\mathcal{S}$ of the flow, together with one orbit belonging to each canonical region. The separatrix configuration $\mathcal{S}_c$ of the flow $(\mathbb{D},\phi)$ is topologically equivalent to the separatrix configuration $\mathcal{S}_c^*$ of the flow $(\mathbb{D},\phi^*)$ if there exists an homeomorphism from $\mathbb{D}$ to $\mathbb{D}$ which transforms orbits of $\mathcal{S}_c$ into orbits of $\mathcal{S}_c^*$, orbits of $\mathcal{S}$ into orbits of $\mathcal{S}^*$ and preserves or reverses the orientation of all these orbits.

\begin{theorem}[Markus-Neumann-Peixoto]
	Let $p(X)$ and $p(Y)$ be two Poin\-car\'e compactifications in the Poincar\'e disk $\mathbb{D}$ of the two polynomial vector fields $X$ and $Y$, with finitely many singularities. Then the phase portraits of $p(X)$ and $p(Y)$ are topologically equivalent if and only if their separatrix configurations are topologically equivalent.
\end{theorem}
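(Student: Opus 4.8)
The statement is a biconditional, so the plan is to prove the two implications separately, with the forward direction being essentially formal and the converse carrying all the weight. For the easy direction, suppose $p(X)$ and $p(Y)$ are topologically equivalent via a homeomorphism $h\colon\mathbb{D}\to\mathbb{D}$ carrying orbits to orbits. I would observe that each of the four classes defining the separatrix set $\mathcal{S}$ is invariant under topological equivalence: $h$ maps $\mathbb{S}^1$ to itself (the unique invariant circle bounding $\mathbb{D}$), sends singular points to singular points and limit cycles to limit cycles, and—because hyperbolic sectors are characterized purely by the local topology of the flow—maps sector separatrices to sector separatrices. Hence $h(\mathcal{S})=\mathcal{S}^*$, and $h$ induces a bijection between canonical regions. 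After composing with an isotopy inside each canonical region (which fixes the separatrices), one arranges that $h$ also carries the chosen representative orbit of each region to that of its image, yielding equivalence of the separatrix configurations $\mathcal{S}_c$ and $\mathcal{S}_c^*$.

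For the converse I would start from a homeomorphism $h\colon\mathcal{S}_c\to\mathcal{S}_c^*$ realizing the equivalence of separatrix configurations and extend it to all of $\mathbb{D}$. The first step is a structural classification of the canonical regions: since a canonical region $R$ contains no separatrices and no singularities, the flow restricted to $R$ is parallelizable, and a compactness and boundary analysis shows that $\bar R$ with its induced flow is topologically equivalent to one of a short list of standard models—a \emph{strip} carrying a parallel flow, an \emph{annulus} carrying a periodic or spiraling flow, and so on. The key point to establish is that the equivalence type of the flow on $\bar R$ is completely determined by its \emph{boundary data}: the cyclic arrangement, along $\partial R$, of the bounding separatrices together with their orientations and the representative orbit.

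The core lemma I would then prove is a \emph{canonical extension} statement: given the standard model of $\bar R$ and of its counterpart $\bar R^*$, which share the same boundary data because $h$ matches separatrix configurations, any boundary homeomorphism respecting the flow—in particular the restriction $h|_{\partial R}$—extends to a flow-preserving homeomorphism $h_R\colon\bar R\to\bar R^*$. This is where the representative orbit earns its keep: it anchors the extension inside the strip- and annulus-type regions, in which a nontrivial shearing or winding ambiguity would otherwise arise. Finally I would glue the maps $h_R$ across the shared separatrices. Since each separatrix lies in the closure of finitely many canonical regions and every adjacent $h_R$ agrees with the single map $h$ there, the glued map $\tilde h\colon\mathbb{D}\to\mathbb{D}$ is well defined; continuity along the separatrices follows from the matching boundary behavior, and $\tilde h$ is the desired topological equivalence.

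The main obstacle is precisely this canonical-extension lemma together with the consistency of the gluing. Two points are delicate: verifying that the classification of canonical regions is exhaustive—handling the regions abutting $\mathbb{S}^1$ and those touching non-elementary singularities through their hyperbolic sectors—and checking that the per-region extensions $h_R$ can be chosen to agree \emph{simultaneously} on every shared separatrix, so that no monodromy obstruction prevents the local homeomorphisms from assembling into a single global one. The hypothesis of finitely many singularities is exactly what keeps $\mathcal{S}$ locally finite and thereby makes both the classification and the gluing tractable.
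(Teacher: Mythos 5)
The paper does not prove this statement at all: its ``proof'' is a pointer to the literature (\cites{Mar1954,Neu1975,Pei1971,CorrectionMNP} and Section~1.9 of \cite{DumLliArt2006}), so the comparison has to be made against those references. Your outline follows exactly their classical route: the forward implication is formal, and the converse rests on (i) classifying the canonical regions up to topological equivalence of the restricted flow, (ii) an extension lemma producing a flow-preserving homeomorphism on each closed canonical region from matching boundary data plus the representative orbit, and (iii) gluing along shared separatrices. So the strategy is the right one and is the same as the paper's sources.

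However, as a proof it has a genuine gap, and it is located precisely where you flag the ``main obstacle.'' The two steps you defer --- that the flow on $\bar R$ is determined by its boundary data, and that the per-region extensions can be chosen consistently --- are not routine verifications but the entire technical content of the theorem, and they are exactly the steps where the original arguments of Markus and Neumann were later found to be flawed: the paper cites \cite{CorrectionMNP} because the theorem is \emph{false} under some of the standard formulations of ``separatrix'' and ``separatrix configuration,'' with explicit counterexamples, and the statement and proof had to be amended. In particular, the claim that a canonical region falls into ``a short list of standard models'' whose equivalence class is read off from the cyclic arrangement of bounding separatrices is the assertion that requires the corrected definitions; asserting it from ``a compactness and boundary analysis'' without engaging with which definition of separatrix is in force does not close the argument. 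A complete proof must either adopt the amended framework of \cite{CorrectionMNP} or carry out the classification and gluing in enough detail to see why the known counterexamples are excluded; your sketch does neither.
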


\begin{proof} See \cites{Mar1954,Neu1975,Pei1971,CorrectionMNP} and Section~$1.9$ of \cite{DumLliArt2006}. \end{proof}

\subsection{Some results about cubic systems with invariant lines}\label{SubSec4.3}

In this subsection we present some results due to Kooij \cite{Kooij} about cubic systems with four invariant straight lines. 

\begin{theorem}[Theorem~$2.5.2$ of \cite{Kooij}]\label{Theo4}
	If a cubic system with exactly two pairs of parallel invariant real lines, where coinciding lines are counted double, has a weak focus, then, it has no limit cycles.
\end{theorem}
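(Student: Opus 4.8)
The plan is to put the line configuration in standard position, confine any limit cycle to a single region bounded by the invariant lines, and then destroy it with a Bendixson--Dulac argument whose Dulac function is manufactured from the four lines; the weak-focus hypothesis will be exactly what forces the relevant divergence to be sign-definite. First I would note that, since the two pairs of parallel lines are not mutually parallel, an affine change of coordinates carries them to $x=0$, $x=1$, $y=0$, $y=1$ (when two lines coincide the corresponding factor appears squared and the effective degree drops, a degenerate subcase handled the same way). Invariance of these four lines forces the field into the normal form
\[
\dot x=x(x-1)\,L(x,y),\qquad \dot y=y(y-1)\,N(x,y),
\]
with $L=a_{00}+a_{10}x+a_{01}y$, $N=b_{00}+b_{10}x+b_{01}y$, i.e.\ exactly system~\eqref{1}. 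The only finite singularity lying on none of the four lines is the point $p=(p_1,p_2)$ solving the linear system in~\eqref{5}; every other singularity sits on the octothorpe. A weak focus must therefore be $p$, and its Jacobian has determinant $\alpha(\alpha-1)\beta(\beta-1)\det A$, so purely imaginary eigenvalues force $\det A\neq0$.

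Next I would confine the periodic orbits. The four lines are invariant, hence no orbit crosses them, and they cut the plane into nine \emph{convex} (thus simply connected) regions, each of which must contain any given periodic orbit entirely. By the index formula a periodic orbit encloses singularities of total index $+1$, hence at least one antisaddle; but the interior of every region except the one containing $p$ has no singularity at all. Consequently every limit cycle lies in the region of $p$ and surrounds $p$.

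Then I would apply the Bendixson--Dulac criterion in that simply connected region with the Dulac function $B=|x|^{k_1}|x-1|^{k_2}|y|^{k_3}|y-1|^{k_4}$, which is smooth and positive in the region's interior (where none of $x,x-1,y,y-1$ vanish). A direct computation gives $\operatorname{div}(BX)=B\,\Theta$ with
\[
\Theta=\bigl[(2x-1)+k_1(x-1)+k_2x\bigr]L+a_{10}\,x(x-1)+\bigl[(2y-1)+k_3(y-1)+k_4y\bigr]N+b_{01}\,y(y-1),
\]
a quadratic polynomial whose homogeneous part is
\[
(3+s_1)a_{10}\,x^2+\bigl[(2+s_1)a_{01}+(2+s_2)b_{10}\bigr]xy+(3+s_2)b_{01}\,y^2,\qquad s_1=k_1+k_2,\ s_2=k_3+k_4.
\]
Because $p$ is a singularity, $\Theta(p)=\operatorname{div}X(p)=\operatorname{tr}DX(p)$, which vanishes precisely under the weak-focus hypothesis, so $\Theta(p)=0$ automatically. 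The quadratic part depends only on $s_1,s_2$, while the individual splits $k_1,k_3$ enter only the linear and constant coefficients of $\Theta$; the two equations $\nabla\Theta(p)=0$ are linear in $(k_1,k_3)$ with coefficient determinant equal to $a_{10}b_{01}-a_{01}b_{10}=\det A\neq0$, hence uniquely solvable. After this choice $\Theta$ is a homogeneous quadratic form in $(x-p_1,y-p_2)$.

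Finally, I would pick $s_1,s_2$ so that this quadratic form is semidefinite: then $\operatorname{div}(BX)=B\,\Theta$ keeps a constant sign and vanishes only on a set of measure zero, so Bendixson--Dulac forbids a periodic orbit in the region of $p$, and by the confinement step there are no limit cycles at all. The hard part will be precisely the semidefiniteness: one must show the discriminant $\bigl[(2+s_1)a_{01}+(2+s_2)b_{10}\bigr]^2-4(3+s_1)(3+s_2)a_{10}b_{01}$ can be made $\leqslant 0$ with matching leading signs. Here the trace relation $\alpha(\alpha-1)a_{10}+\beta(\beta-1)b_{01}=0$ together with $\det A\neq0$ pins down the signs of $a_{10}b_{01}$ and $a_{01}b_{10}$; when $a_{10}b_{01}>0$ the choice $s_1=s_2=-2$ already yields a definite form, and when $a_{10}b_{01}<0$ one selects $s_1,s_2$ with $(3+s_1)(3+s_2)<0$ and tunes the cross term, so the construction survives a short case analysis on the signs of the coefficients (and, separately, on the degenerate coinciding-line configurations). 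I expect this sign bookkeeping, rather than any of the structural steps, to be where the real work lies.
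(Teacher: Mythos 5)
First, a point of comparison: the paper does not prove this statement at all --- Theorem~\ref{Theo4} is imported verbatim from Kooij \cite{Kooij} (Theorem~$2.5.2$ there), so there is no in-paper argument to measure you against, and I have assessed your proposal on its own merits. Your architecture is the right one and is in the spirit of Kooij's methods: the normal form \eqref{1}, the confinement of any periodic orbit (via convexity of the nine regions and the index theorem) to the region containing the unique off-line singularity $p$ of \eqref{5}, the Dulac function $B=|x|^{k_1}|x-1|^{k_2}|y|^{k_3}|y-1|^{k_4}$, the identity $\operatorname{div}(BX)=B\Theta$ with your formula for $\Theta$, the observation that $\Theta(p)=\operatorname{tr}DX(p)=0$ under the weak-focus hypothesis, and the unique solvability of $\nabla\Theta(p)=0$ in $(k_1,k_3)$ because the coefficient matrix has determinant $\det A\neq0$ --- all of this checks out.

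The gap is in the last step, and it is not repairable by more careful sign bookkeeping within Bendixson--Dulac. Write $\sigma=3+s_1$, $\tau=3+s_2$, so the residual form is $\sigma a_{10}X^2+[(\sigma-1)a_{01}+(\tau-1)b_{10}]XY+\tau b_{01}Y^2$. When $a_{10}b_{01}<0$ (where the weak-focus condition forces $\det A>0$ and $a_{01}b_{10}<0$), semidefiniteness requires $\sigma\tau<0$, and killing the cross term via $\sigma-1=-\mu b_{10}$, $\tau-1=\mu a_{01}$ gives $\sigma\tau=(1-\mu b_{10})(1+\mu a_{01})$, an upward parabola in $\mu$ with roots $1/b_{10}$ and $-1/a_{01}$; these coincide exactly when $a_{01}+b_{10}=0$, and in that subcase one checks (using $(\sigma-\tau)^2\geqslant4|\sigma\tau|$ for $\sigma\tau<0$ together with $\det A=a_{10}b_{01}+a_{01}^2>0$) that \emph{no} choice of $(s_1,s_2)$ produces a nonzero semidefinite form: the only semidefinite option is $\sigma=\tau=0$, which forces $\Theta\equiv0$. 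The same happens when $a_{10}=b_{01}=0$. These parameter values are realizable with purely imaginary eigenvalues at $p$, so your criterion ``$\operatorname{div}(BX)$ keeps a constant sign and vanishes only on a set of measure zero'' is unattainable there. The missing idea is that $\Theta\equiv0$ means $B$ is an integrating factor, so the system is integrable on the whole region containing $p$ and therefore has no limit cycles there (indeed $p$ is then a center --- note that your breakdown locus $b_{01}(a_{01}+b_{10})=0$ is exactly the vanishing locus of the first focal value in Theorem~\ref{Theo5}, so this is no accident); alternatively one must invoke that focal-value computation to argue these cases are excluded by the word ``weak focus''. Either route requires input your proposal does not contain. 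A secondary, smaller omission: the configurations with coinciding lines are not actually reduced to \eqref{1} by an affine change (the vector field is then, e.g., $\dot x=x^2L$, $\dot y=y(y-1)N$), so the Dulac computation must be redone in parallel rather than ``handled the same way''.
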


We recall that a singularity $p$ of a planar analytic vector field is a \emph{degenerated monodromic singularity} if the eigenvalues of the linearized system at $p$ are given by $\lambda_i=\pm\omega i$, with $\omega>0$ and $i^2=-1$. If the origin is a degenerated monodromic singularity, then the canonical form of the system is given by,
\begin{equation}\label{40}
	\dot x= -y+F_1(x,y), \quad \dot y = x+F_2(x,y),
\end{equation}
with the linear parts of $F_1$ and $F_2$ being zero. In this case, it is well known that there is a function $V(x,y)$ defined in a neighborhood of the origin such that its rate of change along the orbits of \eqref{40} is given by the formal series,
\[\dot V= V_1(x^2+y^2)^2+V_2(x^2+y^2)^3+\dots.\]
The constants $V_i$ are the focal values. The stability of the origin is determined by the first non-vanishing focal value. In special, the origin is a center if, and only if, $V_i=0$ for all $i\in\mathbb{N}$. For more details, see Section~$4.2$ of \cite{DumLliArt2006} and the references therein.

\begin{theorem}[Corollary~$2.5.3$ of \cite{Kooij}]\label{Theo5}
	If the first focal value of a degenerated monodromic singularity of a cubic system with exactly two pairs of parallel invariant real lines, where coinciding lines are counted double, vanishes, then, it is a center.
\end{theorem}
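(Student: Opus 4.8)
The plan is to derive this center criterion from Theorem~\ref{Theo4} by a degenerate Hopf (Bautin-type) bifurcation argument, reasoning by contradiction. Write the system in the canonical form \eqref{40}, so that $\dot V=V_1(x^2+y^2)^2+V_2(x^2+y^2)^3+\dots$ along its orbits, and recall that the origin is a center exactly when every focal value vanishes, while it is a weak focus of order $k$ when $V_1=\dots=V_{k-1}=0$ and $V_k\neq0$, its stability being fixed by the sign of $V_k$. Suppose, contrary to the claim, that $V_1=0$ but the origin is not a center. Then it is a weak focus of some finite order $k\geqslant2$; after possibly reversing time (which preserves both limit cycles and the invariant-line structure) we may assume this weak focus is stable, so that $V_k<0$.

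The idea is to perturb this degenerate configuration inside the class of cubic systems with exactly two pairs of parallel invariant real lines so as to produce a genuine weak focus encircled by a limit cycle, which Theorem~\ref{Theo4} forbids. First I would introduce a one-parameter family $X_\lambda$ inside this class, with $X_0$ the given system, chosen so that the trace of the linearization at the singularity stays zero for every $\lambda$ (the singularity remains monodromic) while the first focal value becomes $V_1(X_\lambda)=\lambda$. Keeping the trace zero is essential, since it guarantees that the perturbed singularity is again a weak focus and hence that Theorem~\ref{Theo4} still applies to $X_\lambda$.

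With such a family in hand the contradiction is obtained by a Poincar\'e--Bendixson trapping argument. Because $X_0$ has a stable weak focus of order $k$, there is a small circle $C_{r_0}=\{x^2+y^2=r_0^2\}$, containing no singularity other than the origin in its interior, which every orbit crosses inward. Taking $\lambda>0$ small makes the origin a \emph{repelling} weak focus of order one, so a smaller circle $C_{r_1}$, with $0<r_1<r_0$, is crossed outward; at the same time, for $\lambda$ small enough $X_\lambda$ is $C^1$-close to $X_0$, and since transversality is an open condition and $C_{r_0}$ is not an orbit, orbits of $X_\lambda$ still cross $C_{r_0}$ inward. The annulus bounded by $C_{r_1}$ and $C_{r_0}$ is therefore positively invariant and singularity-free, so it contains a limit cycle of $X_\lambda$. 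As $X_\lambda$ has zero trace and $V_1(X_\lambda)=\lambda\neq0$ at the origin, that singularity is a (genuine) weak focus. Thus $X_\lambda$ is a cubic system in the prescribed class having simultaneously a weak focus and a limit cycle, contradicting Theorem~\ref{Theo4}; hence the origin of $X_0$ must be a center.

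The main obstacle is the construction of the admissible family $X_\lambda$: one must exhibit a perturbation that simultaneously (i) preserves the two pairs of parallel invariant real lines, so that $X_\lambda$ remains in the class; (ii) keeps the trace at the singularity equal to zero, so the focus stays weak; and (iii) moves $V_1$ off its vanishing value. Concretely, I would fix an explicit parametrization of cubic systems with two pairs of parallel invariant lines, restrict to the codimension-one locus on which the trace vanishes, and verify that along some direction tangent to that locus one has $\partial V_1/\partial\lambda\neq0$, i.e. that $V_1$ is not forced to vanish identically on the monodromic stratum. This transversality computation, and not the bifurcation mechanism, is where the real work lies; once it is established the contradiction with Theorem~\ref{Theo4} is immediate.
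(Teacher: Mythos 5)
First, a point of comparison: the paper does not prove this statement at all --- it is imported verbatim as Corollary~$2.5.3$ of \cite{Kooij} --- so there is no internal proof to measure yours against. Judged on its own terms, your strategy (reduce the corollary to Theorem~\ref{Theo4} by a Bautin-type degenerate Hopf bifurcation and a Poincar\'e--Bendixson trapping region) is the classical route from a ``no limit cycle around a weak focus'' theorem to a ``first focal value zero implies center'' corollary, and the trapping argument itself is carried out correctly (time reversal, the inward-crossing outer circle surviving small perturbations, the singularity-free annulus, and the fact that the innermost periodic orbit around the now-repelling first-order weak focus is a genuine limit cycle by analyticity of the return map).

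The genuine gap is the one you flag but do not close: exhibiting a perturbation that stays in the class (preserves two pairs of parallel invariant real lines), keeps the singularity monodromic (trace zero, determinant positive), and moves $V_1$ off zero. All of the content of the corollary lives in that verification; without it you have only shown that \emph{if} such a perturbation exists then the origin is a center, which is not the statement. Moreover the transversality is not uniformly true, so the missing step is not a formality. For instance, in the normal form \eqref{6} the trace at the origin is $a_{10}(\alpha-1)\alpha+b_{01}(\beta-1)\beta$, which does not involve $a_{01}$, while the first focal value computed in the proof of Proposition~\ref{Theo7} is $V_1=\frac{b_{01}(a_{01}+b_{10})}{8b_{10}}$; hence perturbing $a_{01}$ alone does everything you want with $\partial V_1/\partial a_{01}=\frac{b_{01}}{8b_{10}}$, \emph{provided} $b_{01}\neq0$. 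But $V_1=0$ has two branches, $a_{01}+b_{10}=0$ and $b_{01}=0$, and on the latter the derivative vanishes identically, so your bifurcation mechanism cannot start there; that branch must be handled separately (there the trace condition forces $a_{10}=0$, the system becomes separable, hence integrable, and the monodromic point is a center for that direct reason). A complete proof must either perform this case analysis on the full Kooij class, coinciding lines included, or argue directly from the explicit center conditions as Kooij does; as written, the proposal is an honest reduction of the problem rather than a proof.
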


\begin{theorem}[Theorem~$2.5.4$ of \cite{Kooij}]\label{Theo6}
	A cubic system with exactly two pairs of parallel invariant real lines has at most one limit cycle and if it exists, it is hyperbolic.
\end{theorem}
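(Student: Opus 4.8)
The plan is to first reduce to a normal form. Since the four invariant lines form two pairs of parallel lines, an affine change of coordinates sends them to $x=0$, $x=1$, $y=0$, $y=1$; requiring these to be invariant for a cubic field forces the system into the shape $\dot x = x(x-1)L_1$, $\dot y = y(y-1)L_2$, with $L_1 = a_{00}+a_{10}x+a_{01}y$ and $L_2 = b_{00}+b_{10}x+b_{01}y$. Any limit cycle must avoid the invariant lines, hence lies in one of the nine open regions of the octothorpe and surrounds the unique finite singularity $p$ with $L_1(p)=L_2(p)=0$. I would treat that region as the arena and split the analysis according to the sign of $a_{10}b_{01}$.

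The central tool is the Dulac function $B_0 = [x(x-1)y(y-1)]^{-1}$, which is smooth and of constant sign on each region. A direct computation gives $\mathrm{div}(B_0 X) = \frac{a_{10}}{y(y-1)} + \frac{b_{01}}{x(x-1)}$, whose sign on a fixed region equals that of the conic $\Phi := a_{10}x(x-1)+b_{01}y(y-1)$. When $\Phi$ keeps a constant sign on the region --- which happens, e.g., whenever $a_{10}b_{01}>0$ and $p$ lies in the central square, since there $x(x-1)<0$ and $y(y-1)<0$ --- the Bendixson--Dulac criterion immediately rules out periodic orbits, so the ``at most one'' bound holds vacuously. Moreover, writing $\mathrm{div}(X) = \Phi - X\cdot\nabla\ln B_0$ and integrating over a periodic orbit $\gamma$ (on which $\ln B_0$ is single valued) yields $\oint_\gamma \mathrm{div}(X)\,dt = \oint_\gamma \Phi\,dt$; hence a limit cycle is hyperbolic as soon as this integral is nonzero, and its stability is governed by the sign of $\oint_\gamma\Phi\,dt$.

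The substantive case is $a_{10}b_{01}<0$, where $\Phi$ changes sign and genuine limit cycles may occur. Here I would pass to the logarithmic coordinates $u = \ln\left|\frac{x-1}{x}\right|$, $v=\ln\left|\frac{y-1}{y}\right|$, under which each region maps diffeomorphically onto a plane and, since $\frac{dx}{du}=x(x-1)$, the system becomes $\dot u = a_{00}+a_{10}X(u)+a_{01}Y(v)$, $\dot v = b_{00}+b_{10}X(u)+b_{01}Y(v)$ with $X,Y$ explicit, strictly monotone functions. This is a Li\'enard-reducible (monotone-nonlinearity) system, for which classical uniqueness theorems of Li\'enard type yield at most one limit cycle, necessarily hyperbolic, under the monotonicity dictated precisely by $a_{10}b_{01}<0$. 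As an alternative I would embed $X$ in a one-parameter family of rotated vector fields preserving the line structure and argue by bifurcation: by Theorem~\ref{Theo5} the monodromic singularity is a weak focus of order one (its first focal value cannot vanish, else it is a center), so a Hopf bifurcation emits a single cycle; by Theorem~\ref{Theo4} no limit cycle coexists with the weak focus; and the monotonic motion of cycles in a rotated family forbids the collision needed to create a second one.

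The main obstacle is exactly this last case: once $\Phi$ changes sign the integral $\oint_\gamma\Phi\,dt$ is no longer of obvious sign, so neither uniqueness nor hyperbolicity is automatic. The crux is therefore to show that, despite the sign change of the divergence across the conic $\Phi=0$, every limit cycle produces $\oint_\gamma\Phi\,dt$ of one and the same sign. I expect the cleanest way to secure this is the monotonicity built into the Li\'enard reduction (or, equivalently, the rotated-field argument anchored to Theorems~\ref{Theo4} and \ref{Theo5}), which simultaneously delivers hyperbolicity and, via the impossibility of two nested cycles of equal stability, the bound of at most one limit cycle.
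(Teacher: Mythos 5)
First, note that the paper does not prove this statement at all: Theorem~\ref{Theo6} is imported verbatim as Theorem~$2.5.4$ of Kooij's paper \cite{Kooij} and used as a black box, so there is no internal proof to compare against; your attempt has to stand on its own as a reconstruction of Kooij's result. Your setup is correct and is the standard one: the normal form $\dot x=x(x-1)L_1$, $\dot y=y(y-1)L_2$; the observation that a periodic orbit lives in one open region of the octothorpe and surrounds the unique zero of $(L_1,L_2)$; the Dulac function $B_0=[x(x-1)y(y-1)]^{-1}$ with $\mathrm{div}(B_0X)=B_0\Phi$ where $\Phi=a_{10}x(x-1)+b_{01}y(y-1)$; and the identity $\oint_\gamma \mathrm{div}(X)\,dt=\oint_\gamma\Phi\,dt$, which reduces both hyperbolicity and (via the impossibility of nested cycles of equal stability) uniqueness to showing that $\oint_\gamma\Phi\,dt$ has one fixed nonzero sign. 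The Bendixson--Dulac disposal of the case where $\Phi$ is semidefinite on the region is also fine, although your dichotomy by the sign of $a_{10}b_{01}$ is not quite the right one: in a side strip such as $0<x<1$, $y>1$ the factors $x(x-1)$ and $y(y-1)$ have opposite signs, so $\Phi$ can be indefinite even when $a_{10}b_{01}>0$; the correct split is by whether $\Phi$ is semidefinite on the particular region containing the cycle.

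The genuine gap is that the hard case is not actually proved; both escape routes you offer fail as stated. The logarithmic change of variables gives $\dot u=a_{00}+a_{10}X(u)+a_{01}Y(v)$, $\dot v=b_{00}+b_{10}X(u)+b_{01}Y(v)$, and this is \emph{not} a Li\'enard system: both equations depend on both variables, and no linear change of coordinates separates them (the combinations $b_{01}u-a_{01}v$ and $b_{10}u-a_{10}v$ each have a derivative depending on a single variable, but re-expressing $u$ and $v$ in these coordinates reintroduces the coupling). ``Classical uniqueness theorems of Li\'enard type'' therefore do not apply off the shelf, and no concrete theorem with verifiable hypotheses is named --- yet this is exactly where the content of Kooij's theorem lives. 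The rotated-vector-field alternative has the same problem: one can indeed build a rotated family preserving the four lines (e.g.\ $\dot x=x(x-1)(L_1+\lambda L_2)$, $\dot y=y(y-1)(L_2-\lambda L_1)$), and Theorems~\ref{Theo4} and \ref{Theo5} control the weak-focus parameter value, but monotone rotation only forbids distinct cycles of the \emph{same} family from crossing; it does not forbid a second limit cycle from being born in a semistable (saddle-node) bifurcation of cycles or from the polycycle bounding the region as $\lambda$ moves away from the Hopf value. Excluding those scenarios is equivalent to knowing that all periodic orbits are hyperbolic with a common stability --- i.e.\ to the very claim about the sign of $\oint_\gamma\Phi\,dt$ that you correctly identify as the crux but never establish. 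As it stands the argument is a correct reduction of the problem, not a proof of it.
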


Observe that Theorems~\ref{Theo4}-\ref{Theo6} trivialize two big problems about the classification of phase portraits. Namely, the center-focus problem and the number of limit cycles. Such results will be of great help to deal with item $(b)$ of the definition of a generic vector field $X\in\mathfrak{X}$. For similar results about the number of limit cycles in planar systems of degree $n$ with $n$ or $n+1$ invariant straight lines, see the works of Llibre and Rodrigues \cite{LliRod} and Kooij \cite{Kooij2}, respectively. For a cubic system with a cusp, see the work of Xian and  Kooij \cite{XianKooij}. See also the works of Cozma and Suba \cites{CozSub1,CozSub2} for related center-focus problem of cubic systems.

\section{Proof of Theorem~\ref{Main1}}\label{Sec5}

\noindent \textit{Proof of Theorem~\ref{Main1}.} Let $X\in\mathfrak{X}$ be given by,
	\[P(x,y)=x(x-1)(a_{00}+a_{10}x+a_{01}y), \quad Q(x,y)=y(y-1)(b_{00}+b_{10}x+b_{01}y),\]
and
	\[A=\left(\begin{array}{cc} a_{10} & a_{01} \\ b_{10} & b_{01} \end{array}\right).\]
We want to prove that if $X\in\Sigma_0$ (see Definition~\ref{Def1}), then $a_{10}b_{01}\det A\neq0$. To prove this, we look to the singularities at infinity. It follows from Subsection~\ref{SubSec4.1} that the compactified vector field $p(X)$ at chart $U_1$ is given by,
\begin{equation}\label{43}
	\begin{array}{rl}
		P_1(u,v) &= -a_{10}u+(b_{10}-a_{01})u^2+(a_{10}-a_{00}-b_{10})uv \vspace{0.2cm} \\
		&\quad+b_{01}u^3+(a_{01}+b_{00}-b_{01})u^2v+(a_{00}-b_{00})uv^2, \vspace{0.2cm} \\
		Q_1(u,v) &= -a_{10}v-a_{01}uv+(a_{10}-a_{00})v^2+a_{01}uv^2+a_{00}v^3.
	\end{array}	
\end{equation}
Let $X_1$ denote the vector field given by \eqref{43}. At chart $U_2$, $p(X)$ is given by,
\begin{equation}\label{47}
	\begin{array}{rl}
		P_2(u,v) &= -b_{01}u+(a_{01}-b_{10})u^2+(b_{01}-b_{00}-a_{01})uv \vspace{0.2cm} \\
		&\quad+a_{10}u^3+(a_{00}-a_{10}+b_{10})u^2v+(b_{00}-a_{00})uv^2, \vspace{0.2cm} \\
		Q_2(u,v) &= -b_{01}v-b_{10}uv+(b_{01}-b_{00})v^2+b_{10}uv^2+b_{00}v^3.
	\end{array}	
\end{equation}
Let $X_2$ denote the vector field given by \eqref{47}. Observe that the origin of both $X_1$ and $X_2$ are singularities with their Jacobian matrices given by,
	\[DX_1(0,0)=\left(\begin{array}{cc} -a_{10} & 0 \vspace{0.2cm} \\ 0 & -a_{10} \end{array}\right), \quad DX_2(0,0)=\left(\begin{array}{cc} -b_{01} & 0 \vspace{0.2cm} \\ 0 & -b_{01} \end{array}\right).\]
Therefore, if $a_{10}b_{01}=0$, then $p(X)$ has a non-hyperbolic singularity. Thus, $X\not\in\Sigma_0$. To finish the proof, we assume $a_{10}b_{01}\neq0$, $\det A=0$ and conclude that $X\not\in\Sigma_0$. Let,
	\[\mathcal{P}(x,y)=a_{00}+a_{10}x+a_{01}y, \quad \mathcal{Q}(x,y)=b_{00}+b_{10}x+b_{01}y,\]
and $r_1$, $r_2$ be the straight lines given by $r_1=\mathcal{P}^{-1}(0)$ and $r_2=\mathcal{Q}^{-1}(0)$. It follows from $\det A=0$ that either $r_1=r_2$, or $r_1$ and $r_2$ are distinct and parallel. If $r_1=r_2$, then such straight line is a set of singularities. Therefore, $X$ has infinitely many singularities. Thus, $X\not\in\Sigma_0$. Suppose that $r_1$ and $r_2$ are distinct and parallel. We claim that at least one singularity at infinity is non-hyperbolic. Observe that the singularities of $X_1$ with $v=0$ are given by $u=0$ or by the real solutions of,
\begin{equation}\label{44}
	b_{01}u^2+(b_{10}-a_{01})u-a_{10}=0.
\end{equation}
Since $b_{01}\neq0$, it is clear that the solutions of \eqref{44} are given by,
\begin{equation}\label{45}
	u_0^\pm=\frac{(a_{01}-b_{10})\pm\sqrt{\Delta}}{2b_{01}}, \quad \Delta=(b_{10}-a_{01})^2+4a_{10}b_{01}.
\end{equation}
It follows from $\det A=0$ that $a_{10}b_{01}=a_{01}b_{10}$. Replacing this at $\Delta$ we get,
	\[\Delta=(b_{10}+a_{01})^2\geqslant0.\]
Hence, $u_0^\pm$ are real. It also follows from $\det A=0$ that,
\begin{equation}\label{46}
	a_{10}=\frac{a_{01}b_{10}}{b_{01}}.
\end{equation}
If we calculate the determinant of the Jacobian matrices of $X_1$ at $(u_0^\pm,0)$ and then replace \eqref{46}, we get,
	\[\det DX_1\bigl(u_0^\pm,0\bigr)=-\frac{a_{01}^2(a_{01}+b_{10})}{2b_{01}^2}\bigl((a_{01}+b_{10})\pm|a_{01}+b_{10}|\bigr).\]
Therefore, $(u_0^+,0)$ or $(u_0^-,0)$ is a non-hyperbolic singularity of $X_1$. Hence, $p(X)$ has a non-hyperbolic singularity. Thus, $X\not\in\Sigma_0$. {\hfill$\square$}

\section{Proof of Theorem~\ref{Main2}}\label{Sec6}

\noindent \textit{Proof of Theorem~\ref{Main2}.} Given a vector field $X\in\Sigma_0$, in this proof we work on its equivalent form,
\begin{equation}\label{41}
	\dot x=(x+\alpha)(x+\alpha-1)(a_{10}x+a_{01}y), \quad \dot y=(y+\beta)(y+\beta-1)(b_{10}x+b_{01}y).
\end{equation}
Consider the maps $\varphi_i\colon\mathbb{R}^8\to\mathbb{R}^8$, $i\in\{1,2,3,4\}$, given by,
\[\begin{array}{rl} 
	\varphi_1(x,y;\alpha,\beta,a_{10},a_{01},b_{10},b_{01}) &=(y,x;\beta,\alpha,b_{01},b_{10},a_{01},a_{10}); \vspace{0.2cm} \\
	\varphi_2(x,y;\alpha,\beta,a_{10},a_{01},b_{10},b_{01}) &= (-y,-x;1-\beta,1-\alpha,b_{01},b_{10},a_{01},a_{10}); \vspace{0.2cm} \\
	\varphi_3(x,y;\alpha,\beta,a_{10},a_{01},b_{10},b_{01}) &= (-x,y;1-\alpha,\beta,a_{10},-a_{01},-b_{10},b_{01}); \vspace{0.2cm} \\
	\varphi_4(x,y;\alpha,\beta,a_{10},a_{01},b_{10},b_{01}) &= (x,-y;\alpha,1-\beta,a_{10},-a_{01},-b_{10},b_{01}).
\end{array}\]
If we handle the parameters $\alpha$, $\beta$, $a_{ij}$ and $b_{ij}$ as variables, then we can write system \eqref{41} as the eight-dimensional system given by,
\begin{equation}\label{8}
	\begin{array}{l}
		\dot x = (x+\alpha)(x+\alpha-1)(a_{10}x+a_{01}y), \quad \dot y = (y+\beta)(y+\beta-1)(b_{10}x+b_{01}y), \vspace{0.2cm} \\
		\dot \alpha=0, \quad \dot \beta=0, \quad \dot a_{10}=0, \quad \dot a_{01}=0, \quad \dot b_{10}=0, \quad \dot b_{01}=0.
	\end{array}
\end{equation}
Therefore, let $Y=(P,Q;R_1,R_2,R_3,R_4,R_5,R_6)$ be the vector field given by system \eqref{8}, i.e.
\[\begin{array}{rl}
	P(x,y;\alpha,\beta,a_{10},a_{01},b_{10},b_{01}) &=(x+\alpha)(x+\alpha-1)(a_{10}x+a_{01}y), \vspace{0.2cm} \\ Q(x,y;\alpha,\beta,a_{10},a_{01},b_{10},b_{01}) &=(y+\beta)(y+\beta-1)(b_{10}x+b_{01}y), \vspace{0.2cm} \\
	R_j(x,y;\alpha,\beta,a_{10},a_{01},b_{10},b_{01}) &=0,
\end{array}\]
$j\in\{1,\dots,6\}$. We claim that $Y=(\varphi_i)_*Y$. More precisely, we claim that
\begin{equation}\label{9}
	Y(z)=D\varphi_i(\varphi_i^{-1}(z))\cdot Y(\varphi_i^{-1}(z)),
\end{equation}
for $i\in\{1,2,3,4\}$ and $z\in\mathbb{R}^8$. To prove this claim, we observe that,
\begin{equation}\label{49}
\begin{array}{ll}
	Y=(\varphi_1)_*Y \Leftrightarrow \left\{\begin{array}{rl} P &= Q\circ\varphi_1, \vspace{0.2cm} \\ Q & =P\circ\varphi_1, \end{array}\right. &
	Y=(\varphi_2)_*Y \Leftrightarrow \left\{\begin{array}{rl} P &= -Q\circ\varphi_2, \vspace{0.2cm} \\ Q &= -P\circ\varphi_2, \end{array}\right. \vspace{0.2cm} \\
	Y=(\varphi_3)_*Y \Leftrightarrow \left\{\begin{array}{rl} P &= -P\circ\varphi_3, \vspace{0.2cm} \\ Q &= Q\circ\varphi_3, \end{array}\right. &
	Y=(\varphi_4)_*Y \Leftrightarrow \left\{\begin{array}{rl} P &= P\circ\varphi_4, \vspace{0.2cm} \\ Q &= -Q\circ\varphi_4. \end{array}\right. 
\end{array}
\end{equation}
Simple calculations show that the right-hand sides of \eqref{49} holds. Thus, \eqref{9} also holds. Let $X\in\Sigma_0$. If the origin is under position $5$, it is clear that $(\varphi_3)_*X\in\Sigma_0$ has the origin under position $3$. Similarly, $\varphi_2$ and $\varphi_4$ sends positions $7$ and $9$ to position $3$. Therefore, we have the topological equivalence between positions $3$, $5$, $7$ and $9$. The equivalence of positions $2$, $4$, $6$ and $8$ follows similarly. We now look at the second claim of the theorem, about Tables~\ref{Table8}, \ref{Table9} and \ref{Table10}. We first focus at position $1$ and then obtain Table~\ref{Table8}. Consider the four parameters,
\begin{equation}\label{34}
	(a_{10},a_{01},b_{10},b_{01})\in\mathbb{R}^4.
\end{equation}
Assume $a_{10}a_{01}b_{10}b_{01}\neq0$. Hence, we have sixteen possible cases given by the signals of such parameters. To simplify the notation, we denote such cases by,
\begin{multicols}{4}
	\begin{enumerate}[label=\arabic*)]
		\item $(0,0,0,0)$, 
		\item $(0,0,0,1)$, 
		\item $(0,0,1,0)$, 
		\item $(0,1,0,0)$, 
	\end{enumerate}
	\columnbreak
	\begin{enumerate}[label=\arabic*)]
		\setcounter{enumi}{4}
		\item $(1,0,0,0)$, 
		\item $(0,0,1,1)$, 
		\item $(0,1,0,1)$, 
		\item $(1,0,0,1)$,
	\end{enumerate}
	\columnbreak
	\begin{enumerate}[label=\arabic*)]
		\setcounter{enumi}{8}
		\item $(1,1,1,1)$, 
		\item $(1,1,1,0)$, 
		\item $(1,1,0,1)$, 
		\item $(1,0,1,1)$, 
	\end{enumerate}
	\columnbreak
	\begin{enumerate}[label=\arabic*)]
		\setcounter{enumi}{12}
		\item $(0,1,1,1)$, 
		\item $(1,1,0,0)$, 
		\item $(1,0,1,0)$, 
		\item $(0,1,1,0)$, 
	\end{enumerate}
\end{multicols}
\noindent where $1$ denotes that its respective parameter, according to \eqref{34}, is positive and $0$ denote that it is negative. For example, case $(3)$ denote,
	\[a_{10}<0, \quad a_{01}<0, \quad b_{10}>0, \quad b_{01}<0,\]
while case $(15)$ denotes,
	\[a_{10}>0, \quad a_{01}<0, \quad b_{10}>0, \quad b_{01}<0.\]
With the change of time $\tau= -t$, one can conclude that cases $k$ and $k+8$ are equivalent, for $k\in\{1,\dots,8\}$. Thus, we have the eight classes given by,
\begin{multicols}{2}
	\begin{enumerate}[label=\arabic*)]
		\item $\{(0,0,0,0),(1,1,1,1)\}$, 
		\item $\{(0,0,0,1),(1,1,1,0)\}$,
		\item $\{(0,0,1,0),(1,1,0,1)\}$, 
		\item $\{(0,1,0,0),(1,0,1,1)\}$, 
	\end{enumerate}
	\columnbreak
	\begin{enumerate}[label=\arabic*)]
		\setcounter{enumi}{4}
		\item $\{(1,0,0,0),(0,1,1,1)\}$, 
		\item $\{(0,0,1,1),(1,1,0,0)\}$, 
		\item $\{(0,1,0,1),(1,0,1,0)\}$, 
		\item $\{(1,0,0,1),(0,1,1,0)\}$.
	\end{enumerate}
\end{multicols}
It is clear that positions $1$ is invariant by $\varphi_4$. Hence, we can apply $\varphi_4$ and then reduce the above eight classes to the following four.
\begin{enumerate}[label=\arabic*)]
	\item $\{(0,0,0,0),(1,1,1,1),(1,0,0,1),(0,1,1,0)\}$, 
	\item $\{(0,0,1,0),(1,1,0,1),(0,1,0,0),(1,0,1,1)\}$,
	\item $\{(0,0,0,1),(1,1,1,0),(1,0,0,0),(0,1,1,1)\}$,
	\item $\{(0,0,1,1),(1,1,0,0),(0,1,0,1),(1,0,1,0)\}$.
\end{enumerate}
Therefore, if $a_{10}a_{01}b_{10}b_{01}\neq0$, then it is enough to completely describe one case of each of the above four classes. In special, observe that each class has a member given by $(1,1,i,j)$, with $i$, $j\in\{0,1\}$. Hence, if $a_{10}a_{01}b_{10}b_{01}\neq0$, then we can assume $a_{10}>0$ and $a_{01}>0$. Thus, if we also consider the cases where $a_{10}a_{01}b_{10}b_{01}=0$, then we can assume $a_{10}\geqslant0$, $a_{01}\geqslant0$ and $b_{10}$, $b_{01}\in\mathbb{R}$. Observe that the Jacobian matrix of $X$ at the origin is given by,
	\[DX(0,0)=\left(\begin{array}{cc} a_{10}(\alpha-1)\alpha & a_{01}(\alpha-1)\alpha \\ b_{10}(\beta-1)\beta & b_{01}(\beta-1)\beta \end{array}\right).\]
Since every singularity of $X\in\Sigma_0$ is hyperbolic, it follows that $\alpha$, $\beta\not\in\{0,1\}$. In special, under position $1$, we conclude $0<\alpha<1$ and $0<\beta<1$. Consider the diffeomorphism $\varphi_5=\varphi_4\circ\varphi_3$ given by,
	\[\varphi_5(x,y;\alpha,\beta,a_{10},a_{01},b_{10},b_{01})=(-x,-y;1-\alpha,1-\beta,a_{10},a_{01},b_{10},b_{01}).\]
Since position $1$ is invariant by $\varphi_5$, we can assume $\beta\geqslant\frac{1}{2}$. Thus, we obtain Table~\ref{Table8}. Tables~\ref{Table9} and \ref{Table10} follows similarly. One need only to observe that positions $2$ and $3$ are invariant by $\varphi_4$ and $\varphi_1$, respectively. {\hfill$\square$}

\section{The finite singularities}\label{Sec7}

Let $X\in\Sigma_0$. Our goal in this section is to study the dynamics of the equivalent system of differential equations,
\begin{equation}\label{19}
	\dot x=(x+\alpha)(x+\alpha-1)(a_{10}x+a_{01}y), \quad \dot y=(y+\beta)(y+\beta-1)(b_{10}x+b_{01}y),
\end{equation}
at its finite singularities. 
\begin{proposition}\label{Theo7}
About system \eqref{19}, the following statements hold.
\begin{enumerate}[label=\arabic*)]
	\item If $(\alpha-1)\alpha(\beta-1)\beta\det A<0$, then the origin is a hyperbolic saddle; 
	\item If $(\alpha-1)\alpha(\beta-1)\beta\det A>0$, 
	\begin{enumerate}[label=(\alph*)]
		\item and $a_{10}(\alpha-1)\alpha+b_{01}(\beta-1)\beta>0$, then the origin is a hyperbolic unstable node/focus; 
		\item and $a_{10}(\alpha-1)\alpha+b_{01}(\beta-1)\beta<0$, then the origin is a hyperbolic stable node/focus; 
		\item $a_{10}(\alpha-1)\alpha+b_{01}(\beta-1)\beta=0$, 
		\begin{enumerate}[label=(\roman*)]
			\item and $b_{10}b_{01}(a_{01}+b_{10})>0$, then the origin is a weak unstable focus;
			\item and $b_{10}b_{01}(a_{01}+b_{10})<0$, then the origin is a weak stable node;
			\item and $b_{01}(a_{01}+b_{10})=0$, then the origin is a center.
		\end{enumerate}
	\end{enumerate}
\end{enumerate}
\end{proposition}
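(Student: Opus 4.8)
The plan is to read all three statements off the linearization at the origin, and then, in the single genuinely degenerate case, reduce the center--focus dichotomy to the sign of one focal value via Theorem~\ref{Theo5}. First I would compute the Jacobian of \eqref{19} at the origin. Since the factors $a_{10}x+a_{01}y$ and $b_{10}x+b_{01}y$ vanish there, only the terms in which these factors are differentiated survive, and one recovers
\[
DX(0,0)=\begin{pmatrix} a_{10}(\alpha-1)\alpha & a_{01}(\alpha-1)\alpha \\ b_{10}(\beta-1)\beta & b_{01}(\beta-1)\beta \end{pmatrix},
\]
exactly as in the proof of Theorem~\ref{Main2}. A direct computation gives $\det DX(0,0)=(\alpha-1)\alpha(\beta-1)\beta\det A$ and $\operatorname{tr}DX(0,0)=a_{10}(\alpha-1)\alpha+b_{01}(\beta-1)\beta$. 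Statement $1$ is then immediate: a negative determinant forces two real eigenvalues of opposite sign, hence a hyperbolic saddle. For statement $2$ a positive determinant makes the eigenvalues either real of equal sign or a complex conjugate pair, and the trace decides the stability, so $\operatorname{tr}DX(0,0)>0$ yields a hyperbolic unstable node/focus and $\operatorname{tr}DX(0,0)<0$ a hyperbolic stable node/focus, covering items $(a)$ and $(b)$.

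The delicate case is $(c)$, where $\det DX(0,0)>0$ and $\operatorname{tr}DX(0,0)=0$, so the eigenvalues are $\pm i\omega$ with $\omega=\sqrt{(\alpha-1)\alpha(\beta-1)\beta\det A}>0$ and the origin is a degenerated monodromic singularity in the sense of \eqref{40}. Before any computation I would record two facts that hold throughout $(c)$: both $b_{10}\neq0$ and $b_{01}\neq0$. Indeed, if either vanished then $DX(0,0)$ would be triangular, the trace condition would read $a_{10}(\alpha-1)\alpha=-b_{01}(\beta-1)\beta$, and hence $\det DX(0,0)=-\bigl(a_{10}(\alpha-1)\alpha\bigr)^2<0$, contradicting $\det DX(0,0)>0$. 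Consequently the sign of $b_{10}b_{01}(a_{01}+b_{10})$ equals the sign of $a_{01}+b_{10}$ times the fixed sign of $b_{10}b_{01}$, and the condition $b_{01}(a_{01}+b_{10})=0$ is equivalent to $a_{01}+b_{10}=0$; this is precisely what makes the subcases $(i)$--$(iii)$ a genuine partition of case $(c)$.

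Next I would bring $DX(0,0)$ into the normal form $\left(\begin{smallmatrix} 0 & -\omega \\ \omega & 0\end{smallmatrix}\right)$ by an explicit linear change of coordinates and compute the first focal value $V_1$ through the standard Poincar\'e--Lyapunov formula. The claim to verify is that, after simplification, $V_1$ is a strictly positive multiple of $b_{10}b_{01}(a_{01}+b_{10})$. Granting this, $V_1>0$ gives a weak unstable focus $(i)$ and $V_1<0$ a weak (stable) focus $(ii)$, while $V_1=0$ occurs exactly when $a_{01}+b_{10}=0$, that is, when $b_{01}(a_{01}+b_{10})=0$. In this last situation I would invoke Theorem~\ref{Theo5}: system \eqref{19} has exactly two pairs of parallel invariant real lines, namely the vertical pair $x=-\alpha$, $x=1-\alpha$ and the horizontal pair $y=-\beta$, $y=1-\beta$ of the octothorpe, so a vanishing first focal value already forces the origin to be a center, yielding $(iii)$ with no higher-order analysis.

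The main obstacle is the explicit computation and factorization of $V_1$: transforming the cubic field to Lyapunov normal form and extracting the first focal value produces a long rational expression, and the crux is to show that it collapses to a positive multiple of $b_{10}b_{01}(a_{01}+b_{10})$. This is a mechanical but error-prone calculation, best delegated to a computer algebra system; once the sign of $V_1$ is pinned down, Theorem~\ref{Theo5} removes all the remaining difficulty that a classical center--focus problem would otherwise entail.
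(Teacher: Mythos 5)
Your proposal follows essentially the same route as the paper: statements $1$, $2(a)$ and $2(b)$ are read off the determinant and trace of $DX(0,0)$, the weak cases $(i)$--$(ii)$ are decided by the sign of the first focal value, and the center case $(iii)$ is settled by Theorem~\ref{Theo5}. The only mechanical difference is that the paper computes $V_1$ directly from the Kert\'esz--Kooij formula \cite{KerKoo}, obtaining $V_1=\frac{b_{01}(a_{01}+b_{10})}{8b_{10}}$, rather than passing through the Lyapunov normal form; since this equals $\frac{1}{8b_{10}^2}\,b_{10}b_{01}(a_{01}+b_{10})$, it is exactly the ``strictly positive multiple of $b_{10}b_{01}(a_{01}+b_{10})$'' you predicted, and your observation that $b_{10}\neq0$ whenever the origin is monodromic with zero trace (otherwise the Jacobian is upper triangular with determinant $-\bigl(a_{10}(\alpha-1)\alpha\bigr)^2\leqslant0$) is a welcome justification that the paper omits.

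One assertion in your write-up is false, although harmlessly so: $b_{01}$ \emph{can} vanish in case $(c)$. For $b_{01}=0$ the Jacobian is not triangular (its lower-left entry $b_{10}(\beta-1)\beta$ survives), and for instance $a_{10}=b_{01}=0$, $a_{01}=1$, $b_{10}=-1$, $\alpha=\beta=\tfrac12$ gives $DX(0,0)=\left(\begin{smallmatrix}0&-1/4\\ 1/4&0\end{smallmatrix}\right)$, a degenerated monodromic singularity with $b_{01}=0$ (in fact a center, with first integral $(x^2-\tfrac14)(y^2-\tfrac14)$). Consequently the equivalence ``$b_{01}(a_{01}+b_{10})=0 \Leftrightarrow a_{01}+b_{10}=0$'' fails. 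This does not damage the proof: conditions $(i)$--$(iii)$ still partition case $(c)$ because $b_{10}\neq0$, the sign of $V_1$ is still that of $b_{10}b_{01}(a_{01}+b_{10})$, and when $b_{01}=0$ one simply lands in subcase $(iii)$ with $V_1=0$ and Theorem~\ref{Theo5} again yields a center. You should, however, drop the claim rather than rely on it.
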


\begin{proof} Statements $(1)$, $(2)(a)$ and $(2)(b)$ follow direct from the fact that the Jacobian matrix of system \eqref{19} at the origin is given by,
	\[DX(0,0)=\left(\begin{array}{cc} a_{10}(\alpha-1)\alpha & a_{01}(\alpha-1)\alpha \\ b_{10}(\beta-1)\beta & b_{01}(\beta-1)\beta \end{array}\right).\]
Hence, its determinant and its trace are given by,
	\[\det DX(0,0) =(\alpha-1)\alpha(\beta-1)\beta\det A, \quad Tr\bigl(DX(0,0)\bigr)=a_{10}(\alpha-1)\alpha+b_{01}(\beta-1)\beta.\]
If the origin is a degenerated monodromic singularity, then using the formula given by Kertész and Kooij \cite{KerKoo}, one can calculate the first focal value of system \eqref{19} at the origin and then see that it is given by,
	\[V_1=\frac{b_{01}(a_{01}+b_{10})}{8b_{10}}.\]
This and Theorem~\ref{Theo5} prove the other statements. \end{proof}

\begin{proposition}\label{Theo8}
	If $X\in\Sigma_0$, then the local phase portrait of system \eqref{19} at each one of its finite singularities, except the origin, is fully described, provided they are hyperbolic, by:
	\begin{enumerate}[label=(\alph*)]
		\item The position of such singularities;
		\item The position of the origin in relation to the octothorpe; 
		\item The signals of $a_{10}$, $b_{01}$ and $\det A$.
	\end{enumerate}
\end{proposition}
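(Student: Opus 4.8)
The plan is to show that, away from the origin, every finite singularity of \eqref{19} lies on one of the four invariant straight lines \eqref{7}, so that its local phase portrait is governed by a single eigenvalue transverse to that line together with the direction of the flow \emph{along} the line. First I would enumerate the finite singularities. Since $P(x,y)=(x+\alpha)(x+\alpha-1)(a_{10}x+a_{01}y)$ vanishes exactly on the three lines $x=-\alpha$, $x=1-\alpha$ and $a_{10}x+a_{01}y=0$, and similarly for $Q$, a singularity other than the origin (where both linear factors $a_{10}x+a_{01}y$ and $b_{10}x+b_{01}y$ vanish, using $\det A\neq0$ from Theorem~\ref{Main1}) must satisfy at least one of the equations $x\in\{-\alpha,1-\alpha\}$ or $y\in\{-\beta,1-\beta\}$. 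Thus every such singularity sits on the octothorpe, and they split into two types: the \emph{corner} points, where one vertical and one horizontal invariant line meet (the four intersections of $\{x=-\alpha,1-\alpha\}$ with $\{y=-\beta,1-\beta\}$), and the \emph{line} points, where an invariant line crosses one of the two lines $a_{10}x+a_{01}y=0$ or $b_{10}x+b_{01}y=0$.

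Next I would compute $DX$ at each type. At a corner point the invariant lines through it are both coordinate-type lines $x=\text{const}$, $y=\text{const}$, so $DX$ is already triangular (in fact the off-diagonal entries drop out because each equation carries the factor cutting out the other line), and the two eigenvalues are read off directly as products of the surviving linear factors evaluated at the point. Each eigenvalue is therefore a product of terms whose signs are fixed by (a) the \emph{position} of the corner (which tells us the signs of $x+\alpha$, $x+\alpha-1$, $y+\beta$, $y+\beta-1$ there), (b) the position of the origin relative to the octothorpe (which fixes the signs of $\alpha$, $\beta$, $1-\alpha$, $1-\beta$ and hence locates the corner), and (c) the signs of $a_{10}$, $b_{01}$, and the coupling, for which knowing the signs of $a_{10}$, $b_{01}$ and $\det A$ turns out to suffice once one observes $a_{10}b_{01}-a_{01}b_{10}=\det A$. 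The point is that the eigenvalues are \emph{monomials in the data}, so their signs — and hence the hyperbolic node/saddle classification — depend only on items (a)--(c). At a line point the same bookkeeping applies: one eigenvalue is transverse to the invariant line, the other is tangent to it, and both factor into the same three kinds of sign-determined quantities.

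The main obstacle will be handling the line-type singularities cleanly, since there the invariant line $a_{10}x+a_{01}y=0$ is not axis-parallel, so I would first note that restricted to an invariant line the flow is one-dimensional and its direction is dictated by the sign of the complementary factor, and then verify that the transverse eigenvalue is again a product of a linear factor (controlled by the position data) and a coupling term expressible through $a_{10}$, $b_{01}$, $\det A$. A secondary subtlety is confirming that these are the \emph{only} finite singularities off the origin and that no spurious coincidences arise — but any coincidence (two singularities merging, or a singularity falling on the intersection $a_{10}x+a_{01}y=b_{10}x+b_{01}y=0$, which is the origin) would force a vanishing eigenvalue and hence a non-hyperbolic point, contradicting $X\in\Sigma_0$. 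Since the proposition assumes hyperbolicity, the sign of each eigenvalue is nonzero and the local topological type (node, saddle) is determined, and the orientation of separatrices along the invariant lines is fixed by the one-dimensional flow; this completes the reduction to items (a)--(c).
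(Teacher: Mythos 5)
Your overall architecture coincides with the paper's: you enumerate the finite singularities off the origin as the four corners $p_1,\dots,p_4$ of the octothorpe together with the intersections $q_1,\dots,q_4$ of the octothorpe with the lines $f_1=0$ and $f_2=0$ (using $\det A\neq0$ to exclude anything else), you observe that the Jacobian matrices there are triangular, and you reduce the classification to the signs of the diagonal entries; the remark that coincidences would force a non-hyperbolic singularity is also fine. So the skeleton is the same as in the paper.

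The gap is in the sign bookkeeping at the corner singularities. By \eqref{22} the eigenvalues at $p_j$ are $\pm f_1(p_j)$ and $\pm f_2(p_j)$; for instance $-f_1(p_1)=a_{10}\alpha+a_{01}\beta$. This is a \emph{sum}, not a product of sign-determined factors, and its sign is not a function of the data as you read it: with $\alpha=\beta=\tfrac{1}{2}$ and $a_{10}=1$, taking $a_{01}=-2$ gives $f_1(p_1)>0$ while $a_{01}=-\tfrac{1}{2}$ gives $f_1(p_1)<0$, and in both cases one may choose $b_{10}$, $b_{01}$ so that the signs of $b_{01}$ and of $\det A$ agree; the local type of $p_1$ changes while your items (a)--(c), interpreted as ``position of the corner'' plus the three signs, do not. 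The identity $a_{10}b_{01}-a_{01}b_{10}=\det A$ cannot recover the sign of $a_{10}\alpha+a_{01}\beta$. The missing idea --- and the actual content of item (a) of the proposition, which refers to the mutual arrangement of \emph{all} the finite singularities, not merely to where each corner sits --- is that $q_1=\bigl(\tfrac{a_{01}}{a_{10}}\beta,-\beta\bigr)$ is exactly the point of the line $y=-\beta$ at which $f_1$ vanishes, so that, given the sign of $a_{10}$, whether $q_1$ lies to the left or to the right of $p_1$ and of $p_2$ determines the signs of $f_1(p_1)$ and $f_1(p_2)$; the paper records this as ``$q_1<p_1$ if and only if $a_{10}f_1(p_1)>0$'', and analogously for $q_2$, $q_3$, $q_4$. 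These equivalences are what convert the relative positions into the eigenvalue signs both at the $p$-singularities and in the tangential entries $f_i(p_j)f_i(p_k)/a_{10}$ (or $/b_{01}$) of \eqref{24} at the $q$-singularities; without them the argument does not close.
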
	

\begin{proof} Besides the origin, system \eqref{19} can have at most eight other singularities, which can be divided in two groups. The first group of singularities are those given by the intersection of the four invariant lines of system \eqref{19}. Namely,
\begin{equation}\label{20}
	p_1=(-\alpha,-\beta), \quad p_2=(1-\alpha,-\beta), \quad p_3=(1-\alpha,1-\beta) \quad p_4=(-\alpha,1-\beta).
\end{equation}
We refer to them as the $p$-singularities. To understand the eigenvalue of the Jacobian matrices of system \eqref{19} at the $p$-singularities, we need first to define the linear functions,
\begin{equation}\label{21}
	f_1(x,y)=a_{10}x+a_{01}y, \quad f_2(x,y)=b_{10}x+b_{01}y.
\end{equation}
With the definitions above, it can be seen that the Jacobian matrices of system \eqref{19} at the $p$-singularities are given by,
\begin{equation}\label{22}
\begin{array}{l}
	\displaystyle DX(p_1)=\left(\begin{array}{cc} -f_1(p_1) & 0 \\ 0 & -f_2(p_1) \end{array}\right), \quad DX(p_2)=\left(\begin{array}{cc} f_1(p_2) & 0 \\ 0 & -f_2(p_2) \end{array}\right), \vspace{0.2cm} \\
	\displaystyle DX(p_3)=\left(\begin{array}{cc} f_1(p_3) & 0 \\ 0 & f_2(p_3) \end{array}\right), \quad DX(p_4)=\left(\begin{array}{cc} -f_1(p_4) & 0 \\ 0 & f_2(p_4) \end{array}\right).
\end{array}
\end{equation}
The second group of singularities are those given (when well defined) by, 
\begin{equation}\label{23}
	q_1=\left(\frac{a_{01}}{a_{10}}\beta,-\beta\right), \quad q_2=\left(1-\alpha,\frac{b_{10}}{b_{01}}(\alpha-1)\right), \quad q_3=\left(\frac{a_{01}}{a_{10}}(\beta-1),1-\beta\right), \quad q_4=\left(-\alpha,\frac{b_{10}}{b_{01}}\alpha\right).
\end{equation}
See Figure~\ref{Fig1}. We refer to them as the $q$-singularities.
\begin{figure}[h]
	\begin{center}
		\begin{overpic}[height=7cm]{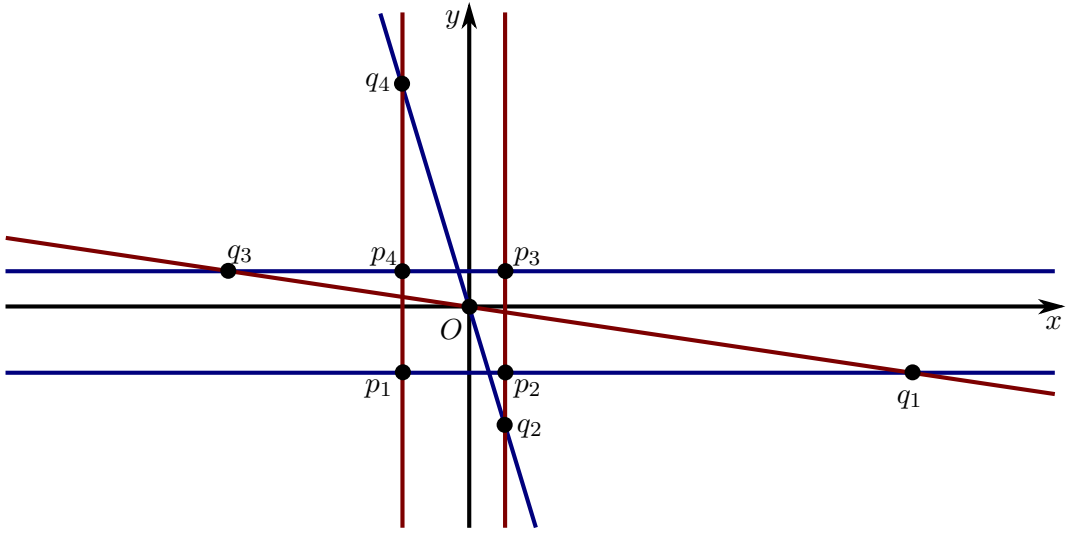} 
			\put(34,13){$p_1$}
			\put(48,13){$p_2$}
			\put(48,25.5){$p_3$}
			\put(34.5,25.5){$p_4$}
			\put(84,12){$q_1$}
			\put(48.25,9.25){$q_2$}
			\put(21,25.75){$q_3$}
			\put(34,42){$q_4$}
			\put(41,18){$O$}
			\put(98,19){$x$}
			\put(41.5,48){$y$}
		\end{overpic}
	\end{center}
	\caption{An example of the disposition of the $p$ and $q$-singularities. The sets $\{\dot x=0\}$ and $\{\dot y=0\}$ are illustrated in red and blue, respectively.}\label{Fig1}
\end{figure}

The Jacobian matrices of system \eqref{19} at the $q$-singularities are given by,
\begin{equation}\label{24}
\begin{array}{l}
	\displaystyle DX(q_1)=\frac{1}{a_{10}}\left(\begin{array}{cc} f_1(p_1)f_1(p_2) & \star \\ 0 & \beta \det A \end{array}\right), \quad DX(q_2)=\frac{1}{b_{01}}\left(\begin{array}{cc} (1-\alpha)\det A & 0 \\ \star & f_2(p_2)f_2(p_3) \end{array}\right), \vspace{0.2cm} \\
	\displaystyle DX(q_3)=\frac{1}{a_{10}}\left(\begin{array}{cc} f_1(p_3)f_1(p_4) & \star \\ 0 & (1-\beta)\det A \end{array}\right), \quad DX(q_4)=\frac{1}{b_{01}}\left(\begin{array}{cc} \alpha \det A & 0 \\ \star & f_2(p_1)f_2(p_4) \end{array}\right).
\end{array}
\end{equation}
Knowing that $p_1$, $p_2$ and $q_1$ lies on the horizontal straight line given by $y=-\beta$, we establish some nomenclatures to study their position. If $q_1$ is on the left-hand side of $p_1$, then we denote $q_1<p_1$; if $q_1$ is on the right-hand side of $p_1$, then we denote $q_1>p_1$. The relative positions between $q_1$ and $p_2$ will be denoted in the similar way. Observe that we have $p_1<p_2$. It follows from \eqref{20} and \eqref{23} that:
\begin{enumerate}[label=\arabic*)]
	\item $q_1<p_1$ if, and only if, $a_{10}f_1(p_1)>0$; 
	\item $q_1<p_2$ if, and only if, $a_{10}f_1(p_2)>0$.
\end{enumerate}
In special, $q_1=p_1$ (resp. $q_1=p_2$) if and only if, $f_1(p_1)=0$ (resp. $f_1(p_2)=0$). In the same way (this time denoting $q_2<p_2$ if $q_2$ lies bellow $p_2$), one can easily see that,
\begin{enumerate}[label=\arabic*)]
	\setcounter{enumi}{2}
	\item $q_2<p_2$ if, and only if, $b_{01}f_2(p_2)>0$; 
	\item $q_2<p_3$ if, and only if, $b_{01}f_2(p_3)>0$. 
\end{enumerate}
Finally, we observe that,
\begin{enumerate}[label=\arabic*)]
	\setcounter{enumi}{4}
	\item $q_3<p_3$ if, and only if, $a_{10}f_1(p_3)>0$; 
	\item $q_3<p_4$ if, and only if, $a_{10}f_1(p_4)>0$;
	\item $q_4<p_4$ if, and only if, $b_{01}f_2(p_4)>0$; 
	\item $q_4<p_1$ if, and only if, $b_{01}f_2(p_1)>0$.
\end{enumerate}
Therefore, if we fix the signals of $a_{10}$ and $b_{01}$, then the signals of $f_i(p_j)$ follows from the relative positions between the $p$ and $q$-singularities. Hence, it follows from \eqref{22} and from the \emph{Hartman-Grobman Theorem} (see Section~$2.8$ of \cite{Perko}), that we can describe the local phase portraits at the $p$-singularities only by means of their position in relation to the $q$-singularities. Furthermore, observe that the signals of $\alpha$, $\beta$, $1-\alpha$ and $1-\beta$ are given by the position of the origin in relation to the invariant octothorpe. Therefore, the position of the origin in relation to the ocotothorpe, the signal of $\det A$ and \eqref{24} provide the local phase portrait at the $q$-singularities. \end{proof}

\section{The singularities at infinity}\label{Sec8}

Let $X\in\Sigma_0$. Our goal in this section is to study the dynamics of the equivalent system of differential equations,
\begin{equation}\label{48}
	\dot x=(x+\alpha)(x+\alpha-1)(a_{10}x+a_{01}y), \quad \dot y=(y+\beta)(y+\beta-1)(b_{10}x+b_{01}y),
\end{equation}
at the infinity. Without loss of generality, let $X$ denote the vector field given by \eqref{48}. It follows from Subsection~\ref{SubSec4.1} that the compactified vector field $p(X)$ at chart $U_1$ is given by,
\begin{equation}\label{25}
	\begin{array}{rl}
		\dot u &= -a_{10}u+(b_{10}-a_{01})u^2+[a_{10}(1-2\alpha)+b_{10}(2\beta-1)]uv+b_{10}\beta(\beta-1)v^2+b_{01}u^3+ \vspace{0.2cm} \\
		&\quad+[a_{01}(1-2\alpha)+b_{01}(2\beta-1)]u^2v+[a_{10}\alpha(1-\alpha)+b_{01}\beta(\beta-1)]uv^2+a_{01}\alpha(1-\alpha)u^2v^2, \vspace{0.2cm} \\
		\dot v &= -a_{10}v-a_{01}uv+a_{10}(1-2\alpha)v^2+a_{01}(1-2\alpha)uv^2+a_{10}\alpha(1-\alpha)v^3+a_{01}\alpha(1-\alpha)uv^3,
	\end{array}	
\end{equation}
while at chart $U_2$ it is given by,
\begin{equation}\label{26}
	\begin{array}{rl}
		\dot u &= -b_{01}u+(a_{01}-b_{10})u^2+[b_{01}(1-2\beta)+a_{01}(2\alpha-1)]uv+a_{01}\alpha(\alpha-1)v^2+a_{10}u^3+ \vspace{0.2cm} \\
		&\quad+[a_{10}(2\alpha-1)+b_{10}(1-2\beta)]u^2v+[a_{10}\alpha(\alpha-1)+b_{01}\beta(1-\beta)]uv^2+b_{10}\beta(1-\beta)u^2v^2, \vspace{0.2cm} \\
		\dot v &=-b_{01}v-b_{10}uv+b_{01}(1-2\beta)v^2+b_{10}(1-2\beta)uv^2+b_{01}\beta(1-\beta)v^3+b_{10}\beta(1-\beta)uv^3.
	\end{array}	
\end{equation}
Let $X_1$ and $X_2$ denote the vector fields given by systems \eqref{25} and \eqref{26}, respectively. It follows from \eqref{25} that the singularities of $X_1$, with $v=0$, are given by $u=0$ and by the real zeros of,
\begin{equation}\label{27}
	g(u):=b_{01}u^2+(b_{10}-a_{01})u-a_{10}.
\end{equation}
Since $b_{01}\neq0$, these solutions are given by,
\begin{equation}\label{28}
	u_0^\pm=\frac{(a_{01}-b_{10})\pm\sqrt{\Delta}}{2b_{01}}, \quad \Delta=(b_{10}-a_{01})^2+4a_{10}b_{01}.
\end{equation}
Furthermore, it can be seen that,
\begin{equation}\label{29}
\begin{array}{l}
	\displaystyle DX_1(0,0)=\left(\begin{array}{cc} -a_{10} & 0 \vspace{0.2cm} \\ 0 & -a_{10} \end{array}\right), \quad DX_2(0,0)=\left(\begin{array}{cc} -b_{01} & 0 \vspace{0.2cm} \\ 0 & -b_{01} \end{array}\right), \vspace{0.2cm} \\
	\displaystyle DX_1(u_0^\pm,0)=\left(\begin{array}{cc} 2a_{10}+(a_{01}-b_{10})u_0^\pm & \star \vspace{0.2cm} \\ 0 & -(a_{01}u_0^\pm+a_{10}) \end{array}\right).
\end{array}
\end{equation}
In the next two propositions we prove that $(u_0^+,0)$ and $(u_0^-,0)$ are non-hyperbolic if, and only if, it collapses with some other singularity.

\begin{proposition}\label{Theo9}
	Let $u_0\in\{u_0^+,u_0^-\}$. If $u_0\in\mathbb{R}$, then the following statements are equivalent.
	\begin{enumerate}[label=(\alph*)]
		\item $2a_{10}+(a_{01}-b_{10})u_0=0$; 
		\item $u_0^+=u_0^-$ or $u_0=0$.
	\end{enumerate}
\end{proposition}

\begin{proof} Assume statement $(a)$. If $a_{01}-b_{10}=0$, then $a_{10}=0$. Hence, it follows from \eqref{28} that $u_0=0$. If $a_{01}-b_{10}\neq0$, then $u_0=2\frac{a_{10}}{b_{10}-a_{01}}$. Therefore, $2\frac{a_{10}}{b_{10}-a_{01}}$ is a solution of $g(u)=0$. Thus,
	\[g\left(2\frac{a_{10}}{b_{10}-a_{01}}\right)=0.\]
Hence,
	\[\frac{a_{10}}{(b_{10}-a_{01})^2}\Delta=0.\]
Which in turn implies $a_{10}=0$ or $\Delta=0$. In the former we have $u_0=0$. In the later, $u_0^+=u_0^-$. Conversely, suppose statement $(b)$. If $u_0=0$, then zero is a solution of \eqref{27}. Thus, $a_{10}=0$. Hence, we have statement $(a)$. If $u_0^+=u_0^-$, then $\Delta=0$. Hence, $u_0=\frac{a_{01}-b_{10}}{2b_{01}}$. Therefore,

	\[2a_{10}+(a_{01}-b_{10})u_0=\frac{1}{2b_{01}}\Delta=0.\]
Which in turn implies statement $(a)$. \end{proof}

\begin{proposition}\label{Theo15}
	Let $u_0\in\{u_0^+,u_0^-\}$. If $u_0\in\mathbb{R}$, then the following statements are equivalent.
	\begin{enumerate}[label=(\alph*)]
		\item $a_{01}u_0+a_{10}=0$; 
		\item $u_0=0$.
	\end{enumerate}
\end{proposition}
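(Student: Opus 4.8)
The plan is to prove the equivalence by exploiting the single structural fact that $u_0$ is, by construction, a real root of the quadratic $g$ in \eqref{27}, together with the standing hypothesis $X\in\Sigma_0$, which by Theorem~\ref{Main1} guarantees $\det A\neq0$. Both implications will reduce to substituting one of the two relations into $g(u_0)=0$ and reading off the consequence, exactly in the spirit of the proof of Proposition~\ref{Theo9}.

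For the implication $(b)\Rightarrow(a)$ I would argue directly: if $u_0=0$, then evaluating $g$ at $0$ and using $g(u_0)=0$ yields $g(0)=-a_{10}=0$, so $a_{10}=0$; hence $a_{01}u_0+a_{10}=0$ and $(a)$ follows. This direction needs nothing beyond the defining property of $u_0$ as a zero of $g$.

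For $(a)\Rightarrow(b)$ I would start from $a_{01}u_0+a_{10}=0$, i.e. $a_{10}=-a_{01}u_0$, and substitute this into $g(u_0)=b_{01}u_0^2+(b_{10}-a_{01})u_0-a_{10}=0$. The terms carrying $a_{01}$ cancel, leaving $b_{01}u_0^2+b_{10}u_0=0$, that is $u_0(b_{01}u_0+b_{10})=0$. Thus either $u_0=0$, which is the desired conclusion, or $b_{01}u_0+b_{10}=0$.

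The crux — and the one step that genuinely uses $X\in\Sigma_0$ — is ruling out the second factor. Here I would observe that $b_{01}u_0+b_{10}=0$ combined with the standing relation $a_{01}u_0+a_{10}=0$ forces $\det A=a_{10}b_{01}-a_{01}b_{10}=(-a_{01}u_0)b_{01}-a_{01}(-b_{01}u_0)=0$, contradicting $\det A\neq0$. Hence $b_{01}u_0+b_{10}\neq0$, and therefore $u_0=0$. The main obstacle is precisely recognizing that the spurious root $u_0=-b_{10}/b_{01}$ of the factored equation corresponds exactly to the configuration $\det A=0$ excluded by genericity; once that link is made the argument is immediate.
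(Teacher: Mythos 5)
Your proof is correct and takes essentially the same route as the paper's: both implications reduce to substituting the linear relation into $g(u_0)=0$ and invoking $\det A\neq0$, which is exactly the genericity input from Theorem~\ref{Main1}. The only cosmetic difference is that you solve for $a_{10}=-a_{01}u_0$ and factor $g(u_0)=u_0(b_{01}u_0+b_{10})$, whereas the paper splits off the case $a_{01}=0$ and otherwise evaluates $g(-a_{10}/a_{01})=\frac{a_{10}}{a_{01}^2}\det A$; your organization even avoids that case split.
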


\begin{proof} Suppose statement $(a)$. If $a_{01}=0$, then $a_{10}=0$. Therefore, $u_0=0$ is a solution of \eqref{27}. Thus, we have statement $(b)$. If $a_{01}\neq0$, then $u_0=-\frac{a_{10}}{a_{01}}$. Hence,
	\[g\left(-\frac{a_{10}}{a_{01}}\right)=\frac{a_{10}}{a_{01}^2}\det A=0.\]
Since $\det A\neq0$, it follows that $a_{10}=0$. Hence, we have statement $(b)$. Conversely, suppose $u_0=0$. It follows from \eqref{27} that $a_{10}=0$. Therefore, we have statement $(a)$. \end{proof}

\section{The generic phase portraits}\label{Sec9} 

Let $X\in\Sigma_0$ and consider its equivalent system of differential equations,
	\[\dot x=(x+\alpha)(x+\alpha-1)(a_{10}x+a_{01}y), \quad \dot y=(y+\beta)(y+\beta-1)(b_{10}x+b_{01}y).\]
From now on we assume that the origin lies in the center of the octothorpe. Hence, we assume $\alpha$, $\beta\in(0,1)$. From Theorems \ref{Main1} and \ref{Main2} we assume $a_{10}>0$ and taking $\tau=a_{10}t$, we study system
\begin{equation}\label{50}
	\dot x=(x+\alpha)(x+\alpha-1)(x+a_{01}y), \quad \dot y=(y+\beta)(y+\beta-1)(b_{10}x+b_{01}y).
\end{equation}
In the next four subsections we approach system \eqref{50} under each of the four set of hypothesis given by Table~\ref{Table8}. Namely, we assume 
	\[0<\alpha<1, \quad \frac{1}{2}\leqslant\beta<1, \quad a_{01}\geqslant0,\]
and one of the for conditions conditions bellow.
\begin{enumerate}[label=\arabic*)]
	\item $b_{10}\geqslant0$, $b_{01}>0$;
	\item $b_{10}\geqslant0$, $b_{01}<0$;
	\item $b_{10}<0$, $b_{01}>0$;
	\item $b_{10}<0$, $b_{01}<0$.		
\end{enumerate}
Following the nomenclature defined at the proof of Proposition~\ref{Theo8}, we remember that $q_1>p_1$ denotes that $q_1$ is on the right-hand side of $p_1$. Furthermore, $q_2<p_3$ denotes that $q_2$ lies bellow $p_2$. Similarly, $q_3<p_3$ denotes $q_3$ on the left-hand side of $p_3$, while $q_4>p_1$ denote $q_4$ above $p_1$. 
	
\subsection{Case \boldmath{$1$}: \boldmath{$b_{10}\geqslant0$} and \boldmath{$b_{01}>0$}}

In this case we have
	\[q_1>p_1, \quad  q_2<p_3, \quad q_3<p_3, \quad q_4>p_1.\]
There exits sixteen possible relative positions between singularities \eqref{20} and \eqref{23}. Fourteen of which are realizable.
\begin{multicols}{2}
	\begin{enumerate}[label=\arabic*)]
		\item $q_1>p_2$, $q_2<p_2$, $q_3<p_4$ and $q_4>p_4$; 
		\item $q_1<p_2$, $q_2<p_2$, $q_3<p_4$ and $q_4>p_4$; 
		\item $q_1>p_2$, $q_2<p_2$, $q_3>p_4$ and $q_4>p_4$; 
		\item $q_1<p_2$, $q_2<p_2$, $q_3>p_4$ and $q_4>p_4$; 
		\item $q_1>p_2$, $q_2>p_2$, $q_3<p_4$ and $q_4>p_4$; 
		\item $q_1>p_2$, $q_2>p_2$, $q_3>p_4$ and $q_4>p_4$; 
		\item $q_1<p_2$, $q_2>p_2$, $q_3>p_4$ and $q_4>p_4$; 	
	\end{enumerate}
	\columnbreak
	\begin{enumerate}[label=\arabic*)]
		\setcounter{enumi}{7}
		\item $q_1>p_2$, $q_2<p_2$, $q_3<p_4$ and $q_4<p_4$; 
		\item $q_1<p_2$, $q_2<p_2$, $q_3<p_4$ and $q_4<p_4$;
		\item $q_1<p_2$, $q_2<p_2$, $q_3>p_4$ and $q_4<p_4$; 
		\item $q_1>p_2$, $q_2>p_2$, $q_3<p_4$ and $q_4<p_4$;
		\item $q_1<p_2$, $q_2>p_2$, $q_3<p_4$ and $q_4<p_4$;
		\item $q_1>p_2$, $q_2>p_2$, $q_3>p_4$ and $q_4<p_4$;
		\item $q_1<p_2$, $q_2>p_2$, $q_3>p_4$ and $q_4<p_4$.
	\end{enumerate}
\end{multicols}

\begin{theorem}\label{Theo10}
	Let $X=(P,Q)$ be the planar vector field given by,
		\[P(x,y)=(x+\alpha)(x+\alpha-1)(x+a_{01}y), \quad Q(x,y)=(y+\beta)(y+\beta-1)(b_{10}x+b_{01}y).\]
	If $X\in\Sigma_0$ and $\frac{1}{2}\leqslant\beta<1$, $0<\alpha<1$, $a_{01}\geqslant0$, $b_{10}\geqslant0$, $b_{01}>0$,	then its phase portrait in the Poincar\'e disk is topologically equivalent to one of the phase portraits given by Figure~\ref{Case1.1}.
\begin{figure}[h]
	\begin{center}
		\begin{minipage}{3.1cm}
			\begin{center}
				\begin{overpic}[height=3cm]{Case1.1.1.eps} 
				\end{overpic}
				
				Case~$1.1$.
			\end{center}
		\end{minipage}
		\begin{minipage}{3.1cm}
			\begin{center}
				\begin{overpic}[height=3cm]{Case1.1.2.eps} 
				\end{overpic}
				
				Case~$1.2$.
			\end{center}
		\end{minipage}
		\begin{minipage}{3.1cm}
			\begin{center}
				\begin{overpic}[height=3cm]{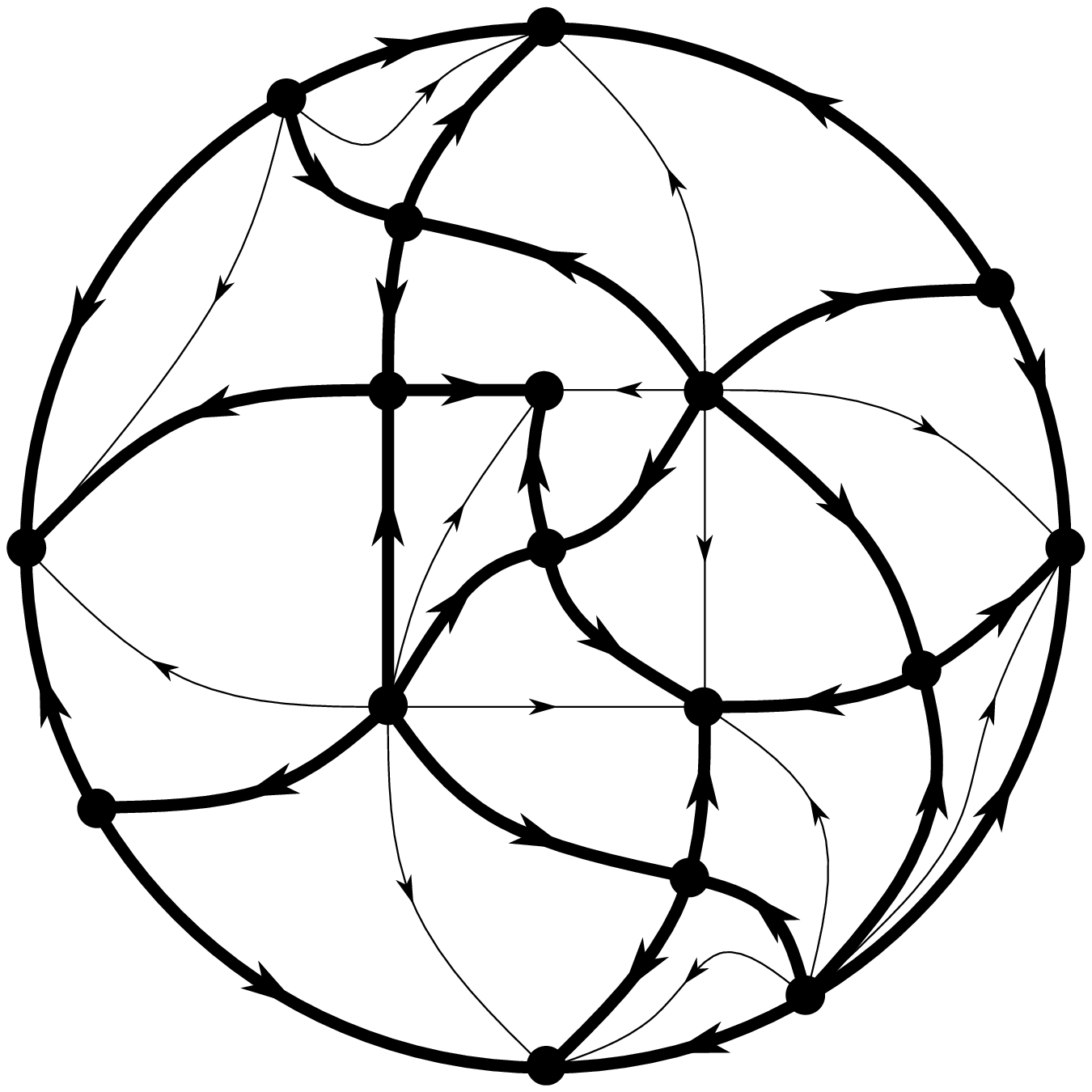} 
				\end{overpic}
				
				Case~$1.3$.
			\end{center}
		\end{minipage}	
		\begin{minipage}{3.1cm}
			\begin{center}
				\begin{overpic}[height=3cm]{Case1.1.4a.eps} 
				\end{overpic}
				
				Case~$1.4a$.
			\end{center}
		\end{minipage}
		\begin{minipage}{3.1cm}
			\begin{center}
				\begin{overpic}[height=3cm]{Case1.1.4b.eps} 
				\end{overpic}
				
				Case~$1.4b$.
			\end{center}
		\end{minipage}
	\end{center}
$\;$
	\begin{center} 
		\begin{minipage}{3.1cm}
			\begin{center}
				\begin{overpic}[height=3cm]{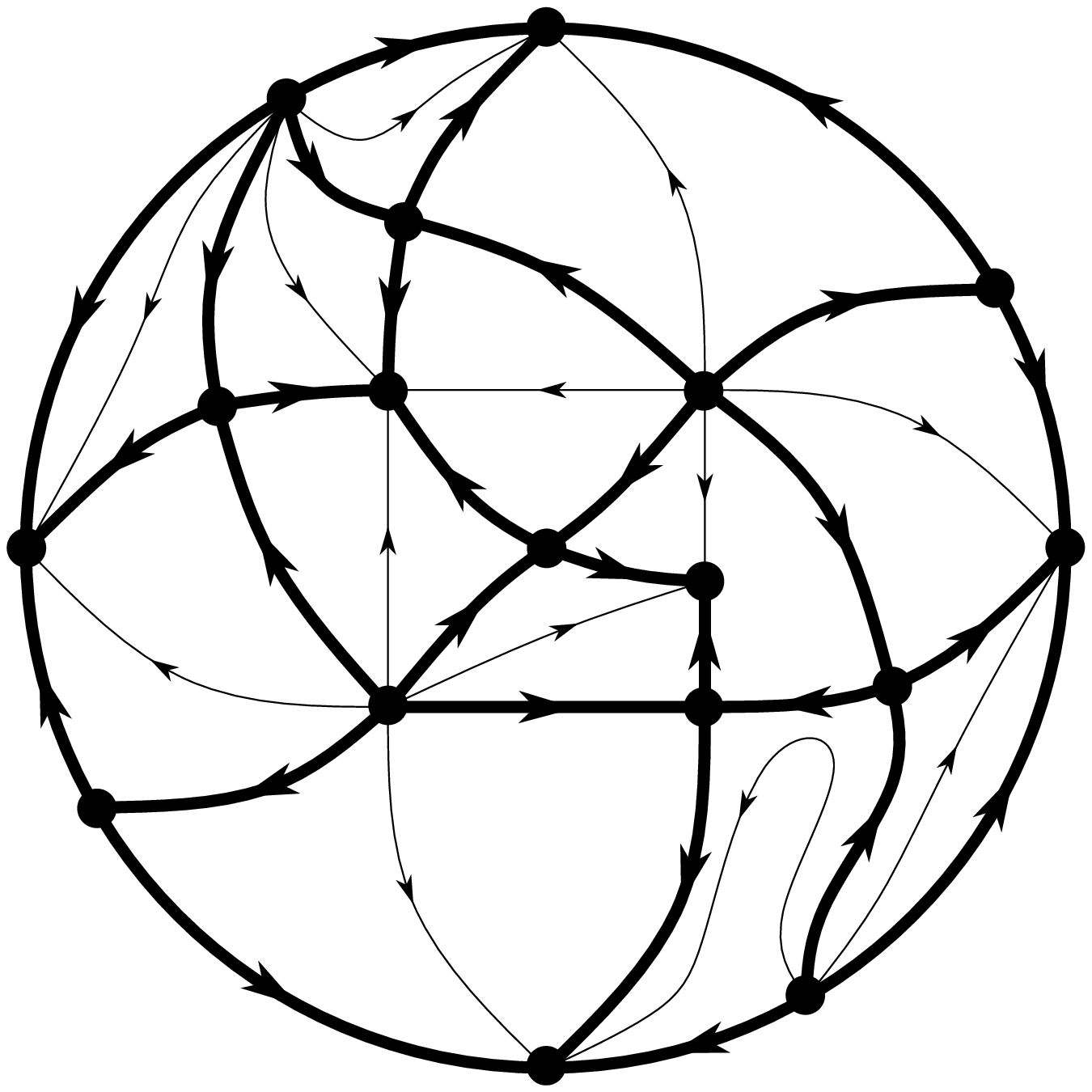} 
				\end{overpic}
				
				Case~$1.5$.
			\end{center}
		\end{minipage}
		\begin{minipage}{3.1cm}
			\begin{center}
				\begin{overpic}[height=3cm]{Case1.1.6a.eps} 
				\end{overpic}
				
				Case~$1.6a$.
			\end{center}
		\end{minipage}
		\begin{minipage}{3.1cm}
			\begin{center}
				\begin{overpic}[height=3cm]{Case1.1.6b.eps} 
				\end{overpic}
				
				Case~$1.6b$.
			\end{center}
		\end{minipage}
		\begin{minipage}{3.1cm}
			\begin{center}
				\begin{overpic}[height=3cm]{Case1.1.7.eps} 
				\end{overpic}
				
				Case~$1.7$.
			\end{center}
		\end{minipage}
		\begin{minipage}{3.1cm}
			\begin{center}
				\begin{overpic}[height=3cm]{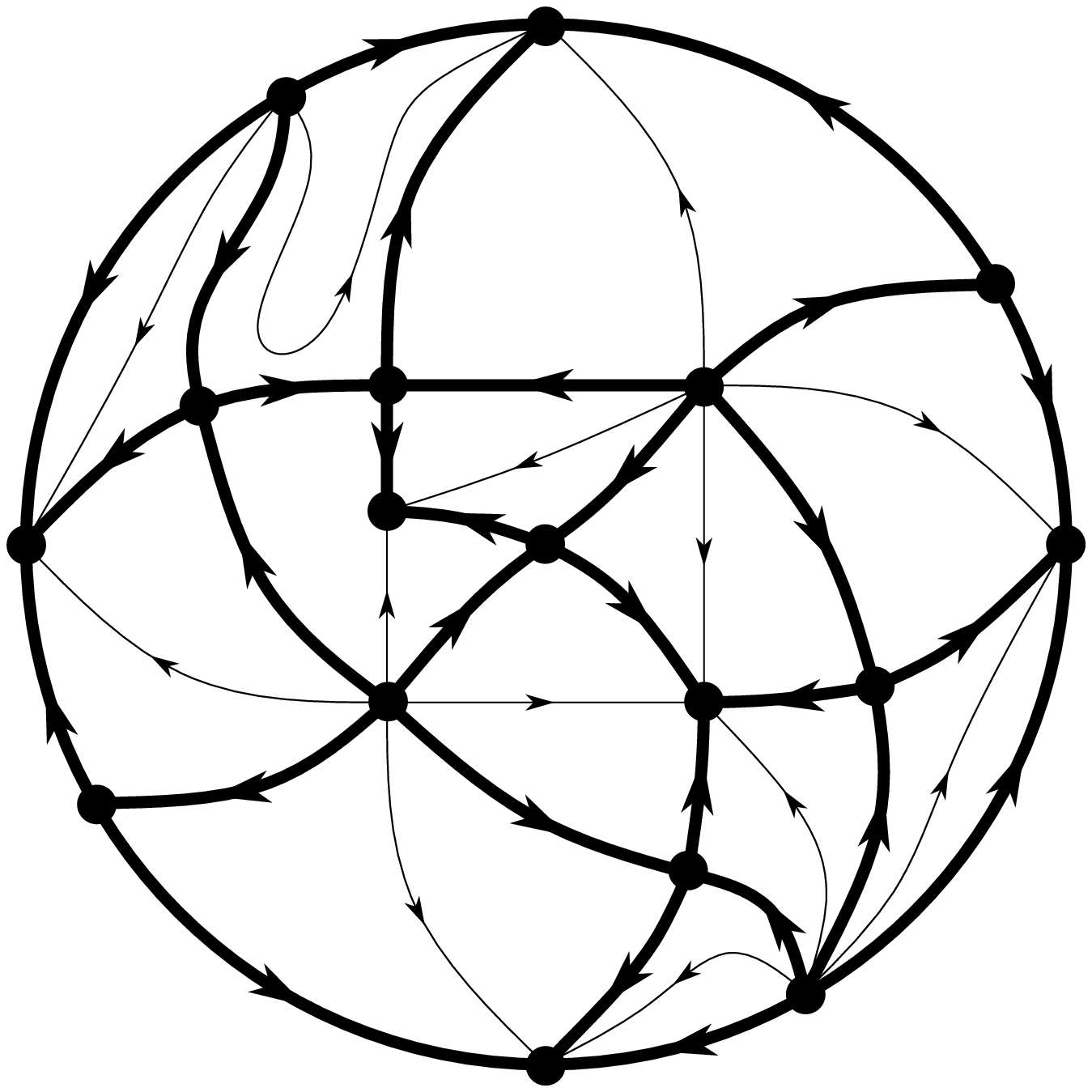} 
				\end{overpic}
				
				Case~$1.8$.
			\end{center}
		\end{minipage}
	\end{center}
$\;$
	\begin{center}
		\begin{minipage}{3.1cm}
			\begin{center}
				\begin{overpic}[height=3cm]{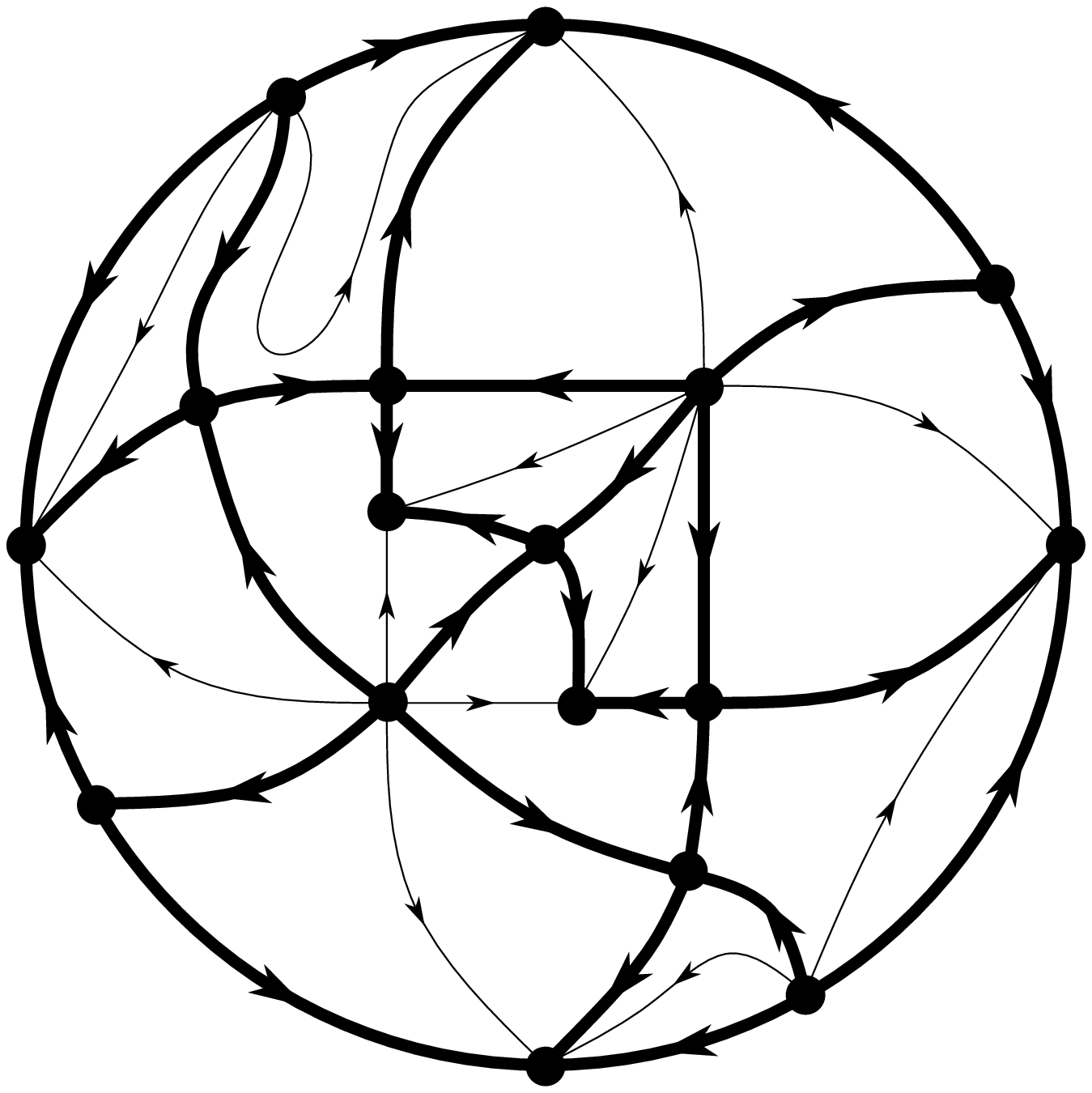} 
				\end{overpic}
				
				Case~$1.9a$.
			\end{center}
		\end{minipage}
		\begin{minipage}{3.1cm}
			\begin{center}
				\begin{overpic}[height=3cm]{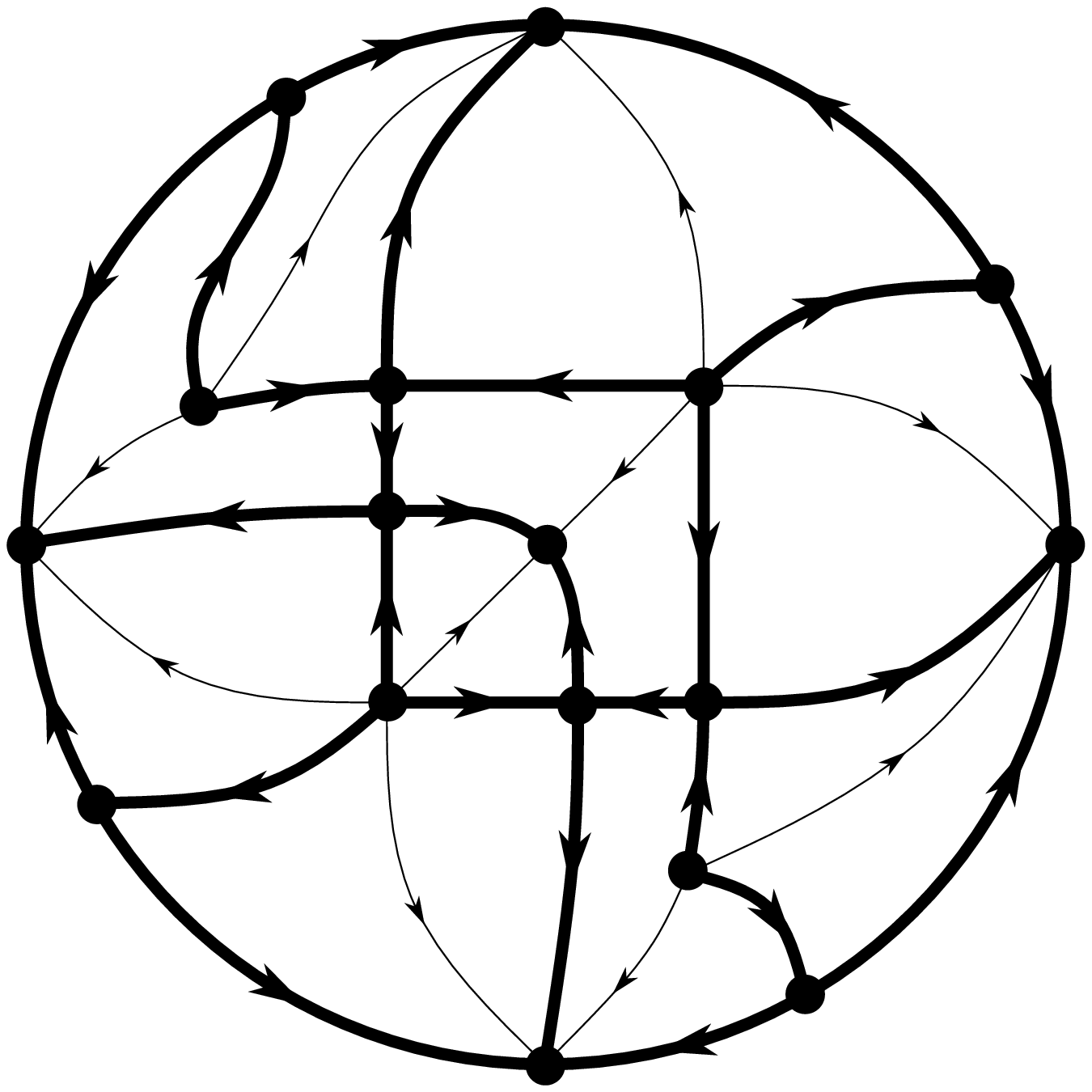} 
				\end{overpic}
				
				Case~$1.9b$.
			\end{center}
		\end{minipage}
		\begin{minipage}{3.1cm}
			\begin{center}
				\begin{overpic}[height=3cm]{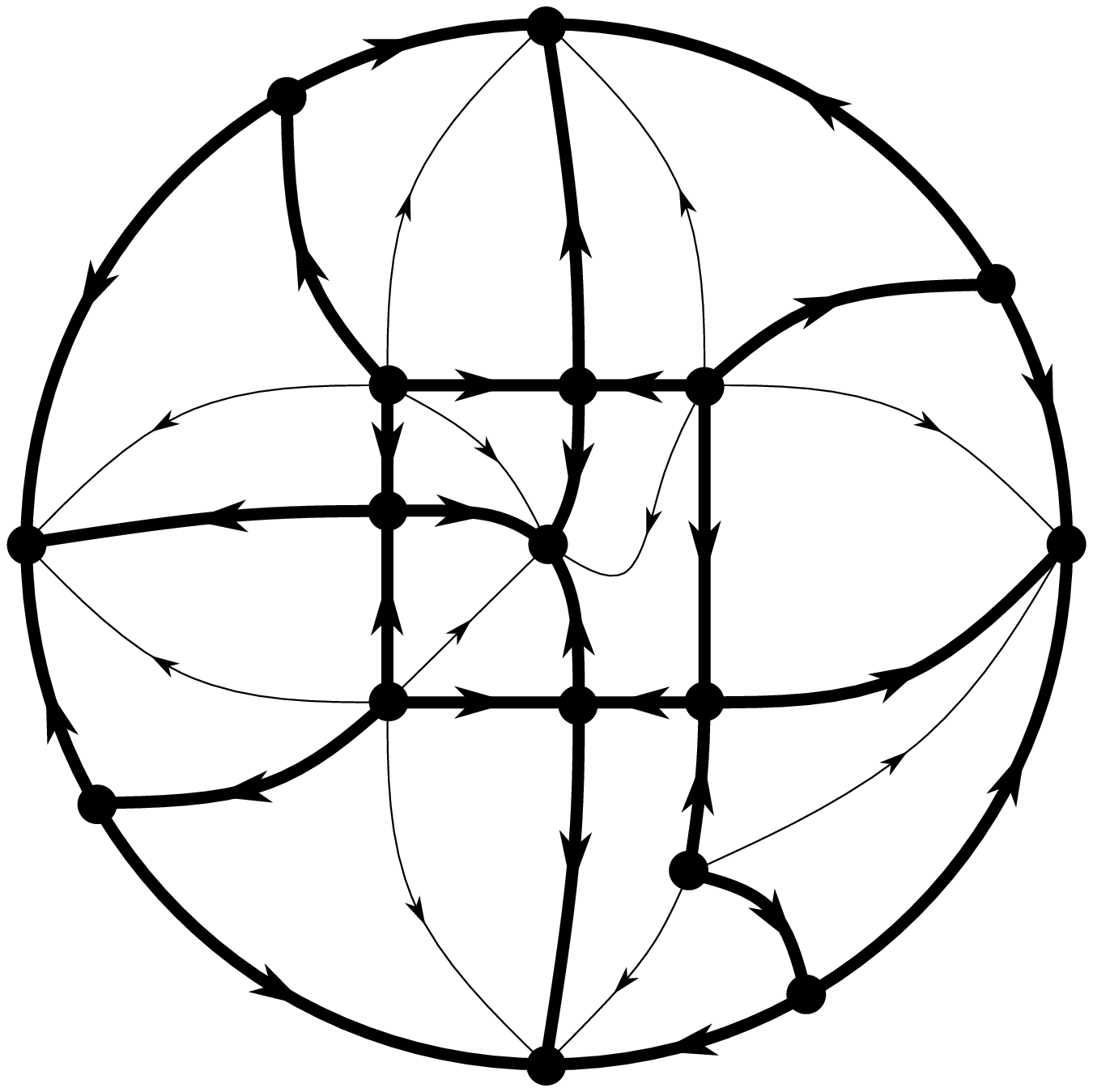} 
				\end{overpic}
				
				Case~$1.10$.
			\end{center}
		\end{minipage}
		\begin{minipage}{3.1cm}
			\begin{center}
				\begin{overpic}[height=3cm]{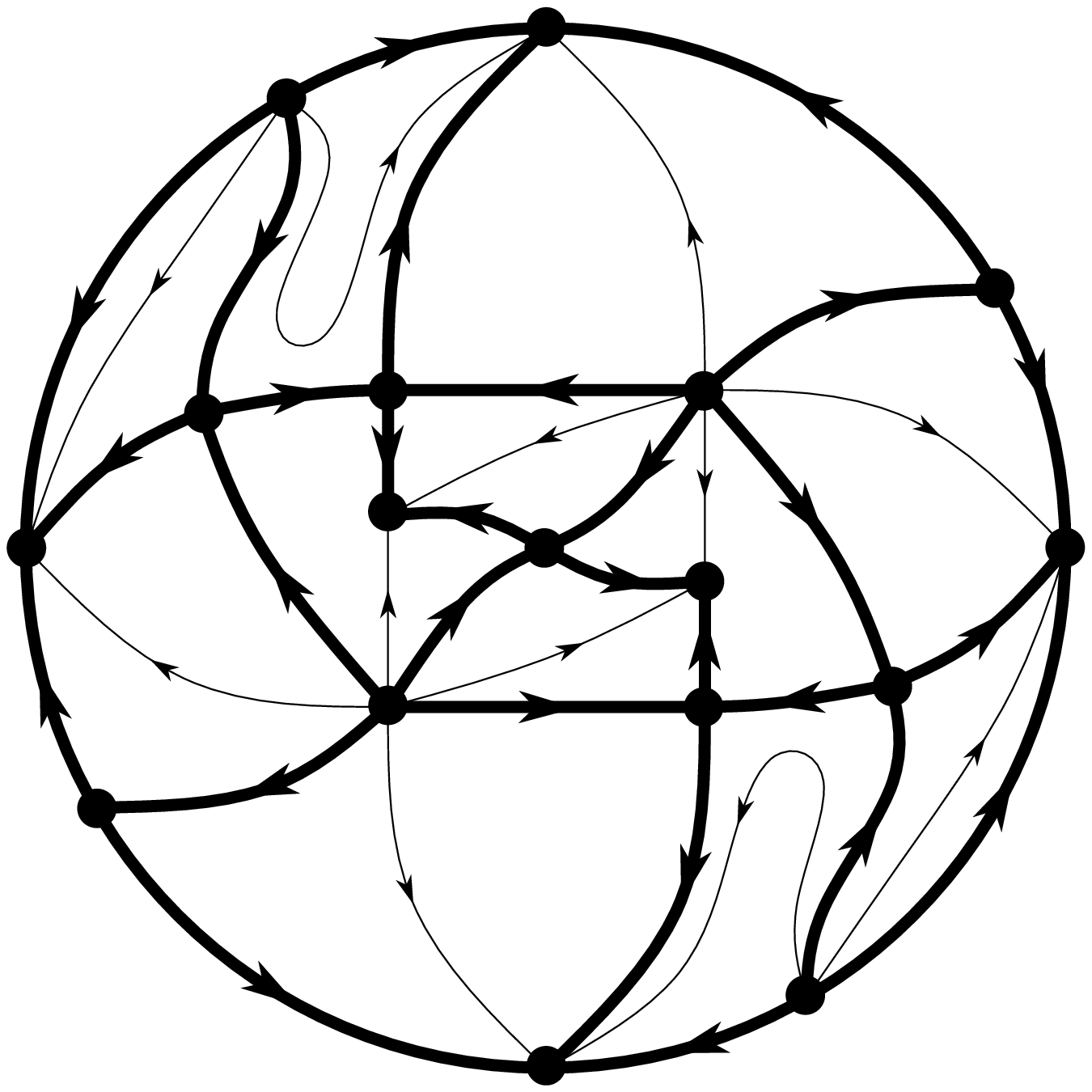} 
				\end{overpic}
				
				Case~$1.11a$.
			\end{center}
		\end{minipage}
		\begin{minipage}{3.1cm}
			\begin{center}
				\begin{overpic}[height=3cm]{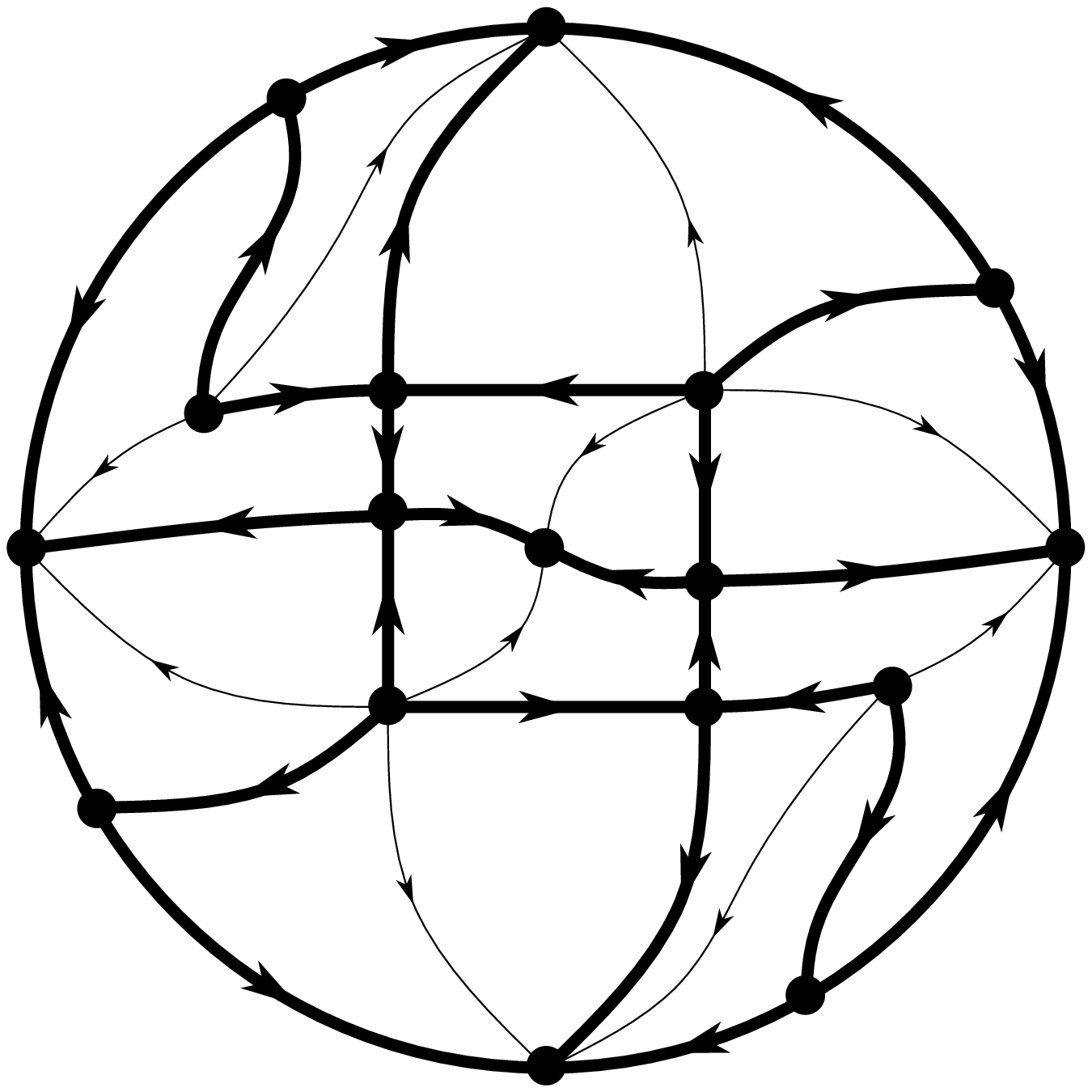} 
				\end{overpic}
				
				Case~$1.11b$.
			\end{center}
		\end{minipage}
	\end{center}
$\;$
	\begin{center}
		\begin{minipage}{3.1cm}
			\begin{center}
				\begin{overpic}[height=3cm]{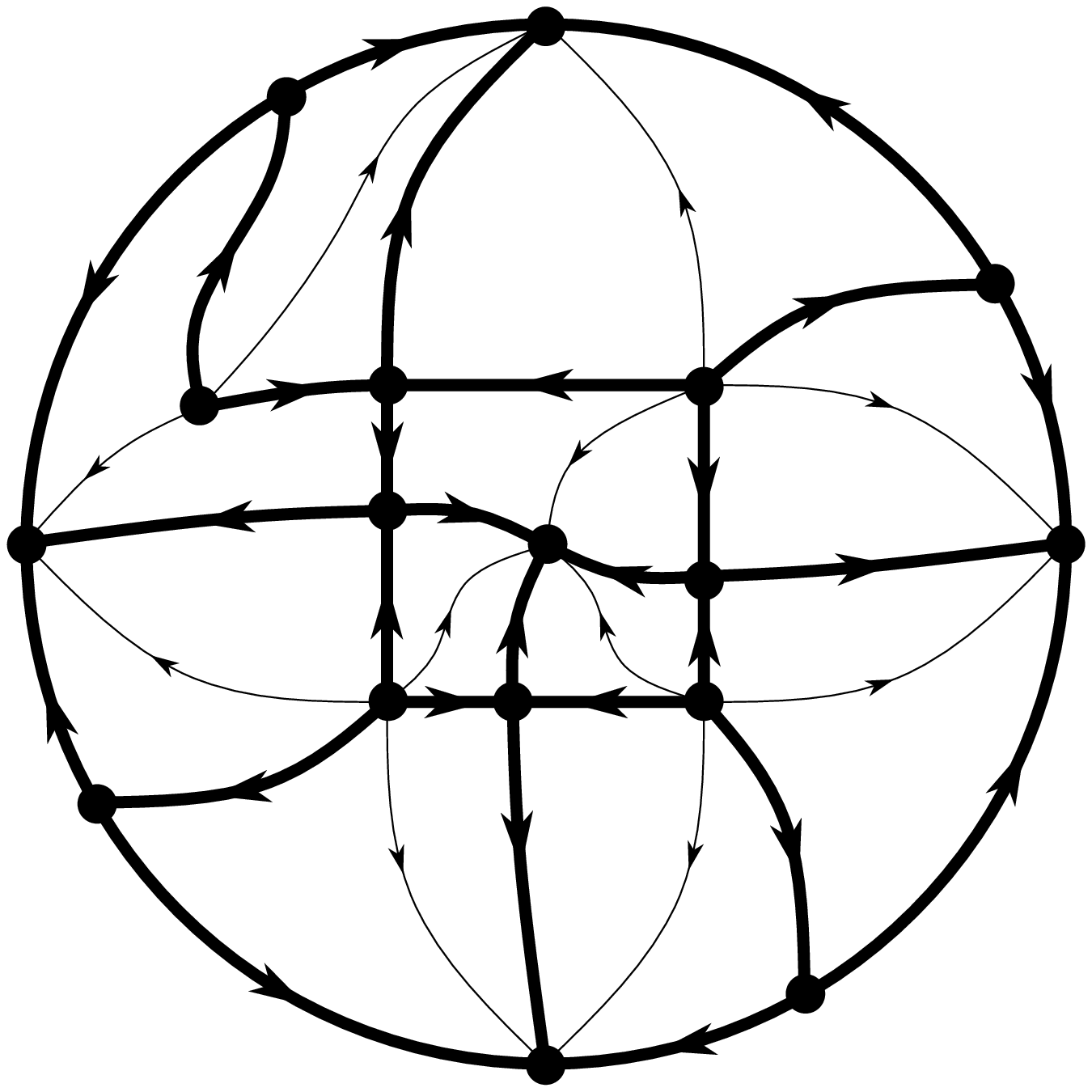} 
				\end{overpic}
				
				Case~$1.12$.
			\end{center}
		\end{minipage}
		\begin{minipage}{3.1cm}
			\begin{center}
				\begin{overpic}[height=3cm]{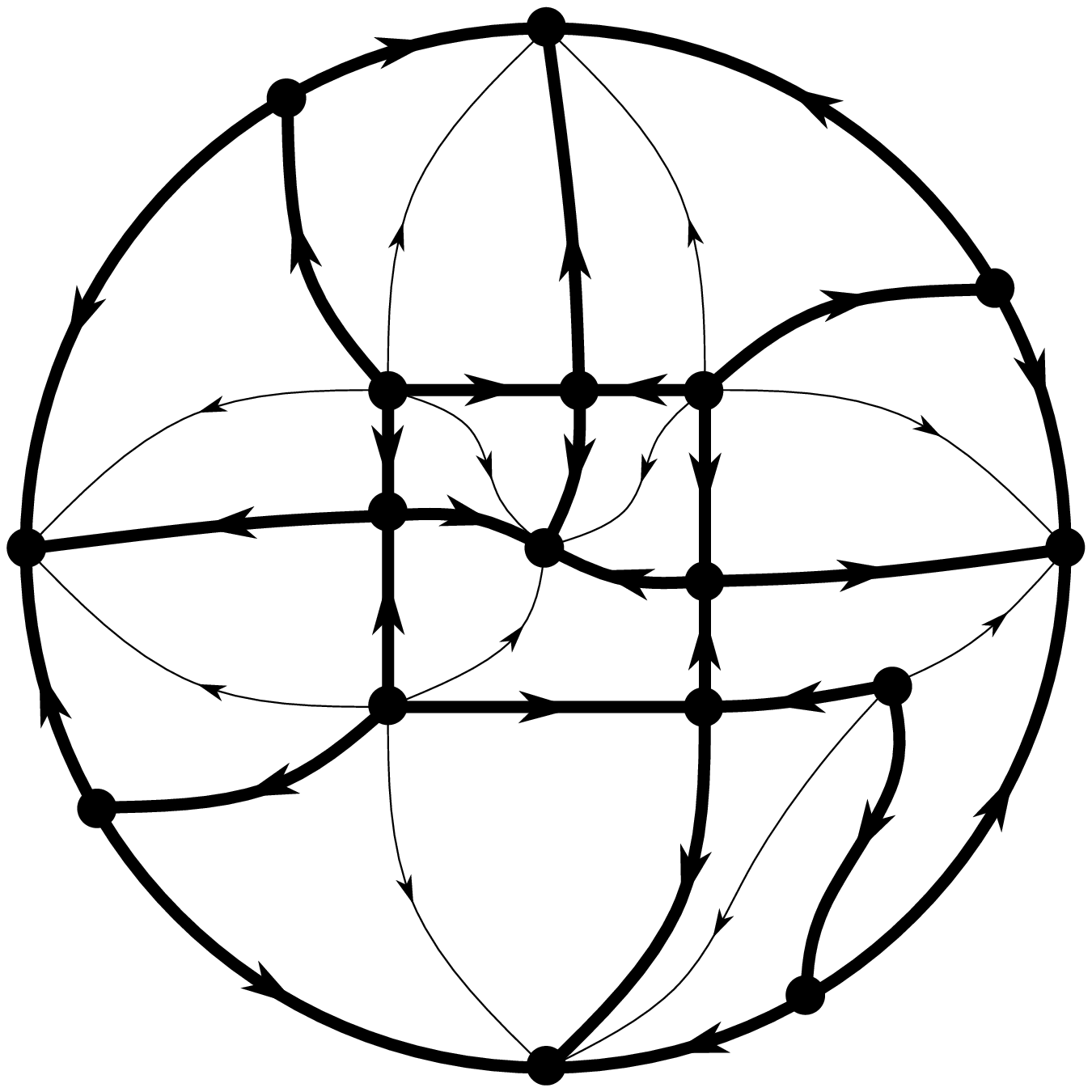} 
				\end{overpic}
				
				Case~$1.13$.
			\end{center}
		\end{minipage}
		\begin{minipage}{3.1cm}
			\begin{center}
				\begin{overpic}[height=3cm]{Case1.1.14.eps} 
				\end{overpic}
				
				Case~$1.14$.
			\end{center}
		\end{minipage}
	\end{center}
	\caption{Phase portraits of the fourteen realizable positions under the hypothesis of Theorem~\ref{Theo10} (Family 1).}\label{Case1.1}
\end{figure}
\end{theorem}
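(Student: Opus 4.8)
The plan is to classify the local phase portrait at every singularity of the compactified field \eqref{25}--\eqref{26} and then to glue these local pictures along the invariant lines into a global separatrix configuration, reading off the answer up to topological equivalence by the Markus--Neumann--Peixoto Theorem. I would begin with the finite singularities, applying Proposition~\ref{Theo7} to the origin. Under the present hypotheses $0<\alpha<1$, $\tfrac12\leqslant\beta<1$, $a_{10}=1$, $a_{01}\geqslant0$, $b_{10}\geqslant0$, $b_{01}>0$ one has $(\alpha-1)\alpha(\beta-1)\beta>0$, so $\det DX(0,0)$ has the sign of $\det A$, while the trace $(\alpha-1)\alpha+b_{01}(\beta-1)\beta$ is strictly negative. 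Hence the origin is a hyperbolic saddle when $\det A<0$ and a stable node/focus when $\det A>0$; in particular the trace never vanishes, so the center and weak-focus branches of Proposition~\ref{Theo7} cannot occur here. The portraits at the corners $p_1,\dots,p_4$ and the edge points $q_1,\dots,q_4$ are then governed by the diagonal and triangular Jacobians \eqref{22} and \eqref{24}, and, exactly as in Proposition~\ref{Theo8}, are fixed once the signs of the $f_i(p_j)$ are known; these signs follow from the prescribed relative positions of the $q$- and $p$-singularities together with the signs of $a_{10}$, $b_{01}$ and $\det A$.

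Next I would treat infinity through the charts \eqref{25}--\eqref{26}. Since $a_{10}=1>0$ and $b_{01}>0$, the origins of $U_1$ and $U_2$ are hyperbolic stable nodes, as $DX_1(0,0)=-a_{10}\,\mathrm{Id}$ and $DX_2(0,0)=-b_{01}\,\mathrm{Id}$ in \eqref{29}; and since $\Delta=(b_{10}-a_{01})^2+4a_{10}b_{01}>0$ here, the two roots $u_0^{\pm}$ are real and distinct, giving two further infinite singularities whose eigenvalues are displayed in \eqref{29}. By Propositions~\ref{Theo9} and \ref{Theo15} these are hyperbolic for $X\in\Sigma_0$, so their saddle/node type and their cyclic order on $\mathbb{S}^1$ are decided by the signs in \eqref{29}. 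I would also record the flow direction along each of the four invariant lines \eqref{7}, obtained from the sign of the surviving linear factor restricted to the line, since these lines are unions of separatrices joining the singularities just classified.

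With all local data fixed I would run through the fourteen admissible configurations. The two sign patterns missing from the sixteen are discarded first: the inequalities defining the relative positions $q_1$--$p_2$, $q_2$--$p_2$, $q_3$--$p_4$, $q_4$--$p_4$ are not independent, as they share the parameters $\alpha,\beta,a_{01},b_{10},b_{01}$, and the resulting compatibility relations among the $f_i(p_j)$ exclude precisely those two. For each surviving configuration the skeleton $\Lambda=\gamma_1\cup\cdots\cup\gamma_4\cup\mathbb{S}^1$ is the octothorpe together with infinity; genericity forbids saddle connections outside $\Lambda$, so each saddle separatrix must terminate on $\Lambda$ or at a node/focus, and the canonical regions are then filled in a way that is unique up to topological equivalence. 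Periodic orbits are controlled by Kooij's results: the system has exactly two pairs of parallel invariant lines, so by Theorem~\ref{Theo6} it carries at most one limit cycle, necessarily hyperbolic, and such a cycle would have to surround the origin inside the central rectangle, the only bounded region and the only one holding a finite singularity off the invariant lines. Because the origin is never a weak focus in this case, no cycle can be born by a Hopf bifurcation, and the remaining possibility is ruled out either by a Dulac function assembled from the invariant factors or by the absence of generic homoclinic loops; hence no limit cycle occurs. Finally, the positions that split into subcases $a$/$b$ are separated by the sign of $\det A$ (origin a sink versus a saddle, with the attendant change of type at the neighbouring $q$-singularities) or by the ordering of $u_0^{\pm}$ on $\mathbb{S}^1$; matching each resulting separatrix configuration against Figure~\ref{Case1.1} via the Markus--Neumann--Peixoto Theorem completes the classification, and realizability is obtained by exhibiting one explicit parameter choice per panel.

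I expect the global gluing, rather than any single computation, to be the crux. Once the local portraits are pinned down one must still verify, configuration by configuration, that the saddle separatrices land on the prescribed sinks and sources without creating forbidden connections or spurious canonical regions, and decide the finer $a$/$b$ alternative in each position; keeping this bookkeeping coherent across all fourteen positions, and confirming that exactly the listed topological types arise and are realizable, is the labor-intensive part. The most delicate single point is the exclusion of the at-most-one limit cycle, where the Hopf-free observation and the choice of a working Dulac function must be handled with care.
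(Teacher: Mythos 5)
Your overall route is the same as the paper's: classify the finite singularities via Propositions~\ref{Theo7} and \ref{Theo8} (noting that the trace at the origin is strictly negative here, so only the saddle and stable node/focus alternatives survive), classify the infinite singularities via \eqref{29} with $u_0^\pm$ real since $a_{10}b_{01}>0$, determine the sign of $\det A$ from the relative positions of the $p$- and $q$-singularities (which is also exactly how the two impossible sign patterns out of sixteen are discarded), and then glue the local data into a separatrix configuration that is unique up to the Markus--Neumann--Peixoto equivalence. Up to that point the proposal is sound and matches the paper.

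The genuine gap is in your exclusion of limit cycles. After correctly invoking Theorem~\ref{Theo6} to get at most one limit cycle, necessarily hyperbolic and surrounding the origin in the central region, you close the argument with ``no cycle can be born by a Hopf bifurcation'' and ``a Dulac function assembled from the invariant factors or the absence of generic homoclinic loops.'' Neither of these is a proof. The Hopf observation is a statement about bifurcations in a family, not about a fixed vector field: a limit cycle can exist in the given system without having been ``born'' from the weak focus, so the absence of a Hopf scenario excludes nothing. The Dulac function is not exhibited, and there is no reason offered that one exists; the homoclinic-loop remark is likewise unsubstantiated. The paper closes this step concretely (in Case~$1.4b$, the only configuration where the issue is nontrivial): if the unique hyperbolic cycle existed it would have to be unstable, because the orbits between it and the stable origin must tend to the origin and there is no second cycle; but then the orbits in the annular region between the cycle and the boundary of the central square of the octothorpe would have no admissible $\omega$-limit set, contradicting the Poincar\'e--Bendixson theory. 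You need to replace your heuristic with this (or an equivalent) argument; as written, the nonexistence of the limit cycle --- which you yourself identify as the most delicate point --- is asserted rather than proved.
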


\begin{proof} First, we study the signal of $\det A$. Since $b_{01}>0$ and $1-\beta>0$, we observe that, 
	\[q_4>p_4 \Rightarrow \frac{b_{10}}{b_{01}}\alpha>1-\beta \Rightarrow b_{10}>\frac{1-\beta}{\alpha}b_{01}.\]
Remembering that $f_1(x,y)=a_{10}x+a_{01}y$ (see Section~\ref{Sec7}), we observe that,
	\[\det A=b_{01}-b_{10}a_{01}<b_{01}-\frac{1-\beta}{\alpha}b_{01}a_{01}=\frac{b_{01}}{\alpha}\bigl(\alpha+(\beta-1)a_{01}\bigr)=-\frac{b_{01}}{\alpha}f_1(p_4).\]
We also remember that at Proposition~\ref{Theo8}, it was proved that $q_3<p_4$ if, and only if, $a_{10}f_1(p_4)>0$. Therefore, we conclude that if $q_4>p_4$ and $q_3<p_4$, then $\det A<0$. Similarly, we obtain the following statements.
\begin{multicols}{2}
\begin{enumerate}[label=(\alph*)]
	\item if $q_3<p_4$ and $q_4>p_4$, then $\det A<0$;
	\item if $q_3>p_4$ and $q_4<p_4$, then $\det A>0$; 
\end{enumerate}
	\columnbreak
\begin{enumerate}[label=(\alph*)]
	\setcounter{enumi}{2}
	\item if $q_1>p_2$ and $q_2<p_2$, then $\det A<0$; 
	\item if $q_1<p_2$ and $q_2>p_2$, then $\det A>0$.
\end{enumerate}
\end{multicols}
Thus, we have the determinant of all cases except $4$, $6$, $9$ and $11$. Each of these four cases must be separated in two more cases, given by $\det A<0$ and $\det A>0$. Furthermore, since $0<\alpha<1$, $0<\beta<1$ and $b_{01}>0$, it follows that $(\alpha-1)\alpha+b_{01}(\beta-1)\beta<0$. Therefore, it follows from Proposition~\ref{Theo7} that the origin is either a hyperbolic saddle or a hyperbolic stable node/focus. Hence, we can divide the first generic family in the eighteen cases given by Table~\ref{Table2}.
\begin{table}[h]
	\caption{Table of the realizable positions under hypothesis of Theorem~\ref{Theo10} (Family 1).}\label{Table2}
	\begin{tabular}{c c c c c c}
		\hline
		Cases & $q_1$ & $q_2$ & $q_3$ & $q_4$ & $\det A$ \\
		\hline
		\rowcolor{mygray}
		$1.1$ & $q_1>p_2$ & $q_2<p_2$ & $q_3<p_4$ & $q_4>p_4$ & $\det A<0$ \\
		$1.2$ & $q_1<p_2$ & $q_2<p_2$ & $q_3<p_4$ & $q_4>p_4$ & $\det A<0$ \\
		\rowcolor{mygray}
		$1.3$ & $q_1>p_2$ & $q_2<p_2$ & $q_3>p_4$ & $q_4>p_4$ & $\det A<0$ \\
		$1.4a$ & $q_1<p_2$ & $q_2<p_2$ & $q_3>p_4$ & $q_4>p_4$ & $\det A<0$ \\
		\rowcolor{mygray}
		$1.4b$ & $q_1<p_2$ & $q_2<p_2$ & $q_3>p_4$ & $q_4>p_4$ & $\det A>0$ \\
		$1.5$ & $q_1>p_2$ & $q_2>p_2$ & $q_3<p_4$ & $q_4>p_4$ & $\det A<0$ \\
		\rowcolor{mygray}
		$1.6a$ & $q_1>p_2$ & $q_2>p_2$ & $q_3>p_4$ & $q_4>p_4$ & $\det A<0$ \\
		$1.6b$ & $q_1>p_2$ & $q_2>p_2$ & $q_3>p_4$ & $q_4>p_4$ & $\det A>0$ \\
		\rowcolor{mygray}
		$1.7$ & $q_1<p_2$ & $q_2>p_2$ & $q_3>p_4$ & $q_4>p_4$ & $\det A>0$ \\
		$1.8$ & $q_1>p_2$ & $q_2<p_2$ & $q_3<p_4$ & $q_4<p_4$ & $\det A<0$ \\
		\rowcolor{mygray}
		$1.9a$ & $q_1<p_2$ & $q_2<p_2$ & $q_3<p_4$ & $q_4<p_4$ & $\det A<0$ \\
		$1.9b$ & $q_1<p_2$ & $q_2<p_2$ & $q_3<p_4$ & $q_4<p_4$ & $\det A>0$ \\
		\rowcolor{mygray}
		$1.10$ & $q_1<p_2$ & $q_2<p_2$ & $q_3>p_4$ & $q_4<p_4$ & $\det A>0$ \\
		$1.11a$ & $q_1>p_2$ & $q_2>p_2$ & $q_3<p_4$ & $q_4<p_4$ & $\det A<0$ \\
		\rowcolor{mygray}
		$1.11b$ & $q_1>p_2$ & $q_2>p_2$ & $q_3<p_4$ & $q_4<p_4$ & $\det A>0$ \\
		$1.12$ & $q_1<p_2$ & $q_2>p_2$ & $q_3<p_4$ & $q_4<p_4$ & $\det A>0$ \\
		\rowcolor{mygray}
		$1.13$ & $q_1>p_2$ & $q_2>p_2$ & $q_3>p_4$ & $q_4<p_4$ & $\det A>0$ \\
		$1.14$ & $q_1<p_2$ & $q_2>p_2$ & $q_3>p_4$ & $q_4<p_4$ & $\det A>0$ \\
		\hline
	\end{tabular}
\end{table}
We claim that each case is a class of topologically equivalent vector fields. To prove this, we use Sections~\ref{Sec7} and \ref{Sec8} to obtain the local phase portrait at each singularity. Then, we prove that there is only one way for each separatrix to evolve. As an example, we work on the phase portrait of case $1$. In this case, observe that the disposal between the $p$ and $q$-singularities is given by Figure~\ref{Fig1}. We remember that $u_0^+$ and $u_0^-$ are the zeros of
	\[g(u)=b_{01}u^2+(b_{10}-a_{01})^2-1.\]
Since $b_{01}>0$, its is clear that $\Delta=(b_{10}-a_{01})^2+4b_{01}>0$. Hence, both $u_0^+$ and $u_0^-$ are real. Furthermore, observe that $u_0^-u_0^+=-\frac{1}{b_{01}}<0$. Thus, $u_0^-<0<u_0^+$. Therefore, we can draw the relative position of all the singularities in the Poincar\'e disk. See Figure~\ref{Fig2}(a). Since $b_{01}>0$, it follows from \eqref{29} that all the four poles at infinity are stable nodes, obtaining Figure~\ref{Fig2}(b). Knowing that all singularities are hyperbolic, we can draw all the arrows on all the four invariant lines $\bigl($Figure~\ref{Fig2}(c)$\bigr)$. 
\begin{figure}[h]
	\begin{center}
		\begin{minipage}{5.2cm}
			\begin{center}
				\begin{overpic}[height=4cm]{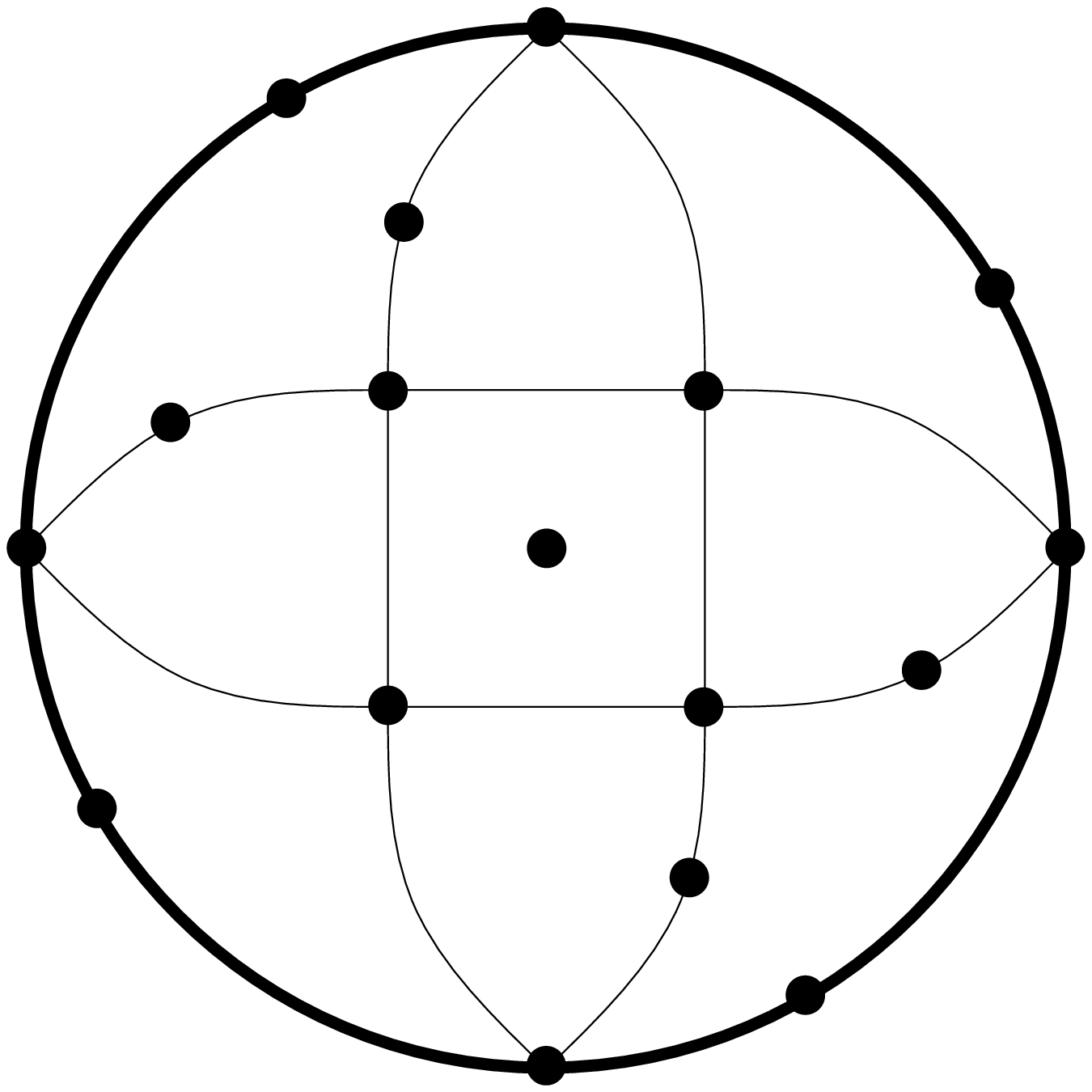} 
					\put(26,29){$p_1$}
					\put(67,29){$p_2$}
					\put(67,67){$p_3$}
					\put(26,67){$p_4$}
					\put(86,33){$q_1$}
					\put(66,16){$q_2$}
					\put(13,66){$q_3$}
					\put(40,79){$q_4$}
					\put(94,74){$u_0^+$}
					\put(77,4){$u_0^-$}
					\put(7,94){$-u_0^-$}
					\put(-13,23){$-u_0^+$}
				\end{overpic}
				
				$(a)$
			\end{center}
		\end{minipage}
		\begin{minipage}{5.2cm}
			\begin{center}
				\begin{overpic}[height=4cm]{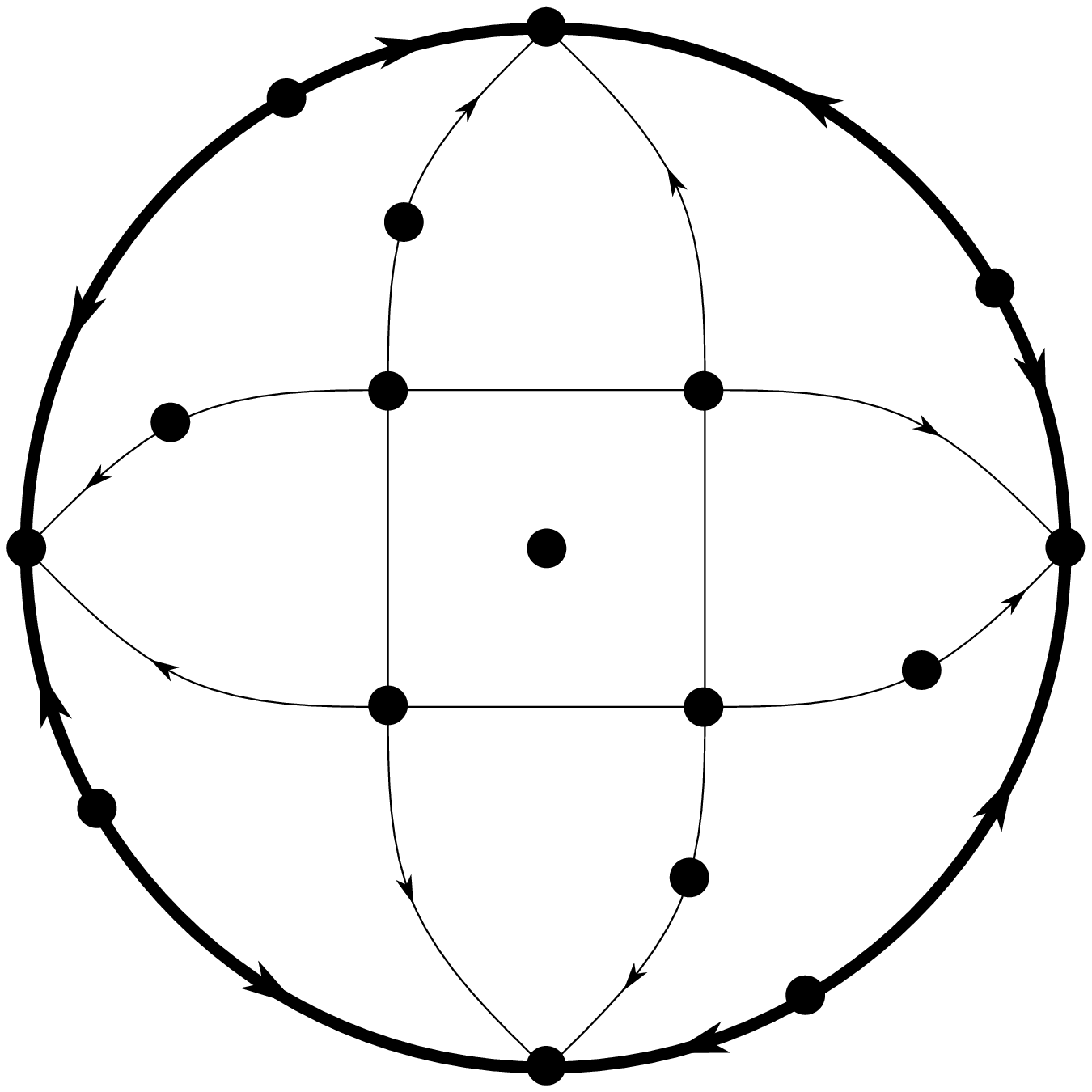} 
					\put(26,29){$p_1$}
					\put(67,29){$p_2$}
					\put(67,67){$p_3$}
					\put(26,67){$p_4$}
					\put(86,33){$q_1$}
					\put(66,16){$q_2$}
					\put(13,66){$q_3$}
					\put(40,79){$q_4$}
					\put(94,74){$u_0^+$}
					\put(77,4){$u_0^-$}
					\put(7,94){$-u_0^-$}
					\put(-13,23){$-u_0^+$}
				\end{overpic}
				
				$(b)$
			\end{center}
		\end{minipage}
		\begin{minipage}{5.2cm}
			\begin{center}
				\begin{overpic}[height=4cm]{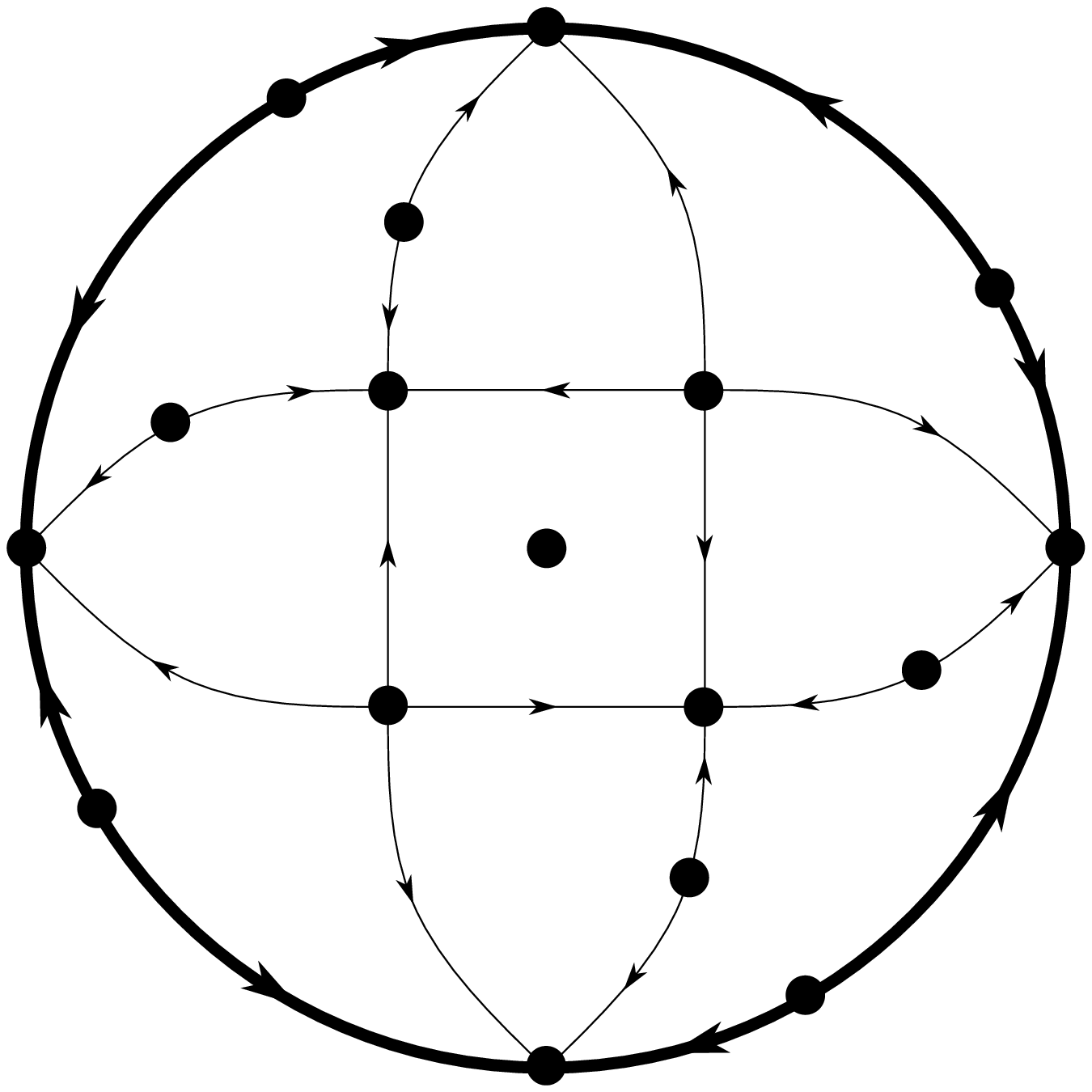} 
					\put(26,29){$p_1$}
					\put(67,29){$p_2$}
					\put(67,67){$p_3$}
					\put(26,67){$p_4$}
					\put(86,33){$q_1$}
					\put(66,16){$q_2$}
					\put(13,66){$q_3$}
					\put(40,79){$q_4$}
					\put(94,74){$u_0^+$}
					\put(77,4){$u_0^-$}
					\put(7,94){$-u_0^-$}
					\put(-13,23){$-u_0^+$}
				\end{overpic}
				
				$(c)$
			\end{center}
		\end{minipage}
	\end{center}
	\caption{Drawing the phase portrait of Case~$1.1$.}\label{Fig2}
\end{figure}
We also know that $\det A<0$. Thus, it follows from Proposition~\ref{Theo7} that the origin is a hyperbolic saddle. Consider the straight lines given by the zeros of
	\[f_1(x,y)=a_{10}x+a_{01}y, \quad f_2(x,y)=b_{10}x+b_{01}y.\]
In special, observe that these straight lines are contained in the solutions of $\dot x=0$ and $\dot y=0$, respectively. Therefore, from the disposal of the $p$ and $q$-singularities and from the local phase portrait at the $p$-singularities, we obtain Figure~\ref{Fig4}(a) and then Figure~\ref{Fig4}(b).
\begin{figure}[h]
	\begin{center}
		\begin{minipage}{7cm}
			\begin{center}
				\begin{overpic}[height=4cm]{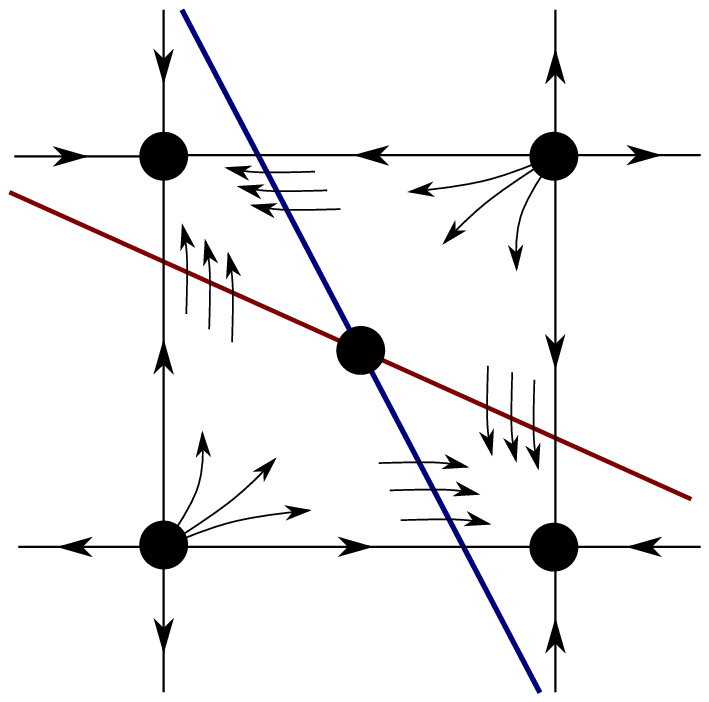} 
					\put(14,14){$p_1$}
					\put(82,15){$p_2$}
					\put(82,83){$p_3$}
					\put(13,83){$p_4$}
					\put(81,39){$f_1(x,y)=0$}
					\put(29,96){$f_2(x,y)=0$}
				\end{overpic}
				
				$(a)$
			\end{center}
		\end{minipage}
		\begin{minipage}{7cm}
			\begin{center}
				\begin{overpic}[height=4cm]{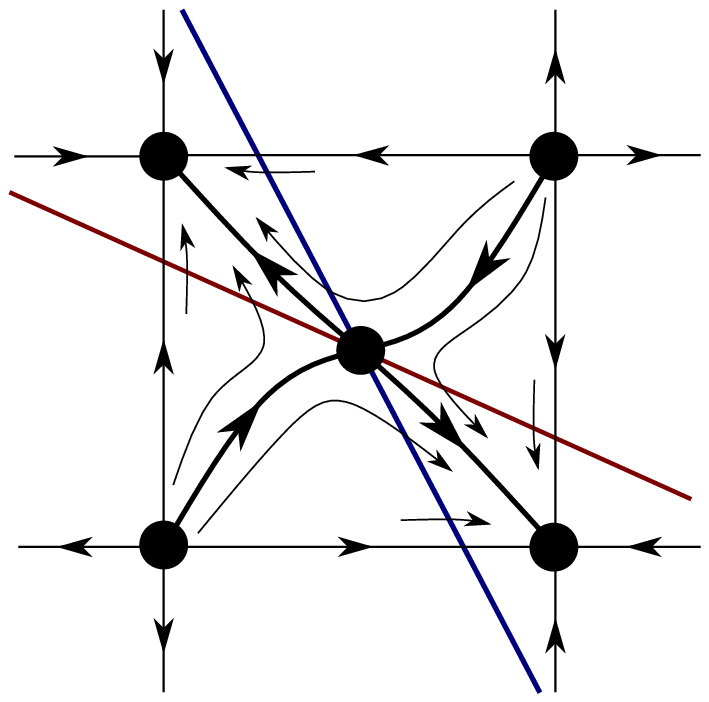} 
					\put(14,14){$p_1$}
					\put(82,15){$p_2$}
					\put(82,83){$p_3$}
					\put(13,83){$p_4$}
					\put(81,39){$f_1(x,y)=0$}
					\put(29,96){$f_2(x,y)=0$}
				\end{overpic}
				
				$(b)$
			\end{center}
		\end{minipage}
	\end{center}
	\caption{The phase portrait at the origin. In red we have $\dot x=0$ and in blue $\dot y=0$.}\label{Fig4}
\end{figure}
We can now complete the local phase portrait at the $q$-singularities. We look at $q_1$ for instance. We need to know its behavior at the $y$-axis. Since $\beta\det A<0$, it follows from \eqref{24} that $q_1$ has a negative eigenvalue relative to the $y$-axis. Hence, it follows from Figure~\ref{Fig2}(c) that $q_1$ is a hyperbolic saddle. Similarly, one can see that all the $q$-singularities are hyperbolic saddles. We now look at the phase portrait at infinity. It follows from $b_{01}>0$ that $g(u)<0$ if, and only if, $u_0^-<u<u_0^+$. Observe that within position $1$ we have $a_{01}\neq0$ (otherwise we would have $q_1<p_2$ and $q_3>p_4$). Therefore, it follows from
	\[g\left(-\frac{1}{a_{01}}\right)=\frac{1}{a_{01}^2}\det A<0,\]
that $u_0^-<-\frac{1}{a_{01}}<u_0^+$. Hence,
	\[a_{01}u_0^- -1<0, \quad a_{01}u_0^+ +1>0.\]
Since $a_{10}=1$, we conclude,
	\[a_{01}u_0^- -a_{10}<0, \quad a_{01}u_0^+ +a_{10}>0.\]
Therefore, it follows from \eqref{29} that $(u_0^-,0)$ is unstable in the $v$-axis, while $(u_0^+,0)$ is stable in the $v$-axis. Hence, it follows from Figure~\ref{Fig2}(c) that $(u_0^-,0)$ is an unstable node and $(u_0^+,0)$ is a saddle. See Figure~\ref{Fig5}(a). 
\begin{figure}[h]
	\begin{center}
		\begin{minipage}{7cm}
			\begin{center}
				\begin{overpic}[height=4cm]{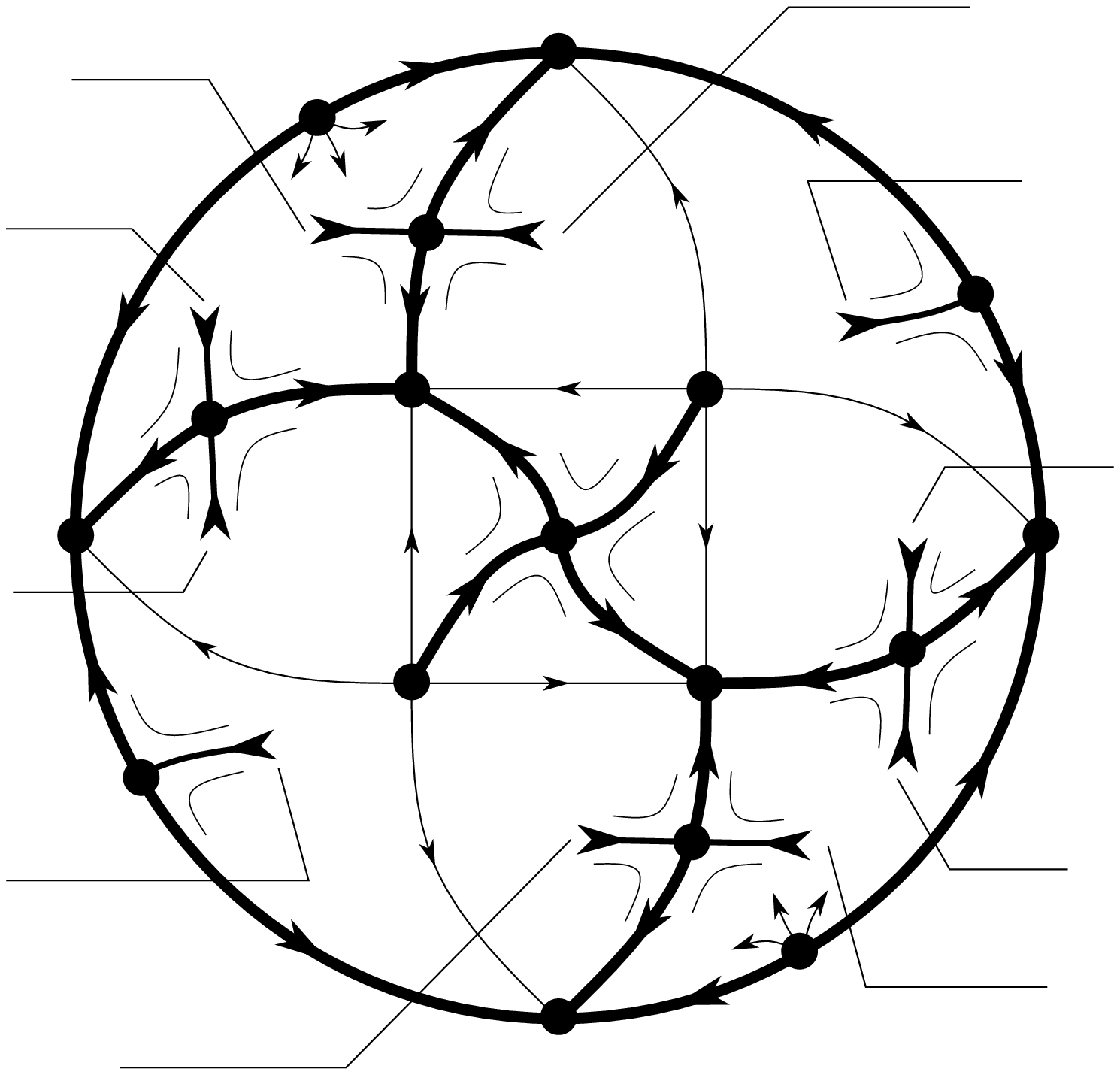} 
					\put(38,29){$p_1$}
					\put(54,64){$p_3$}
					\put(66,1){$u_0^-$}
					\put(18,92){$-u_0^-$}
					\put(95,5){$1$}
					\put(97,16){$2$}
					\put(101,52){$3$}
					\put(92.5,78){$4$}
					\put(88,93){$5$}
					\put(0,86){$6$}
					\put(-6,73){$7$}
					\put(-5,40){$8$}
					\put(-6,15){$9$}
					\put(1,-2.5){$10$}
				\end{overpic}
				
				$(a)$
			\end{center}
		\end{minipage}
		\begin{minipage}{7cm}
			\begin{center}
				\begin{overpic}[height=4cm]{Case1.1.1.eps} 
				\end{overpic}
				
				$(b)$
			\end{center}
		\end{minipage}
	\end{center}
	\caption{Finishing the drawing of the phase portrait of Case~$1.1$.}\label{Fig5}
\end{figure}
So far we have proved that, within case $1$, each vector field has its local phase portrait given by Figure~\ref{Fig5}(a). We now prove that each of those separatrices has only one option to evolve and then conclude that case $1$ is actually a class of topologically equivalent vector fields. Observe that separatrices $1$ and $2$ must born at $u_0^-$. Separatrices $3$, $4$ and $5$ must born at $p_3$. Separatrices $6$ and $7$ must born at $-u_0^-$. Finally, separatrices $8$, $9$ and $10$ has no other option than to born at $p_1$. This completes the phase portrait of Case~$1.1$. See Figure~\ref{Fig5}(b). The other eleven cases are similar. We only point out the reasoning about limit cycles. With the same technique, it can be seen that the phase portrait of case $4b$ is given by Figure~\ref{Fig7}.
\begin{figure}[h]
		\begin{minipage}{4.5cm}
			\begin{center}
				\begin{overpic}[height=4cm]{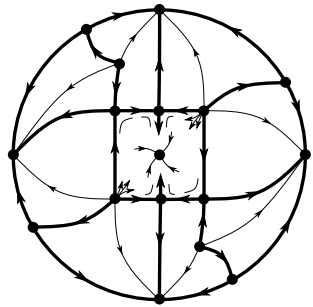} 
				\end{overpic}
			\end{center}
		\end{minipage}
	\caption{Uncompleted phase portrait of Case~$1.4b$.}\label{Fig7}
\end{figure}	
We need only to determine if there is a limit cycle around the origin. Observe that if such limit cycle does exit, then it follows from Theorem~\ref{Theo6} that it is unique and hyperbolic. Hence, it must be unstable because the origin is stable. But since it is unique, it follows that the flow outside such limit cycle and within the center of the octothorpe has no $\omega$-limit. But this is a contradiction with the Poincaré-Bendixson Theory (see Section~$3.7$ of \cite{Perko}). Therefore, the limit cycle does not exist. \end{proof}

\begin{remark}
	About the phase portraits given by Figure~\ref{Case1.1}, the following statements hold.
	\begin{enumerate}[label=(\alph*)]
		\item Case $1.1$ is topologically equivalent to none of the other cases;
		\item Cases $1.2$, $1.3$, $1.5$ and $1.8$ are topologically equivalent;
		\item Cases $1.4a$ and $1.11a$ are topologically equivalent;
		\item Cases $1.4b$ and $1.11b$ are topologically equivalent;
		\item Cases $1.6a$ and $1.9a$ are topologically equivalent;
		\item Cases $1.6b$ and $1.9b$ are topologically equivalent;
		\item Cases $1.7$, $1.10$, $1.12$ and $1.13$ are topologically equivalent;
		\item Case $1.14$ is topologically equivalent to none of the other cases.
	\end{enumerate}
	Therefore, from eighteen phase portraits, only eight remains.
\end{remark}

\begin{corollary}\label{Coro3}
	If $X\in\Sigma_0$ satisfies the hypothesis of Theorem~\ref{Theo10}, then its phase portrait in the Poincar\'e disk is topologically equivalent to one of the eight phase portraits given by Figure~\ref{Case1.1Final}.
	\begin{figure}[h]
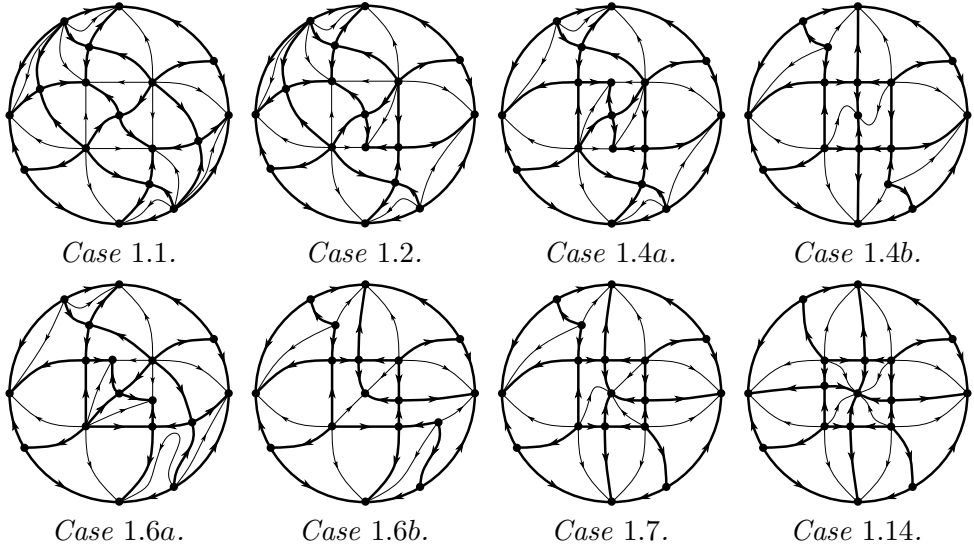

		\begin{center}
			\begin{minipage}{3.1cm}
				\begin{center}
					\begin{overpic}[height=3cm]{Case1.1.1.eps} 
					\end{overpic}
					
					Case~$1.1$.
				\end{center}
			\end{minipage}
			\begin{minipage}{3.1cm}
				\begin{center}
					\begin{overpic}[height=3cm]{Case1.1.2.eps} 
					\end{overpic}
					
					Case~$1.2$.
				\end{center}
			\end{minipage}
			\begin{minipage}{3.1cm}
				\begin{center}
					\begin{overpic}[height=3cm]{Case1.1.4a.eps} 
					\end{overpic}
					
					Case~$1.4a$.
				\end{center}
			\end{minipage}
			\begin{minipage}{3.1cm}
				\begin{center}
					\begin{overpic}[height=3cm]{Case1.1.4b.eps} 
					\end{overpic}
					
					Case~$1.4b$.
				\end{center}
			\end{minipage}
		\end{center}
	$\;$
		\begin{center} 
			\begin{minipage}{3.1cm}
				\begin{center}
					\begin{overpic}[height=3cm]{Case1.1.6a.eps} 
					\end{overpic}
					
					Case~$1.6a$.
				\end{center}
			\end{minipage}
			\begin{minipage}{3.1cm}
				\begin{center}
					\begin{overpic}[height=3cm]{Case1.1.6b.eps} 
					\end{overpic}
					
					Case~$1.6b$.
				\end{center}
			\end{minipage}
			\begin{minipage}{3.1cm}
				\begin{center}
					\begin{overpic}[height=3cm]{Case1.1.7.eps} 
					\end{overpic}
					
					Case~$1.7$.
				\end{center}
			\end{minipage}
			\begin{minipage}{3.1cm}
				\begin{center}
					\begin{overpic}[height=3cm]{Case1.1.14.eps} 
					\end{overpic}
					
					Case~$1.14$.
				\end{center}
			\end{minipage}
		\end{center}
		\caption{Topologically distinct phase portraits of Family~$1$.}\label{Case1.1Final}
	\end{figure}
\end{corollary}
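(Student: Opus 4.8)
The plan is to deduce the statement from Theorem~\ref{Theo10} together with the Markus--Neumann--Peixoto theorem. By Theorem~\ref{Theo10}, any $X\in\Sigma_0$ satisfying the stated hypotheses has a compactified phase portrait topologically equivalent to one of the eighteen portraits of Figure~\ref{Case1.1} (the fourteen realizable positions, four of which split into an ``$a$'' and a ``$b$'' subcase according to the sign of $\det A$). Hence it suffices to decide, among these eighteen representatives, which pairs are topologically equivalent; selecting one representative from each class then yields the eight portraits of Figure~\ref{Case1.1Final}. Because $X$ is generic (Definition~\ref{Def1}) and, by Theorem~\ref{Theo6}, carries at most one limit cycle --- which the proof of Theorem~\ref{Theo10} excludes in every case through a Poincar\'e--Bendixson argument --- each of the eighteen portraits is free of closed orbits, so by the discussion in Subsection~\ref{SubSec4.2} its only separatrices are the finite and infinite singularities, the separatrices of their hyperbolic sectors, and the orbits at infinity.

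First I would, for each of the eighteen portraits, read off its separatrix configuration from the local data already assembled in Sections~\ref{Sec7} and \ref{Sec8}: the four $p$-singularities and the relevant $q$-singularities, whose types follow from \eqref{22} and \eqref{24}; the origin, a saddle when $\det A<0$ and a stable node/focus when $\det A>0$; and the infinite singularities $(0,0)$ and $(u_0^\pm,0)$, whose nature is fixed by \eqref{29}. This produces, for each portrait, a finite labelled skeleton recording every separatrix together with the cyclic order in which the separatrices leave and enter each singularity and the $\alpha$- and $\omega$-limit of each one.

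To establish the equivalences $(b)$--$(g)$ listed in the preceding Remark I would then, for each group, exhibit an explicit orientation-respecting homeomorphism of the Poincar\'e disk carrying the skeleton of one member onto that of another; equivalently, by the Markus--Neumann--Peixoto theorem, I would verify that the two separatrix configurations are isomorphic, that is, that they have the same number of saddles, nodes and foci, a bijection between their hyperbolic sectors preserving the adjacency pattern, and a matching bijection between canonical regions. For the isolated classes $(a)$ and $(h)$ I would instead exhibit a topological invariant of the skeleton separating Case~$1.1$, respectively Case~$1.14$, from every other portrait --- for instance the number of finite saddles, or the number and location of the separatrix connections contained in $\Lambda$, both invariant under topological equivalence. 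Realizability of each of the eight representatives is inherited from Theorem~\ref{Theo10}, since each appears among the fourteen realizable positions of Figure~\ref{Case1.1} (equivalently, one exhibits explicit coefficients realizing it).

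The main obstacle will be the bookkeeping in the last two steps: several of the eighteen portraits differ only in the global way a single separatrix is routed between infinity and a finite saddle, so the delicate point is to confirm that the configurations are genuinely isomorphic for the claimed equivalences and genuinely non-isomorphic for the claimed distinctions, rather than merely similar in appearance. Once each grouping is checked, the representatives $1.1$, $1.2$, $1.4a$, $1.4b$, $1.6a$, $1.6b$, $1.7$ and $1.14$ exhaust all possibilities, which is precisely the content of Figure~\ref{Case1.1Final}.
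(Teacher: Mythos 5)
Your proposal is correct and takes essentially the same route as the paper: Theorem~\ref{Theo10} reduces the problem to the eighteen portraits of Figure~\ref{Case1.1}, and the grouping into the eight classes of Figure~\ref{Case1.1Final} is precisely the content of the Remark preceding the corollary, which the paper asserts by inspection of the separatrix configurations and which you propose to verify explicitly via the Markus--Neumann--Peixoto theorem. The only difference is one of explicitness in the bookkeeping, not of method.
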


\subsection{Case \boldmath{$2$}: \boldmath{$b_{10}\geqslant0$} and \boldmath{$b_{01}<0$}}

\begin{theorem}\label{Theo11}
	Let $X=(P,Q)$ be the planar vector field given by,
		\[P(x,y)=(x+\alpha)(x+\alpha-1)(x+a_{01}y), \quad Q(x,y)=(y+\beta)(y+\beta-1)(b_{10}x+b_{01}y).\]
	If $X\in\Sigma_0$ and $\frac{1}{2}\leqslant\beta<1$, $0<\alpha<1$, $a_{01}\geqslant0$, $b_{10}\geqslant0$, $b_{01}<0$, then its phase portrait in the Poincar\'e disk is topologically equivalent to one of the phase portraits given by Figures~\ref{Case1.2a}, \ref{Case1.2b} and \ref{Case1.2c}.
\end{theorem}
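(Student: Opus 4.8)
The plan is to reproduce, step by step, the argument of Theorem~\ref{Theo10}, tracking the effect of the sign change $b_{01}<0$. As there, I would first read off the sign of $\det A$ from the admissible relative positions of the $p$- and $q$-singularities, using the equivalences of Proposition~\ref{Theo8} (such as $q_3<p_4\Leftrightarrow a_{10}f_1(p_4)>0$) together with estimates of the form $q_4>p_4\Rightarrow b_{10}>\tfrac{1-\beta}{\alpha}b_{01}$. Two preliminary points differ from Case~$1$. First, the sign conditions involving $f_2$ must be recomputed, since with $b_{01}<0$ the relations fixing $q_2$ against $p_3$ and $q_4$ against $p_1$ are no longer forced, so the list of realizable configurations has to be redone. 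Second, and decisively, since $a_{10}=1$, $a_{01}\geqslant0$, $b_{10}\geqslant0$ and $b_{01}<0$ one has $\det A=b_{01}-a_{01}b_{10}<0$ \emph{unconditionally}; hence $(\alpha-1)\alpha(\beta-1)\beta\det A<0$, and Proposition~\ref{Theo7} shows that the origin is \emph{always} a hyperbolic saddle. In particular the node/focus and weak-focus/center alternatives that complicated Case~$1$ cannot occur.

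Next I would treat the infinity. By \eqref{29} the pole of chart $U_2$ has eigenvalues $-b_{01}>0$, hence is an unstable node (the reverse of Case~$1$), while the pole of $U_1$ remains a stable node. The further infinite singularities $(u_0^\pm,0)$ exist precisely when $\Delta=(b_{10}-a_{01})^2+4b_{01}>0$, which---unlike Case~$1$, where $\Delta>0$ was automatic---may now fail; the analysis therefore splits into $\Delta>0$ and $\Delta<0$. When $\Delta>0$ one has $u_0^+u_0^-=-1/b_{01}>0$, so both roots share the sign of $b_{10}-a_{01}$, forcing an additional split according to $b_{10}\gtrless a_{01}$, and their hyperbolic type follows from the triangular matrix in \eqref{29}. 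Propositions~\ref{Theo9} and \ref{Theo15} ensure that genericity excludes $\Delta=0$ and any collision of $(u_0^\pm,0)$ with another singularity, so every singularity at infinity is hyperbolic.

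With the origin fixed as a saddle, I would classify the remaining finite singularities exactly as in Proposition~\ref{Theo8}: the $p$-singularities from the diagonal matrices \eqref{22} and the $q$-singularities from the triangular matrices \eqref{24}, in each case via the Hartman--Grobman theorem. Since all these Jacobians are diagonal or triangular, every finite singularity other than the origin has real eigenvalues and is a node or a saddle. I would then place the flow arrows along the four invariant lines of \eqref{7} and along $\mathbb{S}^1$, and evolve each separatrix: invariance of $\Lambda$ together with the already-fixed local portraits leaves a unique admissible continuation for each separatrix, exactly as in the treatment of Case~$1.1$ inside Theorem~\ref{Theo10}. Collecting the resulting configurations gives the phase portraits of Figures~\ref{Case1.2a}, \ref{Case1.2b} and \ref{Case1.2c}.

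The delicate point of Case~$1$---the control of limit cycles---becomes easy here. Because $\det A<0$, the origin is a saddle, so no closed orbit can surround it, and since every other finite singularity has a triangular Jacobian there are no foci, weak foci or centers anywhere; Theorems~\ref{Theo4} and \ref{Theo5} thus play no role. Any limit cycle is confined by the invariant lines to a single region and would have to enclose an anti-saddle, which the separatrix analysis forbids, and Theorem~\ref{Theo6} already guarantees at most one (necessarily hyperbolic) limit cycle; together these exclude limit cycles outright. I expect the real obstacle to be purely combinatorial: the regime $\Delta<0$, in which the singularities $(u_0^\pm,0)$ disappear, has no analogue in Case~$1$, it alters the global separatrix connections at infinity, and each configuration in the $\Delta>0$, $b_{10}\gtrless a_{01}$, and $\Delta<0$ subcases must be separately checked for realizability. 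The bulk of the effort therefore lies in the enumeration and realization of these subcases rather than in any new dynamical phenomenon.
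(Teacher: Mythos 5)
Your plan follows the paper's proof essentially step for step: $\det A=b_{01}-a_{01}b_{10}<0$ unconditionally, so the origin is always a hyperbolic saddle; the infinite singularities $(u_0^\pm,0)$ force a split into $\Delta>0$ and $\Delta<0$, and when $\Delta>0$ the identity $u_0^+u_0^-=-1/b_{01}>0$ forces a further split by the sign of $b_{10}-a_{01}$; the finite singularities are handled exactly as in Proposition~\ref{Theo8}; and the separatrices are evolved as in Case~$1.1$ of Theorem~\ref{Theo10}. Your explicit exclusion of limit cycles (no periodic orbit can cross the invariant lines, so any would have to surround the origin, which is a saddle) is correct and is actually left implicit in the paper. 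The one substantive piece of the paper's argument that your plan defers rather than supplies is the mechanism behind ``each configuration \ldots must be separately checked for realizability'': the paper introduces $r=-b_{10}/b_{01}\geqslant0$, observes that fixing $(\alpha,\beta,a_{01},r)$ fixes the line $f_2=0$ and hence the entire $p$/$q$ disposal, and then sweeps $b_{01}$ over $(-\infty,0)$ with $b_{10}=-rb_{01}$; since $\Delta=r^2b_{01}^2+(2a_{01}r+4)b_{01}+a_{01}^2$ has two negative roots in $b_{01}$, both signs of $\Delta$ (and of $b_{10}-a_{01}$) are realized within every disposal, which is what justifies the full list of subcases in Table~\ref{Table3}. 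Without some such argument you would know only that each vector field lands in one of the enumerated combinations, not which combinations are nonvacuous, so you would risk listing spurious portraits or, conversely, could not certify the figures as stated; supplying this one-parameter sweep (or an equivalent realizability check) is the remaining work.
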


\begin{proof} Observe that $\det A=b_{01}-b_{10}a_{01}<0$. However, in this case the polynomial
	\[g(u)=b_{01}u^2+(b_{10}-a_{01})u-1,\]
given by \eqref{27}, may have complex roots. Thus, we need to study the signal of,
\begin{equation}\label{30}
	\Delta = (b_{10}-a_{01})^2+4b_{01}.
\end{equation}
We claim that for each disposal between the $p$ and $q$-singularities, both $\Delta>0$ and $\Delta<0$ are realizable. To prove this, we observe that $b_{01}\neq0$. Hence, we can consider the parameter,
\begin{equation}\label{35}
	r=-\frac{b_{10}}{b_{01}}\geqslant 0.
\end{equation}
Consider $f_2(x,y)=b_{10}x+b_{01}y$ (see \eqref{21}). Observe that $r$ is such that $f_2(x,y)=0$ if, and only if, $y=rx$. Therefore, if we fix $\alpha$, $\beta$, $a_{01}$ and allow $b_{01}$ and $b_{10}$ to move such that $r$ stays constant, then the relative position between the $p$ and $q$-singularities stays the same. More precisely, given a choice of parameters $(\alpha,\beta,a_{01},b_{10},b_{01})$, we can fix $(\alpha,\beta,a_{01},r)$, with $r\geqslant0$ given by \eqref{35}, taking
\begin{equation}\label{31}
	b_{10}=-rb_{01},
\end{equation}
and let $b_{01}<0$ free. The variation of $b_{01}$ along $(-\infty,0)$ may change the phase portrait (and in fact, it will), but it will not change the relative position between the $p$ and $q$-singularities. Replacing \eqref{31} at \eqref{30} we get,
	\[\Delta=r^2b_{01}^2+(2a_{01}r+4a)b_{01}+a_{01}^2.\]
Hence, we can define the polynomial,
	\[f(x)=r^2x^2+(2a_{01}r+4)x+a_{01}^2.\]
Suppose $r>0$. Observe that
	\[x^\pm=\frac{-(a_{01}r+2)\pm2\sqrt{a_{01}r+1}}{r^2},\]
are the solutions of $f(x)=0$. Since $a_{01}r\geqslant0$, it follows that $x^\pm\in\mathbb{R}$. Therefore, it follows from
	\[x^+x^-=\frac{1}{r^2}>0, \quad x^++x^-=-\frac{2a_{01}r+4}{r^2}<0,\]
that $x^\pm<0$. Since $x^+$ and $x^-$ do not depend on $b_{01}$, it follows that for each position we can choose $b_{01}<0$ such that $\Delta>0$ and $\Delta<0$ are realizable. Thus, each relative position must be divided in two cases. More precisely, for each relative position between the $p$ and the $q$-singularities, we can choose $b_{01}<x^-$ and $x^-<b_{01}<x^+$. Obtaining $\Delta>0$ and $\Delta<0$, respectively. Furthermore, observe that we must divide it again due to the signal of $b_{10}-a_{01}$, i.e. due the fact that $u_0^\pm$ may be both positive or negative. If $r=0$, then we have $q_2<p_3$ and $q_4>q_1$. Therefore, we can take $r>0$ small enough without changing the given relative position. Thus, we can divide the second generic family in the forty five cases given by Table~\ref{Table3} (which also lies in the appendix section). We now study the infinity. If $\Delta<0$, then $(u_0^+,0)$ and $(u_0^-,0)$ are not singularities. Hence, it follows from Section~\ref{Sec8} that the phase portrait at infinity is given by Figure~\ref{Case1.2Infinity}(a). 
\begin{figure}[h]
	\begin{center}
		\begin{minipage}{4.5cm}
			\begin{center}
				\begin{overpic}[height=3cm]{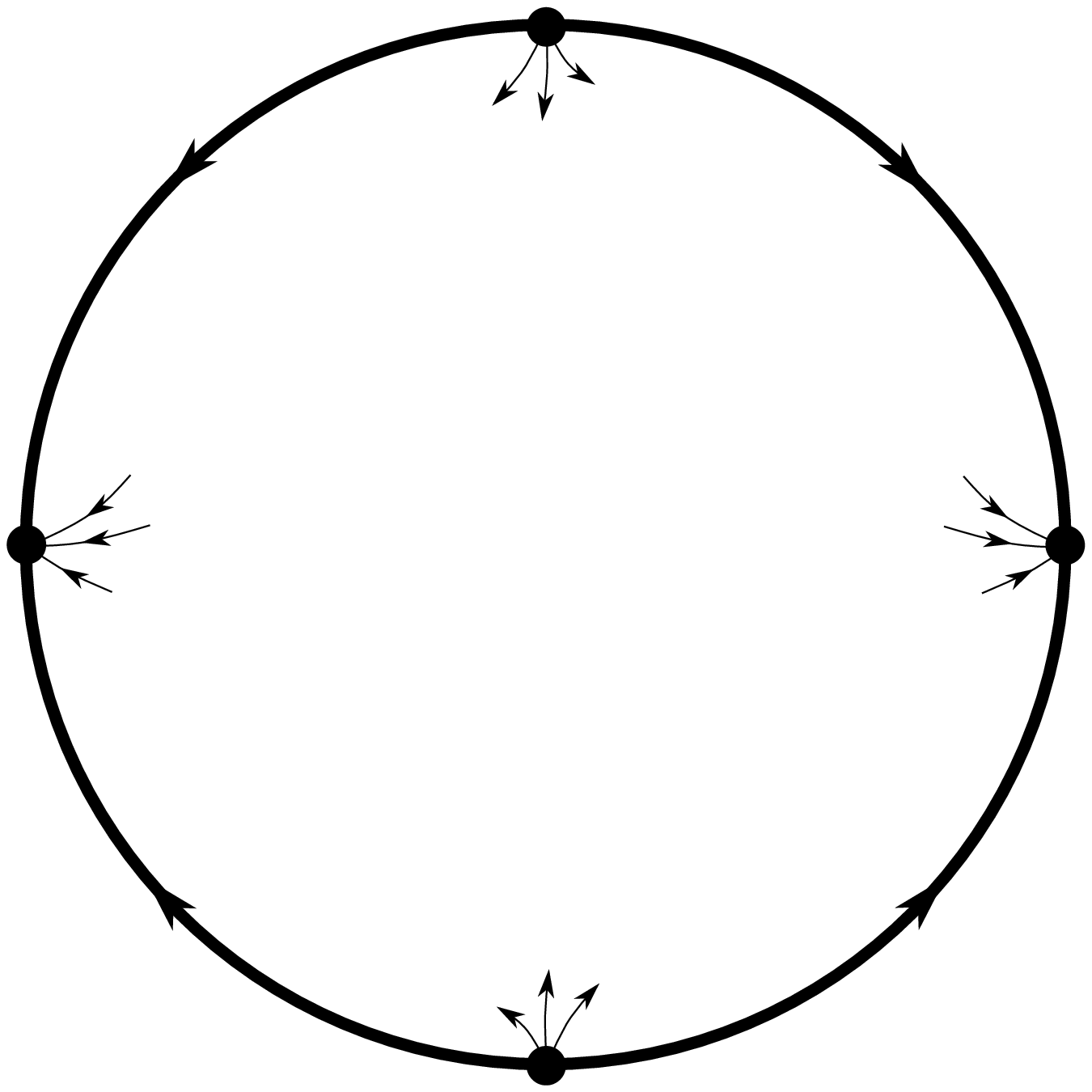} 
				\end{overpic}
				
				$(a)$
				
				$\Delta<0$.
			\end{center}
		\end{minipage}
		\begin{minipage}{4.5cm}
			\begin{center}
				\begin{overpic}[height=3cm]{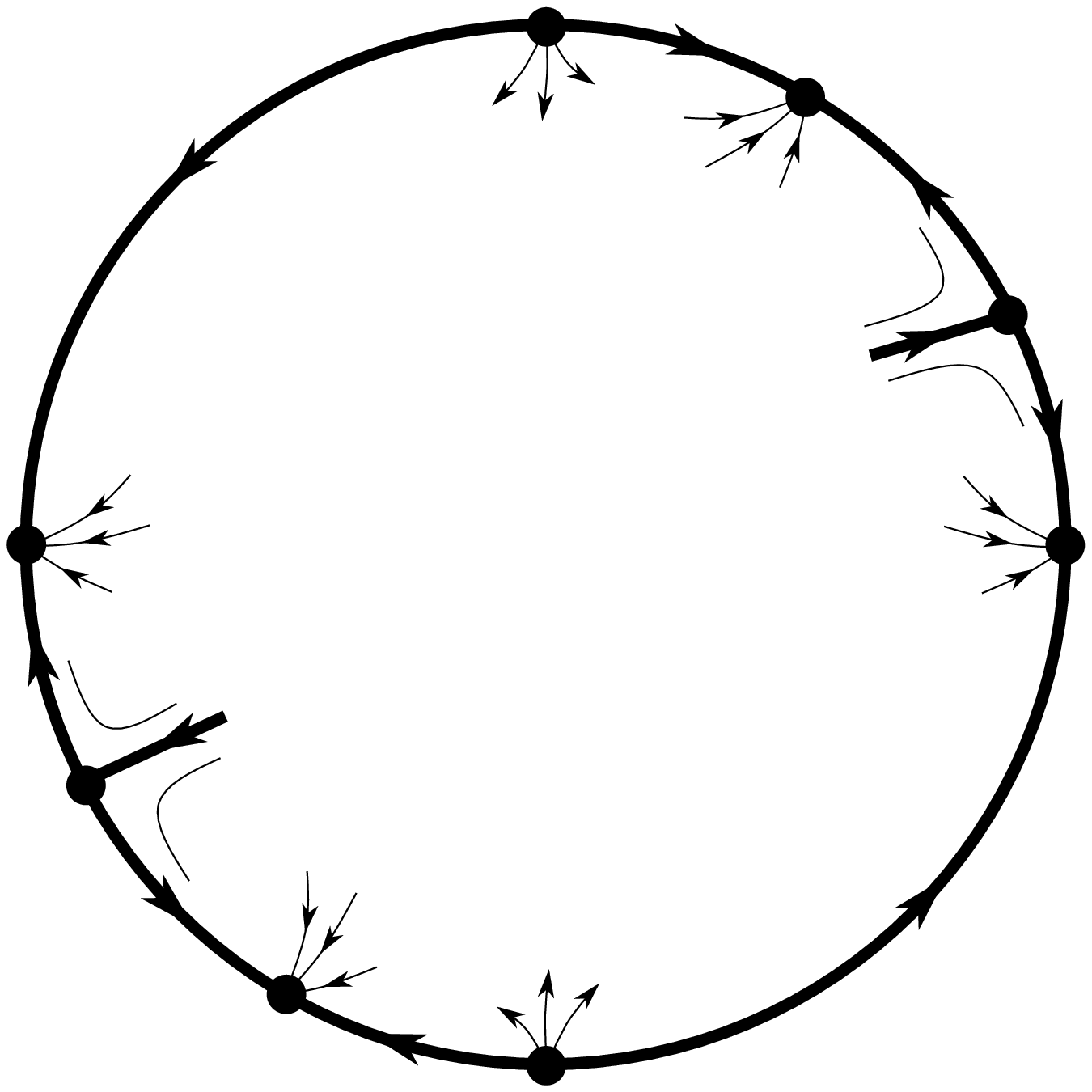} 
				\end{overpic}
				
				$(b)$
				
				$\Delta>0$ and $b_{10}>a_{01}$.
			\end{center}
		\end{minipage}
		\begin{minipage}{4.5cm}
			\begin{center}
				\begin{overpic}[height=3cm]{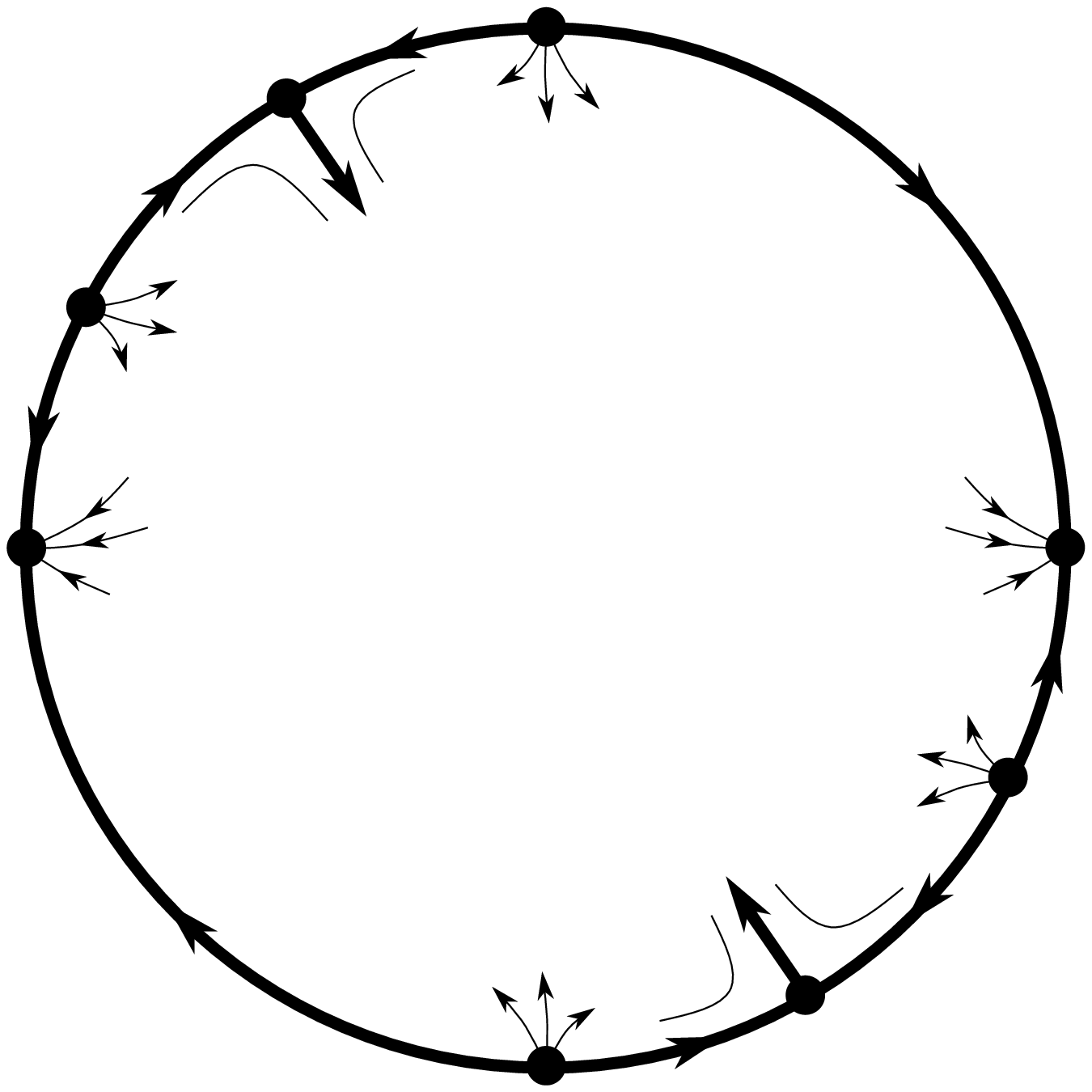} 
				\end{overpic}
				
				$(c)$
				
				$\Delta>0$ and $b_{10}<a_{01}$.
			\end{center}
		\end{minipage}
	\end{center}
	\caption{Local phase portrait of the second family at infinity.}\label{Case1.2Infinity}
\end{figure}
If $\Delta>0$, then $u_0^\pm\in\mathbb{R}$ and in this case we observe that,
	\[u_0^+u_0^-=-\frac{1}{b_{01}}>0.\]
Therefore, $u_0^+$ and $u_0^-$ have the same signal. Since $\Delta>0$, it follows from \eqref{30} and $b_{01}<0$ that $b_{10}\neq a_{01}$. Therefore,
	\[u_0^++u_0^-=-\frac{b_{10}-a_{01}}{b_{01}}=\frac{b_{10}-a_{01}}{|b_{01}|}\neq0.\]
In special, $u_0^\pm$ are both positive if $b_{10}>a_{01}$ and negative if $b_{10}<a_{01}$. In special, it follows from $b_{01}<0$ that $g(u)>0$ if, and only if, $u_0^+<u<u_0^-$. Suppose first $b_{10}>a_{01}$. It follows from \eqref{29} and from $a_{10}=1$ that to obtain the phase portrait at singularities $(u_0^\pm,0)$, we need to obtain the signal of,
\begin{equation}\label{51}
	a_{01}u_0^\pm+1.
\end{equation}
If $a_{01}=0$, then it is clear that \eqref{51} is positive. Suppose $a_{01}\neq0$. In this case we observe that,
	\[g\left(-\frac{1}{a_{01}}\right)=\frac{1}{a_{01}^2}\det A<0.\]
Hence, $-\frac{1}{a_{01}}<0<u^\pm$. Thus, \eqref{51} is positive. Therefore, we obtain the local phase portrait given by Figure~\ref{Case1.2Infinity}(b). Suppose $b_{10}<a_{01}$. In special, observe that $a_{01}\neq0$. Hence, from the signal of
\begin{equation}\label{13}
	g'\left(-\frac{1}{a_{01}}\right)=2\frac{|b_{01}|}{a_{01}}+(b_{10}-a_{01}),
\end{equation}
we can obtain the position between $u_0^\pm$ and $-\frac{1}{a_{01}}$. More precisely, 
\begin{enumerate}[label=(\alph*)]
	\item if $g'\left(-\frac{1}{a_{01}}\right)<0$, then $u_0^\pm<-\frac{1}{a_{01}}$. Thus, \eqref{51} is negative; 
	\item if $g'\left(-\frac{1}{a_{01}}\right)>0$, then $u_0^\pm>-\frac{1}{a_{01}}$. Thus, \eqref{51} is positive.
\end{enumerate}
Since $\Delta>0$, it follows that $(b_{10}-a_{01})^2>4|b_{01}|$. Therefore, it follows from $b_{10}-a_{01}<0$ that,
	\[0<\sqrt{|b_{01}|}<2\sqrt{|b_{01}|}<|b_{10}-a_{01}|=a_{01}-b_{10}<a_{01}.\]
In special, it follows that
\begin{equation}\label{10}
	b_{10}-a_{01}<-2\sqrt{|b_{01}|},
\end{equation}	
and that
\begin{equation}\label{11}
	\sqrt{|b_{01}|}<a_{01}.
\end{equation}
From \eqref{11} we have,
\begin{equation}\label{12}
	\frac{\sqrt{|b_{01}|}}{a_{01}}-1<0.
\end{equation}
Therefore, from \eqref{13}, \eqref{10} and \eqref{12} we have,
	\[g'\left(-\frac{1}{a_{01}}\right)=2\frac{|b_{01}|}{a_{01}}+(b_{10}-a_{01})<2\frac{|b_{01}|}{a_{01}}-2\sqrt{|b_{01}|}=2\sqrt{|b_{01}|}\left(\frac{\sqrt{|b_{01}|}}{a_{01}}-1\right)<0.\]
Hence, from \eqref{29} we can obtain the local phase portrait at the singularities given by $(u_0^\pm,0)$. See Figure~\ref{Case1.2Infinity}(c). The proof now follows as in Theorem~\ref{Theo10}. \end{proof}

\begin{remark}\label{Remark5}
From all the phase portraits given by Figures~\ref{Case1.2a}, \ref{Case1.2b} and \ref{Case1.2c}, if we look to the disposal of the $q$-singularities inside the square given by the $p$-singularities, it can be seen that:
	\begin{enumerate}[label=\arabic*)]
		\item Families $2.1a$ and $2.1b$ are topologically equivalent;
		\item Families $2.2a$, $2.3a$, $2.5b$ and $2.8b$ are topologically equivalent;
		\item Families $2.2b$, $2.3b$, $2.5a$ and $2.8a$ are topologically equivalent;
		\item Families $2.4a$ and $2.12b$ are topologically equivalent;
		\item Families $2.4b$ and $2.12a$ are topologically equivalent;
		\item Families $2.4c$ and $2.12c$ are topologically equivalent;
		\item Families $2.6a$, $2.9a$ and $2.10b$ are topologically equivalent;
		\item Families $2.6b$, $2.9b$ and $2.10a$ are topologically equivalent;
		\item Families $2.6c$, $2.9c$ and $2.10c$ are topologically equivalent;
		\item Families $2.7a$, $2.11a$, $2.13b$ and $2.14b$ are topologically equivalent;
		\item Families $2.7b$, $2.11b$, $2.13a$ and $2.14a$ are topologically equivalent;
		\item Families $2.7c$, $2.11c$, $2.13c$ and $2.14c$ are topologically equivalent;
		\item Families $2.15a$ and $2.15b$ are topologically equivalent.
	\end{enumerate}
Therefore, from forty five families, only sixteen remains (observe that families $2.1c$ and $.15c$ were not named above). If we look at the first family of each class above, then it can be seen that: 
	\begin{enumerate}[label=(\alph*)]
		\item Families $2.1a$, $2.2b$ and $2.4b$ are topologically equivalent;
		\item Families $2.1c$, $2.2c$, $2.4c$, $2.6c$, $2.7c$ and $2.15c$ are topologically equivalent;
		\item Families $2.2a$, $2.6a$, $2.6b$ and $2.7b$ are topologically equivalent;
		\item Families $2.4a$, $2.7a$ and $2.15a$ are topologically equivalent.
	\end{enumerate}
For example, given case $2.1c$, one need just to ``slide'' the $q$-singularities inside the square given by the $p$-singularities to obtain case $2.15c$. Therefore, from the sixteen classes above, only four remains. Finally, we observe that:
	\begin{enumerate}[label=(\alph*)]
		\item Cases $2.1a2$, $2.1a3$ and $2.2a1$ are topologically equivalent;
		\item Cases $2.1a4$, $2.2a2$ and $2.4a1$ are topologically equivalent.
	\end{enumerate}
Thus, from the eighty three phase portraits given by Figures~\ref{Case1.2a}, \ref{Case1.2b} and \ref{Case1.2c}, only four remains. Finally, we observe that this technique of sliding the $q$-singularities does not apply in the first generic family because it would break/create heteroclinic connections between hyperbolic saddles. Look for example at Cases~$1.1$ and $1.2$. More precisely, look at the heteroclinic connection between $p_2$ and $q_2$ at Case $1.2$.
\end{remark}

\begin{corollary}\label{Coro4}
	If $X\in\Sigma_0$ satisfies the hypothesis of Theorem~\ref{Theo11}, then its phase portrait in the Poincar\'e disk is topologically equivalent to one of the four phase portraits given by Figure~\ref{Case1.2Final}.
	\begin{figure}[h]
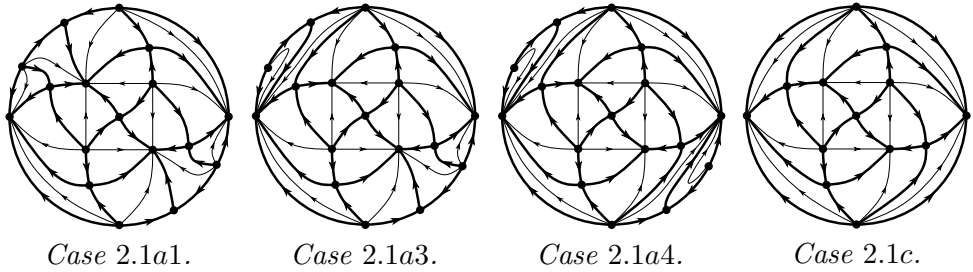

		\begin{center}
			\begin{minipage}{3.1cm}
				\begin{center}
					\begin{overpic}[height=3cm]{Case1.2.1a1.eps} 
					\end{overpic}
					
					Case~$2.1a1$.
				\end{center}
			\end{minipage}
			\begin{minipage}{3.1cm}
				\begin{center}
					\begin{overpic}[height=3cm]{Case1.2.1a3.eps} 
					\end{overpic}
					
					Case~$2.1a3$.
				\end{center}
			\end{minipage}
			\begin{minipage}{3.1cm}
				\begin{center}
					\begin{overpic}[height=3cm]{Case1.2.1a4.eps} 
					\end{overpic}
					
					Case~$2.1a4$.
				\end{center}
			\end{minipage}	
			\begin{minipage}{3.1cm}
				\begin{center}
					\begin{overpic}[height=3cm]{Case1.2.1c.eps} 
					\end{overpic}
					
					Case~$2.1c$.
				\end{center}
			\end{minipage}
		\end{center}
		\caption{Topologically distinct phase portraits of Family $2$.}\label{Case1.2Final}
	\end{figure}
\end{corollary}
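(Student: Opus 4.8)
The plan is to obtain Corollary~\ref{Coro4} as an essentially bookkeeping consequence of Theorem~\ref{Theo11} together with the chain of equivalences collected in Remark~\ref{Remark5}, the whole reduction being governed by the Markus--Neumann--Peixoto Theorem. By Theorem~\ref{Theo11}, the phase portrait of any $X$ satisfying the stated hypotheses is topologically equivalent to one of the eighty three portraits arising from the forty five families of Table~\ref{Table3}; the task is therefore to show that all of these collapse onto the four representatives of Figure~\ref{Case1.2Final}. Since every relevant field lies in $\Sigma_0$ and hence $p(X)$ has only finitely many singularities, the Markus--Neumann--Peixoto Theorem reduces each asserted topological equivalence to the construction of a homeomorphism of the Poincar\'e disk carrying one separatrix configuration onto the other. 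Thus the proof amounts to producing such homeomorphisms for each identification claimed in Remark~\ref{Remark5}.

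I would organize these identifications into two mechanisms. The first is the symmetry group generated by the maps $\varphi_i$ introduced in the proof of Theorem~\ref{Main2}: whenever two families differ by one of these linear changes of variables, the induced map on $\mathbb{S}^2$ is the required orbit-preserving (or orbit-reversing) homeomorphism, and this accounts for the equivalences read off from the ``same disposal of the $q$-singularities'' in the first list of Remark~\ref{Remark5}. The second, and genuinely geometric, mechanism is the \emph{sliding} homeomorphism: one builds an explicit homeomorphism of the disk that transports a $q$-singularity across the square spanned by the $p$-singularities while fixing everything outside a small neighbourhood of the relevant invariant line. This is what passes, for instance, from Case~$2.1c$ to Case~$2.15c$, and iterating it merges the sixteen surviving classes down to four.

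The crux, and the main obstacle, is to certify that each slide is legitimate, i.e.\ that along the deformation the separatrix configuration never jumps. Here I would rely on the local phase portraits supplied by Propositions~\ref{Theo7} and \ref{Theo8}, reading off from the Jacobians \eqref{24} the stability type of every $q$-singularity in Case~$2$, where $b_{01}<0$ and $\det A<0$. The point to verify is that the $q$-singularities being slid carry no saddle separatrix that could meet another saddle off the invariant octothorpe; when this holds, moving them within the $p$-square neither creates nor destroys a heteroclinic connection, so the Markus--Neumann--Peixoto configuration is preserved throughout the isotopy. This is precisely where Family~$1$ behaves differently: there the corresponding $q$-singularities are saddles whose separatrices do connect to the $p$-singularities, so a slide would break or create such a connection (compare Cases~$1.1$ and $1.2$, where the heteroclinic orbit between $p_2$ and $q_2$ appears only in the latter). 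Disentangling which slides are admissible is the delicate, case-dependent heart of the argument.

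Once every equivalence of Remark~\ref{Remark5} has been realized by one of the two homeomorphisms above, exactly four separatrix configurations survive, represented by Cases~$2.1a1$, $2.1a3$, $2.1a4$ and $2.1c$; by the Markus--Neumann--Peixoto Theorem these are pairwise topologically inequivalent and exhaust all possibilities, which is the assertion of the corollary. To close, I would record explicit values of $(\alpha,\beta,a_{01},b_{10},b_{01})$ realizing each of the four, confirming that none of the classes is vacuous.
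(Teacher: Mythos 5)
Your proposal is correct and follows essentially the same route as the paper: Corollary~\ref{Coro4} is obtained from the enumeration in Theorem~\ref{Theo11} by collapsing the eighty three portraits through the identifications listed in Remark~\ref{Remark5}, justified via the Markus--Neumann--Peixoto theorem, with the ``sliding'' of the $q$-singularities inside the $p$-square as the key observation. The only (harmless) difference in emphasis is that the paper never needs to certify an isotopy: since Theorem~\ref{Theo11} already supplies every separatrix configuration explicitly, the identifications are checked by direct comparison of the drawn configurations, and your concern about saddle connections being created or destroyed along a slide is precisely the caveat the paper records at the end of Remark~\ref{Remark5} to explain why the same shortcut fails for Family~$1$.
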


\subsection{Case \boldmath{$3$}: \boldmath{$b_{10}\leqslant0$} and \boldmath{$b_{01}>0$}}

In this section, we highlight a special case in which we need to understand the dynamics of a polycycle. Consider the position given by,
\begin{equation}\label{14}
	q_1>p_2, \quad q_2>p_3, \quad q_3<p_4, \quad q_4<p_1.
\end{equation}
As in Theorem~\ref{Theo10}, it can be seen that under \eqref{14}, the phase portrait is given by Figure~\ref{Fig6}.
\begin{figure}[h]
	\begin{center}
		\begin{minipage}{4.5cm}
			\begin{center}
				\begin{overpic}[height=3cm]{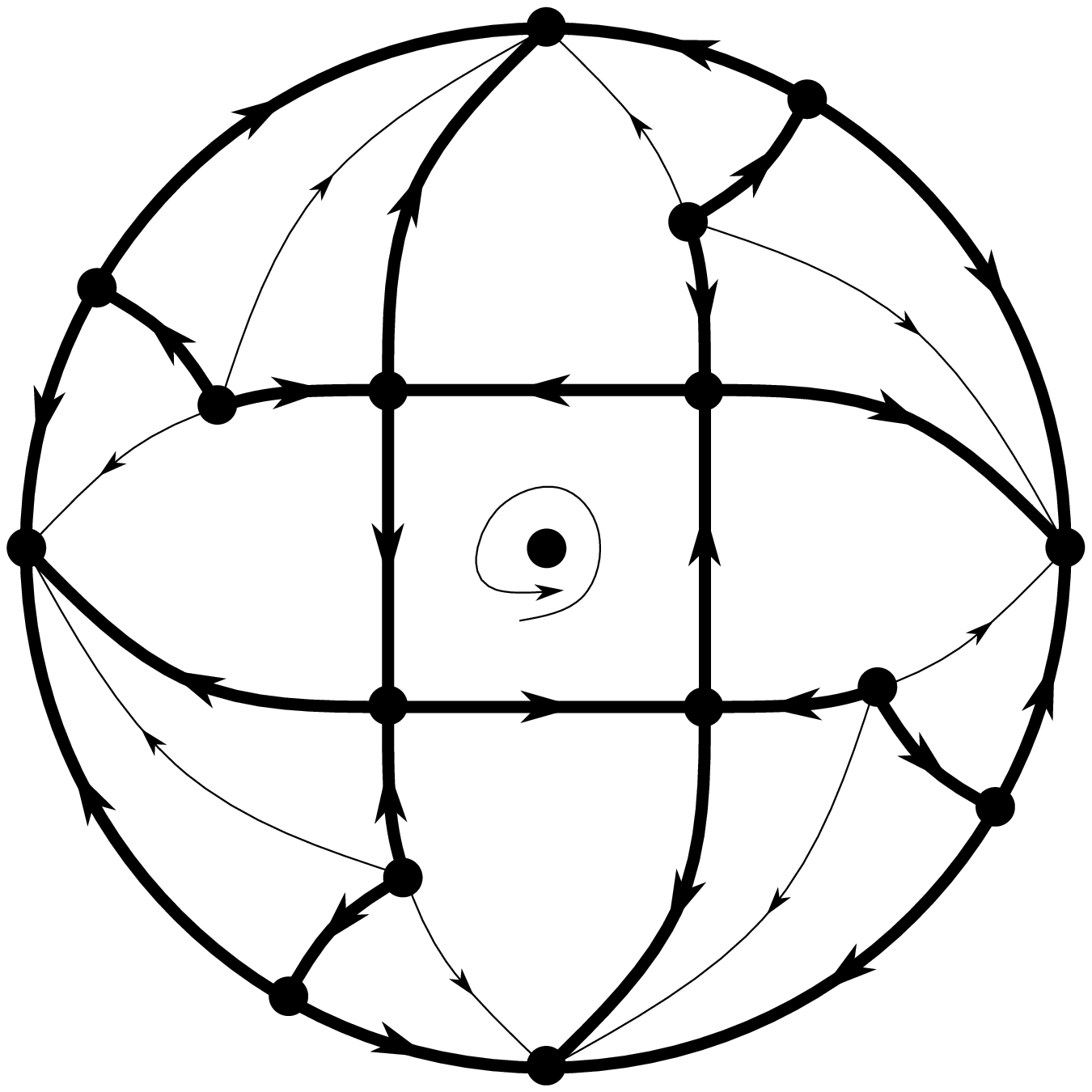} 
				\end{overpic}
				
				$(a)$
			\end{center}
		\end{minipage}
		\begin{minipage}{4.5cm}
			\begin{center}
				\begin{overpic}[height=3cm]{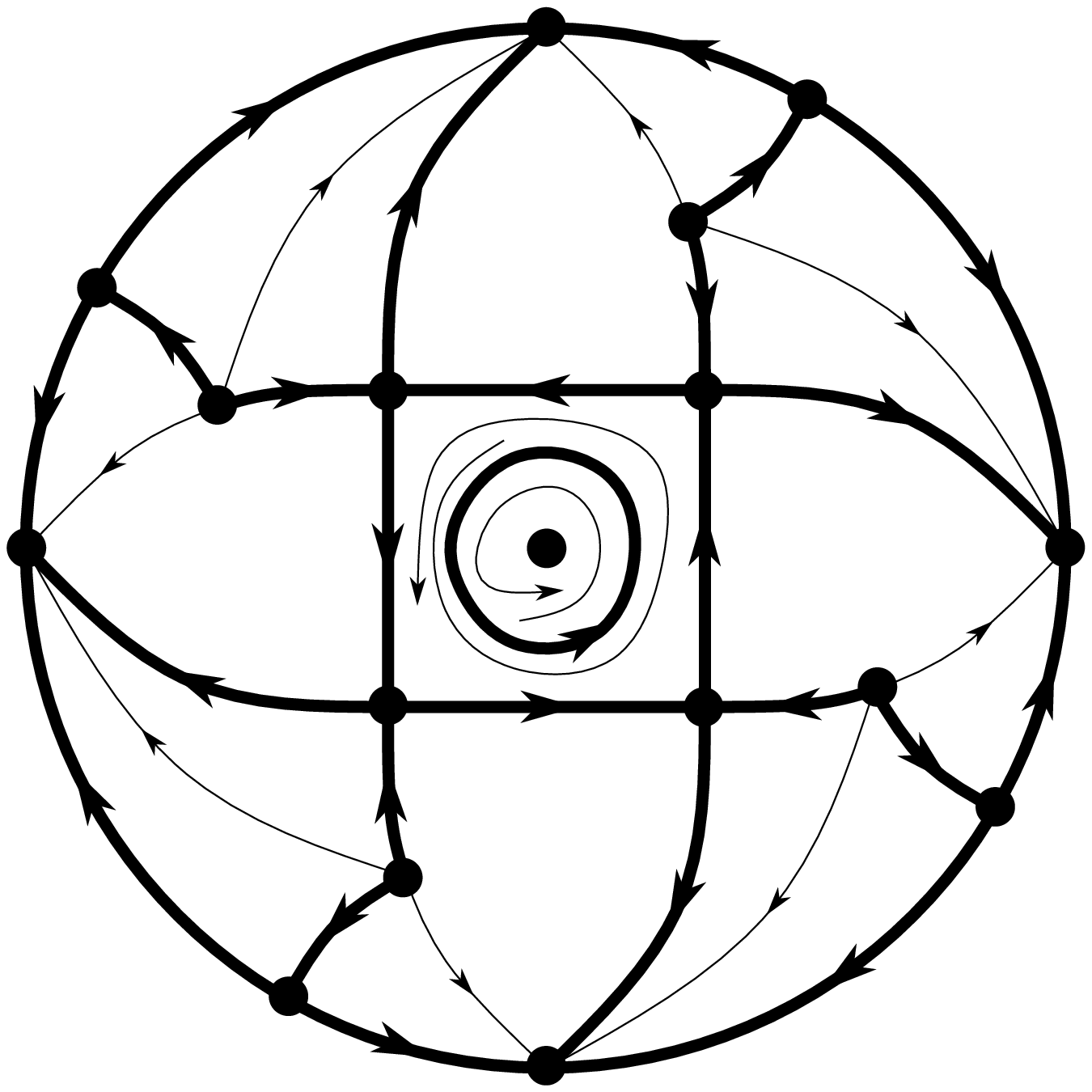} 
				\end{overpic}
				
				$(b)$
			\end{center}
		\end{minipage}
	\end{center}
	\caption{Phase portrait of Case~$3.1$, except by a limit cycle in the center of the octothorpe.}\label{Fig6}
\end{figure}
We know that these two cases are the only possibilities due to Theorem~\ref{Theo6}. Therefore, to proof which phase portraits of Figure~\ref{Fig6} is realizable (if not both), it is enough to study the stability of the polycycle $\Gamma$ given by singularities $p_1$, $p_2$, $p_3$ and $p_4$. Let $\mu_i<0<\nu_i$ be the eigenvalues of singularity $p_i$ and $r_i=\frac{|\mu_i|}{\nu_i}$
the \emph{hyperbolicity ratio} of $p_i$, $i\in\{1,2,3,4\}$. Cherkas \cite{Cherkas} proved that if $r(\Gamma)=r_1r_2r_3r_4>1$ (resp. $r(\Gamma)<1)$, then the polycycle $\Gamma$ is stable (resp. unstable). Therefore, if $r(\Gamma)\neq1$, then the limit cycle in the center of the octothorpe exists if, and only if, $r(\Gamma)>1$. Observe that $r(\Gamma)>1$ if, and only if,
\begin{equation}\label{32}
	a_{01}b_{01}(\beta-1)\beta-a_{10}b_{10}(\alpha-1)\alpha>0.
\end{equation}
It follows from \eqref{14} that $a_{01}b_{10}\neq0$. Thus, under the hypothesis of this subsection, observe that \eqref{32} is negative. Hence, $r(\Gamma)<1$. Therefore, polycycle $\Gamma$ is unstable. Thus, Figure~\ref{Fig6}(a) is the only realizable phase portrait. In the next Proposition we obtain a sufficient and necessary algebraic condition to deal with similar cases.

\begin{proposition}\label{Theo12}
	Let $X=(P,Q)$ be the planar vector field given by
		\[P(x,y)=(x+\alpha)(x+\alpha-1)(a_{10}x+a_{01}y), \quad Q(x,y)=(y+\beta)(y+\beta-1)(b_{10}x+b_{01}y),\]
	such that the origin is a focus/node and lies in the center of the octothorpe. Also, let the $p$ and $q$-singularities be arranged such that there is no $q$-singularity between two $p$-singularities and such that all the $p$-singularities are hyperbolic saddles. Then, $X$ has a limit cycle around the origin if, and only if,
	\begin{equation}\label{33}
		\bigl(a_{10}(\alpha-1)\alpha+b_{01}(\beta-1)\beta\bigr)\bigl(a_{01}b_{01}(\beta-1)\beta-a_{10}b_{10}(\alpha-1)\alpha\bigr)<0.
	\end{equation}
\end{proposition}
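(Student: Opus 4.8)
The plan is to read the two factors of \eqref{33} as the two ``stabilities'' bounding the annular region that lies between the origin and the polycycle formed by the four $p$-singularities, and then to let the Poincar\'e--Bendixson theory together with the uniqueness result of Theorem~\ref{Theo6} force (or forbid) a limit cycle according to whether these two stabilities agree or disagree. First I would dispose of the origin. Under the hypothesis it is a focus/node, so by items $(2)(a)$ and $(2)(b)$ of Proposition~\ref{Theo7} its stability is decided by the sign of the trace $Tr\bigl(DX(0,0)\bigr)=a_{10}(\alpha-1)\alpha+b_{01}(\beta-1)\beta$, which is exactly the first factor of \eqref{33}: the origin is a stable (resp. unstable) node/focus precisely when this factor is negative (resp. positive). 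The borderline case in which the trace vanishes produces a weak focus, and there Theorem~\ref{Theo4} already rules out limit cycles, consistently with \eqref{33} failing to be strictly negative; so I may assume the first factor is nonzero.

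Next I would set up the polycycle. The hypothesis that no $q$-singularity lies between two $p$-singularities, together with the fact that $p_1,\dots,p_4$ are hyperbolic saddles, forces each side of the square $p_1p_2p_3p_4$ to be a single heteroclinic connection along an invariant line, so that $\Gamma=p_1p_2p_3p_4$ is a genuine polycycle enclosing the origin and no finite singularity other than the origin lies in its interior (this is the situation drawn in Figure~\ref{Fig6} and analysed for the special position \eqref{14}). Reading the eigenvalues off the diagonal Jacobians \eqref{22}, the hyperbolicity ratio of each corner is a quotient of two of the numbers $f_1(p_j),f_2(p_j)$; forming $r(\Gamma)=r_1r_2r_3r_4$ the sign factors cancel and one obtains a single rational expression in the $f_i(p_j)$. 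By Cherkas' criterion (recalled in Section~\ref{Sec3}, \cite{Cherkas}) the polycycle is stable when $r(\Gamma)>1$ and unstable when $r(\Gamma)<1$.

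The heart of the proof is then to show that the comparison $r(\Gamma)\gtrless 1$ is governed by the second factor of \eqref{33}, i.e. that $r(\Gamma)>1$ if and only if $a_{01}b_{01}(\beta-1)\beta-a_{10}b_{10}(\alpha-1)\alpha>0$, generalising \eqref{32}. Writing each corner value $f_i(p_j)$ in terms of $\alpha,\beta$ and the coefficients and substituting into $r(\Gamma)-1$, I expect this difference to factor as the product of $S:=a_{01}b_{01}(\beta-1)\beta-a_{10}b_{10}(\alpha-1)\alpha$ with a cofactor that keeps a fixed sign under the standing hypotheses. The delicate point---and the main obstacle---is precisely to check that this cofactor does not change sign, which is where the arrangement hypothesis (all $q$-singularities outside the square, all corners saddles) must be used to pin down the signs of the $f_i(p_j)$. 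Since the admissible sign patterns of $(a_{10},a_{01},b_{10},b_{01})$ are finite and are permuted by the symmetries $\varphi_1,\dots,\varphi_5$ of Section~\ref{Sec6}, it suffices to carry out this verification in one representative configuration and transport it by those symmetries.

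Finally I would run the topological forcing argument. By Theorem~\ref{Theo6} the system has at most one limit cycle around the origin and, if present, it is hyperbolic. If the origin and $\Gamma$ are both attracting (first factor negative, $r(\Gamma)>1$) or both repelling (first factor positive, $r(\Gamma)<1$) with respect to the annulus between them, then orbits leave (resp. enter) the annulus through both boundary components, so the $\alpha$- or $\omega$-limit of an interior orbit is, by the Poincar\'e--Bendixson theorem (Section~$3.7$ of \cite{Perko}) applied to this singularity-free annulus, a periodic orbit strictly between them, which by uniqueness is the sought limit cycle. In the remaining two cases the flow crosses the entire annulus from one boundary component to the other, and a limit cycle would contradict the uniqueness and stability type forced by Theorem~\ref{Theo6}, exactly as in the argument closing the proof of Theorem~\ref{Theo10}. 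Translating ``agreeing stabilities'' into signs, a limit cycle exists if and only if the first factor and $S$ have opposite signs, that is, if and only if their product \eqref{33} is negative.
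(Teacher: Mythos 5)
Your overall architecture (origin's stability from the trace $T=a_{10}(\alpha-1)\alpha+b_{01}(\beta-1)\beta$, polycycle's stability from Cherkas' criterion applied to $\Gamma=p_1p_2p_3p_4$, then Poincar\'e--Bendixson plus the uniqueness/hyperbolicity of Theorem~\ref{Theo6} to force or forbid the cycle) is exactly the paper's, but there is a genuine gap at the boundary of the ``only if'' direction. Writing $K=a_{01}b_{01}(\beta-1)\beta-a_{10}b_{10}(\alpha-1)\alpha$, the proposition asserts that no limit cycle exists whenever $TK\geqslant0$, and in particular when $K=0$ and $T\neq0$. In that case $r(\Gamma)=1$ and Cherkas' criterion is silent: the polycycle has no determined stability, so your annulus has only one boundary component with known behaviour and the ``agreeing versus disagreeing stabilities'' dichotomy on which your final paragraph rests simply does not apply. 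You explicitly reduce to ``the first factor is nonzero'' after treating $T=0$, but you never reduce to $K\neq0$, and no soft topological argument will do it for you. The paper closes this case by a perturbation argument: if a limit cycle existed it would be hyperbolic by Theorem~\ref{Theo6}, hence persistent under arbitrarily small perturbations, yet one can perturb so that $T$ and $K$ become nonzero with the same sign, a regime already shown to have no limit cycle --- contradiction. Some such argument is indispensable in your write-up.

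A secondary weakness: the computational heart of the proof, namely that $r(\Gamma)>1$ if and only if $K>0$, is presented only as something you ``expect'' to follow from a factorization of $r(\Gamma)-1$, and you flag it yourself as the main obstacle without carrying it out. The paper establishes this identity by direct computation with the diagonal Jacobians \eqref{22} in the configuration \eqref{14} preceding the proposition (this is how \eqref{32} arises) and then invokes it in the proof; your appeal to the symmetries $\varphi_1,\dots,\varphi_5$ to transport one verified configuration to the others is a reasonable strategy, but as written the verification itself is absent. Your treatment of the case $T=0$ via Theorem~\ref{Theo4} and of the two ``agreeing'' and two ``disagreeing'' sign configurations via Poincar\'e--Bendixson and Theorem~\ref{Theo6} matches the paper and is fine.
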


\begin{proof} It follows from the hypothesis that the $p$-singularities define a polycycle $\Gamma$ around the origin. Let,
	\[K=a_{01}b_{01}(\beta-1)\beta-a_{10}b_{10}(\alpha-1)\alpha.\]
Observe that $r(\Gamma)>1$ if, and only if, $K>0$. Also, let $T$ denote the trace of the Jacobian matrix of $X$ at the origin. It is clear that,
	\[T=a_{10}(\alpha-1)\alpha+b_{01}(\beta-1)\beta.\]
From the reasoning given above, it is clear that if \eqref{33} holds, then $X$ has a limit cycle around the origin. From this same reasoning, it is also clear that if $T K>0$, then $X$ has no limit cycles. Therefore, we need only to study the case $T K=0$. If $T=0$, then the origin is a weak focus or a center. In the former, we know from Theorem~\ref{Theo4} that $X$ cannot have limit cycles. In the later, it is clear that there is no limit cycles. Thus, suppose $T\neq0$ and $K=0$. Without loss of generality, suppose $T>0$. If $X$ has a limit cycle, then it follows from Theorem~\ref{Theo6} that it is hyperbolic. Therefore, any small enough perturbation of $X$ will also have a limit cycle. But, it is clear that we can choose an arbitrarily small perturbation of $X$ such that $T>0$ and $K>0$. Hence, we have $TK>0$. Thus, $X$ has no limit cycle. Contradicting the fact that the limit cycle is hyperbolic. \end{proof}

\begin{theorem}\label{Theo13}
	Let $X=(P,Q)$ be the planar vector field given by,
		\[P(x,y)=(x+\alpha)(x+\alpha-1)(x+a_{01}y), \quad Q(x,y)=(y+\beta)(y+\beta-1)(b_{10}x+b_{01}y).\]
	If $X\in\Sigma_0$ and $\frac{1}{2}\leqslant\beta<1$, $0<\alpha<1$, $a_{01}\geqslant0$, $b_{10}\leqslant0$, $b_{01}>0$,	then its phase portrait in the Poincar\'e disk is topologically equivalent to one of the phase portraits given by Figure~\ref{Case1.3}.
\end{theorem}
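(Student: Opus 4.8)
The plan is to repeat the scheme of Theorems~\ref{Theo10} and \ref{Theo11}: fix the relative positions of the $p$- and $q$-singularities, read the local phase portrait at every finite and infinite singularity off Sections~\ref{Sec7} and \ref{Sec8}, and then show that the separatrices admit a single consistent gluing, so that each realizable position yields one topological class. Two features peculiar to this case organize the work. Since $a_{10}=1$, $a_{01}\geqslant0$, $b_{10}\leqslant0$ and $b_{01}>0$, we have $\det A=b_{01}-a_{01}b_{10}\geqslant b_{01}>0$, so, contrary to Case~$1$, the sign of $\det A$ is fixed and no case splitting by $\det A$ is needed. Moreover, because $0<\alpha<1$ and $\frac{1}{2}\leqslant\beta<1$, the Jacobian $DX(0,0)$ has determinant $(\alpha-1)\alpha(\beta-1)\beta\det A>0$ and trace $(\alpha-1)\alpha+b_{01}(\beta-1)\beta<0$, so Proposition~\ref{Theo7} forces the origin to be a stable node/focus in every subcase.

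I would then fix the positions. Evaluating $f_1,f_2$ at the four corners and feeding the signs into Proposition~\ref{Theo8}, the comparisons forced by the signs alone are
\[q_1>p_1,\quad q_2>p_2,\quad q_3<p_3,\quad q_4<p_4,\]
while the signs of $f_1(p_2)$, $f_2(p_3)$, $f_1(p_4)$ and $f_2(p_1)$ remain free, giving at most sixteen positions; the non-realizable ones are discarded as in Theorem~\ref{Theo10}, the identities $f_1(p_4)-f_1(p_2)=a_{01}-1$ and $f_2(p_3)-f_2(p_1)=b_{10}+b_{01}$ pinning down the forbidden sign patterns. At infinity, $b_{01}>0$ gives $\Delta>0$ and $u_0^-u_0^+=-1/b_{01}<0$, hence $u_0^-<0<u_0^+$, and the two axis poles are stable nodes by \eqref{29}. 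A convenient uniformity now appears: since $g'(u_0^\pm)=\pm\sqrt{\Delta}$, the equatorial eigenvalue $u_0^\pm g'(u_0^\pm)$ is positive at both poles, while $\det A>0$ yields $g(-1/a_{01})=\det A/a_{01}^2>0$ and hence $a_{01}u_0^\pm+a_{10}>0$, so the transverse eigenvalue is negative at both; thus $(u_0^+,0)$ and $(u_0^-,0)$ are both hyperbolic saddles, in contrast with the node/saddle pair of Case~$1$.

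With every local picture fixed, the octothorpe together with the curves $f_1=0$ and $f_2=0$ orients the flow on the invariant lines, and for each realizable position I would check, as for Case~$1.1$, that each separatrix has a unique continuation. The only genuine obstacle is a possible limit cycle around the stable origin, and I expect this to be the hard point. The decisive remark is that at each corner one eigenvalue is forced positive by the fixed comparisons, since $-f_1(p_1)$, $-f_2(p_2)$, $f_1(p_3)$ and $f_2(p_4)$ are all positive; hence no corner is attracting and each $p_i$ is a saddle or an unstable node. Consequently there are two alternatives. If all four corners are saddles, the boundary of the central cell is a $4$-saddle polycycle and Proposition~\ref{Theo12} applies: with $K=a_{01}b_{01}(\beta-1)\beta-a_{10}b_{10}(\alpha-1)\alpha\leqslant0$ (and $K\neq0$ by genericity, so $K<0$) and trace $T<0$ one gets $TK>0$, whence no limit cycle. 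If instead some corner is an unstable node, the boundary cannot be an $\omega$-limit graphic, and a hypothetical cycle, unique and hyperbolic by Theorem~\ref{Theo6} and unstable since the origin is stable, would leave the outer annulus of the cell with no admissible $\omega$-limit, contradicting the Poincar\'e--Bendixson theorem, exactly as in Case~$4b$ of Theorem~\ref{Theo10}. Since the origin is the only singularity off the invariant lines, no limit cycle can surround any other singularity, so this argument excludes limit cycles altogether, and the finitely many realizable positions collapse to the portraits of Figure~\ref{Case1.3}.
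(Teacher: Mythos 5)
Your proposal is correct and follows essentially the same route as the paper, whose proof of this theorem simply defers to the scheme of Theorem~\ref{Theo10} together with the polycycle analysis of Proposition~\ref{Theo12}; your observations that $\det A>0$ is forced, that the origin is always a stable node/focus, and that both equatorial singularities $(u_0^\pm,0)$ are hyperbolic saddles are exactly the computations that make this the simplest of the four families. The only inessential slip is the parenthetical claim that the identities $f_1(p_4)-f_1(p_2)=a_{01}-1$ and $f_2(p_3)-f_2(p_1)=b_{10}+b_{01}$ pin down the non-realizable position: excluding the pattern $q_1<p_2$, $q_2<p_3$, $q_3<p_4$, $q_4<p_1$ actually uses the normalization $\beta\geqslant\tfrac{1}{2}$ (that pattern forces both $\alpha+\beta<1$ and $\beta<\alpha$, hence $\beta<\tfrac{1}{2}$), but since the theorem only asserts equivalence to \emph{one of} the listed portraits, an over-count of positions would be harmless anyway.
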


\begin{proof} This case is the simplest one and its proof follows similarly to the proof of Theorem~\ref{Theo10}. The only difference was already studied at Proposition~\ref{Theo12}. \end{proof}
	
\begin{remark}
	From all the phase portraits given by Figure~\ref{Case1.3}, it can be seen that:
	\begin{enumerate}[label=(\alph*)]
		\item Case $3.1$ is topologically equivalent to none of the other cases;
		\item Cases $3.2$, $3.3$, $3.5$ and $3.8$ are topologically equivalent;
		\item Cases $3.4$ and $3.12$ are topologically equivalent;
		\item Cases $3.6$, $3.9$, and $3.10$ are topologically equivalent;
		\item Cases $3.7$, $3.11$, $3.13$ and $3.14$ are topologically equivalent;
		\item Case $3.15$ is topologically equivalent to none of the other cases.
	\end{enumerate}
	Therefore, from fifteen phase portraits, only six remains.
\end{remark}

\begin{corollary}\label{Coro5}
	If $X\in\Sigma_0$ satisfies the hypothesis of Theorem~\ref{Theo13}, then its phase portrait in the Poincar\'e disk is topologically equivalent to one of the six phase portraits given by Figure~\ref{Case1.3Final}.	
	\begin{figure}[h]
		\begin{center}
			\begin{minipage}{3.1cm}
				\begin{center}
					\begin{overpic}[height=3cm]{Case1.3.1.eps} 
					\end{overpic}
					
					Case~$1.3.1$.
				\end{center}
			\end{minipage}
			\begin{minipage}{3.1cm}
				\begin{center}
					\begin{overpic}[height=3cm]{Case1.3.2.eps} 
					\end{overpic}
					
					Case~$1.3.2$.
				\end{center}
			\end{minipage}
			\begin{minipage}{3.1cm}
				\begin{center}
					\begin{overpic}[height=3cm]{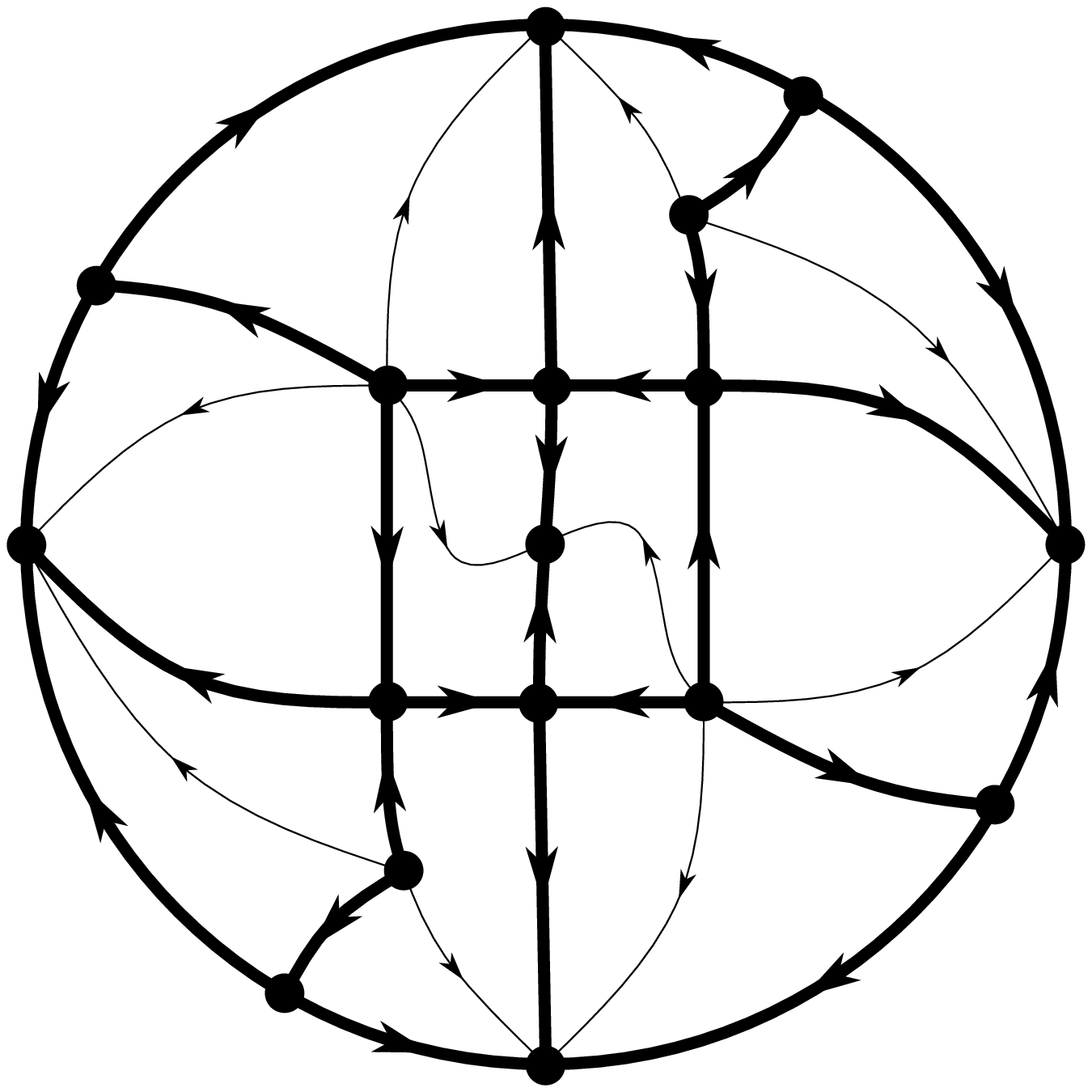} 
					\end{overpic}
					
					Case~$1.3.4$.
				\end{center}
			\end{minipage}
		\end{center}
		$\;$
		\begin{center}
			\begin{minipage}{3.1cm}
				\begin{center}
					\begin{overpic}[height=3cm]{Case1.3.6.eps} 
					\end{overpic}
					
					Case~$1.3.6$.
				\end{center}
			\end{minipage}
			\begin{minipage}{3.1cm}
				\begin{center}
					\begin{overpic}[height=3cm]{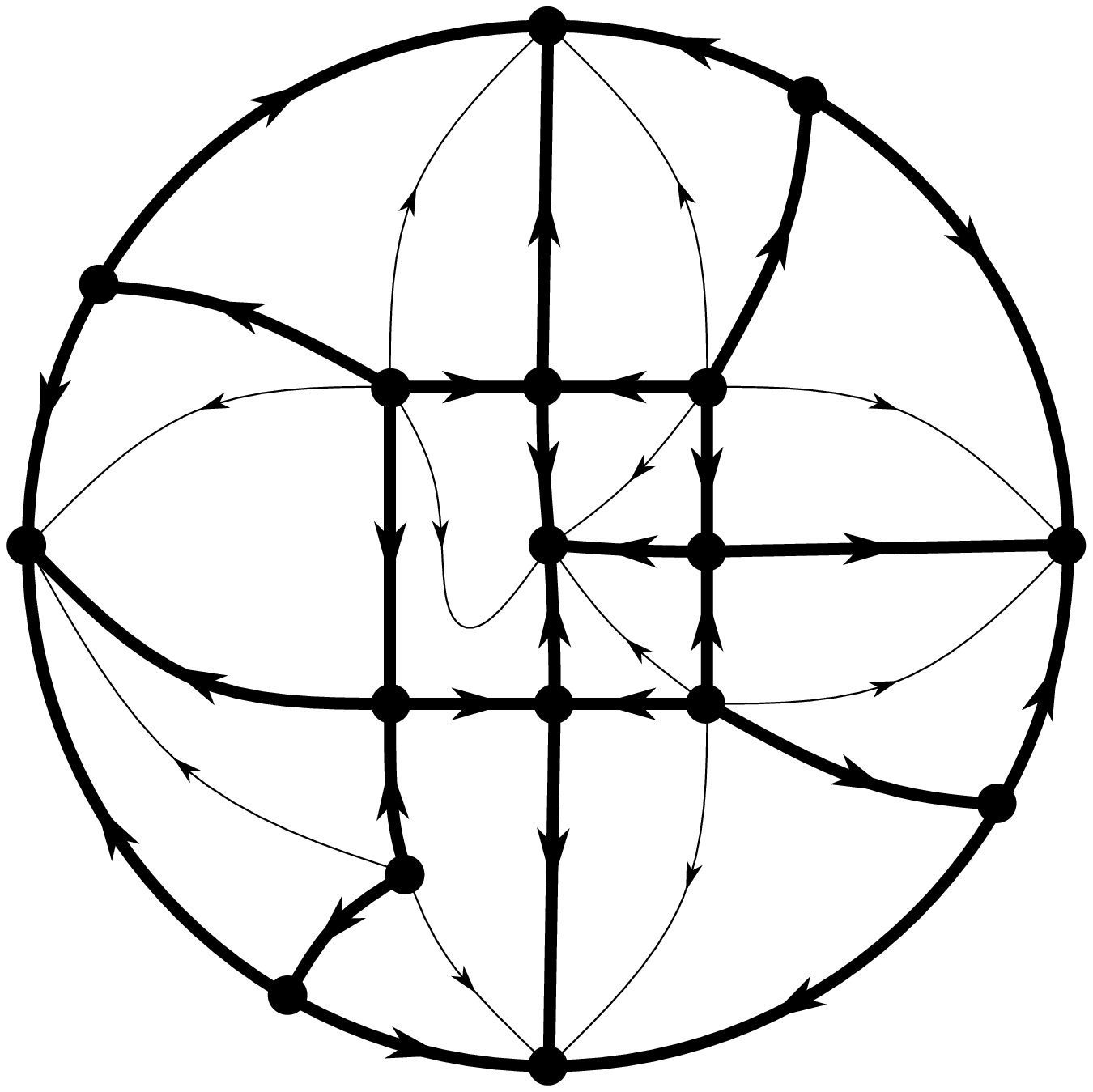} 
					\end{overpic}
					
					Case~$1.3.7$.
				\end{center}
			\end{minipage}
			\begin{minipage}{3.1cm}
				\begin{center}
					\begin{overpic}[height=3cm]{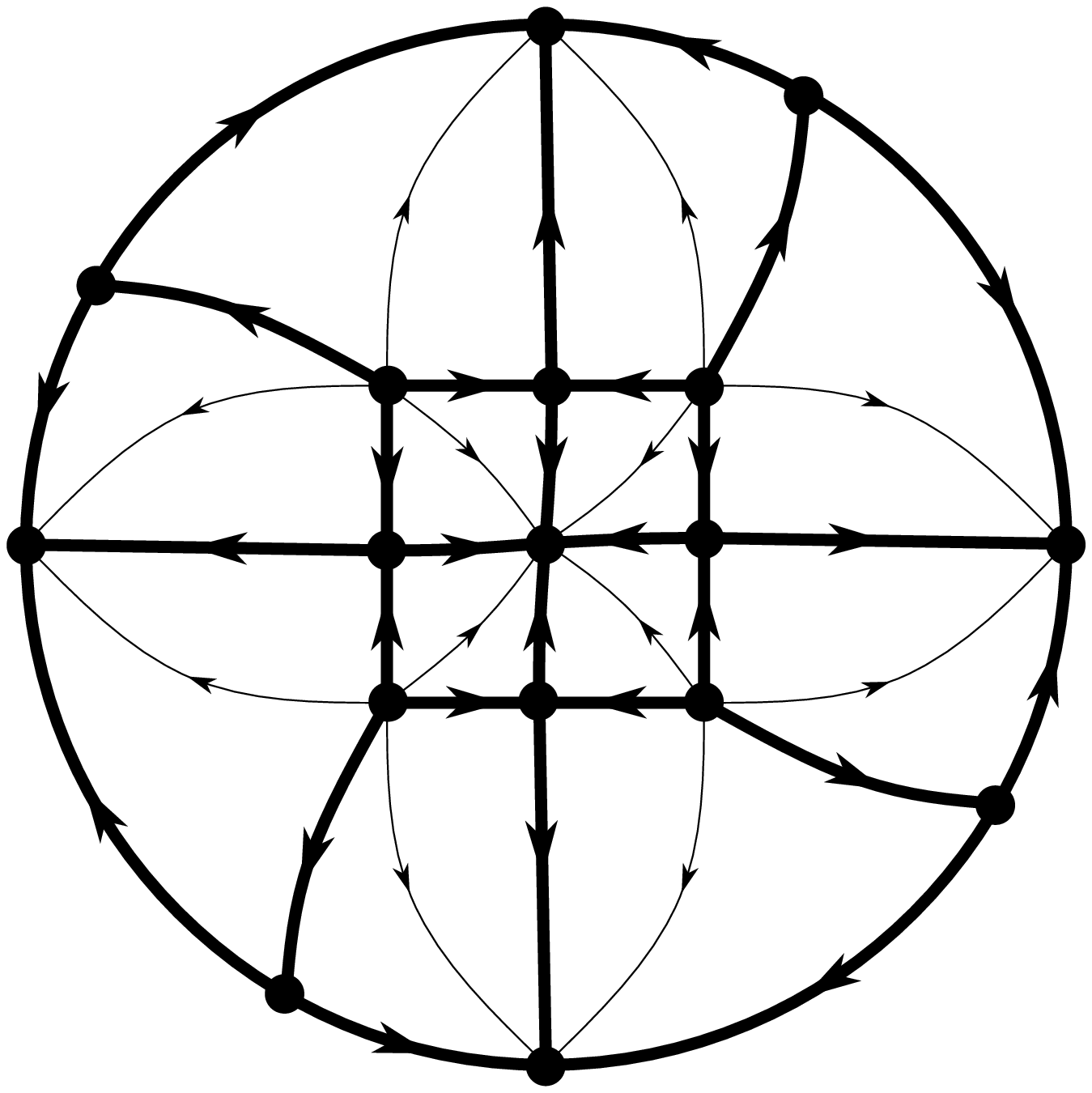} 
					\end{overpic}
					
					Case~$1.3.15$.
				\end{center}
			\end{minipage}
		\end{center}
		\caption{Topologically distinct phase portraits of Family~$3$.}\label{Case1.3Final}
	\end{figure}
\end{corollary}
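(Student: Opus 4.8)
The plan is to derive Corollary~\ref{Coro5} from Theorem~\ref{Theo13} by collapsing the fifteen candidate phase portraits of Figure~\ref{Case1.3} into six topological classes, the entire reduction being governed by the Markus--Neumann--Peixoto Theorem: two Poincaré compactifications are topologically equivalent precisely when their separatrix configurations are. By Theorem~\ref{Theo13} the phase portrait of $X$ is topologically equivalent to one of Cases $3.1,\dots,3.15$, so it suffices to show that each of these is equivalent to one of the six representatives Cases $3.1$, $3.2$, $3.4$, $3.6$, $3.7$, $3.15$ of Figure~\ref{Case1.3Final}; equivalently, it suffices to establish the six equivalence groups $(a)$--$(f)$ recorded in the preceding Remark.

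To verify each equivalence I would compare separatrix configurations directly. Here $\det A=b_{01}-a_{01}b_{10}>0$ and the trace $(\alpha-1)\alpha+b_{01}(\beta-1)\beta<0$, so by Proposition~\ref{Theo7} the origin is always a hyperbolic stable node/focus (never a saddle and never a weak focus, which removes any limit-cycle ambiguity), while the proof of Theorem~\ref{Theo13} shows every $q$-singularity is a hyperbolic saddle and every pole at infinity is a node. Within a single group the portraits differ only in the relative placement of the $q$-saddles among the $p$-saddles along the four invariant lines. I would realize the required reordering by an ambient homeomorphism of the Poincaré disk that slides the $q$-singularities into their target positions while keeping the invariant curves $\gamma_1,\dots,\gamma_4$, the $p$-singularities and the equator $\mathbb{S}^1$ invariant, and then check that this homeomorphism carries separatrices to separatrices and canonical regions to canonical regions, preserving orientation. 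By the Markus--Neumann--Peixoto Theorem the two portraits are then topologically equivalent.

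The main obstacle is the bookkeeping of \emph{saddle connections}: a sliding homeomorphism preserves the separatrix configuration only if it neither creates nor destroys any heteroclinic connection between saddles. This is exactly the subtlety flagged for Family~$2$ in Remark~\ref{Remark5} (contrast Cases~$1.1$ and $1.2$), so for each of the groups $(b)$--$(e)$ I must confirm that every intermediate configuration shares the same connection graph — the same incidences of each saddle separatrix with the finite saddles, the infinite saddles, and the invariant set $\Lambda$. Since Definition~\ref{Def1}$(c)$ allows saddle connections only inside $\Lambda$, those forced connections persist automatically; the verification therefore amounts to checking that no free separatrix changes its $\alpha$- or $\omega$-limit as the $q$-singularities are slid, and this is the crux of the argument.

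Finally, the singleton groups $(a)$ and $(f)$ require an inequivalence check, which I would settle by a topological invariant of the separatrix configuration — for instance, Case $3.1$ is distinguished by the unstable polycycle $\Gamma$ (the four $p$-saddles joined along the invariant lines) that encloses the origin, and Case $3.15$ by the number of separatrices bounding the canonical region containing the origin, neither of which can be matched by the four multi-member representatives. Realizability of the six portraits, though not demanded by the statement, is inherited for free from Theorem~\ref{Theo13}, each being one of the fifteen portraits already produced there by an explicit admissible choice of parameters.
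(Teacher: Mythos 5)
Your route coincides with the paper's: Corollary~\ref{Coro5} is obtained from Theorem~\ref{Theo13} by grouping the fifteen portraits of Figure~\ref{Case1.3} into the six classes of the preceding remark, with representatives $3.1$, $3.2$, $3.4$, $3.6$, $3.7$, $3.15$, each identification being a comparison of separatrix configurations in the sense of Markus--Neumann--Peixoto (the paper asserts these equivalences by inspection, and your proposal merely makes the inspection explicit). The grouping you name is exactly the paper's, so the skeleton of the argument is sound.

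There is, however, one concretely false supporting claim. Under the Family~$3$ hypotheses ($a_{10}=1$, $a_{01}\geqslant0$, $b_{10}\leqslant0$, $b_{01}>0$) one has $\det A=b_{01}-a_{01}b_{10}>0$, and then \eqref{24} shows that the eigenvalue of $DX(q_i)$ transversal to the invariant line has a fixed sign; hence $q_i$ is a hyperbolic saddle precisely when it lies strictly between its two adjacent $p$-singularities, and a hyperbolic \emph{node} otherwise. In particular, in Case~$3.1$ ($q_1>p_2$, $q_2>p_3$, $q_3<p_4$, $q_4<p_1$) all four $q$-singularities are nodes --- which is exactly why the four $p$-saddles close up into the polycycle $\Gamma$ of Figure~\ref{Fig6}. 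So your statement that ``every $q$-singularity is a hyperbolic saddle'' is wrong, and your description of group members as differing only in ``the relative placement of the $q$-saddles among the $p$-saddles'' misses that sliding a $q$-singularity across a $p$-singularity converts a saddle into a node and changes the separatrix configuration: your ambient homeomorphism must never effect such a crossing. Fortunately none is needed: by Table~\ref{Table5}, members of each group have the same number of $q$-singularities interior to the sides of the octothorpe (group $(b)$: one; $(c)$: two opposite; $(d)$: two adjacent; $(e)$: three; the singletons $3.1$ and $3.15$: zero and four), and they are matched by the octothorpe symmetries in the spirit of the maps $\varphi_i$ of Theorem~\ref{Main2}, so the argument goes through once the local types are recorded correctly. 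A smaller slip: hyperbolicity of the (stable) origin, from $T=(\alpha-1)\alpha+b_{01}(\beta-1)\beta<0$, does not by itself ``remove any limit-cycle ambiguity''; in Case~$3.1$ the absence of a limit cycle between the origin and $\Gamma$ rests on Theorem~\ref{Theo6} and the Cherkas ratio $r(\Gamma)<1$ (Proposition~\ref{Theo12}). Since that is part of Theorem~\ref{Theo13}, which you are entitled to assume, the corollary is unaffected, but the parenthetical justification should be deleted.
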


\subsection{Case \boldmath{$4$}: \boldmath{$b_{10}\leqslant0$} and \boldmath{$b_{01}<0$}}

\begin{theorem}\label{Theo14}
	Let $X=(P,Q)$ be the planar vector field given by,
		\[P(x,y)=(x+\alpha)(x+\alpha-1)(x+a_{01}y), \quad Q(x,y)=(y+\beta)(y+\beta-1)(b_{10}x+b_{01}y).\]
	If $X\in\Sigma_0$ and $\frac{1}{2}\leqslant\beta<1$, $0<\alpha<1$, $a_{10}\geqslant0$, $b_{10}\leqslant0$, $b_{01}<0$,	then its phase portrait in the Poincar\'e disk is topologically equivalent to one of the phase portraits given by Figures~\ref{Case1.4a}, \ref{Case1.4b} and \ref{Case1.4c}.
\end{theorem}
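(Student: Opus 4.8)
The plan is to follow the same scheme as in the proofs of Theorems~\ref{Theo10}, \ref{Theo11} and \ref{Theo13}, now accounting for the fact that, under the present hypotheses $b_{10}\leqslant0$, $b_{01}<0$ (with $a_{10}=1$, $a_{01}\geqslant0$), \emph{both} the sign of $\det A$ and the sign of the discriminant $\Delta$ at infinity may vary. Indeed $\det A=b_{01}-a_{01}b_{10}$ is the sum of the negative term $b_{01}$ and the nonnegative term $-a_{01}b_{10}$, hence is not sign-definite; and since $b_{01}<0$, the discriminant $\Delta=(b_{10}-a_{01})^2+4b_{01}$ of \eqref{28} may be either positive or negative. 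This double source of branching is what makes this the richest of the four cases and is responsible for the three figures in the statement.

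First I would enumerate the realizable relative positions between the $p$-singularities \eqref{20} and the $q$-singularities \eqref{23}. Using $a_{10}=1>0$, $a_{01}\geqslant0$, $b_{10}\leqslant0$, $b_{01}<0$ together with the eight equivalences established in the proof of Proposition~\ref{Theo8}, several of the sixteen \emph{a priori} positions are fixed outright (for instance $q_1>p_1$ always holds, since the abscissa of $q_1$ equals $a_{01}\beta\geqslant0>-\alpha$), leaving a short list of genuinely distinct positions. As in the proof of Theorem~\ref{Theo10}, I would then read off the sign of $\det A$ from each position when it is forced, and otherwise split that position into the two subcases $\det A>0$ and $\det A<0$.

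The main obstacle is the infinity, which I would treat as in Theorem~\ref{Theo11} but with an additional branch. When $\Delta<0$ the points $(u_0^\pm,0)$ are not singularities and, by \eqref{29} and $b_{01}<0$, the equator carries the nodal configuration of Figure~\ref{Case1.2Infinity}(a). When $\Delta>0$ the equatorial singularities $(u_0^\pm,0)$ are real and of equal sign (since $u_0^+u_0^-=-1/b_{01}>0$), and to determine their type I must fix the sign of $a_{01}u_0^\pm+a_{10}$ through the position of $-1/a_{01}$ relative to the roots of $g$. The genuinely new feature here, absent from Theorem~\ref{Theo11}, is that $g(-1/a_{01})=\det A/a_{01}^2$ may now be \emph{positive}: since $b_{01}<0$ the parabola $g$ opens downwards, so when $\det A>0$ the point $-1/a_{01}$ lies \emph{between} the roots $u_0^\pm$, yielding an arrangement at infinity that does not occur in Case~$2$, where $\det A<0$ always forces $-1/a_{01}$ outside the interval bounded by $u_0^\pm$. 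I would therefore split the $\Delta>0$ analysis according to the sign of $\det A$ and, as in Theorem~\ref{Theo11}, according to the sign of $b_{10}-a_{01}$ (which decides whether $u_0^\pm$ are both positive or both negative), thereby exhausting the infinity configurations.

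With every local phase portrait in hand — the origin from Proposition~\ref{Theo7} (whose trace $a_{10}(\alpha-1)\alpha+b_{01}(\beta-1)\beta$ is again not sign-definite, as $b_{01}(\beta-1)\beta>0$ competes with $a_{10}(\alpha-1)\alpha<0$), the $p$-singularities from \eqref{22} and the Hartman--Grobman theorem, and the $q$-singularities from \eqref{24} — it remains to settle limit cycles and to glue separatrices. Limit cycles are governed by Proposition~\ref{Theo12}: a cycle around the origin exists if and only if \eqref{33} holds, and by Theorem~\ref{Theo6} any such cycle is unique and hyperbolic, so the Poincar\'e--Bendixson argument used at the close of the proof of Theorem~\ref{Theo10} excludes spurious cycles in the remaining configurations. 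Finally, exactly as in Theorem~\ref{Theo10}, in each resulting case every separatrix admits a single destination, so each case constitutes one topological class; collecting these classes yields the phase portraits of Figures~\ref{Case1.4a}, \ref{Case1.4b} and \ref{Case1.4c}.
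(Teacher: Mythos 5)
Your proposal is correct and follows essentially the same route as the paper: enumerate the relative positions of the $p$- and $q$-singularities, read off or branch on the sign of $\det A$, branch on the sign of $\Delta$ at infinity (locating $-1/a_{01}$ relative to the roots of $g$ to type $(u_0^\pm,0)$, with the new possibility $g(-1/a_{01})=\det A/a_{01}^2>0$), settle the origin via the trace and limit cycles via Proposition~\ref{Theo12}, Theorem~\ref{Theo6} and Poincar\'e--Bendixson, and then glue separatrices as in Theorem~\ref{Theo10}. Two small remarks: under these hypotheses $b_{10}-a_{01}\leqslant0$ forces $u_0^\pm<0$ whenever $\Delta>0$, so your proposed split on the sign of $b_{10}-a_{01}$ is vacuous and the extra branching actually needed when $\det A<0$ is on the sign of $g'(-1/a_{01})$ (which side of both roots $-1/a_{01}$ lies on); and the paper additionally uses the reparametrization $r=-b_{10}/b_{01}$, freezing the singularity disposal while letting $b_{01}$ vary, to decide which sign combinations of $\Delta$, $T$, $K$ and $g'(-1/a_{01})$ actually co-occur with each disposal, which is how Table~\ref{Table6} is justified.
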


\begin{proof} As in Theorem~\ref{Theo10}, observe that:
\begin{multicols}{2}
	\begin{enumerate}[label=(\alph*)]
		\item if $q_3<p_4$ and $q_4>p_4$, then $\det A>0$; 
		\item if $q_3>p_4$ and $q_4<p_4$, then $\det A<0$; 
	\end{enumerate}
	\columnbreak
	\begin{enumerate}[label=(\alph*)]
		\setcounter{enumi}{2}
		\item if $q_1>p_2$ and $q_2<p_2$, then $\det A>0$; 
		\item if $q_1<p_2$ and $q_2>p_2$, then $\det A<0$.
	\end{enumerate}
\end{multicols}
Furthermore, we also need to study the signal of,
\begin{equation}\label{36}
	\Delta=(b_{10}-a_{01})^2+4b_{01}.
\end{equation}
As in Theorem~\ref{Theo11}, we define $r=-\frac{b_{10}}{b_{01}}\leqslant0$. Hence, we can take
\begin{equation}\label{37}
	b_{10}=-rb_{01},
\end{equation} 
and let $b_{01}<0$ free, without changing the positions between the $p$ and $q$-singularities. Replacing \eqref{37} at \eqref{36} we get,
	\[\Delta=r^2b_{01}^2+(2a_{01}r+4)b_{01}+a_{01}^2.\]
Hence, we define,
	\[f(x)=r^2x^2+(2a_{01}r+4)x+a_{01}^2.\]
As in Theorem~\ref{Theo11}, observe that we can suppose $r\neq0$. Therefore, it is clear that,
	\[x^\pm=\frac{-(a_{01}r+2)\pm2\sqrt{a_{01}r+1}}{r^2},\]
are the solutions of $f(x)=0$. It follows from $b_{01}<0$ that, 
	\[a_{01}r+1>0 \Leftrightarrow \det A<0.\] 
Thus, if $\det A<0$, then we can choose $b_{01}<0$ such that $\Delta>0$ and $\Delta<0$. Hence, if $\det A<0$, then, for each disposal between the $p$ and $q$-singularities, we need to consider the cases $\Delta>0$ and $\Delta<0$. Suppose $\det A>0$. Then, $f(x)>0$ for all $x\in\mathbb{R}$. Thus, $\Delta>0$ for all $b_{01}\in(-\infty,0)$. It follows from $b_{10}<0$ and $b_{01}<0$ that $u_0^\pm<0$. Therefore, there is no need to divide such case as in Theorem~\ref{Theo11}. It follows from $\det A>0$ that the origin is a hyperbolic node/focus (see Proposition~\ref{Theo7}). Hence, we need to study the signal of,
	\[T=(\alpha-1)\alpha+b_{01}(\beta-1)\beta.\]
To do this, we remember that we have $b_{01}\in(-\infty,0)$ free. Then, it is clear that both $T>0$ and $T<0$ are realizable. Furthermore, we observe that we may have polycycles around the origin. Therefore, it follows from Proposition~\ref{Theo12} that we also need to study the signal of,
	\[K=a_{01}b_{01}(\beta-1)\beta-b_{10}(\alpha-1)\alpha.\]
Replacing \eqref{37} we get,
\begin{equation}\label{38}
	K=b_{01}[a_{01}(\beta-1)\beta+r(\alpha-1)\alpha.]
\end{equation}
Observe that, in this subsection, the polycycle exists if, and only if,
\begin{equation}\label{52}
	q_1>p_2, \quad q_2<p_2, \quad q_3<p_4, \quad q_4>p_4.
\end{equation}
Thus, we have,
\begin{equation}
	a_{01}>\frac{1-\alpha}{\beta}, \quad r<-\frac{\beta}{1-\alpha}, \quad a_{10}>\frac{\alpha}{1-\beta}, \quad r<-\frac{1-\beta}{\alpha}.
\end{equation}
Therefore, if we define the intervals
	\[I=\left(\max\left\{\frac{1-\alpha}{\beta},\frac{\alpha}{1-\beta}\right\},+\infty\right), \quad J=\left(-\infty,\;\min\left\{-\frac{\beta}{1-\alpha},-\frac{1-\beta}{\alpha}\right\}\right),\]
then, we may have $a_{01}\in I$, $r\in J$ free, without changing \eqref{52}. Hence, it follows from \eqref{38} that whether $T>0$ or $T<0$, both $K>0$ and $K<0$ are realizable by $a_{01}\to+\infty$ and $r\to-\infty$, respectively. We now study the infinity. If $\Delta<0$, then it is clear that we have Figure~\ref{Case1.4Infinity}(a).
\begin{figure}[h]
	\begin{center}
		\begin{minipage}{3.9cm}
			\begin{center}
				\begin{overpic}[height=3cm]{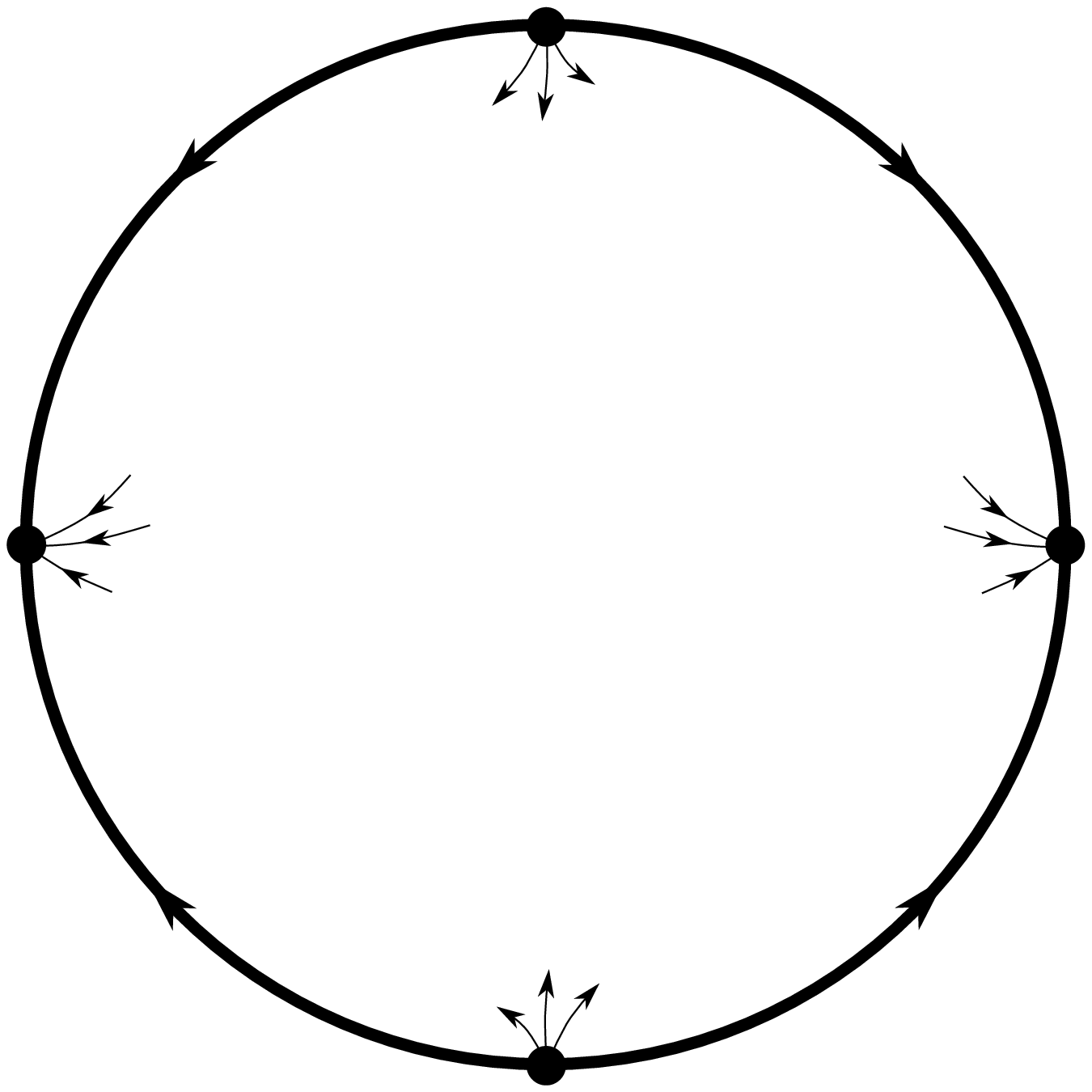} 
				\end{overpic}
			
				$(a)$
				
				$\Delta<0$.
			\end{center}
		\end{minipage}
		\begin{minipage}{3.9cm}
			\begin{center}
				\begin{overpic}[height=3cm]{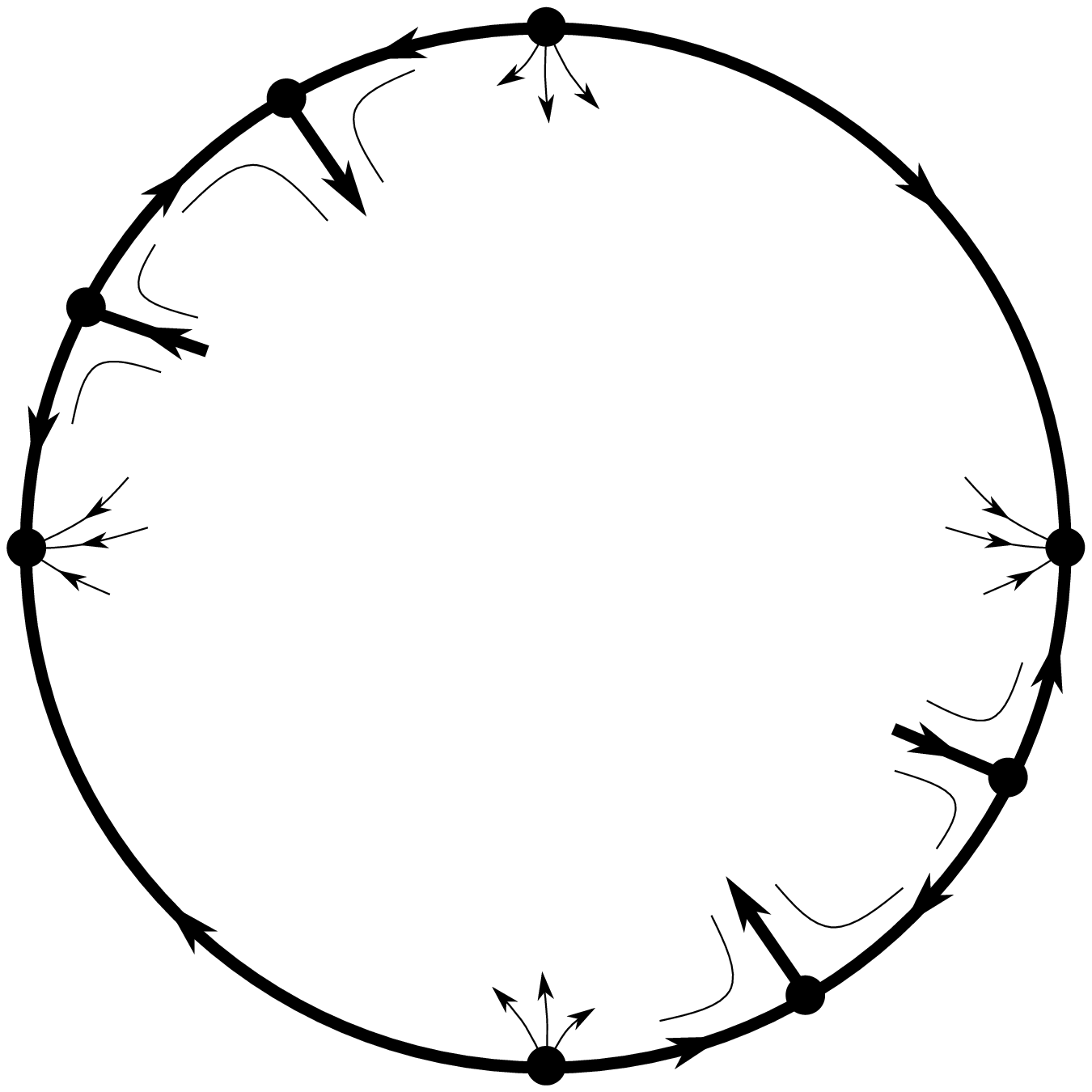} 
				\end{overpic}
			
				$(b)$
				
				$\det A>0$.
			\end{center}
		\end{minipage}
		\begin{minipage}{3.9cm}
			\begin{center}
				\begin{overpic}[height=3cm]{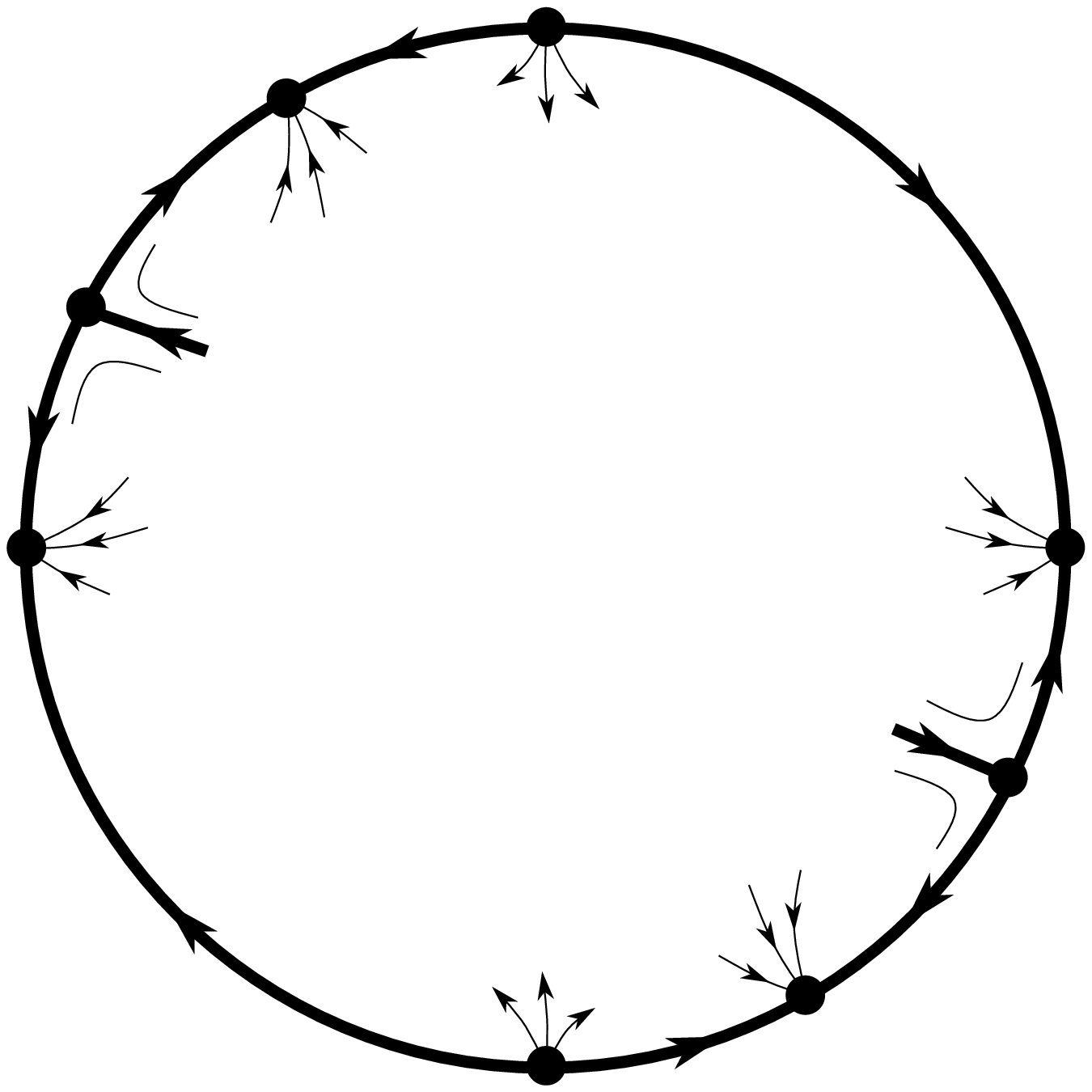} 
				\end{overpic}
				
				$(c)$
				
				$\det A<0$ and $g'>0$.
			\end{center}
		\end{minipage}
		\begin{minipage}{3.9cm}
			\begin{center}
				\begin{overpic}[height=3cm]{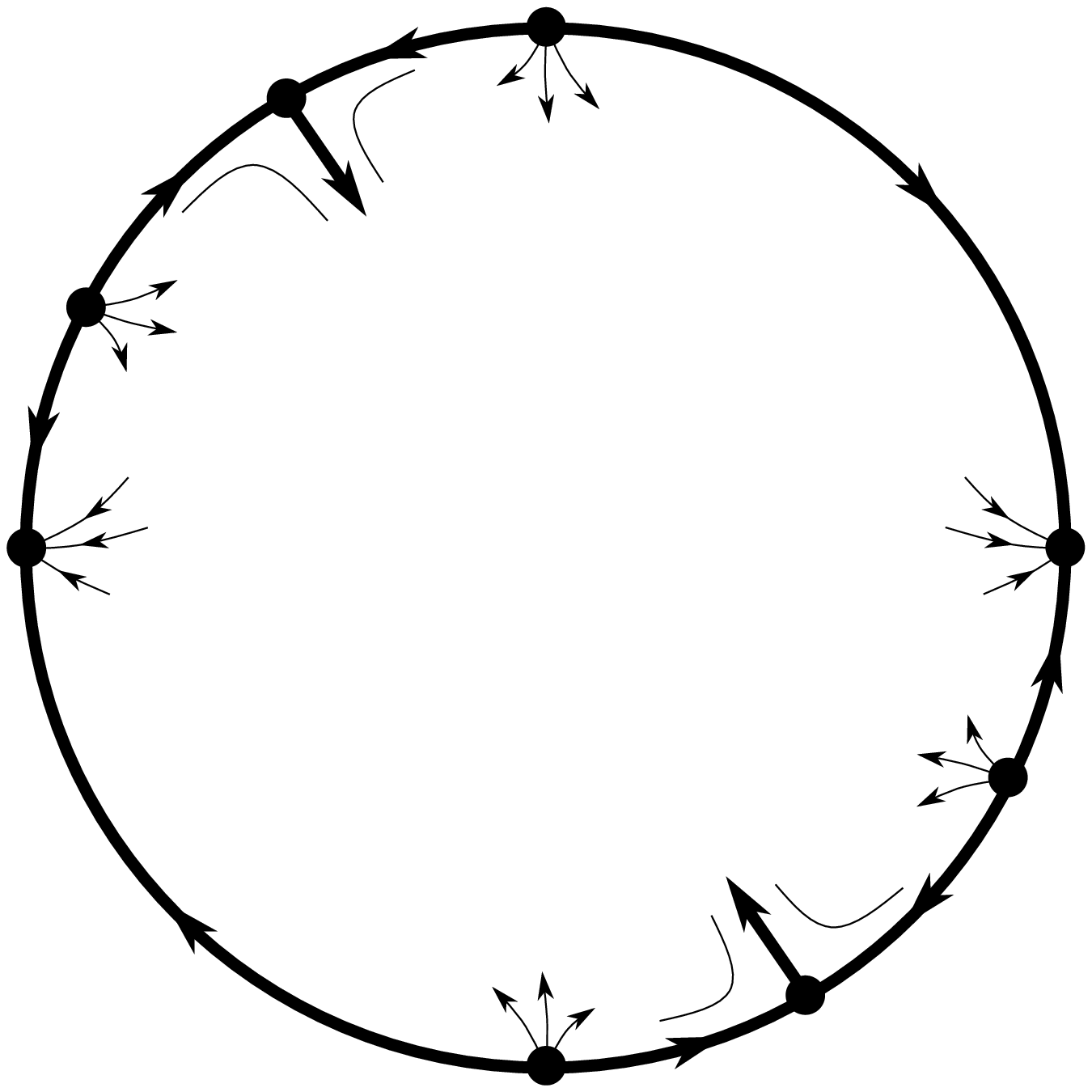} 
				\end{overpic}
				
				$(d)$
				
				$\det A<0$ and $g'<0$.
			\end{center}
		\end{minipage}
	\end{center}
	\caption{Local phase portrait of the fourth family at infinity.}\label{Case1.4Infinity}
\end{figure}
Suppose $\Delta>0$. We already saw that $u_0^\pm<0$. Observe that if $\det A>0$, then $a_{01}\neq0$. Thus, we can consider,
	\[g\left(-\frac{1}{a_{01}}\right)=\frac{1}{a_{01}^2}\det A>0.\]
Therefore, if $\det A>0$, then $u_0^+<-\frac{1}{a_{01}}<u_0^-$. Hence, we obtain Figure~\ref{Case1.4Infinity}(b). Suppose $\det A<0$. As in Theorem~\ref{Theo11}, if $a_{01}\neq0$, then we study the signal of $g'(-\frac{1}{a_{01}})$. We claim that for each disposal between the $p$ and $q$-singularities, both $g'(-\frac{1}{a_{01}})<0$ and $g'(-\frac{1}{a_{01}})>0$ are realizable. Suppose $a_{01}\neq0$. Observe that,
\begin{equation}\label{15}
	g'\left(-\frac{1}{a_{01}}\right)=2\frac{|b_{01}|}{a_{01}}+(b_{10}-a_{01}).
\end{equation}
Replacing \eqref{37} at \eqref{15} we get,
	\[g'\left(-\frac{1}{a_{01}}\right)=|b_{01}|\left(\frac{2}{a_{01}}+r\right)-a_{01}.\]
It follows from $\det A<0$ and $a_{01}>0$ that $\frac{1}{a_{01}}+r>0$. Thus, $\frac{2}{a_{01}}+r>0$. Hence, both signals of $g'(-\frac{1}{a_{01}})$ are realizable, provided $b_{01}\to-\infty$ and $b_{01}\to0$. Hence, we obtain Figures~\ref{Case1.4Infinity}(c) and (d). Finally, given a relative position between the $p$ and $q$-singularities such that $a_{01}=0$, observe that we can take $a_{01}>0$ small enough without changing such relative position. Hence, we obtain Table~\ref{Table6}. The proof now follows as in Theorem~\ref{Theo10}. \end{proof}

\begin{remark}\label{Remark6}
As in Remark~\ref{Remark5}, we observe that:
	\begin{enumerate}[label=\arabic*)]
		\item Families $4.2a$, $4.3a$, $4.5b$ and $4.8b$ are topologically equivalent;
		\item Families $4.2b$, $4.3b$, $4.5a$ and $4.8a$ are topologically equivalent;
		\item Families $4.4a.i$ and $4.11a.ii$ are topologically equivalent;
		\item Families $4.4a.ii$ and $4.11a.i$ are topologically equivalent;
		\item Families $4.4b.i$ and $4.4b.ii$ are topologically equivalent;
		\item Families $4.4b.ii$, $4.7b$, $4.10b$, $4.11b.i$, $4.12a$, $4.13a$, $4.14a$ and $4.14b$ are topologically equivalent;
		\item Families $4.4b.iii$, $4.6b.iii$, $4.7c$, $4.9b.iii$, $4.10c$, $4.11b.iii$, $4.12c$, $4.13c$ and $4.14c$ are topologically equivalent;
		\item Families $4.6a.i$, $4.6a.ii$, $4.9a.i$ and $4.9a.ii$ are topologically equivalent;
		\item Families $4.6b.i$, $4.6b.ii$, $4.7a$, $4.9b.i$, $4.9b.ii$, $4.10a$, $4.12b$ and $4.13b$ are topologically equivalent.
	\end{enumerate}
Therefore, from forty seven families, only thirteen remains (observe that cases $4.1a.i$, $4.1a.ii$, $4.1b.i$ and $4.1b.ii$ were not named above). Furthermore, we observe that:
	\begin{enumerate}[label=\arabic*)]
		\item Cases $4.4b.i.2$, $4.4b.i.3$ and $4.6b.i.1$ are topologically equivalent;
		\item Cases $4.4b.i.4$, $4.4b.ii$ and $4.6b.i.2$ are topologically equivalent.
	\end{enumerate}
Thus, from sixty five phase portraits, only fourteen remains. 
\end{remark}

\begin{corollary}\label{Coro6}
	If $X\in\Sigma_0$ satisfies the hypothesis of Theorem~\ref{Theo14}, then its phase portrait in the Poincar\'e disk is topologically equivalent to one of the fourteen phase portraits given by Figure~\ref{Case1.4Final}.	
	\begin{figure}[h]
		\begin{center}
			\begin{minipage}{3.1cm}
				\begin{center}
					\begin{overpic}[height=3cm]{Case1.4.1a1.eps} 
					\end{overpic}
					
					Case~$4.1a.i$.
				\end{center}
			\end{minipage}
			\begin{minipage}{3.1cm}
				\begin{center}
					\begin{overpic}[height=3cm]{Case1.4.1a2.eps} 
					\end{overpic}
					
					Case~$4.1a.ii$.
				\end{center}
			\end{minipage}
			\begin{minipage}{3.1cm}
				\begin{center}
					\begin{overpic}[height=3cm]{Case1.4.1b1.eps} 
					\end{overpic}
					
					Case~$4.1b.i$.
				\end{center}
			\end{minipage}	
			\begin{minipage}{3.1cm}
				\begin{center}
					\begin{overpic}[height=3cm]{Case1.4.1b2.eps} 
					\end{overpic}
					
					Case~$4.1b.ii$.
				\end{center}
			\end{minipage}
			\begin{minipage}{3.1cm}
				\begin{center}
					\begin{overpic}[height=3cm]{Case1.4.2a.eps} 
					\end{overpic}
					
					Case~$4.2a$.
				\end{center}
			\end{minipage}
		\end{center}
	$\;$
		\begin{center}
			\begin{minipage}{3.1cm}
				\begin{center}
					\begin{overpic}[height=3cm]{Case1.4.2b.eps} 
					\end{overpic}
					
					Case~$4.2b$.
				\end{center}
			\end{minipage}
			\begin{minipage}{3.1cm}
				\begin{center}
					\begin{overpic}[height=3cm]{Case1.4.4a1.eps} 
					\end{overpic}
					
					Case~$4.4a.i$.
				\end{center}
			\end{minipage}
			\begin{minipage}{3.1cm}
				\begin{center}
					\begin{overpic}[height=3cm]{Case1.4.4a2.eps} 
					\end{overpic}
					
					Case~$4.4a.ii$.
				\end{center}
			\end{minipage}
			\begin{minipage}{3.1cm}
				\begin{center}
					\begin{overpic}[height=3cm]{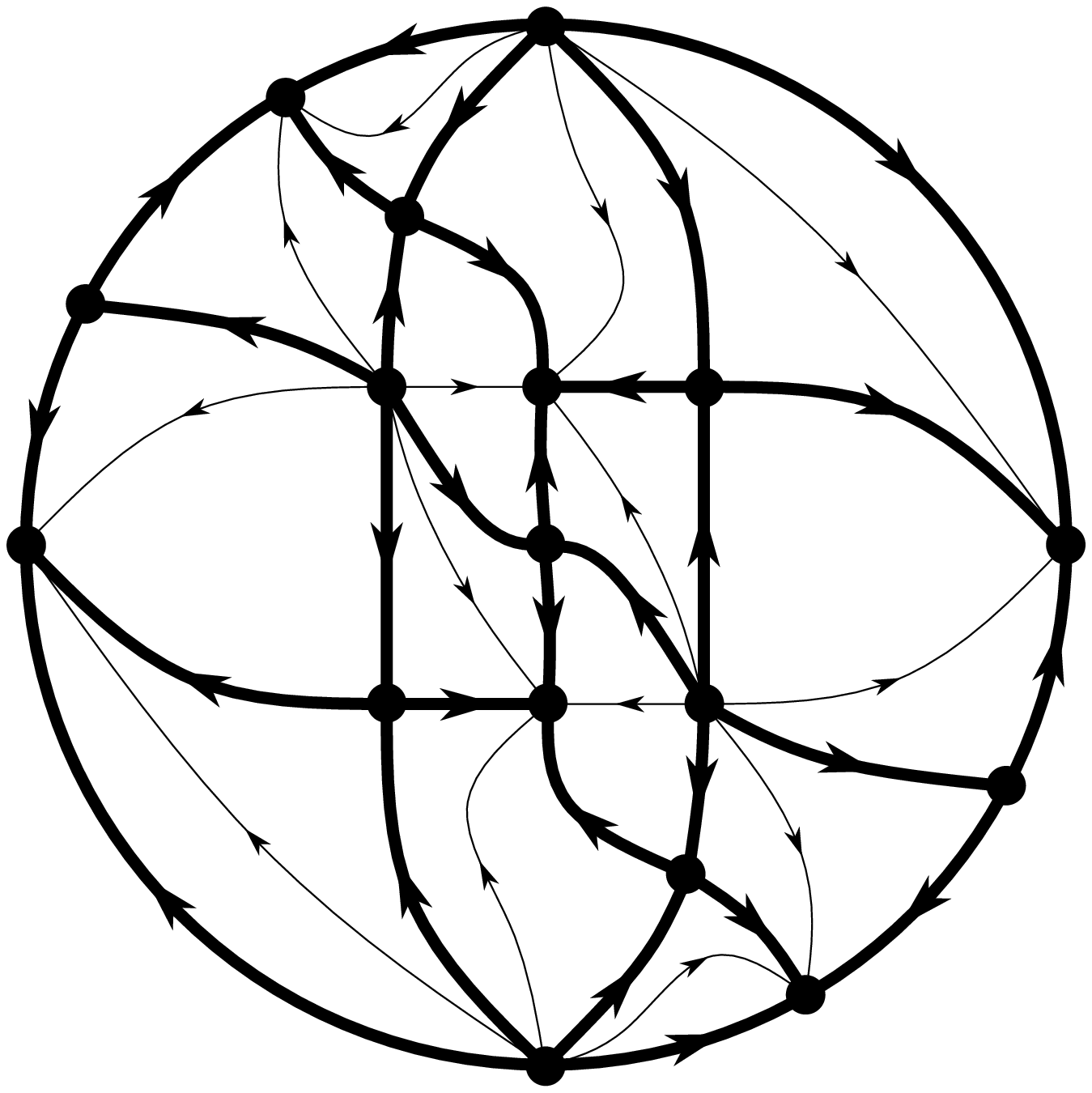} 
					\end{overpic}
					
					Case~$4.4b.i.1$.
				\end{center}
			\end{minipage}
			\begin{minipage}{3.1cm}
				\begin{center}
					\begin{overpic}[height=3cm]{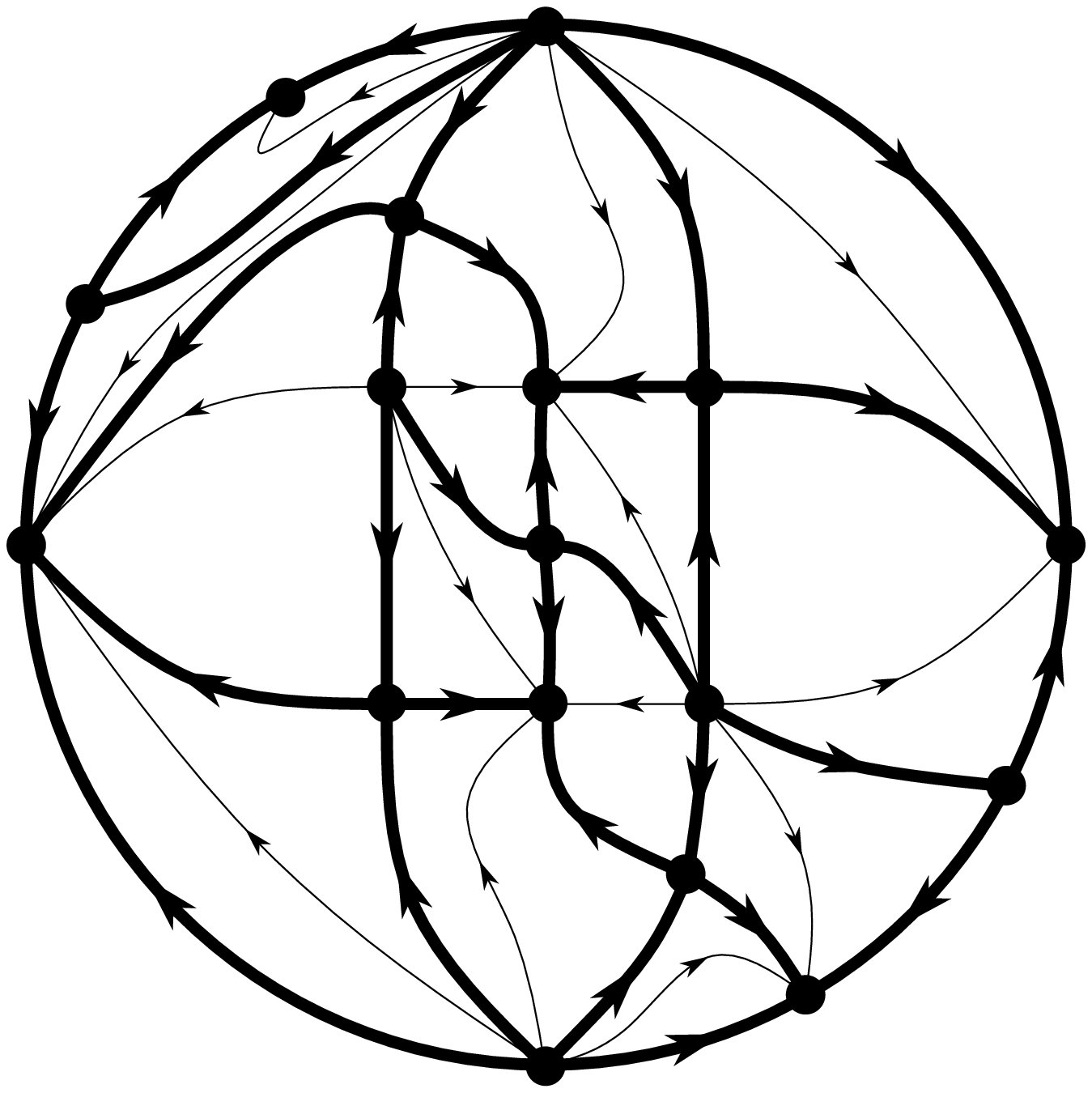} 
					\end{overpic}
					
					Case~$4.4b.i.3$.
				\end{center}
			\end{minipage}	
		\end{center}
		$\;$
		\begin{center}
			\begin{minipage}{3.1cm}
				\begin{center}
					\begin{overpic}[height=3cm]{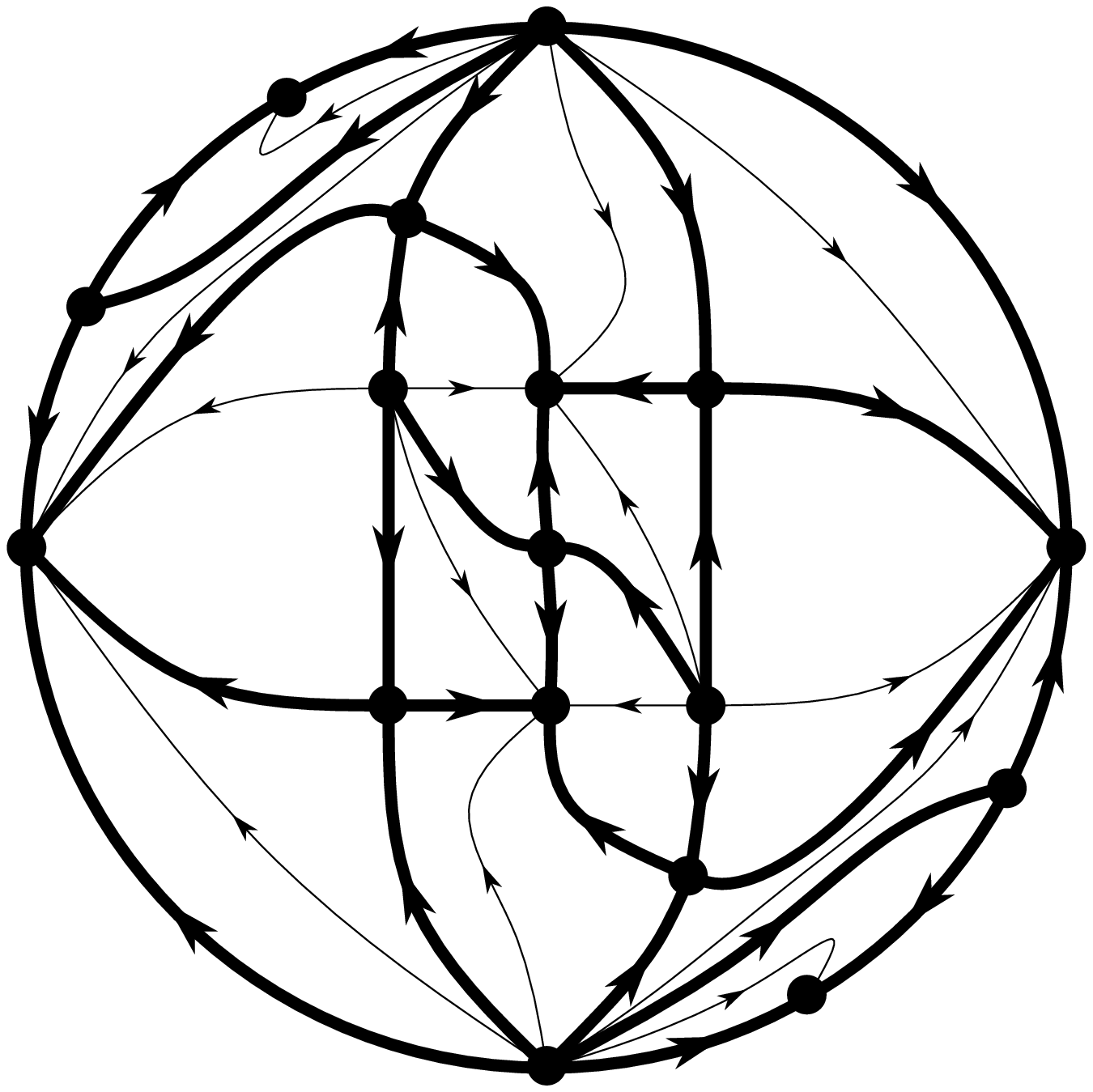} 
					\end{overpic}
					
					Case~$4.4b.i.4$.
				\end{center}
			\end{minipage}	
			\begin{minipage}{3.1cm}
				\begin{center}
					\begin{overpic}[height=3cm]{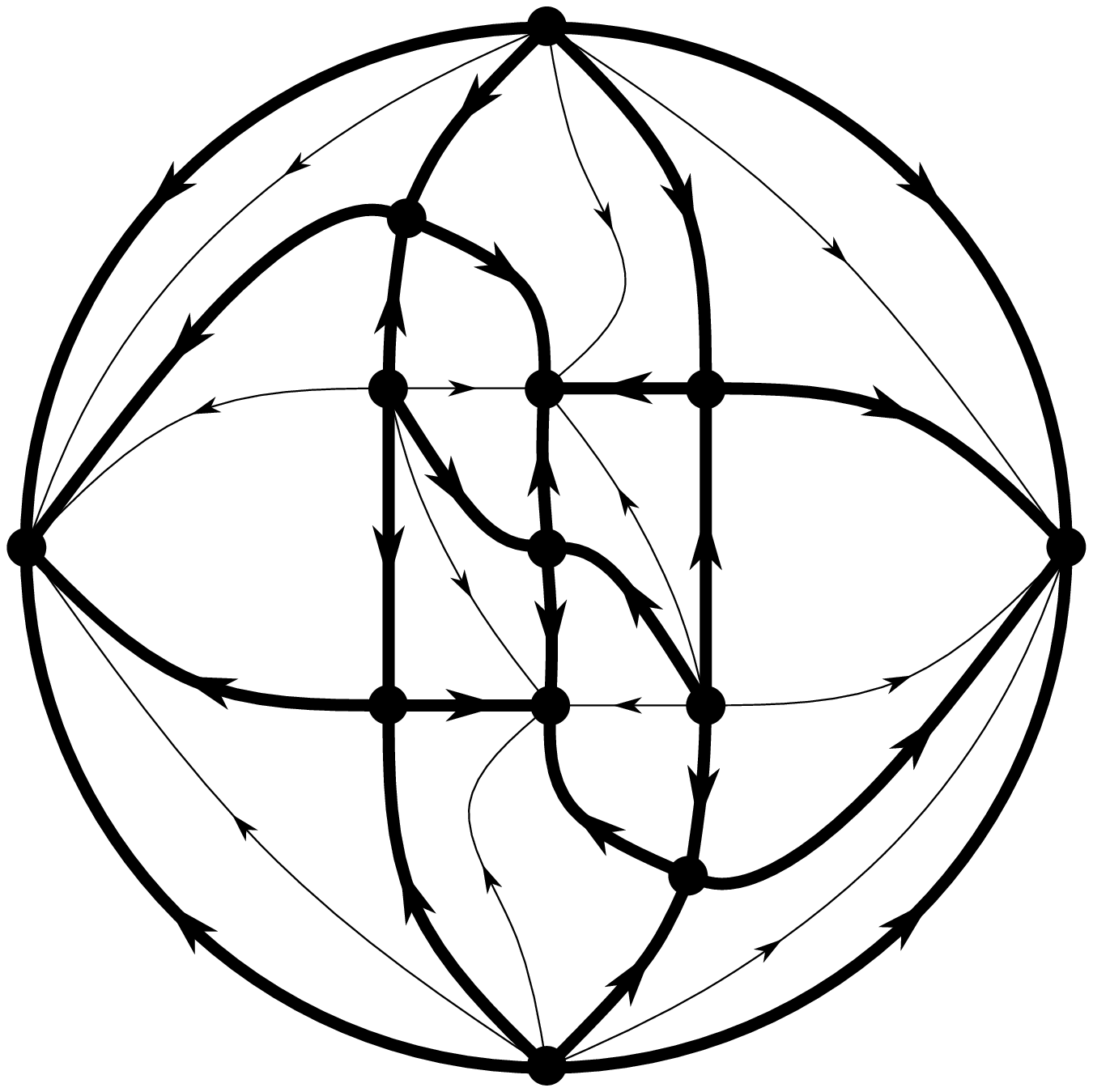} 
					\end{overpic}
					
					Case~$4.4b.iii$.
				\end{center}
			\end{minipage}	
			\begin{minipage}{3.1cm}
				\begin{center}
					\begin{overpic}[height=3cm]{Case1.4.6a1.1.eps} 
					\end{overpic}
					
					Case~$4.6a.i.1$.
				\end{center}
			\end{minipage}
			\begin{minipage}{3.1cm}
				\begin{center}
					\begin{overpic}[height=3cm]{Case1.4.6a2.1.eps} 
					\end{overpic}
					
					Case~$4.6a.ii.1$.
				\end{center}
			\end{minipage}
		\end{center}
		\caption{Topologically distinct phase portraits of Family~$4$}\label{Case1.4Final}
	\end{figure}
\end{corollary}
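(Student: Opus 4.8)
The plan is to obtain Corollary~\ref{Coro6} as a direct consequence of Theorem~\ref{Theo14} and the topological identifications collected in Remark~\ref{Remark6}. Theorem~\ref{Theo14} already produces the complete list of phase portraits in Figures~\ref{Case1.4a}, \ref{Case1.4b} and \ref{Case1.4c}, indexed by the relative positions of the $p$- and $q$-singularities, the sign of the discriminant $\Delta$ that governs whether the infinite singularities $u_0^\pm$ are real, the sign of $\det A$ (which decides, via Proposition~\ref{Theo7}, whether the origin is a saddle or a node/focus), and---when the origin is a node/focus surrounded by a polycycle---the signs of the trace $T$ and of the auxiliary quantity $K$ that control limit-cycle existence through Proposition~\ref{Theo12}. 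The corollary is thus a quotient statement: after identifying topologically equivalent portraits, exactly fourteen remain.

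First I would record the full enumeration of Theorem~\ref{Theo14} (sixty-five portraits spread over forty-seven parameter families) and then establish the equivalences via the \emph{sliding} technique described in Remark~\ref{Remark5}. The key observation is that the topological type is pinned down by the combinatorial disposal of the four $q$-singularities inside the rectangle spanned by $p_1,\dots,p_4$, together with the local phase portrait at each singularity; whenever a $q$-singularity lies strictly inside this rectangle and is not tied to an adjacent saddle by a separatrix, one may translate it continuously across the rectangle without altering the separatrix configuration. By the Markus--Neumann--Peixoto Theorem, two portraits sharing the same separatrix configuration are topologically equivalent, so each admissible translation collapses several parameter families into a single class.

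With this tool I would carry out precisely the identifications listed in Remark~\ref{Remark6}: the nine family-level groupings collapse the forty-seven families to thirteen, and two further identifications among the surviving portraits reduce the sixty-five phase portraits to the fourteen representatives drawn in Figure~\ref{Case1.4Final}. Each survivor is realizable by the explicit parameter choices already exhibited in the proof of Theorem~\ref{Theo14}, so nothing is lost in passing to the quotient.

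The hard part will be justifying each application of the sliding argument. For every equivalence asserted in Remark~\ref{Remark6} one must verify that the chosen deformation neither destroys an existing saddle--saddle connection nor creates a new one---this is precisely the pitfall flagged at the end of Remark~\ref{Remark5}, where the heteroclinic connection between $p_2$ and $q_2$ blocks the analogous collapse in Family~$1$. Concretely, I would check along each sliding path that no $q$-singularity ever coincides with a $p$-singularity (which would momentarily create a non-hyperbolic point and could spawn a heteroclinic orbit) and that the separatrices of the four corner saddles keep the same source and target throughout the deformation. Once these checks are completed for the specific list in Remark~\ref{Remark6}, the count of fourteen follows at once.
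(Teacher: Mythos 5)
Your proposal follows essentially the same route as the paper: Corollary~\ref{Coro6} is obtained there directly from the enumeration in Theorem~\ref{Theo14} together with the identifications listed in Remark~\ref{Remark6}, which are justified by the same sliding-of-$q$-singularities argument introduced in Remark~\ref{Remark5} (with the same caveat about not breaking or creating saddle connections). Your additional insistence on explicitly verifying, along each deformation path, that no $q$-singularity crosses a $p$-singularity and that the corner-saddle separatrices keep their endpoints is a welcome sharpening of what the paper leaves implicit, but it does not change the argument.
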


\section{Proof of Theorem~\ref{Main2} and Corollary~\ref{Main3}}\label{Sec10}

\noindent \textit{Proof of Theorem~\ref{Main2}.} It follows from Corollaries~\ref{Coro3}, \ref{Coro4}, \ref{Coro5} and \ref{Coro6} that there are thirty two possible phase portraits. However, observe that some phase portraits of distinct families are topologically equivalent. More precisely, observe that: 
\begin{enumerate}[label=\arabic*)]
	\item Cases $1.1b$ and $3.4$ are topologically equivalent;
	\item Cases $1.7$ and $3.7$ are topologically equivalent;
	\item Cases $1.14$ and $3.15$ are topologically equivalent;
	\item Cases $2.1a1$ and $4.4b.i.1$ are topologically equivalent;
	\item Cases $2.1a3$ and $4.4b.i.3$ are topologically equivalent;
	\item Cases $2.1a4$ and $4.4b.i.4$ are topologically equivalent;
	\item Cases $2.1c$ and $4.4b.iii$ are topologically equivalent.
\end{enumerate}
Hence, we have the twenty five phase portraits given by Figure~\ref{GenericFinal}. We now prove that each of these twenty five phase portraits is realizable. Except by Cases $2.1a1$, $2.1a3$ and $2.1a4$, it follows from Section~\ref{Sec9} that each phase portrait given by Figure~\ref{GenericFinal} is realizable. Cases $2.1a1$, $2.1a3$ and $2.1a4$ need a special analysis due to its multiple options for the separatrices. See Figure~\ref{Fig9}.
\begin{figure}[h]
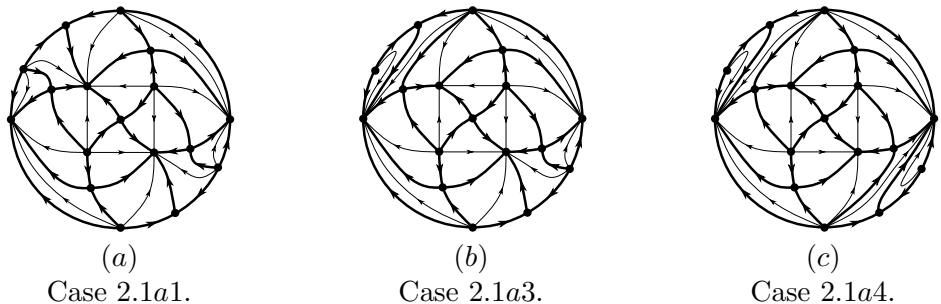

	\begin{center}
		\begin{minipage}{4.5cm}
			\begin{center}
				\begin{overpic}[height=3cm]{Case1.2.1a1.eps} 
				\end{overpic}
			
				$(a)$
				
				Case~$2.1a1$.
			\end{center}
		\end{minipage}
		\begin{minipage}{4.5cm}
			\begin{center}
				\begin{overpic}[height=3cm]{Case1.2.1a3.eps} 
				\end{overpic}
			
				$(b)$
				
				Case~$2.1a3$.
			\end{center}
		\end{minipage}
		\begin{minipage}{4.5cm}
			\begin{center}
				\begin{overpic}[height=3cm]{Case1.2.1a4.eps} 
				\end{overpic}
			
				$(c)$
				
				Case~$2.1a4$.
			\end{center}
		\end{minipage}	
	\end{center}
	\caption{Phase portraits with multiple options for the separatrices.}\label{Fig9}
\end{figure}
More precisely, each phase portrait given by Figure~\ref{Fig9} have the same disposal between the $p$ and $q$-singularities, which is precisely the disposal given by Case $2.1a$. Case $2.1a$ is the unique case at Figure~\ref{GenericFinal} with such property. Hence, it needs a special analysis. We now prove that each phase portraits given by Figure~\ref{Fig9} is realizable. Consider $X\in\Sigma_0$ with $\alpha=\frac{1}{2}$, $\beta=\frac{1}{2}$, $a_{01}=5$, $b_{10}=1$, and $b_{01}=-\frac{1}{2}$. It is clear that $\lambda=a_{10}$ is the only free parameter. We now study the one-parameter family $X_\lambda$, with $0\leqslant\lambda\leqslant 5$. We claim that there exists $\varepsilon>0$ such that if $0<\lambda<\varepsilon$, then the phase portrait of $X_\lambda$ is topologically equivalent to Figure~\ref{Fig9}(a). To prove this, first observe that if $0<\lambda<5$, then the uncompleted phase portrait of $X_\lambda$ is given by Figure~\ref{Fig10}(a).
\begin{figure}[h]
	\begin{center}
		\begin{minipage}{5cm}
			\begin{center}
				\begin{overpic}[height=3cm]{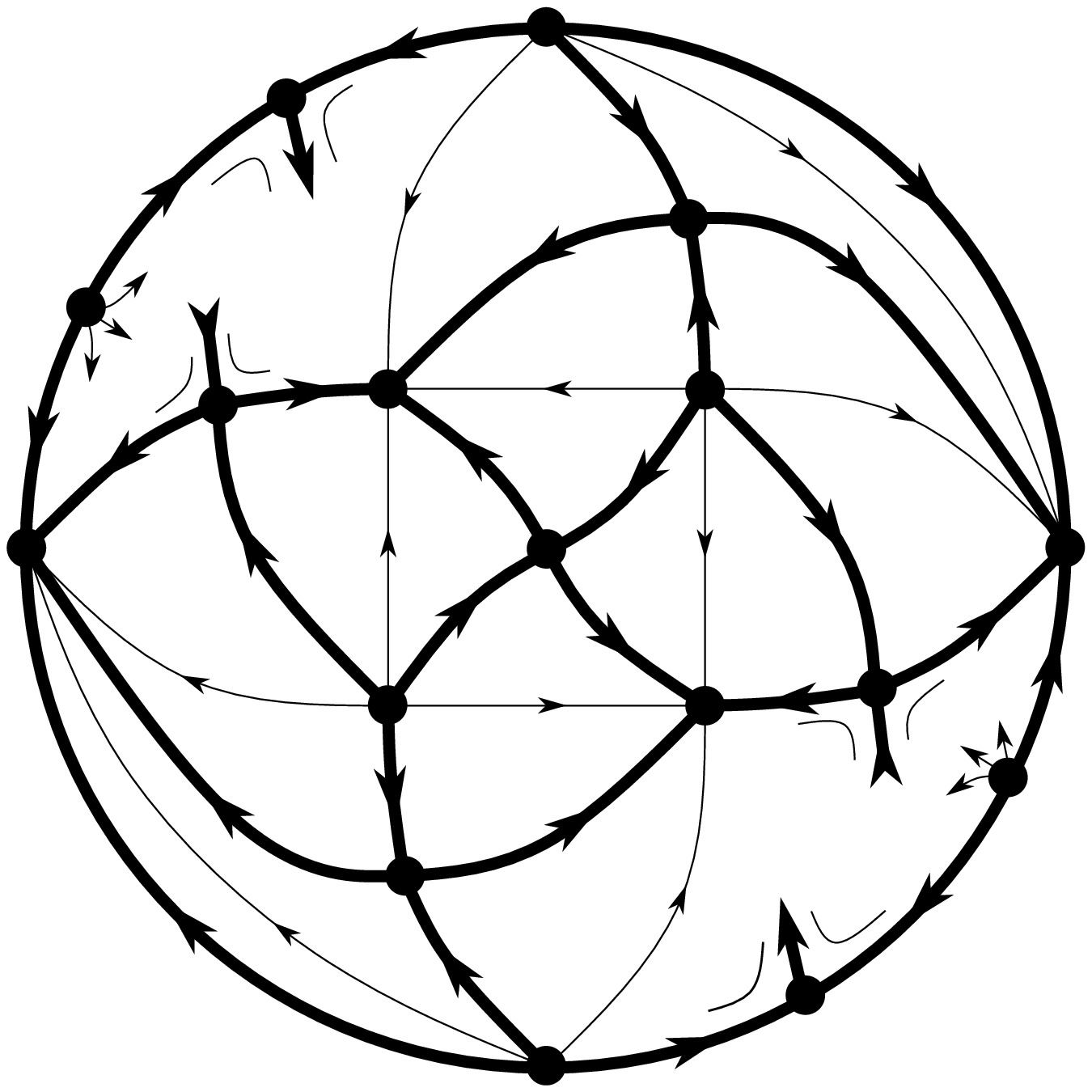} 
				\end{overpic}
				
				$(a)$
				
				$0<\lambda<5$.
			\end{center}
		\end{minipage}
		\begin{minipage}{5cm}
			\begin{center}
				\begin{overpic}[height=3cm]{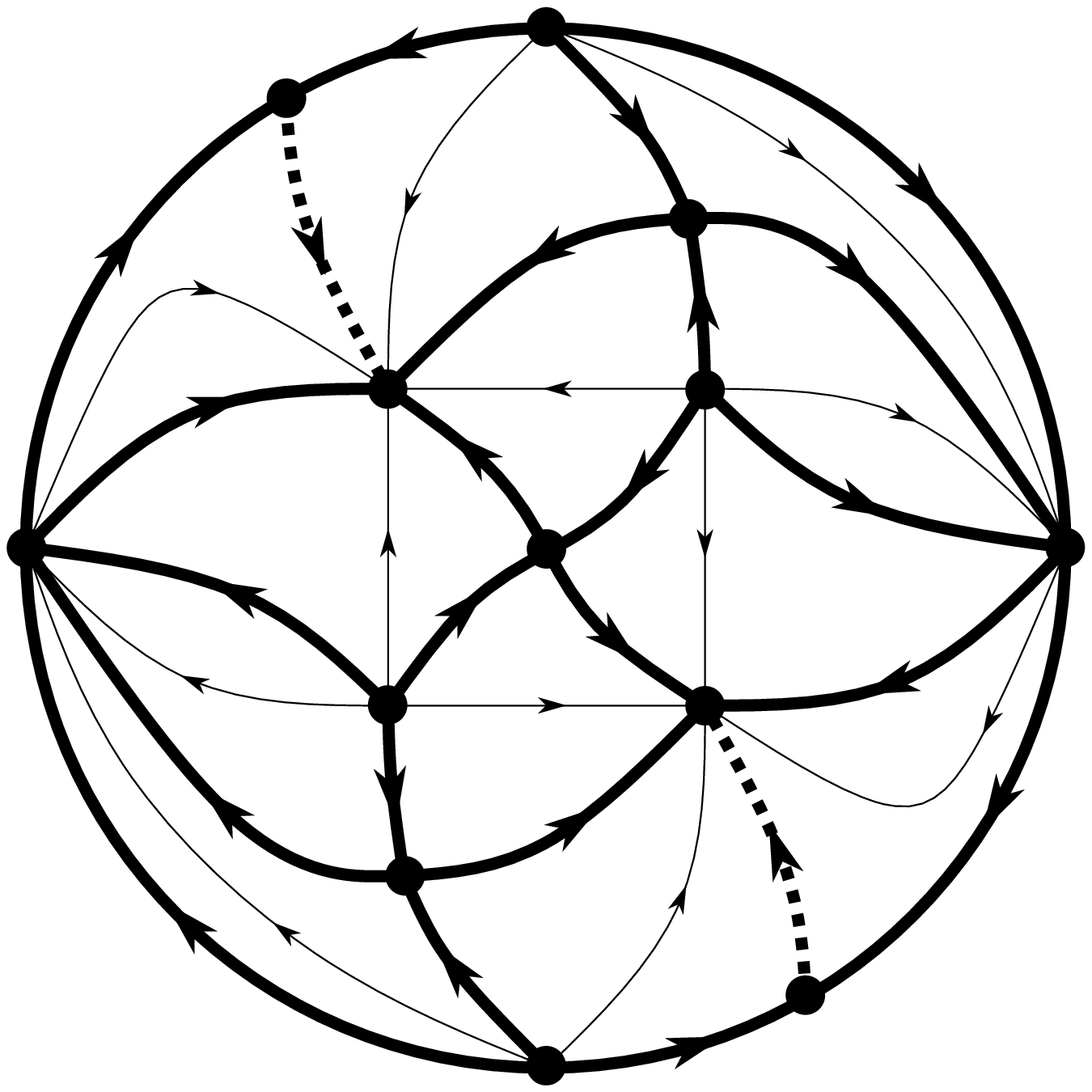} 
						\put(23,56){$p_4$}
						\put(67,40){$p_2$}
				\end{overpic}
				
				$(b)$
				
				$\lambda=0$.
			\end{center}
		\end{minipage}
		\begin{minipage}{5cm}
			\begin{center}
				\begin{overpic}[height=3cm]{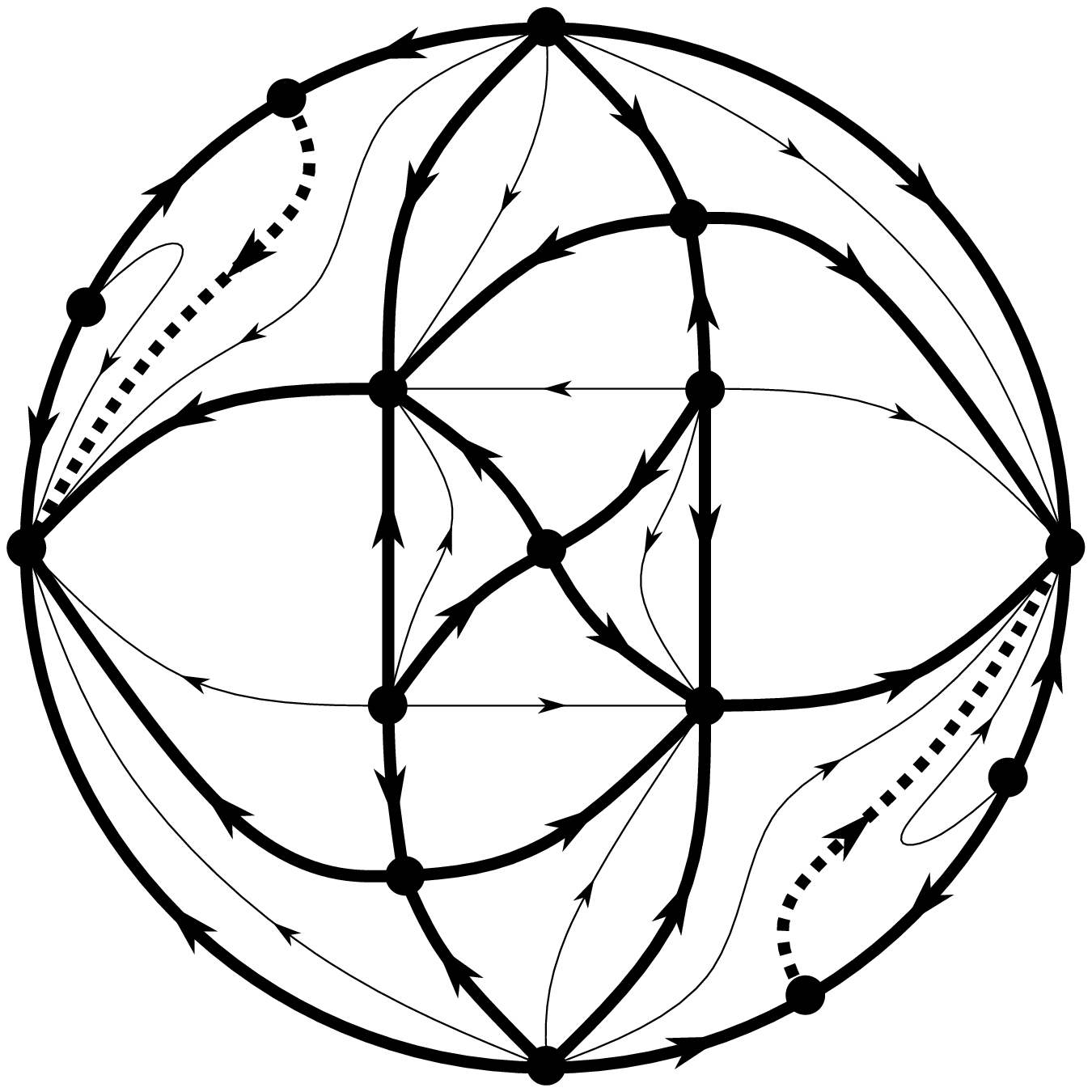} 
				\end{overpic}
				
				$(c)$
				
				$\lambda=5$.
			\end{center}
		\end{minipage}	
	\end{center}
	\caption{Phase portrait of $X_\lambda$.}\label{Fig10}
\end{figure}
Observe that if $\lambda=0$, then the phase portrait of $X_\lambda$ is given by Figure~\ref{Fig10}(b). In special, at Figure~\ref{Fig10}(b), consider the two separatrices (given by the dotted lines) between the hyperbolic saddles at infinity and the hyperbolic nodes $p_4$ and $p_2$. It is well known in the literature that a connection between a hyperbolic saddle and a hyperbolic node/focus is structurally stable. See Lemma $6$ of \cite{PeiPei}. Hence, for $\lambda>0$ small enough, it follows that such connections holds. Thus, we obtain the phase portrait given by Figure~\ref{Fig9}(a). Similarly, one can prove that if $5-\varepsilon<\lambda<5$, then the phase portrait of $X_\lambda$ is topologically equivalent to Figure~\ref{Fig9}(c). See Figure~\ref{Fig10}(c). Consider $X\in\Sigma_0$ with $\alpha=\frac{1}{2}$, $a_{01}=5$, $b_{10}=1$, and $b_{01}=-\frac{1}{2}$. Denote $\lambda=a_{10}$ and consider the two-parameter family $X_{\lambda,\beta}$. With a similar argumentation, observe that there exists $\varepsilon>0$ such that if $1-\varepsilon<\beta<1$ and $0<\lambda<\varepsilon$, then the phase portrait of $X_{\lambda,\beta}$ is topologically equivalent to Figure~\ref{Fig9}(b). Hence, all the phase portraits are realizable. {\hfill$\square$}

\noindent \textit{Proof of Corollary~\ref{Main3}.} Sixteen of the twenty phase portraits presented at Figure~\ref{FinalSquares} follows from Figure~\ref{GenericFinal}. The other four, namely, Cases $2.6a1$, $2.7a$, $2.15a$ and $4.6b.i.1$, follows from engineering back some of the techniques applied at Remarks~\ref{Remark5} and \ref{Remark6}, which changed the phase portrait at the square, without changing the phase portrait at the Poincar\'e Disk. {\hfill$\square$}

\section{Conclusion and Further Directions}

In this work we deal with the family of planar cubic vector fields with an invariant octothorpe, which models the Evolutionary Stable Strategies given by two players with two pure strategies each. Following the directions given by Peixoto's Theorem, we suggest the notion of genericity and provide the global phase portraits of such systems with an unique singularity in the central region of the octothorpe. We also provide an algebraic characterization for the bifurcation of limit cycles from a polycycle that has influence in the model.

In a future paper, the authors pushed on the phase portraits at the center of the octothorpe, aiming to classify all the realizable phase portraits on such square, including the non-generic ones, obtaining a complete qualitative characterization of the model. Regarding a more qualitative further direction, it is natural to wonder if the suggested family of generic vector fields is open and dense in the space considered in this paper. It is also natural to wonder if such family is equal to the set of the structurally stable vector fields. Such questions are already being considered by the authors. Finally, an approach to the family of codimension one vector fields is also considered.

\section*{Acknowledgments}

The authors are grateful to professor Armengol Gasull for sharing the idea of applying the tools of Dynamic Systems to study the models of Evolutionary Stable Strategies. 

\section*{Declarations}

\noindent\textbf{Ethical Approval:} Not applicable.

\noindent\textbf{Competing interests:} Not applicable.

\noindent\textbf{Authors' contributions:} All authors contributed equally to this work.

\noindent\textbf{Funding:} The authors are partially supported by CNPq, grant 304798/2019-3 and by S\~ao Paulo Research Foundation (FAPESP), grants 2019/10269-3 and 2021/01799-9.

\noindent\textbf{Availability of data and materials:} Not applicable.

\clearpage

\appendix

\section{Phase portraits under the hypothesis of Theorem~\ref{Theo11} (Family 2)}\label{Ap1}

To simplify Table~\ref{Table3}, we denote $\Delta=(b_{10}-a_{01})^2+4b_{01}$.

\begin{table}[h]
	\caption{Table of the realizable cases under hypothesis of Theorem~\ref{Theo11} (Family 2).}\label{Table3}
	\begin{tabular}{c c c c c c c}
		\hline
		Cases & $q_1$ & $q_2$ & $q_3$ & $q_4$ & $\Delta$ & $b_{10}-a_{01}$ \\
		\hline
		\rowcolor{mygray}
		$2.1a$ & $q_1>p_2$ & $q_2>p_3$ & $q_3<p_4$ & $q_4<p_1$ & $\Delta>0$ & $b_{10}-a_{01}<0$ \\
		$2.1b$ & $q_1>p_2$ & $q_2>p_3$ & $q_3<p_4$ & $q_4<p_1$ & $\Delta>0$ & $b_{10}-a_{01}>0$ \\
		\rowcolor{mygray}
		$2.1c$ & $q_1>p_2$ & $q_2>p_3$ & $q_3<p_4$ & $q_4<p_1$ & $\Delta<0$ & \\		
		$2.2a$ & $q_1<p_2$ & $q_2>p_3$ & $q_3<p_4$ & $q_4<p_1$ & $\Delta>0$ & $b_{10}-a_{01}<0$ \\
		\rowcolor{mygray}
		$2.2b$ & $q_1<p_2$ & $q_2>p_3$ & $q_3<p_4$ & $q_4<p_1$ & $\Delta>0$ & $b_{10}-a_{01}>0$ \\
		$2.2c$ & $q_1<p_2$ & $q_2>p_3$ & $q_3<p_4$ & $q_4<p_1$ & $\Delta<0$ & \\		
		\rowcolor{mygray}
		$2.3a$ & $q_1>p_2$ & $q_2>p_3$ & $q_3>p_4$ & $q_4<p_1$ & $\Delta>0$ & $b_{10}-a_{01}<0$ \\
		$2.3b$ & $q_1>p_2$ & $q_2>p_3$ & $q_3>p_4$ & $q_4<p_1$ & $\Delta>0$ & $b_{10}-a_{01}>0$ \\
		\rowcolor{mygray}
		$2.3c$ & $q_1>p_2$ & $q_2>p_3$ & $q_3>p_4$ & $q_4<p_1$ & $\Delta<0$ & \\		
		$2.4a$ & $q_1<p_2$ & $q_2>p_3$ & $q_3>p_4$ & $q_4<p_1$ & $\Delta>0$ & $b_{10}-a_{01}<0$ \\
		\rowcolor{mygray}
		$2.4b$ & $q_1<p_2$ & $q_2>p_3$ & $q_3>p_4$ & $q_4<p_1$ & $\Delta>0$ & $b_{10}-a_{01}>0$ \\
		$2.4c$ & $q_1<p_2$ & $q_2>p_3$ & $q_3>p_4$ & $q_4<p_1$ & $\Delta<0$ & \\		
		\rowcolor{mygray}
		$2.5a$ & $q_1>p_2$ & $q_2<p_3$ & $q_3<p_4$ & $q_4<p_1$ & $\Delta>0$ & $b_{10}-a_{01}<0$ \\
		$2.5b$ & $q_1>p_2$ & $q_2<p_3$ & $q_3<p_4$ & $q_4<p_1$ & $\Delta>0$ & $b_{10}-a_{01}>0$ \\
		\rowcolor{mygray}
		$2.5c$ & $q_1>p_2$ & $q_2<p_3$ & $q_3<p_4$ & $q_4<p_1$ & $\Delta<0$ & \\		
		$2.6a$ & $q_1>p_2$ & $q_2<p_3$ & $q_3>p_4$ & $q_4<p_1$ & $\Delta>0$ & $b_{10}-a_{01}<0$ \\
		\rowcolor{mygray}
		$2.6b$ & $q_1>p_2$ & $q_2<p_3$ & $q_3>p_4$ & $q_4<p_1$ & $\Delta>0$ & $b_{10}-a_{01}>0$ \\
		$2.6c$ & $q_1>p_2$ & $q_2<p_3$ & $q_3>p_4$ & $q_4<p_1$ & $\Delta<0$ & \\		
		\rowcolor{mygray}
		$2.7a$ & $q_1<p_2$ & $q_2<p_3$ &  $q_3>p_4$ & $q_4<p_1$ & $\Delta>0$ & $b_{10}-a_{01}<0$ \\
		$2.7b$ & $q_1<p_2$ & $q_2<p_3$ &  $q_3>p_4$ & $q_4<p_1$ & $\Delta>0$ & $b_{10}-a_{01}>0$ \\
		\rowcolor{mygray}
		$2.7c$ & $q_1<p_2$ & $q_2<p_3$ &  $q_3>p_4$ & $q_4<p_1$ & $\Delta<0$ & \\		
		$2.8a$ & $q_1>p_2$ & $q_2>p_3$ & $q_3<p_4$ & $q_4>p_1$ & $\Delta>0$ & $b_{10}-a_{01}<0$ \\
		\rowcolor{mygray}
		$2.8b$ & $q_1>p_2$ & $q_2>p_3$ & $q_3<p_4$ & $q_4>p_1$ & $\Delta>0$ & $b_{10}-a_{01}>0$ \\
		$2.8c$ & $q_1>p_2$ & $q_2>p_3$ & $q_3<p_4$ & $q_4>p_1$ & $\Delta<0$ & \\		
		\rowcolor{mygray}
		$2.9a$ & $q_1<p_2$ & $q_2>p_3$ & $q_3<p_4$ & $q_4>p_1$ & $\Delta>0$ & $b_{10}-a_{01}<0$ \\
		$2.9b$ & $q_1<p_2$ & $q_2>p_3$ & $q_3<p_4$ & $q_4>p_1$ & $\Delta>0$ & $b_{10}-a_{01}>0$ \\
		\rowcolor{mygray}
		$2.9c$ & $q_1<p_2$ & $q_2>p_3$ & $q_3<p_4$ & $q_4>p_1$ & $\Delta<0$ & \\		
		$2.10a$ & $q_1>p_2$ & $q_2>p_3$ & $q_3>p_4$ & $q_4>p_1$ & $\Delta>0$ & $b_{10}-a_{01}<0$ \\
		\rowcolor{mygray}
		$2.10b$ & $q_1>p_2$ & $q_2>p_3$ & $q_3>p_4$ & $q_4>p_1$ & $\Delta>0$ & $b_{10}-a_{01}>0$ \\
		$2.10c$ & $q_1>p_2$ & $q_2>p_3$ & $q_3>p_4$ & $q_4>p_1$ & $\Delta<0$ & \\
		\rowcolor{mygray}
		$2.11a$ & $q_1<p_2$ & $q_2>p_3$ & $q_3>p_4$ & $q_4>p_1$ & $\Delta>0$ & $b_{10}-a_{01}<0$ \\
		$2.11b$ & $q_1<p_2$ & $q_2>p_3$ & $q_3>p_4$ & $q_4>p_1$ & $\Delta>0$ & $b_{10}-a_{01}>0$ \\
		\rowcolor{mygray}
		$2.11c$ & $q_1<p_2$ & $q_2>p_3$ & $q_3>p_4$ & $q_4>p_1$ & $\Delta<0$ & \\		
		$2.12a$ & $q_1>p_2$ & $q_2<p_3$ & $q_3<p_4$ & $q_4>p_1$ & $\Delta>0$ & $b_{10}-a_{01}<0$ \\
		\rowcolor{mygray}
		$2.12b$ & $q_1>p_2$ & $q_2<p_3$ & $q_3<p_4$ & $q_4>p_1$ & $\Delta>0$ & $b_{10}-a_{01}>0$ \\
		$2.12c$ & $q_1>p_2$ & $q_2<p_3$ & $q_3<p_4$ & $q_4>p_1$ & $\Delta<0$ & \\		
		\rowcolor{mygray}
		$2.13a$ & $q_1<p_2$ & $q_2<p_3$ & $q_3<p_4$ & $q_4>p_1$ & $\Delta>0$ & $b_{10}-a_{01}<0$ \\
		$2.13b$ & $q_1<p_2$ & $q_2<p_3$ & $q_3<p_4$ & $q_4>p_1$ & $\Delta>0$ & $b_{10}-a_{01}>0$ \\
		\rowcolor{mygray}
		$2.13c$ & $q_1<p_2$ & $q_2<p_3$ & $q_3<p_4$ & $q_4>p_1$ & $\Delta<0$ & \\		
		$2.14a$ & $q_1>p_2$ & $q_2<p_3$ & $q_3>p_4$ & $q_4>p_1$ & $\Delta>0$ & $b_{10}-a_{01}<0$ \\
		\rowcolor{mygray}
		$2.14b$ & $q_1>p_2$ & $q_2<p_3$ & $q_3>p_4$ & $q_4>p_1$ & $\Delta>0$ & $b_{10}-a_{01}>0$ \\
		$2.14c$ & $q_1>p_2$ & $q_2<p_3$ & $q_3>p_4$ & $q_4>p_1$ & $\Delta<0$ & \\		
		\rowcolor{mygray}
		$2.15a$ & $q_1<p_2$ & $q_2<p_3$ & $q_3>p_4$ & $q_4>p_1$ & $\Delta>0$ & $b_{10}-a_{01}<0$ \\
		$2.15b$ & $q_1<p_2$ & $q_2<p_3$ & $q_3>p_4$ & $q_4>p_1$ & $\Delta>0$ & $b_{10}-a_{01}>0$ \\
		\rowcolor{mygray}
		$2.15c$ & $q_1<p_2$ & $q_2<p_3$ & $q_3>p_4$ & $q_4>p_1$ & $\Delta<0$ & \\		
		\hline
	\end{tabular}
\end{table}

\begin{figure}[h]
	\begin{center}
		\begin{minipage}{3.1cm}
			\begin{center}
				\begin{overpic}[height=3cm]{Case1.2.1a1.eps} 
				\end{overpic}
				
				Case~$2.1a1$.
			\end{center}
		\end{minipage}
		\begin{minipage}{3.1cm}
			\begin{center}
				\begin{overpic}[height=3cm]{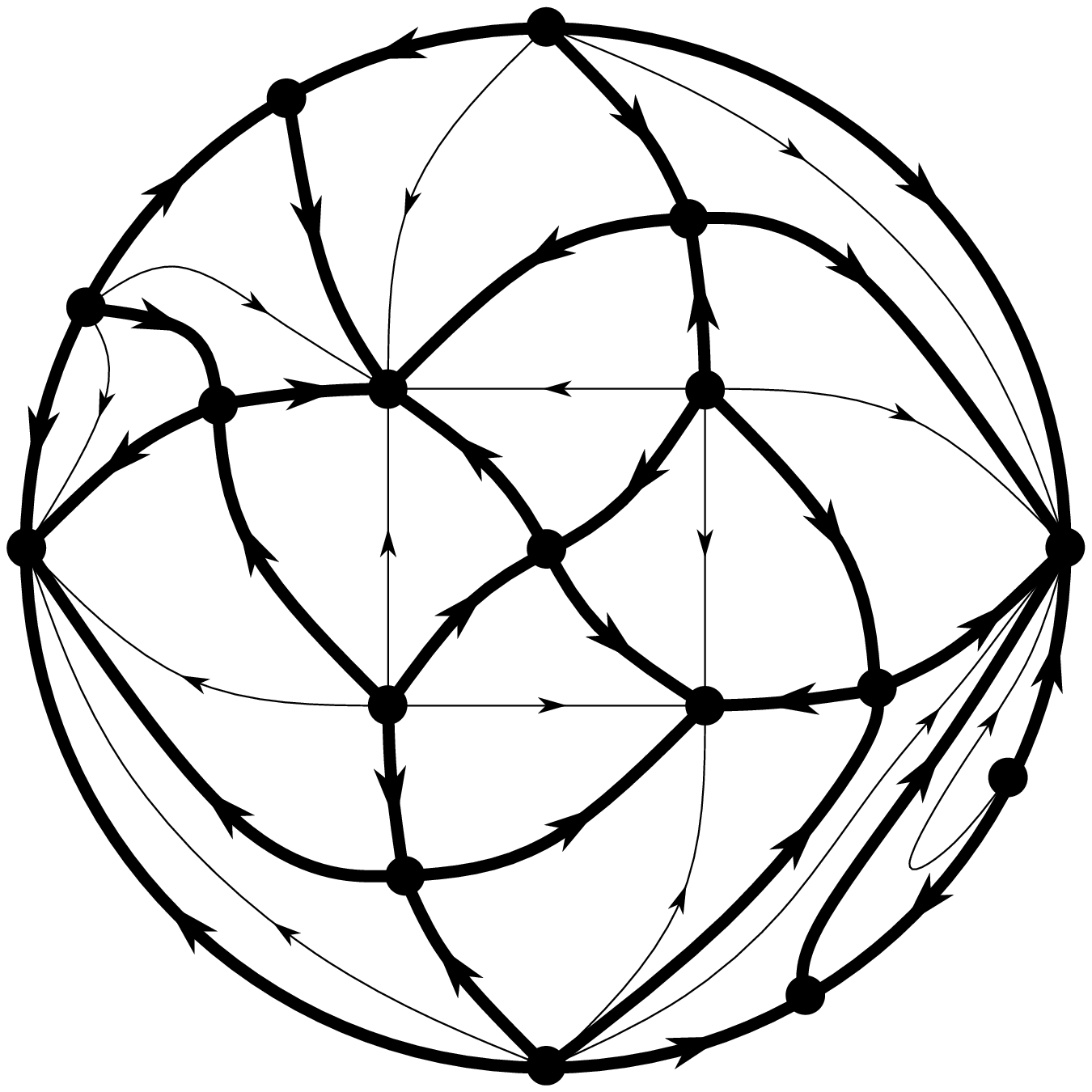} 
				\end{overpic}
				
				Case~$2.1a2$.
			\end{center}
		\end{minipage}
		\begin{minipage}{3.1cm}
			\begin{center}
				\begin{overpic}[height=3cm]{Case1.2.1a3.eps} 
				\end{overpic}
				
				Case~$2.1a3$.
			\end{center}
		\end{minipage}	
		\begin{minipage}{3.1cm}
			\begin{center}
				\begin{overpic}[height=3cm]{Case1.2.1a4.eps} 
				\end{overpic}
				
				Case~$2.1a4$.
			\end{center}
		\end{minipage}
		\begin{minipage}{3.1cm}
			\begin{center}
				\begin{overpic}[height=3cm]{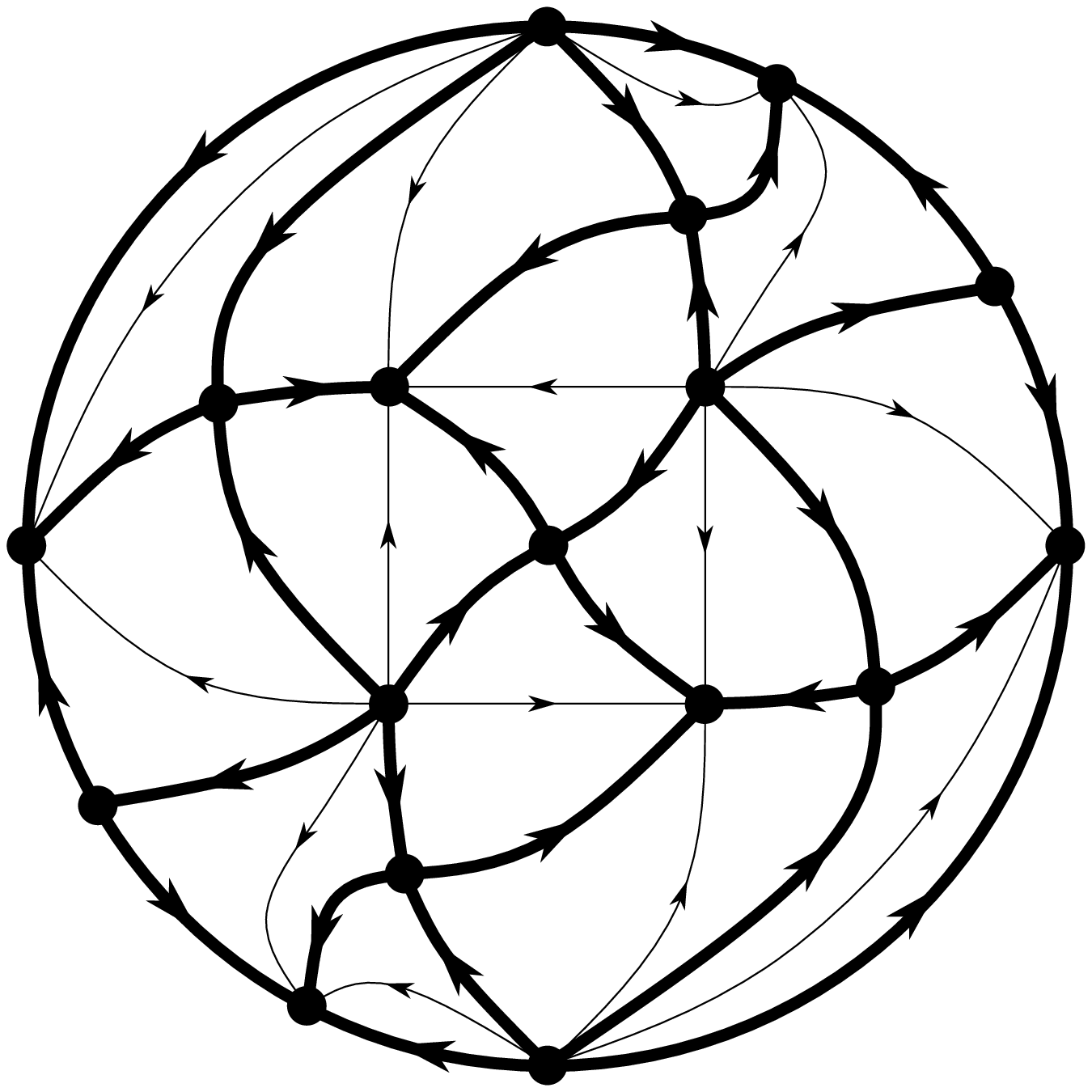} 
				\end{overpic}
				
				Case~$2.1b1$.
			\end{center}
		\end{minipage}
	\end{center}
	$\;$
	\begin{center}
		\begin{minipage}{3.1cm}
			\begin{center}
				\begin{overpic}[height=3cm]{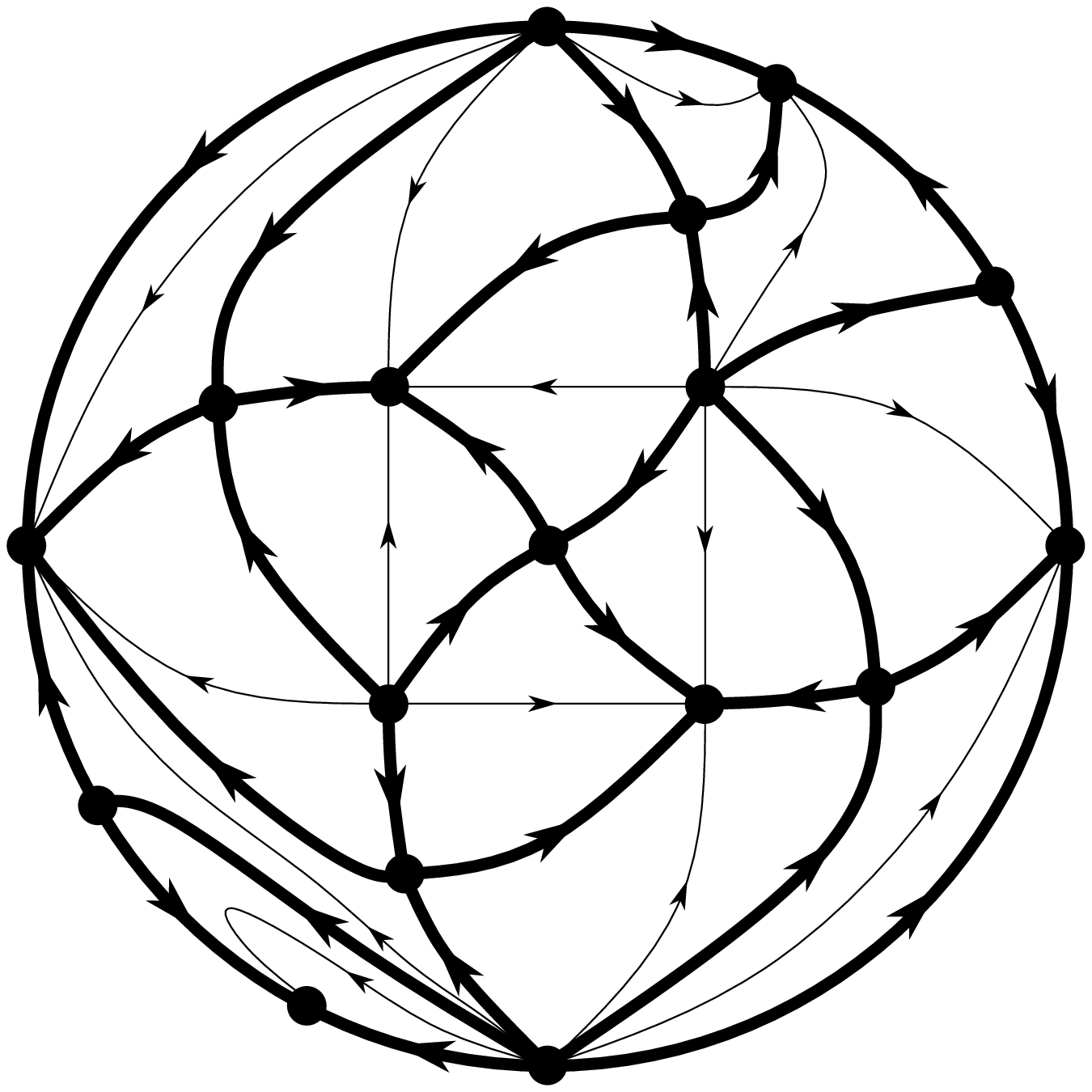} 
				\end{overpic}
				
				Case~$2.1b2$.
			\end{center}
		\end{minipage}
		\begin{minipage}{3.1cm}
			\begin{center}
				\begin{overpic}[height=3cm]{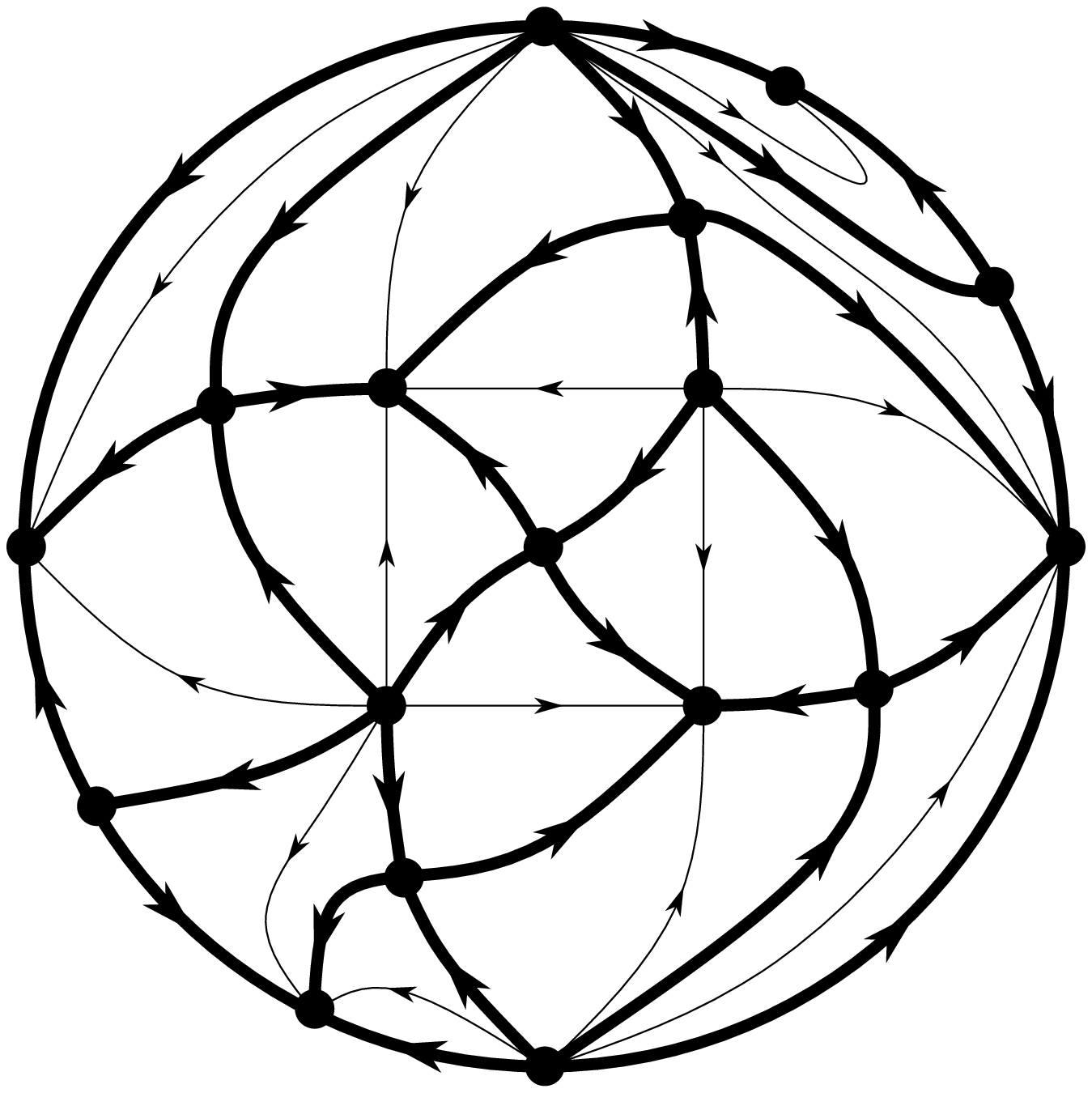} 
				\end{overpic}
				
				Case~$2.1b3$.
			\end{center}
		\end{minipage}
		\begin{minipage}{3.1cm}
			\begin{center}
				\begin{overpic}[height=3cm]{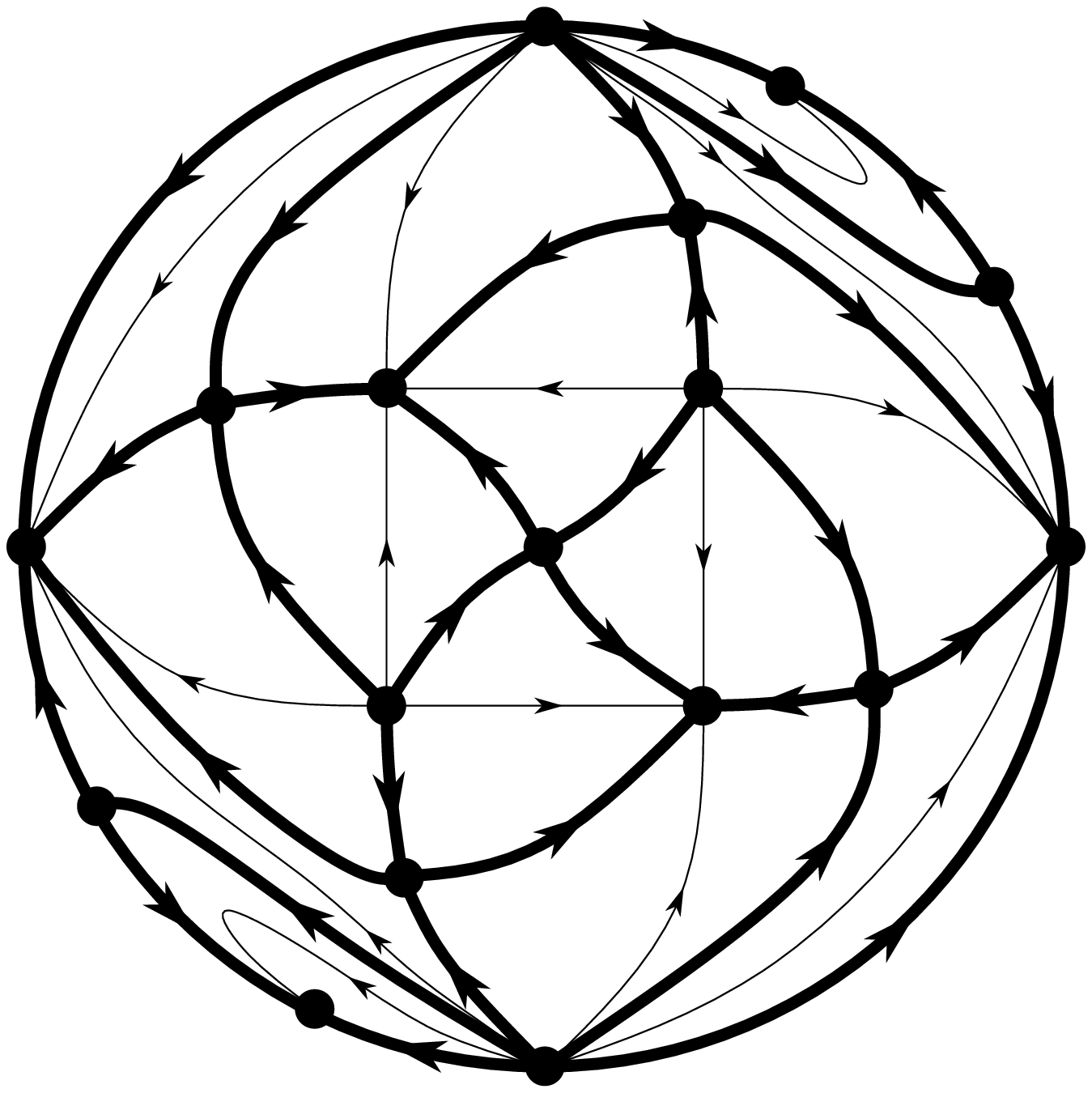} 
				\end{overpic}
				
				Case~$2.1b4$.
			\end{center}
		\end{minipage}
		\begin{minipage}{3.1cm}
			\begin{center}
				\begin{overpic}[height=3cm]{Case1.2.1c.eps} 
				\end{overpic}
				
				Case~$2.1c$.
			\end{center}
		\end{minipage}
		\begin{minipage}{3.1cm}
			\begin{center}
				\begin{overpic}[height=3cm]{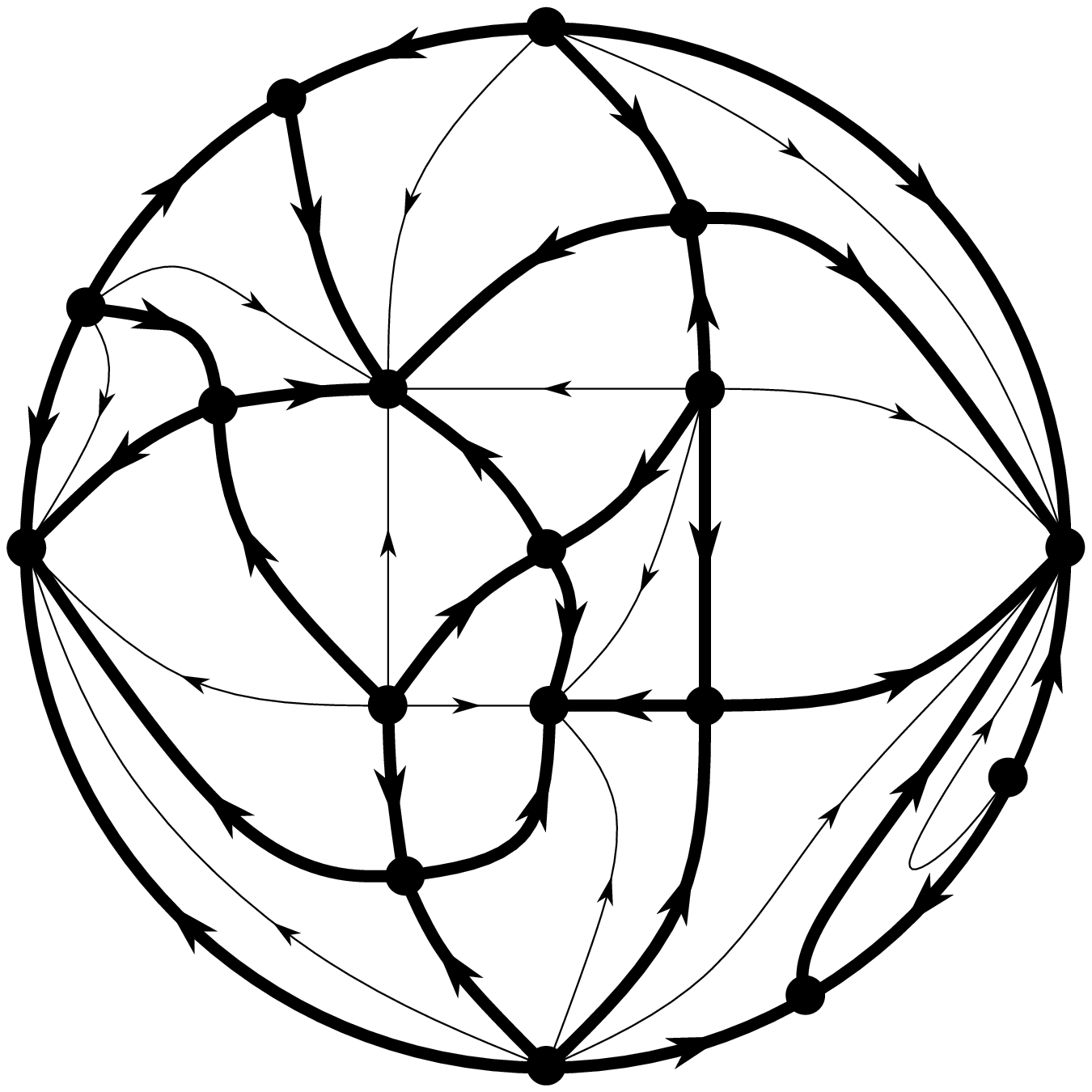} 
				\end{overpic}
				
				Case~$2.2a1$.
			\end{center}
		\end{minipage}
	\end{center}
	$\;$
	\begin{center}
		\begin{minipage}{3.1cm}
			\begin{center}
				\begin{overpic}[height=3cm]{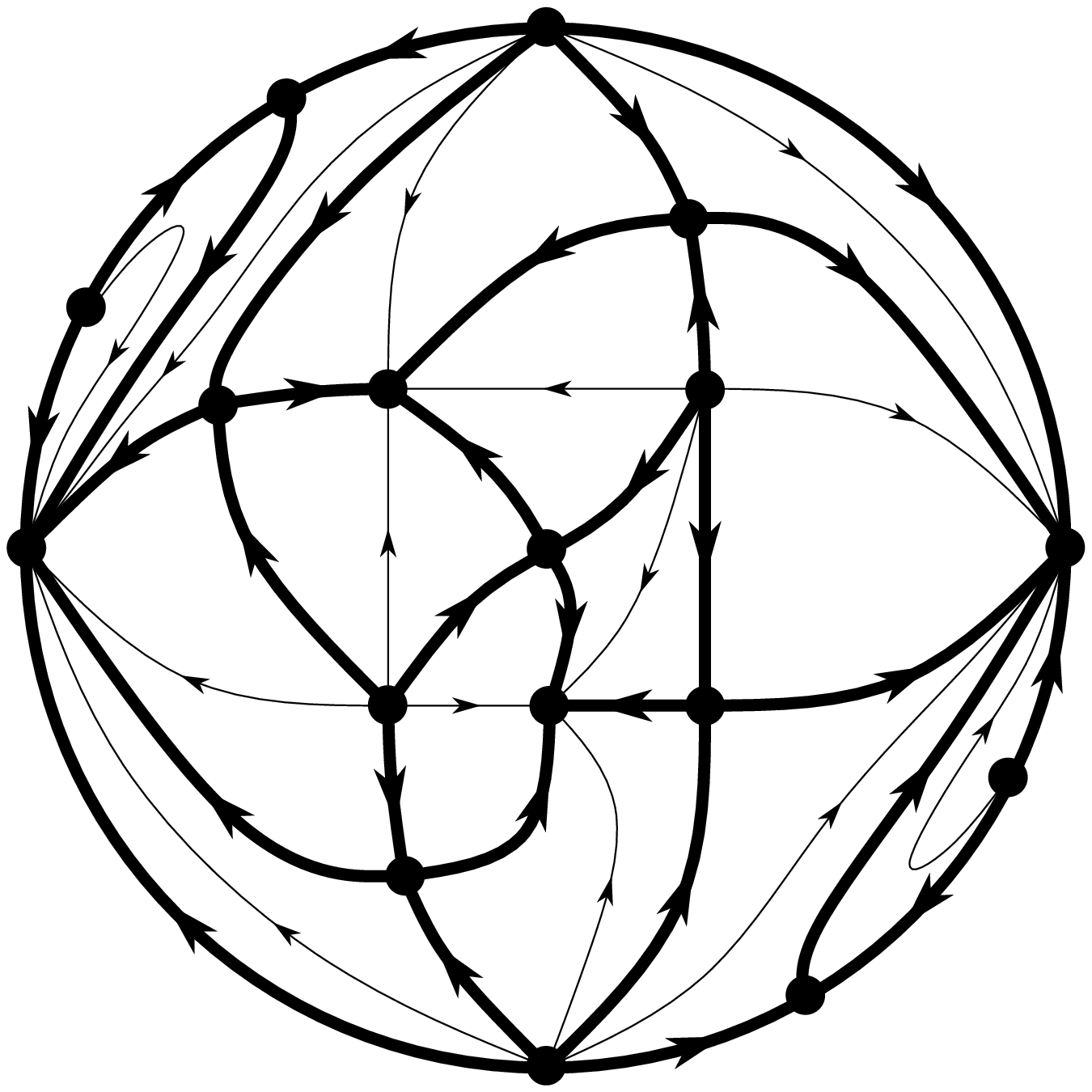} 
				\end{overpic}
				
				Case~$2.2a2$.
			\end{center}
		\end{minipage}
		\begin{minipage}{3.1cm}
			\begin{center}
				\begin{overpic}[height=3cm]{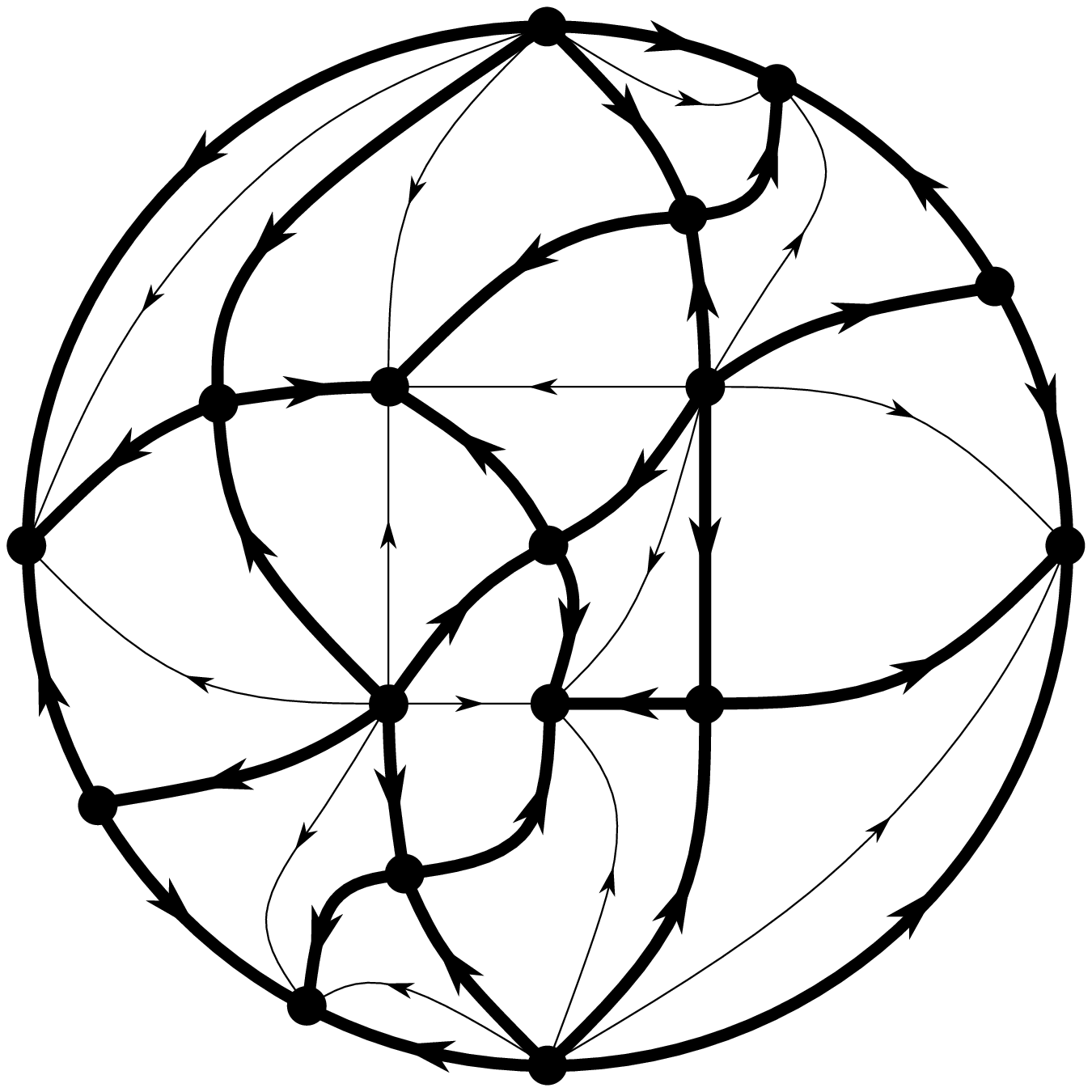} 
				\end{overpic}
				
				Case~$2.2b1$.
			\end{center}
		\end{minipage}	
		\begin{minipage}{3.1cm}
			\begin{center}
				\begin{overpic}[height=3cm]{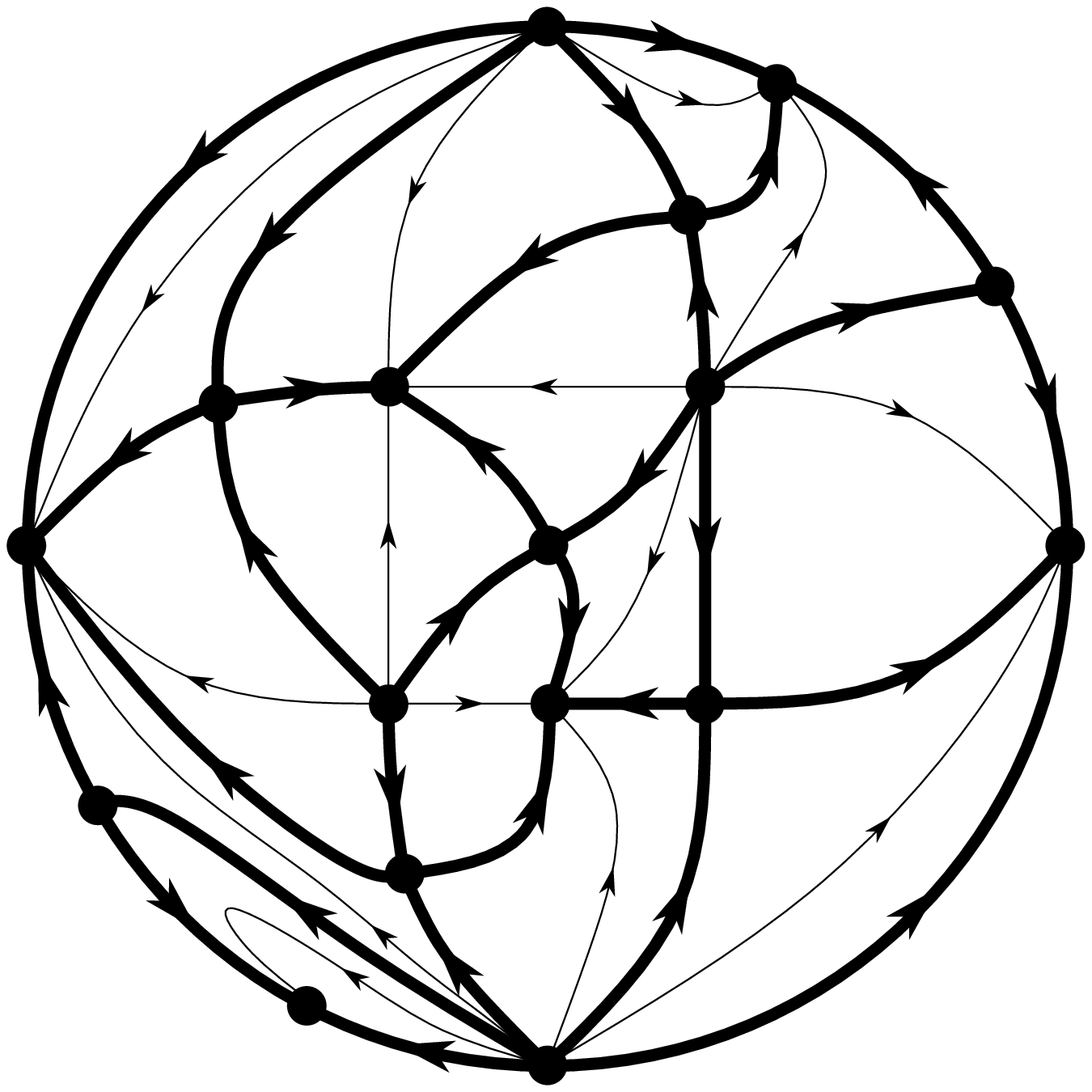} 
				\end{overpic}
				
				Case~$2.2b2$.
			\end{center}
		\end{minipage}
		\begin{minipage}{3.1cm}
			\begin{center}
				\begin{overpic}[height=3cm]{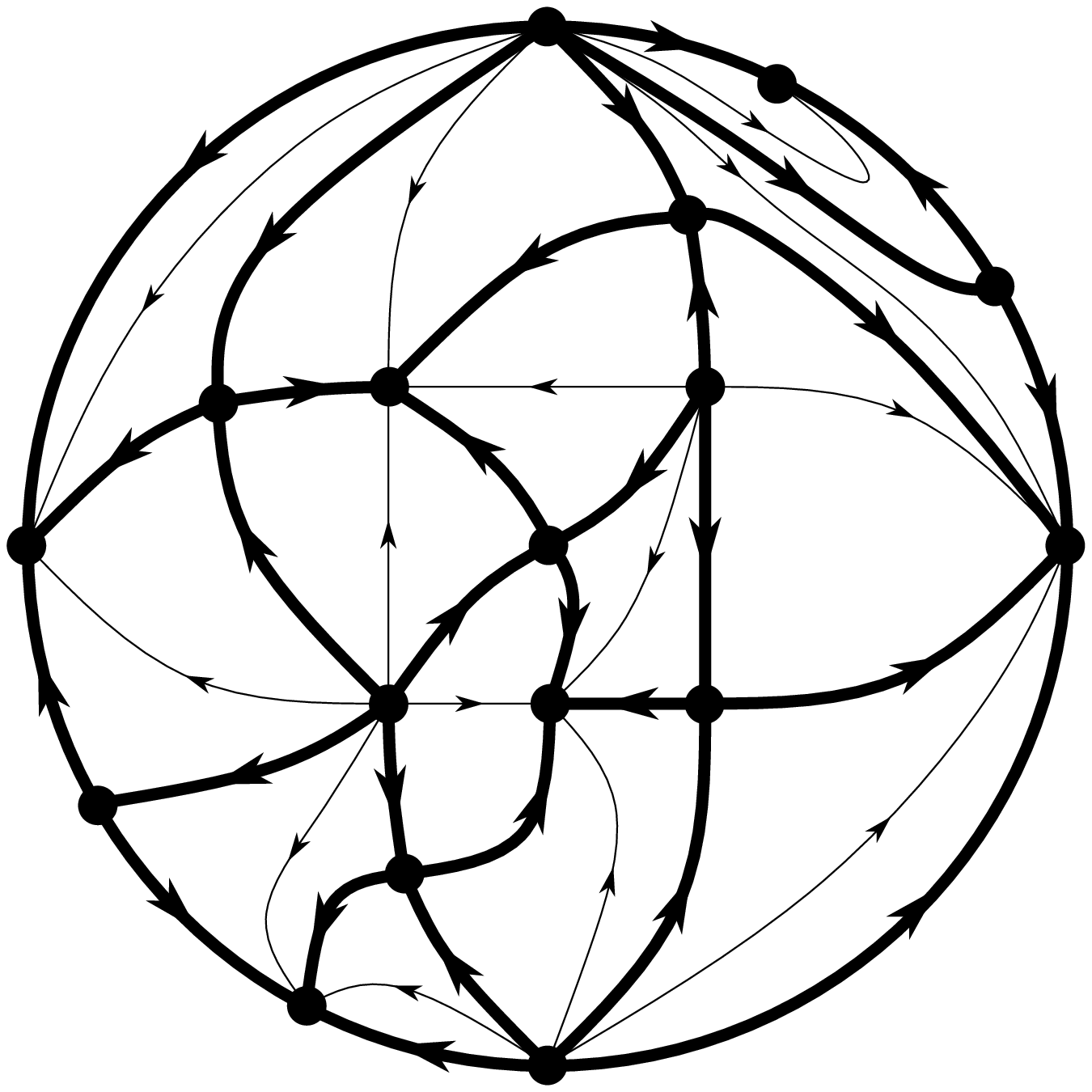} 
				\end{overpic}
				
				Case~$2.2b3$.
			\end{center}
		\end{minipage}
		\begin{minipage}{3.1cm}
			\begin{center}
				\begin{overpic}[height=3cm]{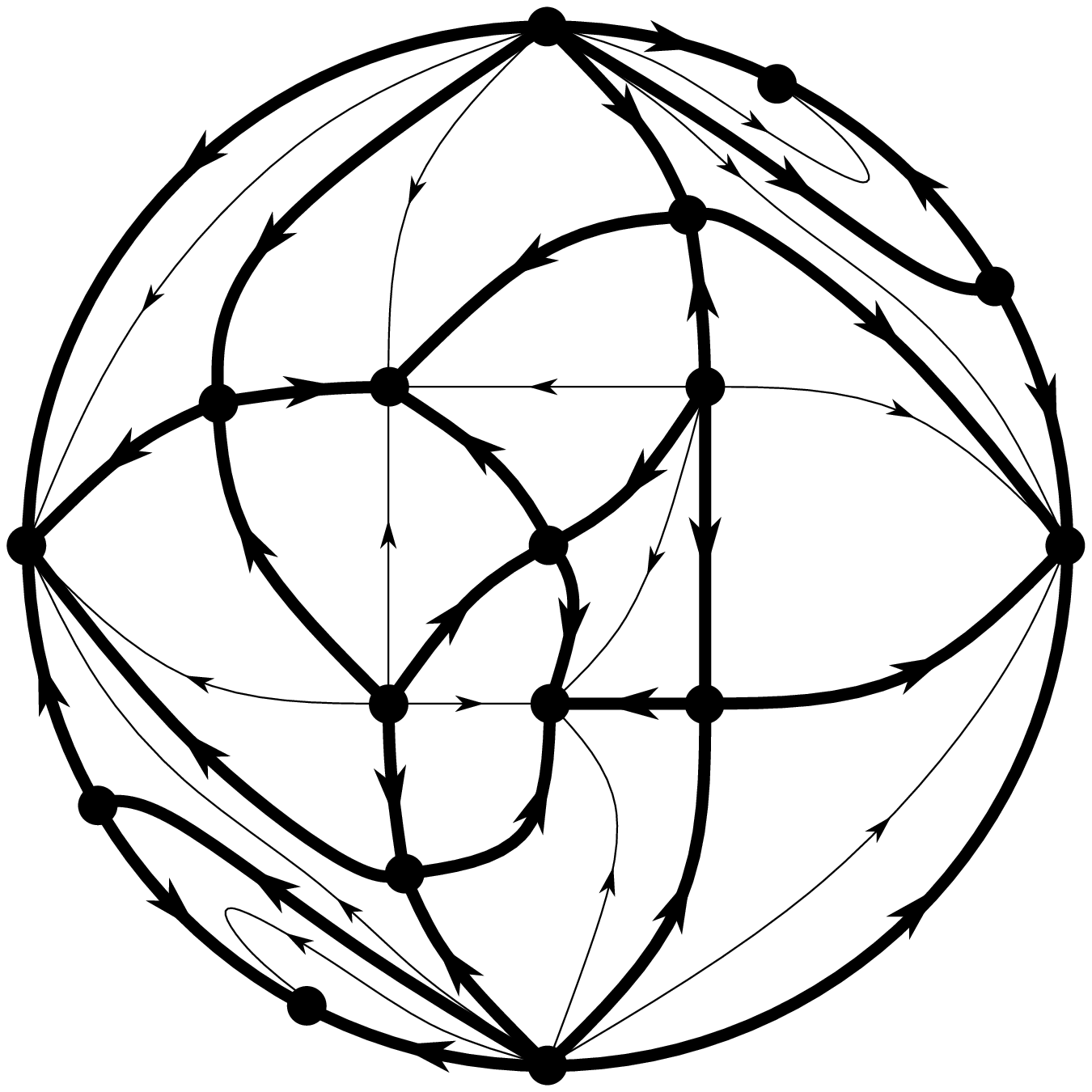} 
				\end{overpic}
				
				Case~$2.2b4$.
			\end{center}
		\end{minipage}
	\end{center}
	$\;$
	\begin{center}
		\begin{minipage}{3.1cm}
			\begin{center}
				\begin{overpic}[height=3cm]{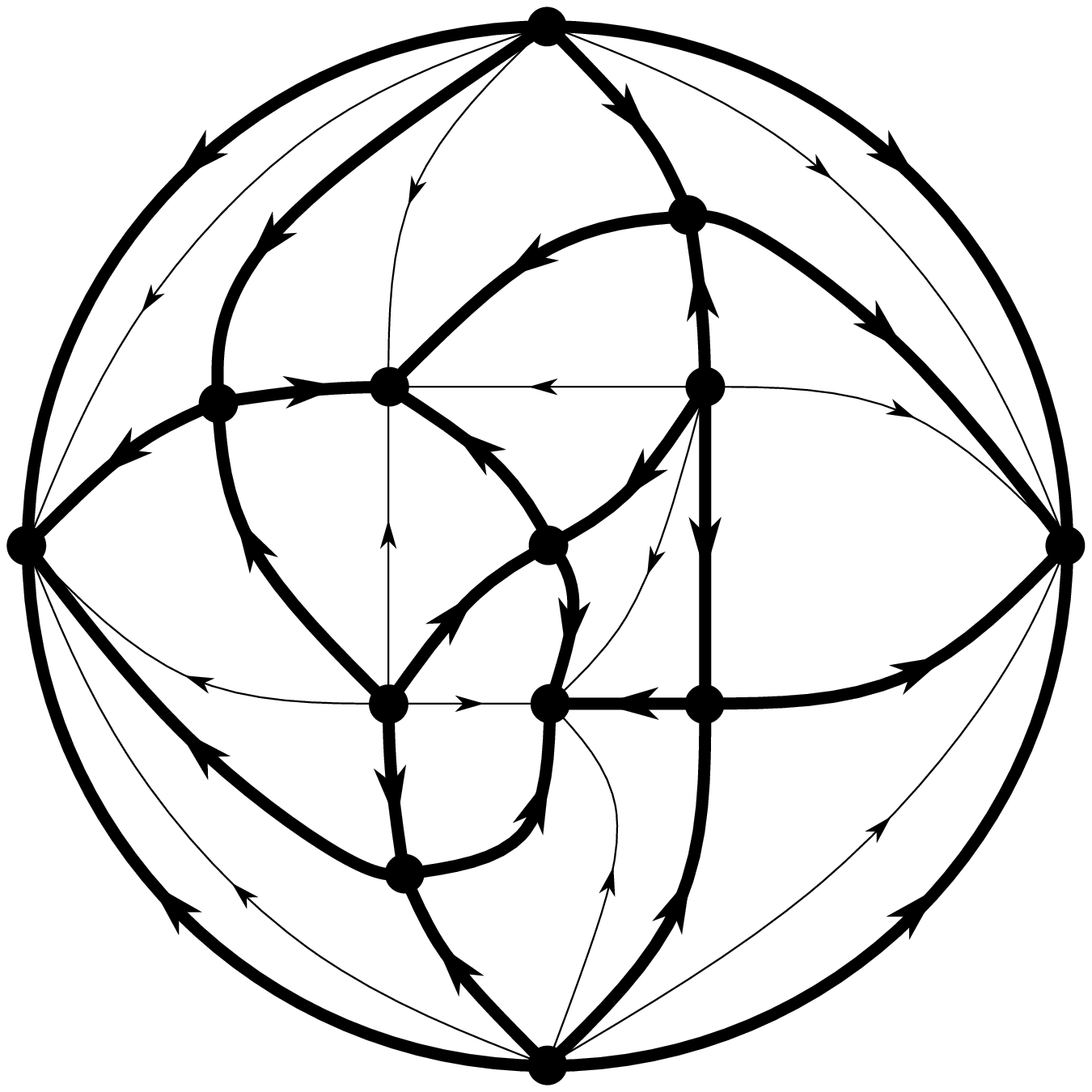} 
				\end{overpic}
				
				Case~$2.2c$.
			\end{center}
		\end{minipage}
		\begin{minipage}{3.1cm}
			\begin{center}
				\begin{overpic}[height=3cm]{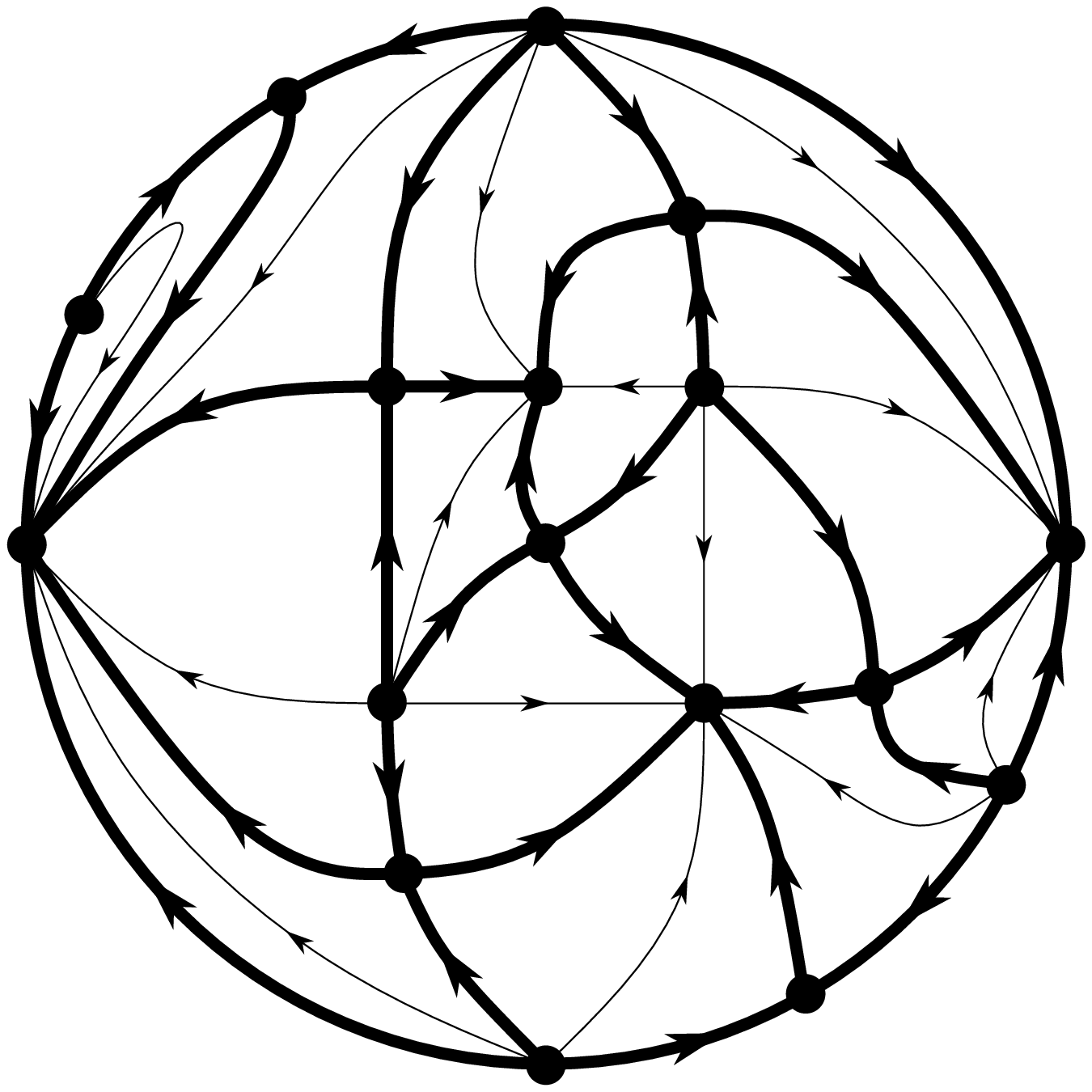} 
				\end{overpic}
				
				Case~$2.3a1$.
			\end{center}
		\end{minipage}
		\begin{minipage}{3.1cm}
			\begin{center}
				\begin{overpic}[height=3cm]{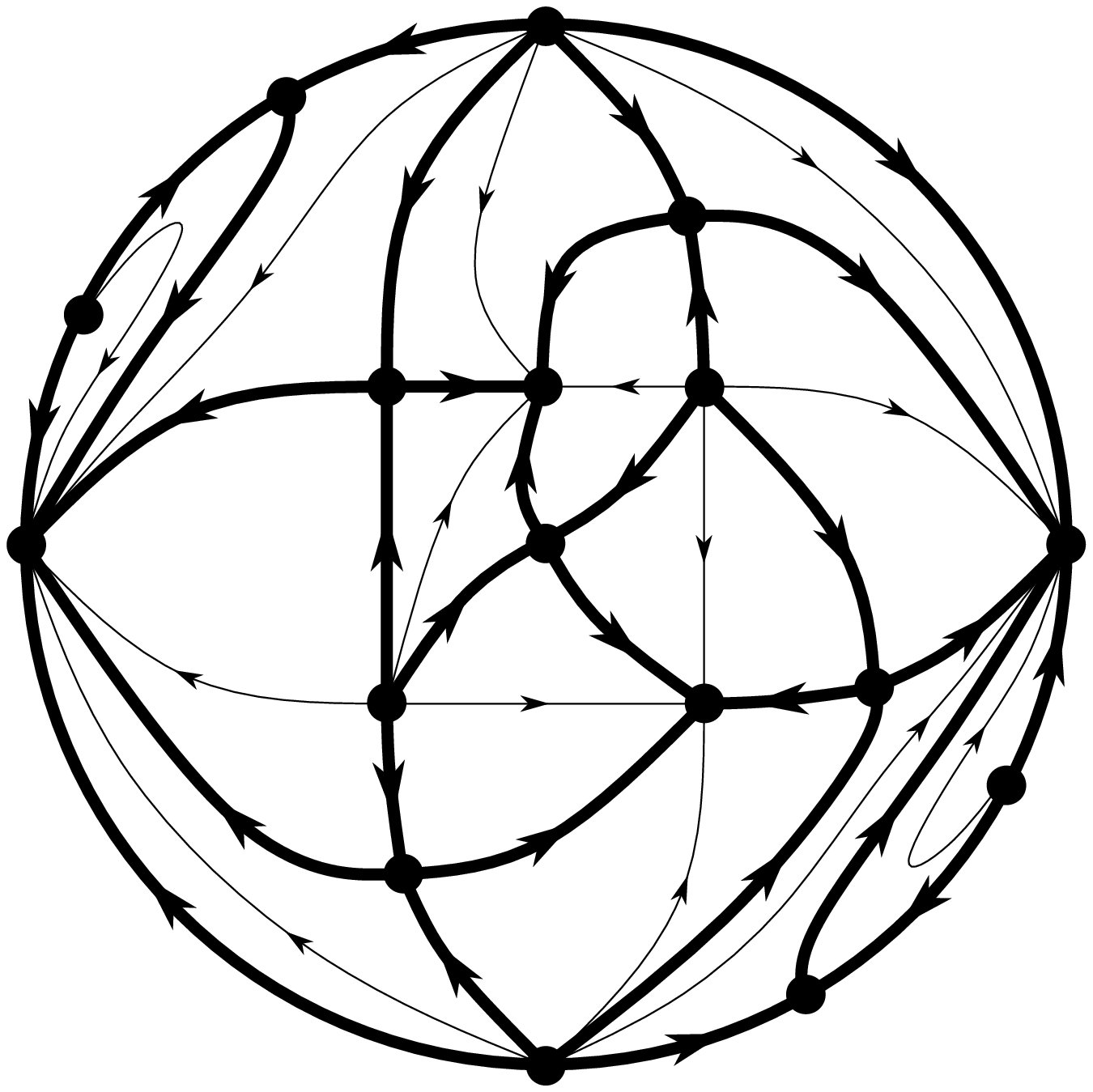} 
				\end{overpic}
				
				Case~$2.3a2$.
			\end{center}
		\end{minipage}
		\begin{minipage}{3.1cm}
			\begin{center}
				\begin{overpic}[height=3cm]{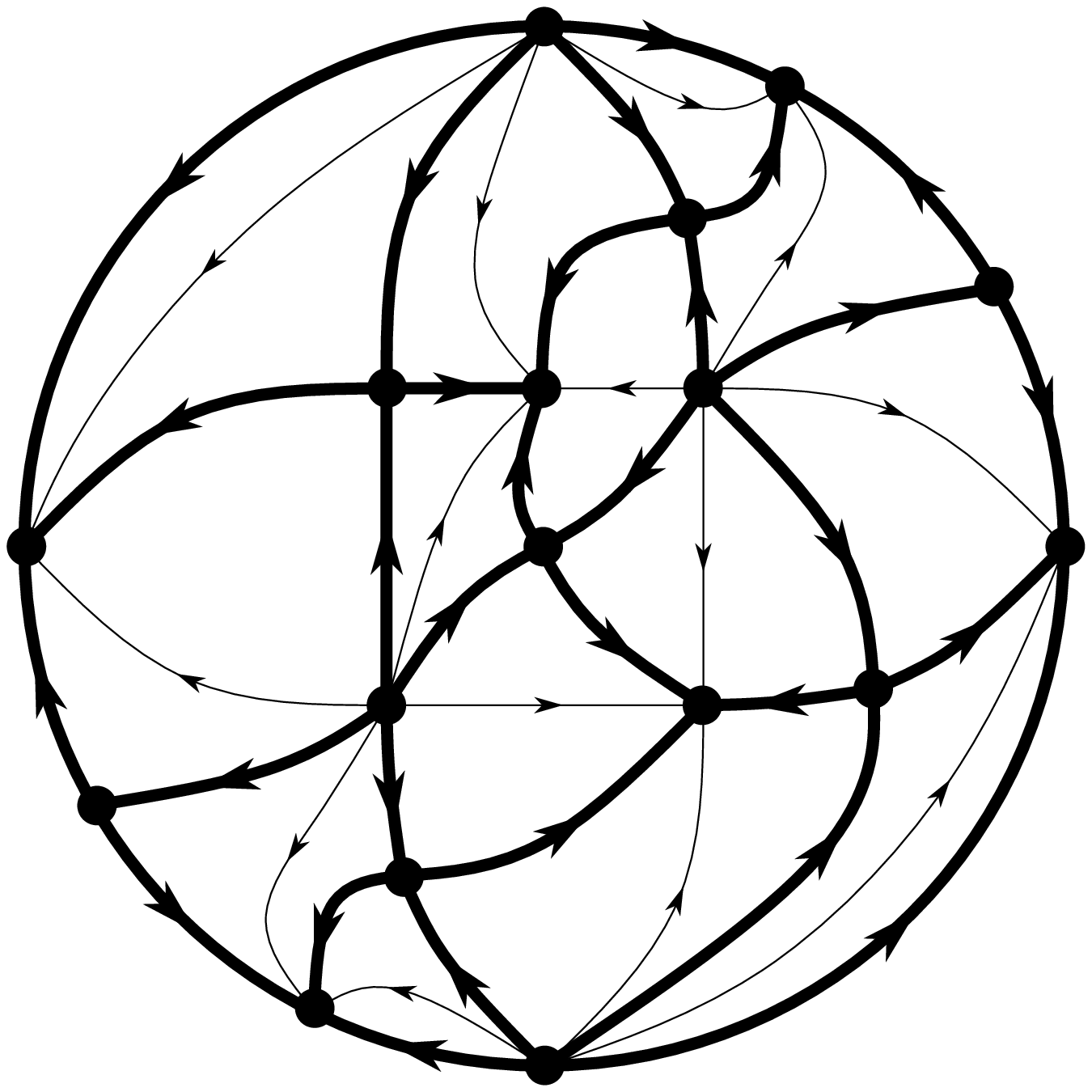} 
				\end{overpic}
				
				Case~$2.3b1$.
			\end{center}
		\end{minipage}	
		\begin{minipage}{3.1cm}
			\begin{center}
				\begin{overpic}[height=3cm]{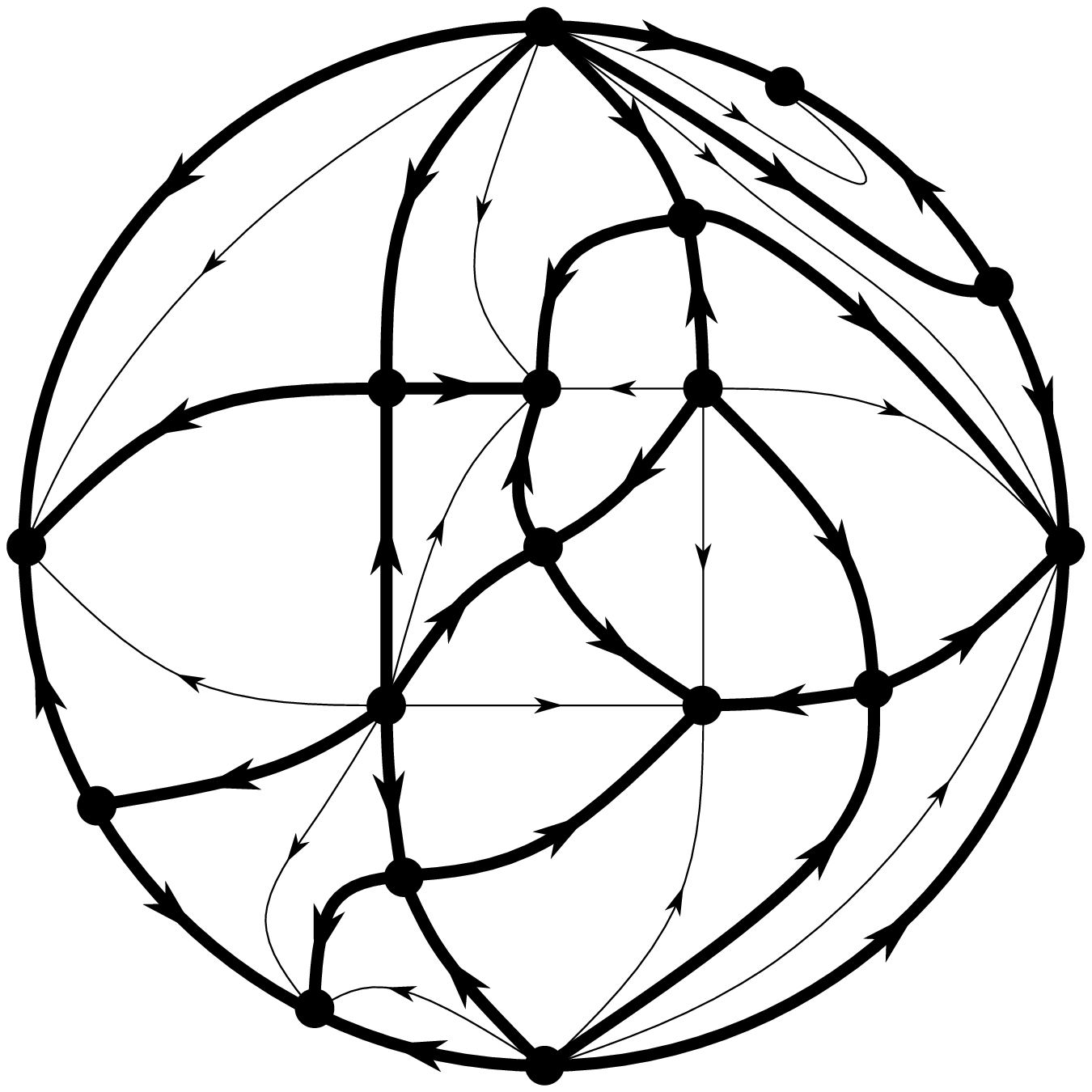} 
				\end{overpic}
				
				Case~$2.3b2$.
			\end{center}
		\end{minipage}
	\end{center}
	$\;$
	\begin{center}
		\begin{minipage}{3.1cm}
			\begin{center}
				\begin{overpic}[height=3cm]{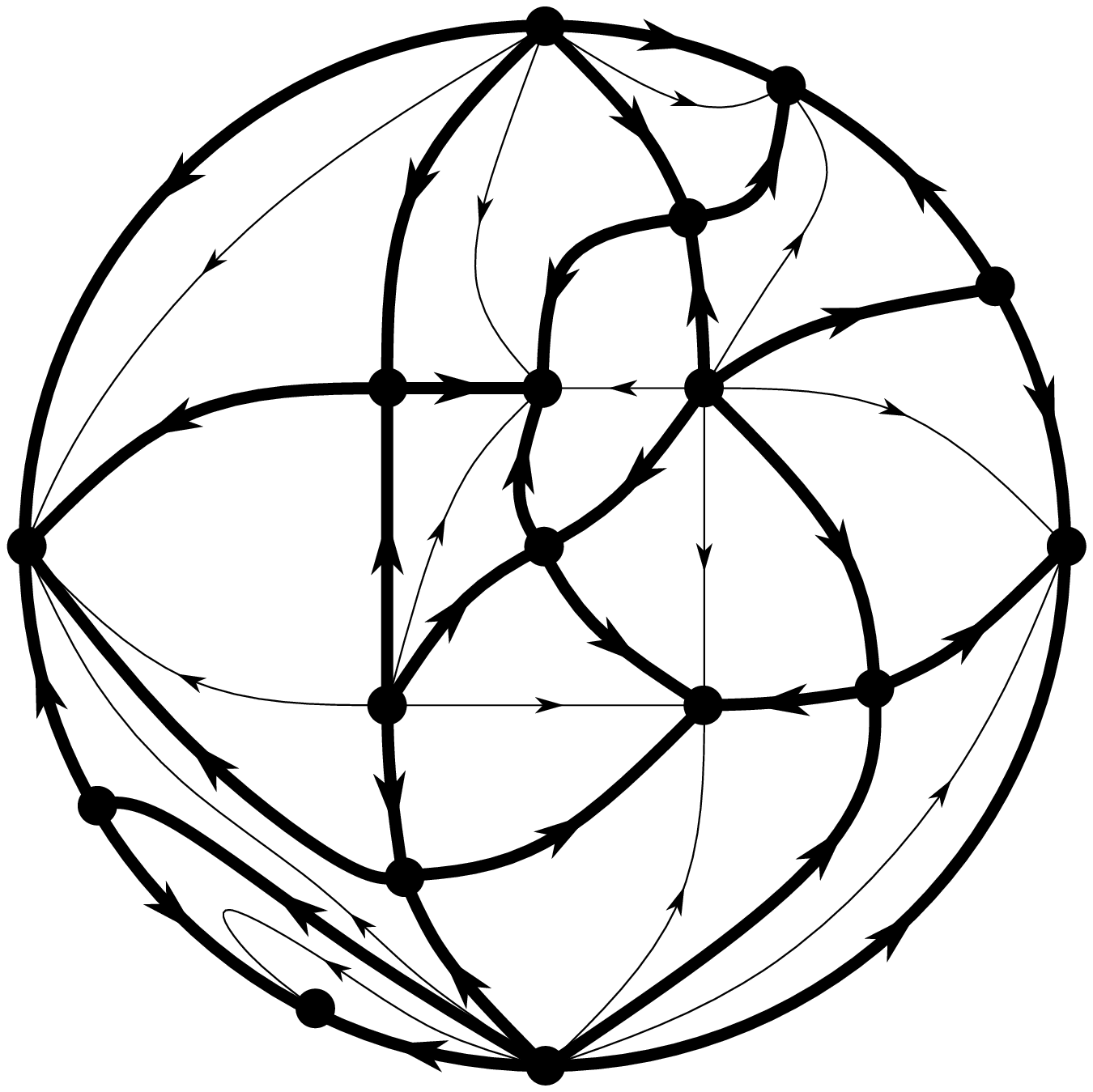} 
				\end{overpic}
				
				Case~$2.3b3$.
			\end{center}
		\end{minipage}
		\begin{minipage}{3.1cm}
			\begin{center}
				\begin{overpic}[height=3cm]{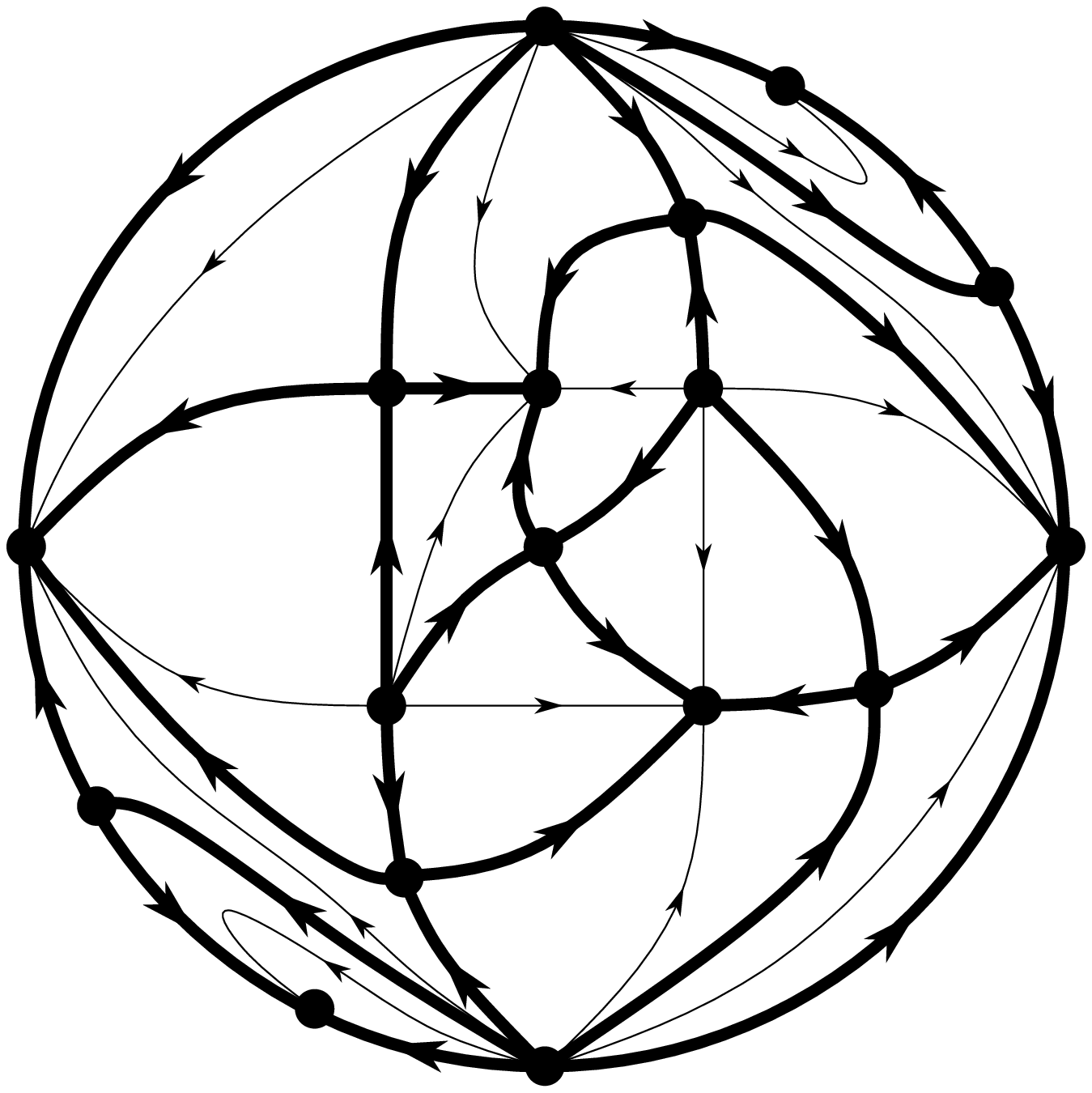} 
				\end{overpic}
				
				Case~$2.3b4$.
			\end{center}
		\end{minipage}
		\begin{minipage}{3.1cm}
			\begin{center}
				\begin{overpic}[height=3cm]{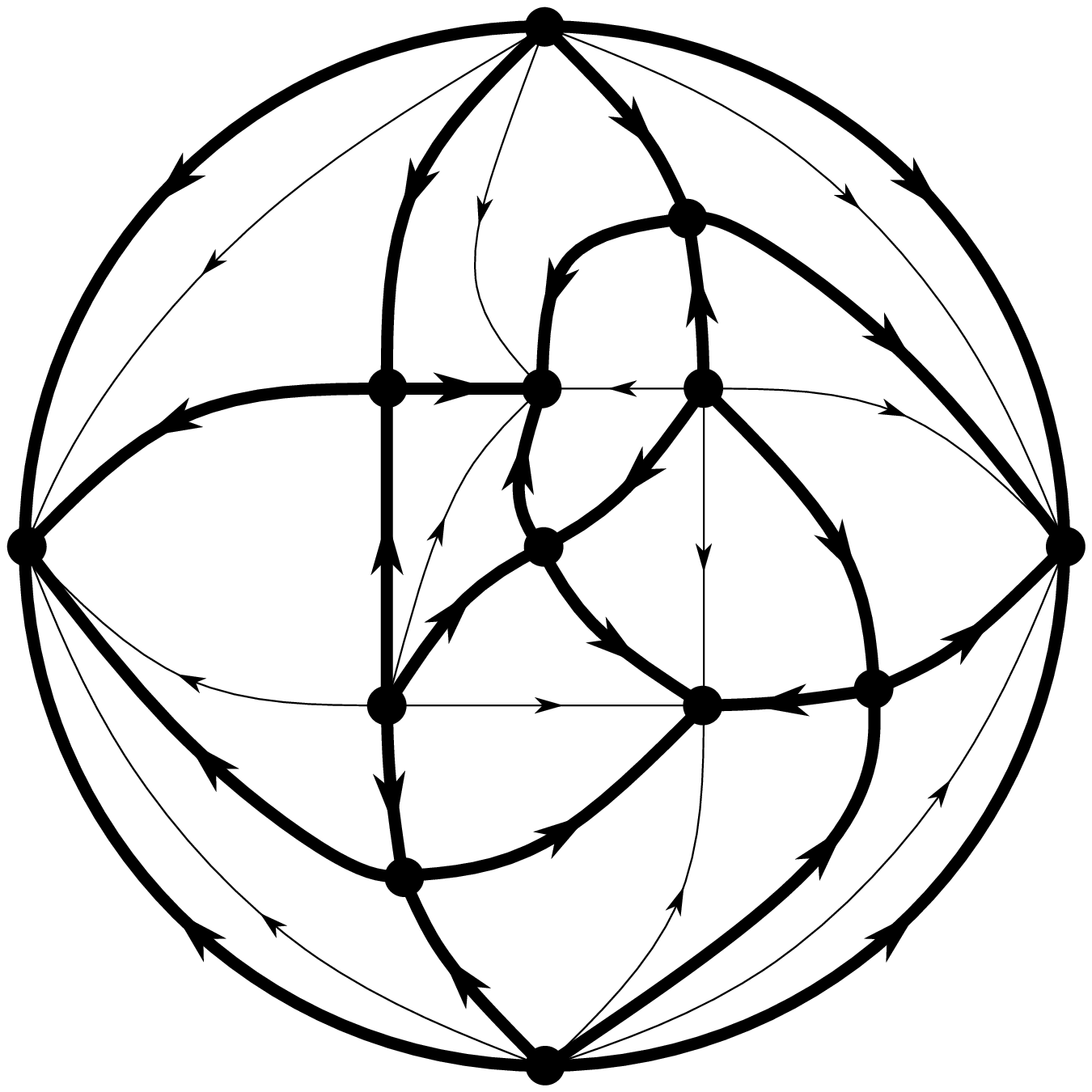} 
				\end{overpic}
				
				Case~$2.3c$.
			\end{center}
		\end{minipage}
		\begin{minipage}{3.1cm}
			\begin{center}
				\begin{overpic}[height=3cm]{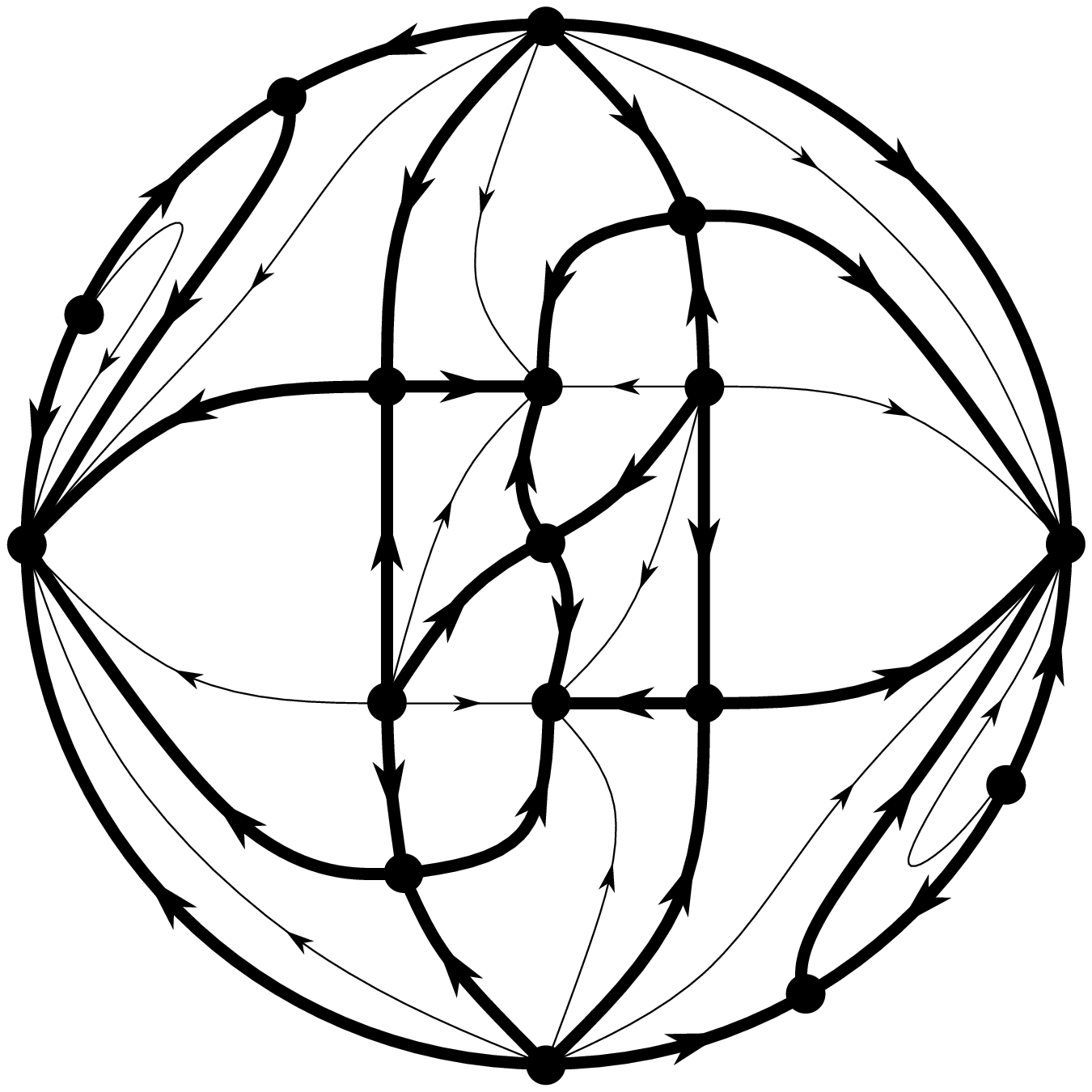} 
				\end{overpic}
				
				Case~$2.4a$.
			\end{center}
		\end{minipage}
		\begin{minipage}{3.1cm}
			\begin{center}
				\begin{overpic}[height=3cm]{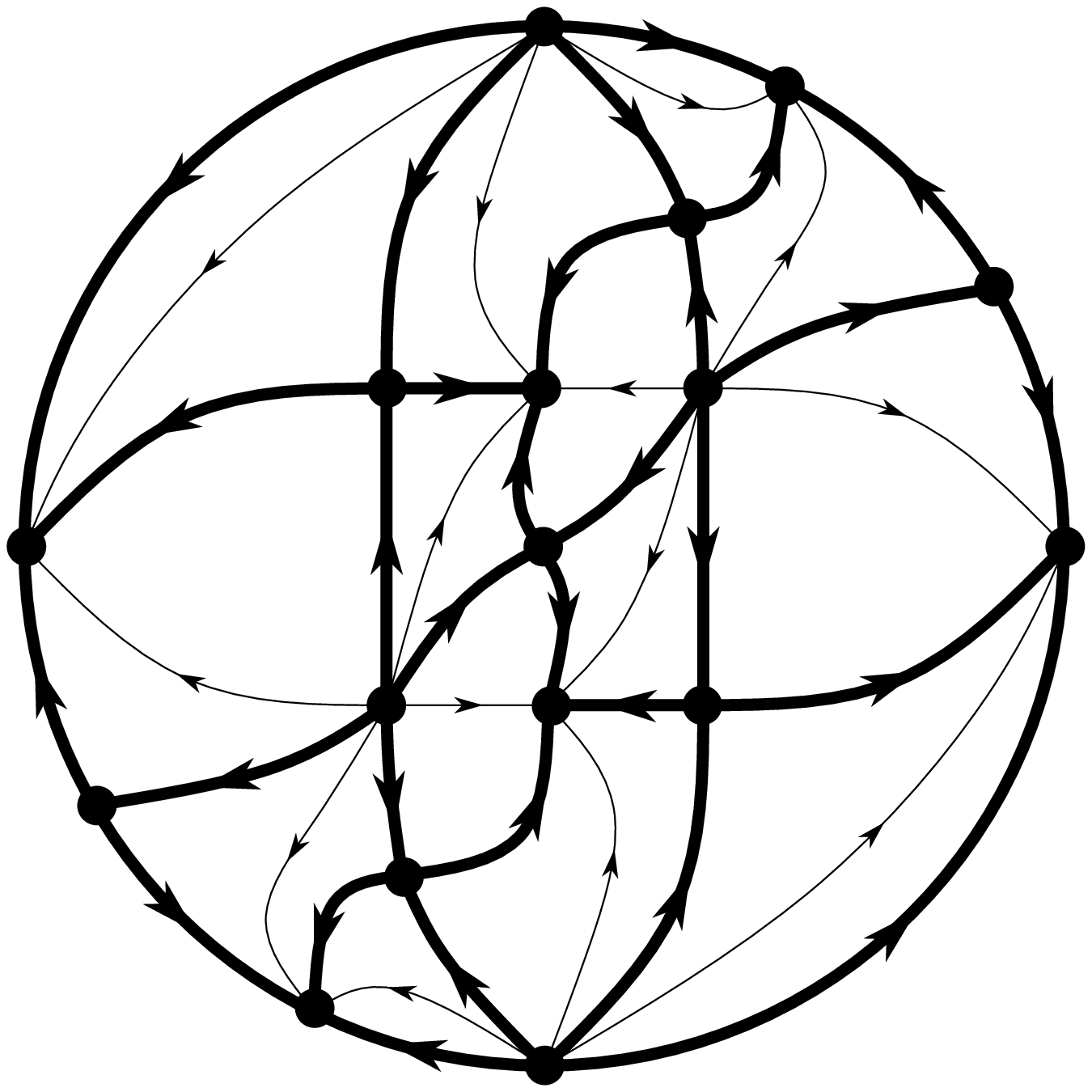} 
				\end{overpic}
				
				Case~$2.4b1$.
			\end{center}
		\end{minipage}
	\end{center}
	$\;$
	\begin{center}
		\begin{minipage}{3.1cm}
			\begin{center}
				\begin{overpic}[height=3cm]{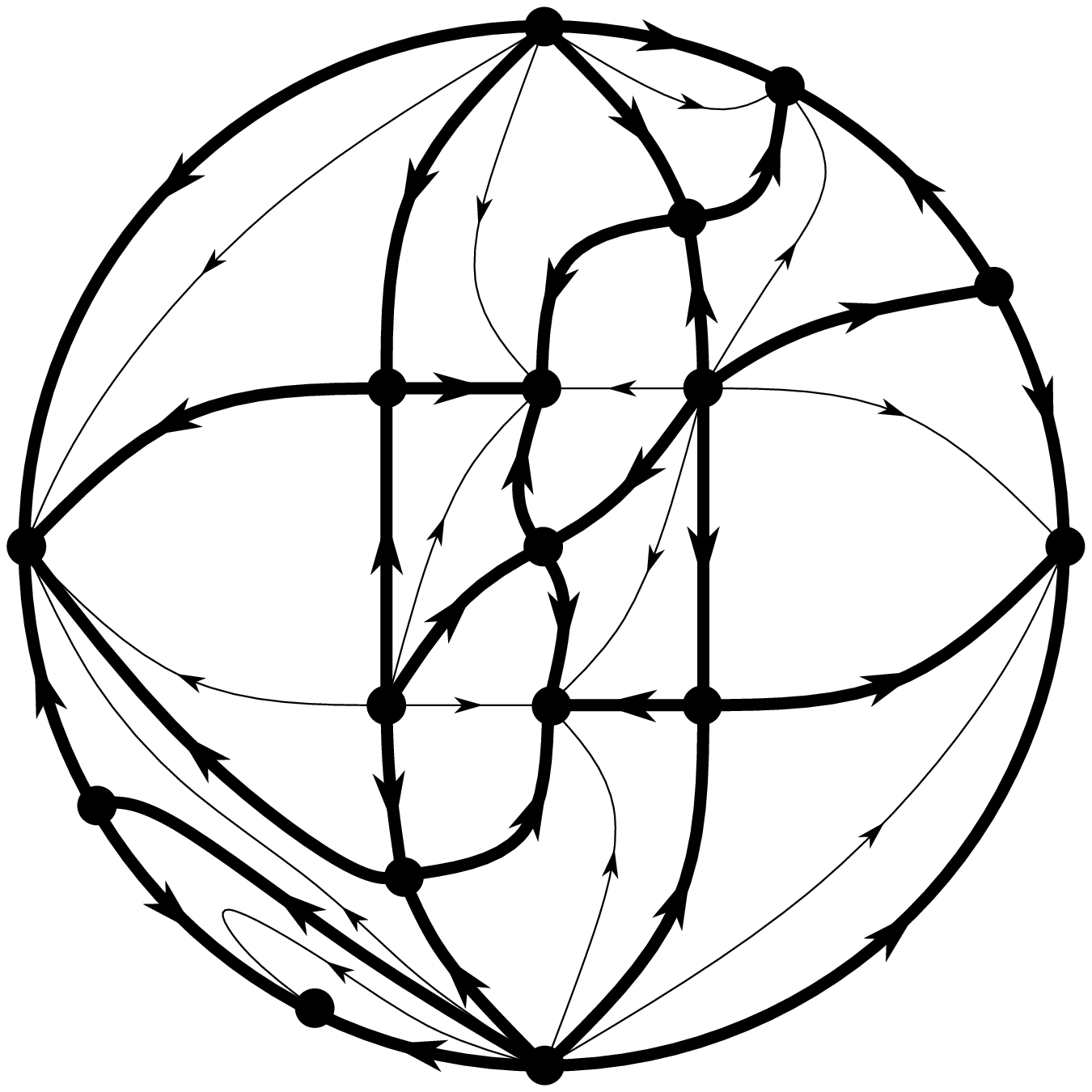} 
				\end{overpic}
				
				Case~$2.4b2$.
			\end{center}
		\end{minipage}	
		\begin{minipage}{3.1cm}
			\begin{center}
				\begin{overpic}[height=3cm]{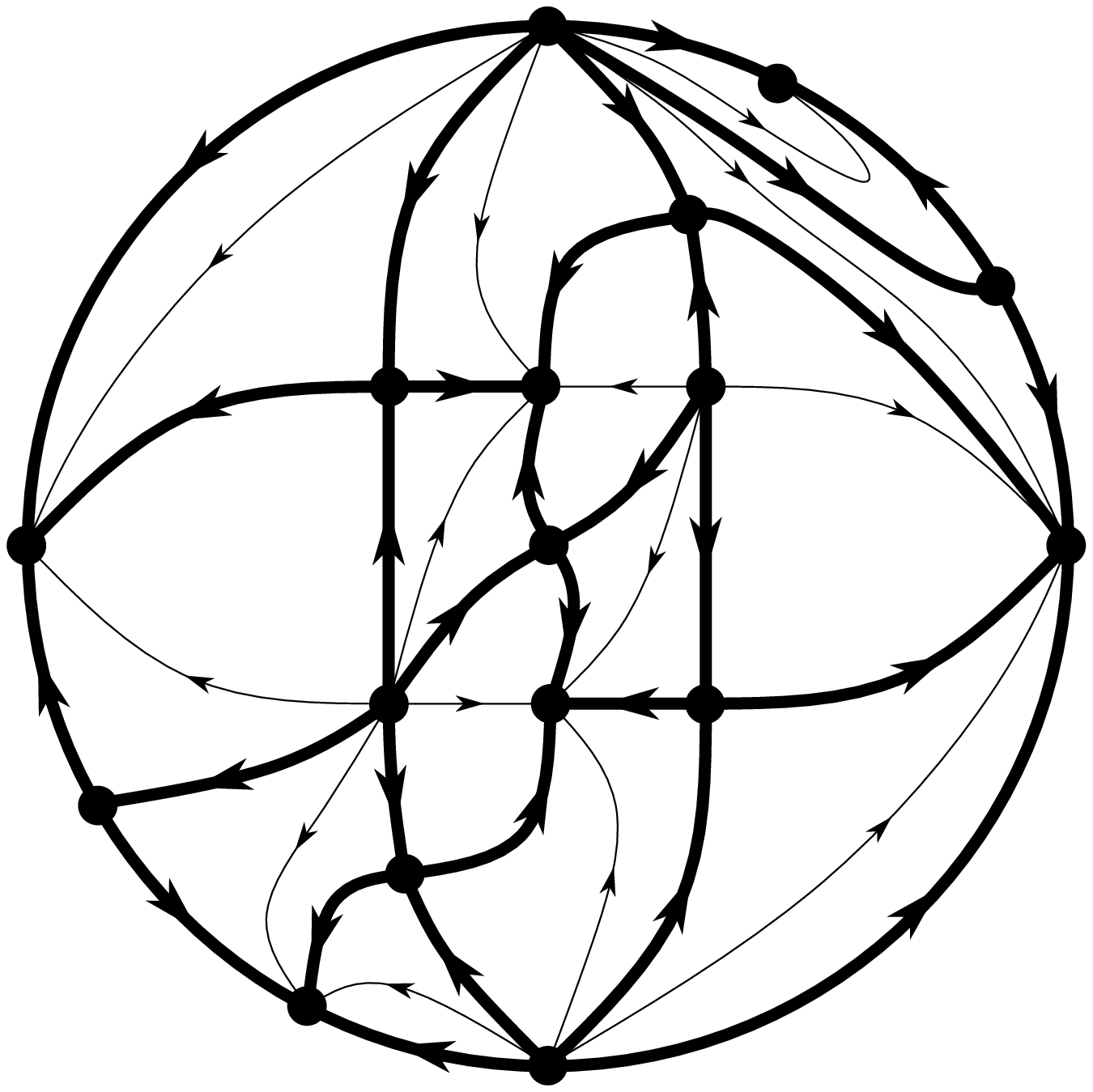} 
				\end{overpic}
				
				Case~$2.4b3$.
			\end{center}
		\end{minipage}
		\begin{minipage}{3.1cm}
			\begin{center}
				\begin{overpic}[height=3cm]{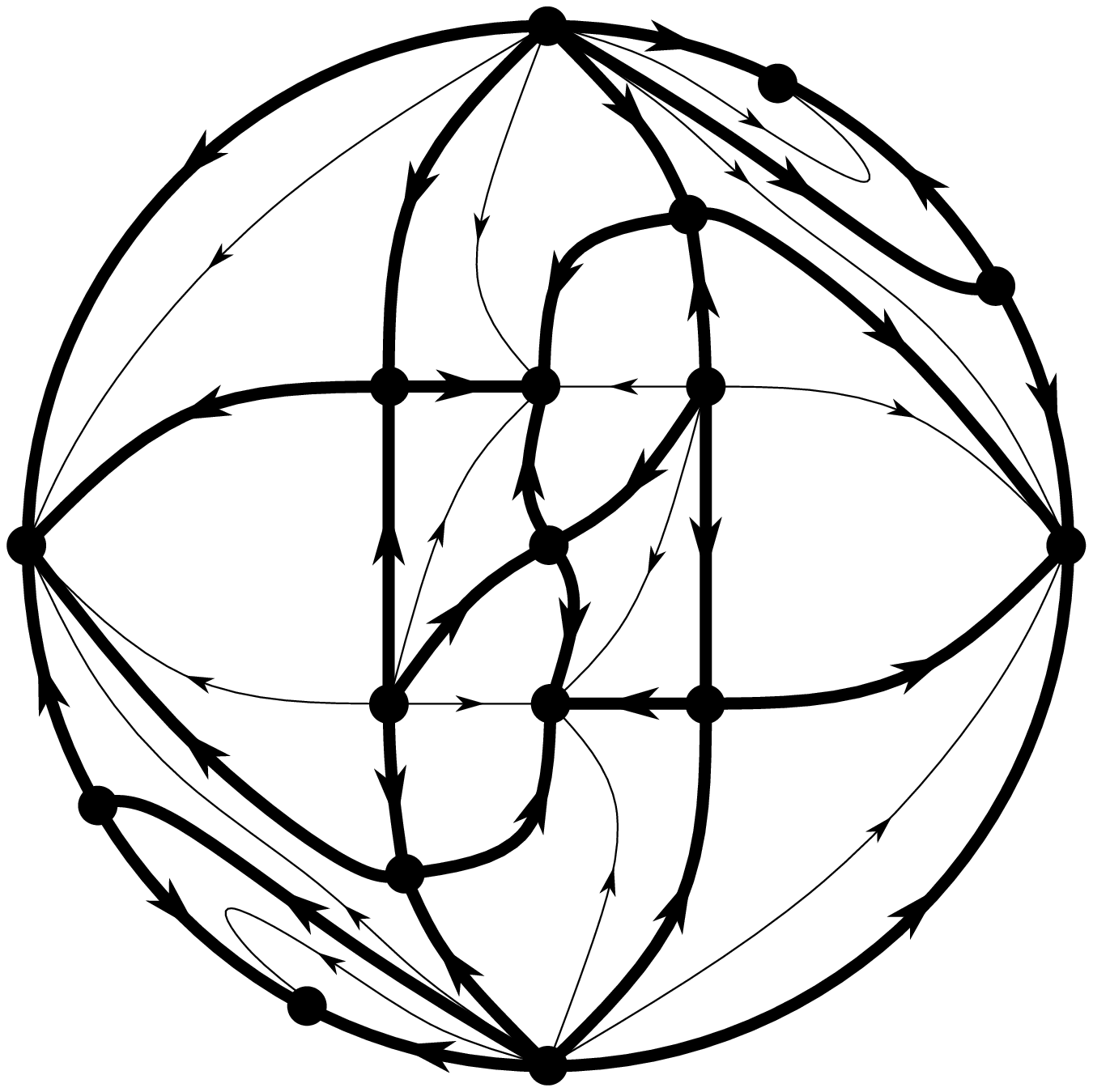} 
				\end{overpic}
				
				Case~$2.4b4$.
			\end{center}
		\end{minipage}
		\begin{minipage}{3.1cm}
			\begin{center}
				\begin{overpic}[height=3cm]{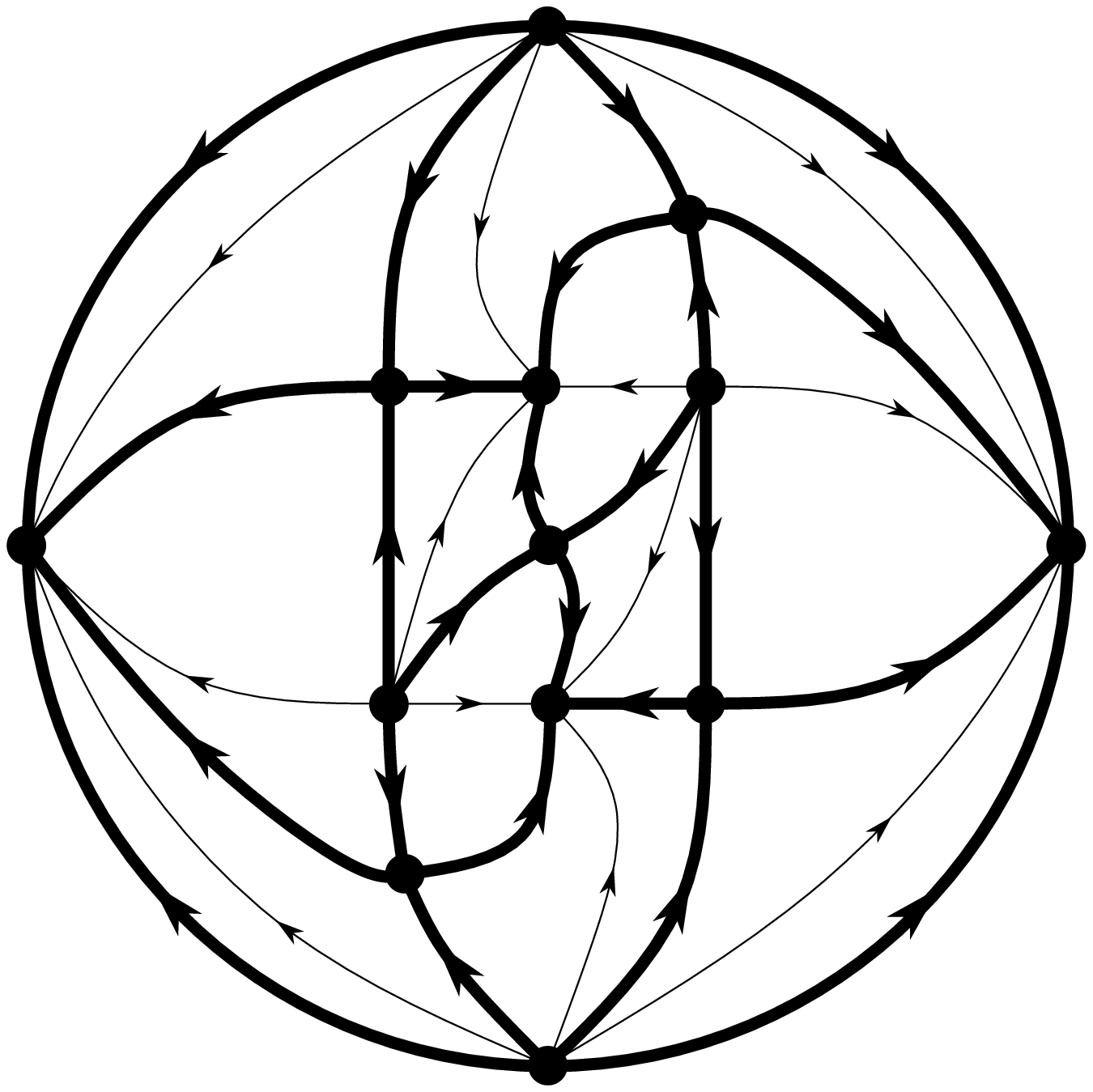} 
				\end{overpic}
				
				Case~$2.4c$.
			\end{center}
		\end{minipage}
	\end{center}
	\caption{Phase portraits from Cases~$2.1$ to $2.4$.}\label{Case1.2a}
\end{figure}

\begin{figure}[h]
	\begin{center}
		\begin{minipage}{3.1cm}
			\begin{center}
				\begin{overpic}[height=3cm]{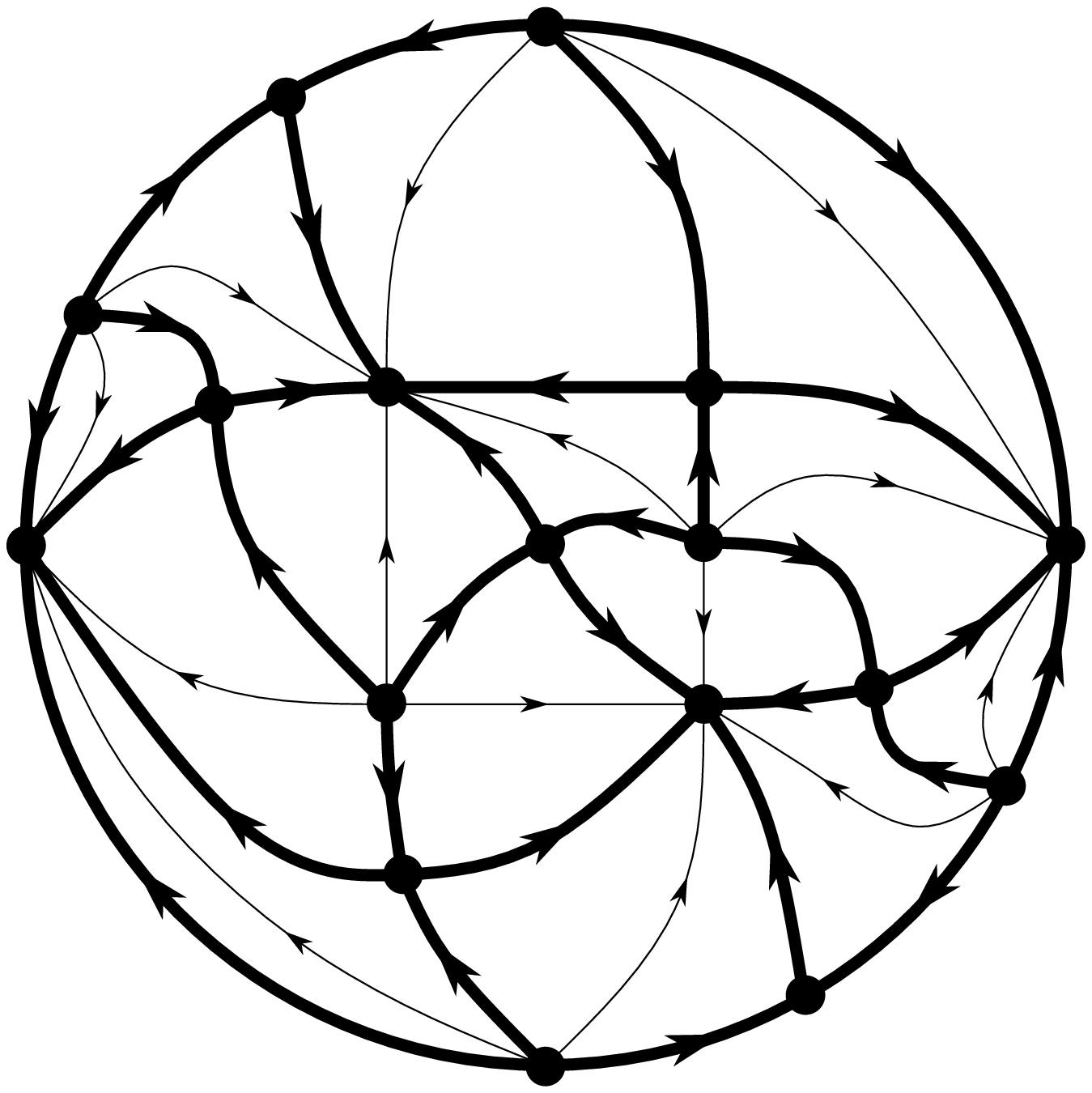} 
				\end{overpic}
				
				Case~$2.5a1$.
			\end{center}
		\end{minipage}
		\begin{minipage}{3.1cm}
			\begin{center}
				\begin{overpic}[height=3cm]{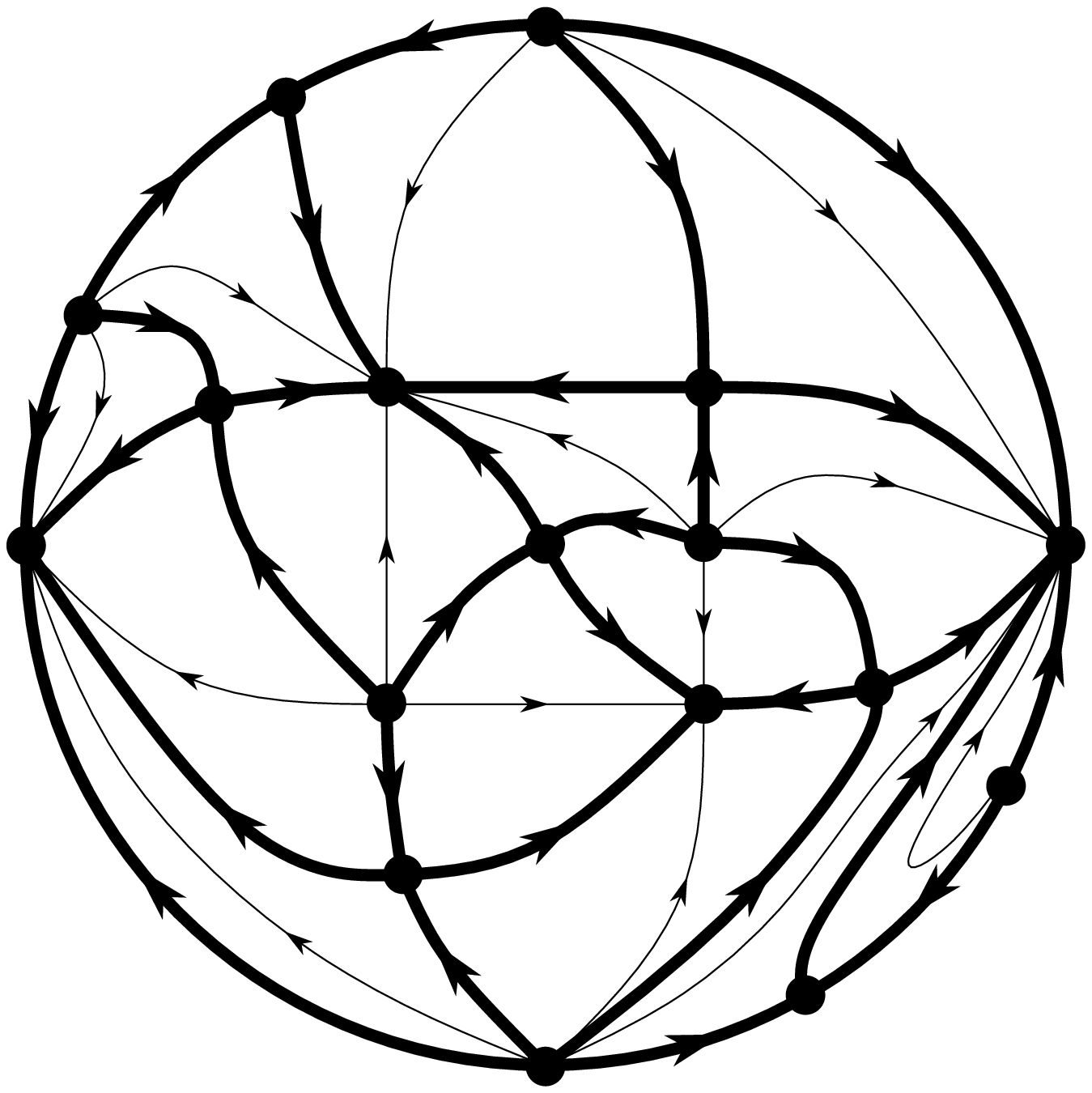} 
				\end{overpic}
				
				Case~$2.5a2$.
			\end{center}
		\end{minipage}
		\begin{minipage}{3.1cm}
			\begin{center}
				\begin{overpic}[height=3cm]{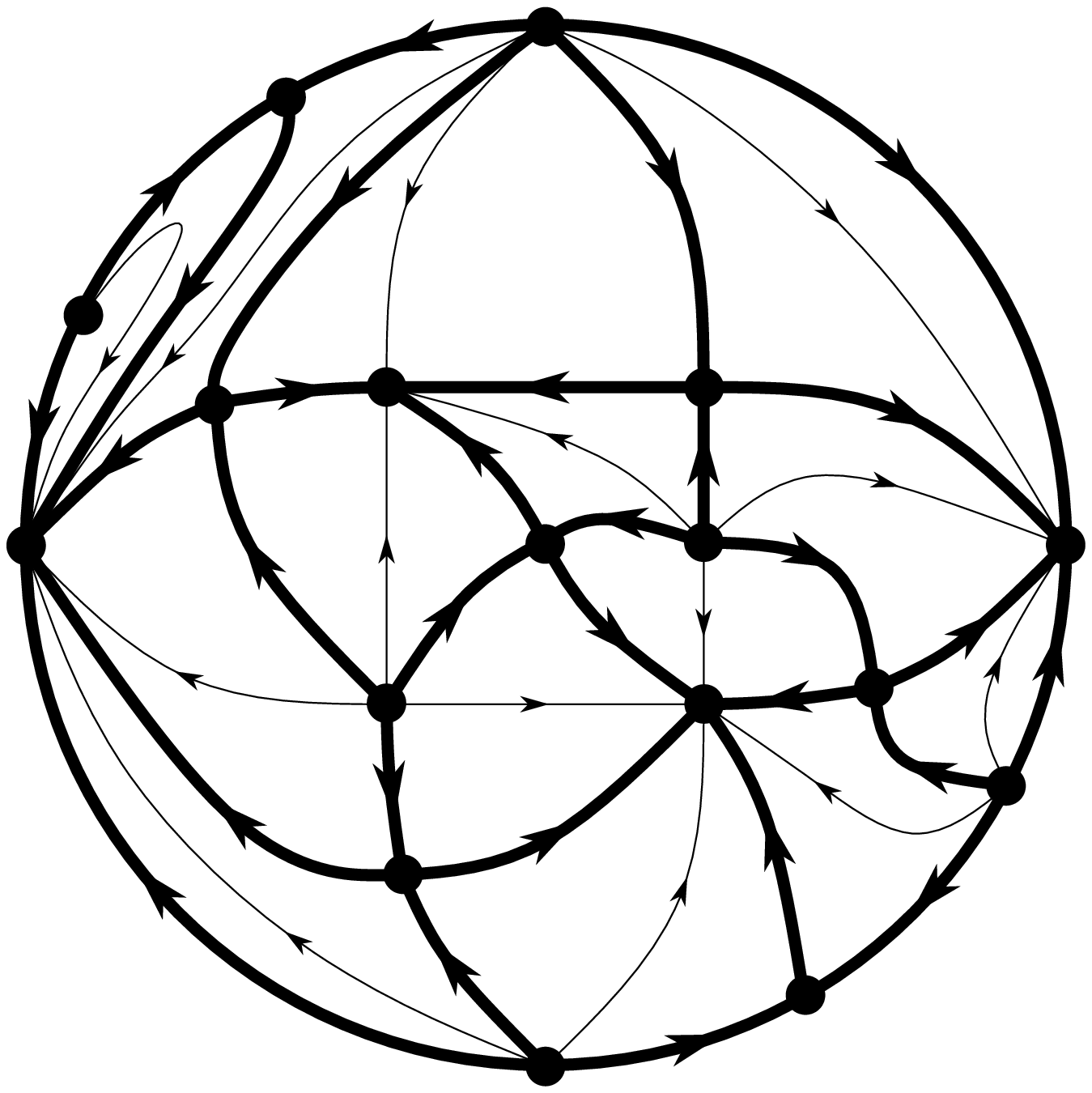} 
				\end{overpic}
				
				Case~$2.5a3$.
			\end{center}
		\end{minipage}	
		\begin{minipage}{3.1cm}
			\begin{center}
				\begin{overpic}[height=3cm]{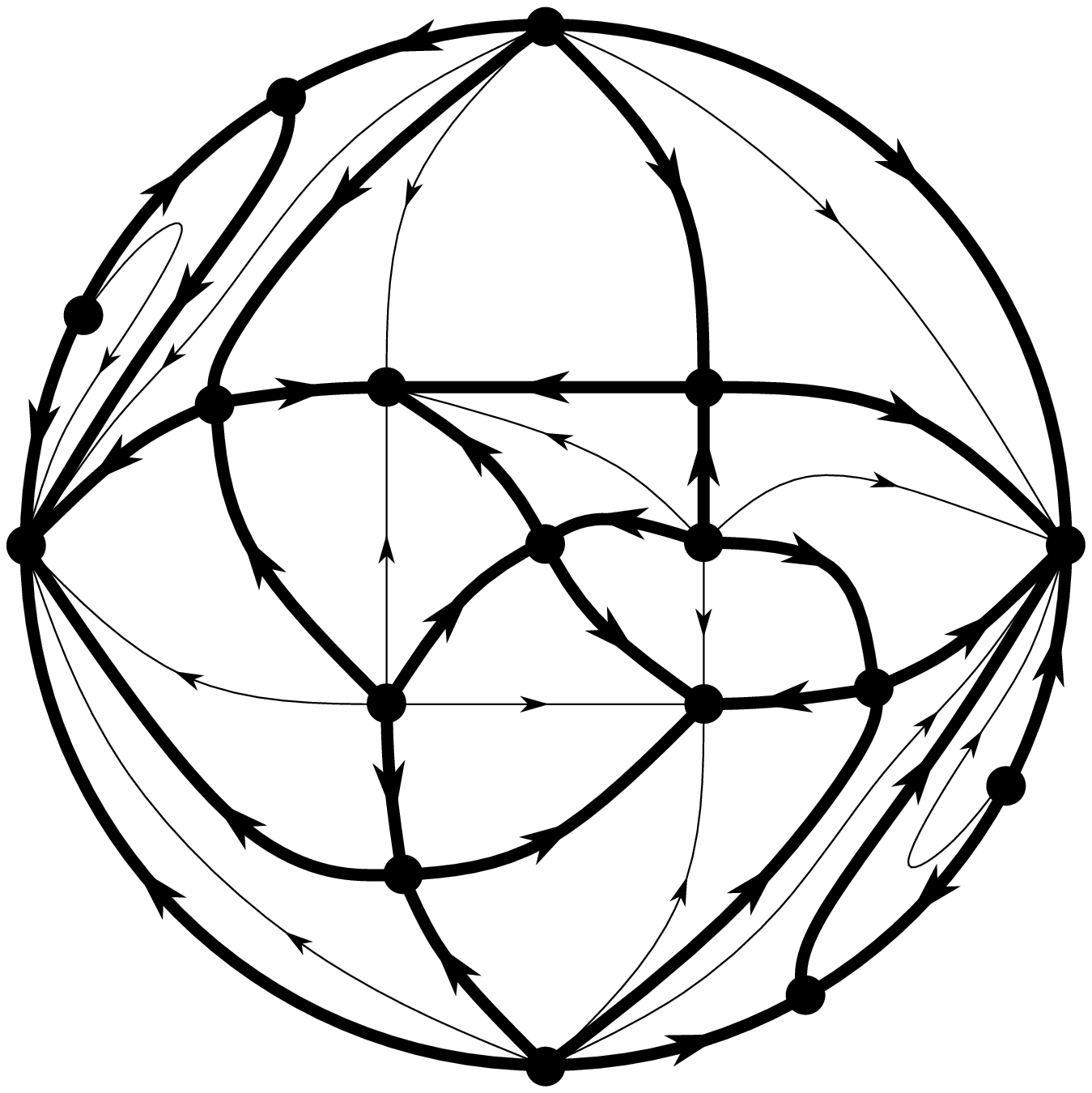} 
				\end{overpic}
				
				Case~$2.5a4$.
			\end{center}
		\end{minipage}
		\begin{minipage}{3.1cm}
			\begin{center}
				\begin{overpic}[height=3cm]{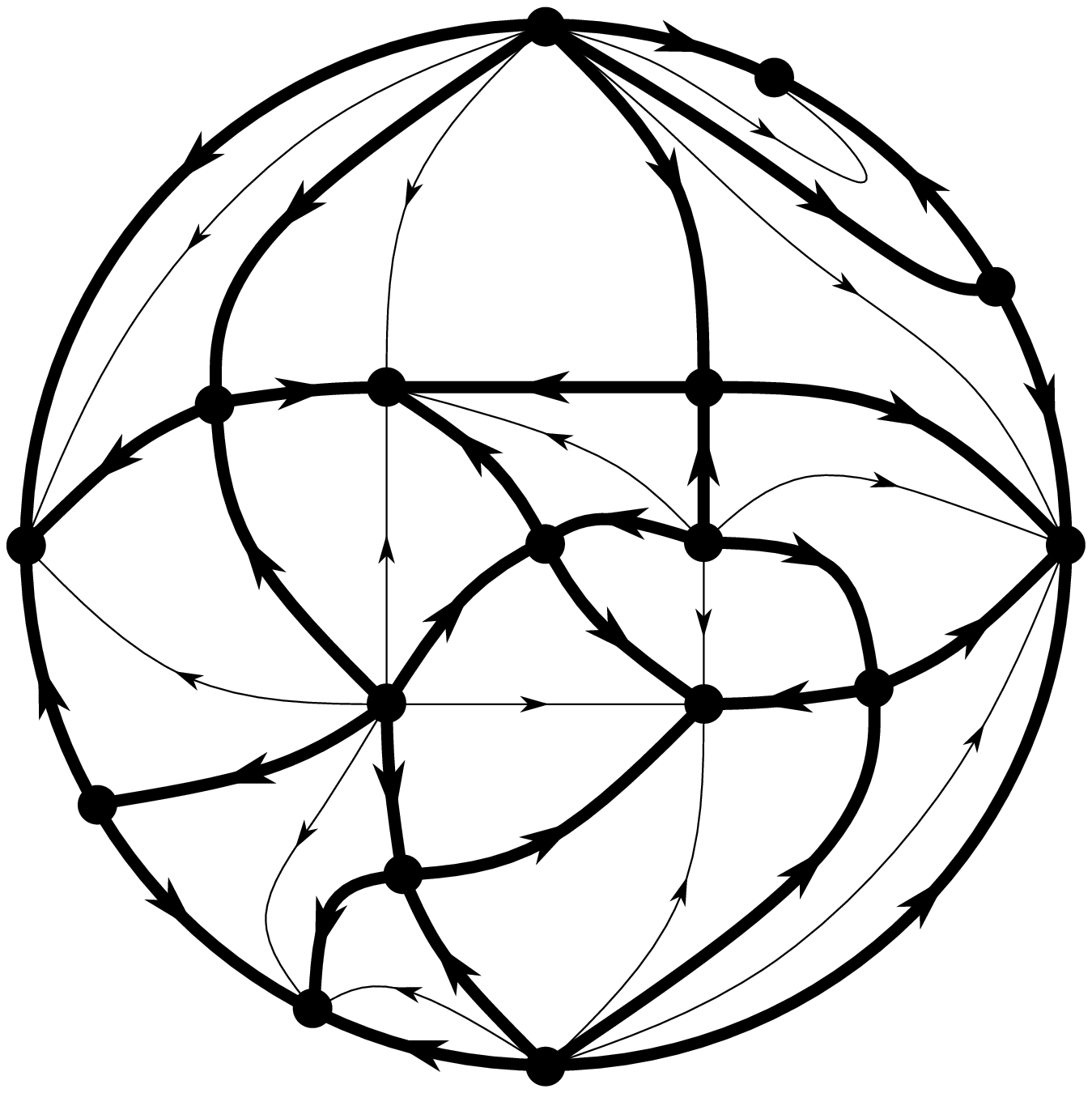} 
				\end{overpic}
				
				Case~$2.5b1$.
			\end{center}
		\end{minipage}
	\end{center}
	$\;$
	\begin{center}
		\begin{minipage}{3.1cm}
			\begin{center}
				\begin{overpic}[height=3cm]{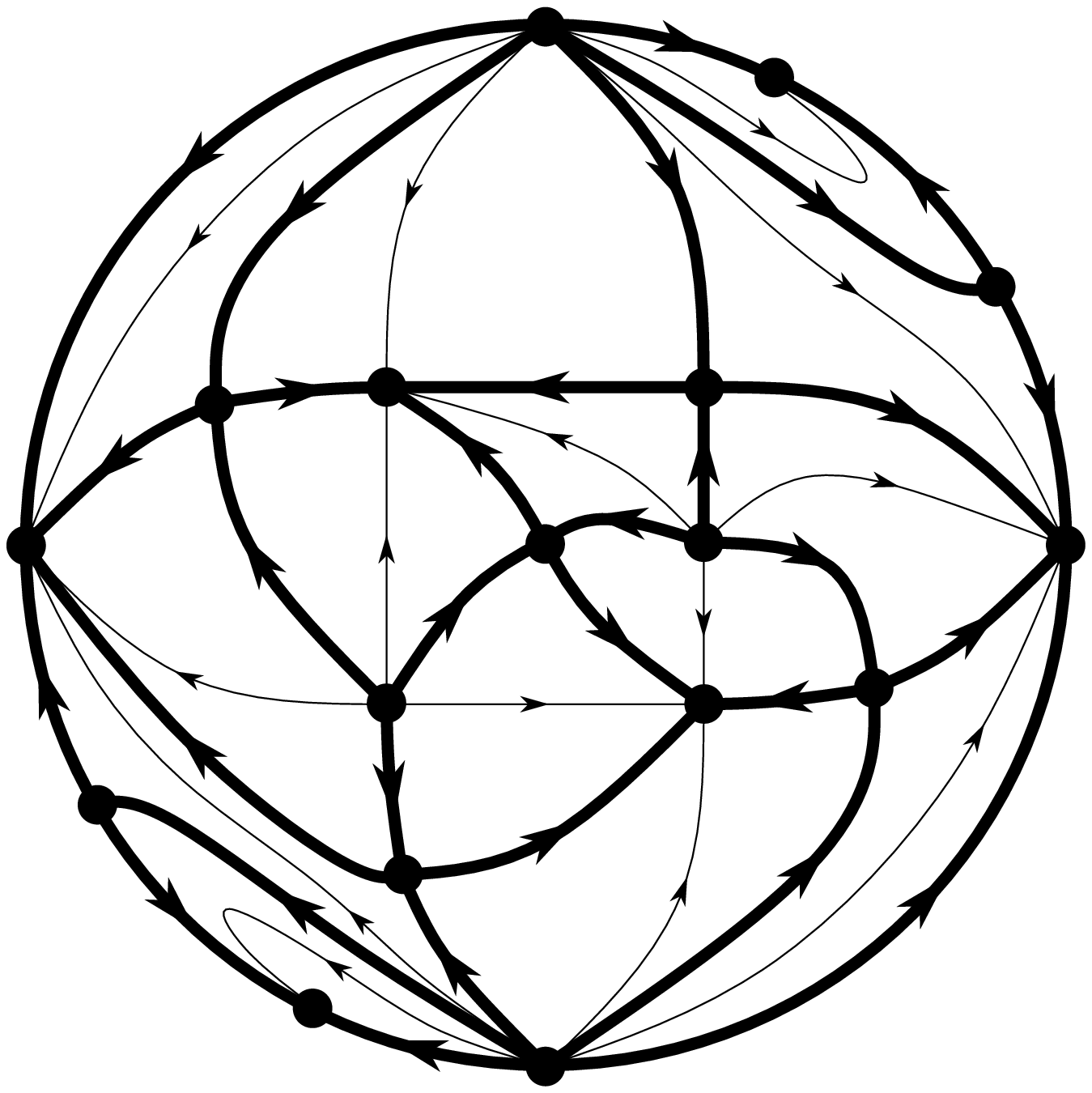} 
				\end{overpic}
				
				Case~$2.5b2$.
			\end{center}
		\end{minipage}
		\begin{minipage}{3.1cm}
			\begin{center}
				\begin{overpic}[height=3cm]{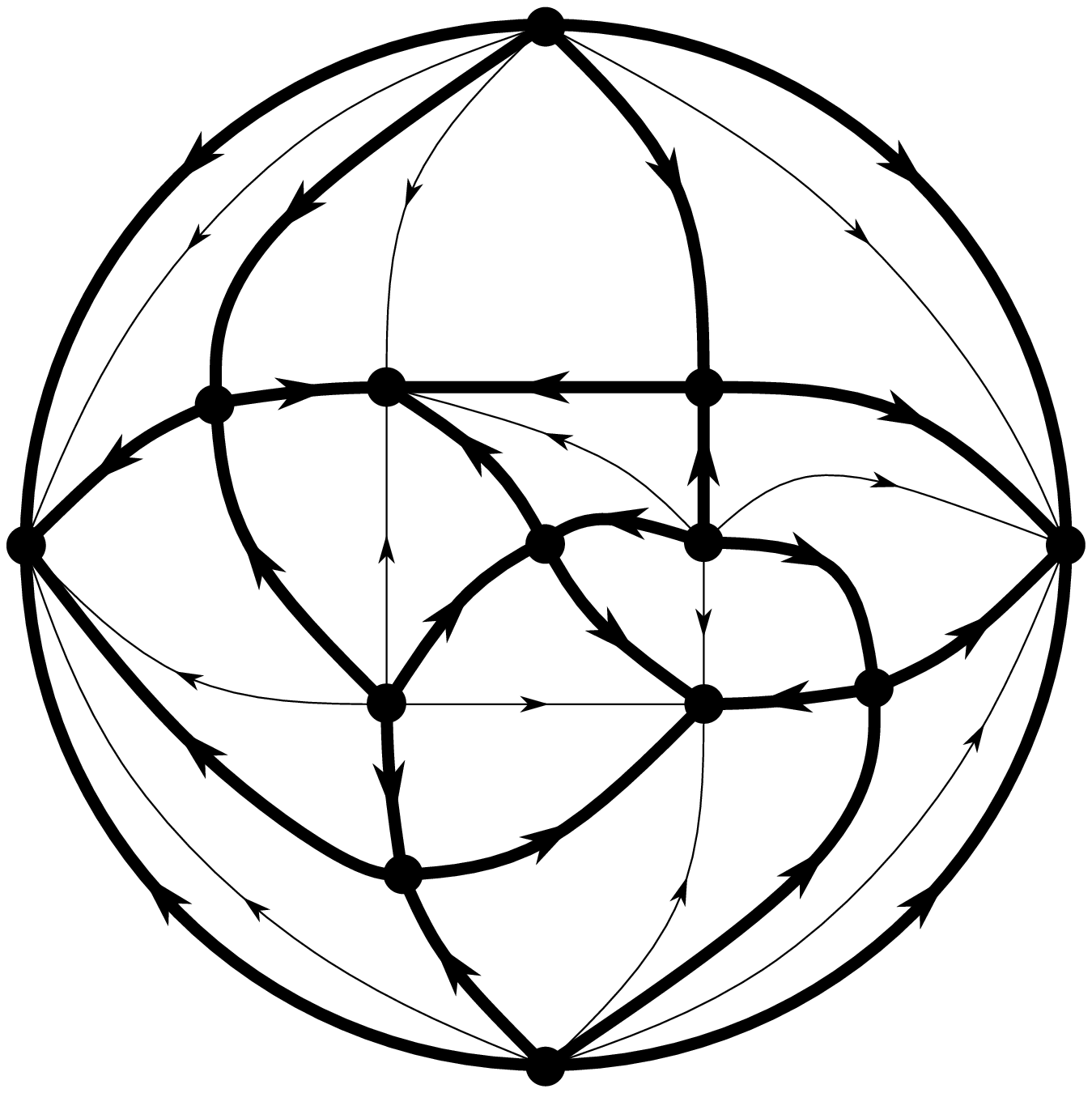} 
				\end{overpic}
				
				Case~$2.5c$.
			\end{center}
		\end{minipage}
		\begin{minipage}{3.1cm}
			\begin{center}
				\begin{overpic}[height=3cm]{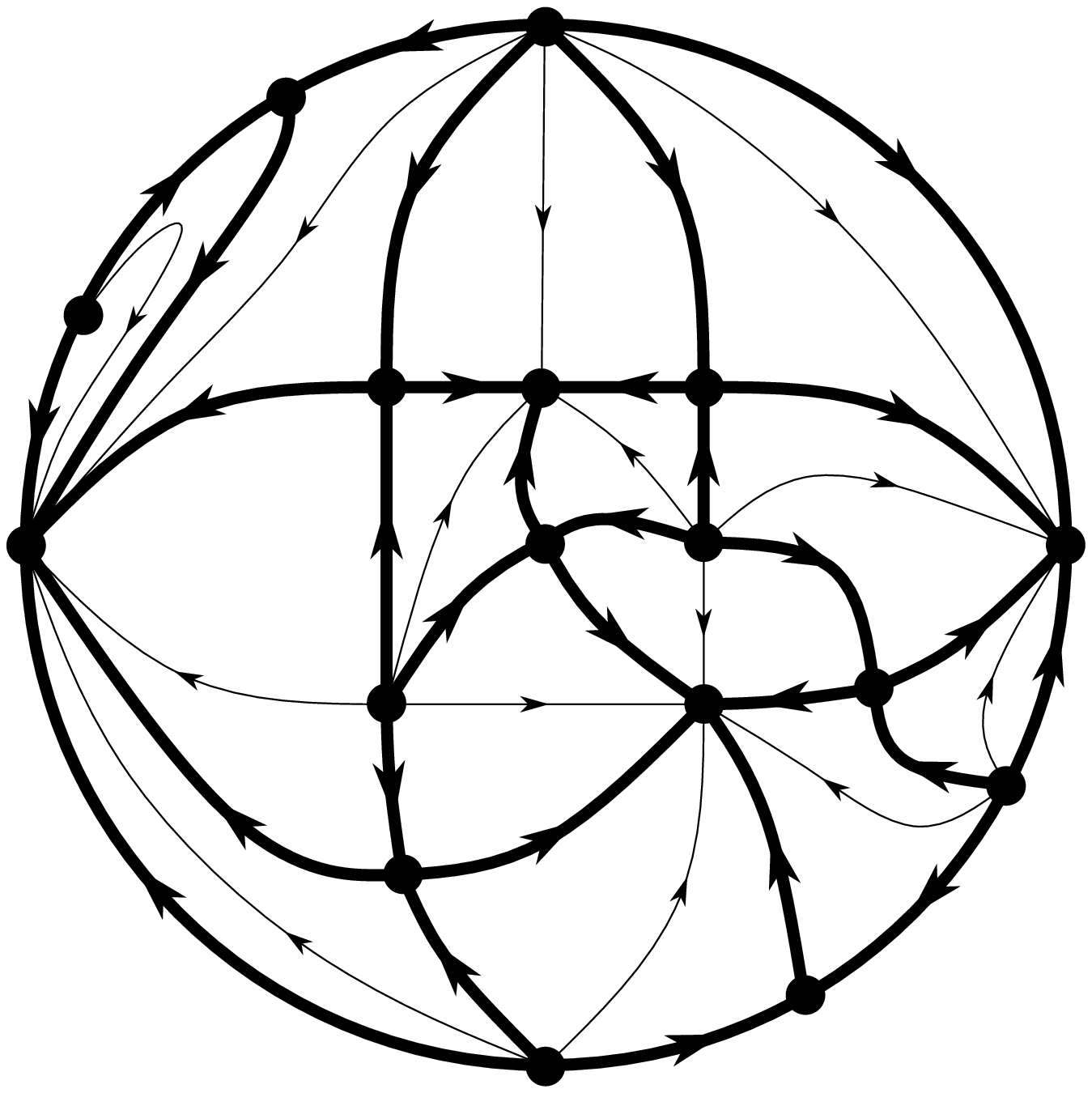} 
				\end{overpic}
				
				Case~$2.6a1$.
			\end{center}
		\end{minipage}
		\begin{minipage}{3.1cm}
			\begin{center}
				\begin{overpic}[height=3cm]{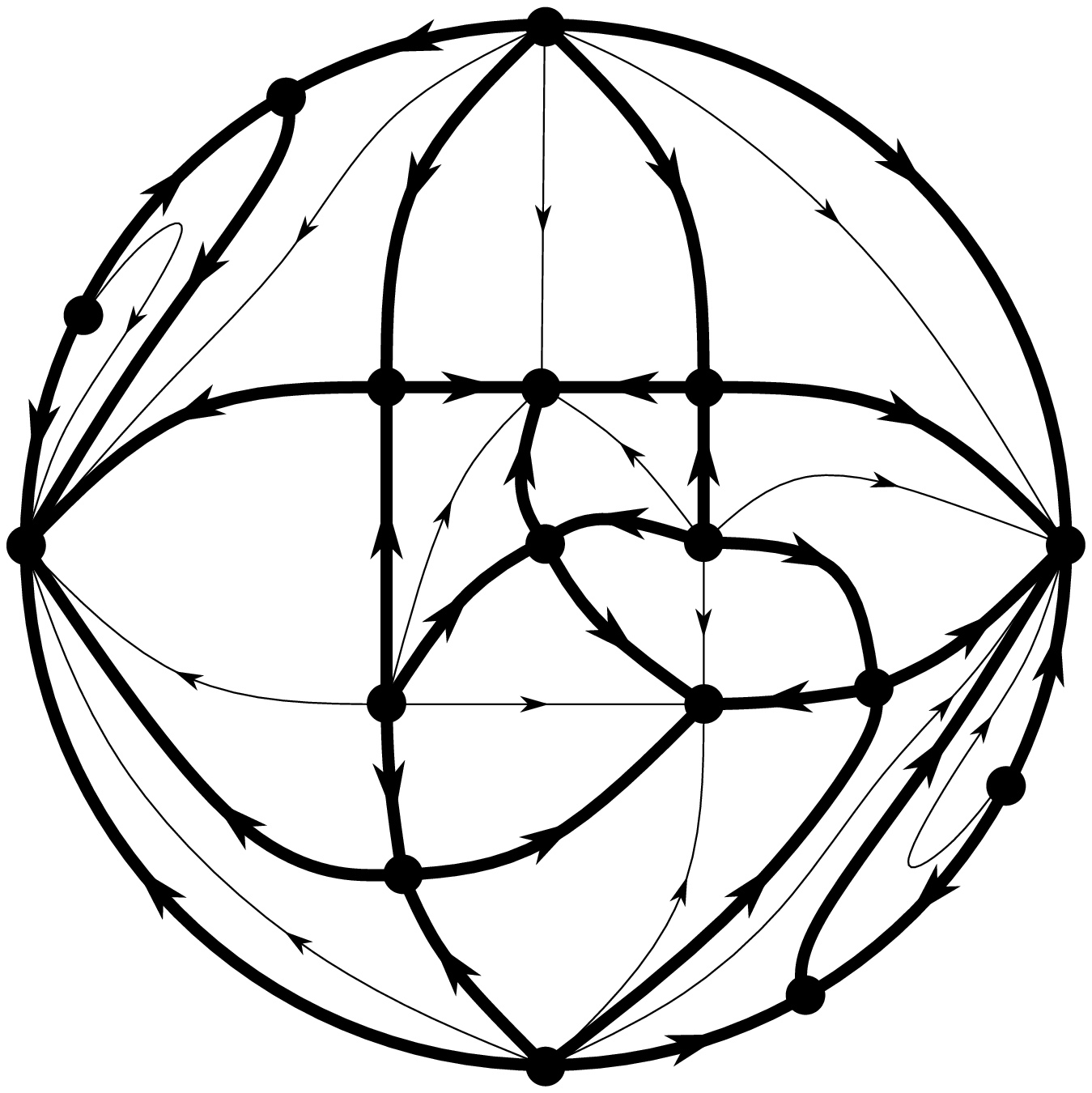} 
				\end{overpic}
				
				Case~$2.6a2$.
			\end{center}
		\end{minipage}
		\begin{minipage}{3.1cm}
			\begin{center}
				\begin{overpic}[height=3cm]{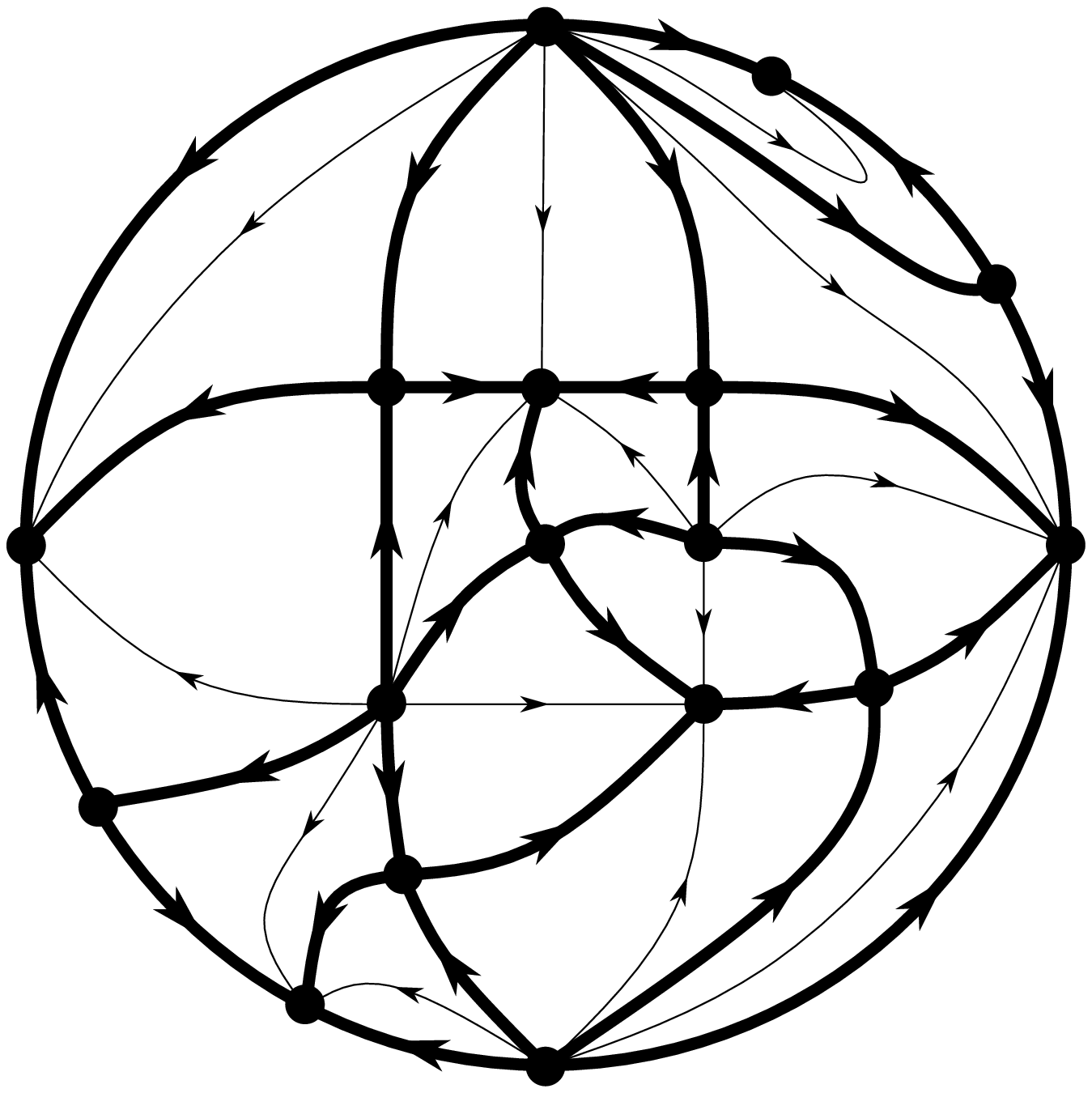} 
				\end{overpic}
				
				Case~$2.6b1$.
			\end{center}
		\end{minipage}	
	\end{center}
	$\;$
	\begin{center}
		\begin{minipage}{3.1cm}
			\begin{center}
				\begin{overpic}[height=3cm]{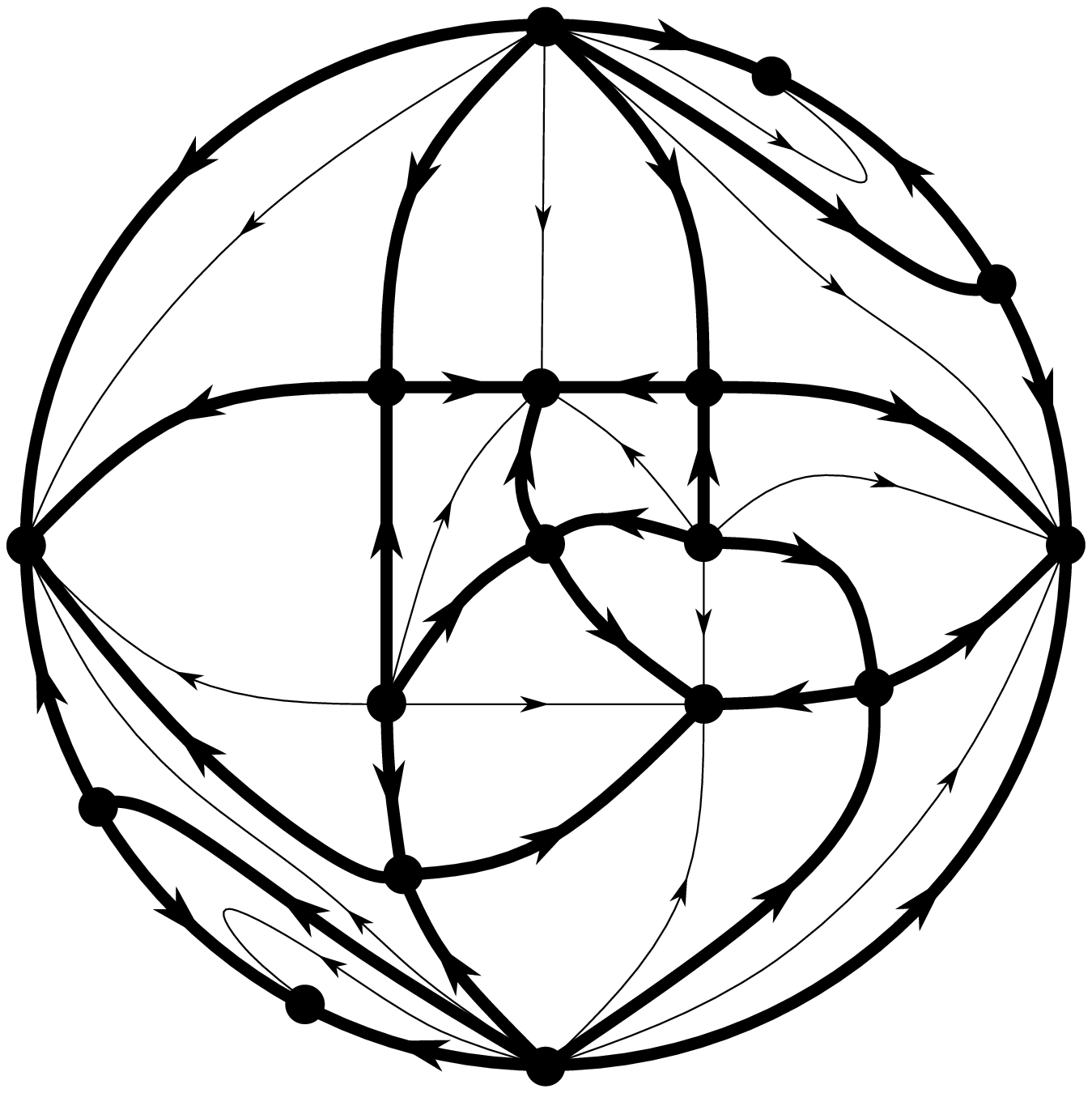} 
				\end{overpic}
				
				Case~$2.6b2$.
			\end{center}
		\end{minipage}
		\begin{minipage}{3.1cm}
			\begin{center}
				\begin{overpic}[height=3cm]{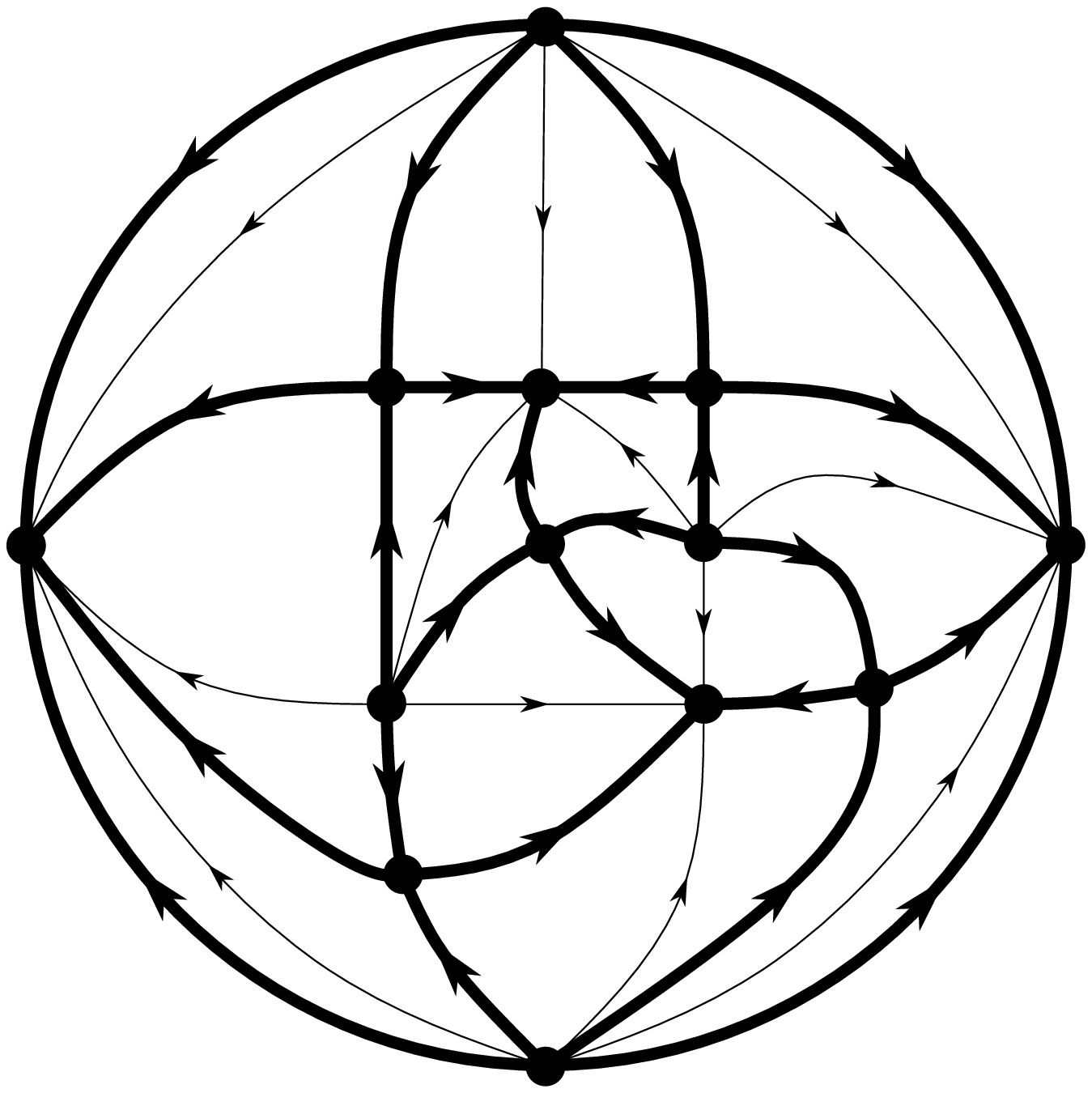} 
				\end{overpic}
				
				Case~$2.6c$.
			\end{center}
		\end{minipage}
		\begin{minipage}{3.1cm}
			\begin{center}
				\begin{overpic}[height=3cm]{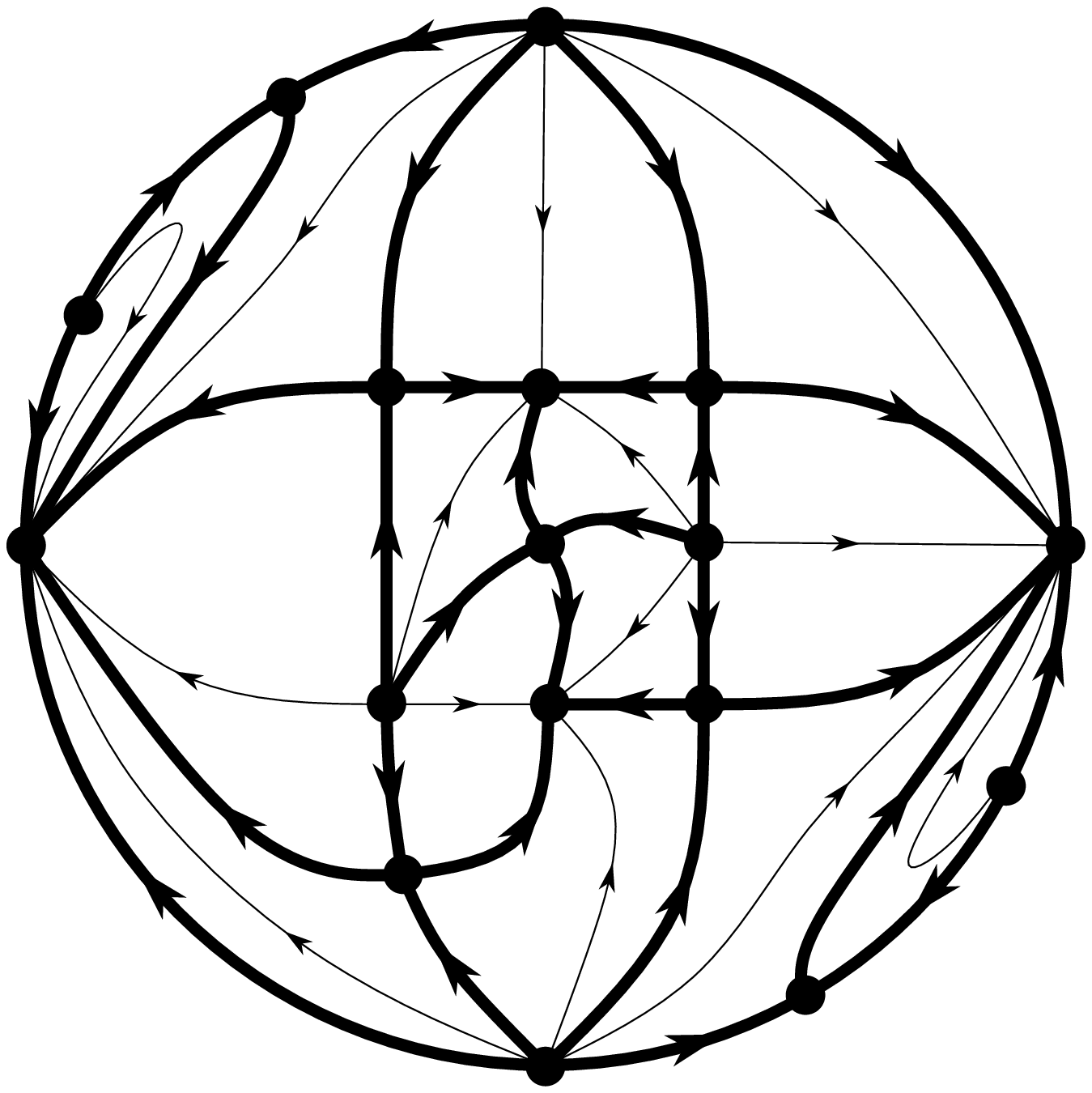} 
				\end{overpic}
				
				Case~$2.7a$.
			\end{center}
		\end{minipage}
		\begin{minipage}{3.1cm}
			\begin{center}
				\begin{overpic}[height=3cm]{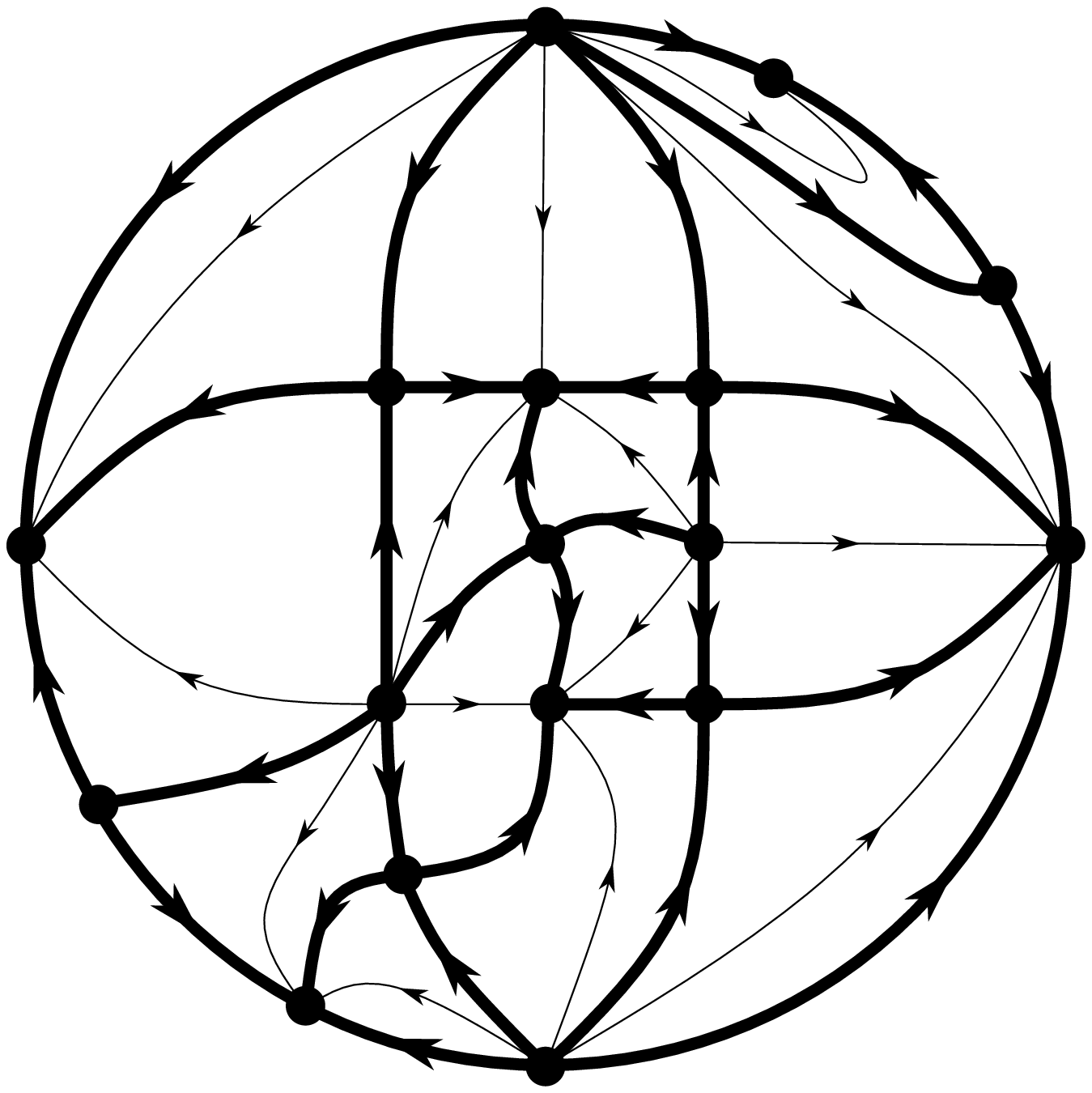} 
				\end{overpic}
				
				Case~$2.7b1$.
			\end{center}
		\end{minipage}
		\begin{minipage}{3.1cm}
			\begin{center}
				\begin{overpic}[height=3cm]{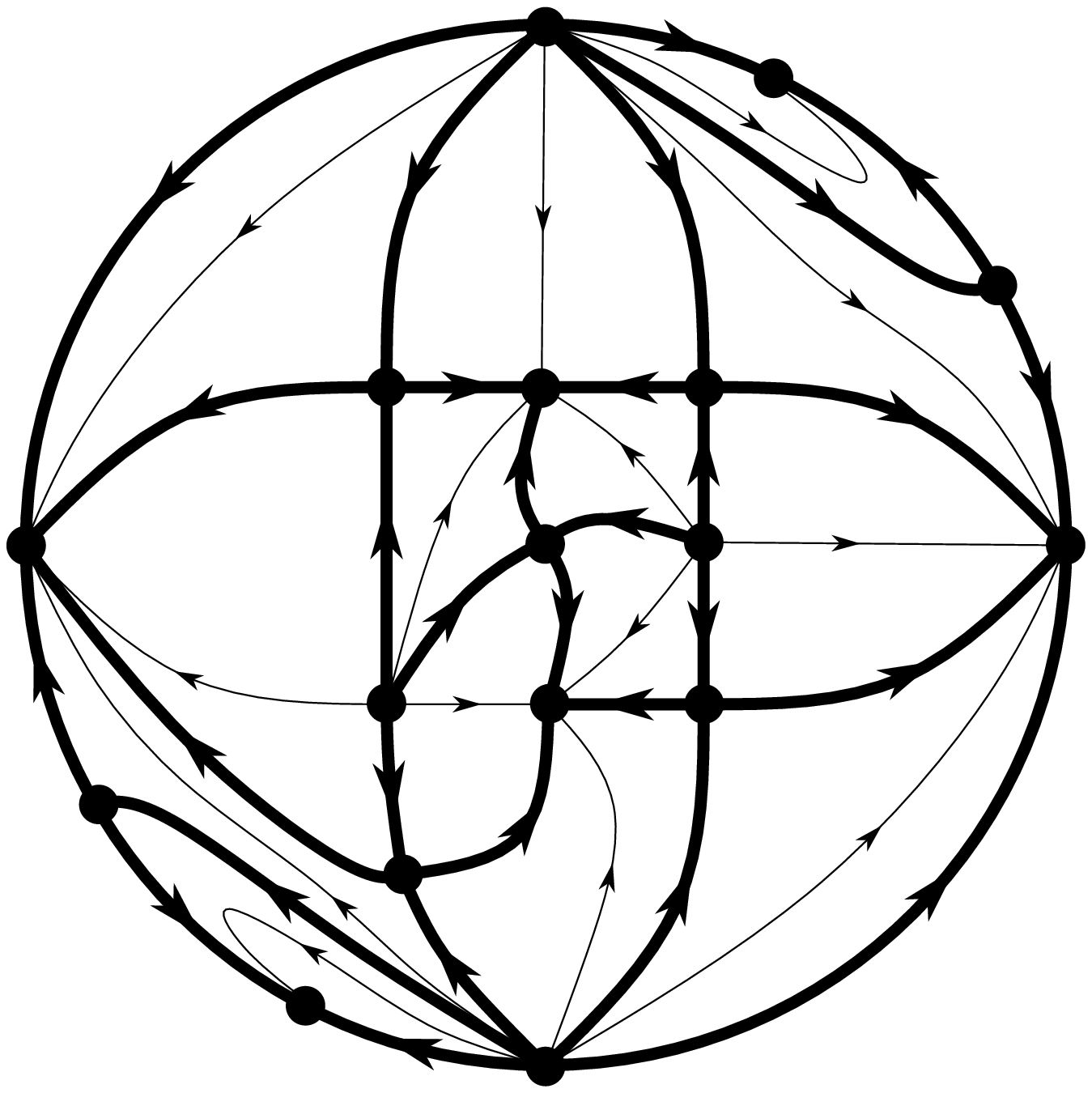} 
				\end{overpic}
				
				Case~$2.7b2$.
			\end{center}
		\end{minipage}
	\end{center}
	$\;$
	\begin{center}
		\begin{minipage}{3.1cm}
			\begin{center}
				\begin{overpic}[height=3cm]{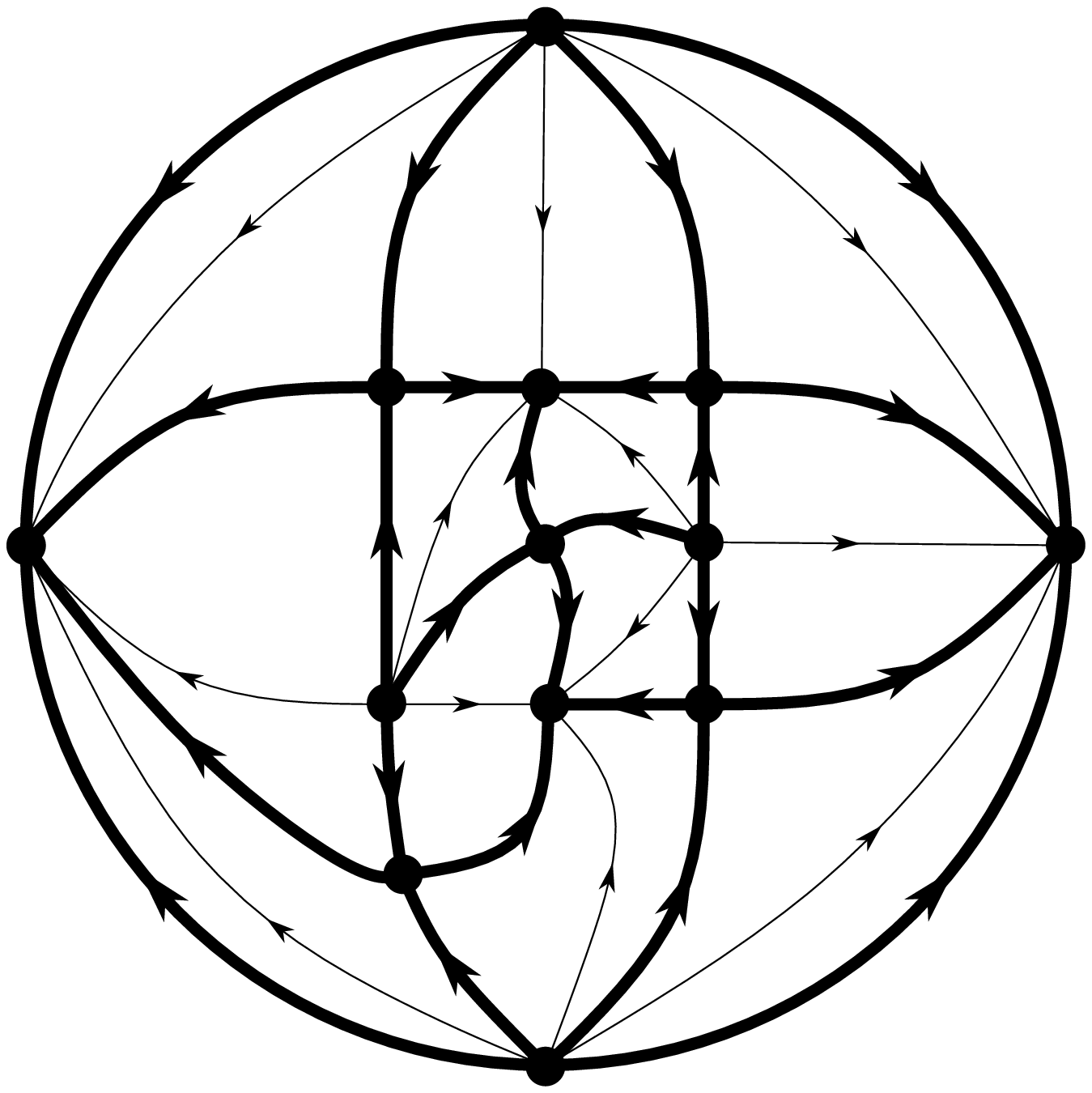} 
				\end{overpic}
				
				Case~$2.7c$.
			\end{center}
		\end{minipage}
		\begin{minipage}{3.1cm}
			\begin{center}
				\begin{overpic}[height=3cm]{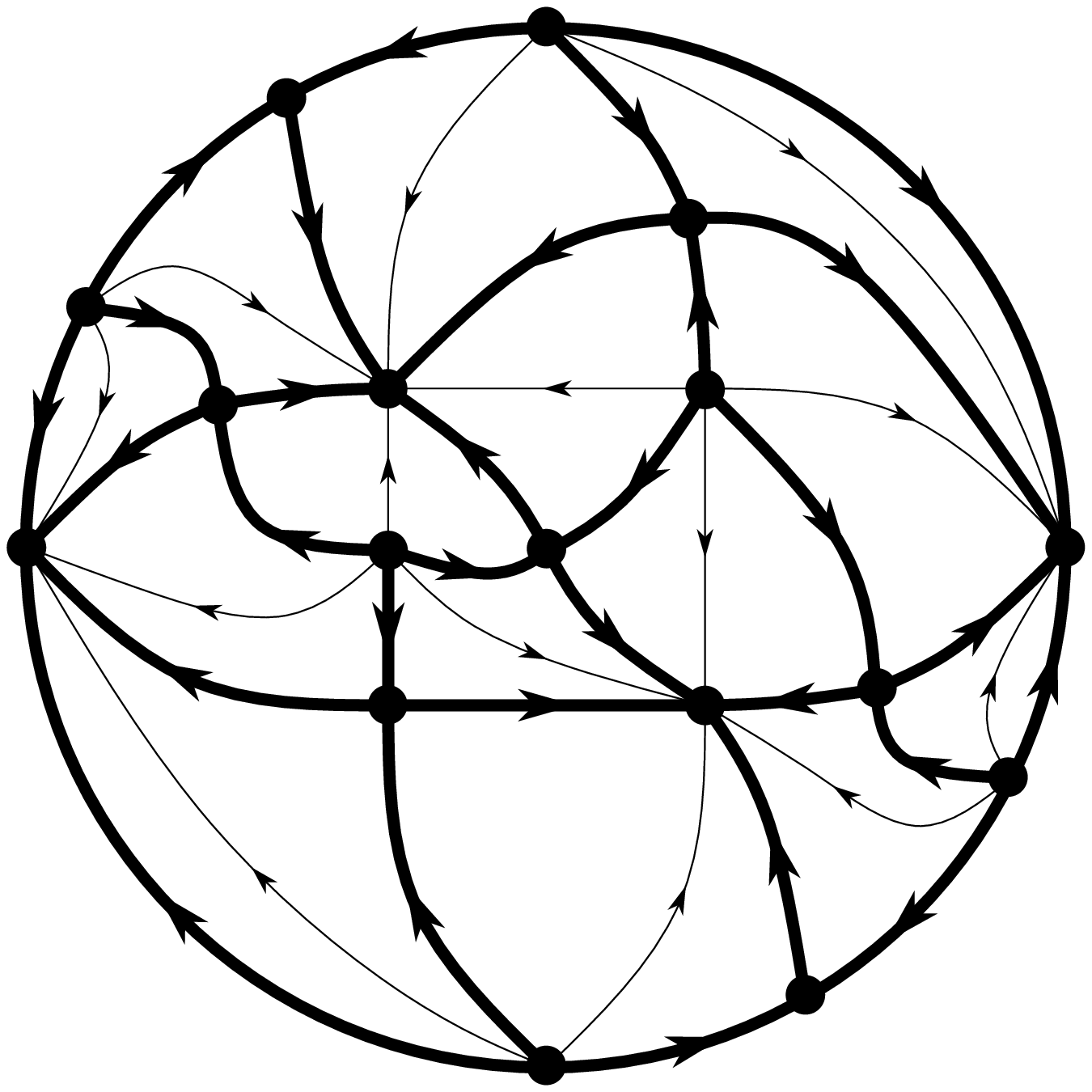} 
				\end{overpic}
				
				Case~$2.8a1$.
			\end{center}
		\end{minipage}	
		\begin{minipage}{3.1cm}
			\begin{center}
				\begin{overpic}[height=3cm]{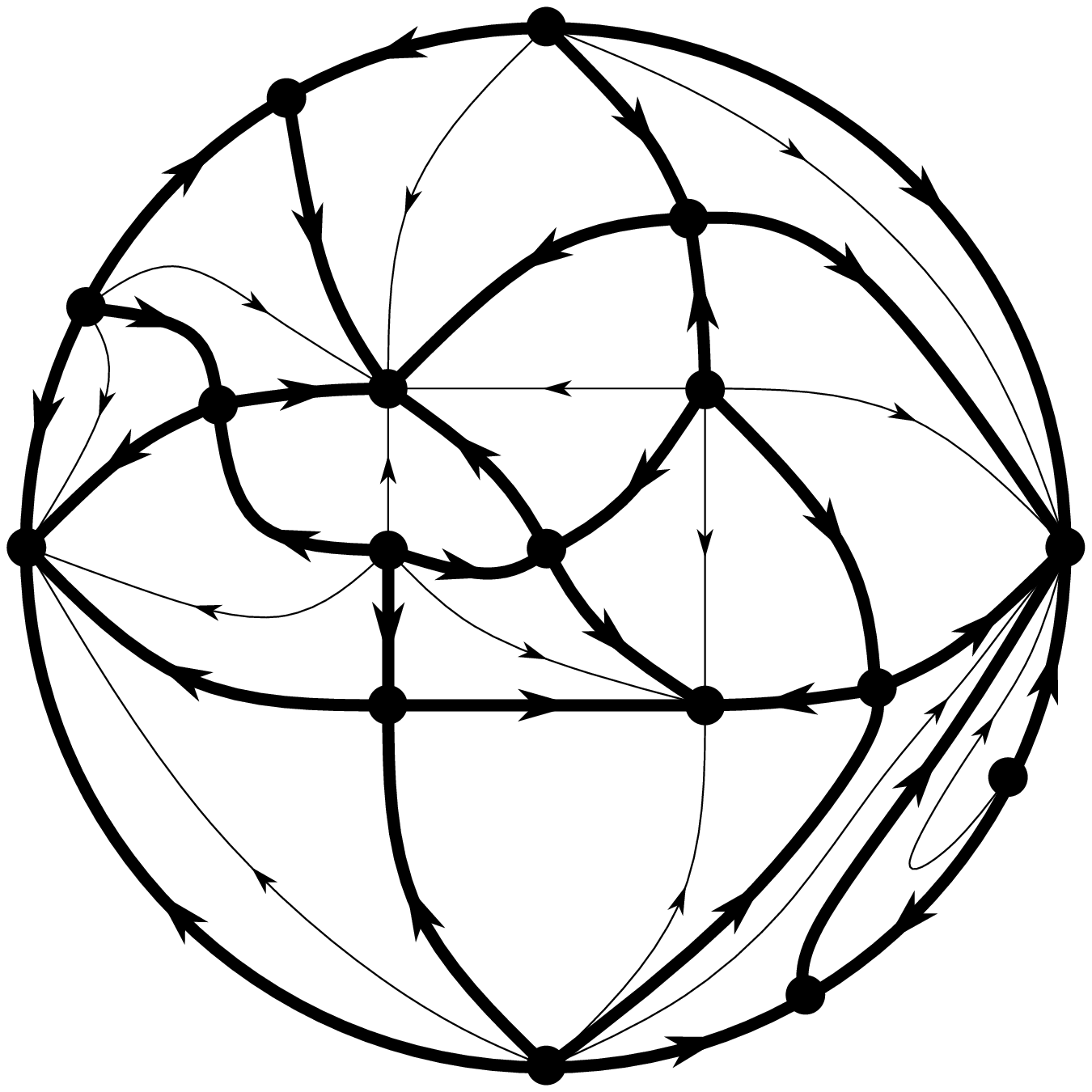} 
				\end{overpic}
				
				Case~$2.8a2$.
			\end{center}
		\end{minipage}
		\begin{minipage}{3.1cm}
			\begin{center}
				\begin{overpic}[height=3cm]{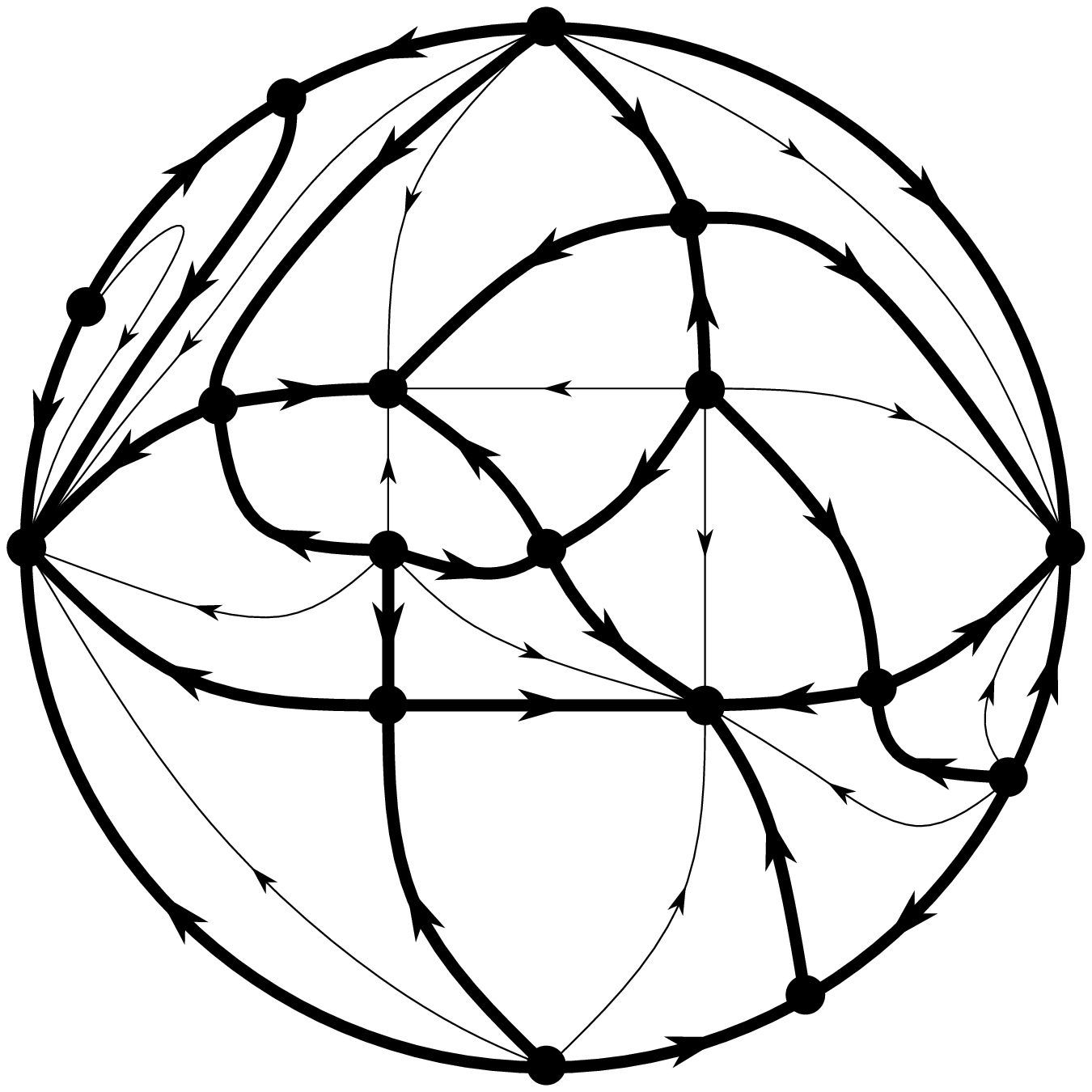} 
				\end{overpic}
				
				Case~$2.8a3$.
			\end{center}
		\end{minipage}
		\begin{minipage}{3.1cm}
			\begin{center}
				\begin{overpic}[height=3cm]{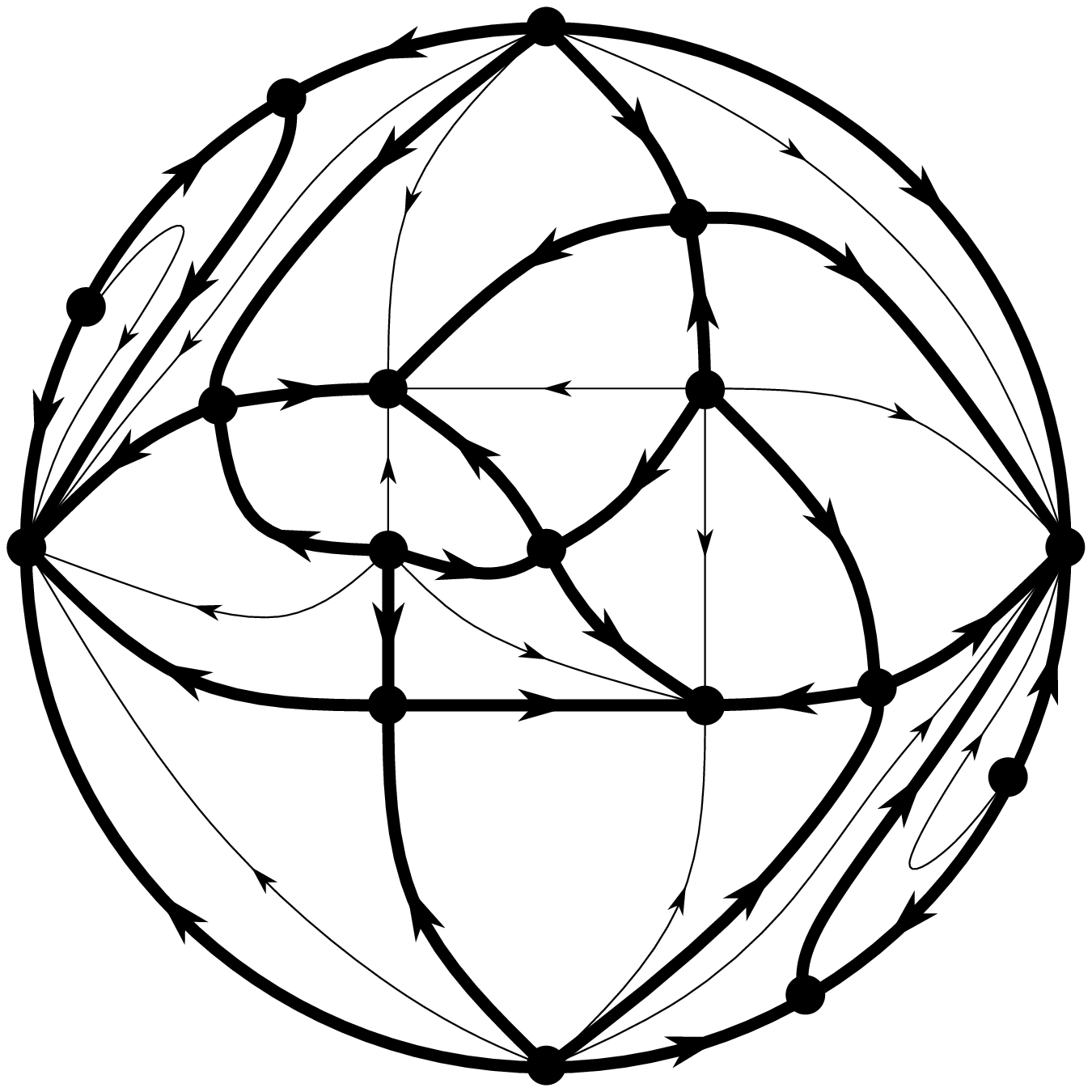} 
				\end{overpic}
				
				Case~$2.8a4$.
			\end{center}
		\end{minipage}
	\end{center}
	$\;$
	\begin{center}
		\begin{minipage}{3.1cm}
			\begin{center}
				\begin{overpic}[height=3cm]{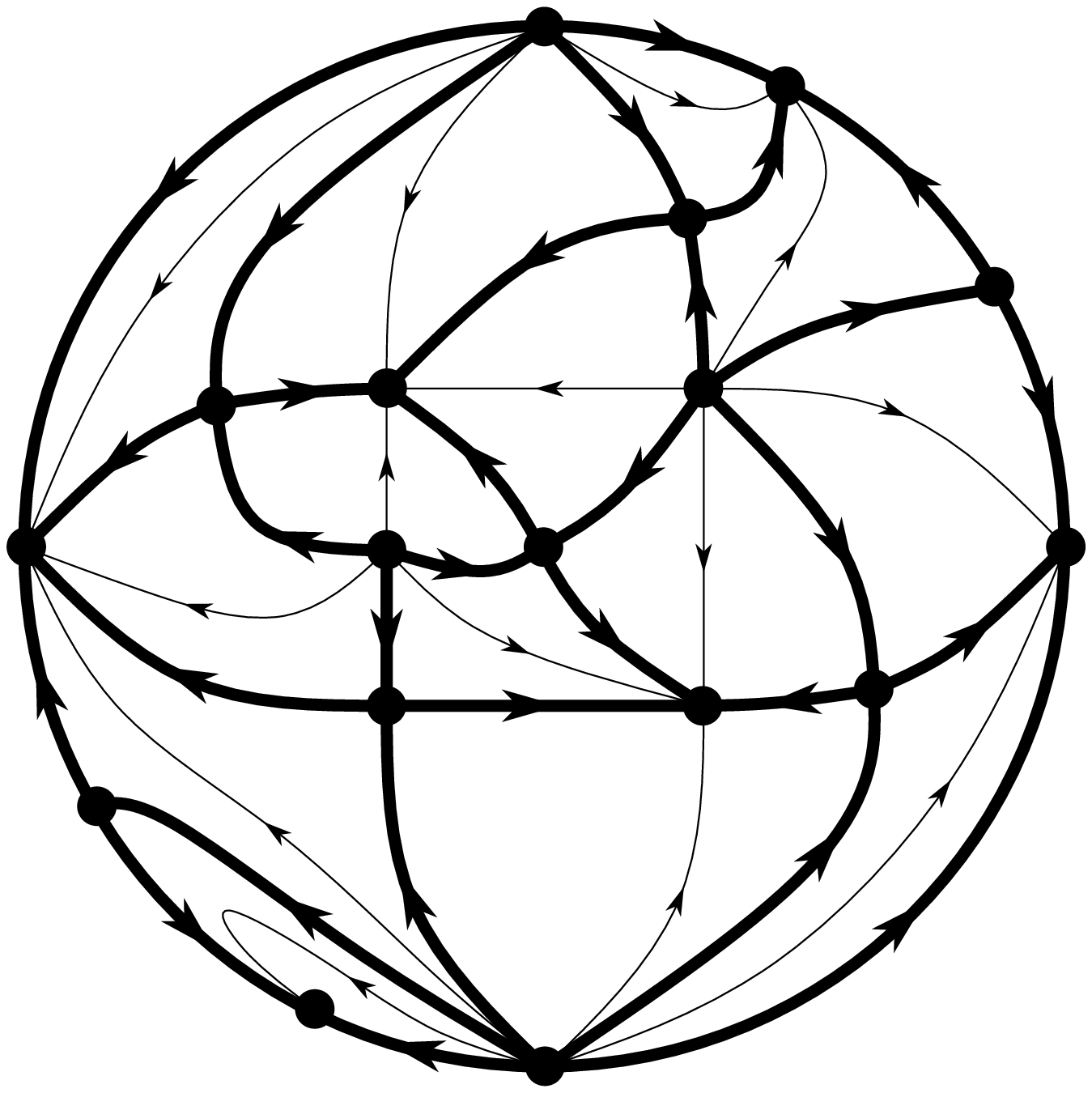} 
				\end{overpic}
				
				Case~$2.8b1$.
			\end{center}
		\end{minipage}
		\begin{minipage}{3.1cm}
			\begin{center}
				\begin{overpic}[height=3cm]{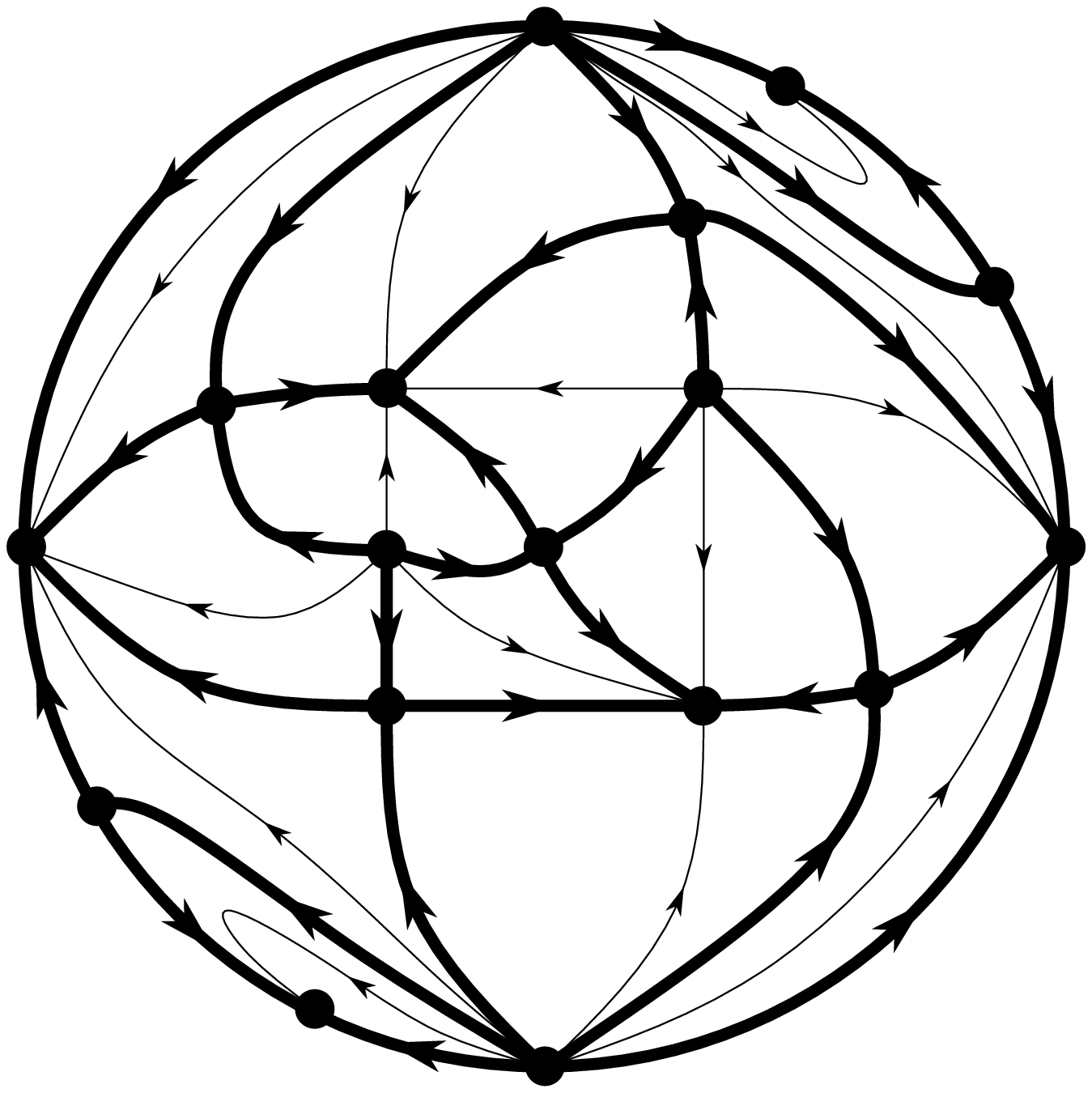} 
				\end{overpic}
				
				Case~$2.8b2$.
			\end{center}
		\end{minipage}
		\begin{minipage}{3.1cm}
			\begin{center}
				\begin{overpic}[height=3cm]{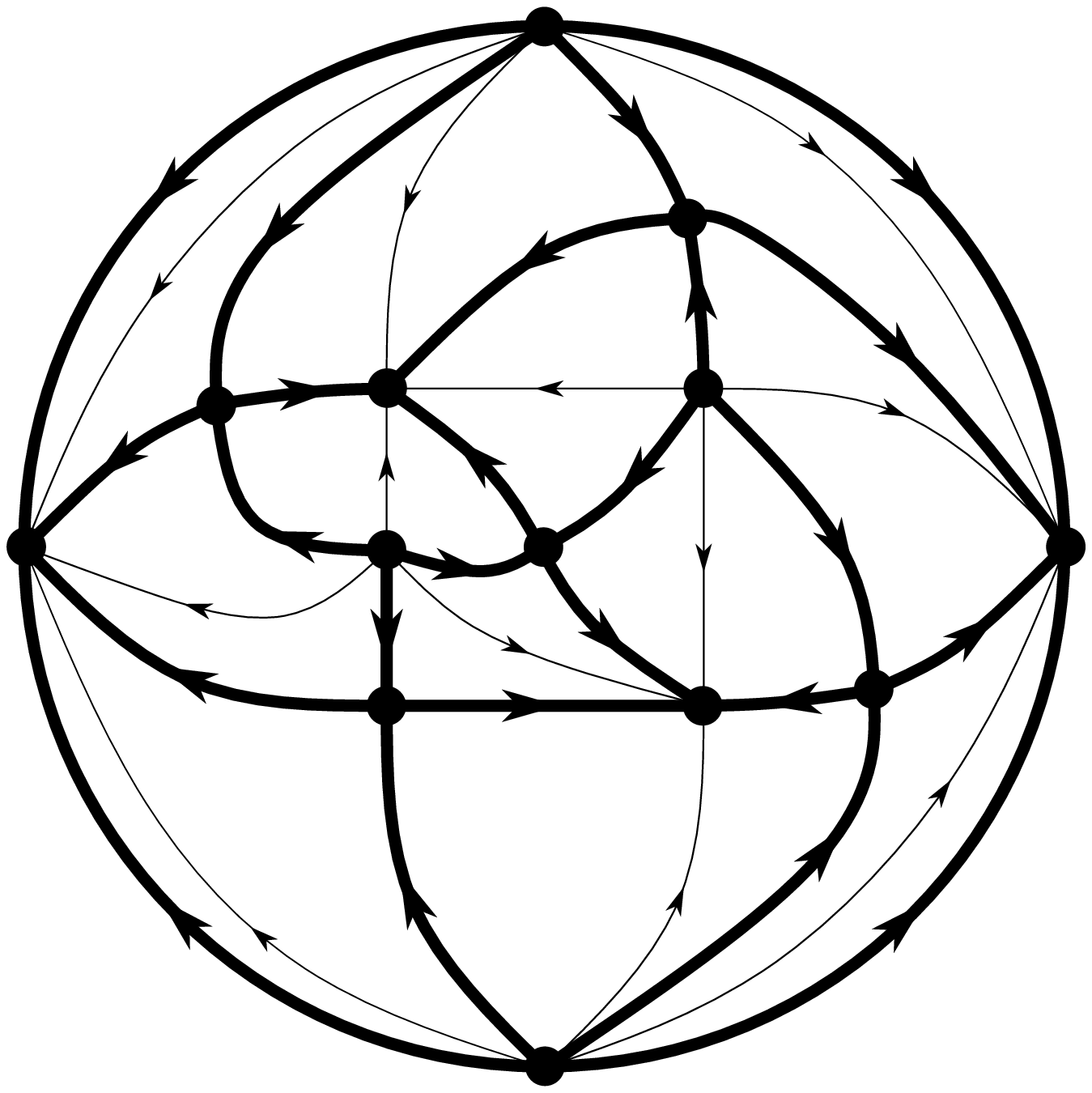} 
				\end{overpic}
				
				Case~$2.8c$.
			\end{center}
		\end{minipage}	
		\begin{minipage}{3.1cm}
			\begin{center}
				\begin{overpic}[height=3cm]{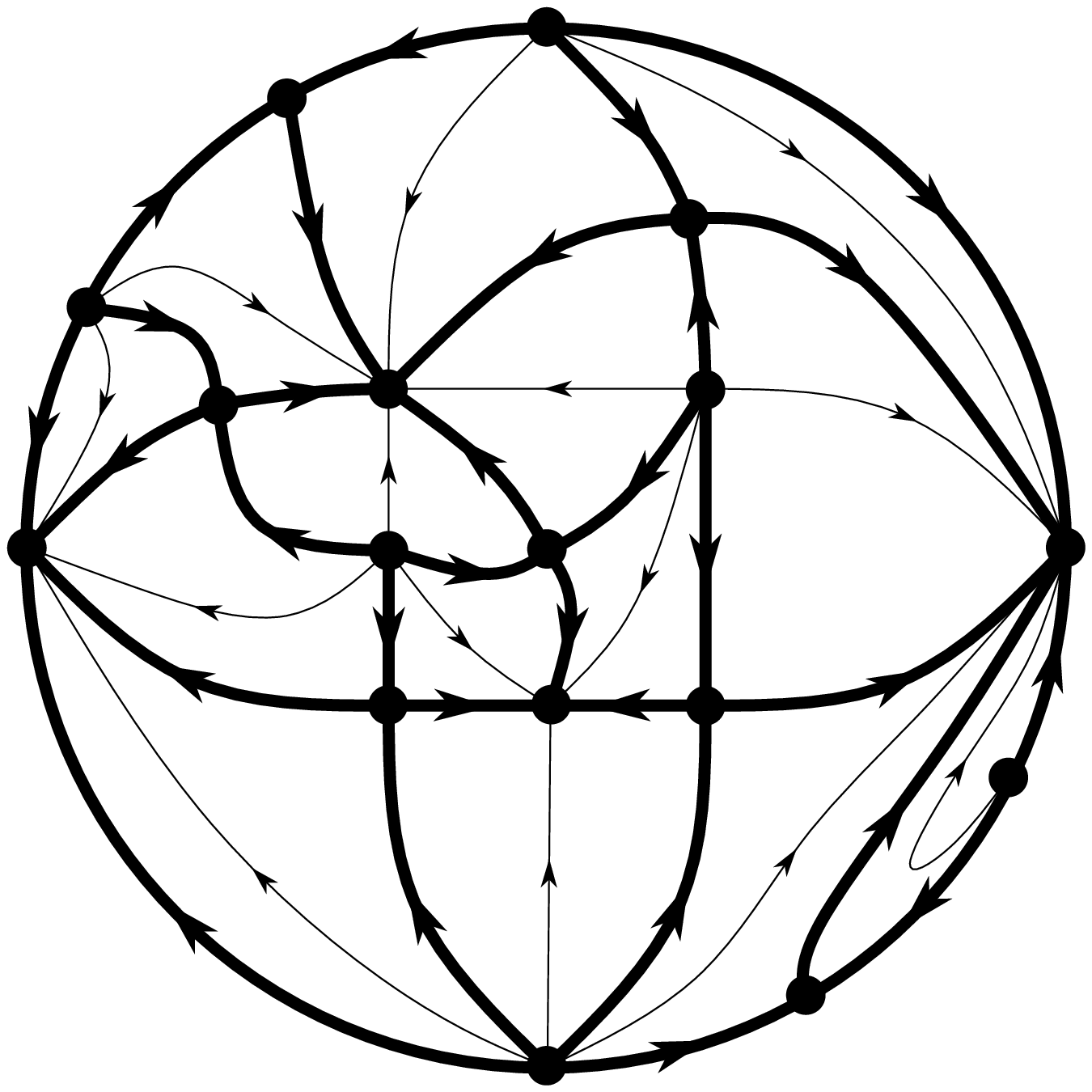} 
				\end{overpic}
				
				Case~$2.9a1$.
			\end{center}
		\end{minipage}
		\begin{minipage}{3.1cm}
			\begin{center}
				\begin{overpic}[height=3cm]{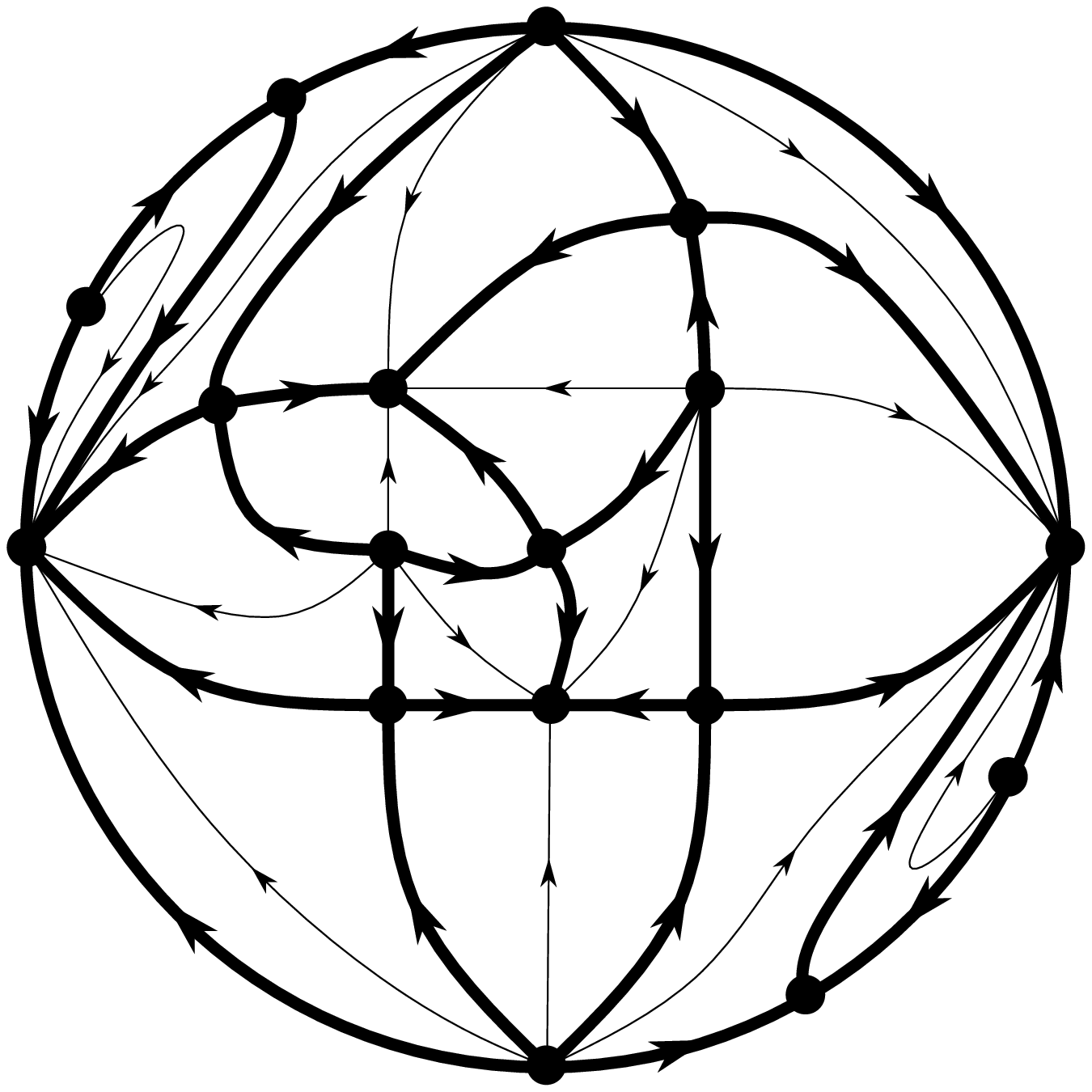} 
				\end{overpic}
				
				Case~$2.9a2$.
			\end{center}
		\end{minipage}
	\end{center}
	$\;$
	\begin{center}
		\begin{minipage}{3.1cm}
			\begin{center}
				\begin{overpic}[height=3cm]{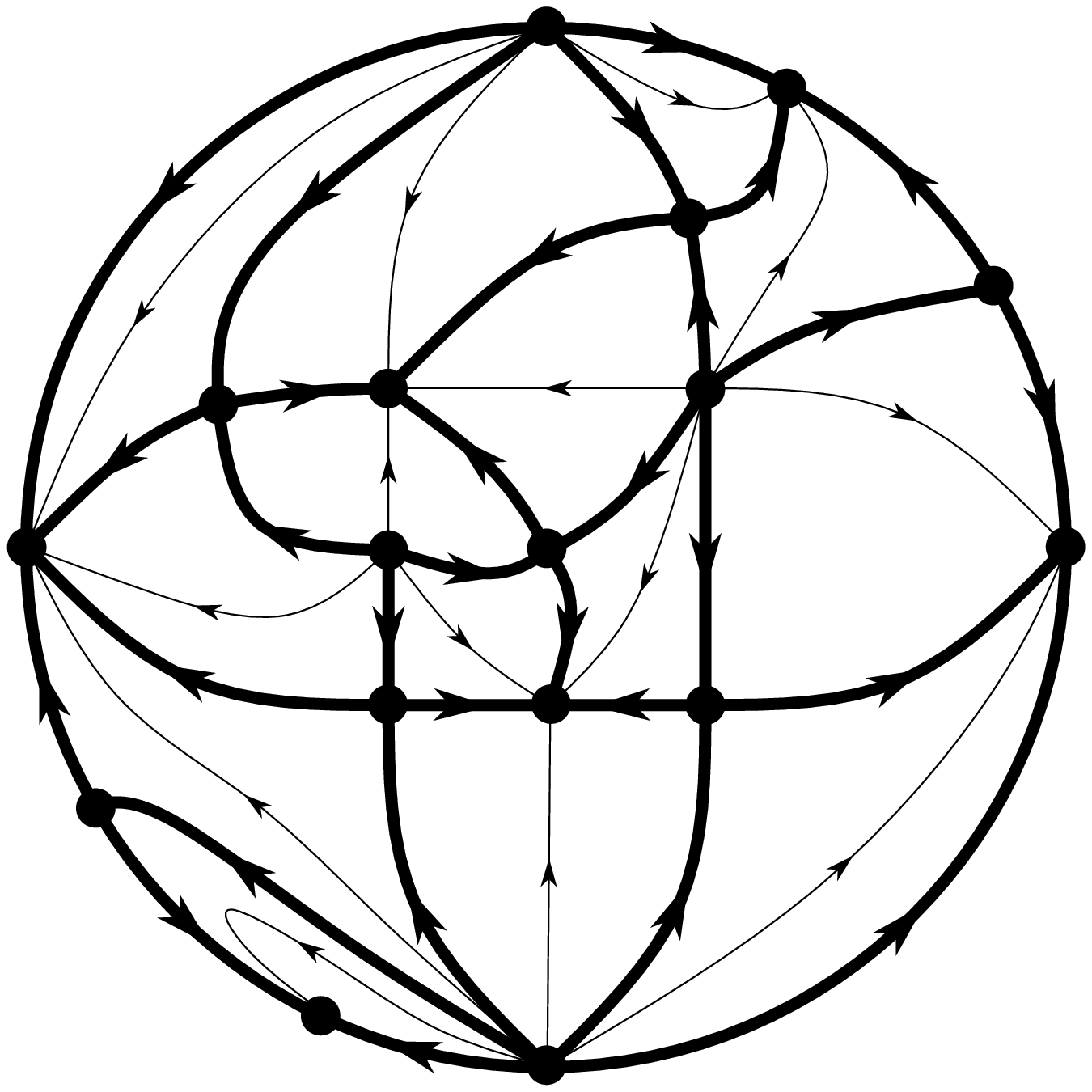} 
				\end{overpic}
				
				Case~$2.9b1$.
			\end{center}
		\end{minipage}	
		\begin{minipage}{3.1cm}
			\begin{center}
				\begin{overpic}[height=3cm]{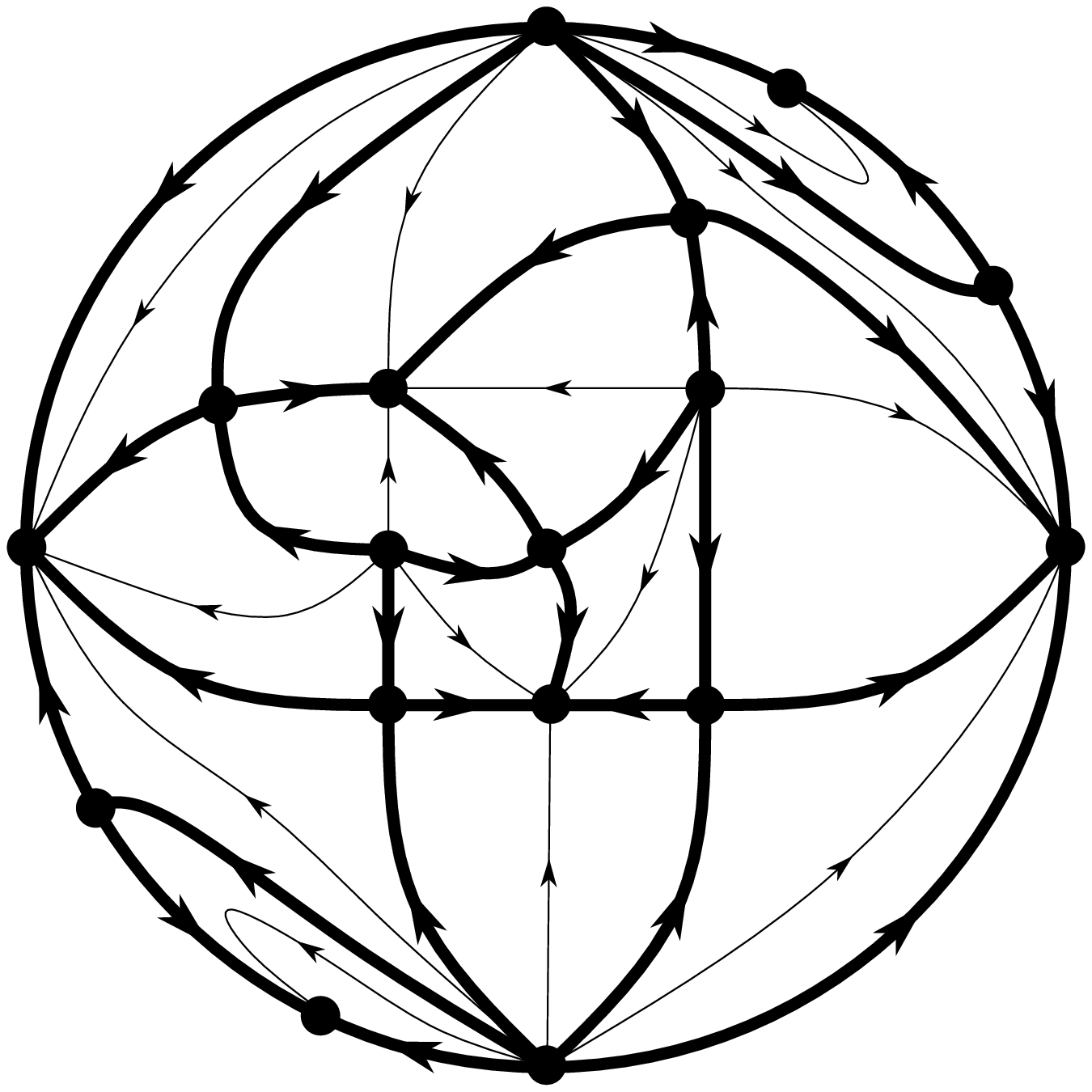} 
				\end{overpic}
				
				Case~$2.9b2$.
			\end{center}
		\end{minipage}
		\begin{minipage}{3.1cm}
			\begin{center}
				\begin{overpic}[height=3cm]{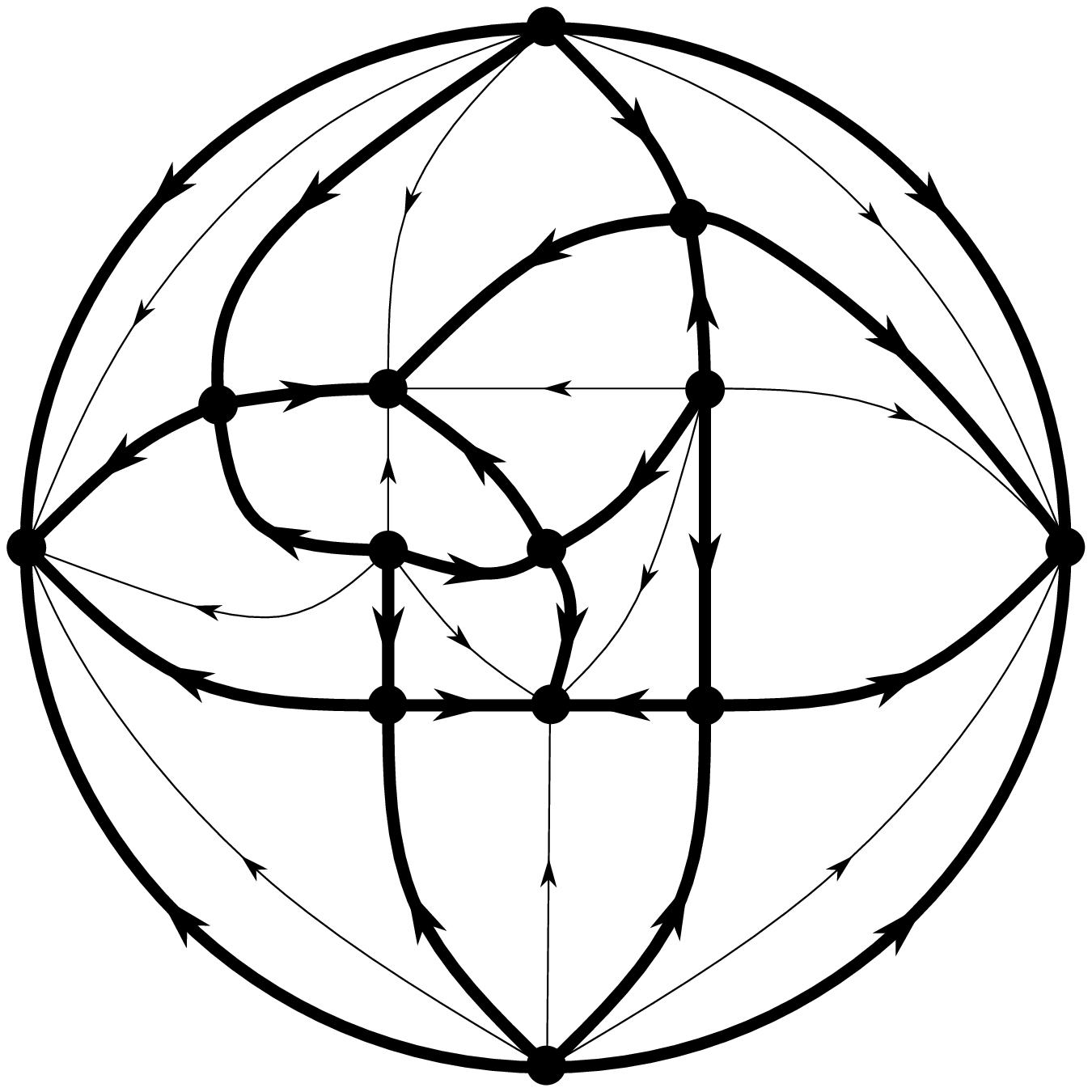} 
				\end{overpic}
				
				Case~$2.9c$.
			\end{center}
		\end{minipage}
	\end{center}
	\caption{Phase portraits from Cases~$2.4$ to $2.9$.}\label{Case1.2b}
\end{figure}

\begin{figure}[h]
	\begin{center}
		\begin{minipage}{3.1cm}
			\begin{center}
				\begin{overpic}[height=3cm]{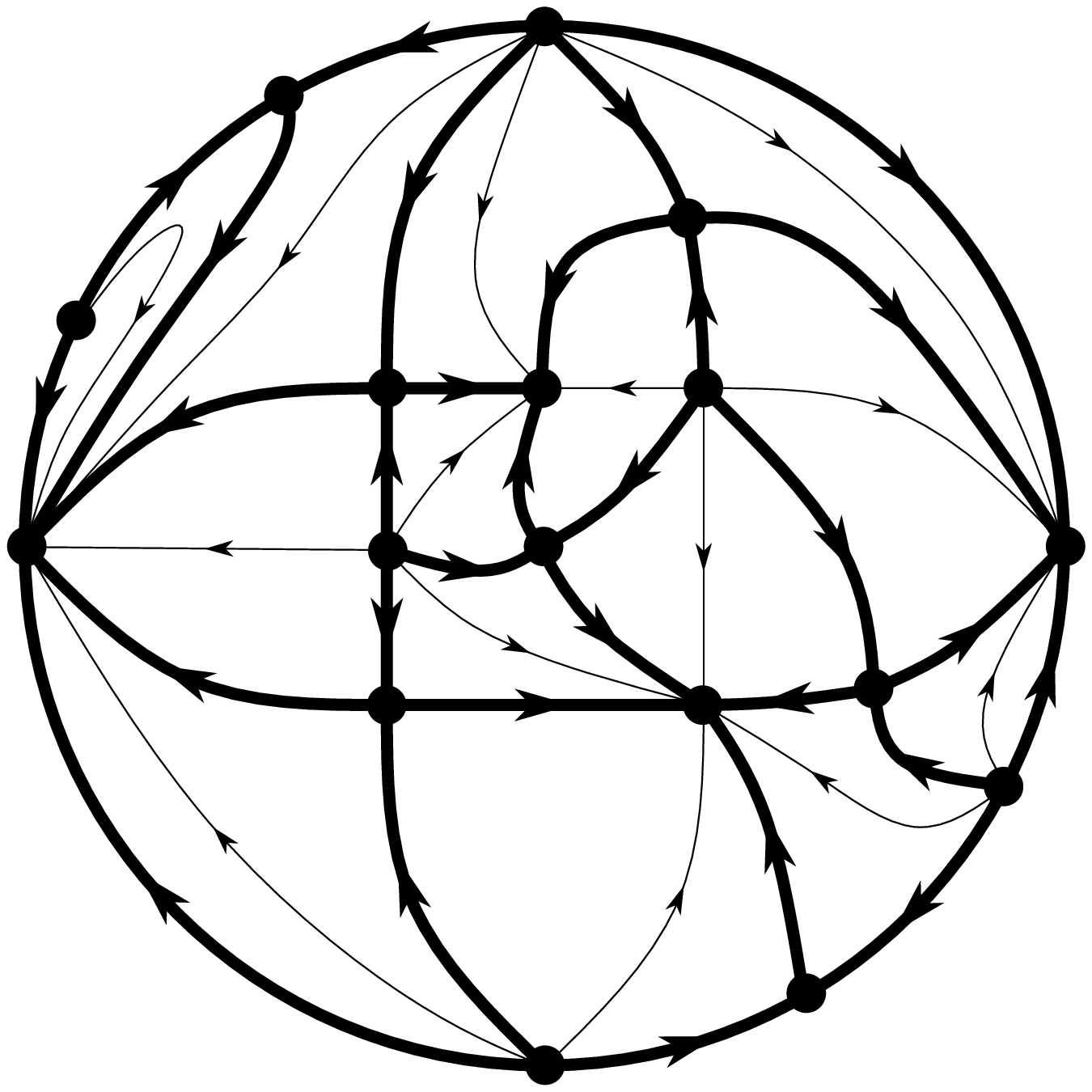} 
				\end{overpic}
				
				Case~$2.10a1$.
			\end{center}
		\end{minipage}
		\begin{minipage}{3.1cm}
			\begin{center}
				\begin{overpic}[height=3cm]{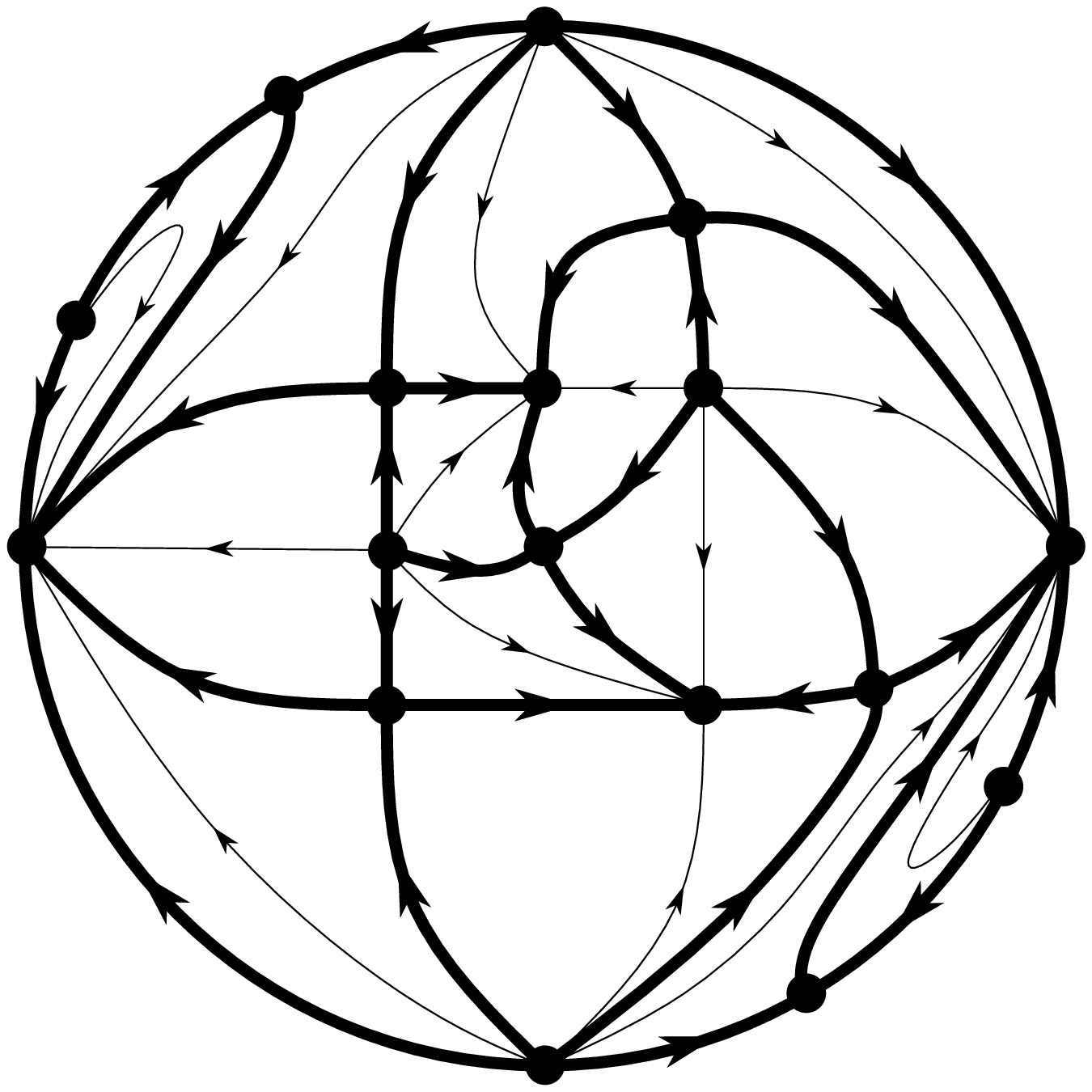} 
				\end{overpic}
				
				Case~$2.10a2$.
			\end{center}
		\end{minipage}
		\begin{minipage}{3.1cm}
			\begin{center}
				\begin{overpic}[height=3cm]{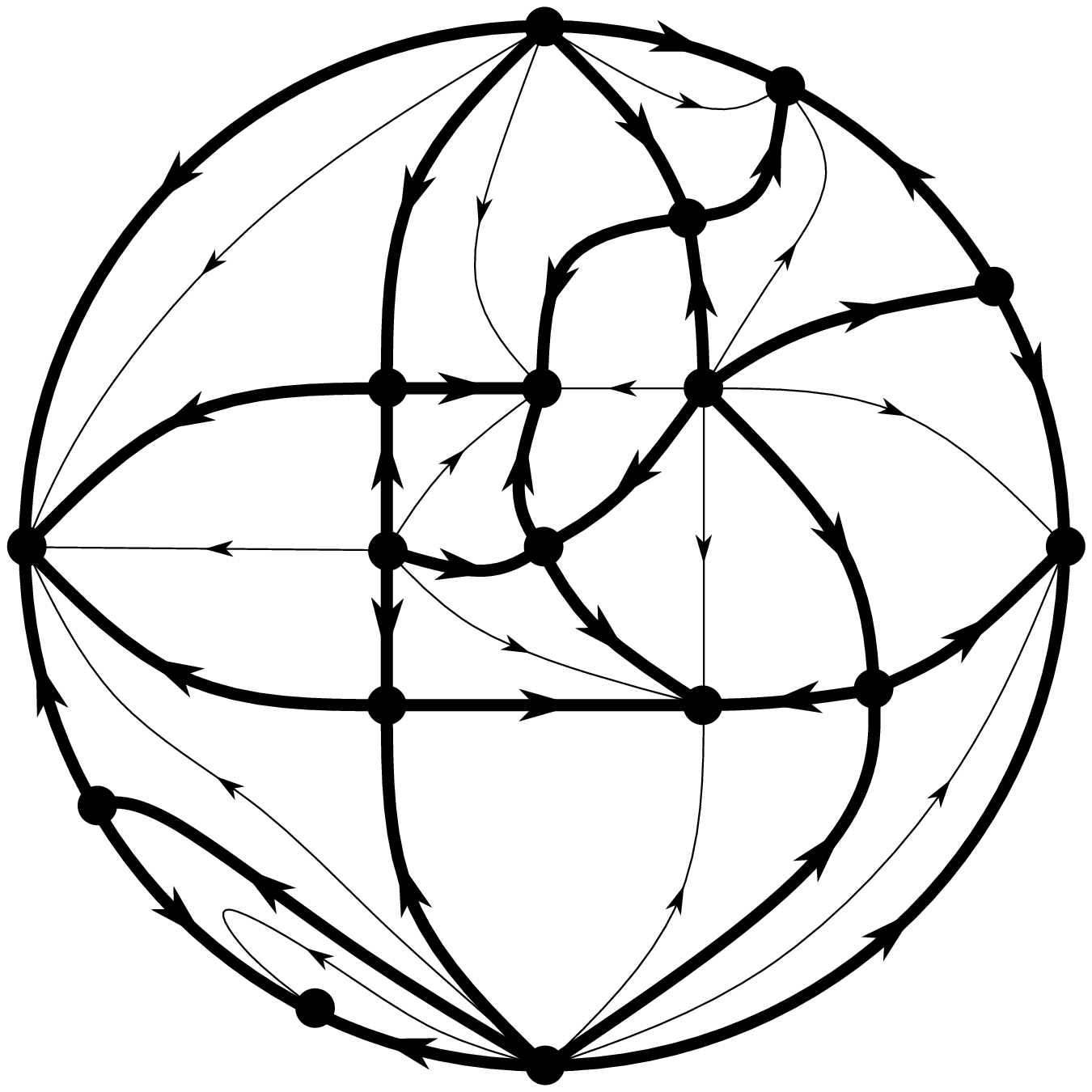} 
				\end{overpic}
				
				Case~$2.10b1$.
			\end{center}
		\end{minipage}
		\begin{minipage}{3.1cm}
			\begin{center}
				\begin{overpic}[height=3cm]{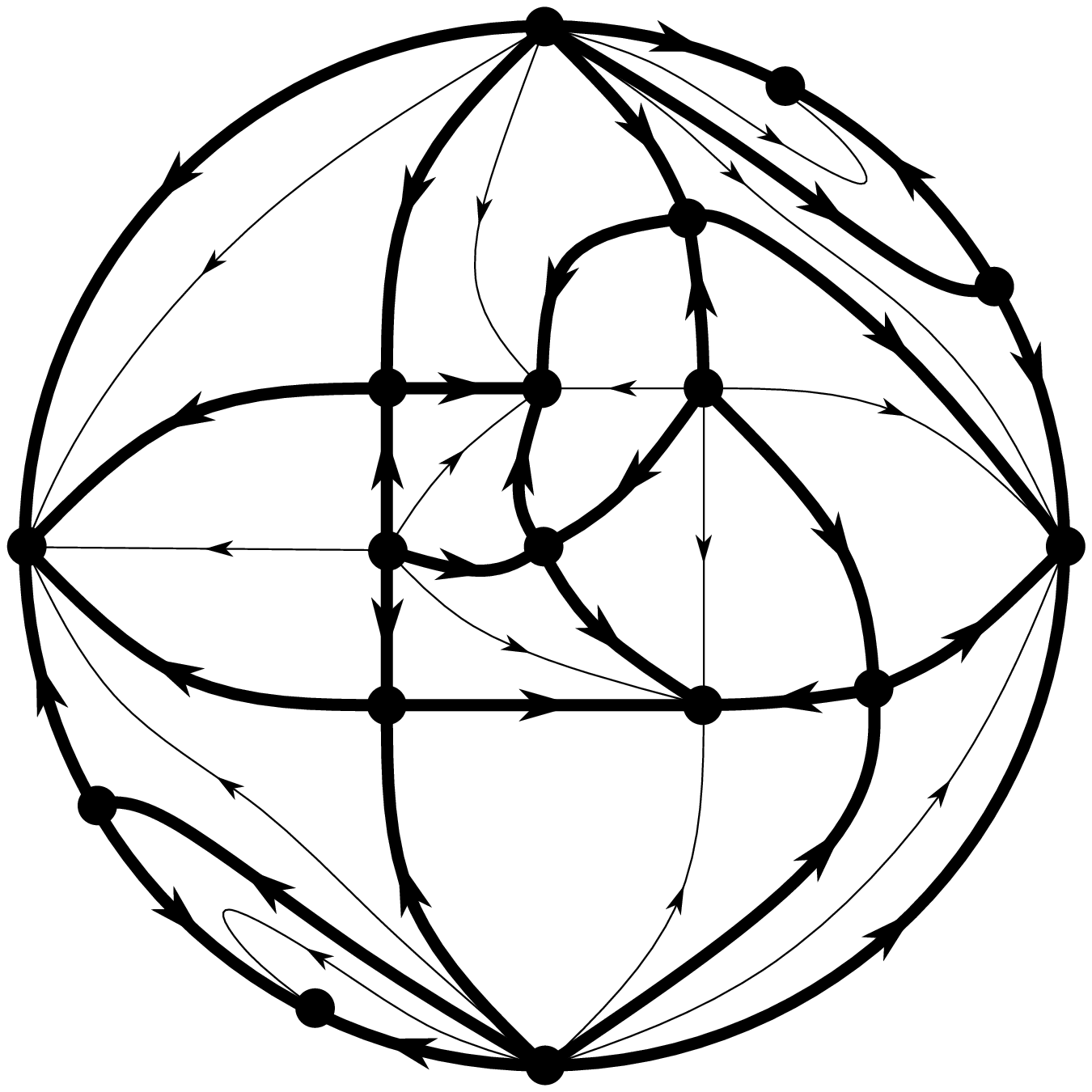} 
				\end{overpic}
				
				Case~$2.10b2$.
			\end{center}
		\end{minipage}
		\begin{minipage}{3.1cm}
			\begin{center}
				\begin{overpic}[height=3cm]{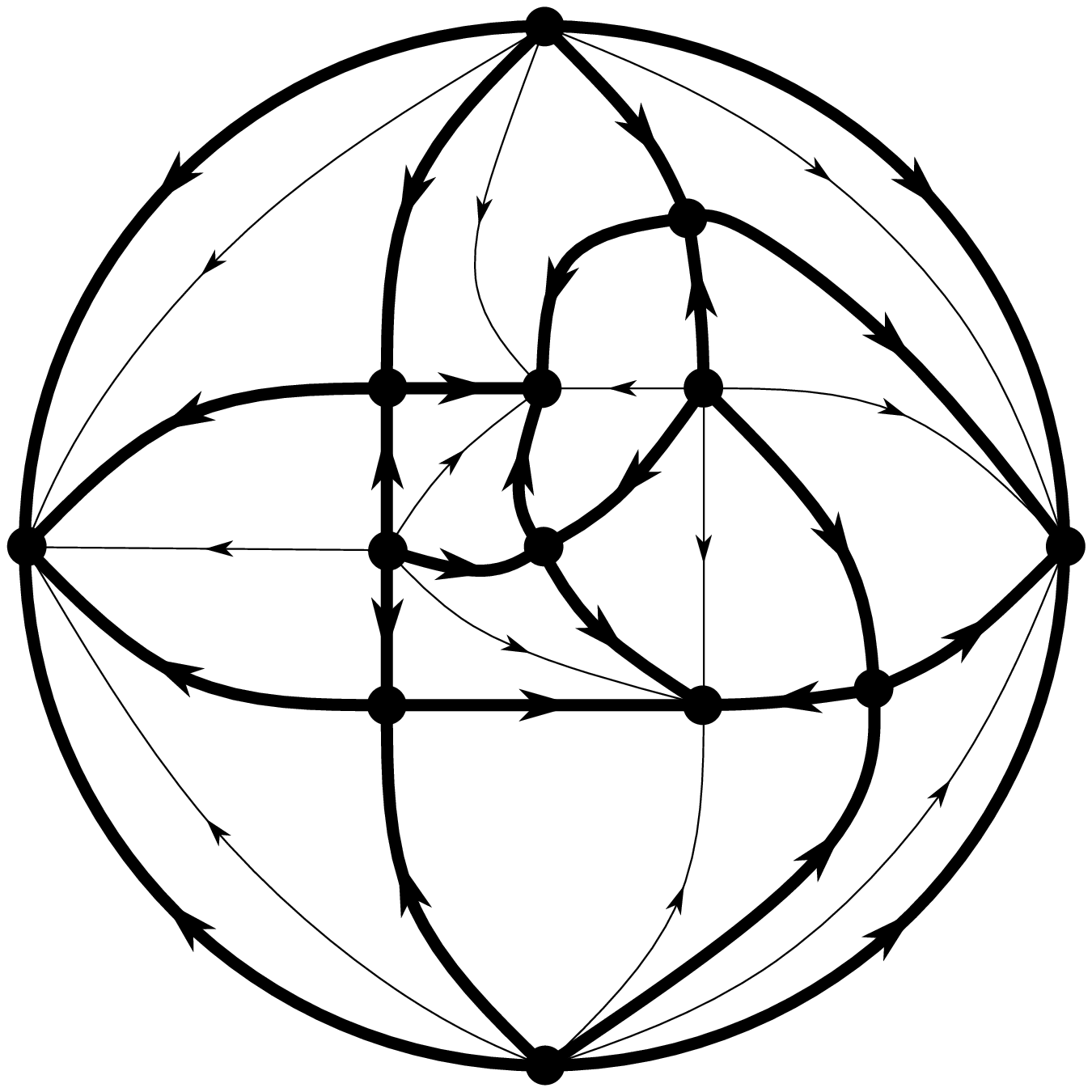} 
				\end{overpic}
				
				Case~$2.10c$.
			\end{center}
		\end{minipage}	
	\end{center}
	$\;$
	\begin{center}
		\begin{minipage}{3.1cm}
			\begin{center}
				\begin{overpic}[height=3cm]{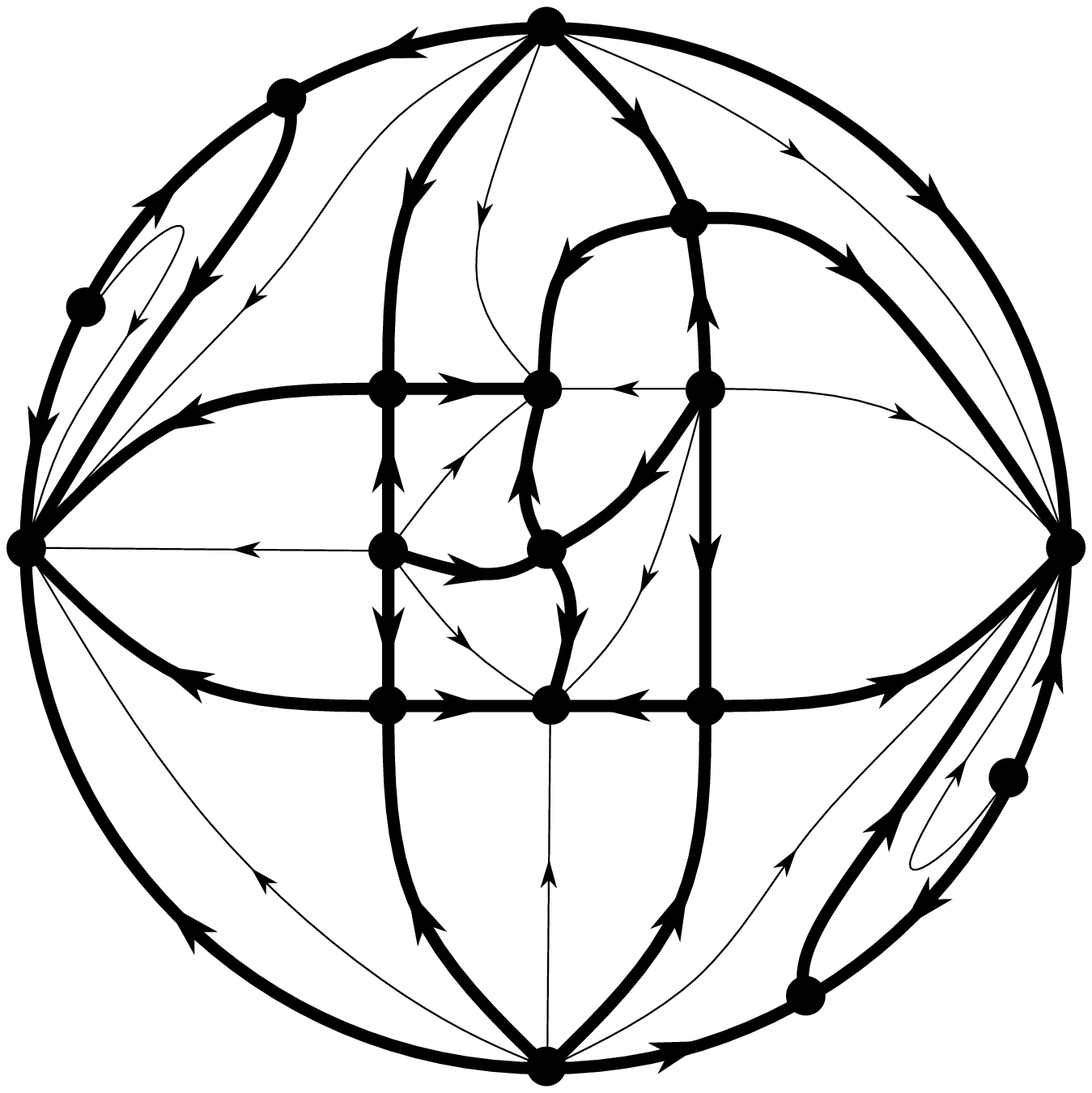} 
				\end{overpic}
				
				Case~$2.11a$.
			\end{center}
		\end{minipage}
		\begin{minipage}{3.1cm}
			\begin{center}
				\begin{overpic}[height=3cm]{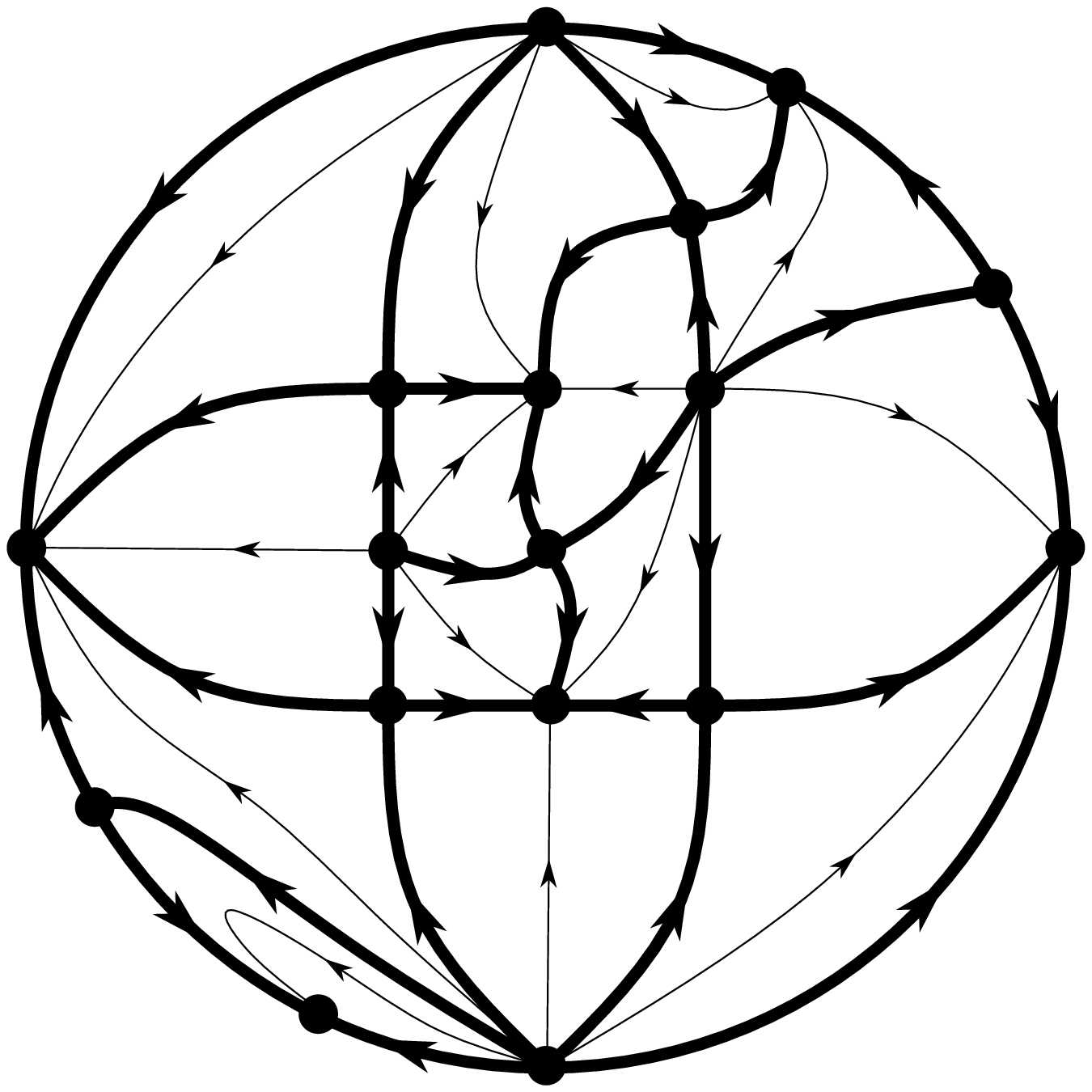} 
				\end{overpic}
				
				Case~$2.11b1$.
			\end{center}
		\end{minipage}
		\begin{minipage}{3.1cm}
			\begin{center}
				\begin{overpic}[height=3cm]{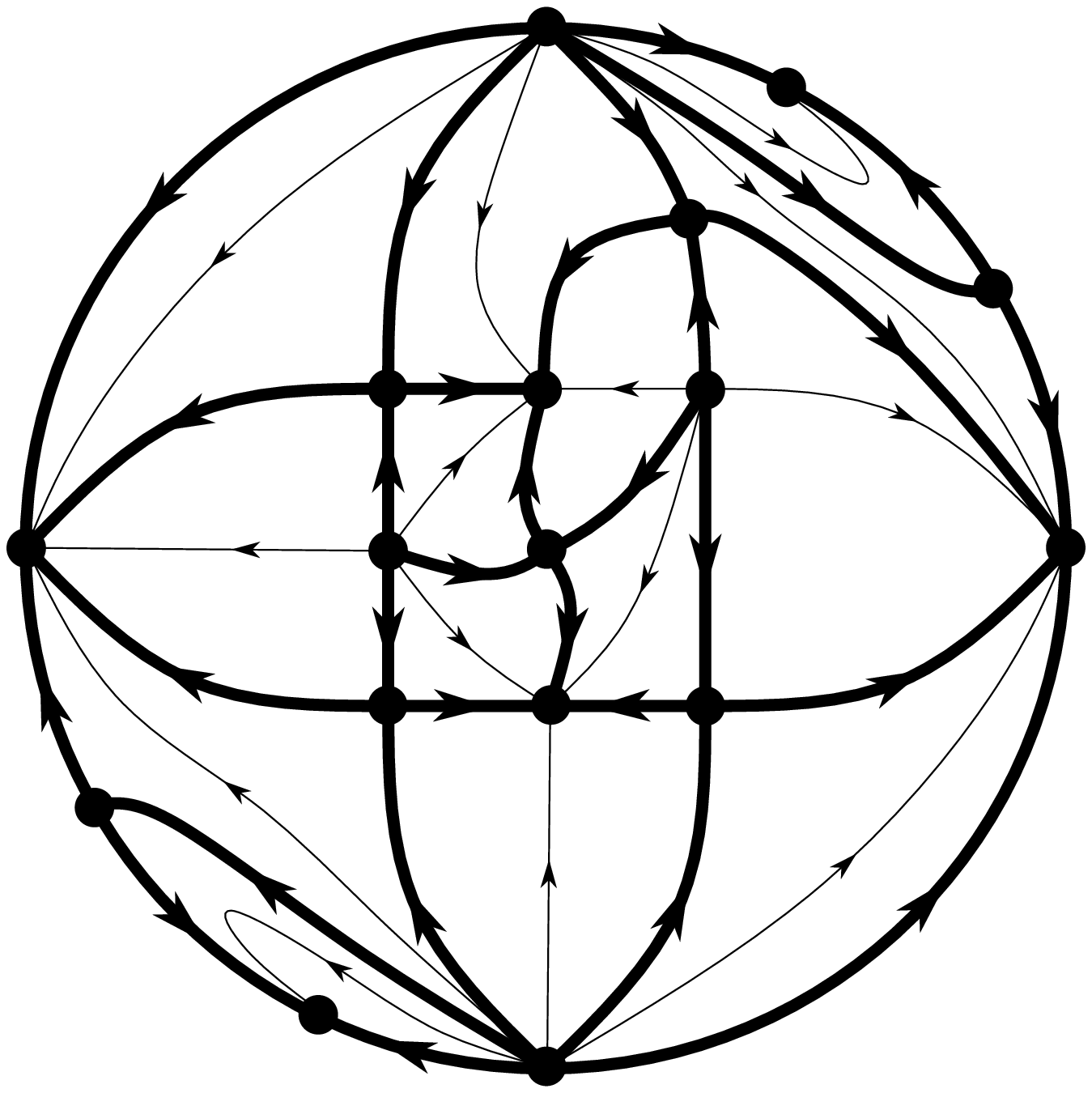} 
				\end{overpic}
				
				Case~$2.11b2$.
			\end{center}
		\end{minipage}
		\begin{minipage}{3.1cm}
			\begin{center}
				\begin{overpic}[height=3cm]{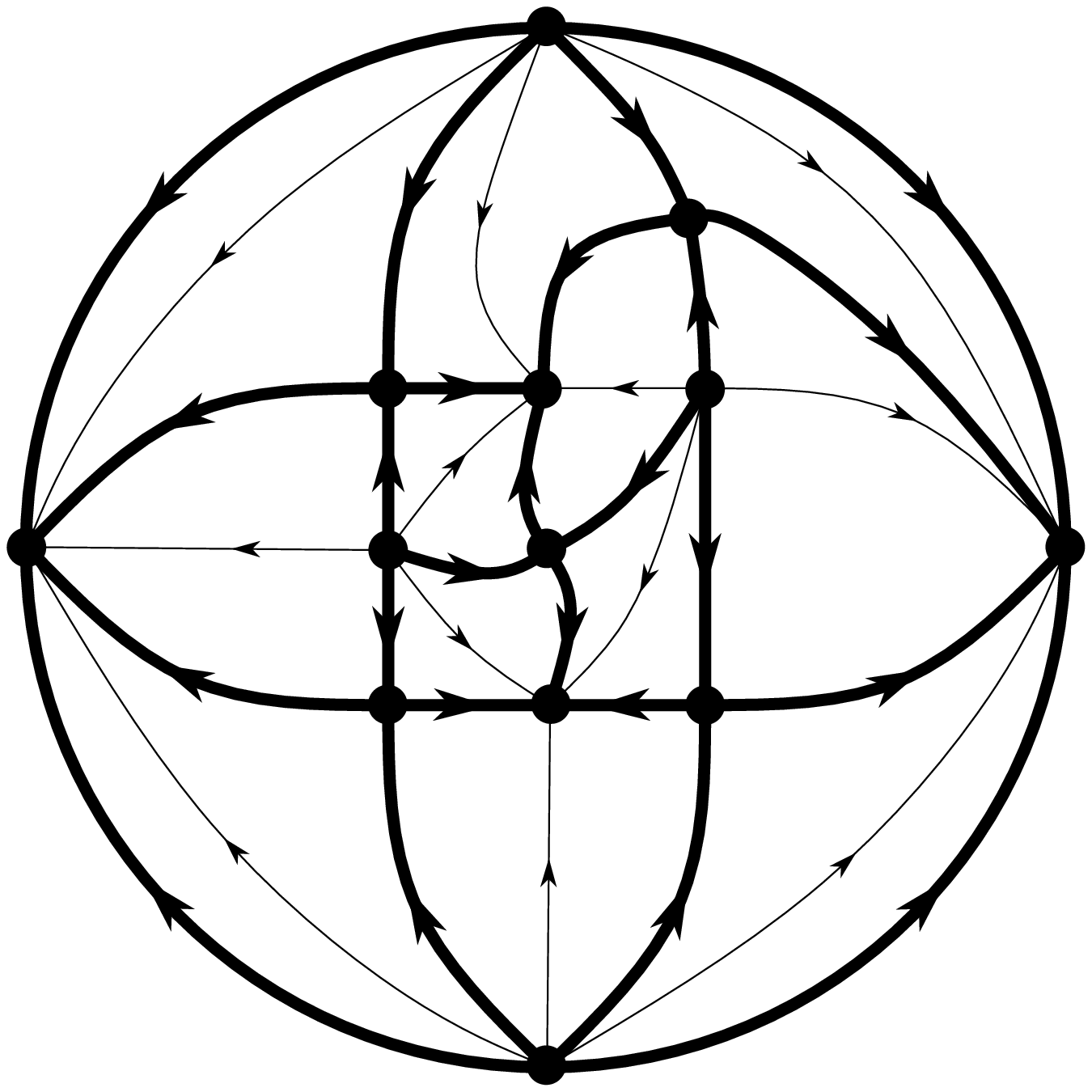} 
				\end{overpic}
				
				Case~$2.11c$.
			\end{center}
		\end{minipage}
		\begin{minipage}{3.1cm}
			\begin{center}
				\begin{overpic}[height=3cm]{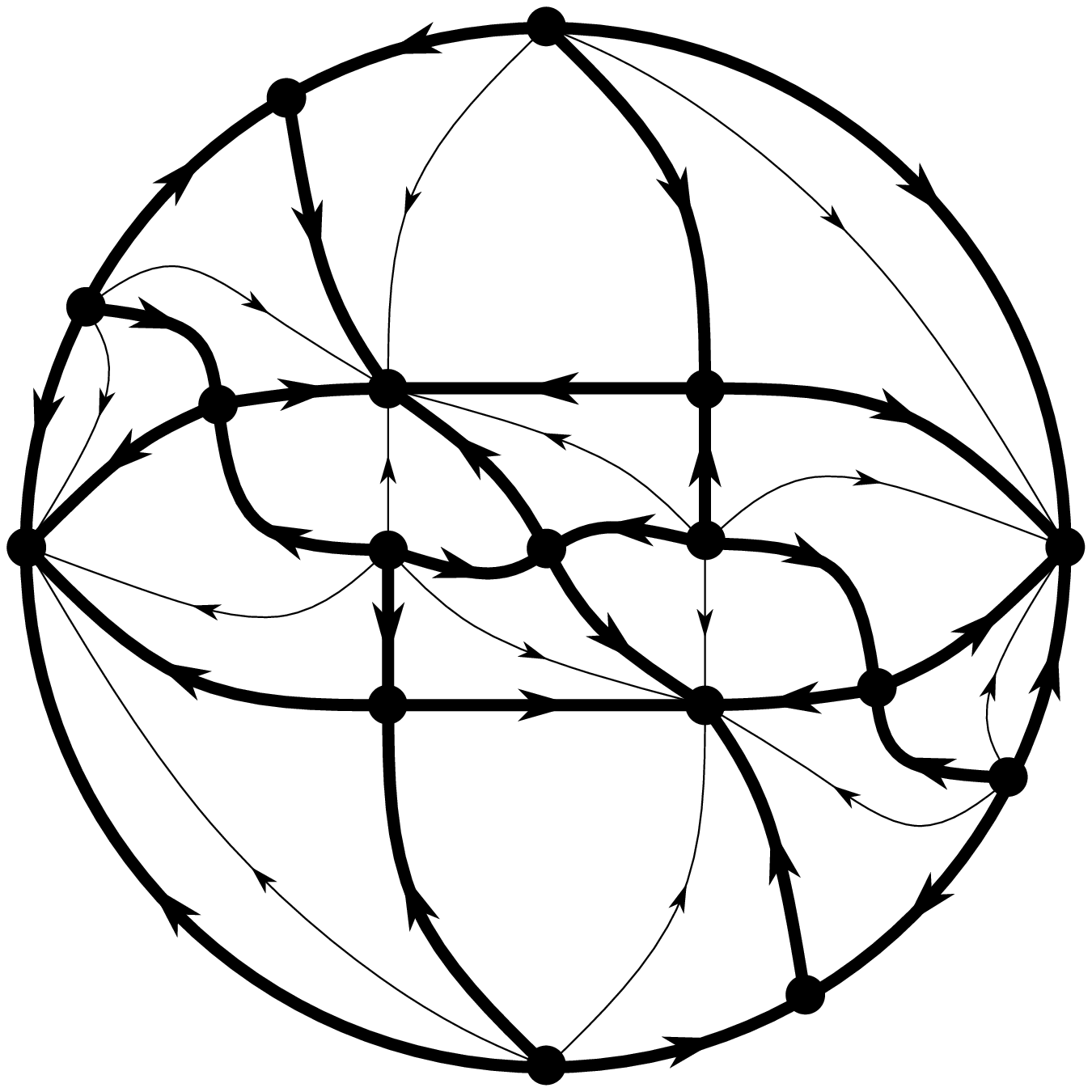} 
				\end{overpic}
				
				Case~$2.12a1$.
			\end{center}
		\end{minipage}
	\end{center}
	$\;$
	\begin{center}
		\begin{minipage}{3.1cm}
			\begin{center}
				\begin{overpic}[height=3cm]{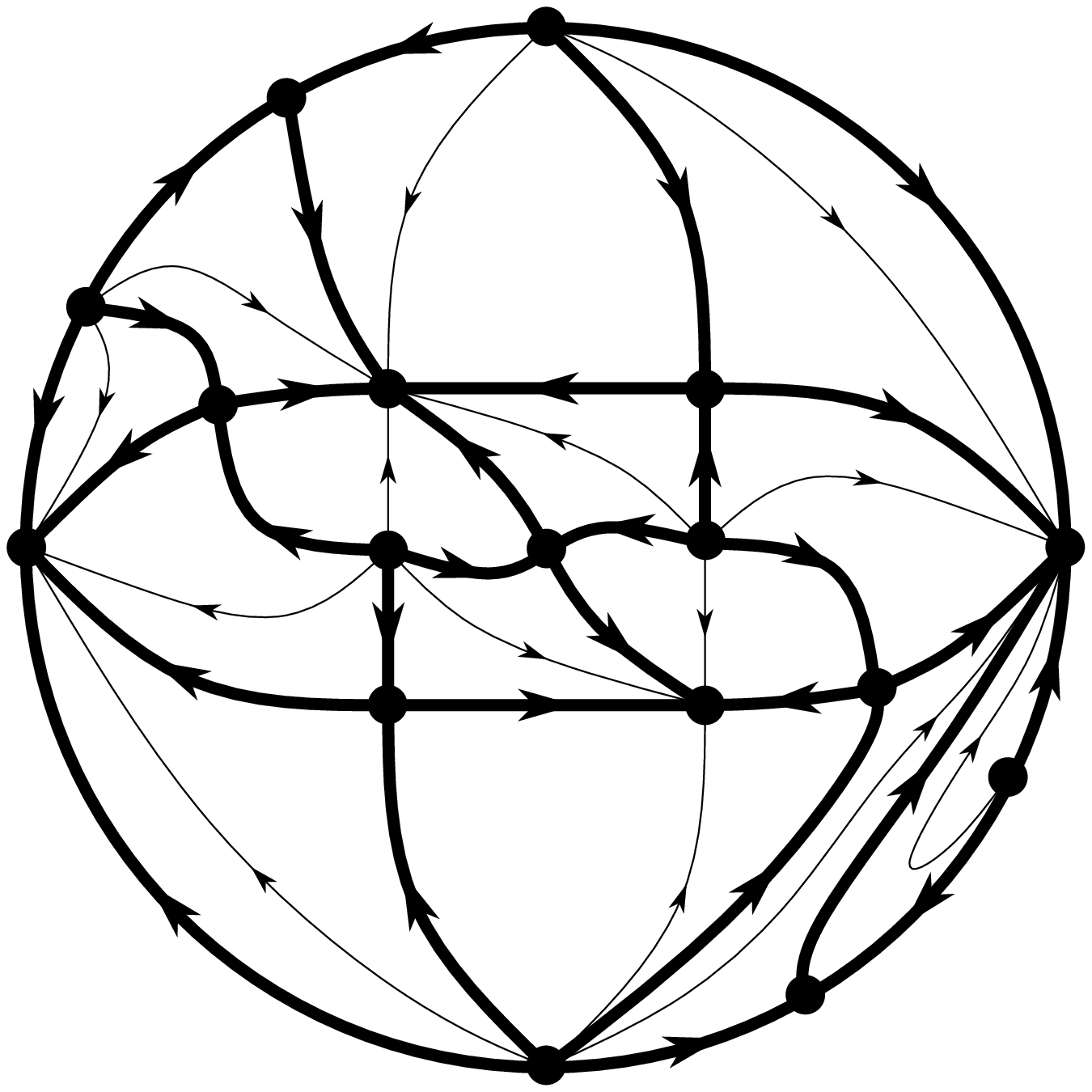} 
				\end{overpic}
				
				Case~$2.12a2$.
			\end{center}
		\end{minipage}	
		\begin{minipage}{3.1cm}
			\begin{center}
				\begin{overpic}[height=3cm]{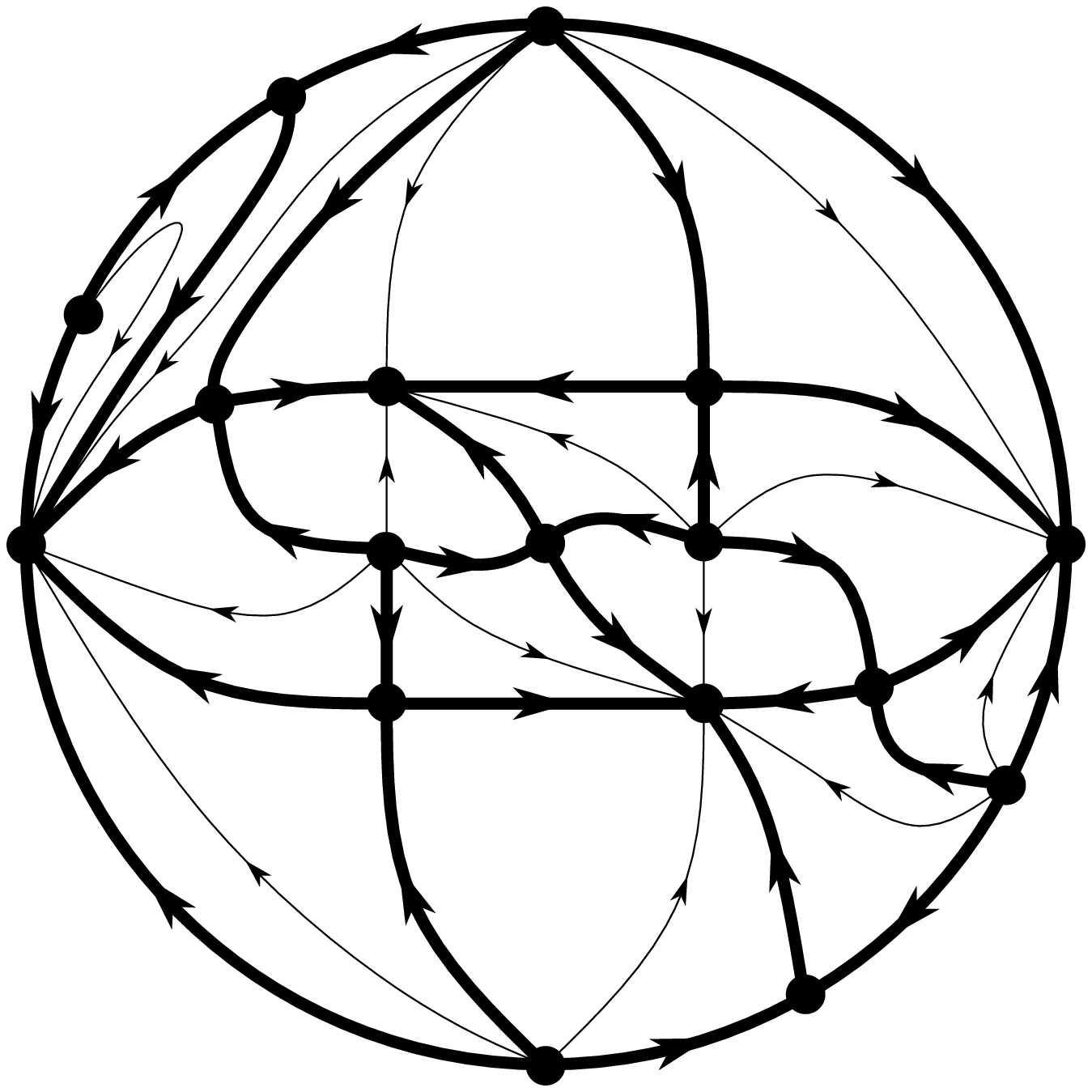} 
				\end{overpic}
				
				Case~$2.12a3$.
			\end{center}
		\end{minipage}
		\begin{minipage}{3.1cm}
			\begin{center}
				\begin{overpic}[height=3cm]{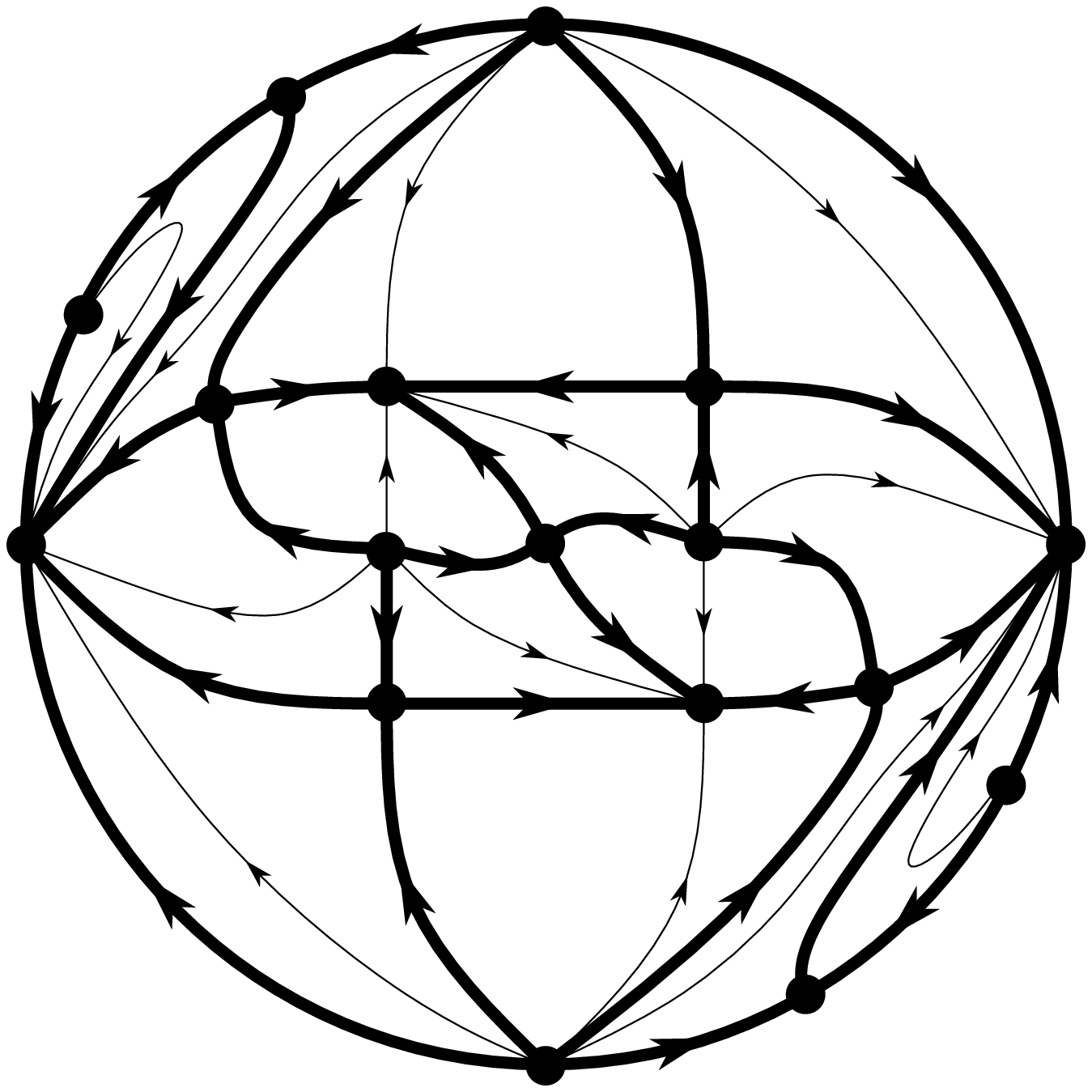} 
				\end{overpic}
				
				Case~$2.12a4$.
			\end{center}
		\end{minipage}
		\begin{minipage}{3.1cm}
			\begin{center}
				\begin{overpic}[height=3cm]{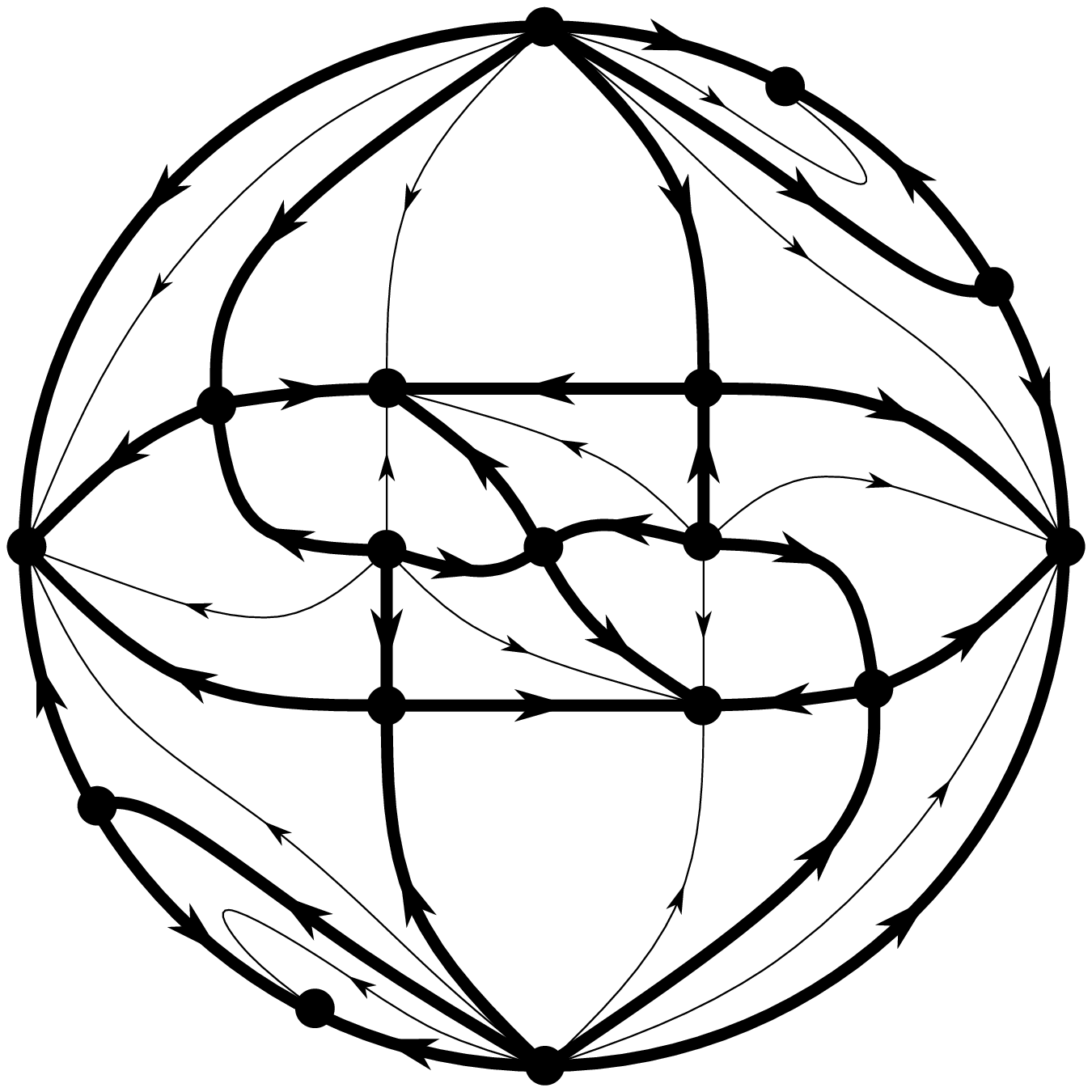} 
				\end{overpic}
				
				Case~$2.12b$.
			\end{center}
		\end{minipage}
		\begin{minipage}{3.1cm}
			\begin{center}
				\begin{overpic}[height=3cm]{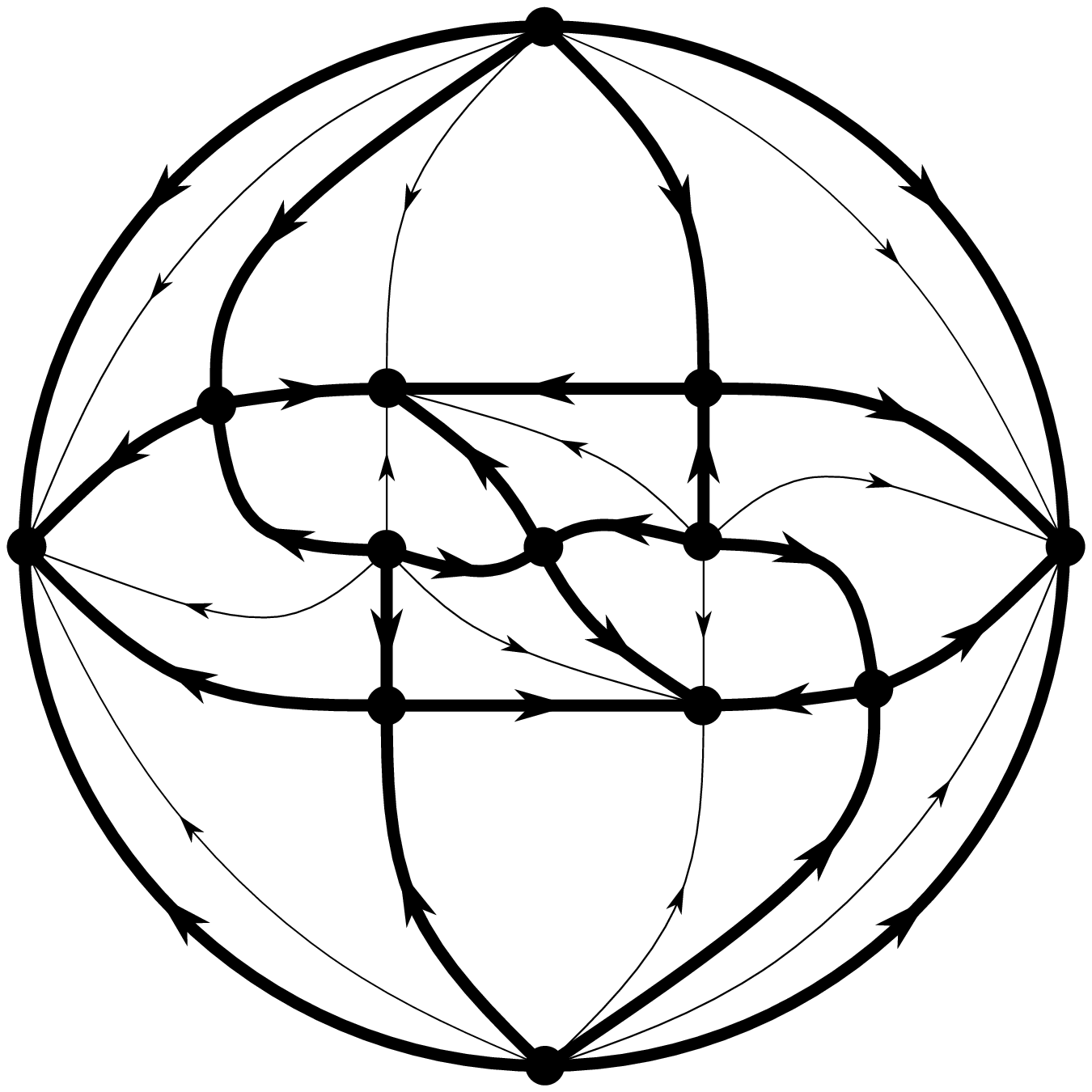} 
				\end{overpic}
				
				Case~$2.12c$.
			\end{center}
		\end{minipage}
	\end{center}
	$\;$
	\begin{center}
		\begin{minipage}{3.1cm}
			\begin{center}
				\begin{overpic}[height=3cm]{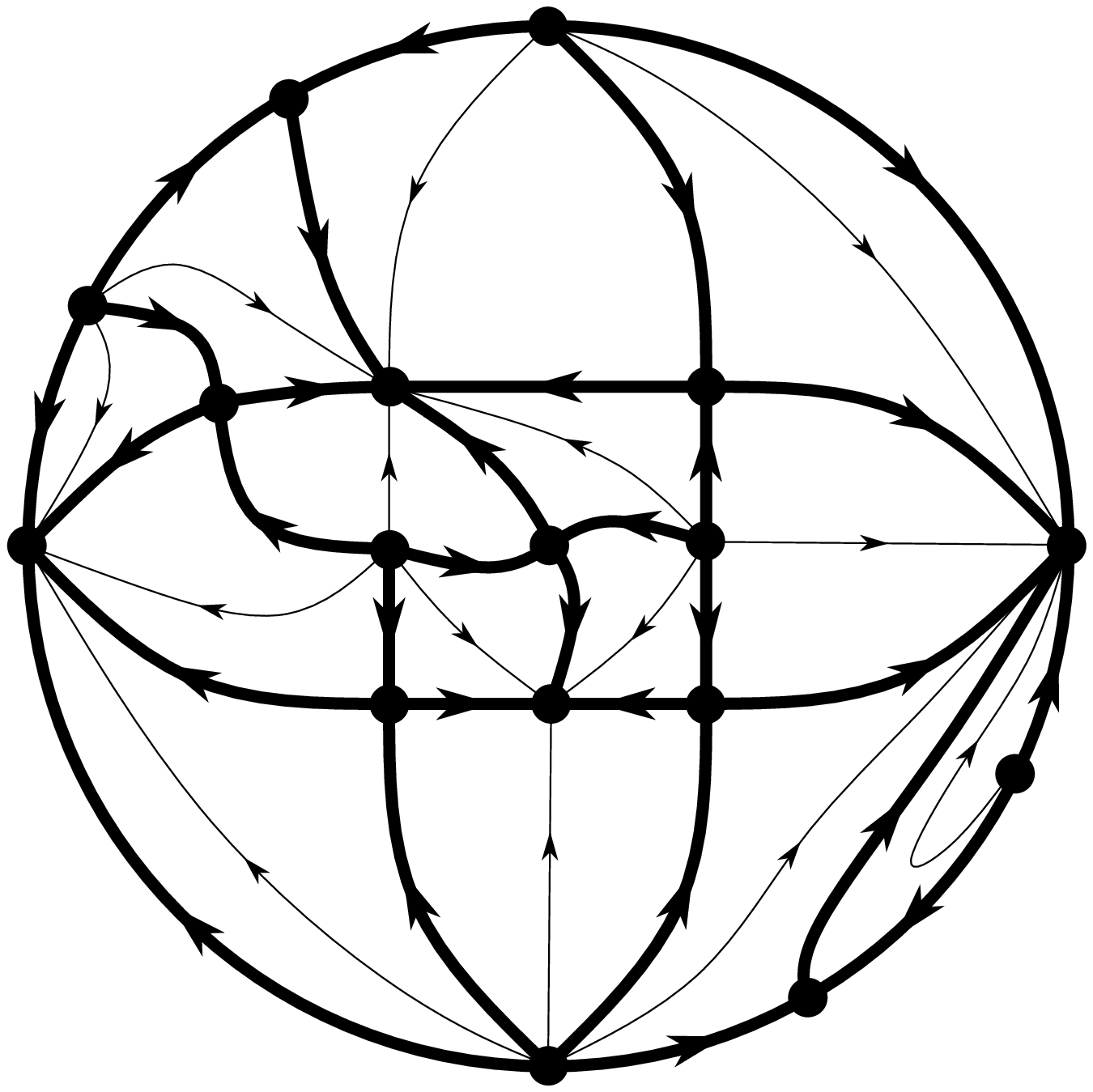} 
				\end{overpic}
				
				Case~$2.13a1$.
			\end{center}
		\end{minipage}
		\begin{minipage}{3.1cm}
			\begin{center}
				\begin{overpic}[height=3cm]{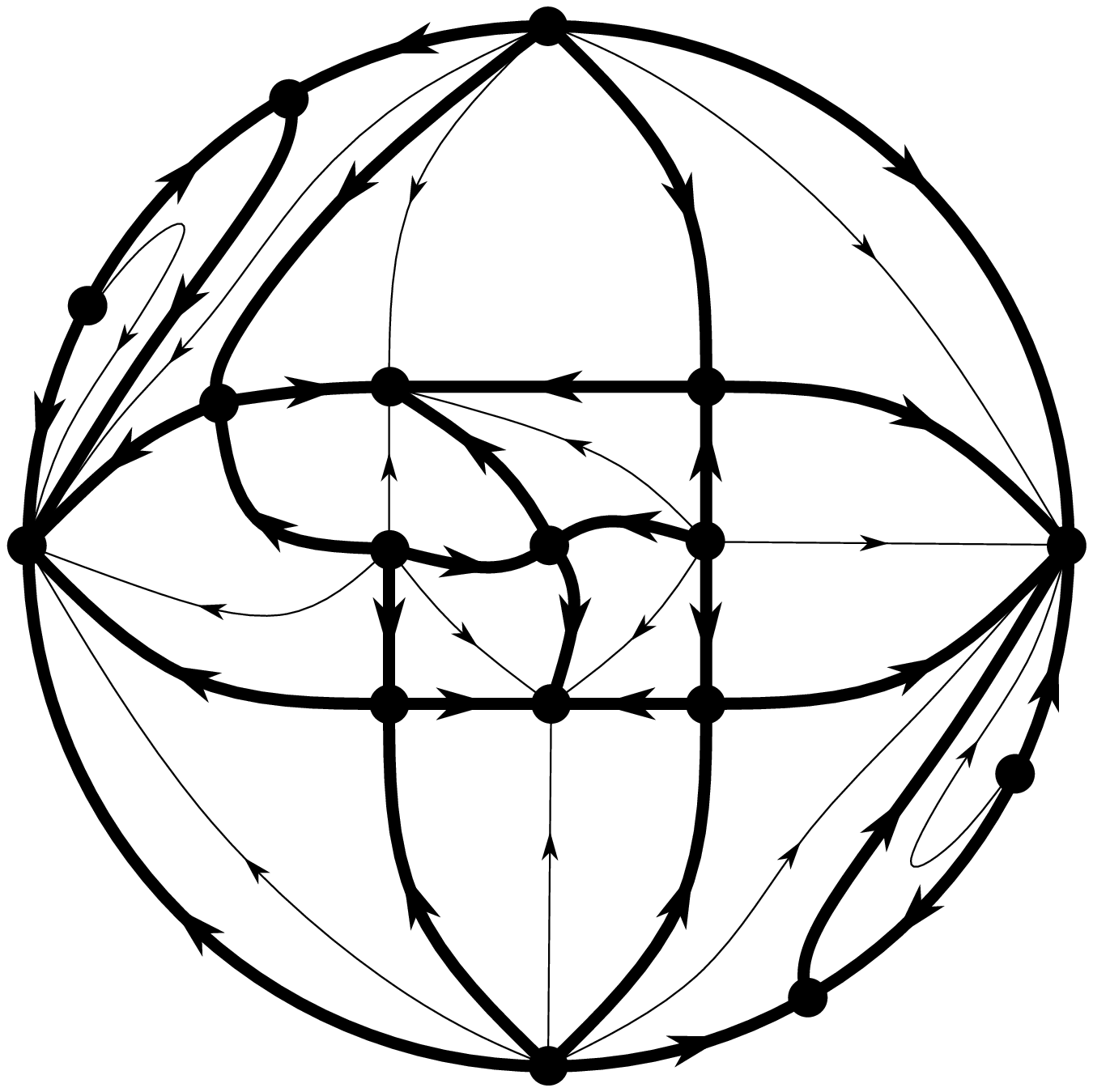} 
				\end{overpic}
				
				Case~$2.13a2$.
			\end{center}
		\end{minipage}
		\begin{minipage}{3.1cm}
			\begin{center}
				\begin{overpic}[height=3cm]{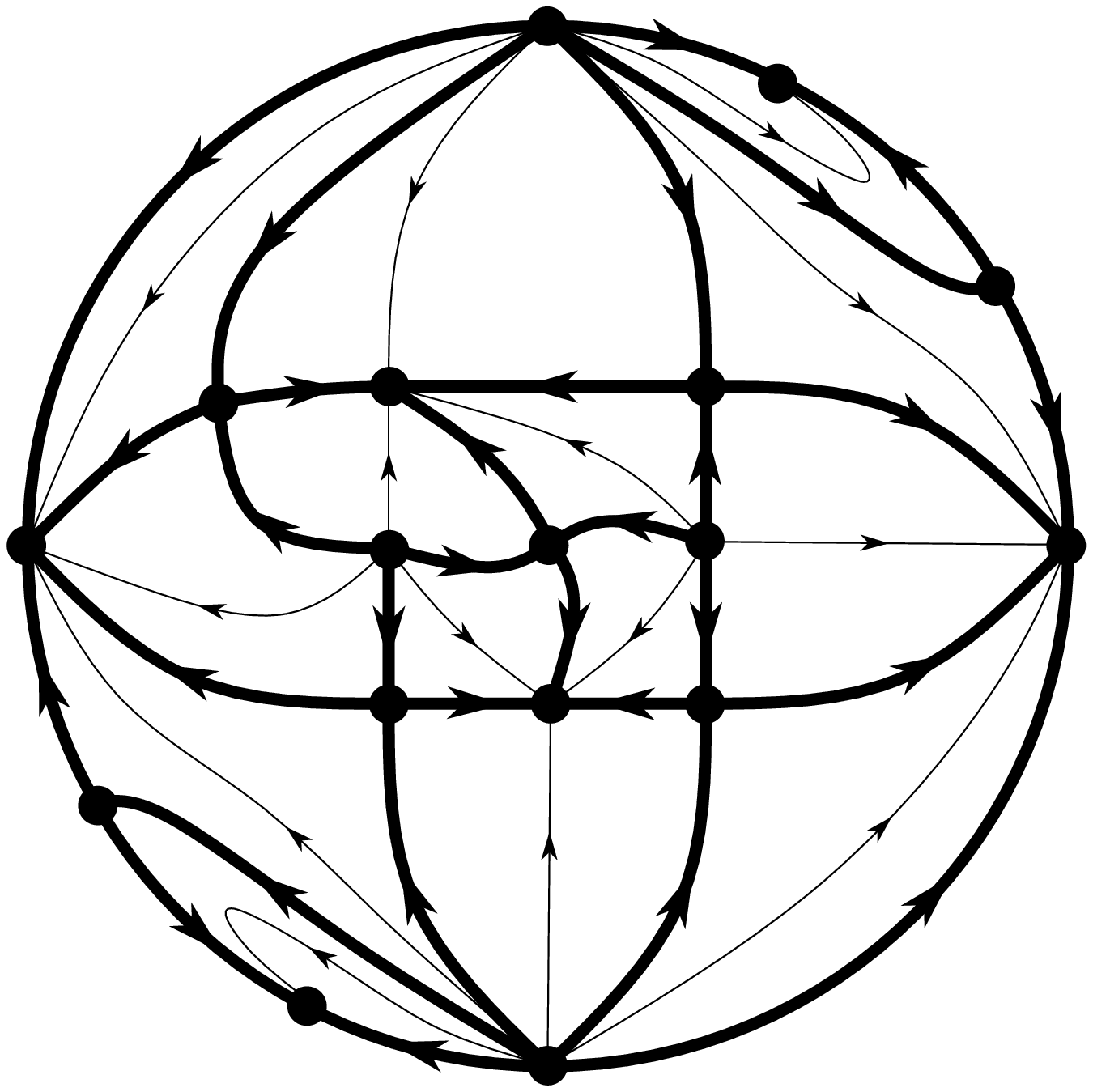} 
				\end{overpic}
				
				Case~$2.13b$.
			\end{center}
		\end{minipage}	
		\begin{minipage}{3.1cm}
			\begin{center}
				\begin{overpic}[height=3cm]{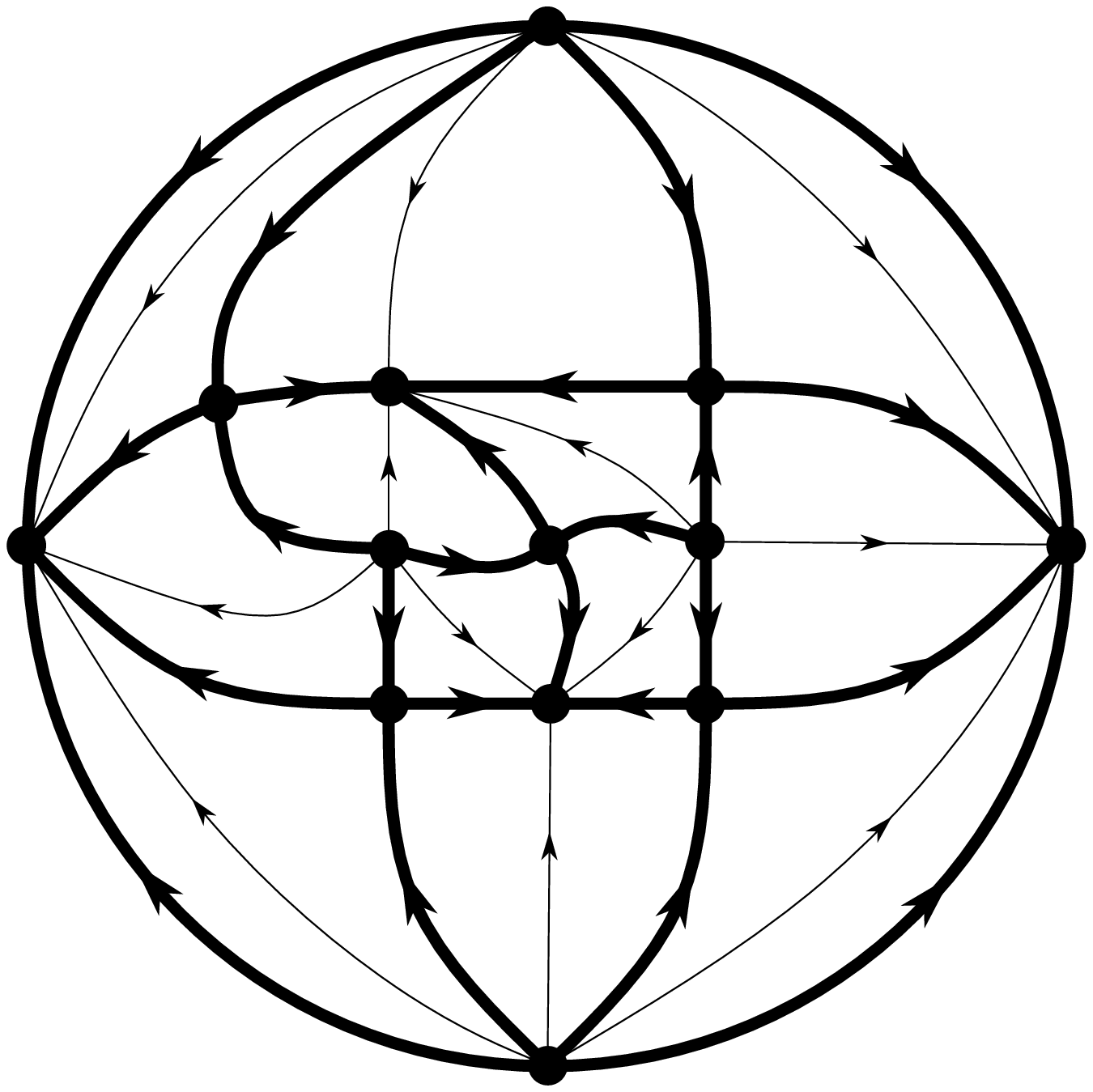} 
				\end{overpic}
				
				Case~$2.13c$.
			\end{center}
		\end{minipage}
	\end{center}
	$\;$
	\begin{center}
		\begin{minipage}{3.1cm}
			\begin{center}
				\begin{overpic}[height=3cm]{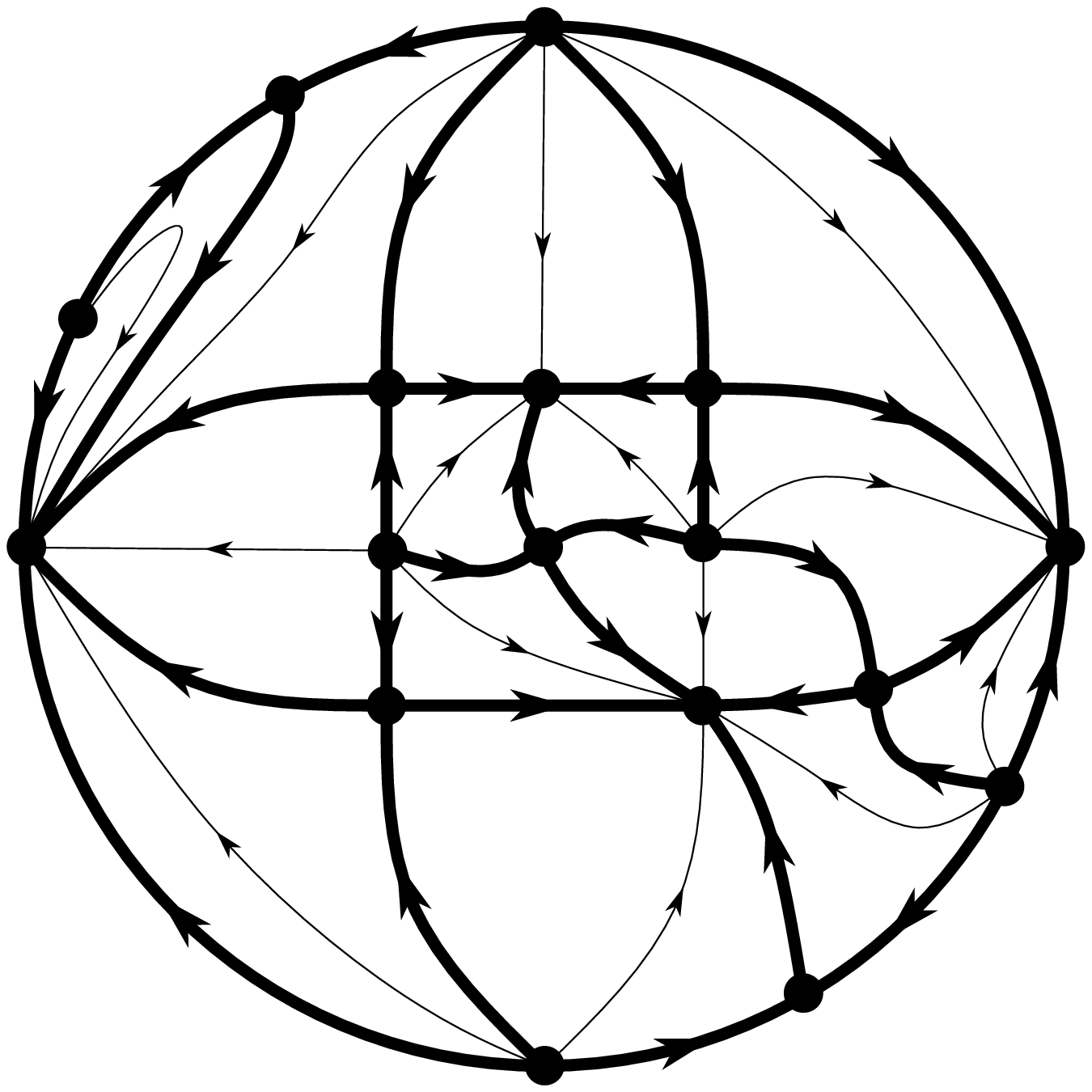} 
				\end{overpic}
				
				Case~$2.14a1$.
			\end{center}
		\end{minipage}
		\begin{minipage}{3.1cm}
			\begin{center}
				\begin{overpic}[height=3cm]{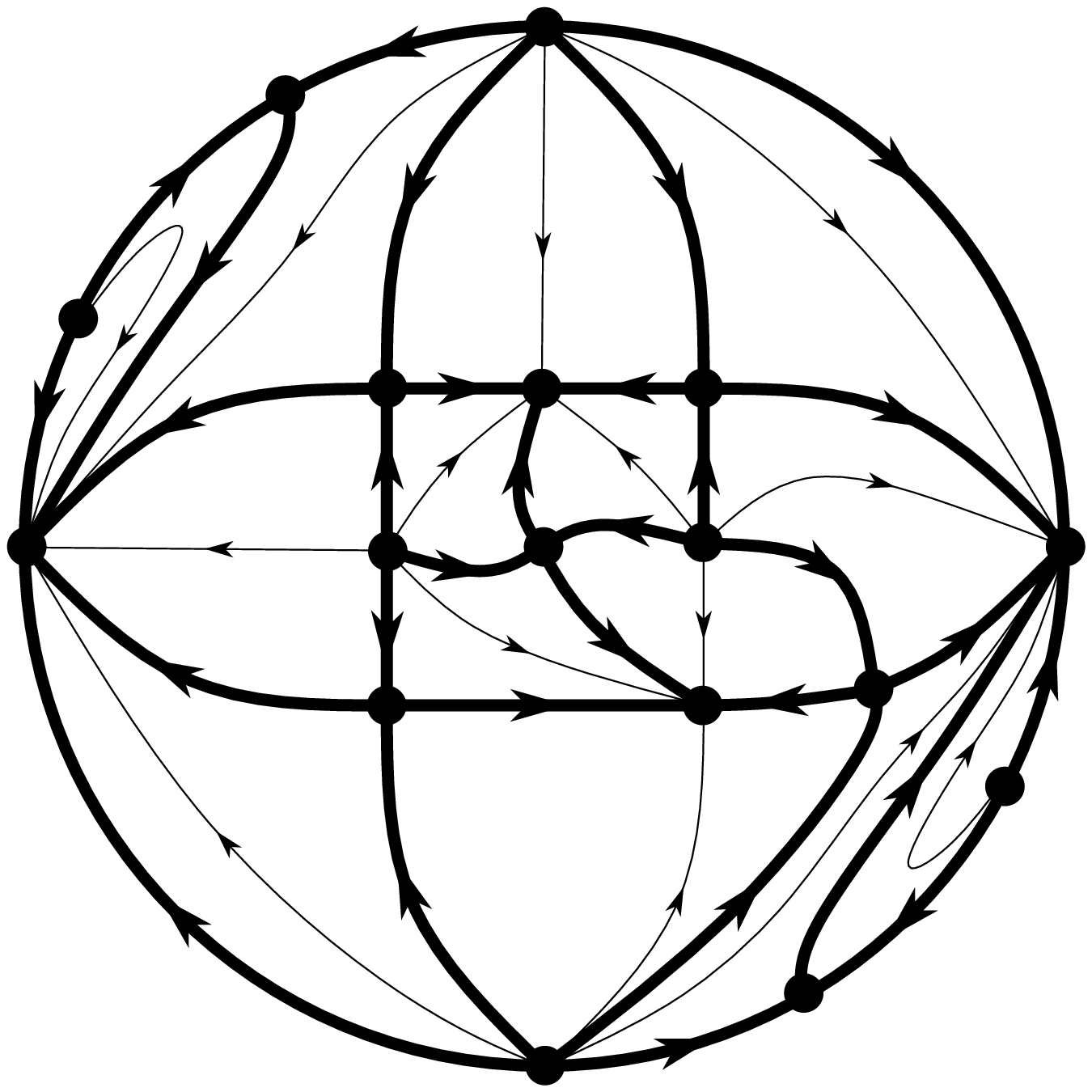} 
				\end{overpic}
				
				Case~$2.14a2$.
			\end{center}
		\end{minipage}
		\begin{minipage}{3.1cm}
			\begin{center}
				\begin{overpic}[height=3cm]{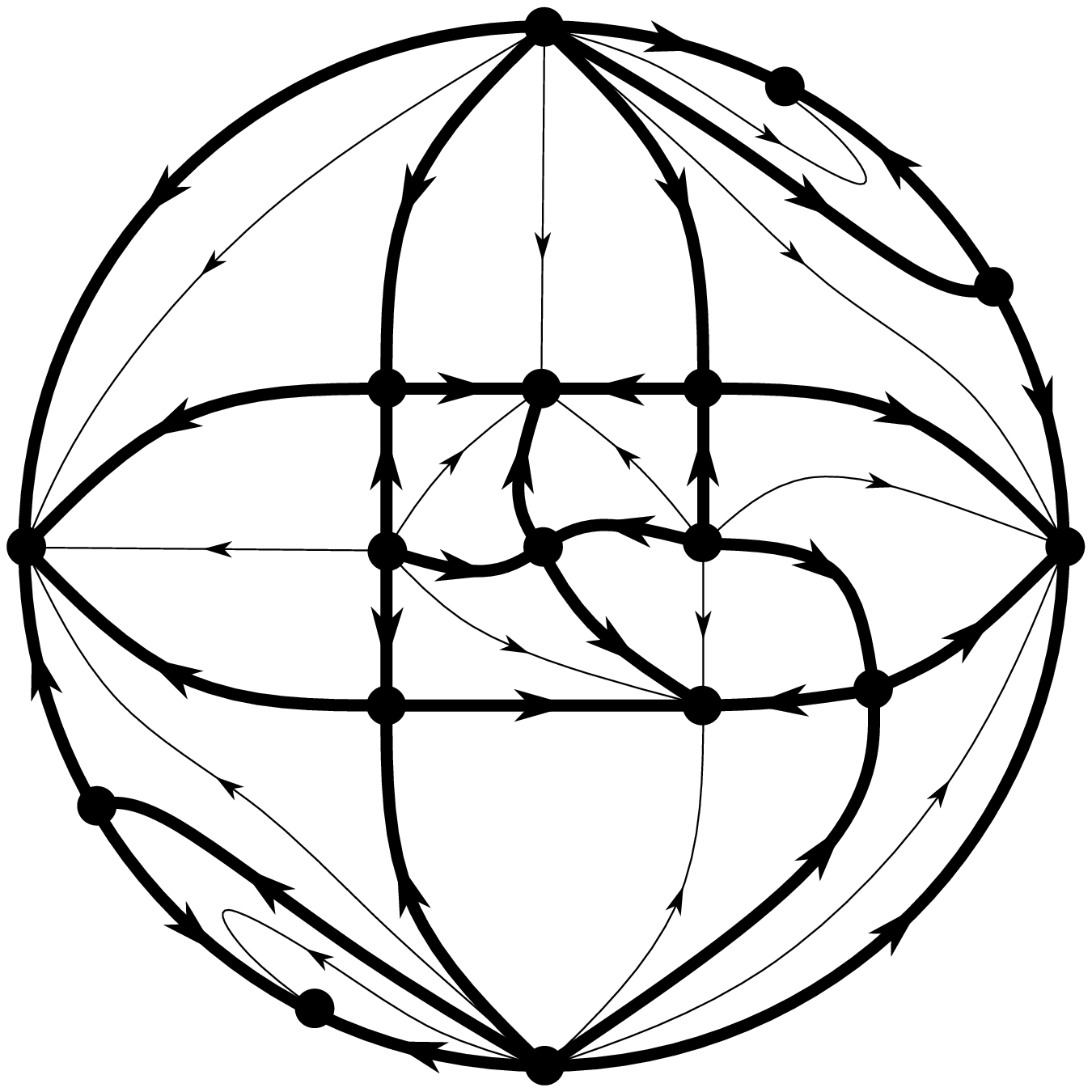} 
				\end{overpic}
				
				Case~$2.14b$.
			\end{center}
		\end{minipage}
		\begin{minipage}{3.1cm}
			\begin{center}
				\begin{overpic}[height=3cm]{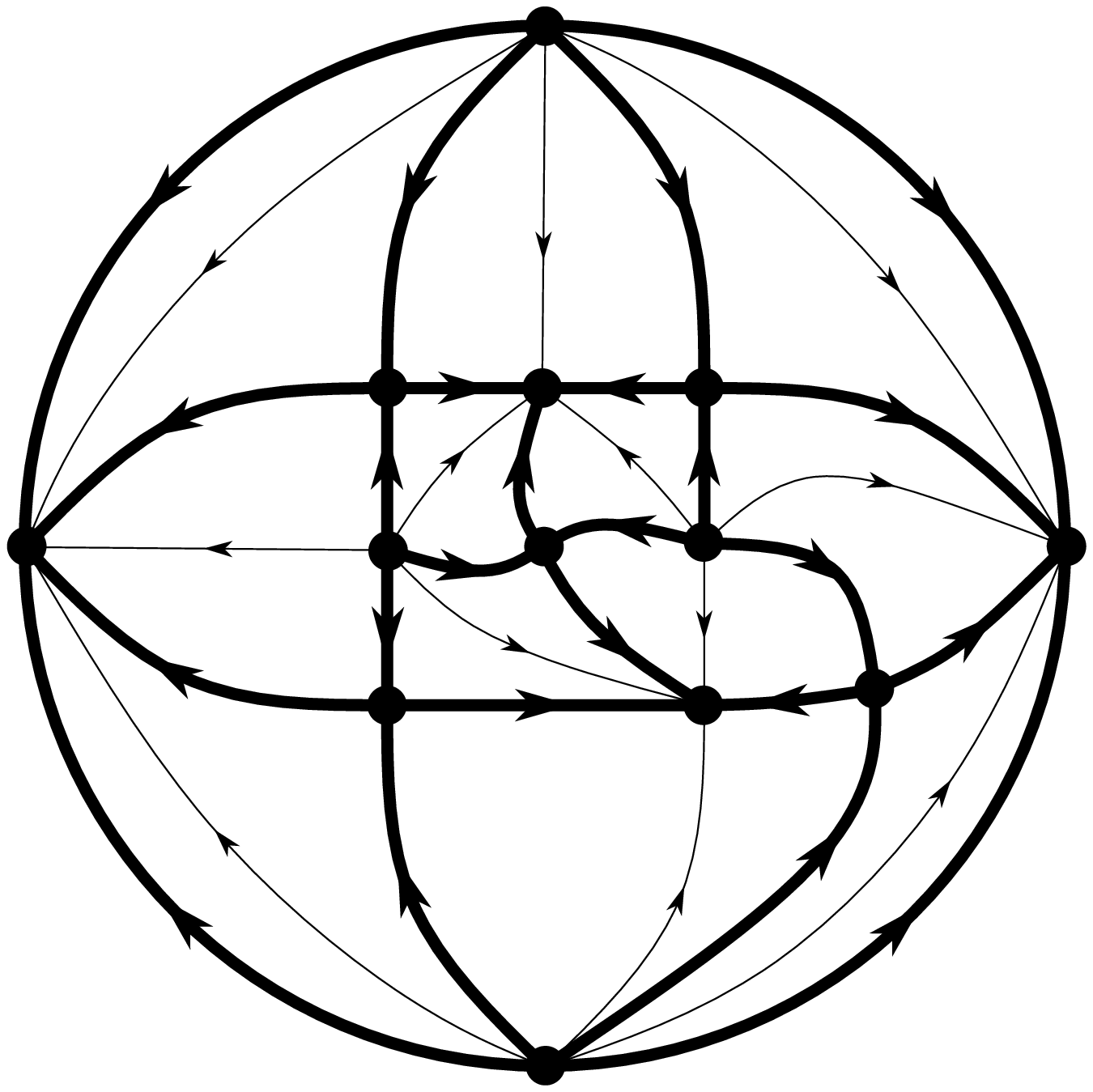} 
				\end{overpic}
				
				Case~$2.14c$.
			\end{center}
		\end{minipage}
	\end{center}
	$\;$
	\begin{center}
		\begin{minipage}{3.1cm}
			\begin{center}
				\begin{overpic}[height=3cm]{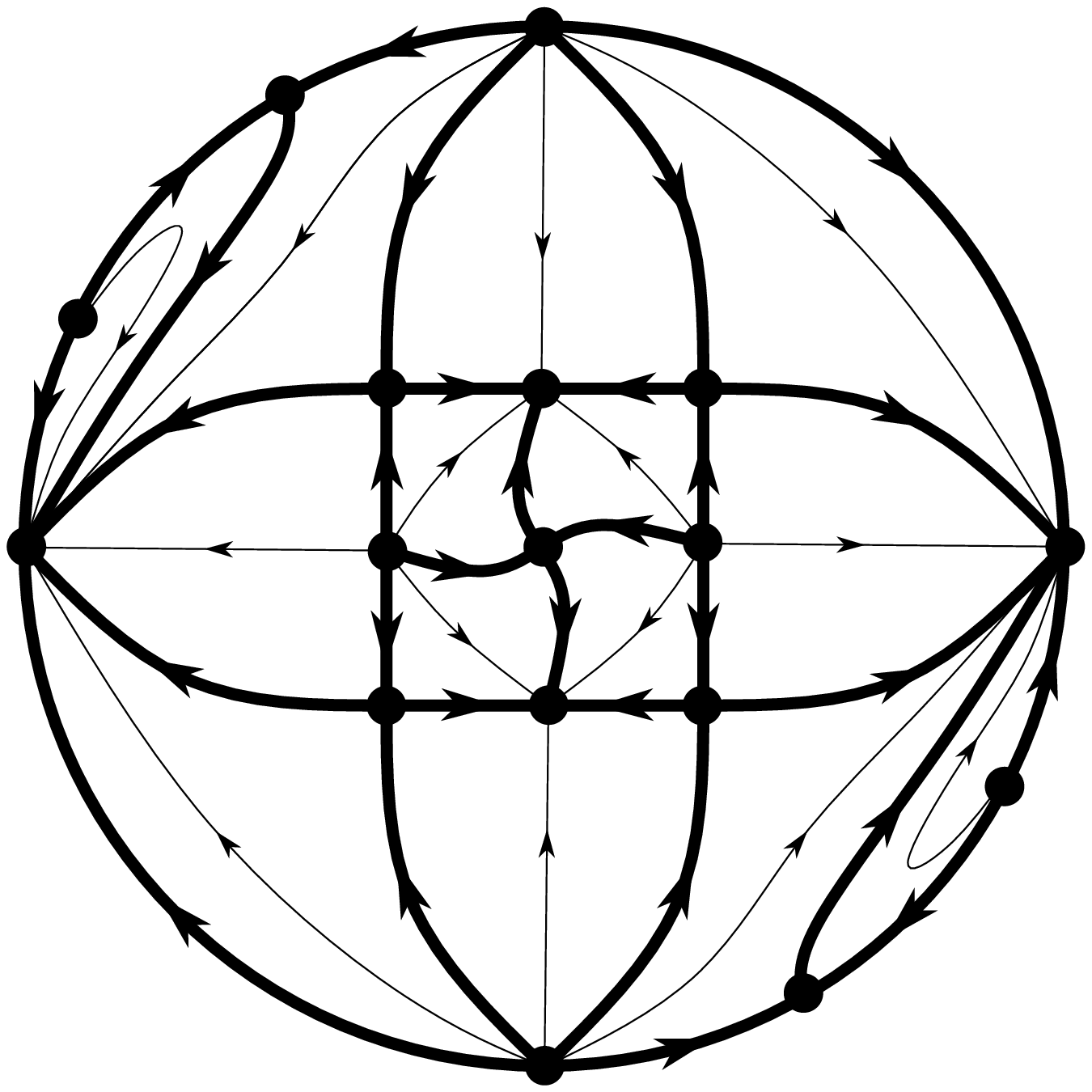} 
				\end{overpic}
				
				Case~$2.15a$.
			\end{center}
		\end{minipage}
		\begin{minipage}{3.1cm}
			\begin{center}
				\begin{overpic}[height=3cm]{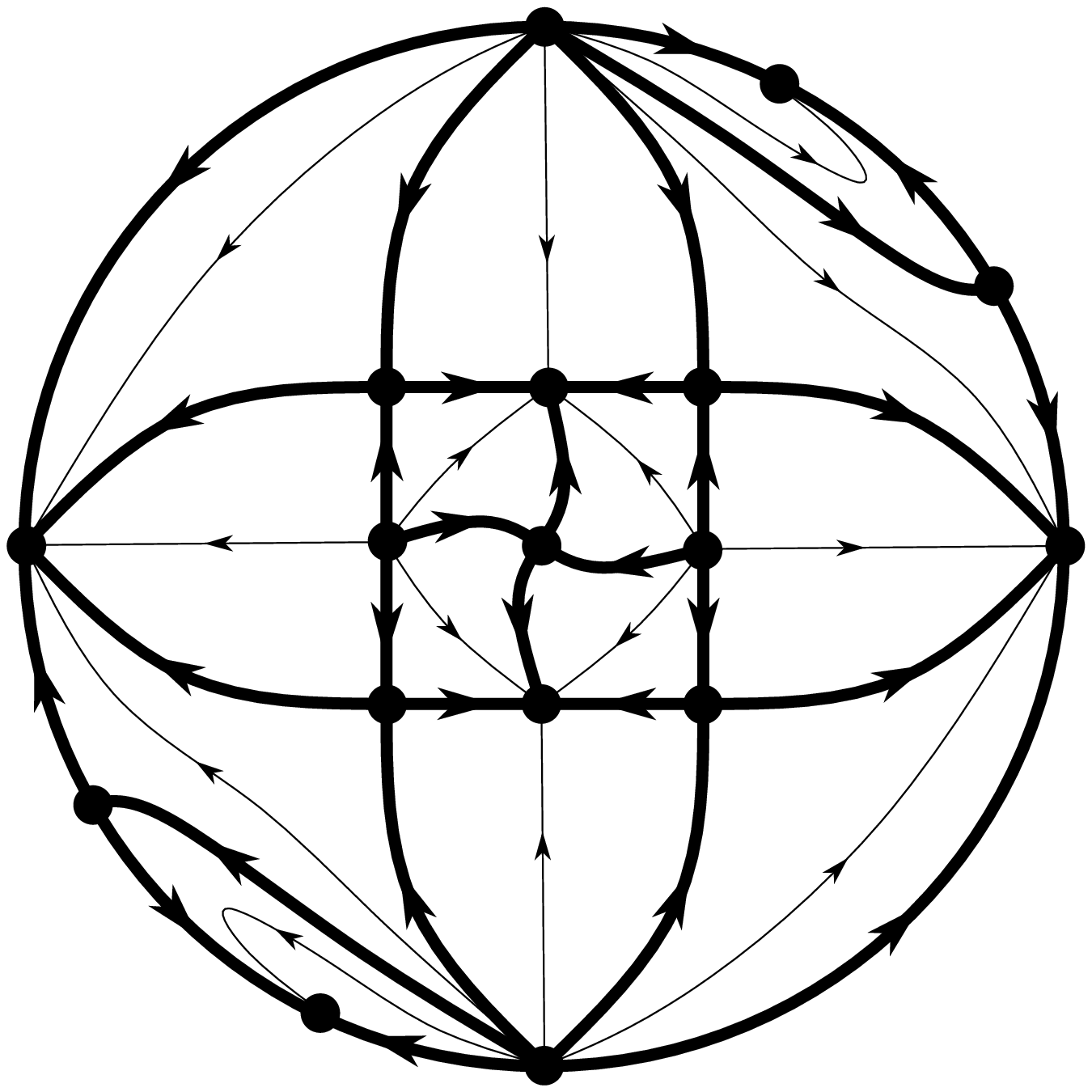} 
				\end{overpic}
				
				Case~$2.15b$.
			\end{center}
		\end{minipage}	
		\begin{minipage}{3.1cm}
			\begin{center}
				\begin{overpic}[height=3cm]{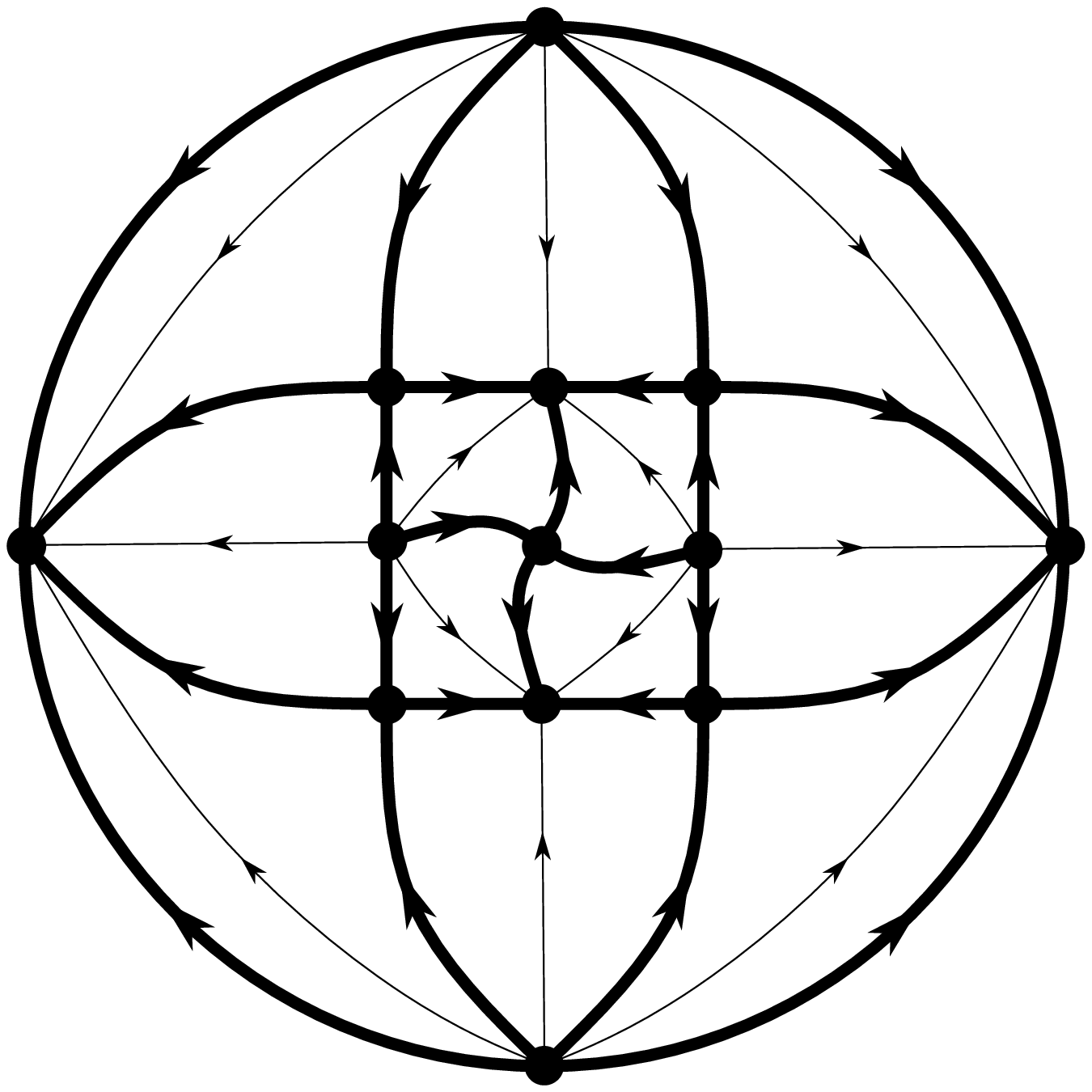} 
				\end{overpic}
				
				Case~$2.15c$.
			\end{center}
		\end{minipage}
	\end{center}
	\caption{Phase portraits from Cases~$2.10$ to $2.15$.}\label{Case1.2c}
\end{figure}

\clearpage

\section{Phase portraits under the hypothesis of Theorem~\ref{Theo13} (Family 3)}\label{Ap2}

\begin{table}[h]
	\caption{Table of the realizable cases under hypothesis of Theorem~\ref{Theo13} (Family $3$).}\label{Table5}
	\begin{tabular}{c c c c c}
		\hline 
		Cases & $q_1$ & $q_2$ & $q_3$ & $q_4$ \\
		\hline
		\rowcolor{mygray}
		$3.1$ & $q_1>p_2$ & $q_2>p_3$ & $q_3<p_4$ & $q_4<p_1$ \\
		$3.2$ & $q_1<p_2$ & $q_2>p_3$ & $q_3<p_4$ & $q_4<p_1$ \\
		\rowcolor{mygray}
		$3.3$ & $q_1>p_2$ & $q_2>p_3$ & $q_3>p_4$ & $q_4<p_1$ \\
		$3.4$ & $q_1<p_2$ & $q_2>p_3$ & $q_3>p_4$ & $q_4<p_1$ \\
		\rowcolor{mygray}
		$3.5$ & $q_1>p_2$ & $q_2<p_3$ & $q_3<p_4$ & $q_4<p_1$ \\
		$3.6$ & $q_1>p_2$ & $q_2<p_3$ & $q_3>p_4$ & $q_4<p_1$ \\
		\rowcolor{mygray}
		$3.7$ & $q_1<p_2$ & $q_2<p_3$ & $q_3>p_4$ & $q_4<p_1$ \\
		$3.8$ & $q_1>p_2$ & $q_2>p_3$ & $q_3<p_4$ & $q_4>p_1$ \\
		\rowcolor{mygray}
		$3.9$ & $q_1<p_2$ & $q_2>p_3$ & $q_3<p_4$ & $q_4>p_1$ \\
		$3.10$ & $q_1>p_2$ & $q_2>p_3$ & $q_3>p_4$ & $q_4>p_1$  \\
		\rowcolor{mygray}
		$3.11$ & $q_1<p_2$ & $q_2>p_3$ & $q_3>p_4$ & $q_4>p_1$ \\
		$3.12$ & $q_1>p_2$ & $q_2<p_3$ & $q_3<p_4$ & $q_4>p_1$ \\
		\rowcolor{mygray}
		$3.13$ & $q_1<p_2$ & $q_2<p_3$ & $q_3<p_4$ & $q_4>p_1$ \\
		$3.14$ & $q_1>p_2$ & $q_2<p_3$ & $q_3>p_4$ & $q_4>p_1$ \\
		\rowcolor{mygray}
		$3.15$ & $q_1<p_2$ & $q_2<p_3$ & $q_3>p_4$ & $q_4>p_1$  \\
		\hline
	\end{tabular}
\end{table}

\begin{figure}[h]
	\begin{center}
		\begin{minipage}{3.1cm}
			\begin{center}
				\begin{overpic}[height=3cm]{Case1.3.1.eps} 
				\end{overpic}
				
				Case~$3.1$.
			\end{center}
		\end{minipage}
		\begin{minipage}{3.1cm}
			\begin{center}
				\begin{overpic}[height=3cm]{Case1.3.2.eps} 
				\end{overpic}
				
				Case~$3.2$.
			\end{center}
		\end{minipage}
		\begin{minipage}{3.1cm}
			\begin{center}
				\begin{overpic}[height=3cm]{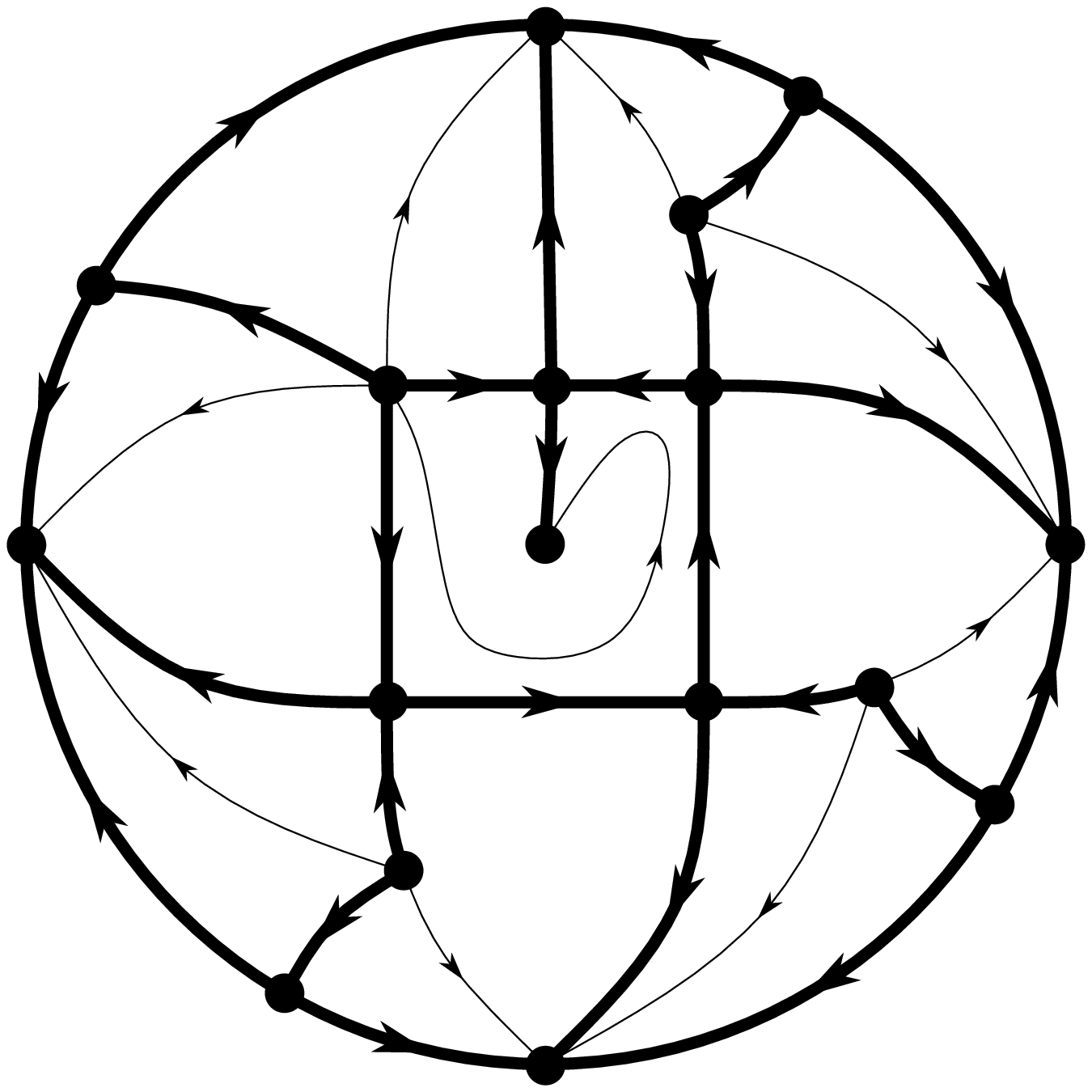} 
				\end{overpic}
				
				Case~$3.3$.
			\end{center}
		\end{minipage}	
		\begin{minipage}{3.1cm}
			\begin{center}
				\begin{overpic}[height=3cm]{Case1.3.4.eps} 
				\end{overpic}
				
				Case~$3.4$.
			\end{center}
		\end{minipage}
		\begin{minipage}{3.1cm}
			\begin{center}
				\begin{overpic}[height=3cm]{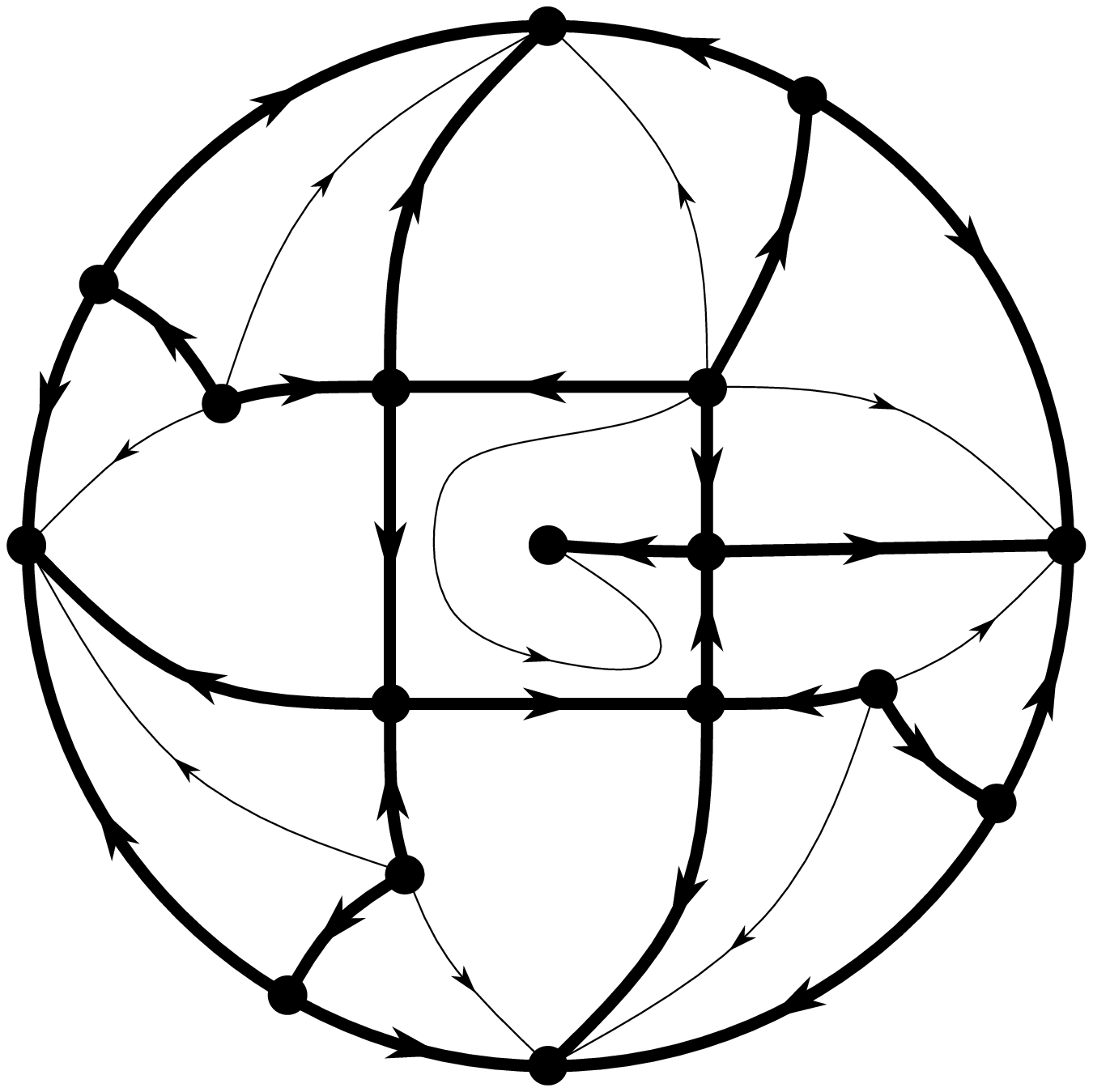} 
				\end{overpic}
				
				Case~$3.5$.
			\end{center}
		\end{minipage}
	\end{center}
	$\;$
	\begin{center}
		\begin{minipage}{3.1cm}
			\begin{center}
				\begin{overpic}[height=3cm]{Case1.3.6.eps} 
				\end{overpic}
				
				Case~$3.6$.
			\end{center}
		\end{minipage}
		\begin{minipage}{3.1cm}
			\begin{center}
				\begin{overpic}[height=3cm]{Case1.3.7.eps} 
				\end{overpic}
				
				Case~$3.7$.
			\end{center}
		\end{minipage}
		\begin{minipage}{3.1cm}
			\begin{center}
				\begin{overpic}[height=3cm]{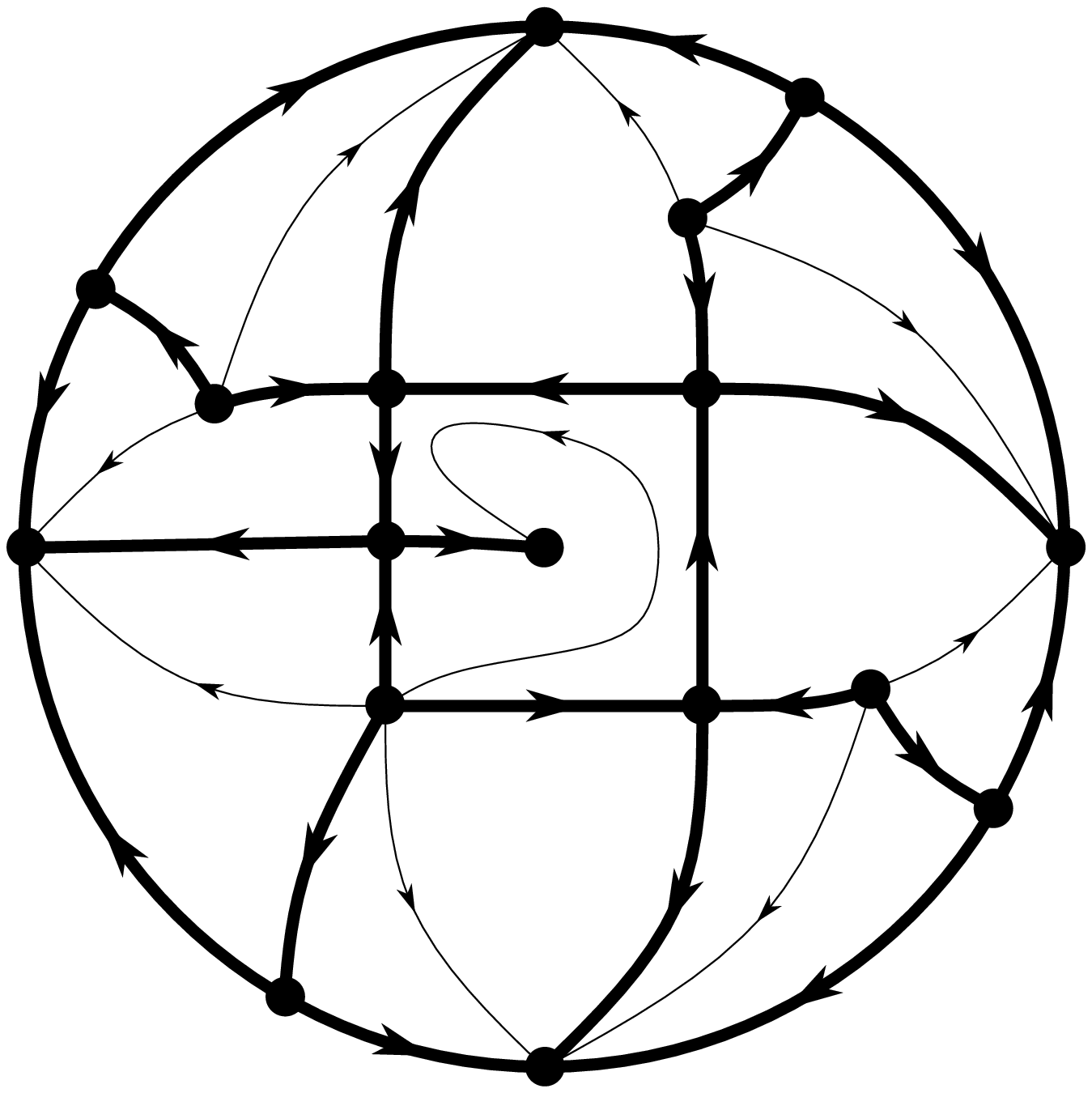} 
				\end{overpic}
				
				Case~$3.8$.
			\end{center}
		\end{minipage}
		\begin{minipage}{3.1cm}
			\begin{center}
				\begin{overpic}[height=3cm]{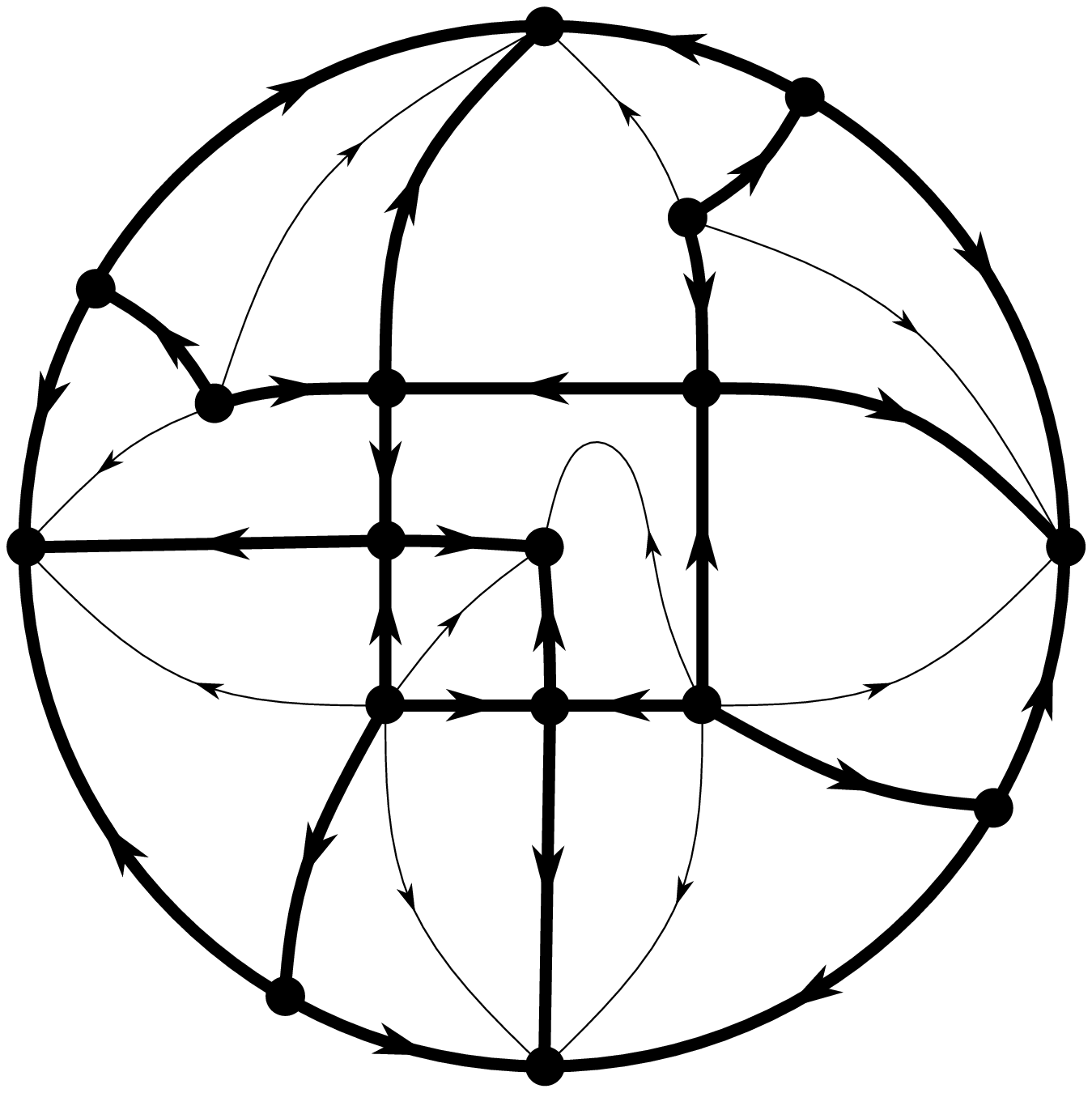} 
				\end{overpic}
				
				Case~$3.9$.
			\end{center}
		\end{minipage}
		\begin{minipage}{3.1cm}
			\begin{center}
				\begin{overpic}[height=3cm]{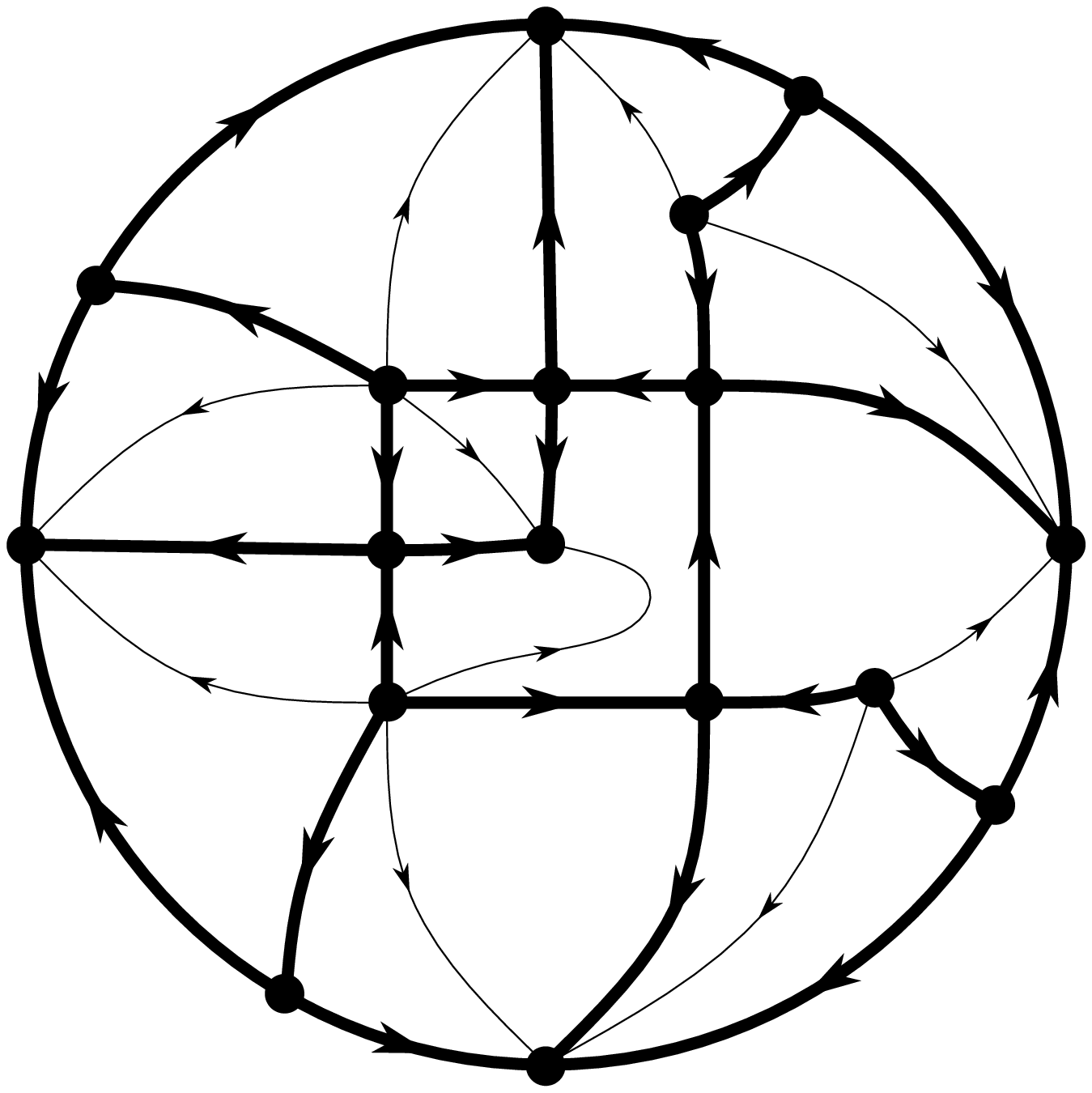} 
				\end{overpic}
				
				Case~$3.10$.
			\end{center}
		\end{minipage}
	\end{center}
	$\;$
	\begin{center}
		\begin{minipage}{3.1cm}
			\begin{center}
				\begin{overpic}[height=3cm]{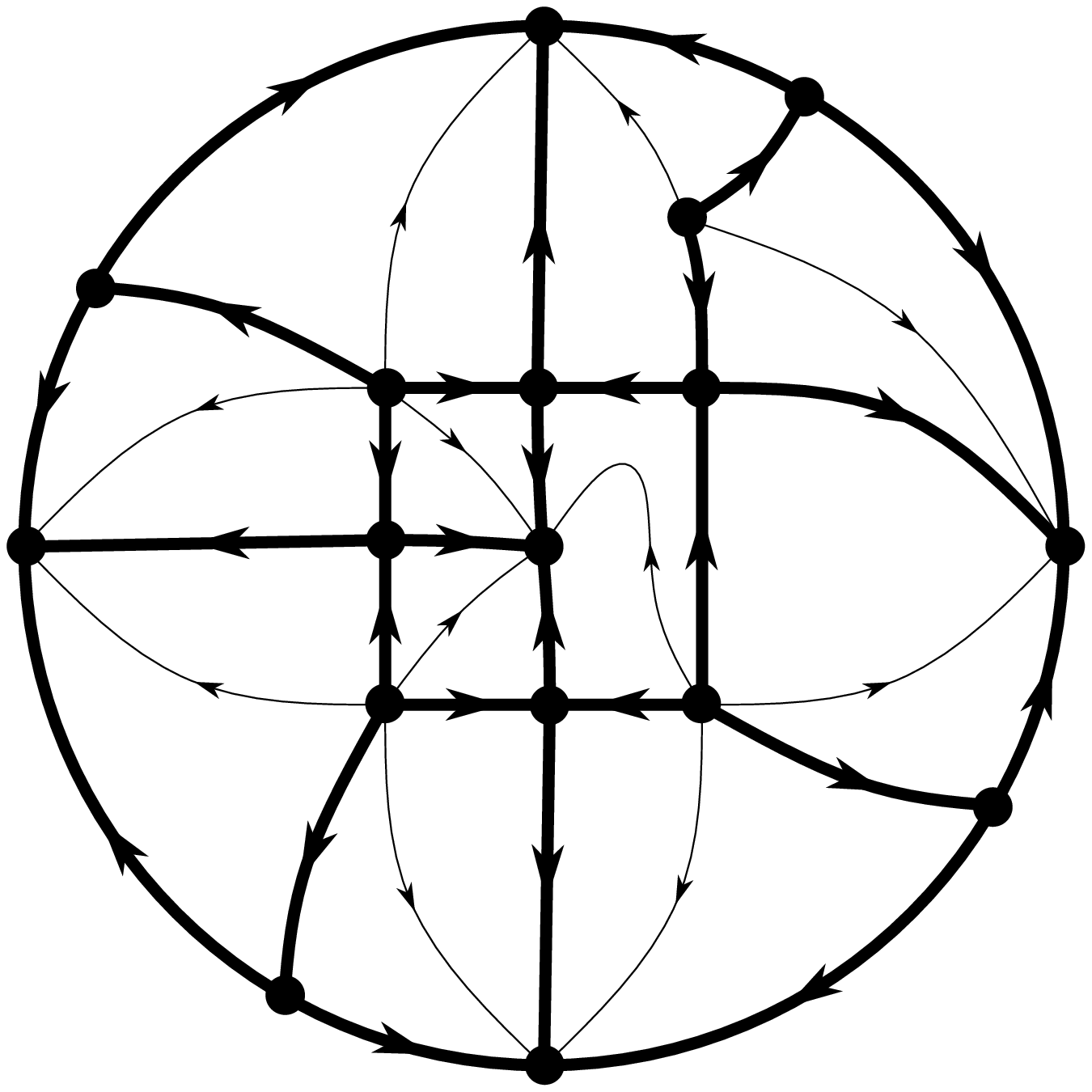} 
				\end{overpic}
				
				Case~$3.11$.
			\end{center}
		\end{minipage}
		\begin{minipage}{3.1cm}
			\begin{center}
				\begin{overpic}[height=3cm]{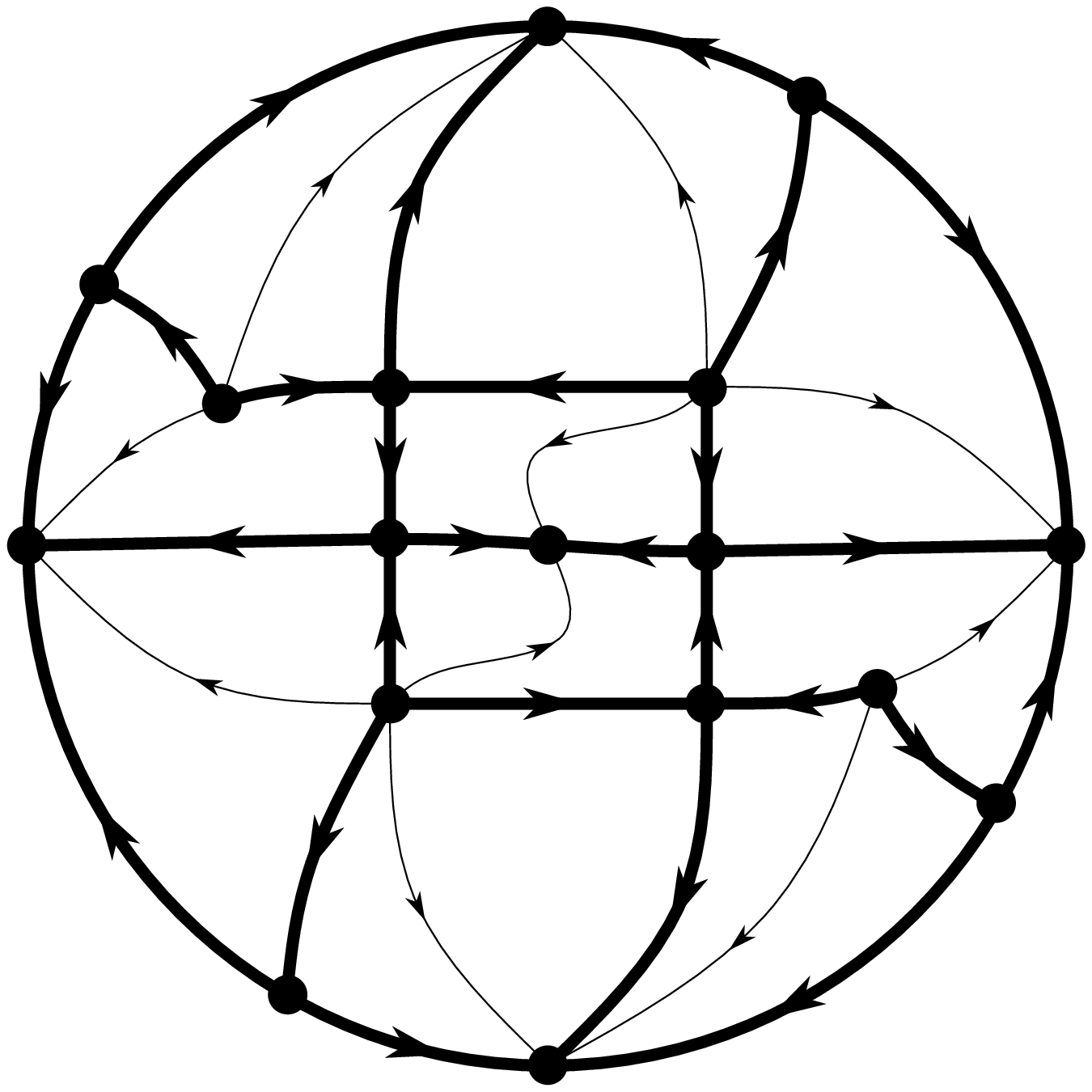} 
				\end{overpic}
				
				Case~$3.12$.
			\end{center}
		\end{minipage}
		\begin{minipage}{3.1cm}
			\begin{center}
				\begin{overpic}[height=3cm]{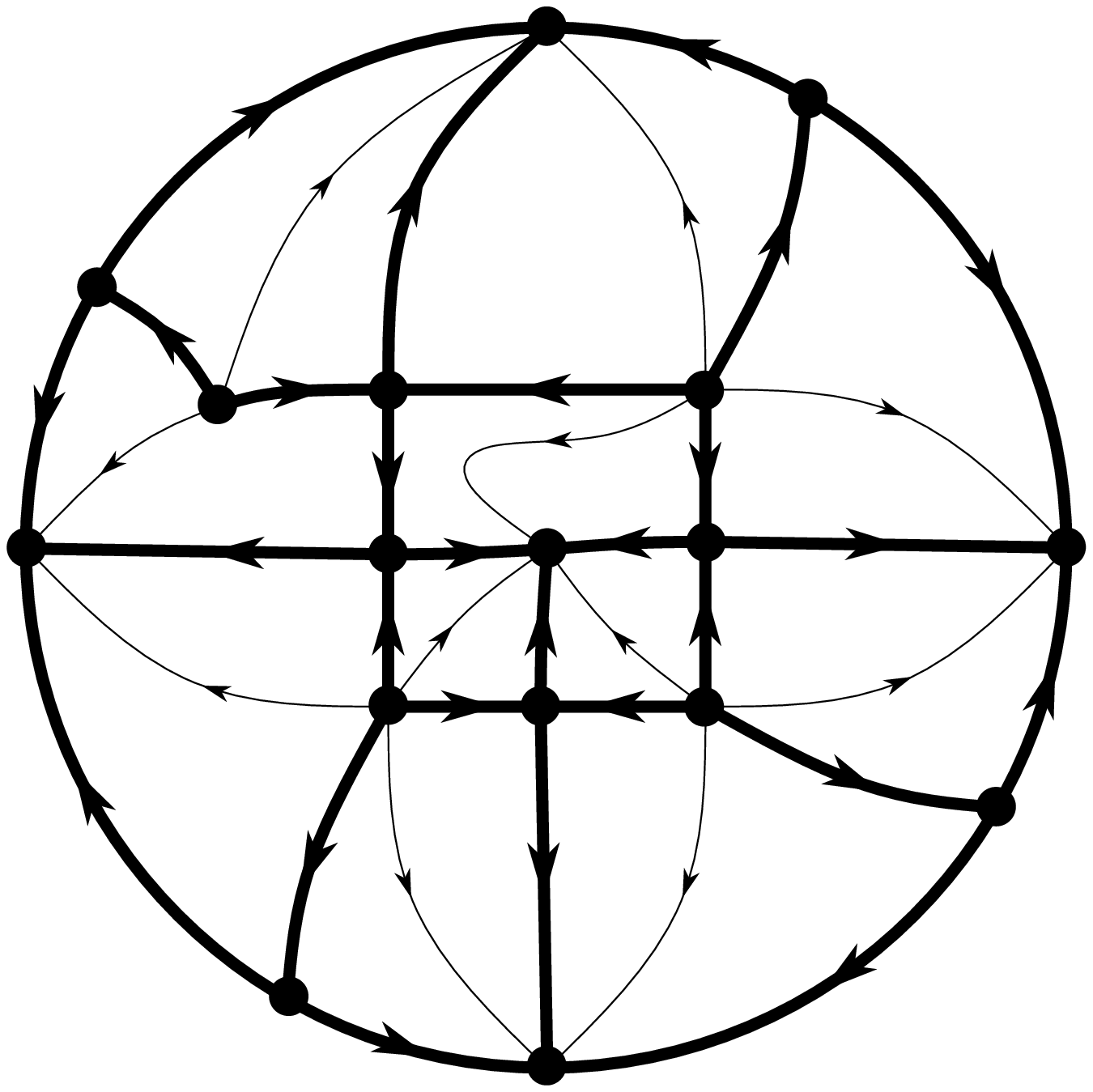} 
				\end{overpic}
				
				Case~$3.13$.
			\end{center}
		\end{minipage}
		\begin{minipage}{3.1cm}
			\begin{center}
				\begin{overpic}[height=3cm]{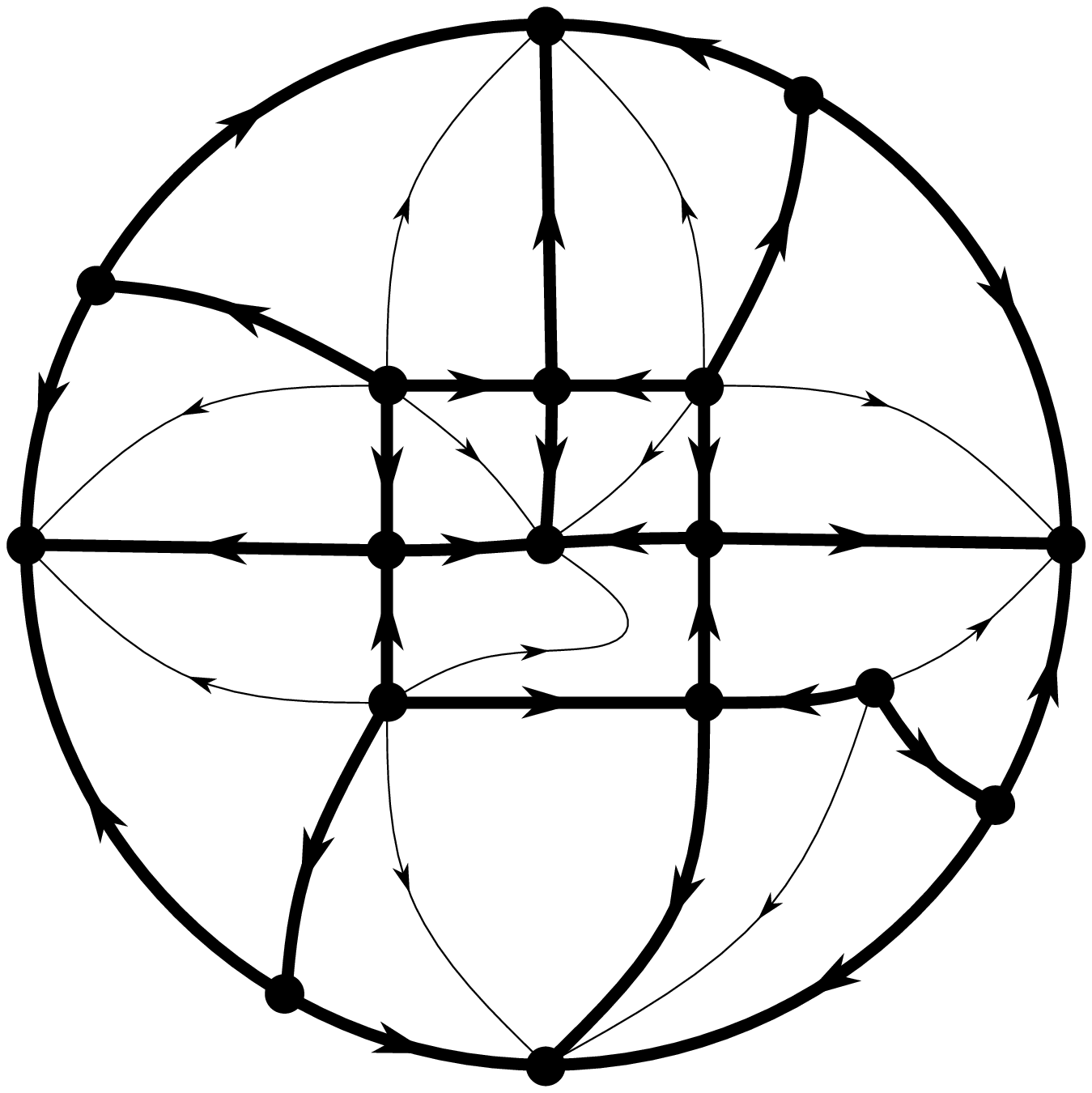} 
				\end{overpic}
				
				Case~$3.14$.
			\end{center}
		\end{minipage}
		\begin{minipage}{3.1cm}
			\begin{center}
				\begin{overpic}[height=3cm]{Case1.3.15.eps} 
				\end{overpic}
				
				Case~$3.15$.
			\end{center}
		\end{minipage}
	\end{center}
	\caption{Phase portraits of Family~$3$.}\label{Case1.3}
\end{figure}	

\section{Phase portraits under the hypothesis of Theorem~\ref{Theo14} (Family 4)}\label{Ap3}

To simplify the Table~\ref{Table6}, we denote,
	\[\det A=b_{01}-a_{01}b_{10}, \quad T=(\alpha-1)\alpha+b_{01}(\beta-1)\beta, \quad K=a_{01}b_{01}(\beta-1)\beta-b_{10}(\alpha-1)\alpha,\]
	\[\Delta=(b_{10}-a_{01})^2+4b_{01}, \quad \delta=\left\{\begin{array}{ll} 2\frac{|b_{01}|}{a_{01}}+(b_{10}-a_{01}), & \text{if } a_{01}\neq0, \vspace{0.1cm} \\ +\infty, & \text{if } a_{01}=0. \end{array}\right.\]

\begin{table}[h]
	\caption{Table of the realizable cases under hypothesis of Theorem~\ref{Theo14} (Family $4$).}\label{Table6}
	\begin{tabular}{c c c c c c c c c c}
		\hline 
		Cases & $q_1$ & $q_2$ & $q_3$ & $q_4$ & $\det A$ & $T$ & $K$ & $\Delta$ & $\delta$ \\
		\hline
		\rowcolor{mygray}
		$4.1a.i$ & $q_1>p_2$ & $q_2<p_2$ & $q_3<p_4$ & $q_4>p_4$ & $\det A>0$ & $T>0$ & $K>0$ & & \\
		$4.1a.ii$ & $q_1>p_2$ & $q_2<p_2$ & $q_3<p_4$ & $q_4>p_4$ & $\det A>0$ & $T>0$ & $K<0$ & & \\
		\rowcolor{mygray}
		$4.1b.i$ & $q_1>p_2$ & $q_2<p_2$ & $q_3<p_4$ & $q_4>p_4$ & $\det A>0$ & $T<0$ & $K>0$ & & \\
		$4.1b.ii$ & $q_1>p_2$ & $q_2<p_2$ & $q_3<p_4$ & $q_4>p_4$ & $\det A>0$ & $T<0$ & $K<0$ & & \\		
		\rowcolor{mygray}
		$4.2a$ & $q_1<p_2$ & $q_2<p_2$ & $q_3<p_4$ & $q_4>p_4$ & $\det A>0$ & $T>0$ & & & \\
		$4.2b$ & $q_1<p_2$ & $q_2<p_2$ & $q_3<p_4$ & $q_4>p_4$ & $\det A>0$ & $T<0$ & & & \\	
		\rowcolor{mygray}	
		$4.3a$ & $q_1>p_2$ & $q_2<p_2$ & $q_3>p_4$ & $q_4>p_4$ & $\det A>0$ & $T>0$ & & & \\
		$4.3b$ & $q_1>p_2$ & $q_2<p_2$ & $q_3>p_4$ & $q_4>p_4$ & $\det A>0$ & $T<0$ & & & \\	
		\rowcolor{mygray}	
		$4.4a.i$ & $q_1<p_2$ & $q_2<p_2$ & $q_3>p_4$ & $q_4>p_4$ & $\det A>0$ & $T>0$ &&& \\
		$4.4a.ii$ & $q_1<p_2$ & $q_2<p_2$ & $q_3>p_4$ & $q_4>p_4$ & $\det A>0$  & $T<0$ &&& \\
		\rowcolor{mygray}
		$4.4b.i$ & $q_1<p_2$ & $q_2<p_2$ & $q_3>p_4$ & $q_4>p_4$ &  $\det A<0$ & & & $\Delta>0$ & $\delta>0$ \\
		$4.4b.ii$ & $q_1<p_2$ & $q_2<p_2$ & $q_3>p_4$ & $q_4>p_4$ & $\det A<0$ & & & $\Delta>0$ &  $\delta<0$ \\
		\rowcolor{mygray}
		$4.4b.iii$ & $q_1<p_2$ & $q_2<p_2$ & $q_3>p_4$ & $q_4>p_4$ & $\det A<0$ & & & $\Delta<0$ & \\		
		$4.5a$ & $q_1>p_2$ & $q_2>p_2$ & $q_3<p_4$ & $q_4>p_4$ & $\det A>0$ & $T>0$ & & &  \\
		\rowcolor{mygray}
		$4.5b$ & $q_1>p_2$ & $q_2>p_2$ & $q_3<p_4$ & $q_4>p_4$ & $\det A>0$ & $T<0$ & & &  \\		
		$4.6a.i$ & $q_1>p_2$ & $q_2>p_2$ & $q_3>p_4$ & $q_4>p_4$ & $\det A>0$ & $T>0$ & & & \\
		\rowcolor{mygray}
		$4.6a.ii$ & $q_1>p_2$ & $q_2>p_2$ & $q_3>p_4$ & $q_4>p_4$ & $\det A>0$ & $T<0$ & & & \\
		$4.6b.i$ & $q_1>p_2$ & $q_2>p_2$ & $q_3>p_4$ & $q_4>p_4$ & $\det A<0$ & & & $\Delta>0$ & $\delta>0$ \\
		\rowcolor{mygray}
		$4.6b.ii$ & $q_1>p_2$ & $q_2>p_2$ & $q_3>p_4$ & $q_4>p_4$ & $\det A<0$ & & & $\Delta>0$ & $\delta<0$ \\
		$4.6b.iii$ & $q_1>p_2$ & $q_2>p_2$ & $q_3>p_4$ & $q_4>p_4$ & $\det A<0$ & & & $\Delta<0$ & \\	
		\rowcolor{mygray}	
		$4.7a$ & $q_1<p_2$ & $q_2>p_2$ & $q_3>p_4$ & $q_4>p_4$ & $\det A<0$ & & & $\Delta>0$ & $\delta>0$ \\
		$4.7b$ & $q_1<p_2$ & $q_2>p_2$ & $q_3>p_4$ & $q_4>p_4$ & $\det A<0$ & & & $\Delta>0$ & $\delta<0$ \\
		\rowcolor{mygray}
		$4.7c$ & $q_1<p_2$ & $q_2>p_2$ & $q_3>p_4$ & $q_4>p_4$ & $\det A<0$ & & & $\Delta<0$ & \\
		$4.8a$ & $q_1>p_2$ & $q_2<p_2$ & $q_3<p_4$ & $q_4<p_4$ & $\det A>0$ & $T>0$ & & & \\
		\rowcolor{mygray}
		$4.8b$ & $q_1>p_2$ & $q_2<p_2$ & $q_3<p_4$ & $q_4<p_4$ & $\det A>0$ & $T<0$ & & & \\		
		$4.9a.i$ & $q_1<p_2$ & $q_2<p_2$ & $q_3<p_4$ & $q_4<p_4$ & $\det A>0$ & $T>0$ & & & \\
		\rowcolor{mygray}
		$4.9a.ii$ & $q_1<p_2$ & $q_2<p_2$ & $q_3<p_4$ & $q_4<p_4$ & $\det A>0$ & $T<0$ & & & \\
		$4.9b.i$ & $q_1<p_2$ & $q_2<p_2$ & $q_3<p_4$ & $q_4<p_4$ & $\det A<0$ & & & $\Delta>0$ & $\delta>0$ \\
		\rowcolor{mygray}
		$4.9b.ii$ & $q_1<p_2$ & $q_2<p_2$ & $q_3<p_4$ & $q_4<p_4$ & $\det A<0$ & & & $\Delta>0$ & $\delta<0$ \\
		$4.9b.iii$ & $q_1<p_2$ & $q_2<p_2$ & $q_3<p_4$ & $q_4<p_4$ & $\det A<0$ & & & $\Delta>0$ & \\		
		\rowcolor{mygray}
		$4.10a$ & $q_1<p_2$ & $q_2<p_2$ & $q_3>p_4$ & $q_4<p_4$ & $\det A<0$ & & & $\Delta>0$ & $\delta>0$ \\
		$4.10b$ & $q_1<p_2$ & $q_2<p_2$ & $q_3>p_4$ & $q_4<p_4$ & $\det A<0$ & & & $\Delta>0$ & $\delta<0$ \\
		\rowcolor{mygray}
		$4.10c$ & $q_1<p_2$ & $q_2<p_2$ & $q_3>p_4$ & $q_4<p_4$ & $\det A<0$ & & & $\Delta<0$ & \\	
		$4.11a.i$ & $q_1>p_2$ & $q_2>p_2$ & $q_3<p_4$ & $q_4<p_4$ & $\det A>0$ & $T>0$ & & & \\
		\rowcolor{mygray}
		$4.11a.ii$ & $q_1>p_2$ & $q_2>p_2$ & $q_3<p_4$ & $q_4<p_4$ & $\det A>0$ & $T<0$ & & & \\
		$4.11b.i$ & $q_1>p_2$ & $q_2>p_2$ & $q_3<p_4$ & $q_4<p_4$ & $\det A<0$ & & & $\Delta>0$ & $\delta>0$ \\
		\rowcolor{mygray}
		$4.11b.ii$ & $q_1>p_2$ & $q_2>p_2$ & $q_3<p_4$ & $q_4<p_4$ & $\det A<0$ & & & $\Delta>0$ & $\delta<0$ \\
		$4.11b.iii$ & $q_1>p_2$ & $q_2>p_2$ & $q_3<p_4$ & $q_4<p_4$ & $\det A<0$ & & & $\Delta<0$ & \\	
		\rowcolor{mygray}	
		$4.12a$ & $q_1<p_2$ & $q_2>p_2$ & $q_3<p_4$ & $q_4<p_4$ & $\det A<0$  & & & $\Delta>0$ & $\delta>0$ \\
		$4.12b$ & $q_1<p_2$ & $q_2>p_2$ & $q_3<p_4$ & $q_4<p_4$ & $\det A<0$  & & & $\Delta>0$ & $\delta<0$ \\
		\rowcolor{mygray}
		$4.12c$ & $q_1<p_2$ & $q_2>p_2$ & $q_3<p_4$ & $q_4<p_4$ & $\det A<0$  & & & $\Delta<0$ & \\		
		$4.13a$ & $q_1>p_2$ & $q_2>p_2$ & $q_3>p_4$ & $q_4<p_4$ & $\det A<0$ & & & $\Delta>0$ & $\delta>0$ \\
		\rowcolor{mygray}
		$4.13b$ & $q_1>p_2$ & $q_2>p_2$ & $q_3>p_4$ & $q_4<p_4$ & $\det A<0$ & & & $\Delta>0$ & $\delta<0$ \\
		$4.13c$ & $q_1>p_2$ & $q_2>p_2$ & $q_3>p_4$ & $q_4<p_4$ & $\det A<0$ & & & $\Delta<0$ & \\		
		\rowcolor{mygray}
		$4.14a$ & $q_1<p_2$ & $q_2>p_2$ & $q_3>p_4$ & $q_4<p_4$ & $\det A<0$ & & & $\Delta>0$ & $\delta>0$ \\
		$4.14b$ & $q_1<p_2$ & $q_2>p_2$ & $q_3>p_4$ & $q_4<p_4$ & $\det A<0$ & & & $\Delta>0$ & $\delta<0$ \\
		\rowcolor{mygray}
		$4.14c$ & $q_1<p_2$ & $q_2>p_2$ & $q_3>p_4$ & $q_4<p_4$ & $\det A<0$ & & & $\Delta<0$ &  \\
		\hline
	\end{tabular}
\end{table}

\begin{figure}[h]
	\begin{center}
		\begin{minipage}{3.1cm}
			\begin{center}
				\begin{overpic}[height=3cm]{Case1.4.1a1.eps} 
				\end{overpic}
				
				Case~$4.1a.i$.
			\end{center}
		\end{minipage}
		\begin{minipage}{3.1cm}
			\begin{center}
				\begin{overpic}[height=3cm]{Case1.4.1a2.eps} 
				\end{overpic}
				
				Case~$4.1a.ii$.
			\end{center}
		\end{minipage}
		\begin{minipage}{3.1cm}
			\begin{center}
				\begin{overpic}[height=3cm]{Case1.4.1b1.eps} 
				\end{overpic}
				
				Case~$4.1b.i$.
			\end{center}
		\end{minipage}	
		\begin{minipage}{3.1cm}
			\begin{center}
				\begin{overpic}[height=3cm]{Case1.4.1b2.eps} 
				\end{overpic}
				
				Case~$4.1b.ii$.
			\end{center}
		\end{minipage}
		\begin{minipage}{3.1cm}
			\begin{center}
				\begin{overpic}[height=3cm]{Case1.4.2a.eps} 
				\end{overpic}
				
				Case~$4.2a$.
			\end{center}
		\end{minipage}
	\end{center}
	$\;$
	\begin{center}
		\begin{minipage}{3.1cm}
			\begin{center}
				\begin{overpic}[height=3cm]{Case1.4.2b.eps} 
				\end{overpic}
				
				Case~$4.2b$.
			\end{center}
		\end{minipage}
		\begin{minipage}{3.1cm}
			\begin{center}
				\begin{overpic}[height=3cm]{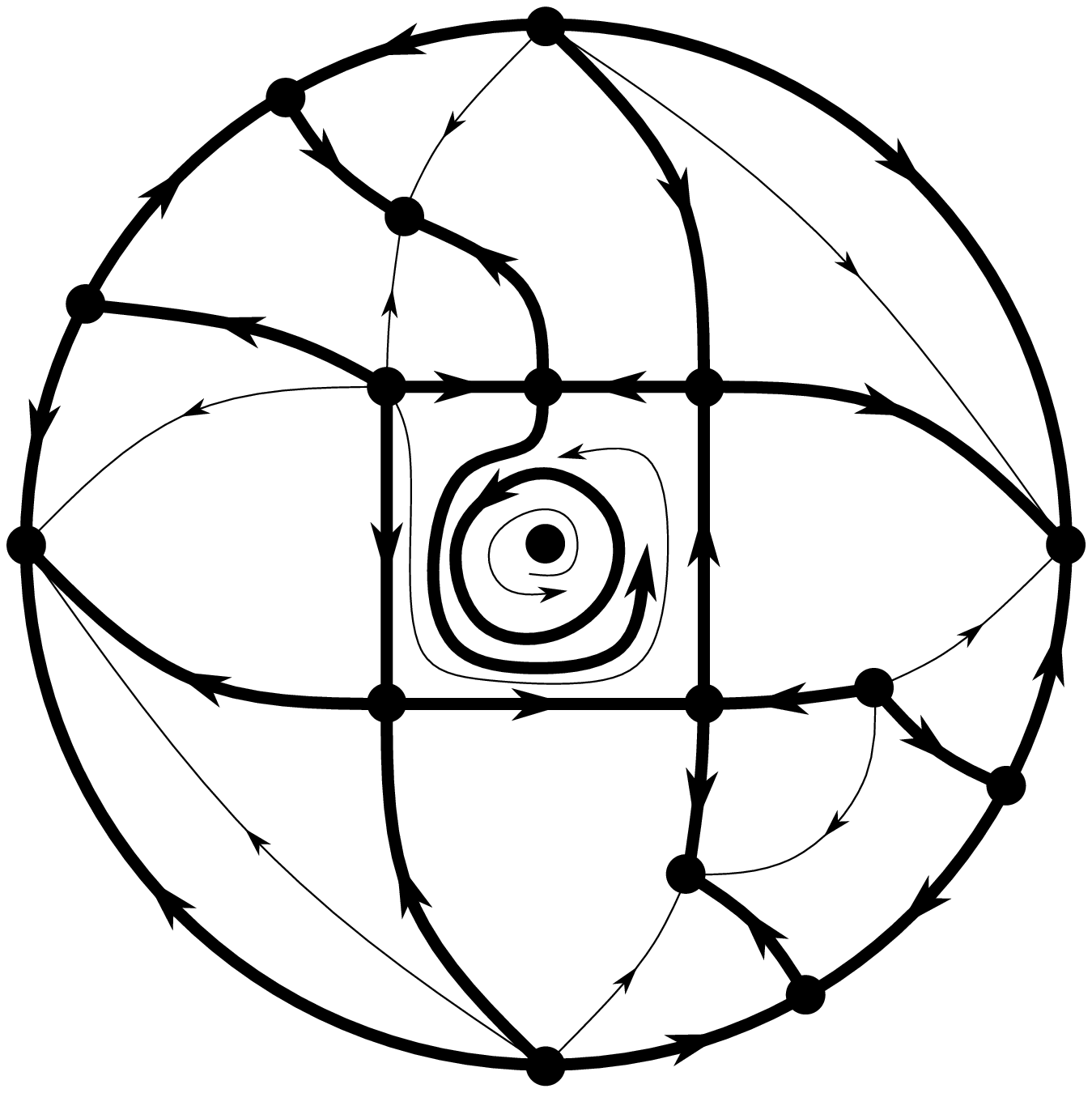} 
				\end{overpic}
				
				Case~$4.3a$.
			\end{center}
		\end{minipage}
		\begin{minipage}{3.1cm}
			\begin{center}
				\begin{overpic}[height=3cm]{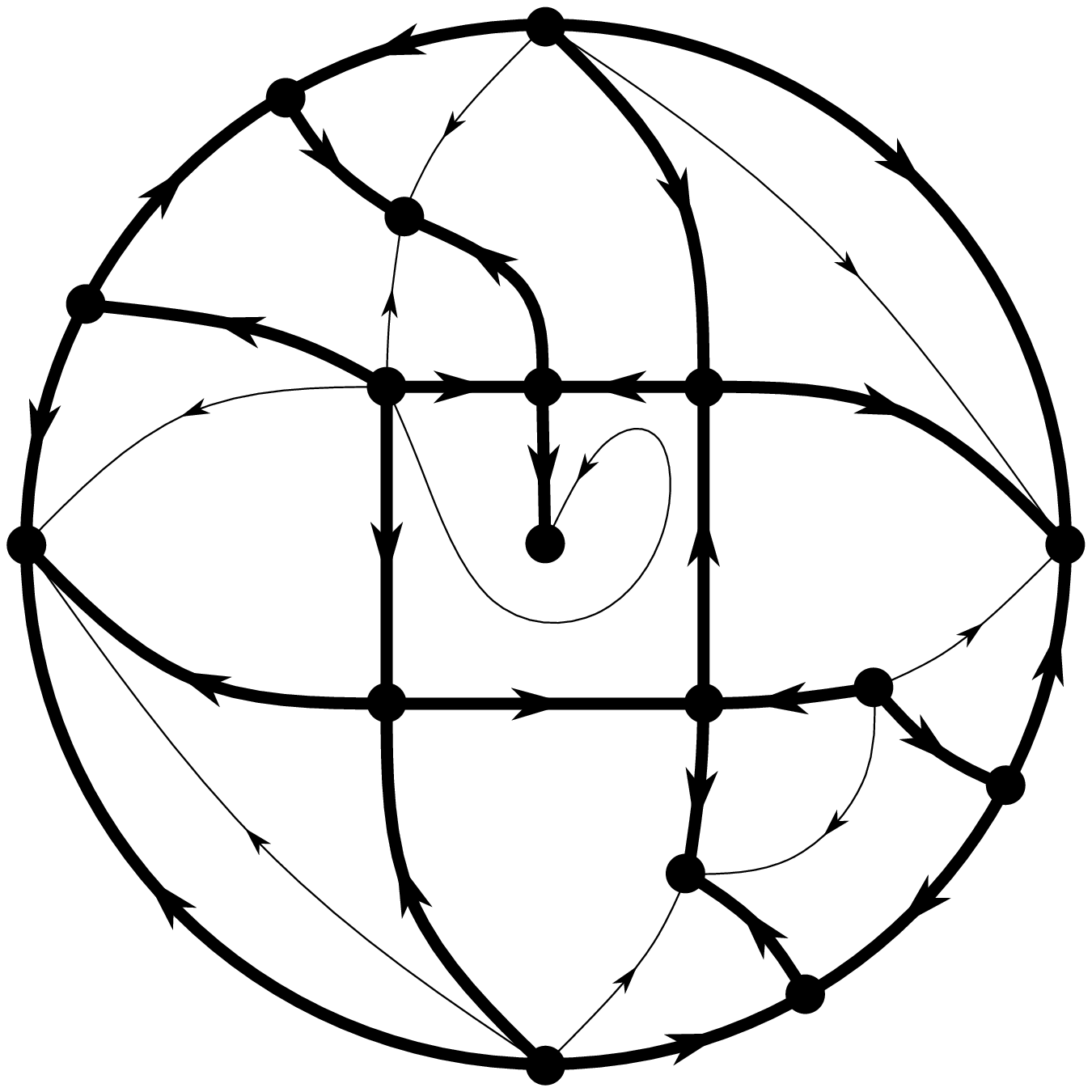} 
				\end{overpic}
				
				Case~$4.3b$.
			\end{center}
		\end{minipage}
		\begin{minipage}{3.1cm}
			\begin{center}
				\begin{overpic}[height=3cm]{Case1.4.4a1.eps} 
				\end{overpic}
				
				Case~$4.4a.i$.
			\end{center}
		\end{minipage}
		\begin{minipage}{3.1cm}
			\begin{center}
				\begin{overpic}[height=3cm]{Case1.4.4a2.eps} 
				\end{overpic}
				
				Case~$4.4a.ii$.
			\end{center}
		\end{minipage}
	\end{center}
	$\;$
	\begin{center}
		\begin{minipage}{3.1cm}
			\begin{center}
				\begin{overpic}[height=3cm]{Case1.4.4b1.1.eps} 
				\end{overpic}
				
				Case~$4.4b.i.1$.
			\end{center}
		\end{minipage}
		\begin{minipage}{3.1cm}
			\begin{center}
				\begin{overpic}[height=3cm]{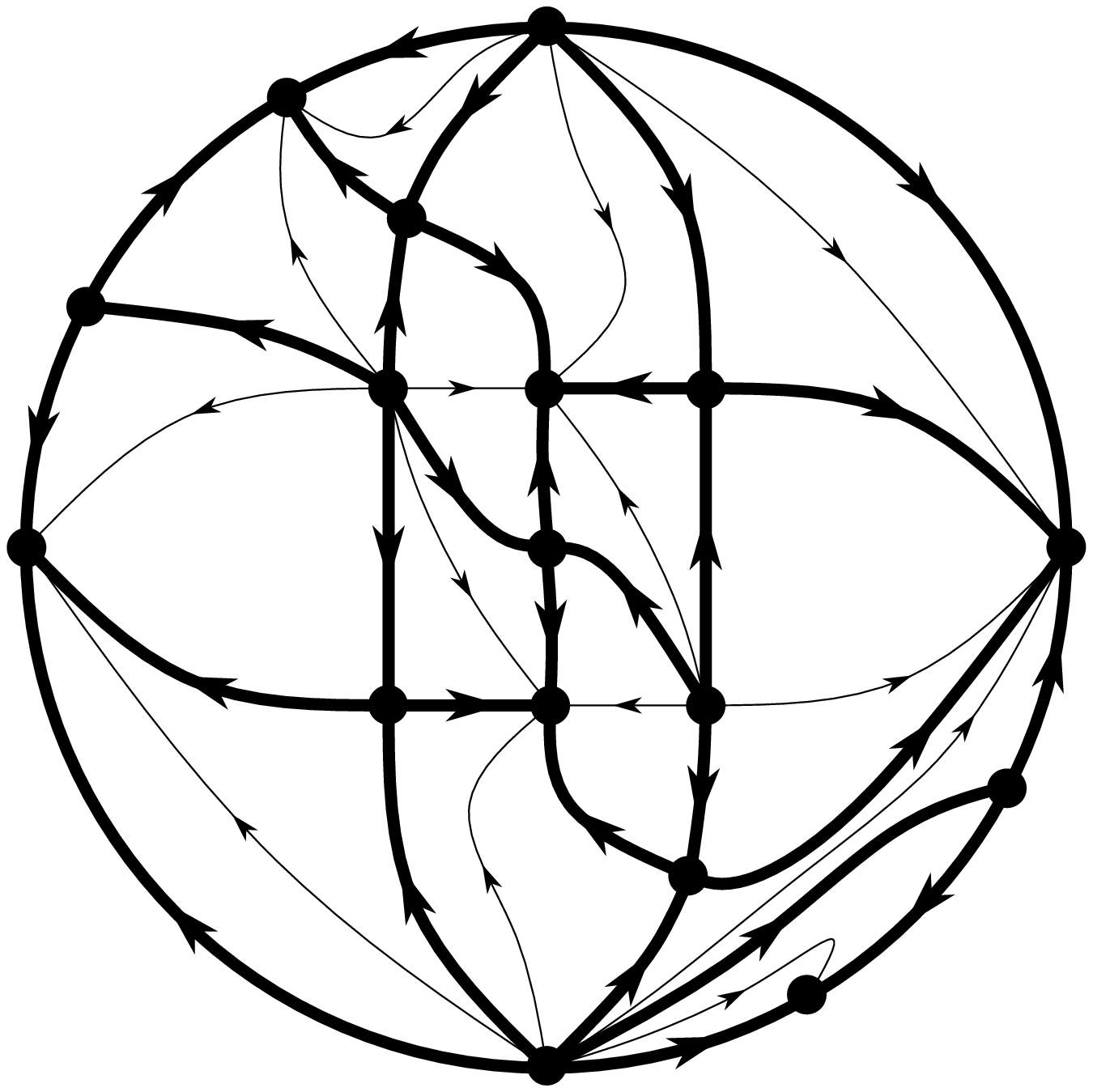} 
				\end{overpic}
				
				Case~$4.4b.i.2$.
			\end{center}
		\end{minipage}	
		\begin{minipage}{3.1cm}
			\begin{center}
				\begin{overpic}[height=3cm]{Case1.4.4b1.3.eps} 
				\end{overpic}
				
				Case~$4.4b.i.3$.
			\end{center}
		\end{minipage}
		\begin{minipage}{3.1cm}
			\begin{center}
				\begin{overpic}[height=3cm]{Case1.4.4b1.4.eps} 
				\end{overpic}
				
				Case~$4.4b.i.4$.
			\end{center}
		\end{minipage}
		\begin{minipage}{3.1cm}
			\begin{center}
				\begin{overpic}[height=3cm]{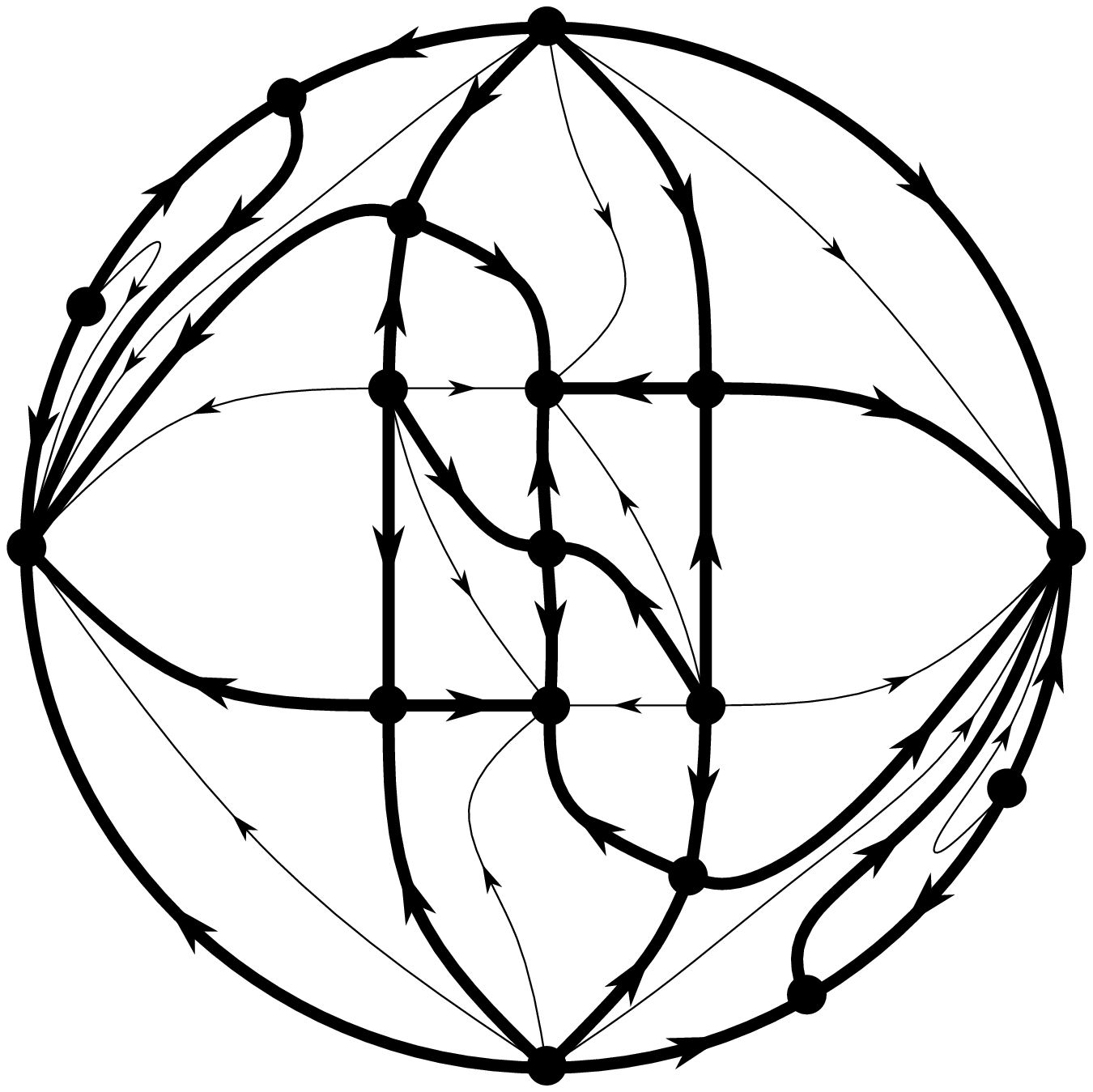} 
				\end{overpic}
				
				Case~$4.4b.ii$.
			\end{center}
		\end{minipage}
	\end{center}
	$\;$
	\begin{center}
		\begin{minipage}{3.1cm}
			\begin{center}
				\begin{overpic}[height=3cm]{Case1.4.4b3.eps} 
				\end{overpic}
				
				Case~$4.4b.iii$.
			\end{center}
		\end{minipage}
		\begin{minipage}{3.1cm}
			\begin{center}
				\begin{overpic}[height=3cm]{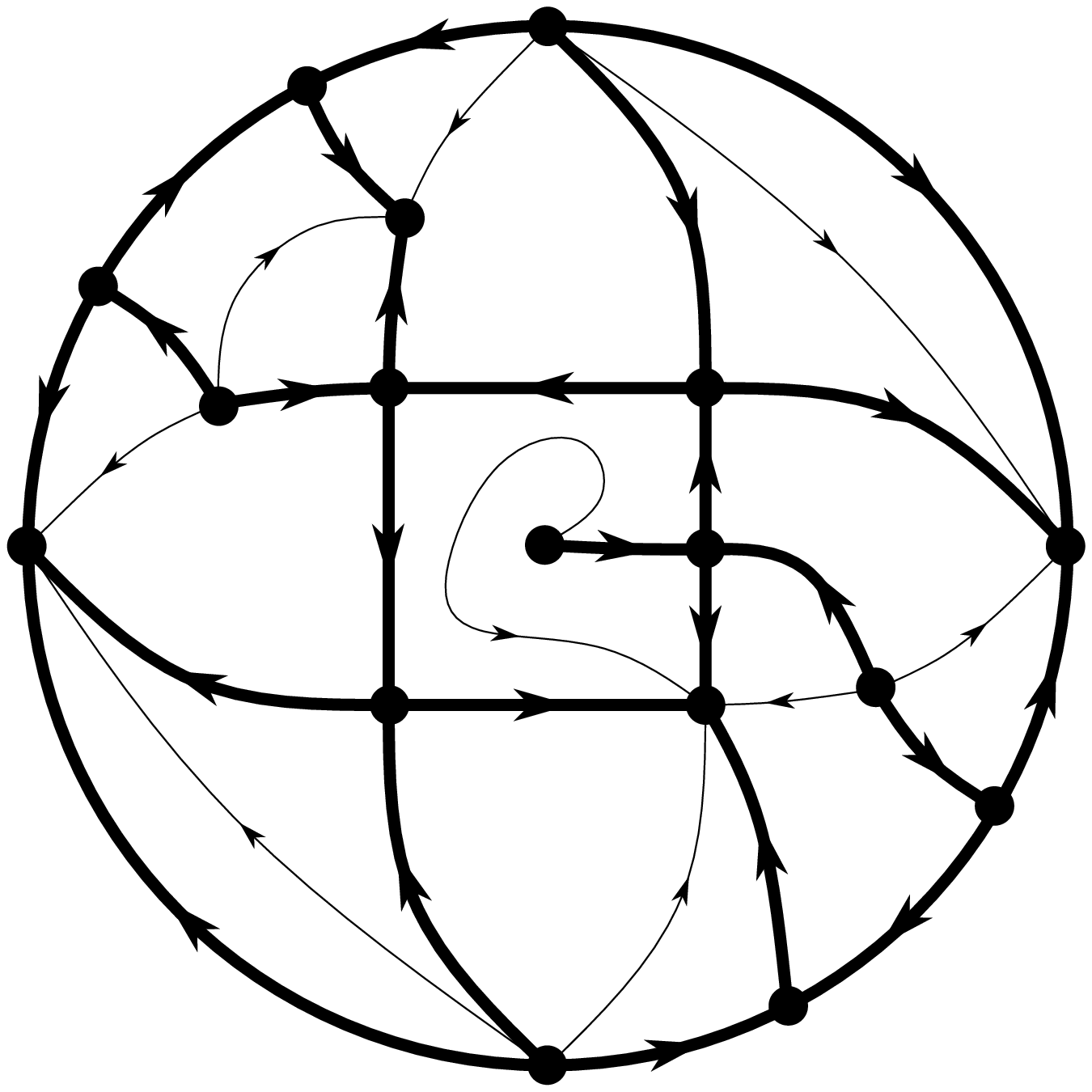} 
				\end{overpic}
				
				Case~$4.5a$.
			\end{center}
		\end{minipage}
		\begin{minipage}{3.1cm}
			\begin{center}
				\begin{overpic}[height=3cm]{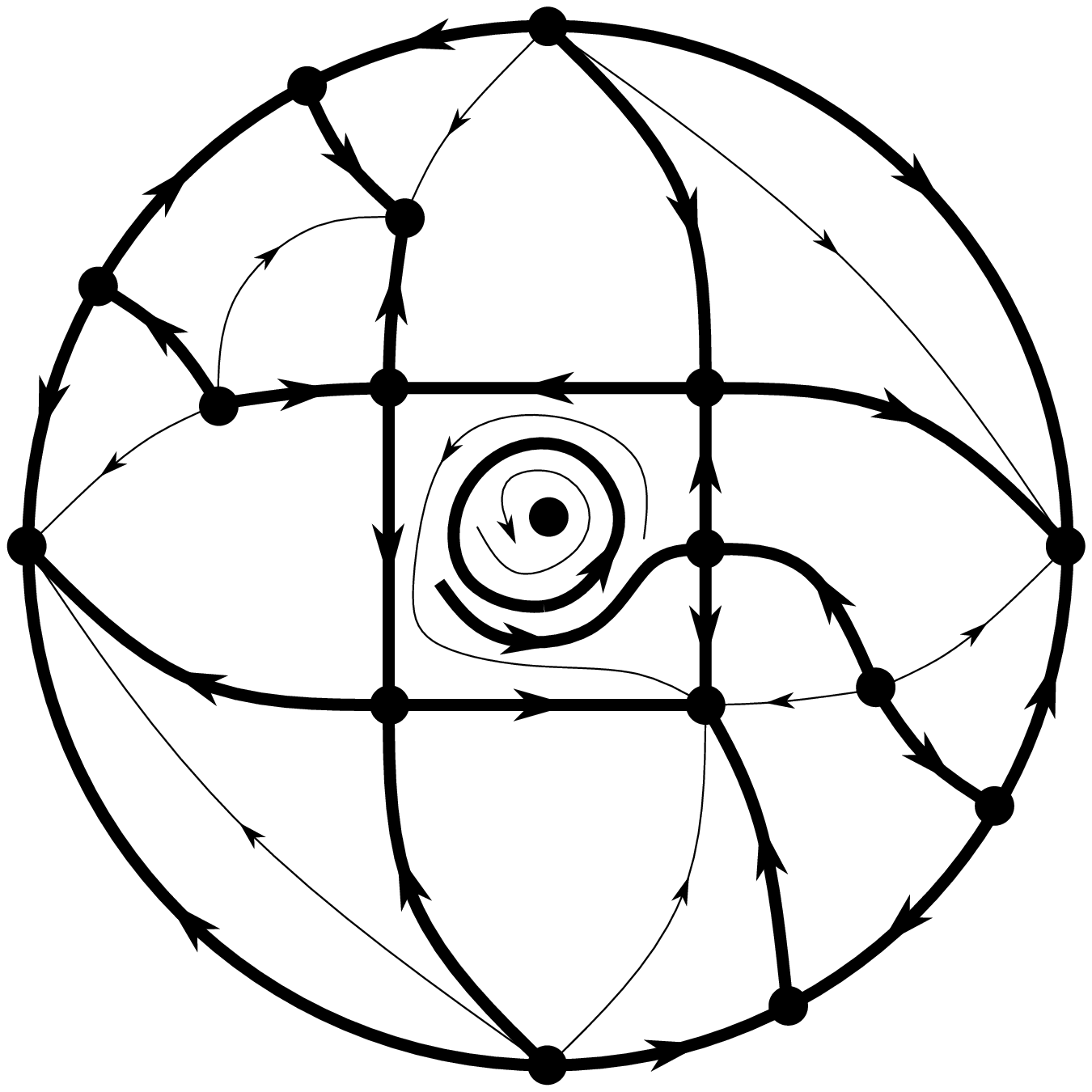} 
				\end{overpic}
				
				Case~$4.5b$.
			\end{center}
		\end{minipage}
		\begin{minipage}{3.1cm}
			\begin{center}
				\begin{overpic}[height=3cm]{Case1.4.6a1.1.eps} 
				\end{overpic}
				
				Case~$4.6a.i.1$.
			\end{center}
		\end{minipage}
		\begin{minipage}{3.1cm}
			\begin{center}
				\begin{overpic}[height=3cm]{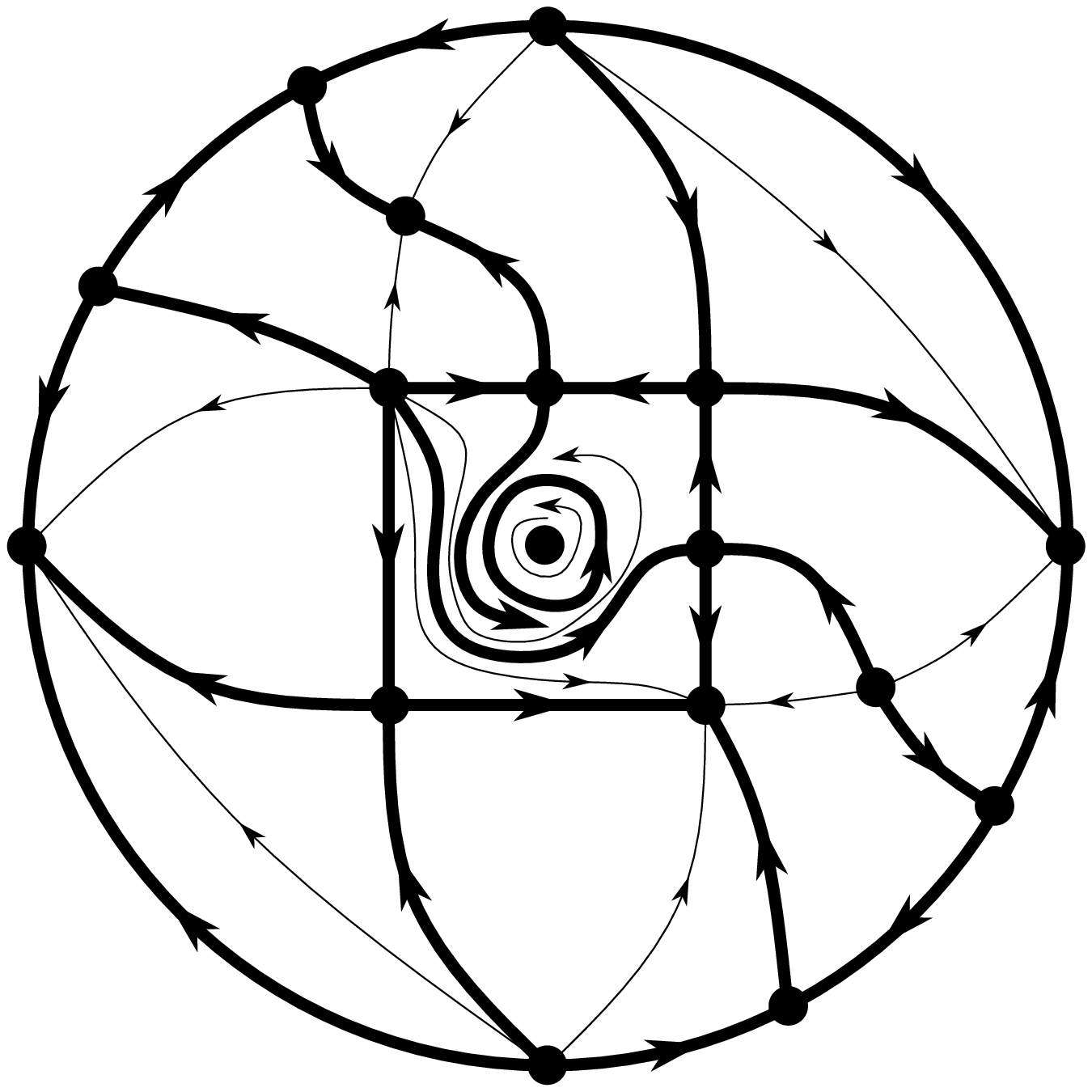} 
				\end{overpic}
				
				Case~$4.6a.i.2$.
			\end{center}
		\end{minipage}
	\end{center}
	$\;$
	\begin{center}
		\begin{minipage}{3.1cm}
			\begin{center}
				\begin{overpic}[height=3cm]{Case1.4.6a2.1.eps} 
				\end{overpic}
				
				Case~$4.6a.ii.1$.
			\end{center}
		\end{minipage}	
		\begin{minipage}{3.1cm}
			\begin{center}
				\begin{overpic}[height=3cm]{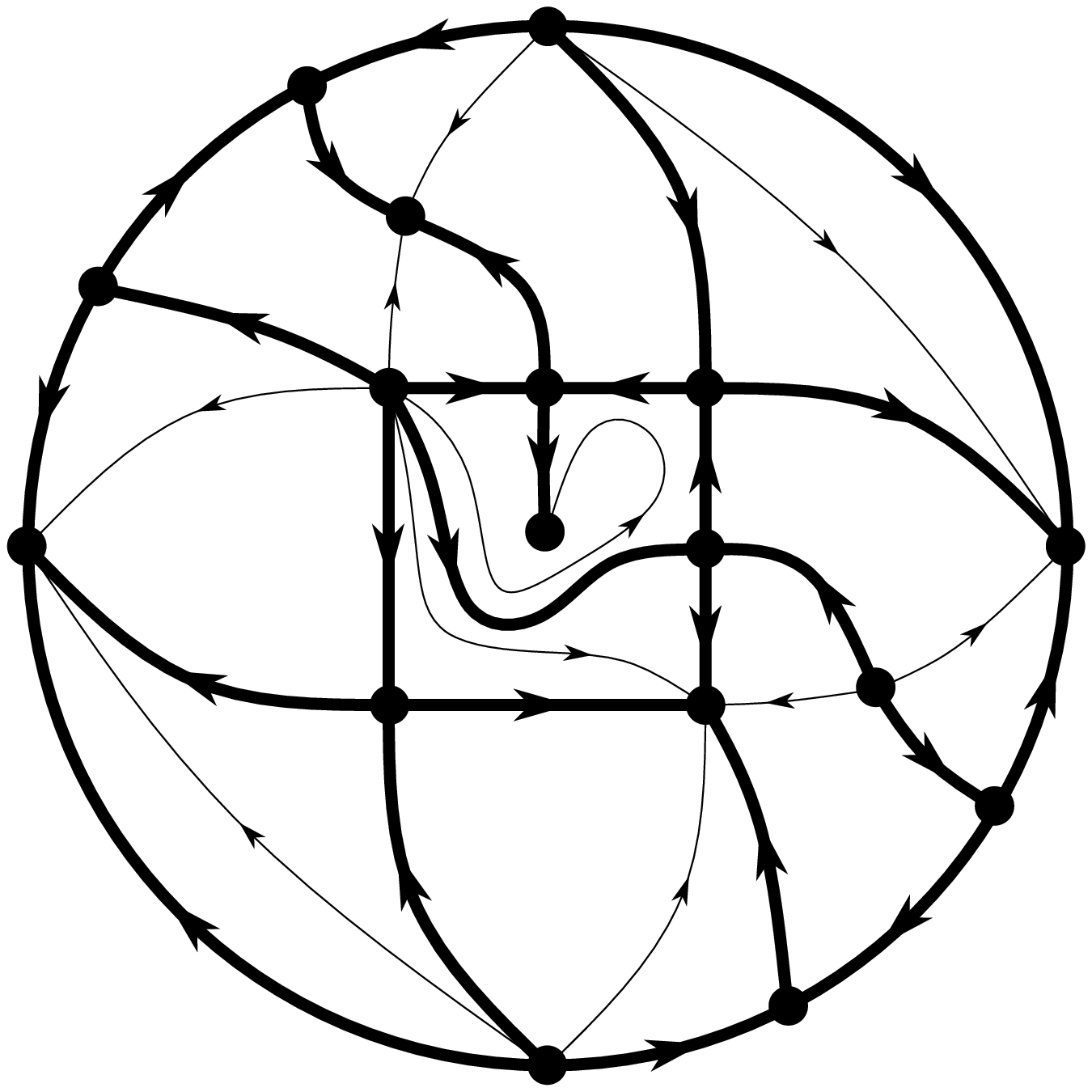} 
				\end{overpic}
				
				Case~$4.6a.ii.2$.
			\end{center}
		\end{minipage}
		\begin{minipage}{3.1cm}
			\begin{center}
				\begin{overpic}[height=3cm]{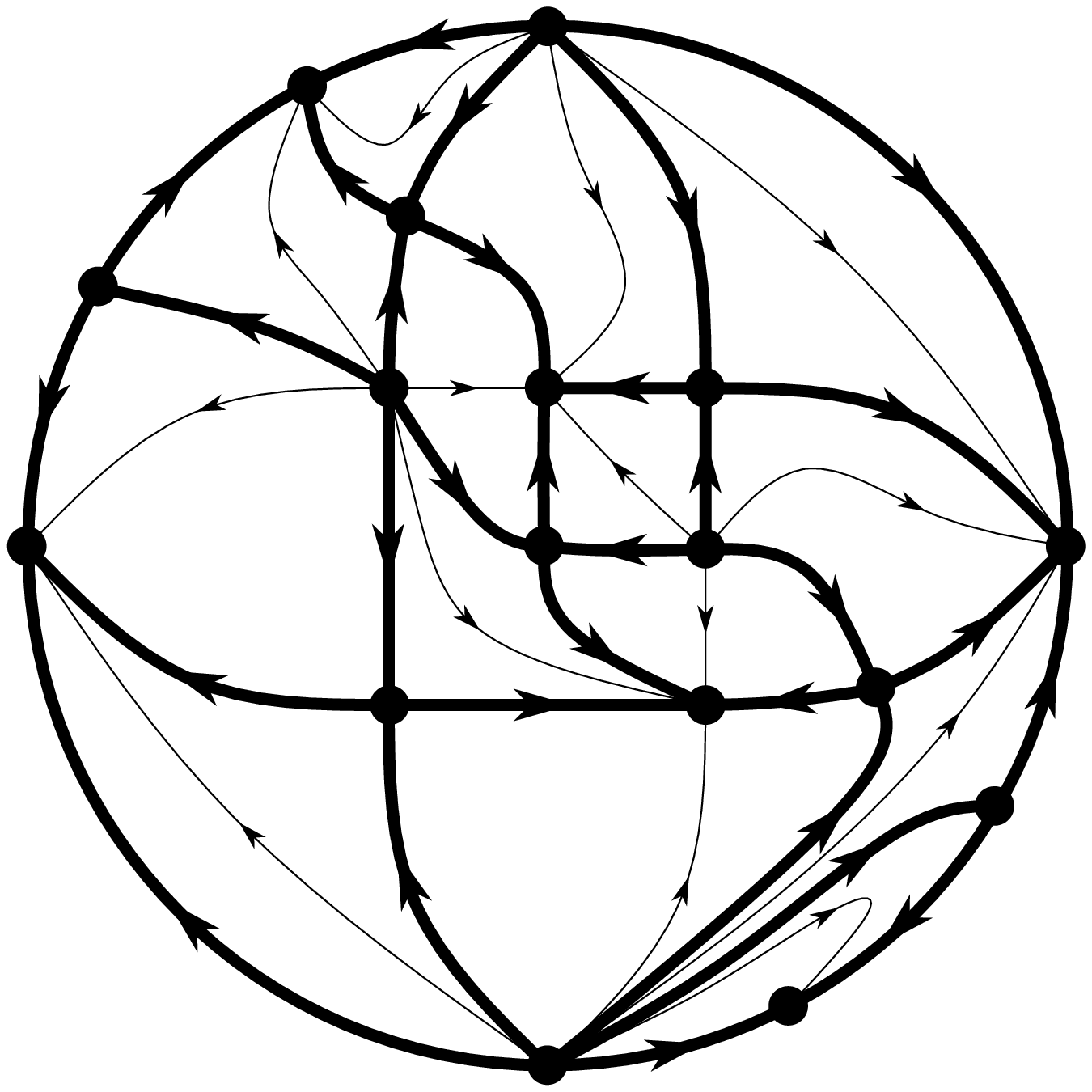} 
				\end{overpic}
				
				Case~$4.6b.i.1$.
			\end{center}
		\end{minipage}
		\begin{minipage}{3.1cm}
			\begin{center}
				\begin{overpic}[height=3cm]{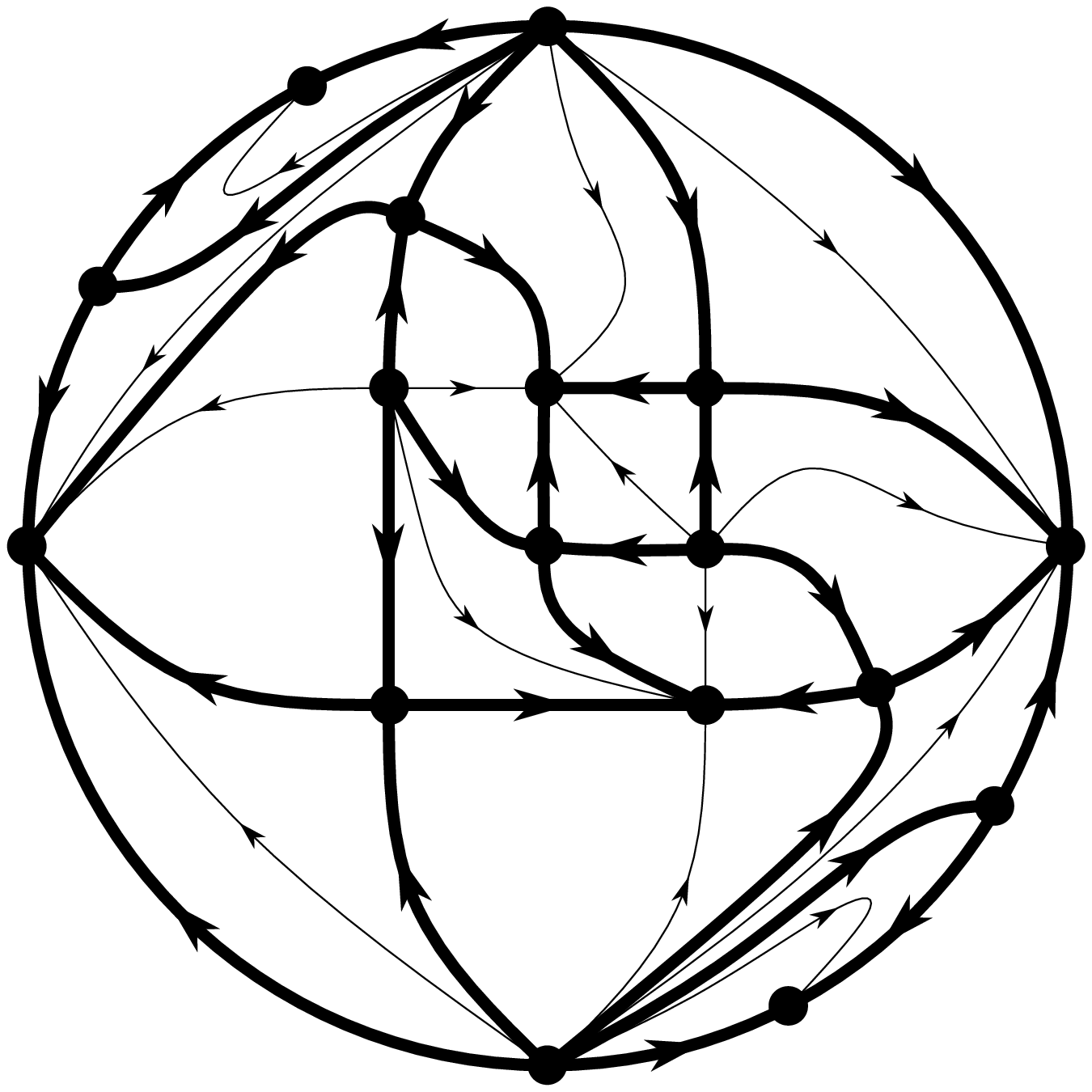} 
				\end{overpic}
				
				Case~$4.6b.i.2$.
			\end{center}
		\end{minipage}
		\begin{minipage}{3.1cm}
			\begin{center}
				\begin{overpic}[height=3cm]{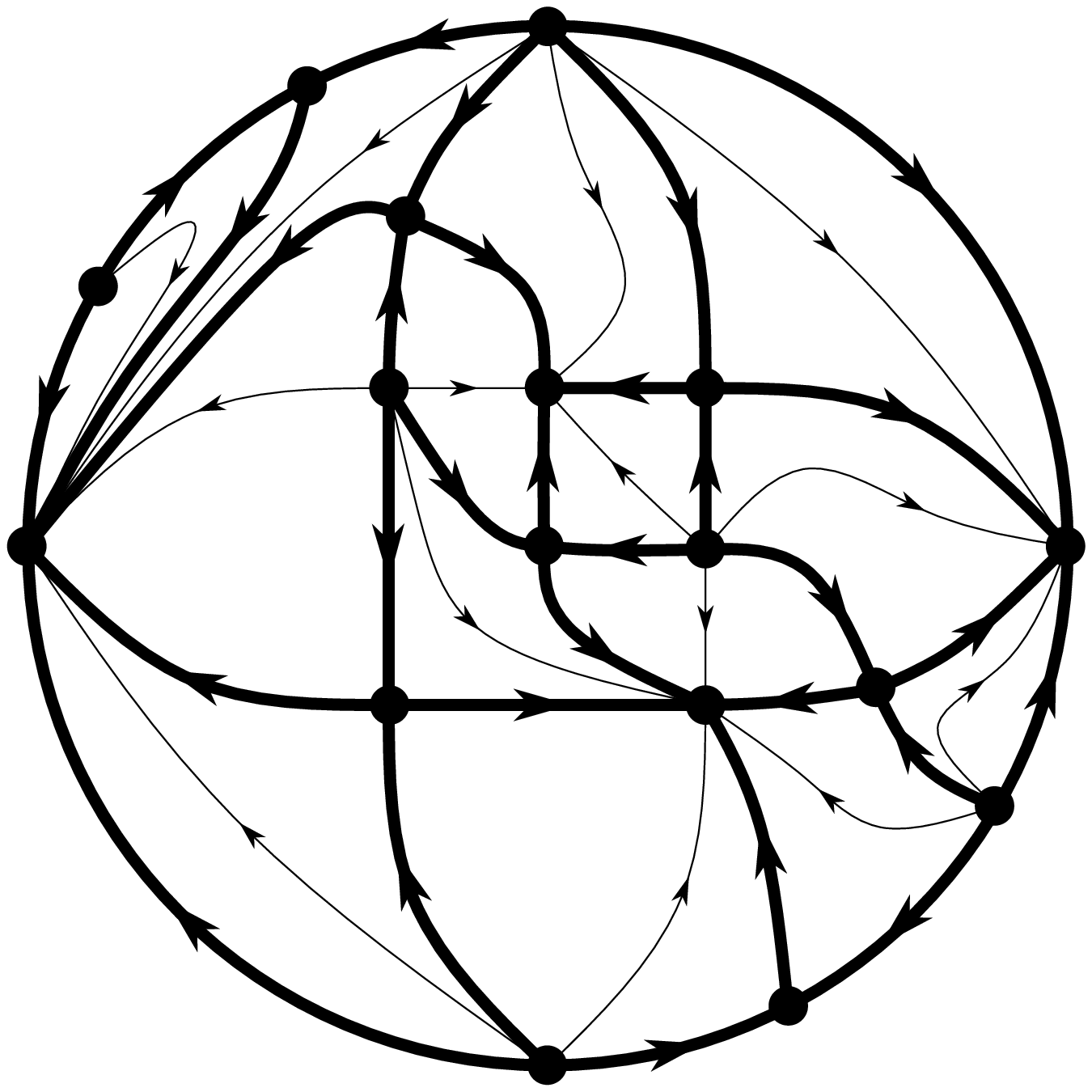} 
				\end{overpic}
				
				Case~$4.6b.ii.1$.
			\end{center}
		\end{minipage}
	\end{center}
	$\;$
	\begin{center}
		\begin{minipage}{3.1cm}
			\begin{center}
				\begin{overpic}[height=3cm]{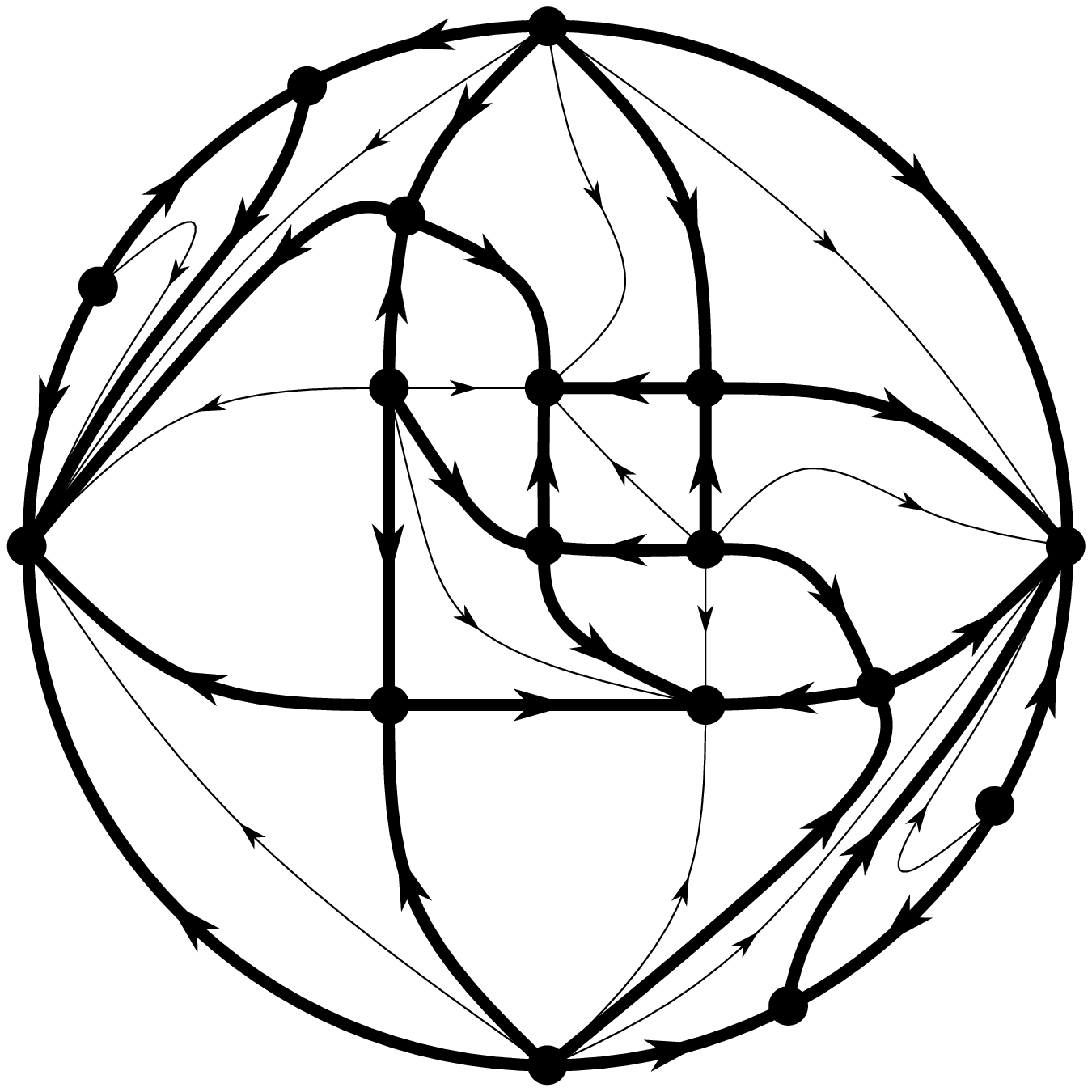} 
				\end{overpic}
				
				Case~$4.6b.ii.2$.
			\end{center}
		\end{minipage}
		\begin{minipage}{3.1cm}
			\begin{center}
				\begin{overpic}[height=3cm]{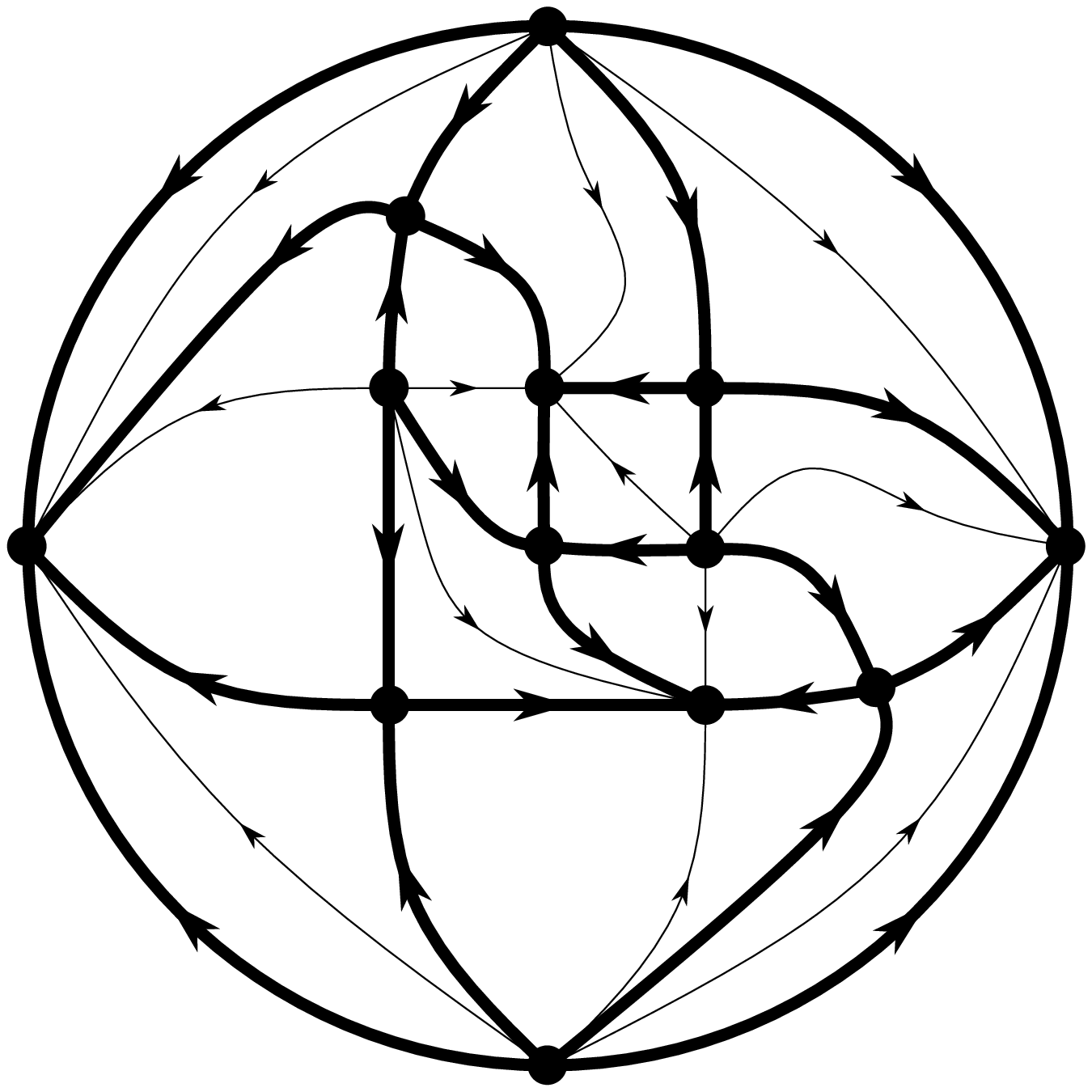} 
				\end{overpic}
				
				Case~$4.6b.iii$.
			\end{center}
		\end{minipage}
	\end{center}
	\caption{Phase portraits from Cases~$4.1$ to $4.6$.}\label{Case1.4a}
\end{figure}

\begin{figure}[h]
	\begin{center}
		\begin{minipage}{3.1cm}
			\begin{center}
				\begin{overpic}[height=3cm]{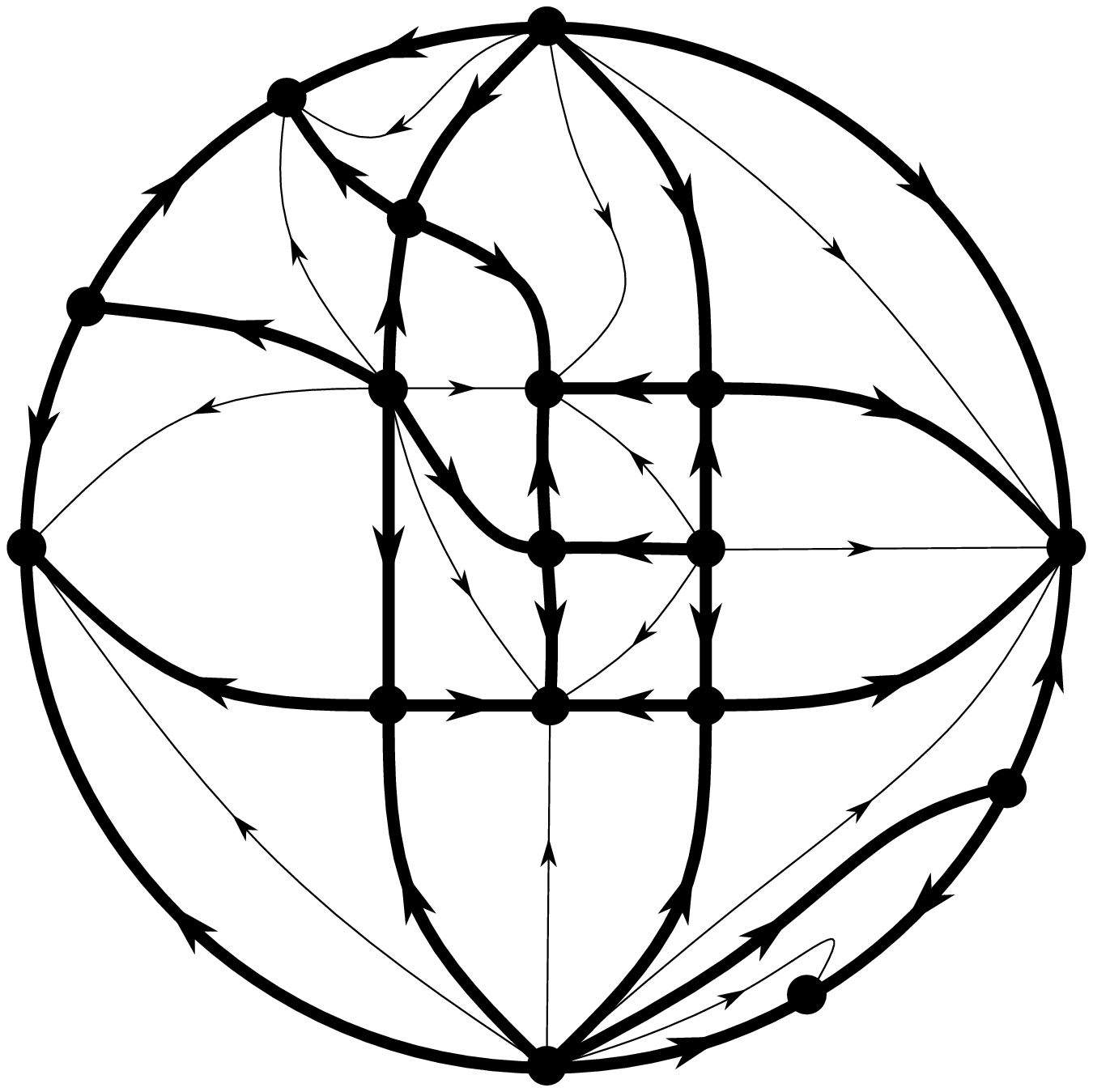} 
				\end{overpic}
				
				Case~$4.7a.1$.
			\end{center}
		\end{minipage}
		\begin{minipage}{3.1cm}
			\begin{center}
				\begin{overpic}[height=3cm]{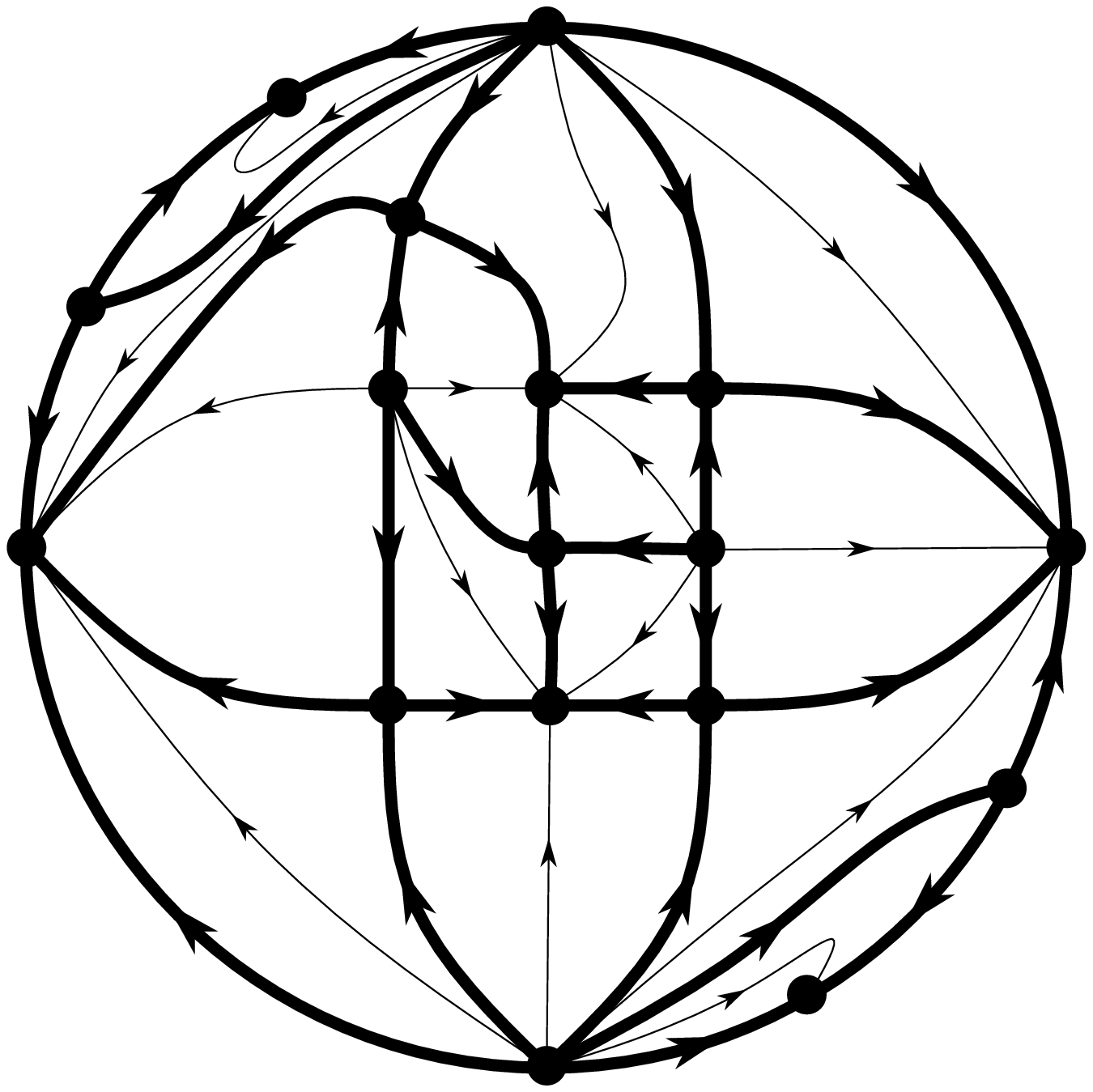} 
				\end{overpic}
				
				Case~$4.7a.2$.
			\end{center}
		\end{minipage}
		\begin{minipage}{3.1cm}
			\begin{center}
				\begin{overpic}[height=3cm]{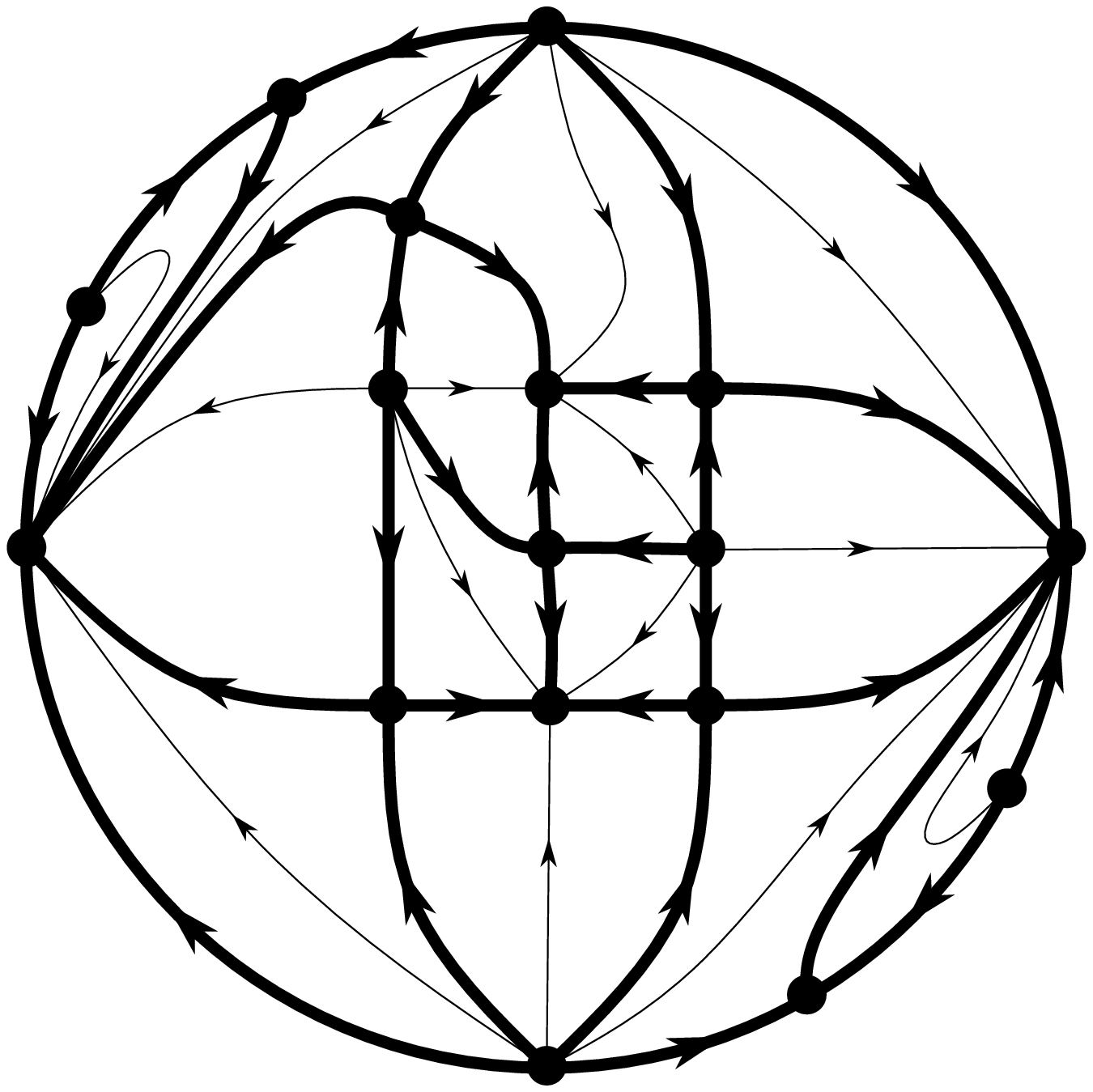} 
				\end{overpic}
				
				Case~$4.7b$.
			\end{center}
		\end{minipage}	
		\begin{minipage}{3.1cm}
			\begin{center}
				\begin{overpic}[height=3cm]{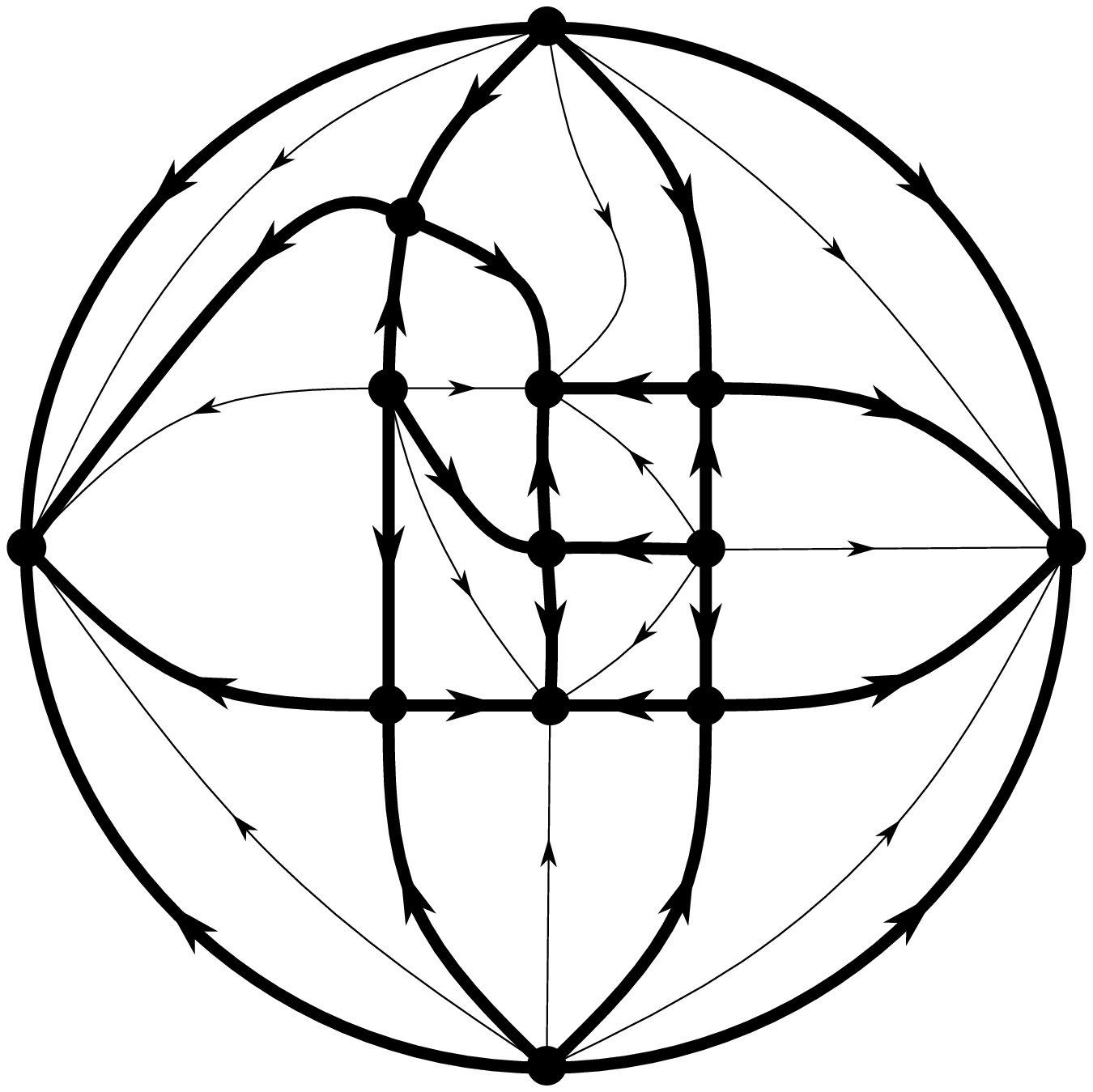} 
				\end{overpic}
				
				Case~$4.7c$.
			\end{center}
		\end{minipage}
		\begin{minipage}{3.1cm}
			\begin{center}
				\begin{overpic}[height=3cm]{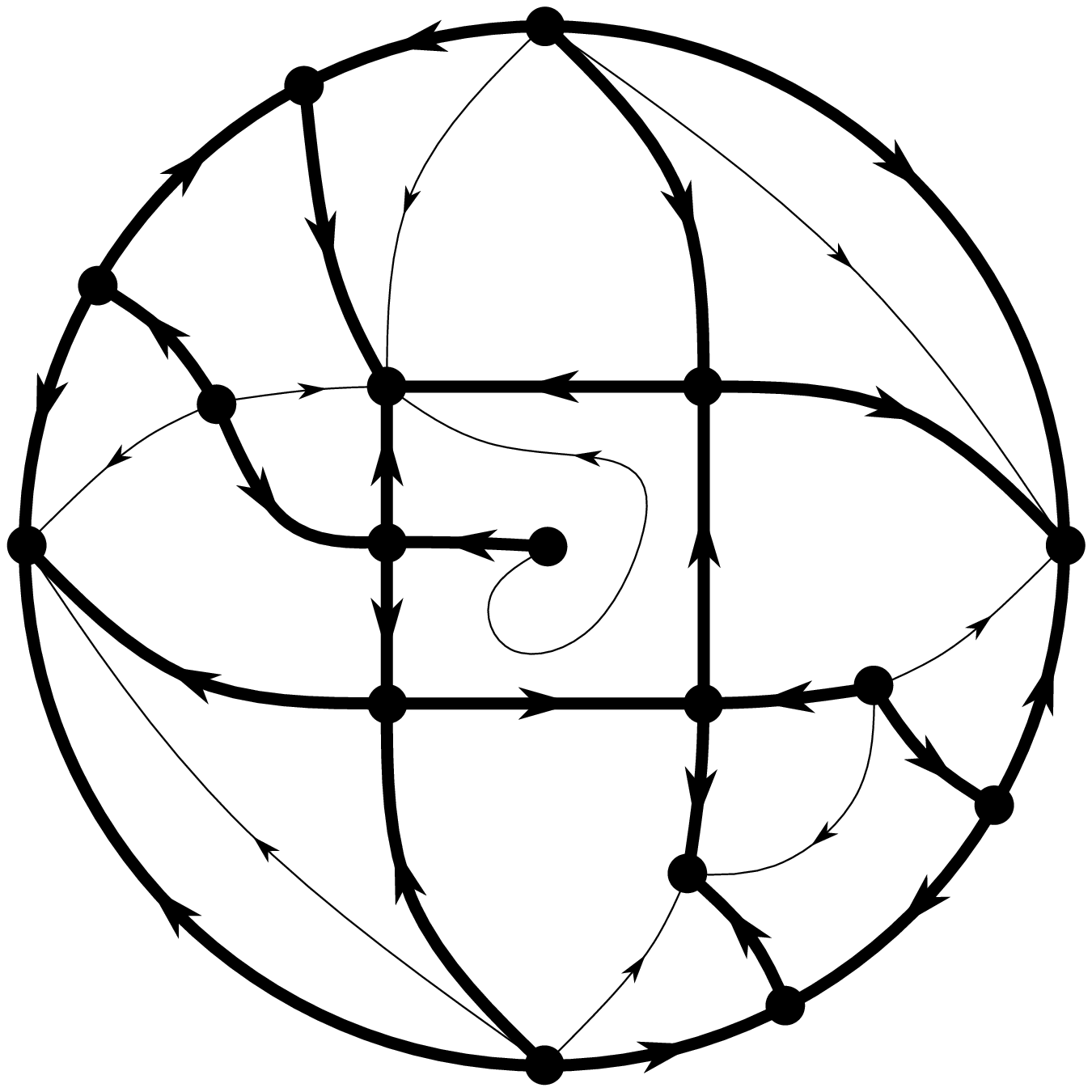} 
				\end{overpic}
				
				Case~$4.8a$.
			\end{center}
		\end{minipage}
	\end{center}
	$\;$
	\begin{center}
		\begin{minipage}{3.1cm}
			\begin{center}
				\begin{overpic}[height=3cm]{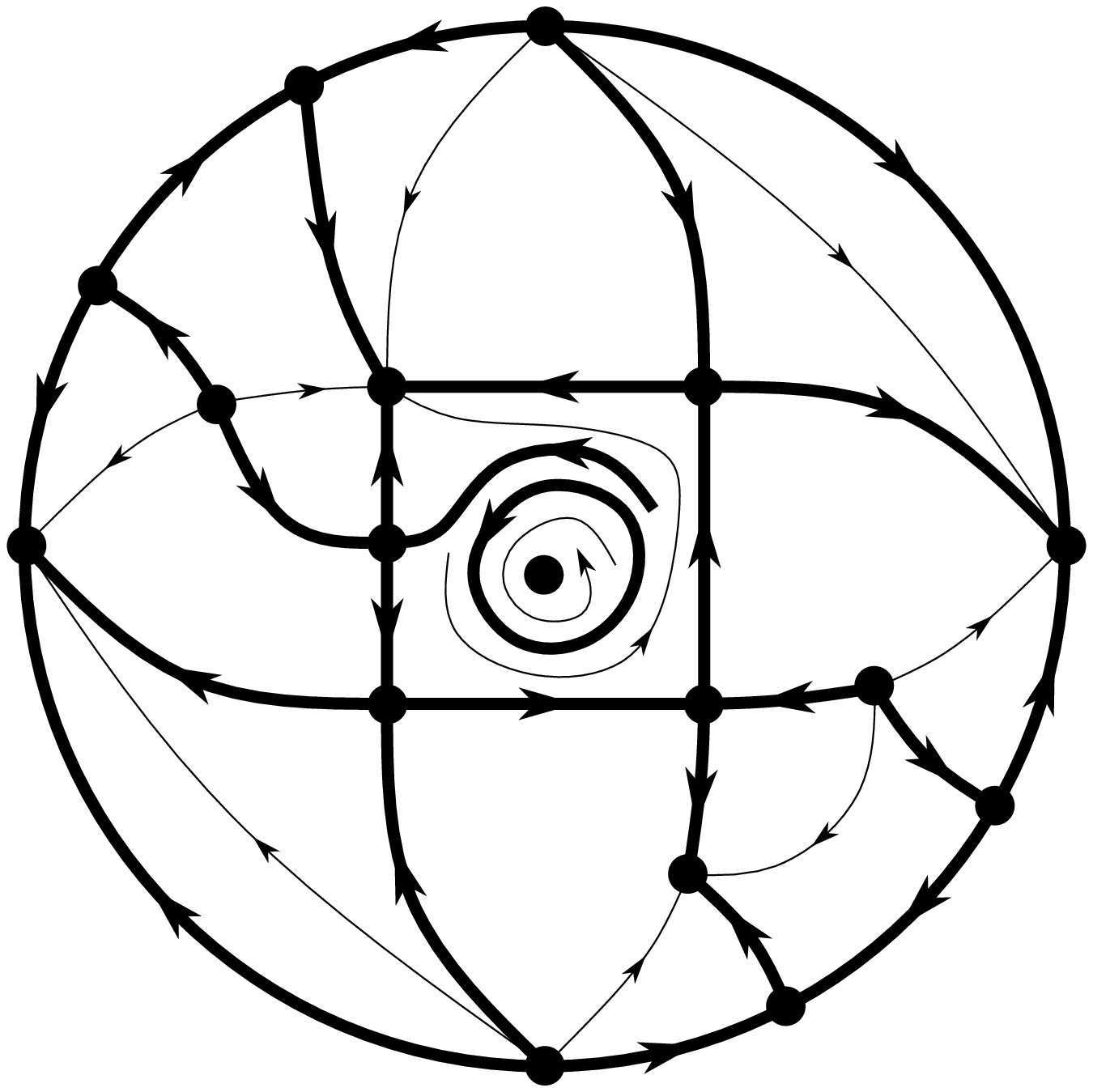} 
				\end{overpic}
				
				Case~$4.8b$.
			\end{center}
		\end{minipage}
		\begin{minipage}{3.1cm}
			\begin{center}
				\begin{overpic}[height=3cm]{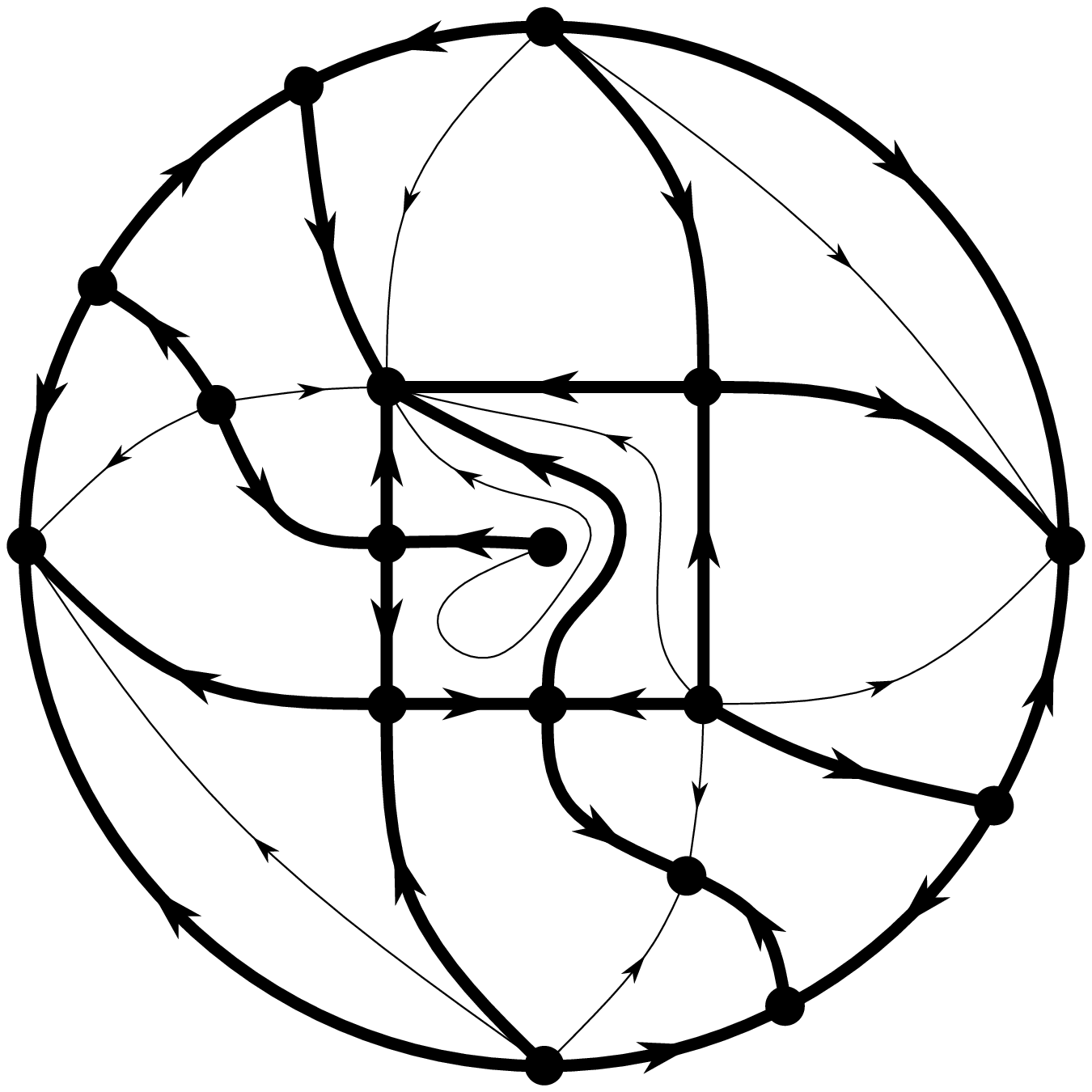} 
				\end{overpic}
				
				Case~$4.9a.i.1$.
			\end{center}
		\end{minipage}
		\begin{minipage}{3.1cm}
			\begin{center}
				\begin{overpic}[height=3cm]{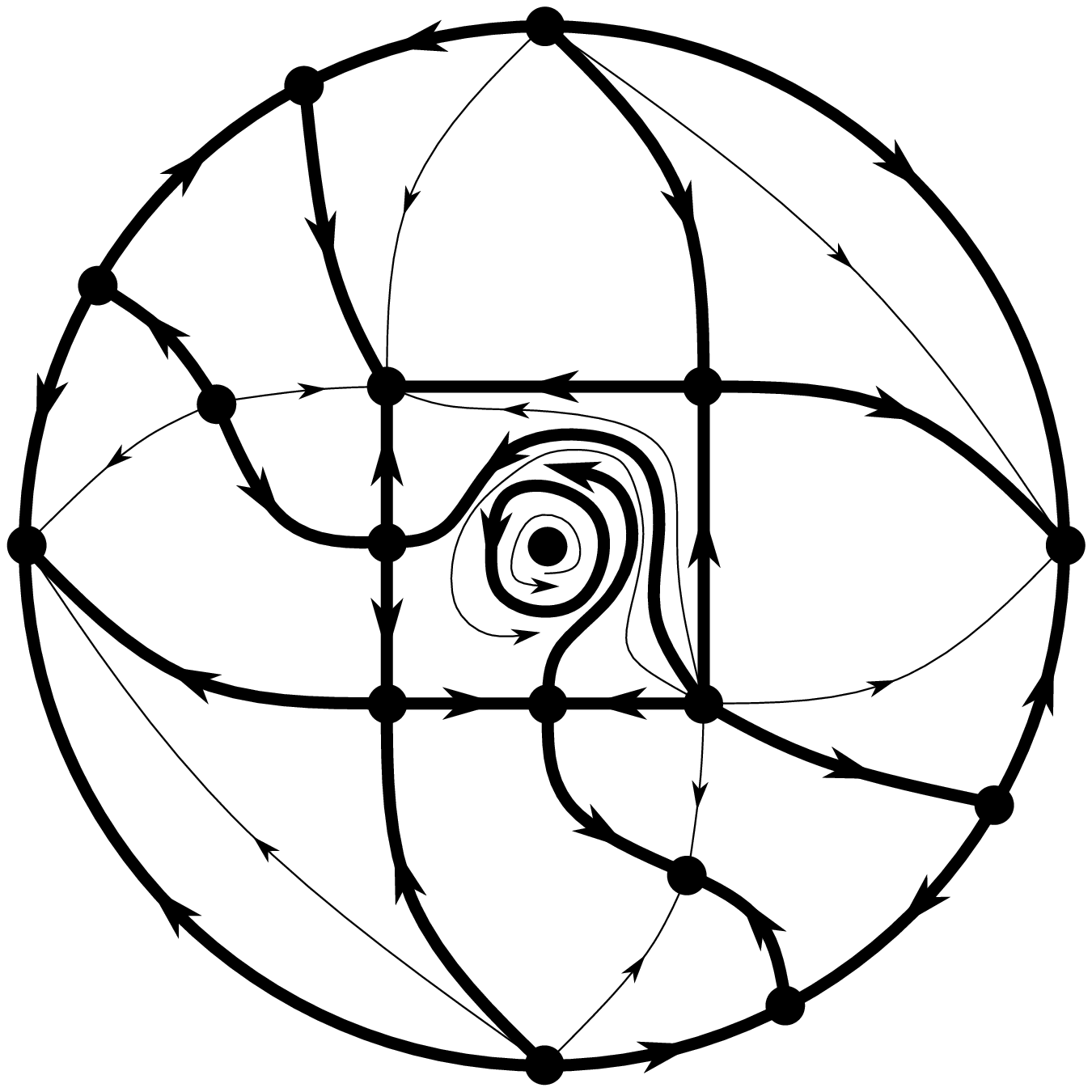} 
				\end{overpic}
				
				Case~$4.9a.i.2$.
			\end{center}
		\end{minipage}
		\begin{minipage}{3.1cm}
			\begin{center}
				\begin{overpic}[height=3cm]{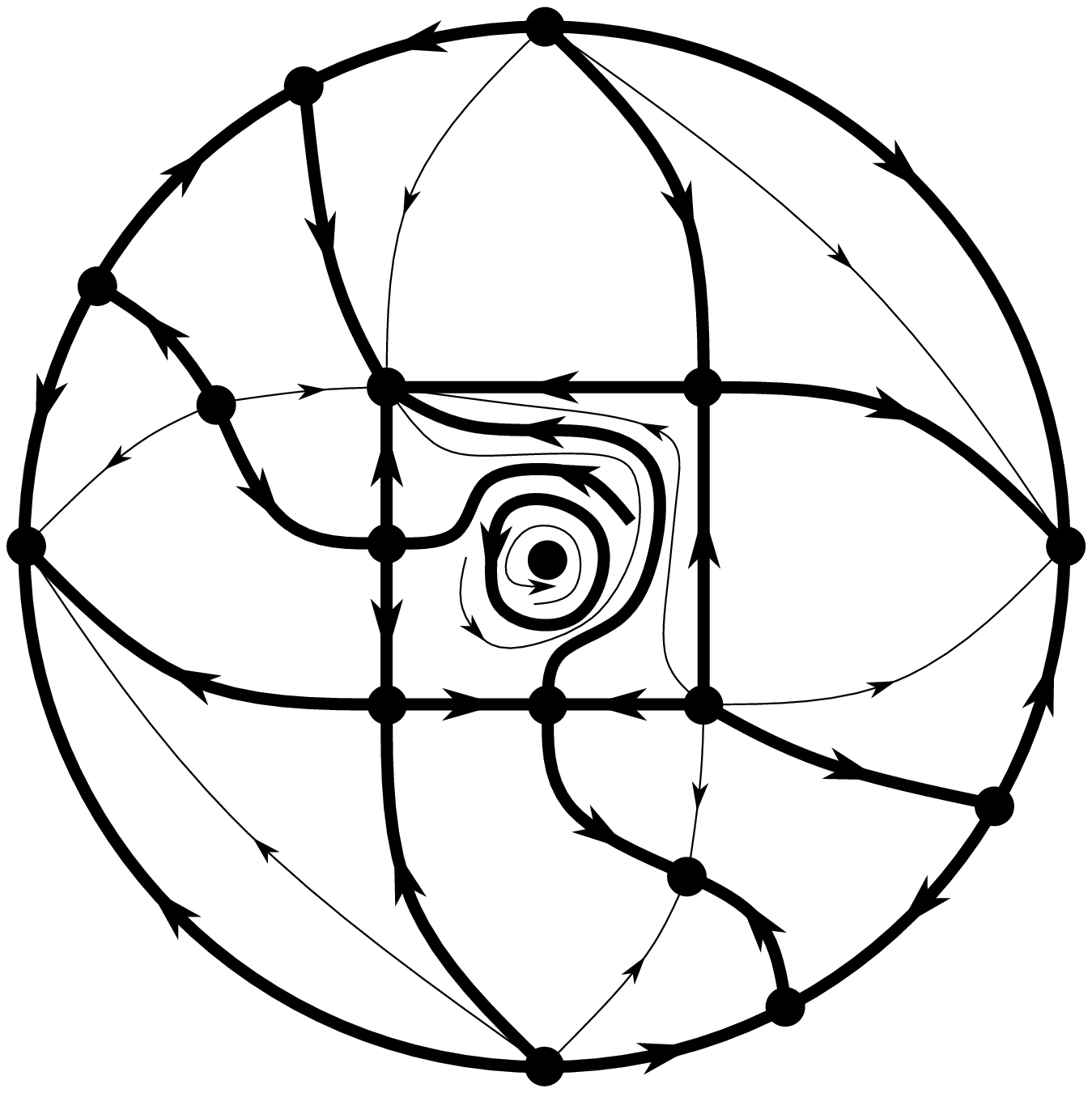} 
				\end{overpic}
				
				Case~$4.9a.ii.1$.
			\end{center}
		\end{minipage}	
		\begin{minipage}{3.1cm}
			\begin{center}
				\begin{overpic}[height=3cm]{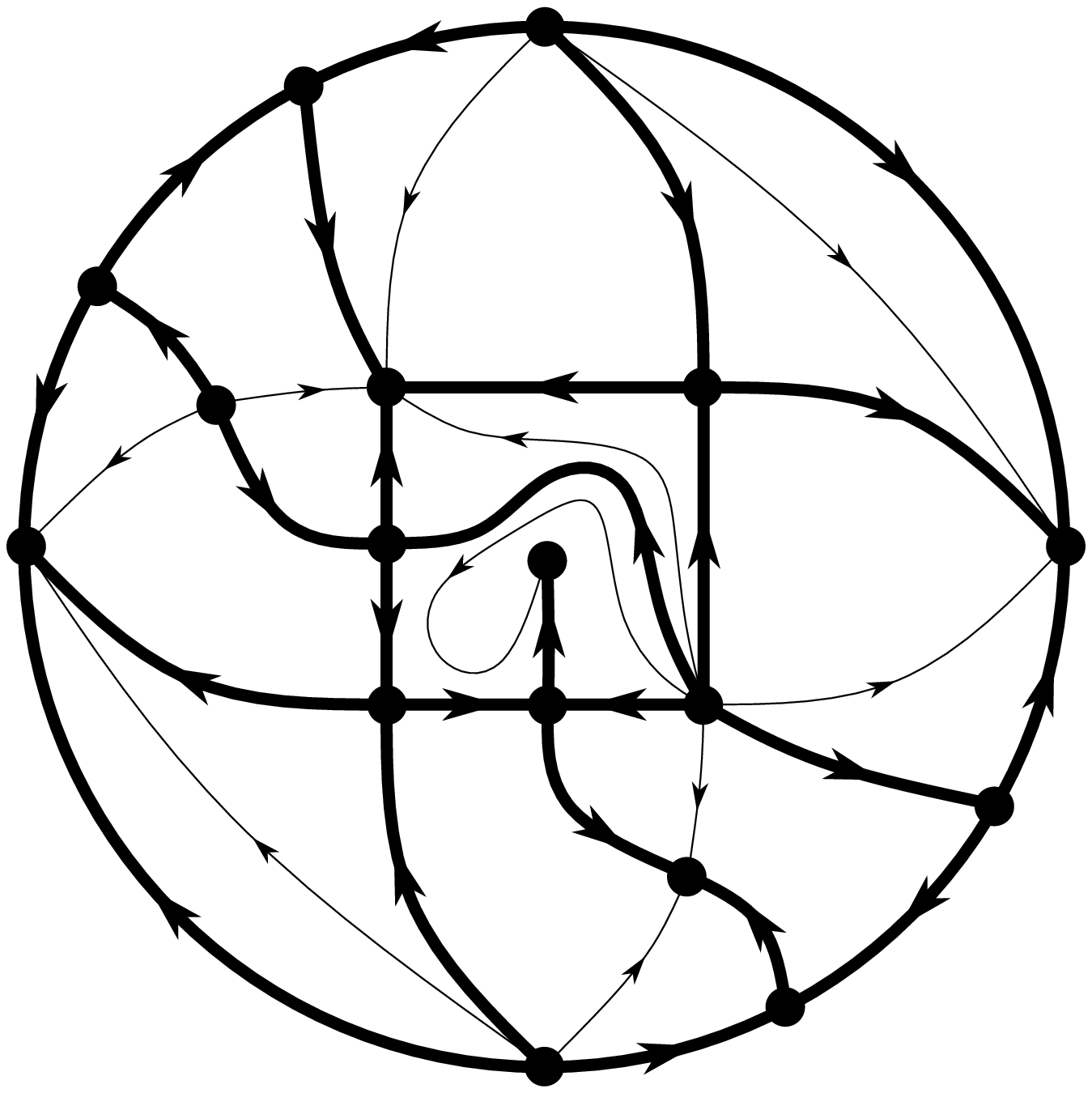} 
				\end{overpic}
				
				Case~$4.9a.ii.2$.
			\end{center}
		\end{minipage}
	\end{center}
	$\;$
	\begin{center}
		\begin{minipage}{3.1cm}
			\begin{center}
				\begin{overpic}[height=3cm]{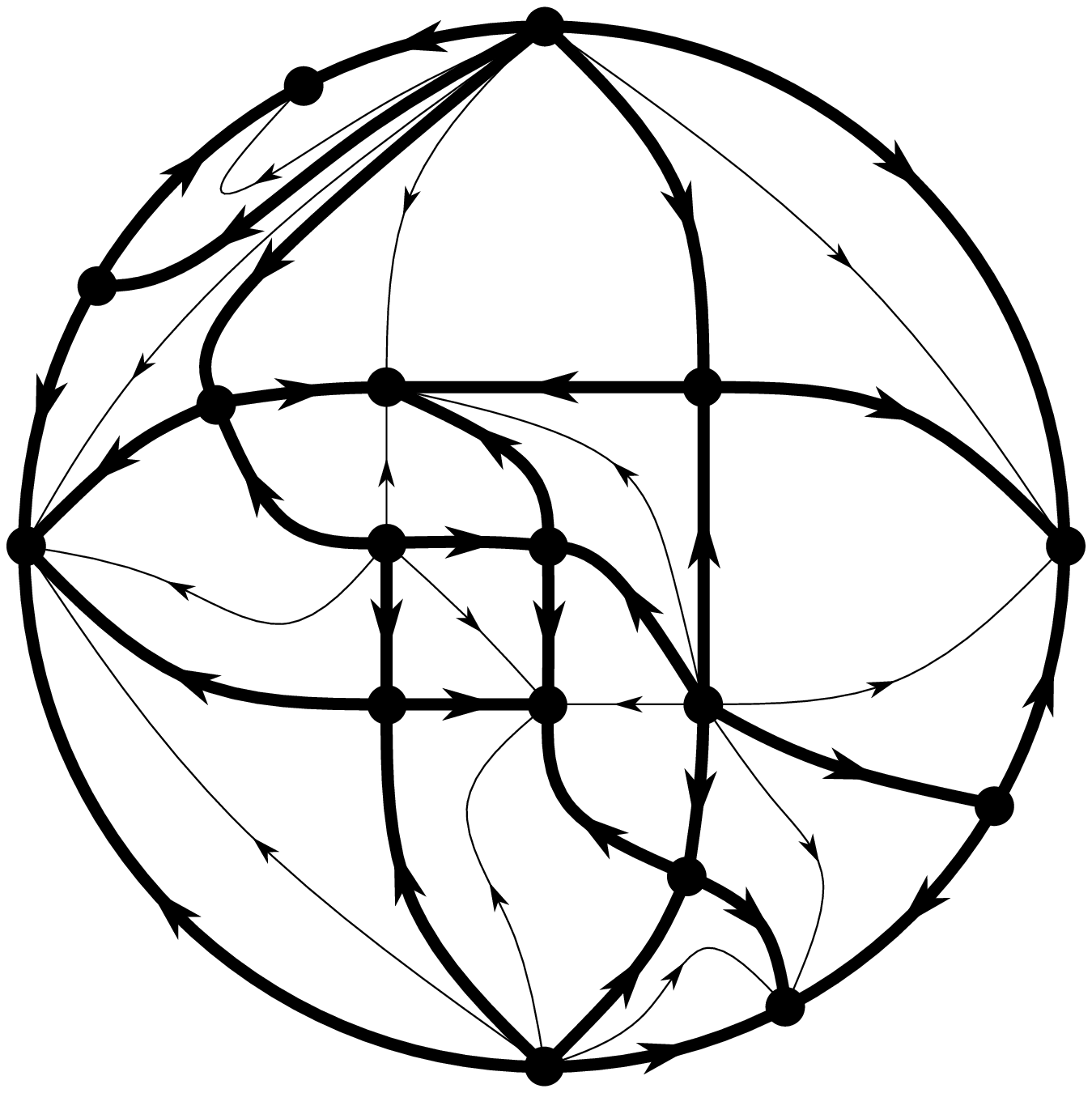} 
				\end{overpic}
				
				Case~$4.9b.i.1$.
			\end{center}
		\end{minipage}
		\begin{minipage}{3.1cm}
			\begin{center}
				\begin{overpic}[height=3cm]{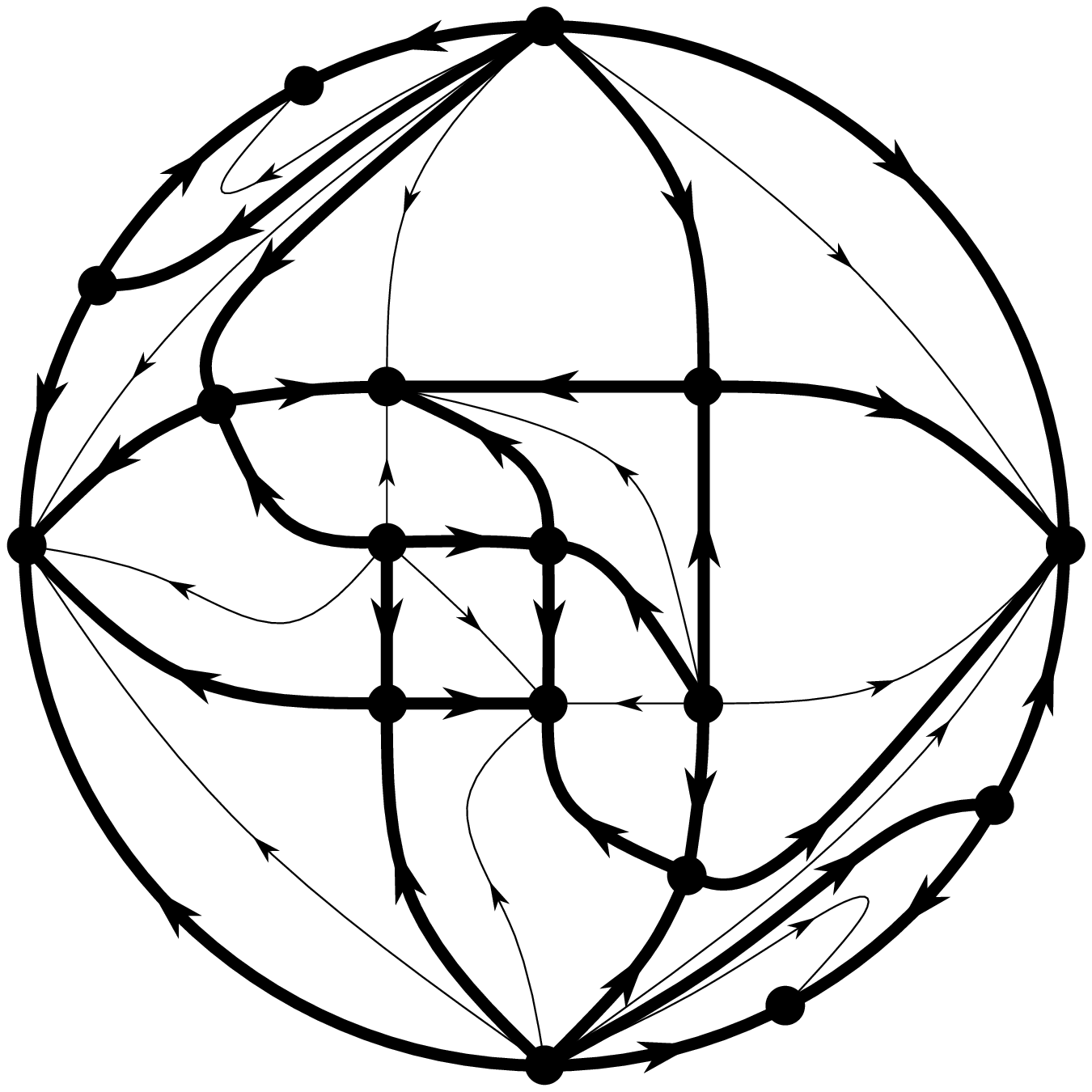} 
				\end{overpic}
				
				Case~$4.9b.i.2$.
			\end{center}
		\end{minipage}
		\begin{minipage}{3.1cm}
			\begin{center}
				\begin{overpic}[height=3cm]{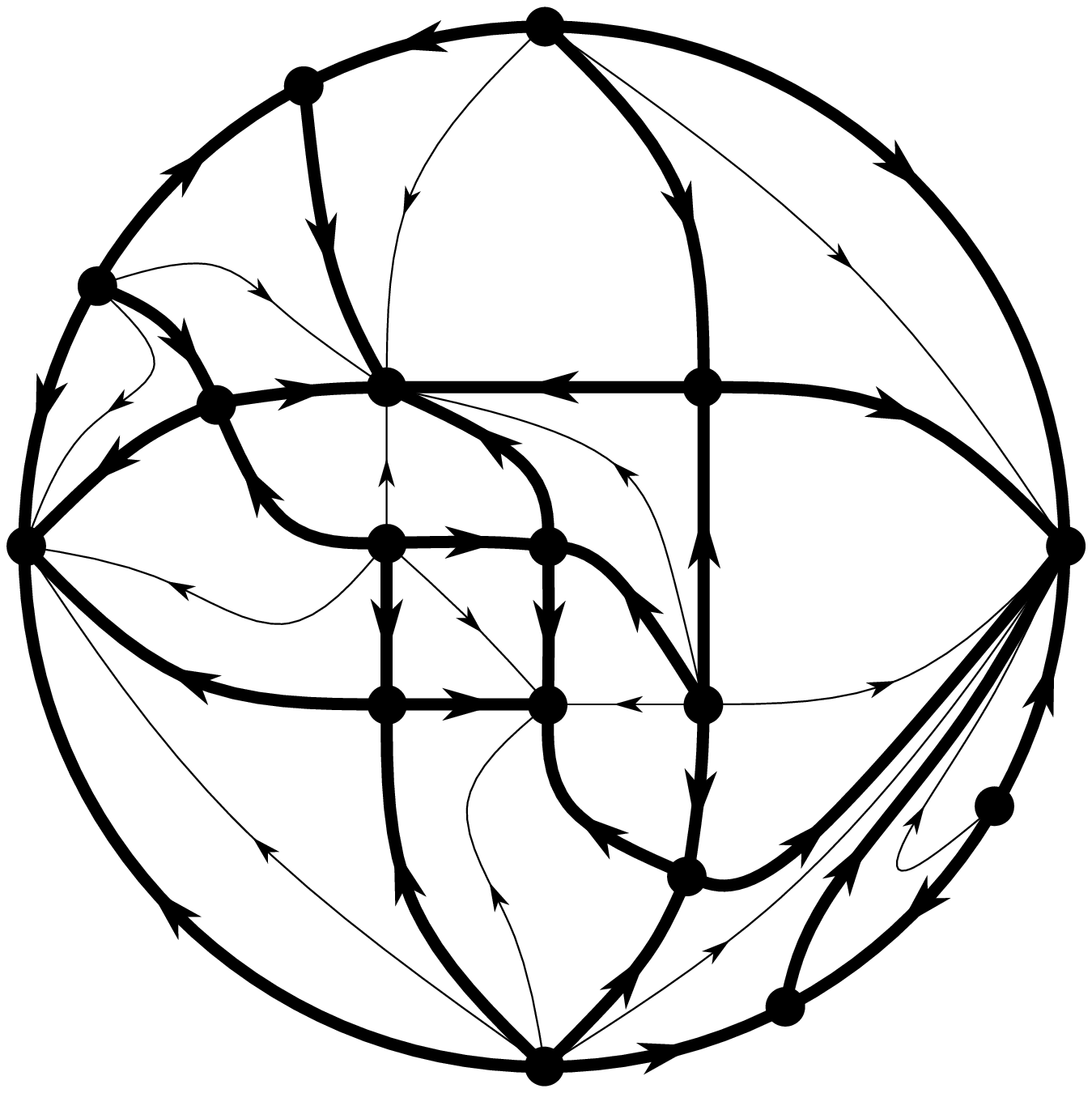} 
				\end{overpic}
				
				Case~$4.9b.ii.1$.
			\end{center}
		\end{minipage}
		\begin{minipage}{3.1cm}
			\begin{center}
				\begin{overpic}[height=3cm]{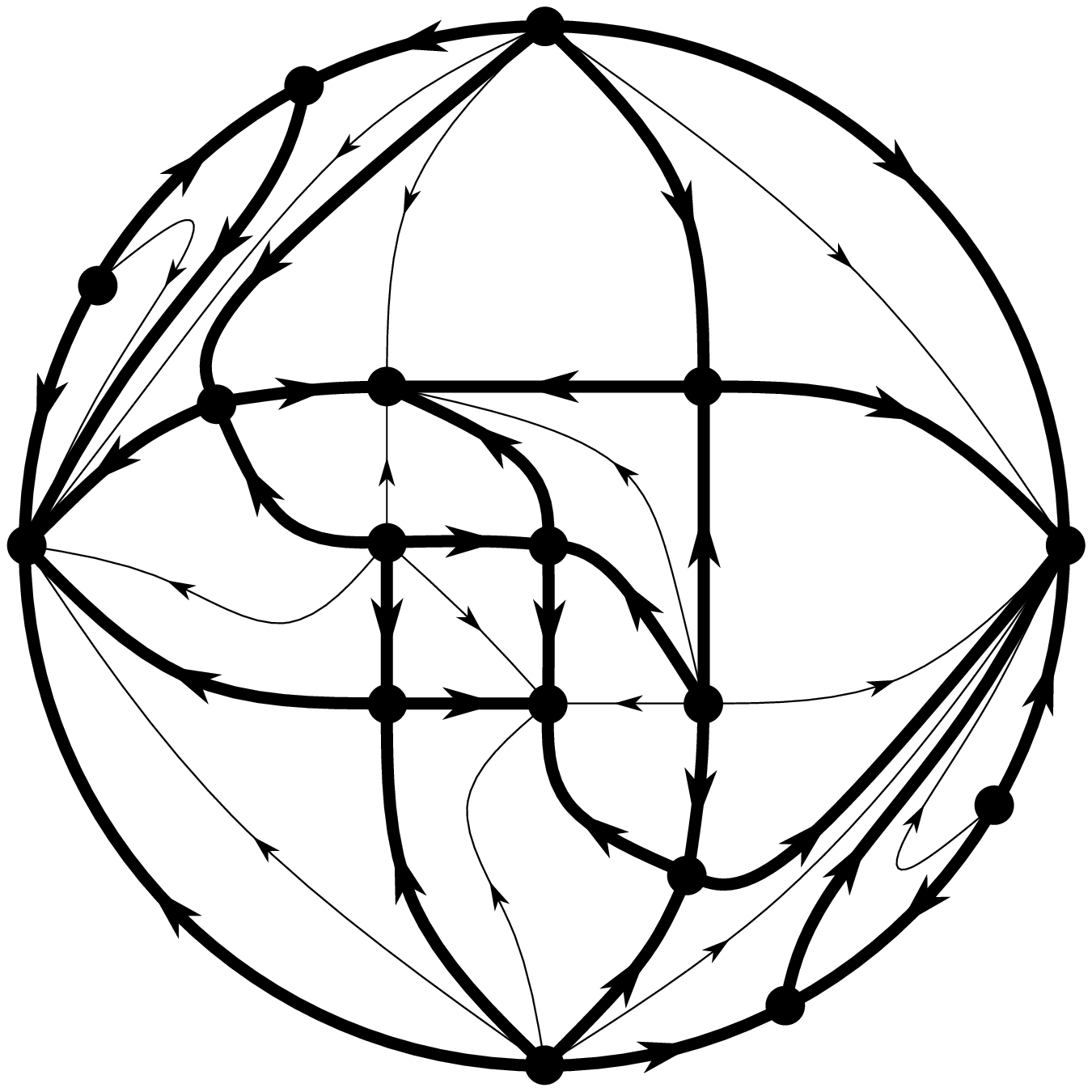} 
				\end{overpic}
				
				Case~$4.9b.ii.2$.
			\end{center}
		\end{minipage}
		\begin{minipage}{3.1cm}
			\begin{center}
				\begin{overpic}[height=3cm]{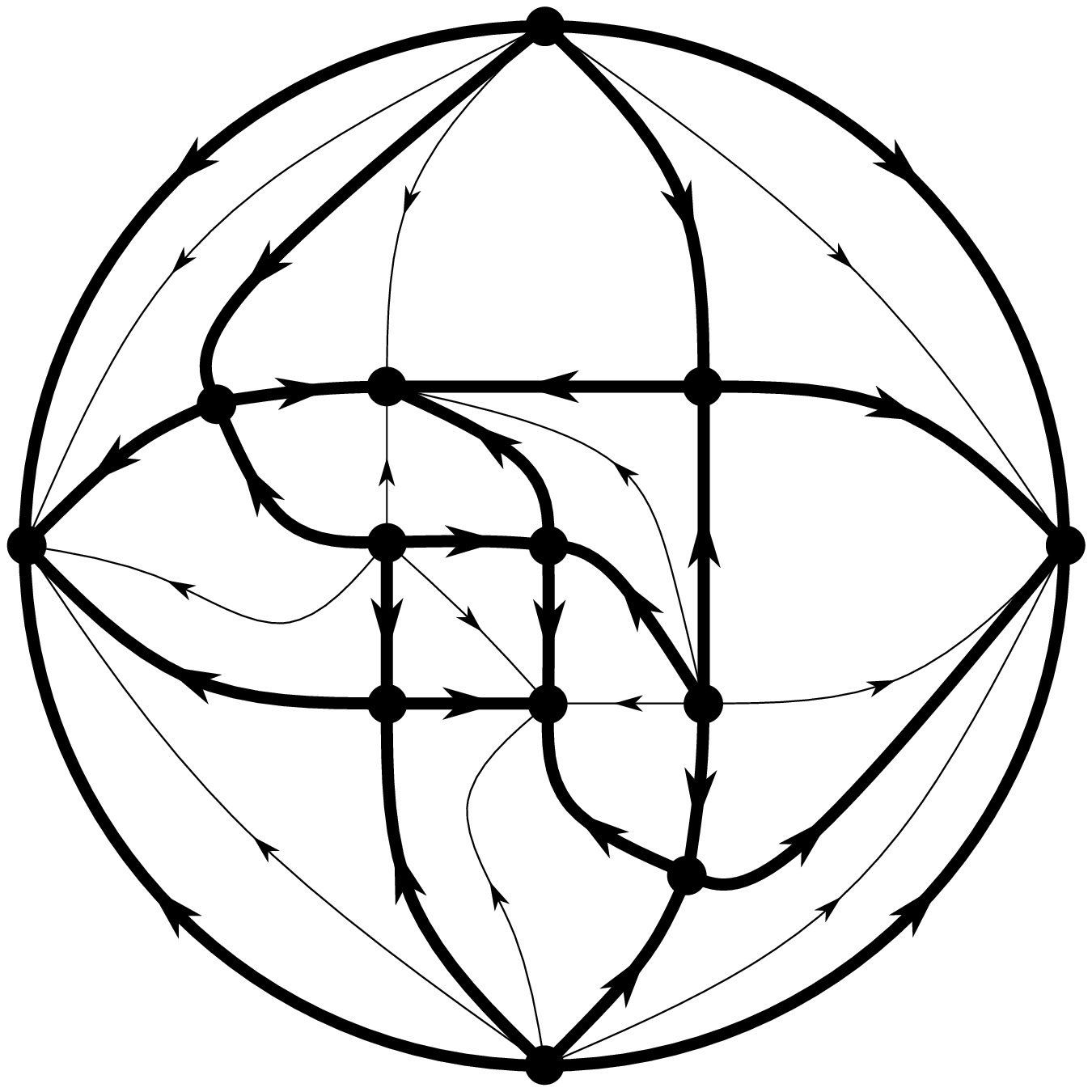} 
				\end{overpic}
				
				Case~$4.9b.iii$.
			\end{center}
		\end{minipage}
	\end{center}
	$\;$
	\begin{center}
		\begin{minipage}{3.1cm}
			\begin{center}
				\begin{overpic}[height=3cm]{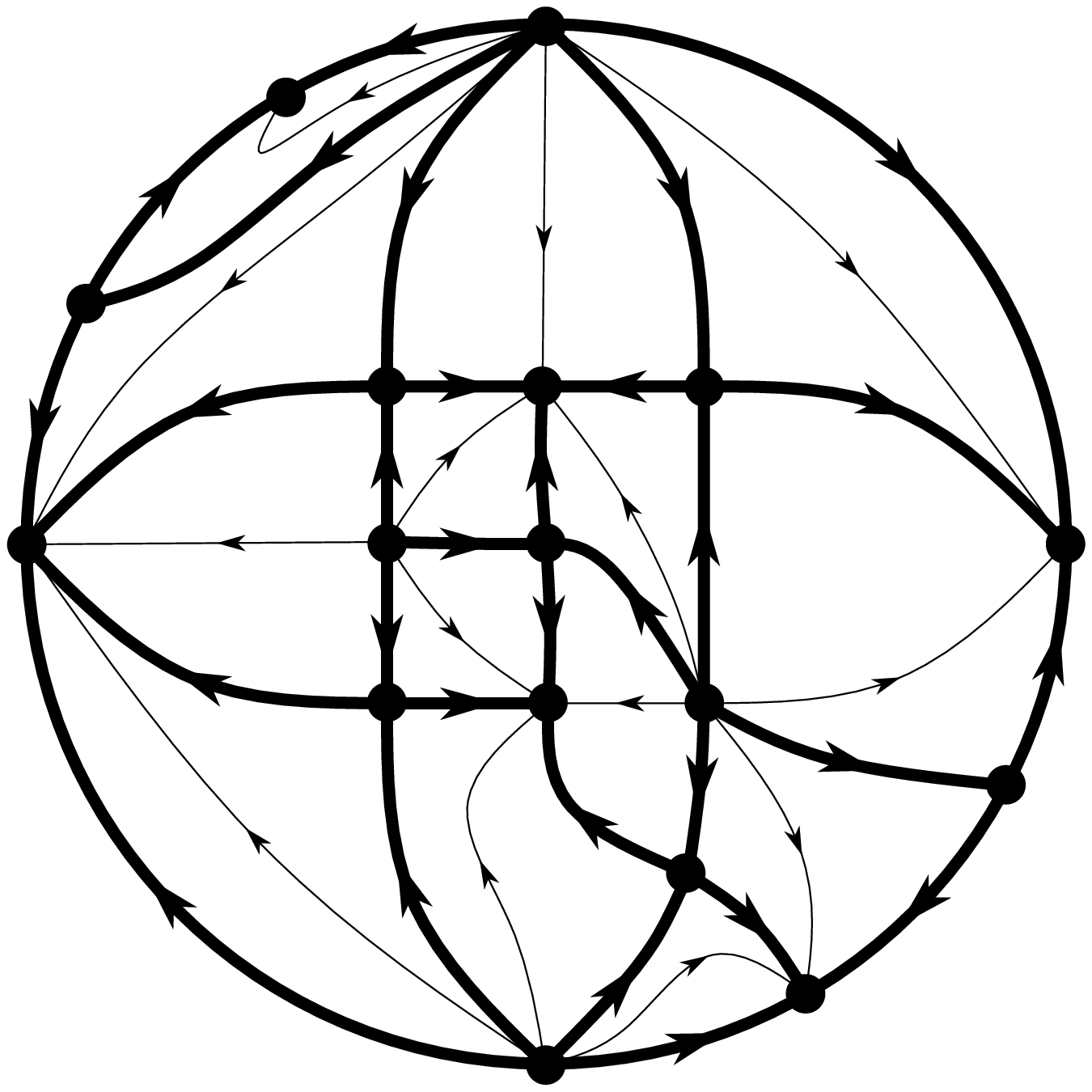} 
				\end{overpic}
				
				Case~$4.10a.1$.
			\end{center}
		\end{minipage}
		\begin{minipage}{3.1cm}
			\begin{center}
				\begin{overpic}[height=3cm]{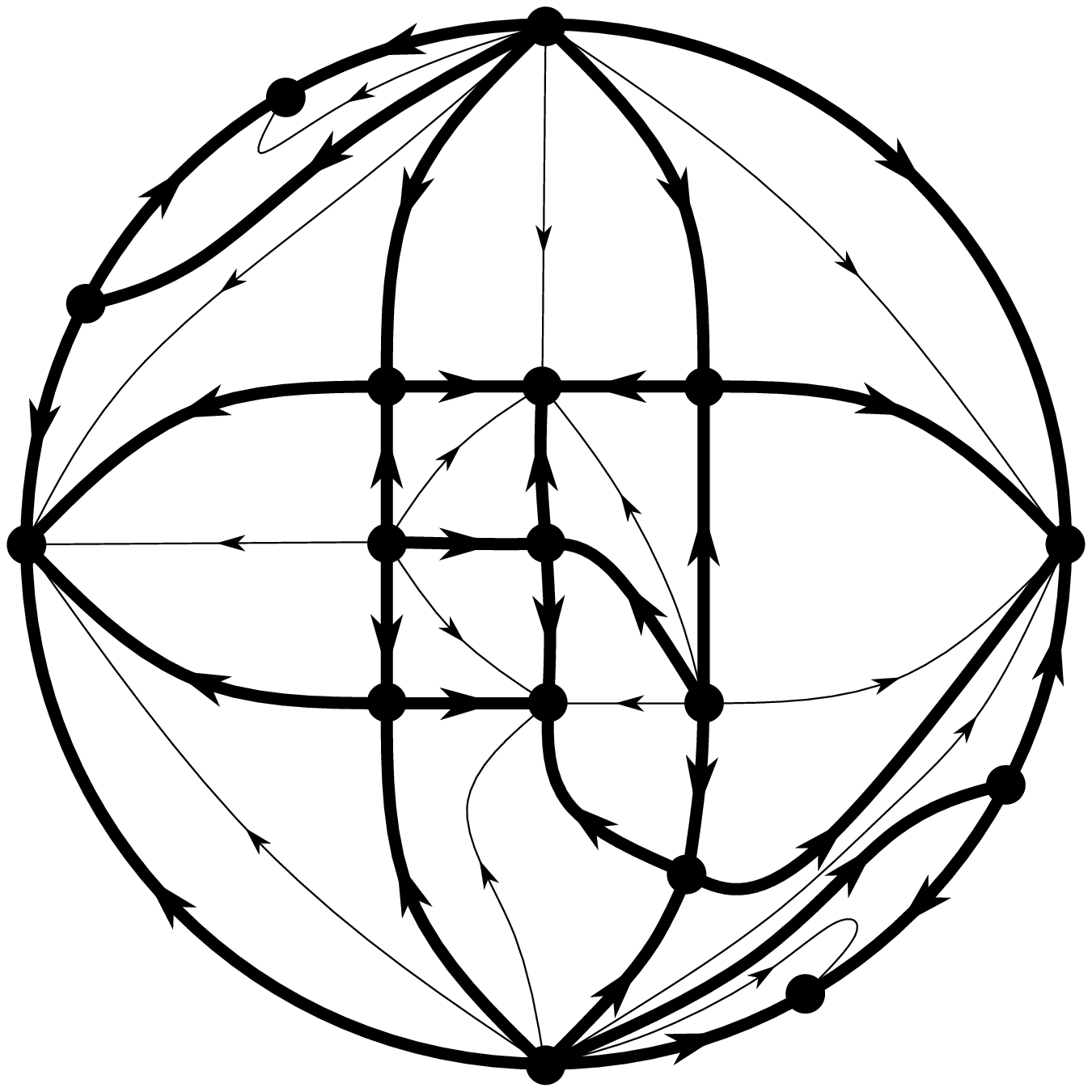} 
				\end{overpic}
				
				Case~$4.10a.2$.
			\end{center}
		\end{minipage}
		\begin{minipage}{3.1cm}
			\begin{center}
				\begin{overpic}[height=3cm]{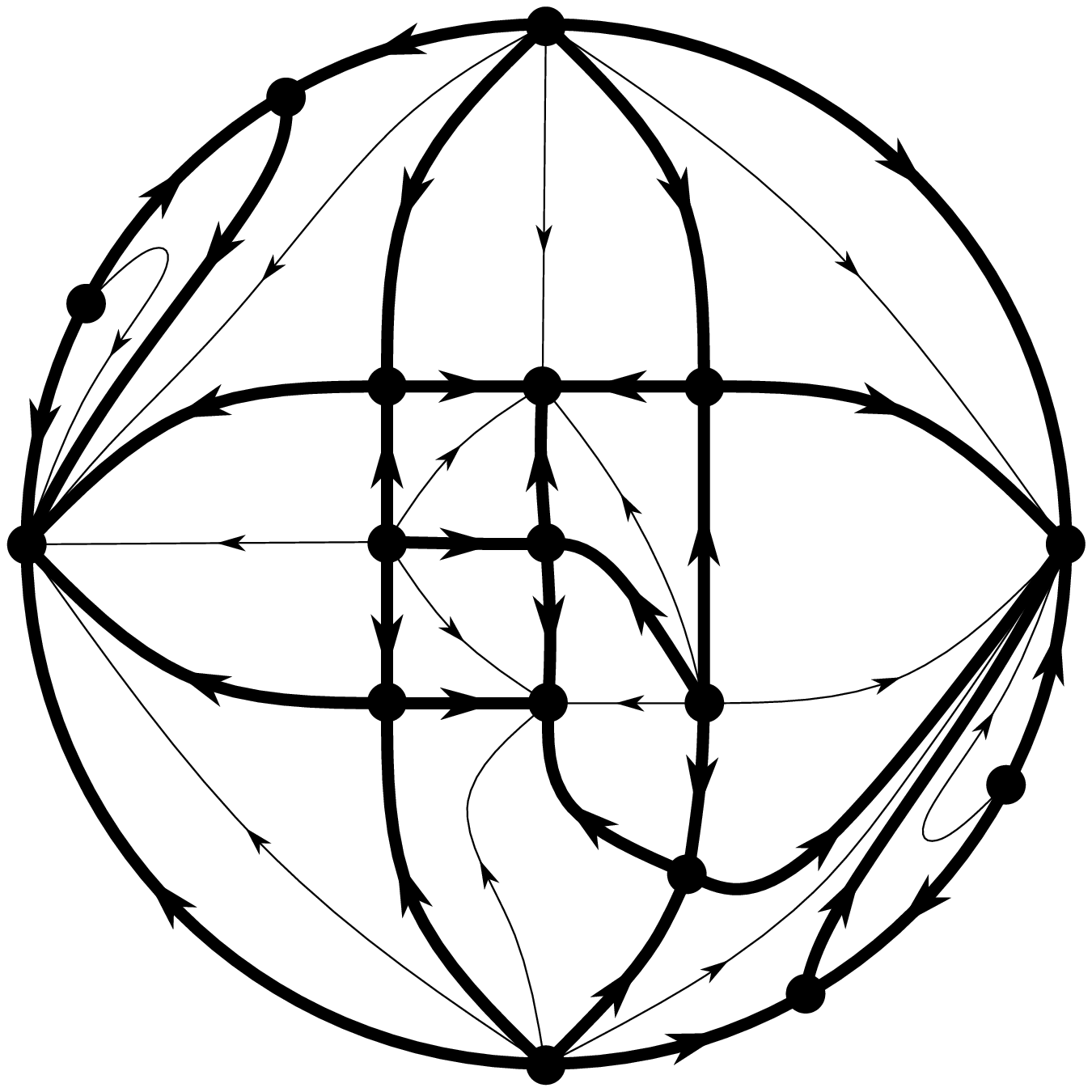} 
				\end{overpic}
				
				Case~$4.10b$.
			\end{center}
		\end{minipage}	
		\begin{minipage}{3.1cm}
			\begin{center}
				\begin{overpic}[height=3cm]{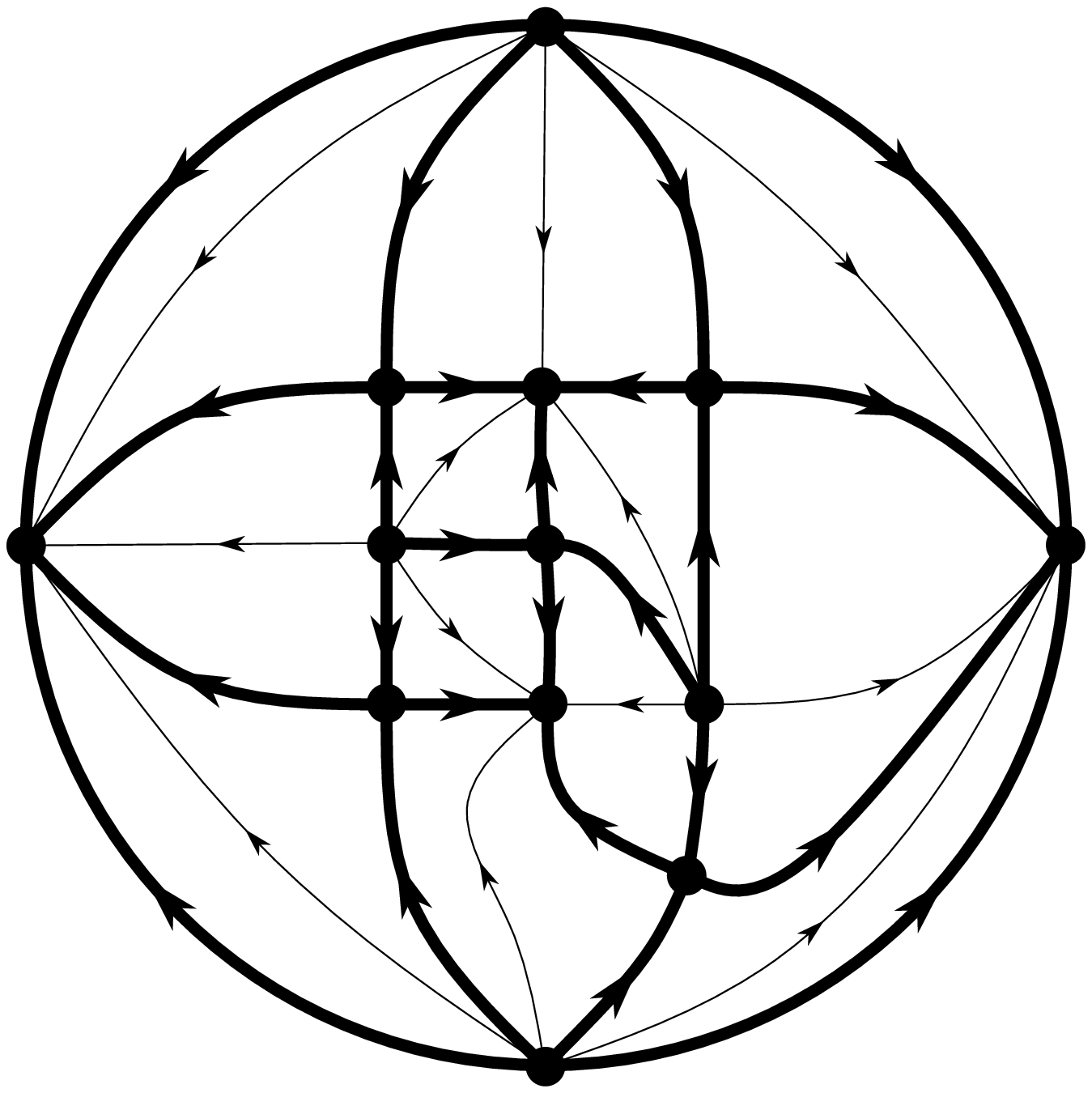} 
				\end{overpic}
				
				Case~$4.10c$.
			\end{center}
		\end{minipage}
		\begin{minipage}{3.1cm}
			\begin{center}
				\begin{overpic}[height=3cm]{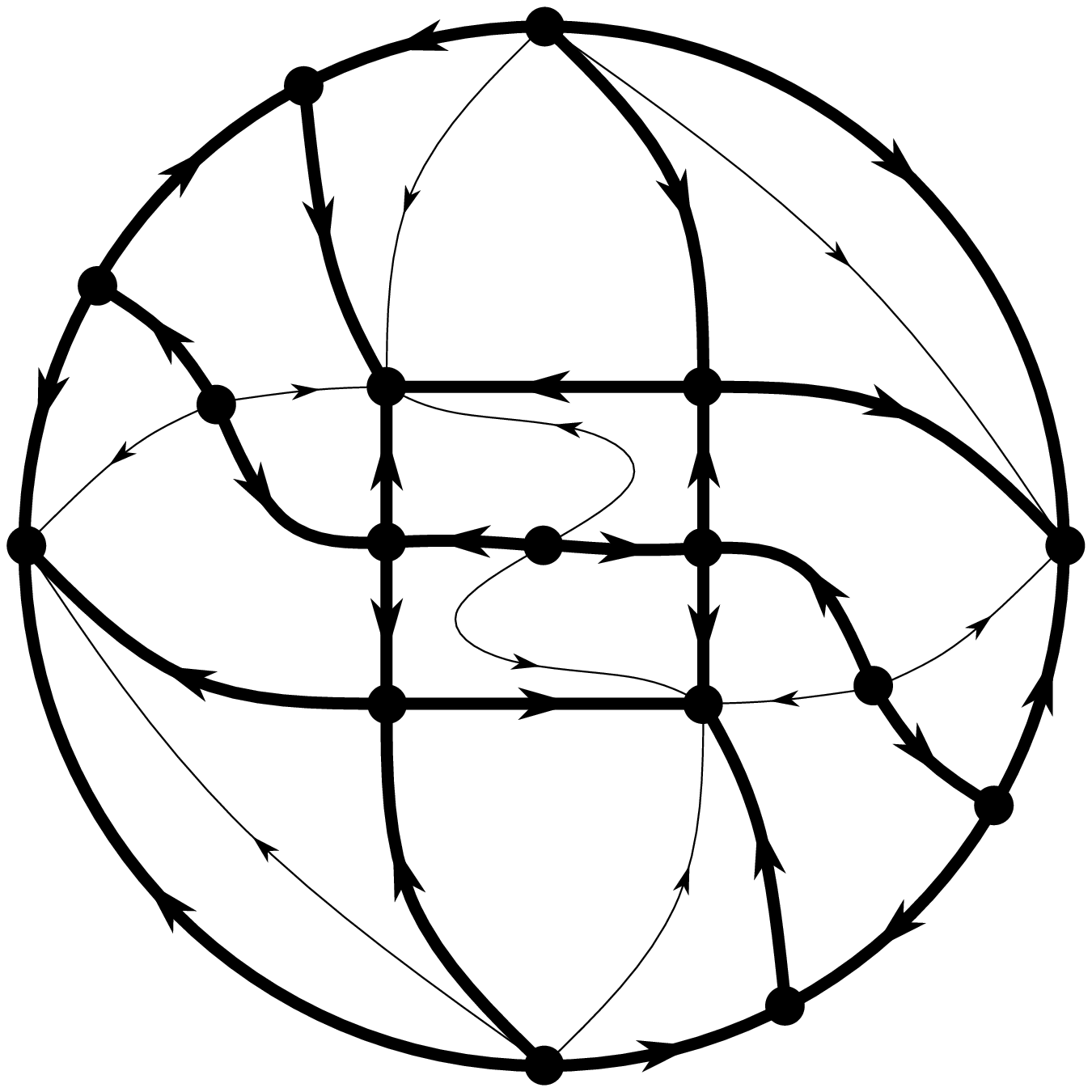} 
				\end{overpic}
				
				Case~$4.11a.i$.
			\end{center}
		\end{minipage}
	\end{center}
	$\;$
	\begin{center}
		\begin{minipage}{3.1cm}
			\begin{center}
				\begin{overpic}[height=3cm]{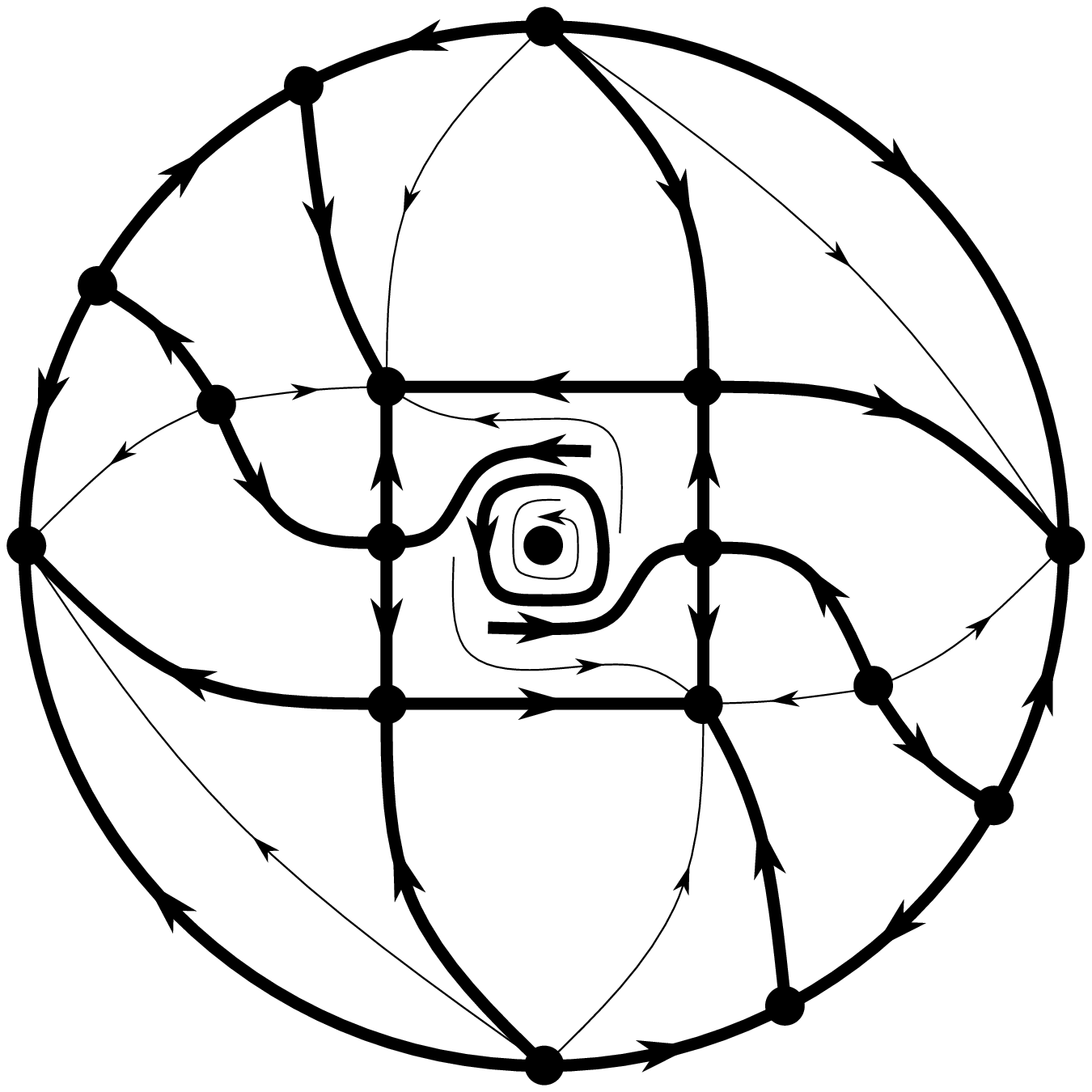} 
				\end{overpic}
				
				Case~$4.11a.ii$.
			\end{center}
		\end{minipage}
		\begin{minipage}{3.1cm}
			\begin{center}
				\begin{overpic}[height=3cm]{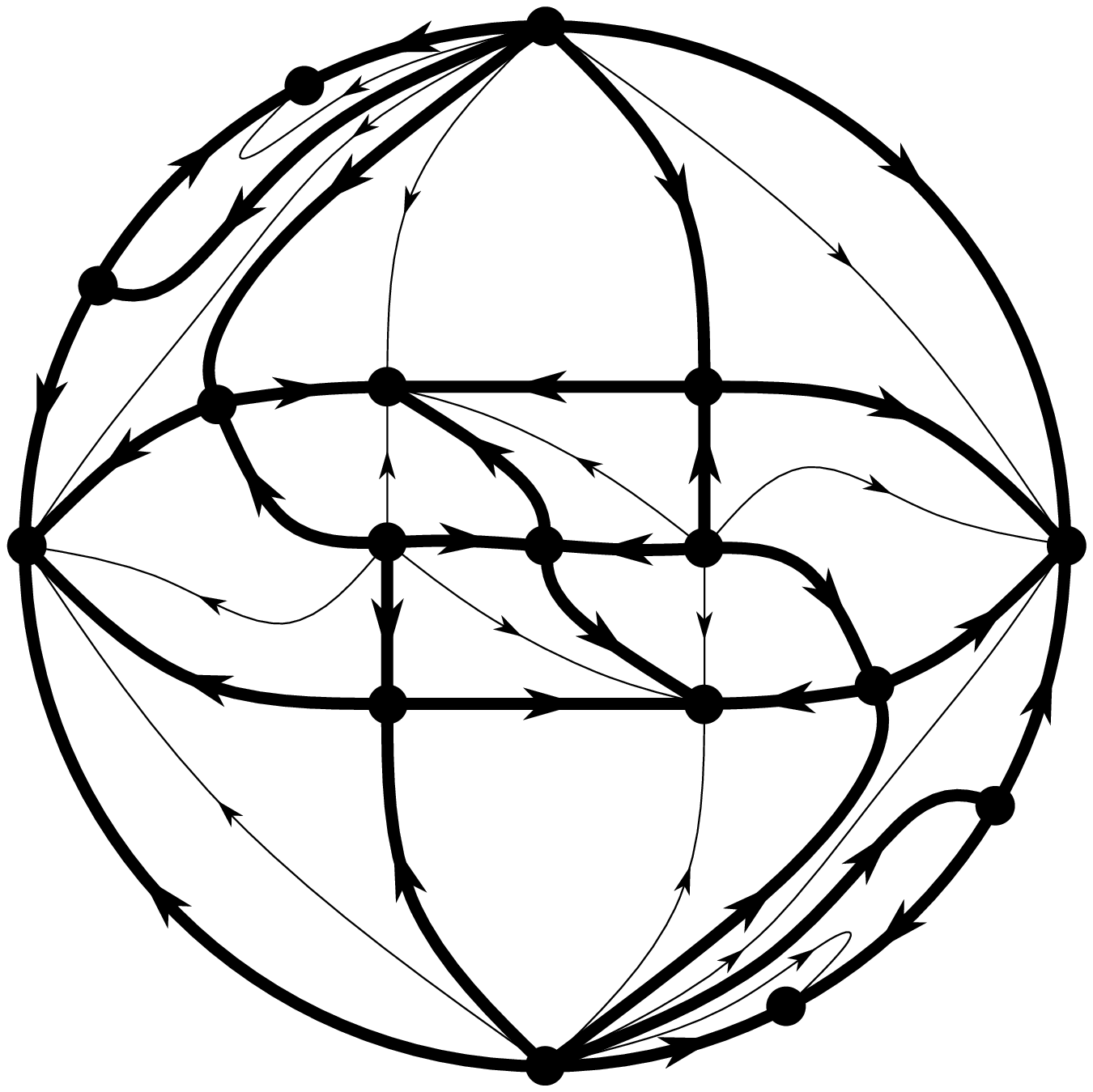} 
				\end{overpic}
				
				Case~$4.11b.i$.
			\end{center}
		\end{minipage}	
		\begin{minipage}{3.1cm}
			\begin{center}
				\begin{overpic}[height=3cm]{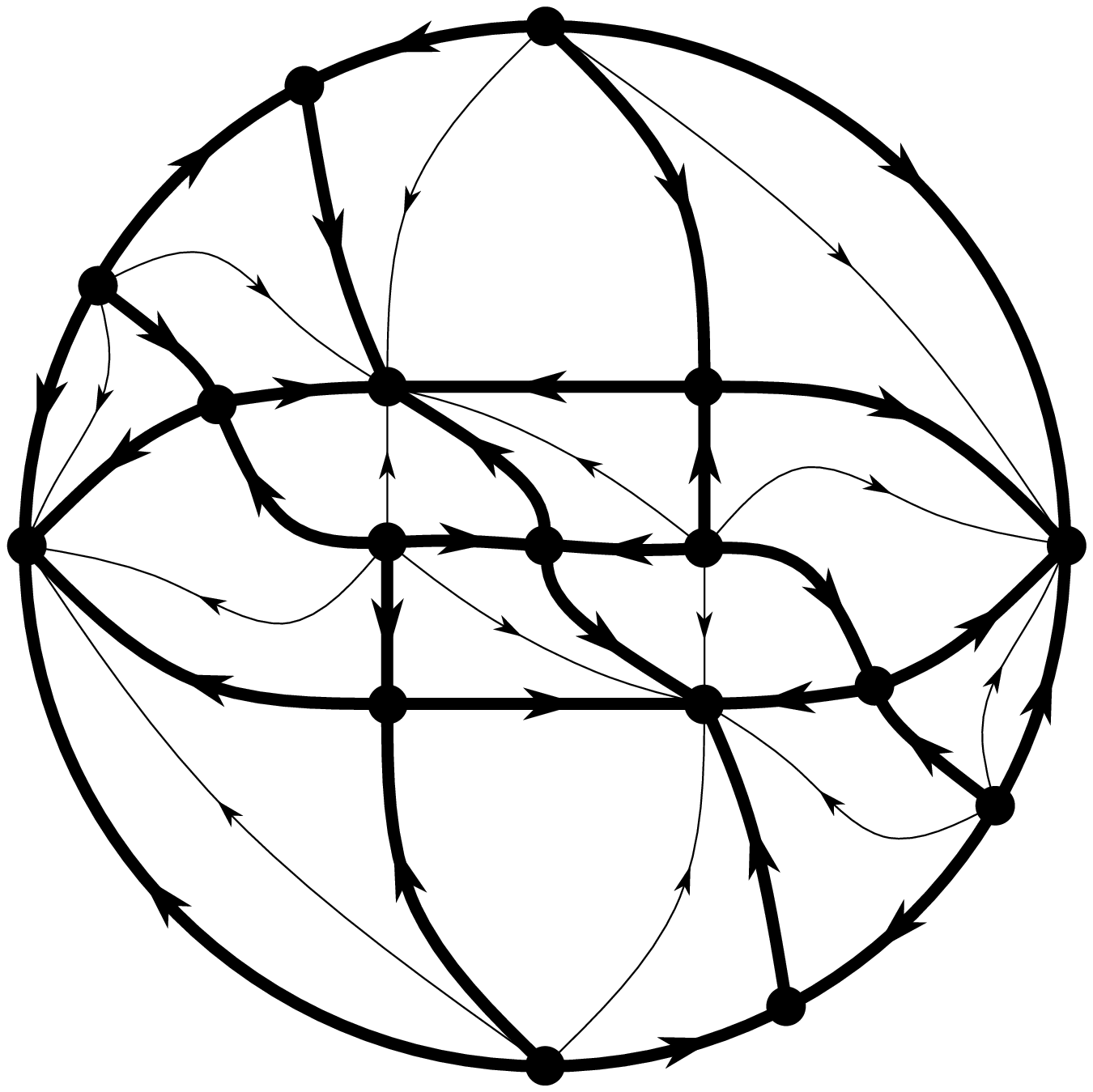} 
				\end{overpic}
				
				Case~$4.11b.ii.1$.
			\end{center}
		\end{minipage}
		\begin{minipage}{3.1cm}
			\begin{center}
				\begin{overpic}[height=3cm]{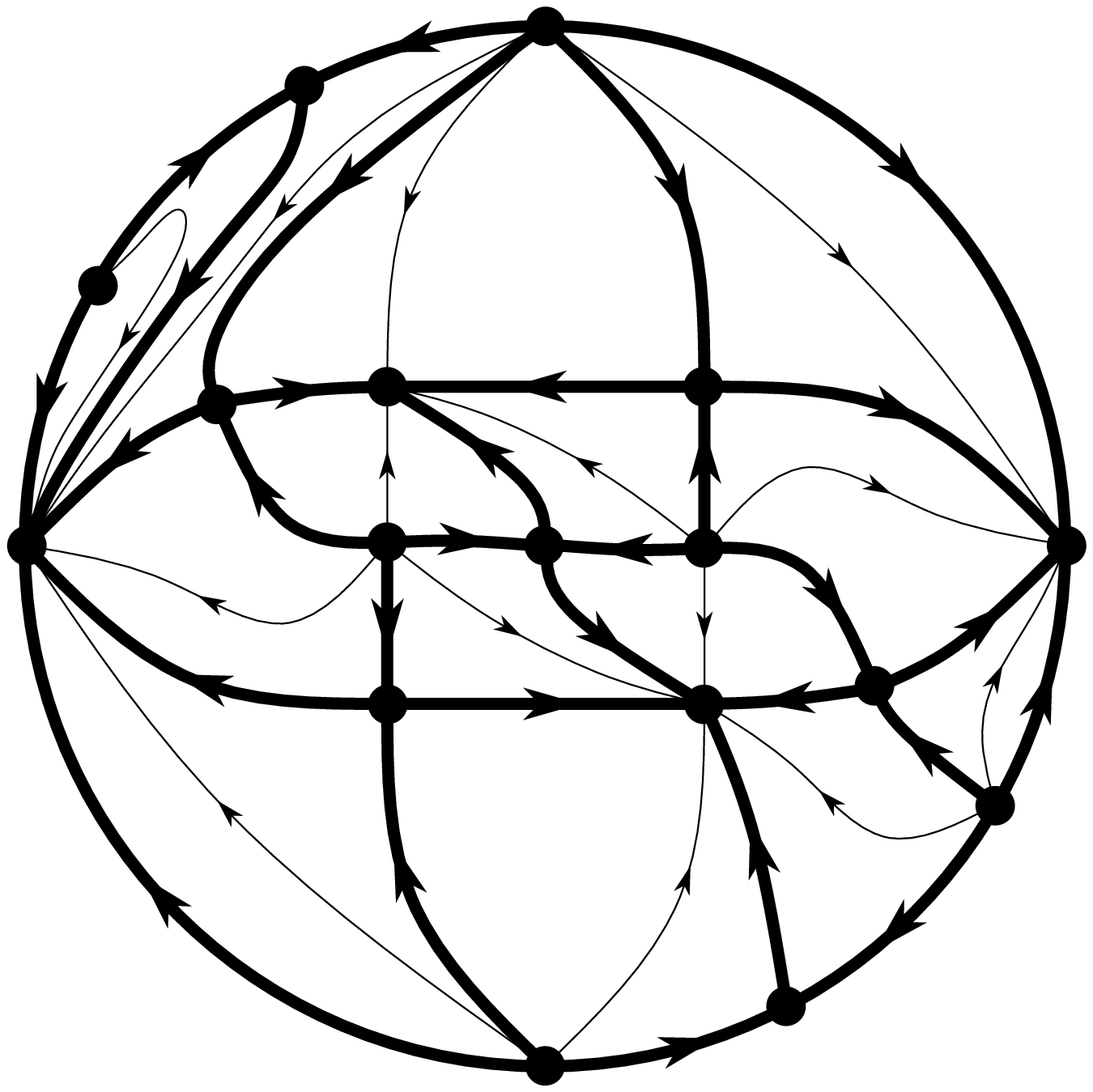} 
				\end{overpic}
				
				Case~$4.11b.ii.2$.
			\end{center}
		\end{minipage}
		\begin{minipage}{3.1cm}
			\begin{center}
				\begin{overpic}[height=3cm]{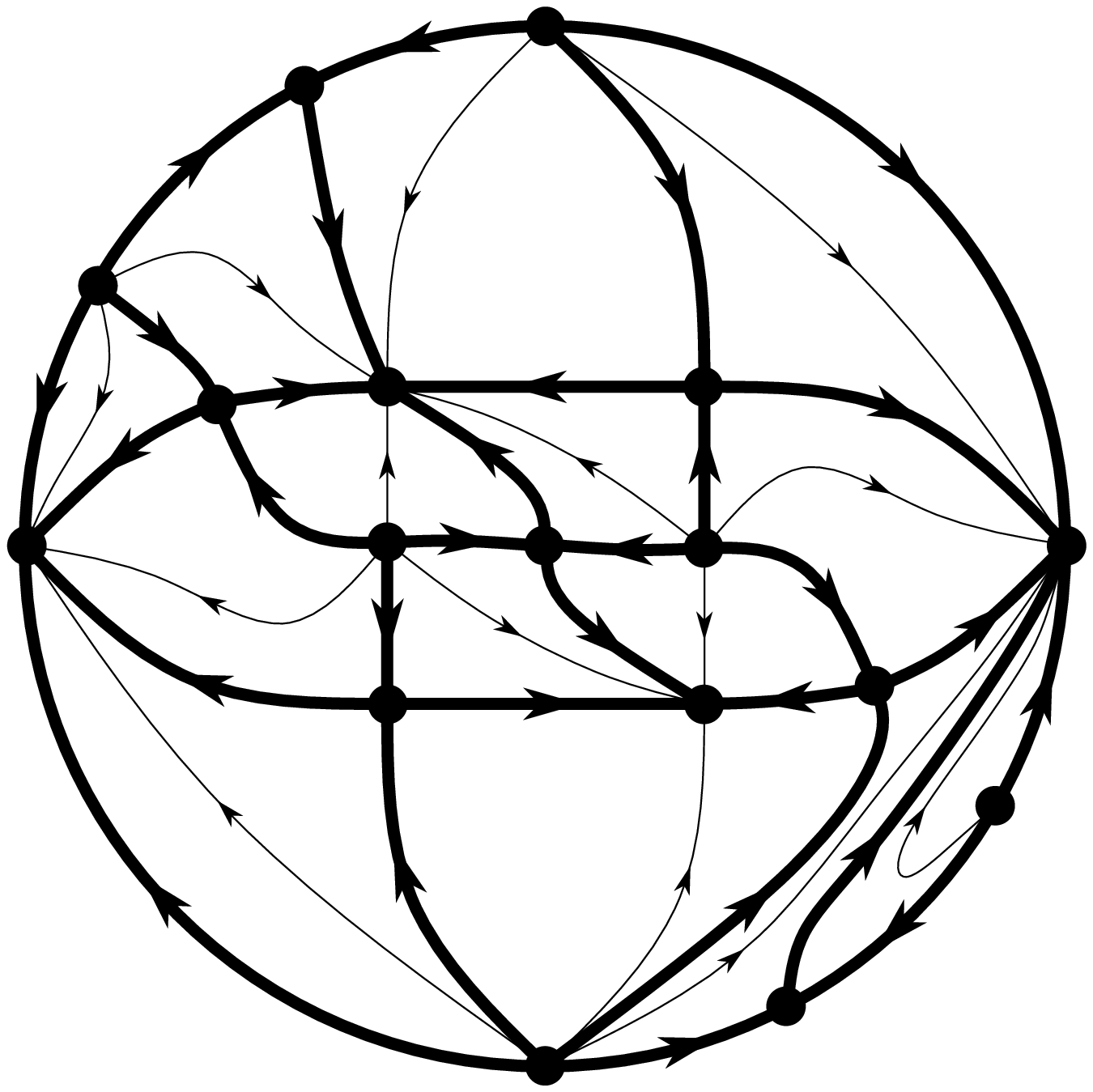} 
				\end{overpic}
				
				Case~$4.11b.ii.3$.
			\end{center}
		\end{minipage}
	\end{center}
	$\;$
	\begin{center}
		\begin{minipage}{3.1cm}
			\begin{center}
				\begin{overpic}[height=3cm]{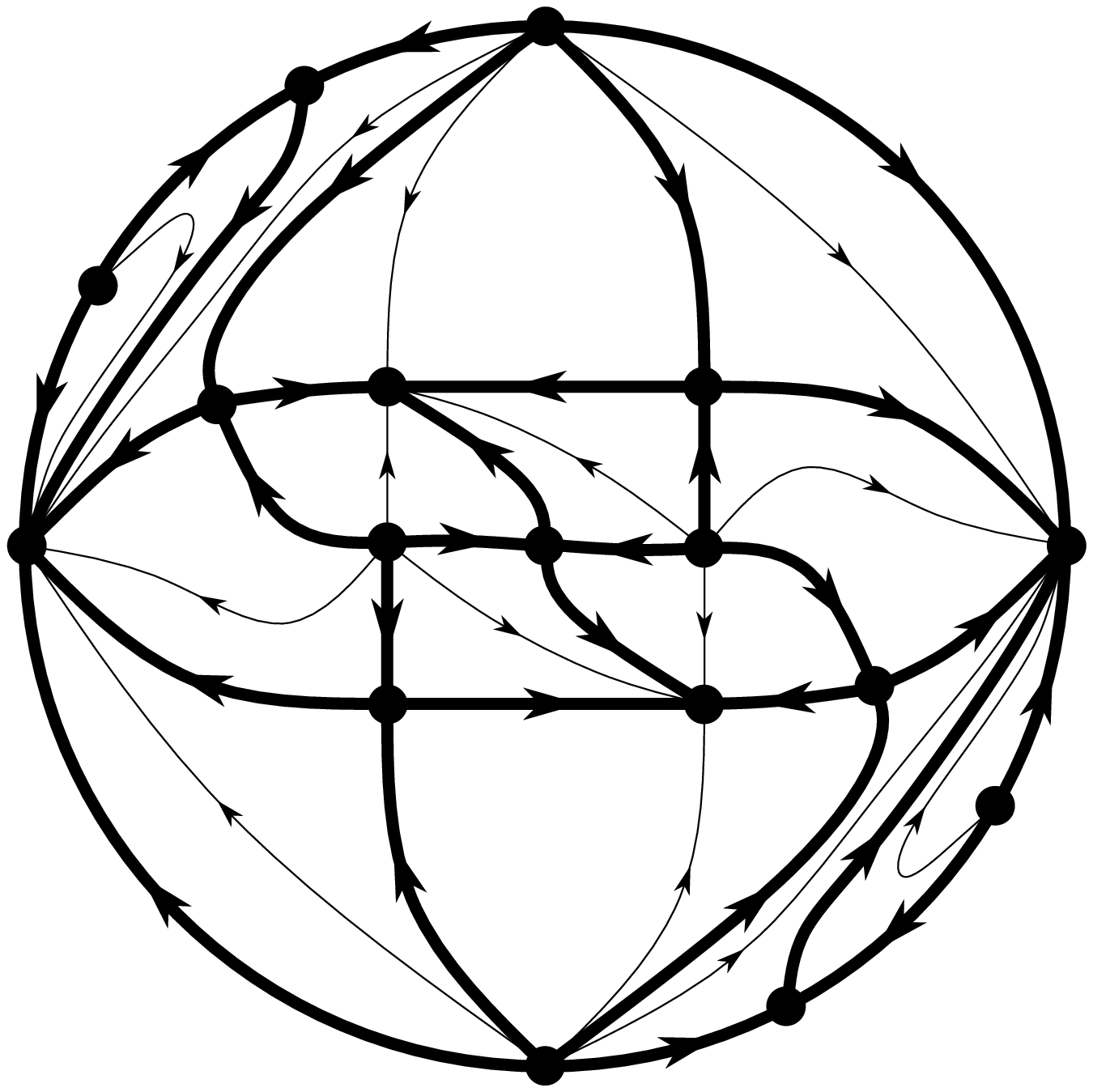} 
				\end{overpic}
				
				Case~$4.11b.ii.4$.
			\end{center}
		\end{minipage}
		\begin{minipage}{3.1cm}
			\begin{center}
				\begin{overpic}[height=3cm]{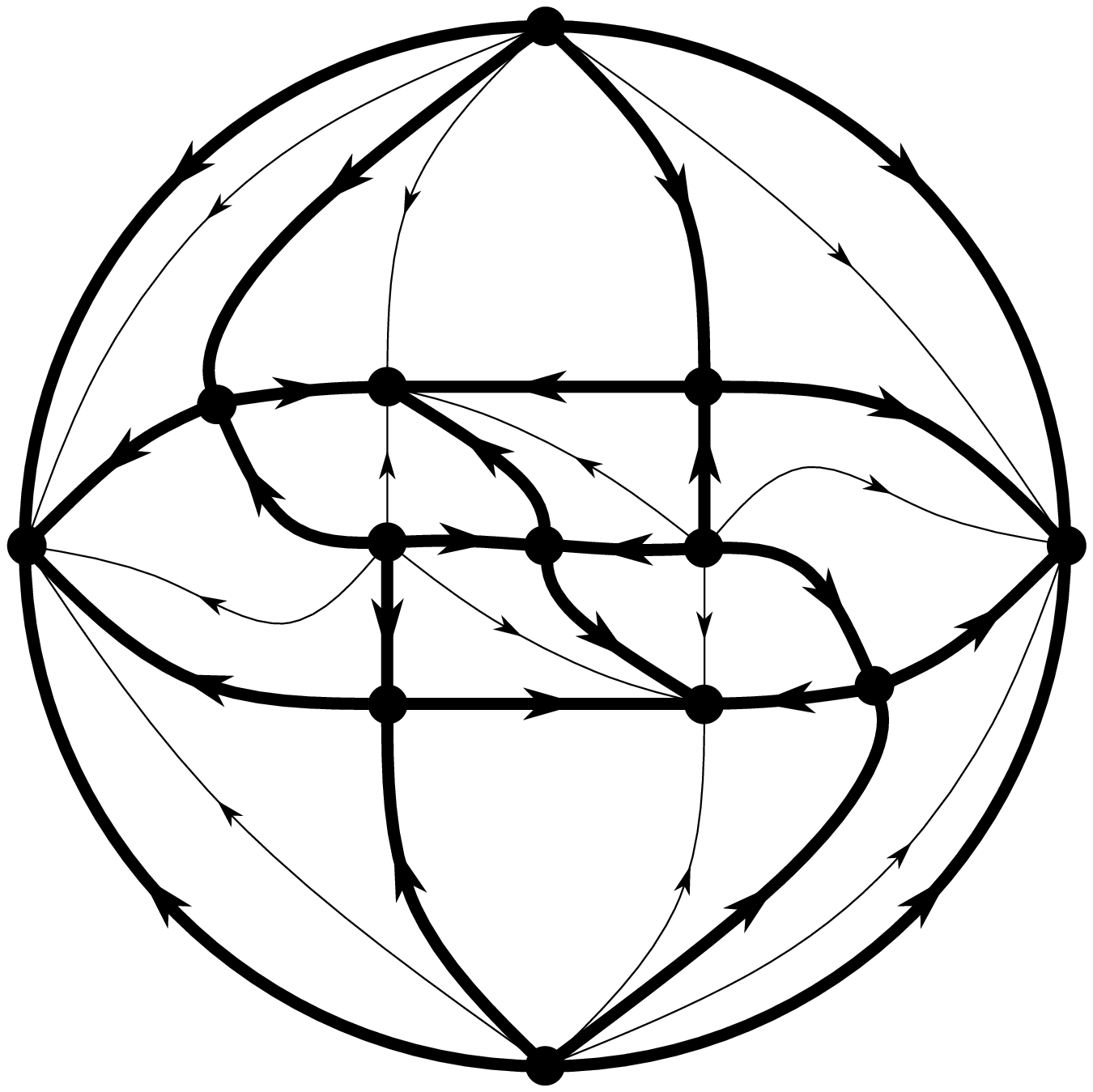} 
				\end{overpic}
				
				Case~$4.11b.iii$.
			\end{center}
		\end{minipage}
	\end{center}
	\caption{Phase portraits from Cases~$4.7$ to $4.11$.}\label{Case1.4b}
\end{figure}

\clearpage

\begin{figure}[h]
	\begin{center}
		\begin{minipage}{3.1cm}
			\begin{center}
				\begin{overpic}[height=3cm]{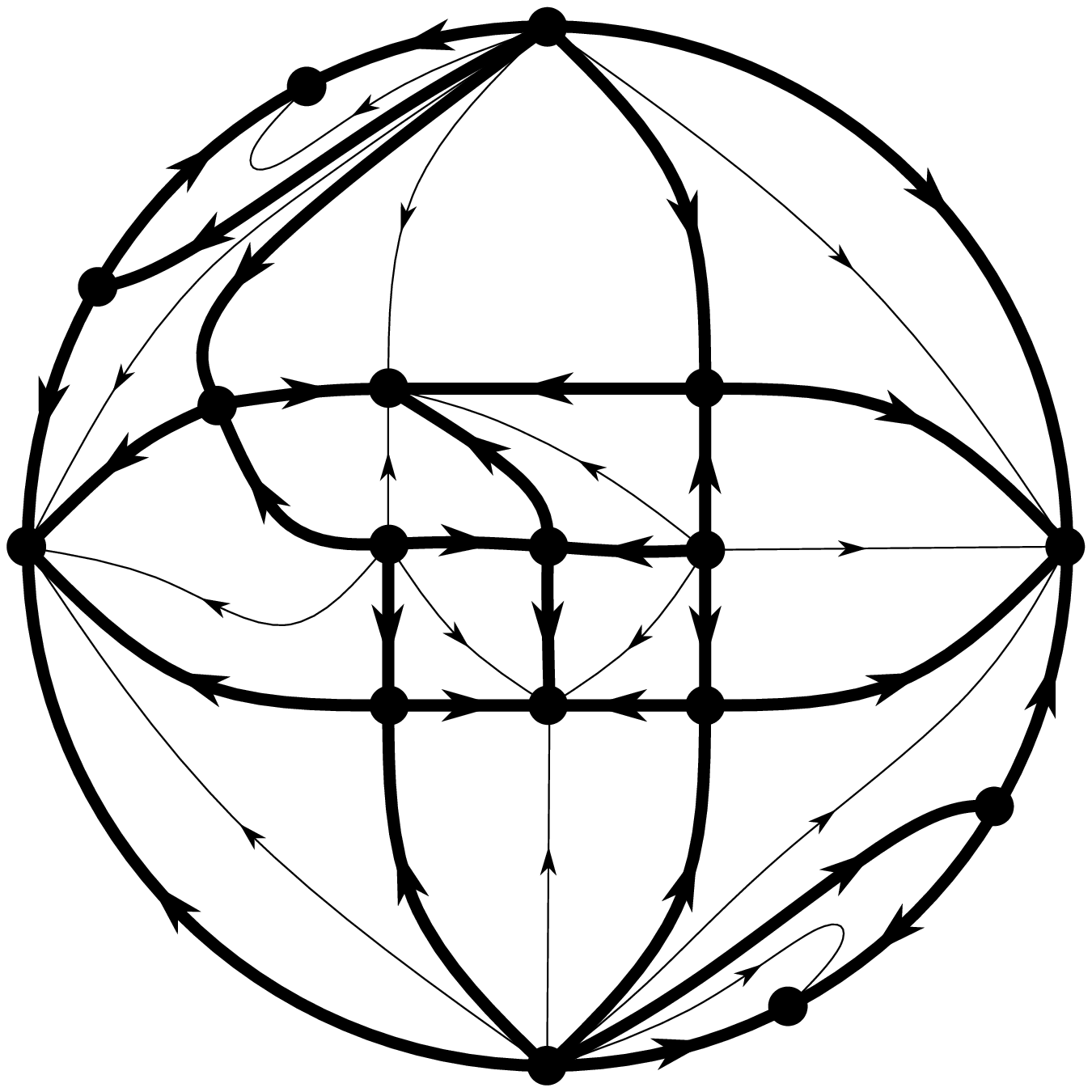} 
				\end{overpic}
				
				Case~$4.12a$.
			\end{center}
		\end{minipage}
		\begin{minipage}{3.1cm}
			\begin{center}
				\begin{overpic}[height=3cm]{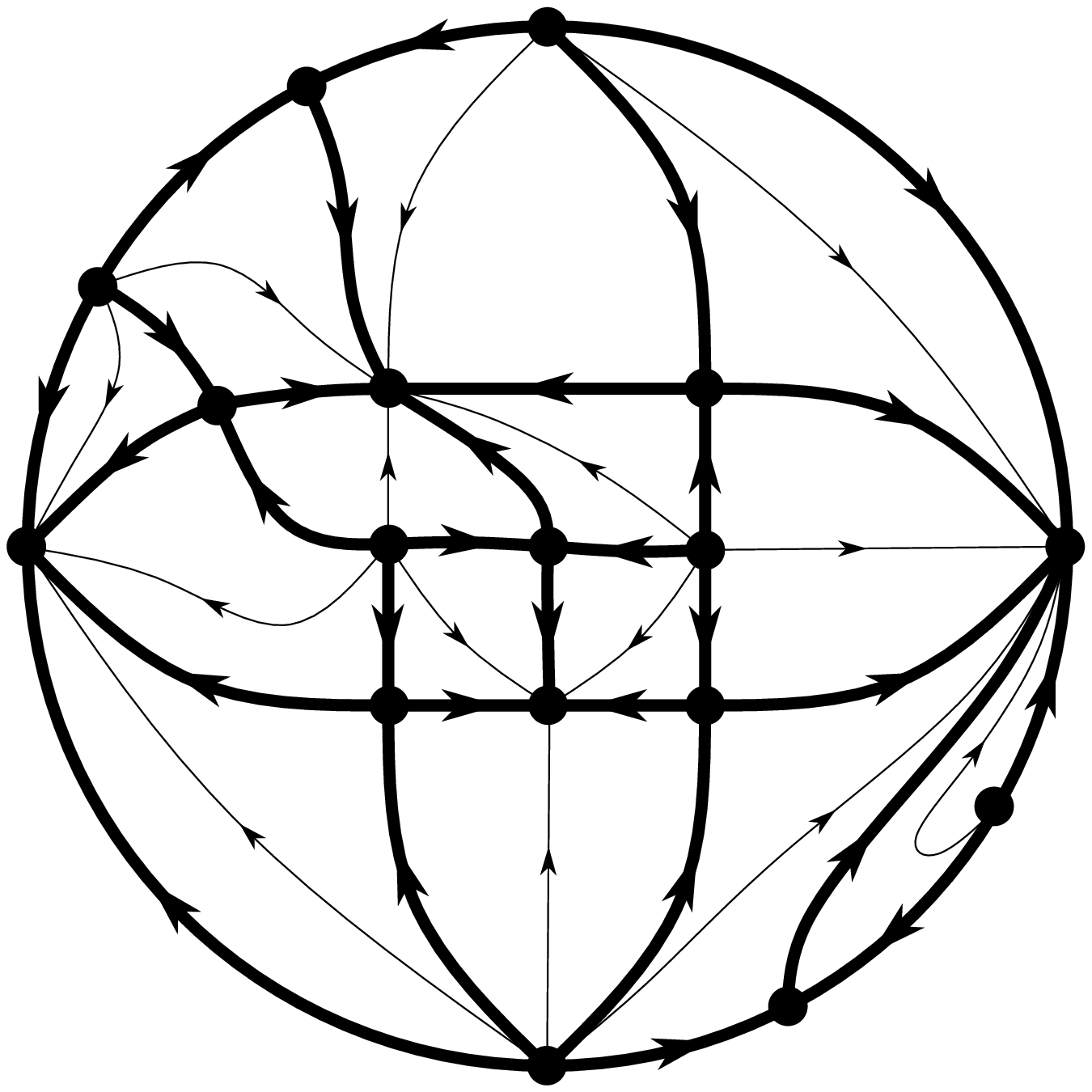} 
				\end{overpic}
				
				Case~$4.12b.1$.
			\end{center}
		\end{minipage}
		\begin{minipage}{3.1cm}
			\begin{center}
				\begin{overpic}[height=3cm]{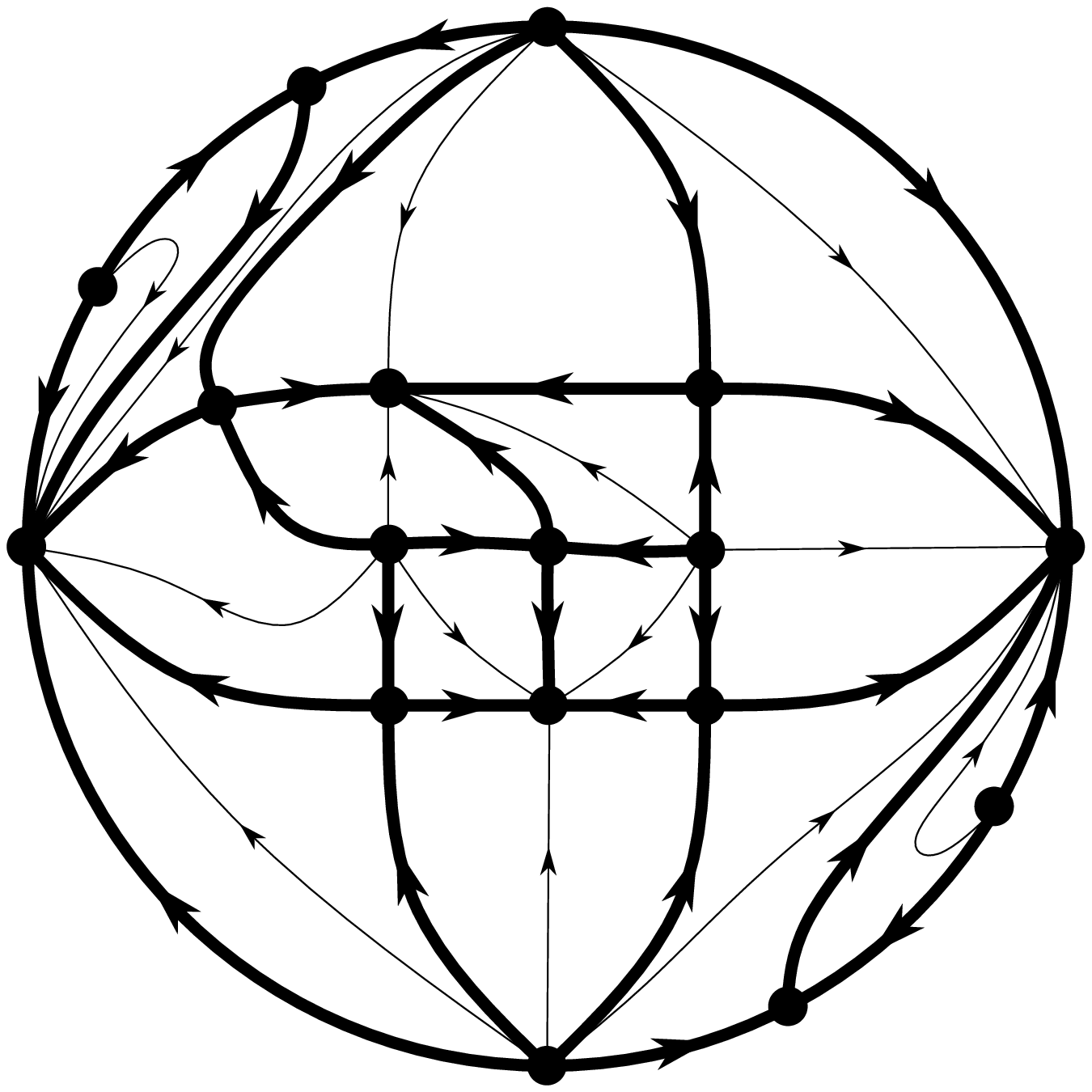} 
				\end{overpic}
				
				Case~$4.12b.2$.
			\end{center}
		\end{minipage}
		\begin{minipage}{3.1cm}
			\begin{center}
				\begin{overpic}[height=3cm]{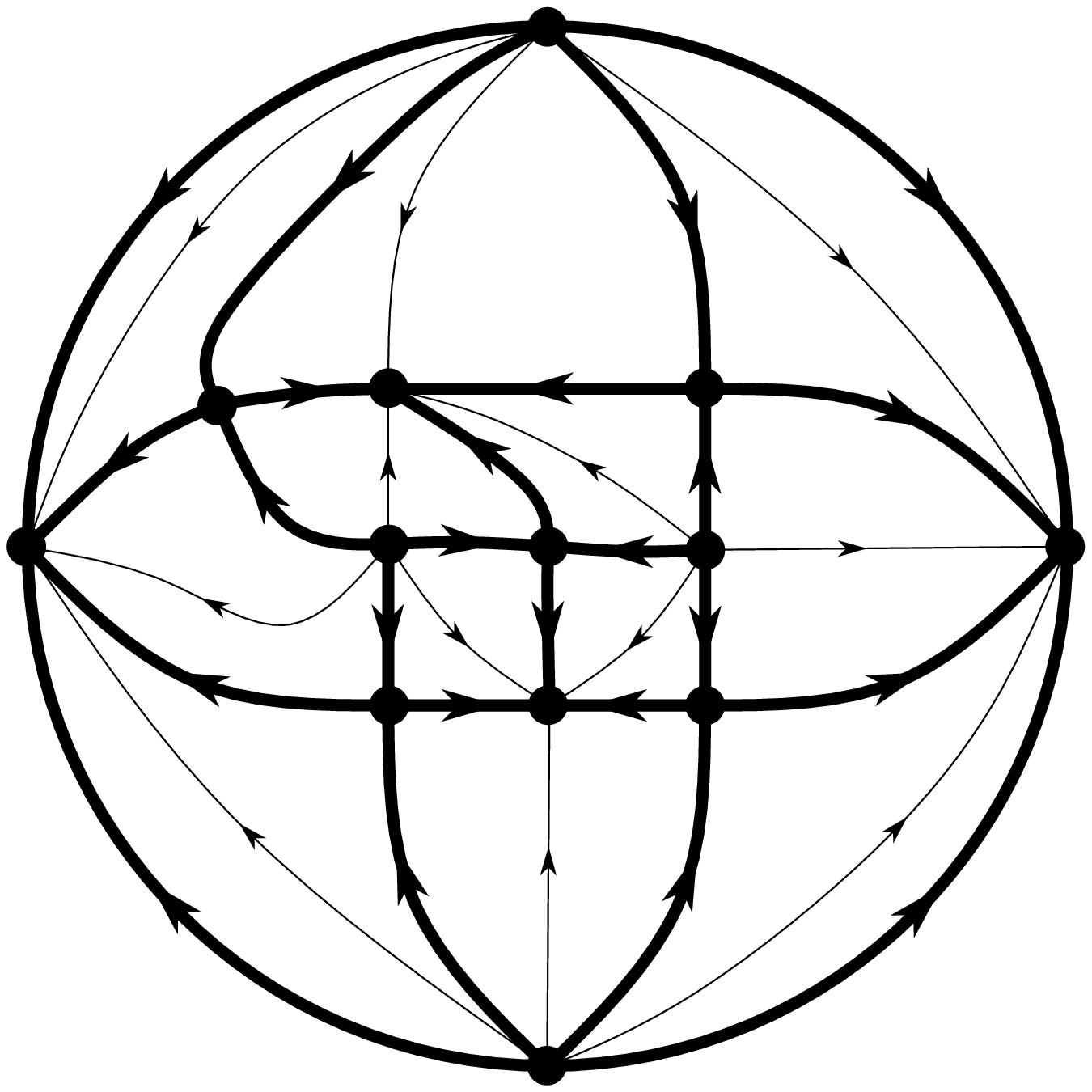} 
				\end{overpic}
				
				Case~$4.12c$.
			\end{center}
		\end{minipage}	
	\end{center}
	$\;$
	\begin{center} 
		\begin{minipage}{3.1cm}
			\begin{center}
				\begin{overpic}[height=3cm]{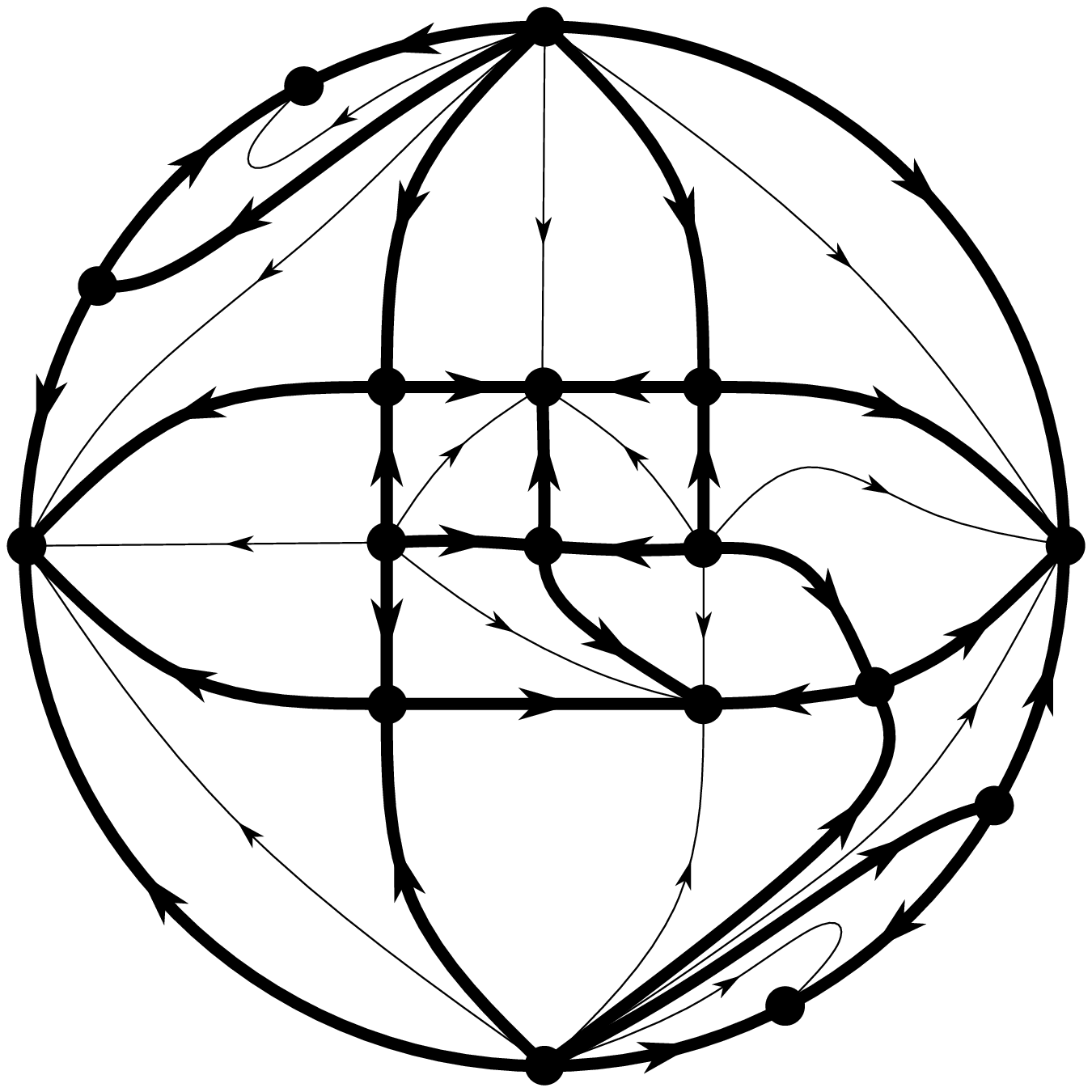} 
				\end{overpic}
				
				Case~$4.13a$.
			\end{center}
		\end{minipage}
		\begin{minipage}{3.1cm}
			\begin{center}
				\begin{overpic}[height=3cm]{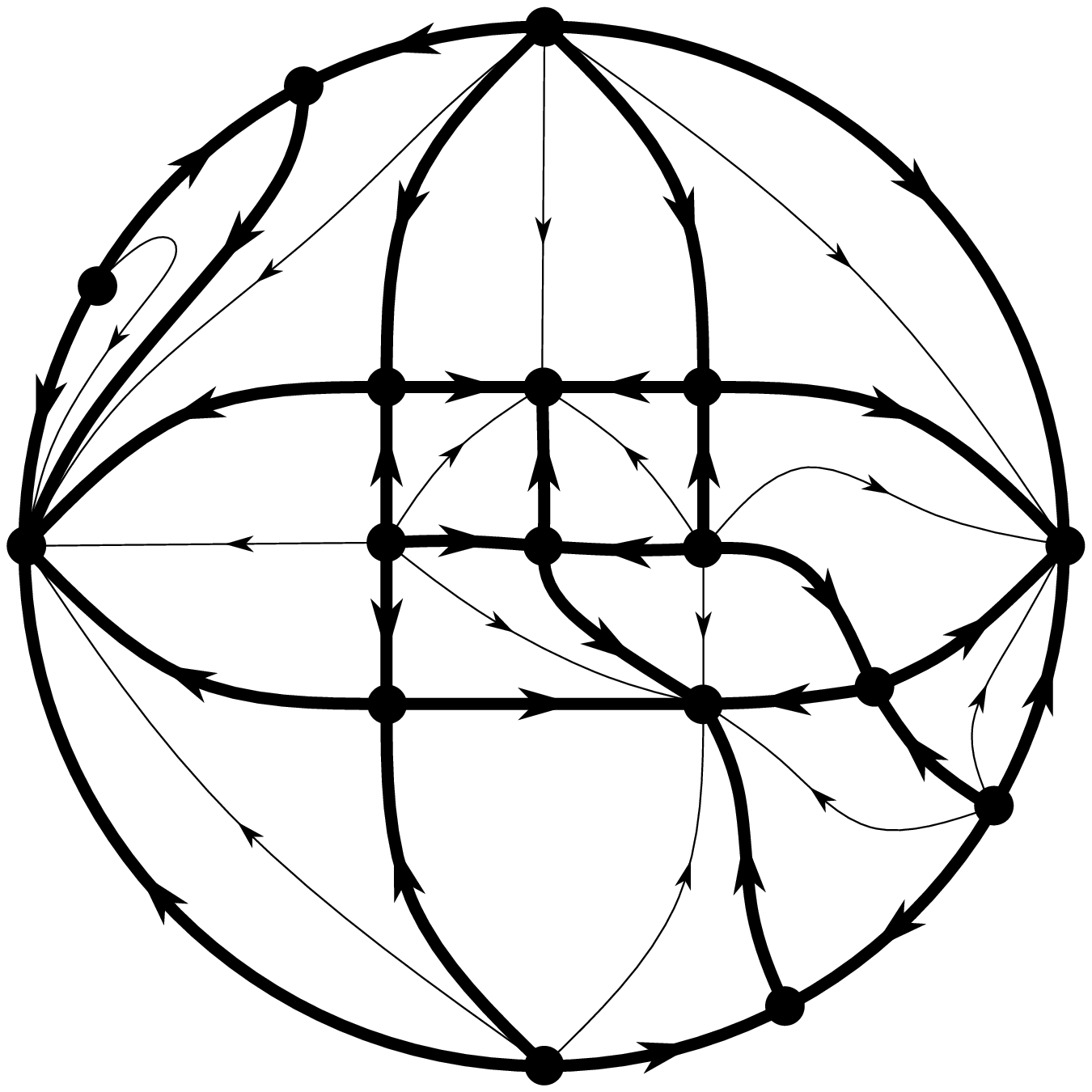} 
				\end{overpic}
				
				Case~$4.13b.1$.
			\end{center}
		\end{minipage}
		\begin{minipage}{3.1cm}
			\begin{center}
				\begin{overpic}[height=3cm]{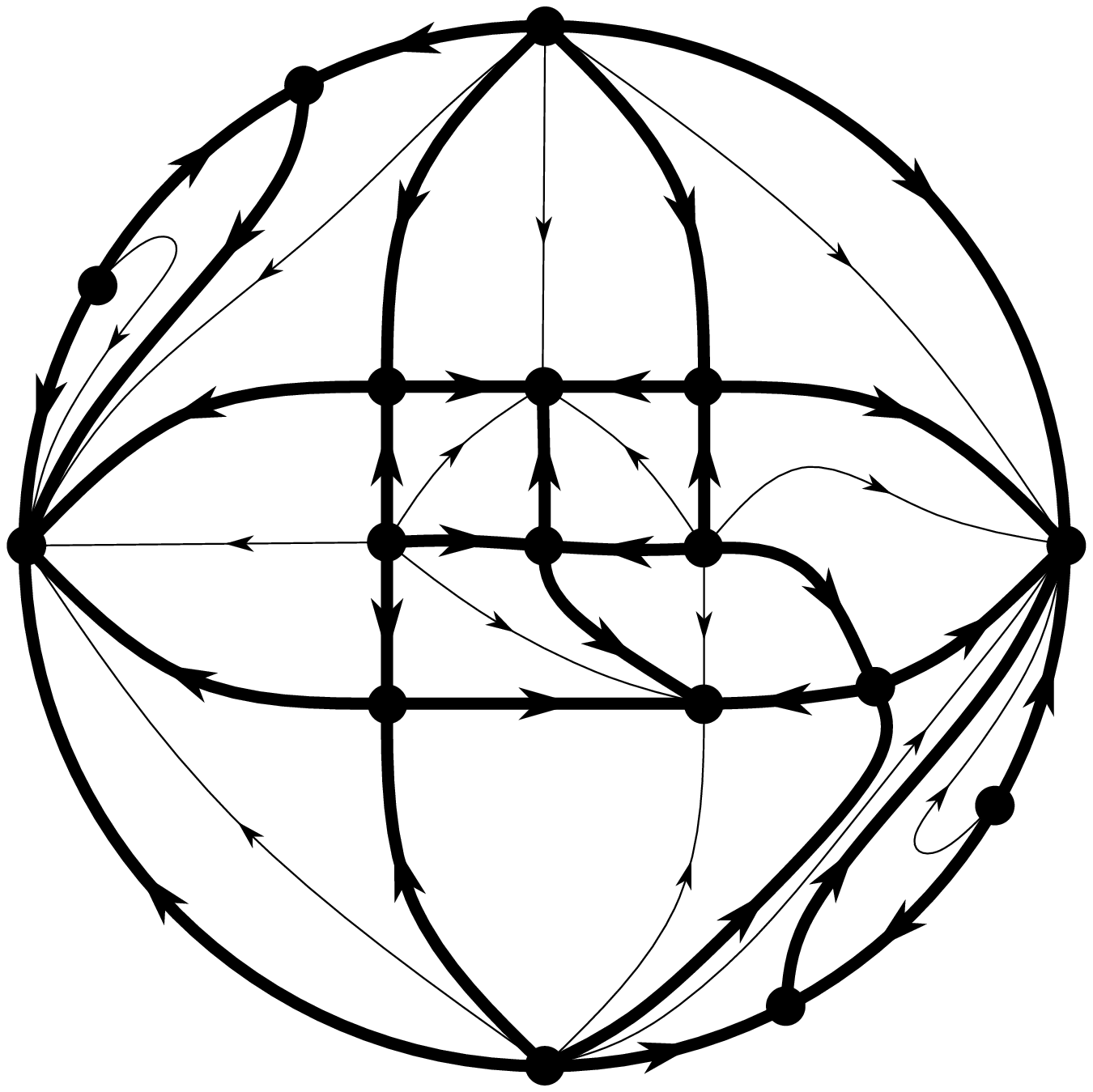} 
				\end{overpic}
				
				Case~$4.13b.2$.
			\end{center}
		\end{minipage}
		\begin{minipage}{3.1cm}
			\begin{center}
				\begin{overpic}[height=3cm]{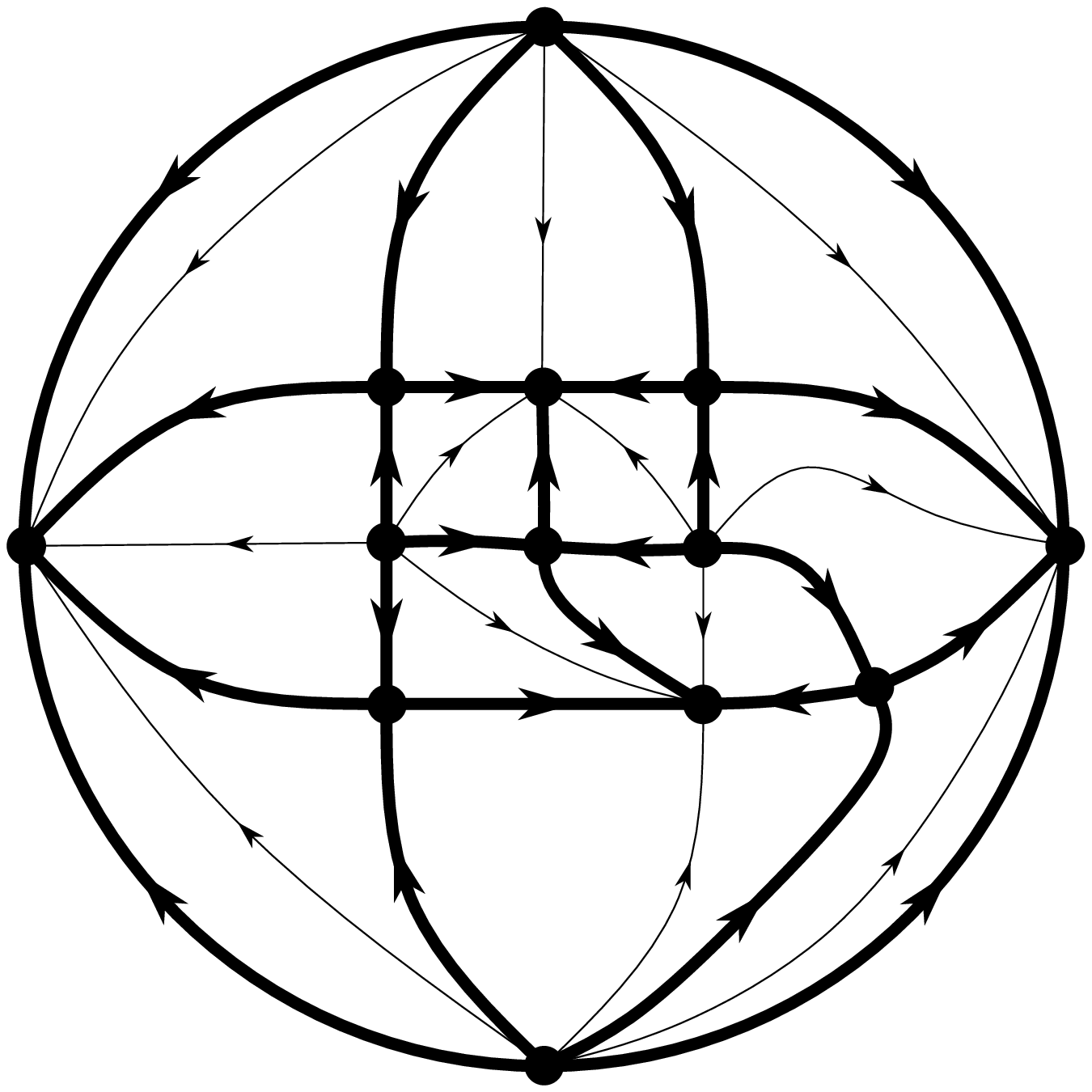} 
				\end{overpic}
				
				Case~$4.13c$.
			\end{center}
		\end{minipage}
	\end{center}
	$\;$
	\begin{center} 
		\begin{minipage}{3.1cm}
			\begin{center}
				\begin{overpic}[height=3cm]{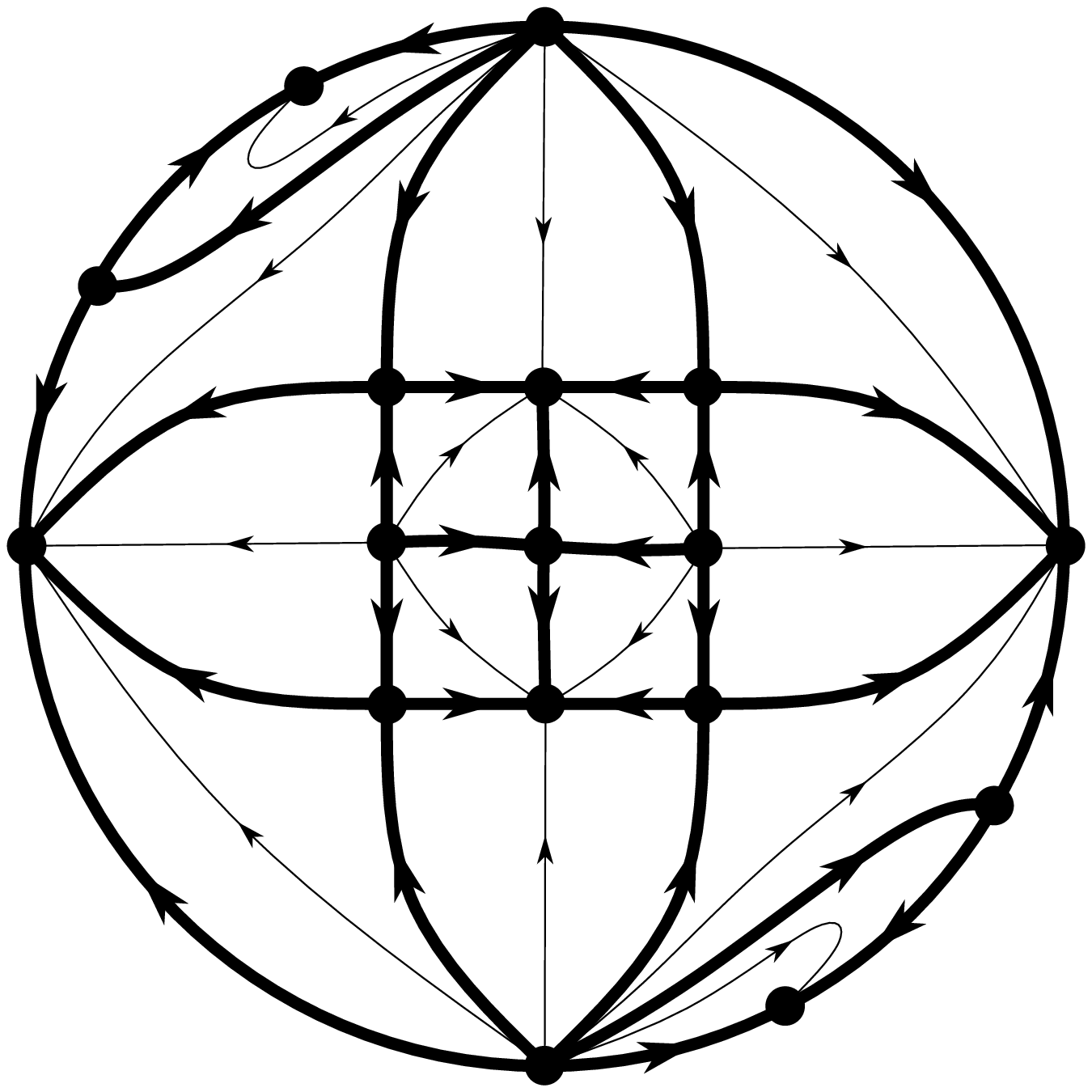} 
				\end{overpic}
				
				Case~$4.14a$.
			\end{center}
		\end{minipage}
		\begin{minipage}{3.1cm}
			\begin{center}
				\begin{overpic}[height=3cm]{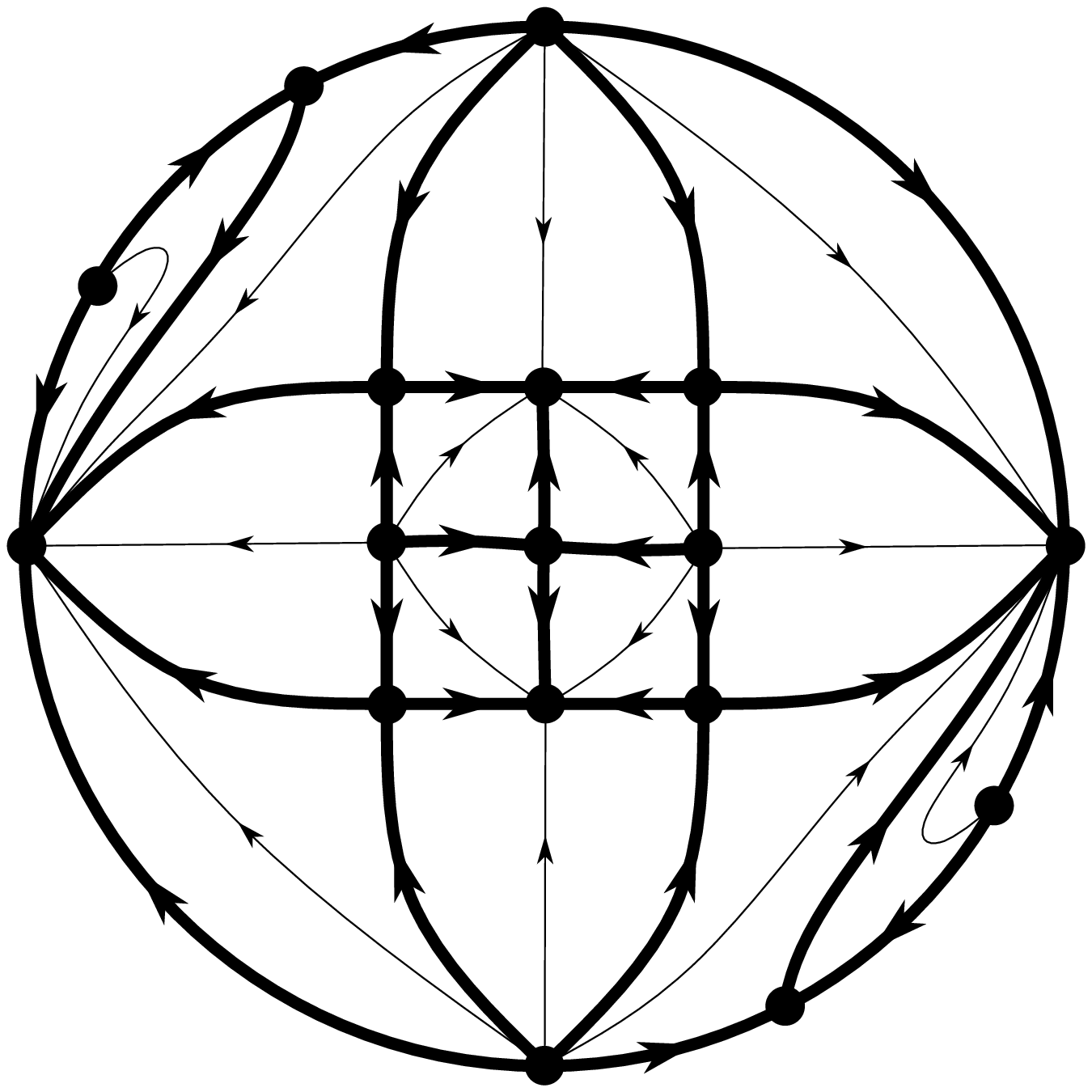} 
				\end{overpic}
				
				Case~$4.14b$.
			\end{center}
		\end{minipage}
		\begin{minipage}{3.1cm}
			\begin{center}
				\begin{overpic}[height=3cm]{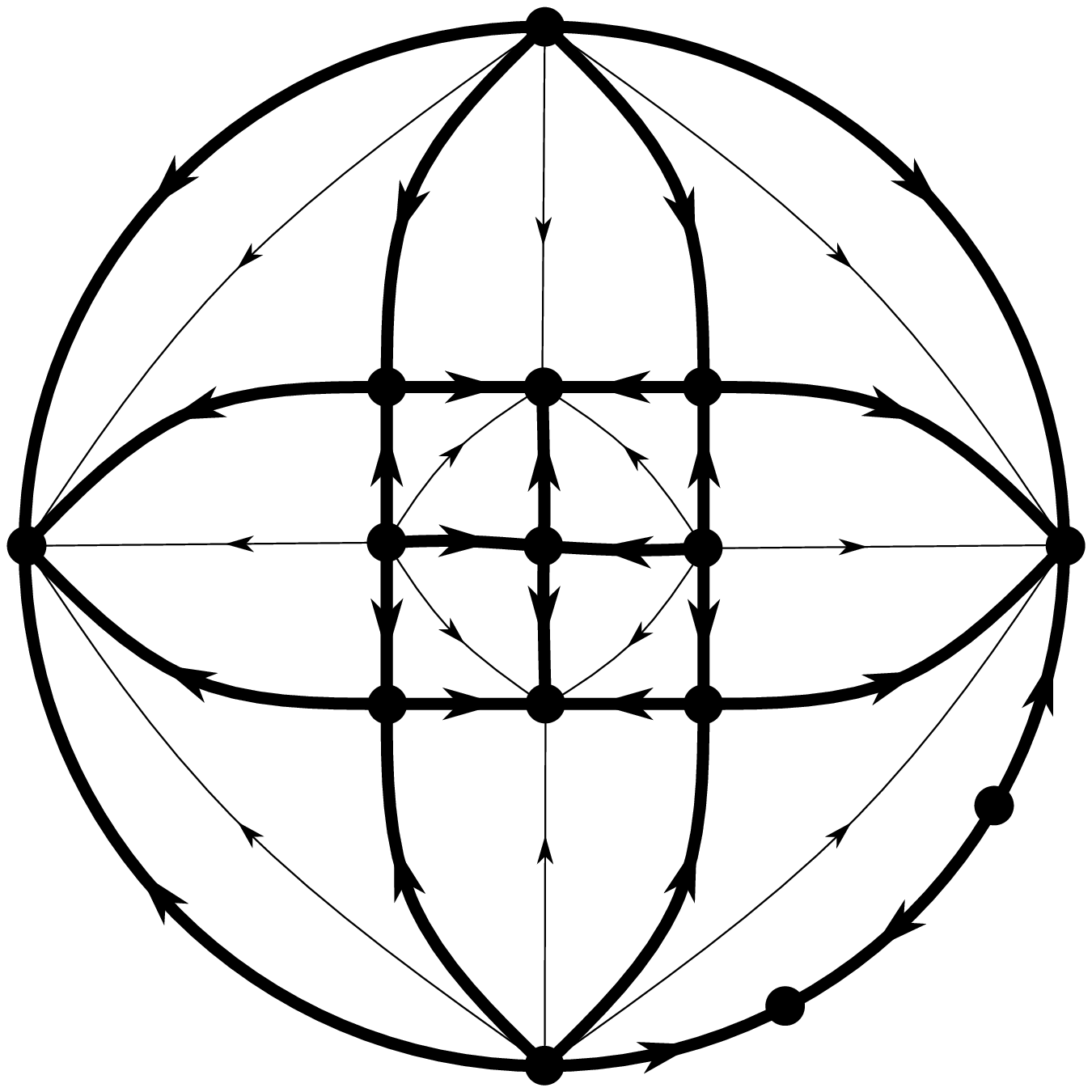} 
				\end{overpic}
				
				Case~$4.14c$.
			\end{center}
		\end{minipage}
	\end{center}
	\caption{Phase portraits from Cases~$4.12$ to $4.14$.}\label{Case1.4c}
\end{figure}

\end{document}